\theoremstyle{plain}
\newtheorem{theorem}{Theorem}[section]
\newtheorem{lemma}[theorem]{Lemma}
\newtheorem{corollary}[theorem]{Corollary}
\newtheorem{proposition}[theorem]{Proposition}
\theoremstyle{definition}
\theoremstyle{remark}
\newtheorem{remark}[theorem]{Remark}
\newcommand{\tconst}[1]{\ensuremath{t_{\textnormal{\ref{#1}}}}}
\newcommand{\const}[1]{\ensuremath{c_{\textnormal{\ref{#1}}}}}
\def\N{\ensuremath{\mathbf{N}}}
\def\Q{\ensuremath{\mathbf{Q}}}
\def\R{\ensuremath{\mathbf{R}}}
\def\Z{\ensuremath{\mathbf{Z}}}
\def\ep{\varepsilon}
\def\E{\ensuremath{\mathbf{E}}}
\def\P{\ensuremath{\mathbf{P}}}
\DeclareMathOperator{\Var}{Var}
\def\F{\ensuremath{\mathscr{F}}}
\def\Ind{\ensuremath{\mathbf{1}}}
\renewcommand\Re{\operatorname{Re}}
\DeclareMathOperator{\supp}{supp}
\DeclareMathOperator{\med}{qu}
\def\to{\rightarrow}
\def\tand{\ensuremath{\text{ and }}}
\def\tif{\ensuremath{\text{ if }}}
\def\tas{\ensuremath{\text{ as }}}
\def\ton{\ensuremath{\text{ on }}}
\def\tor{\ensuremath{\text{ or }}}
\newcommand{\dd}{\mathrm{d}} 
\newcommand{\executeiffilenewer}[3]{%
\ifnum\pdfstrcmp{\pdffilemoddate{#1}}%
{\pdffilemoddate{#2}}>0%
{\immediate\write18{#3}}\fi%
}
\newcommand{%
\executeiffilenewer{.svg}{.pdf}%
{inkscape -z -D --file=.svg %
--export-pdf=.pdf --export-latex --export-area-drawing}%
\input{.pdf_tex}%
}[1]{%
\executeiffilenewer{#1.svg}{#1.pdf}%
{inkscape -z -D --file=#1.svg %
--export-pdf=#1.pdf --export-latex --export-area-drawing}%
\input{#1.pdf_tex}%
}
\def\A{\ensuremath{\mathscr{A}}}
\def\G{\ensuremath{\mathscr{G}}}
\def\H{\ensuremath{\mathscr{H}}}
\def\p{\ensuremath{\mathbf{p}}}
\def\Ptilde{\widetilde{\P}}
\def\Etilde{\widetilde{\E}}
\def\Phat{\widehat{\P}}
\def\Ehat{\widehat{\E}}
\def\Varhat{\widehat{\Var}}
\def\thbar{\overline{\theta}}
\def\Bfl{B$^\flat$}
\def\Bsh{B$^\sharp$}
\def\numcst{b}
\def\PB{\mathbf{P}^{\mathrm{B}}}
\def\EB{\mathbf{E}^{\mathrm{B}}}
\def\VarB{\Var^{\mathrm{B}}}
\def\Pfl{\mathbf{P}^{\flat}}
\def\Pfltilde{\widetilde{\mathbf{P}}^{\flat}}
\def\Efl{\mathbf{E}^{\flat}}
\def\Efltilde{\widetilde{\mathbf{E}}^{\flat}}
\def\Varfl{\Var^\flat}
\def\Psh{\mathbf{P}^{\sharp}}
\def\Esh{\mathbf{E}^{\sharp}}
\def\Varsh{\Var^\sharp}
\numberwithin{equation}{section}
\author{Pascal Maillard\thanks{Département de Mathématiques, Université Paris-Sud, 91405 Orsay Cedex, France. \newline \hspace*{\parindent} e-mail: pascal DOT maillard AT u-psud DOT fr}}
\title{Speed and fluctuations of $N$-particle branching Brownian motion with spatial selection}
\begin{document}

\maketitle

\begin{abstract}
We consider branching Brownian motion on the real line with the following selection mechanism: Every time the number of particles exceeds a (large) given number $N$, only the $N$ right-most particles are kept and the others killed. After rescaling time by $\log^3N$, we show that the properly recentred position of the $\lceil \alpha N\rceil$-th particle from the right, $\alpha\in(0,1)$, converges in law to an explicitly given spectrally positive L\'evy process. This behaviour has been predicted to hold for a large class of models falling into the universality class of the FKPP equation with weak multiplicative noise [Brunet et al., Phys. Rev. E \textbf{73}(5), 056126 (2006)] and is proven here for the first time for such a model.

\bigskip

\noindent \textbf{Keywords.} Branching Brownian motion; selection; FKPP equation

\bigskip

\noindent \textbf{MSC2010.} 60J80; 60K35; 60J70
\end{abstract}

\tableofcontents

\section{Introduction}
\label{sec:NBBM_intro}

\subsection{Definition of the model and statement of the main result}

In this article, we consider an instance of branching Brownian motion (BBM) with \emph{selection}, dubbed the $N$-BBM and defined as follows: Given a probability measure $(q(k))_{k\ge 0}$ on $\N = \{0,1,2,\ldots\}$, called the \emph{reproduction law}, with $m=\sum (k-1)q(k) > 0$ and finite second moment, particles diffuse according to standard Brownian motion and branch at rate $\beta q(k)$ into $k$ particles, for every $k\ge0$, where $\beta = 1/(2m)$ is called the \emph{branching rate}. We fix a (large) parameter $N$ and select particles according to the following simple mechanism: Each time the number of particles exceeds $N$, we keep only the $N$ right-most and instantaneously kill the others. In Section~\ref{sec:motivation}, we explain the motivation behind this system. We first state our result.

For a finite counting measure $\nu$ on $\R$, $\alpha\in(0,1)$ and $N\in\N$, we define
\begin{equation}
 \label{eq:def_med}
\med^N_\alpha(\nu) = \inf\{x\in\R: \nu([x,\infty)) < \alpha N\}.
\end{equation}
For $N\in\N$ large enough, we then define $a_N = \log N + 3 \log\log N$ and
\[
\mu_N = \sqrt{1-\frac{\pi^2}{a_N^2}} = 1 - \frac{\pi^2}{2\log^2N} +  \frac{3\pi^2\log\log N}{\log^3 N} + o\Big(\frac 1 {\log^3 N}\Big).
\]
Let $\nu^N_t$ be the counting measure formed by the positions of the particles of $N$-BBM at time~$t$. Define $$M^N_\alpha(t) = \med_\alpha^N(\nu^N_t) - \mu_N t.$$  Our main theorem is the following:

\begin{theorem}
  \label{th:1}
Suppose that at time $0$ there are $N$ particles distributed independently according to the density proportional to $\sin(\pi x/a_N)e^{- x}\Ind_{(0,a_N)}(x)$. Let $\alpha\in(0,1)$ and define $x_\alpha>0$ by $\int_{x_\alpha}^\infty  y e^{- y}\,\dd y = \alpha.$ Then the finite-dimensional distributions of the process 
\(
\left(M^N_\alpha\big(t\log^3 N\big)\right)_{t\ge0}
\)
converge as $N\to\infty$ to those of the L\'evy process $(L_t+x_\alpha)_{t\ge0}$ with $L_0 = 0$ and 
\begin{equation}
\label{eq:laplace_levy}
 \log E[e^{i\lambda L_1}] = i\lambda c + \pi^2\int_0^\infty (e^{i\lambda x} - 1 - i\lambda x\Ind_{(x\le 1)})\,\Lambda(\dd x).
\end{equation}
Here, $\Lambda$ is the image of the measure $x^{-2}\Ind_{(x> 0)}\dd x$ by the map $x\mapsto \log(1+x)$ and $c\in\R$ is a constant depending only on the reproduction law $q(k)$ (in fact, $c = \pi^2(\const{eq:W_expec} + \const{eq:c_log} - \log(2\pi^2))$, with the constants from \eqref{eq:W_expec} and \eqref{eq:c_log}, respectively).
\end{theorem}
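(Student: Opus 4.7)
The heuristic behind Theorem~\ref{th:1} is the Brunet--Derrida picture: in the moving frame of velocity $\mu_N$, the empirical profile of the $N$-BBM relaxes rapidly to a shape close to $\sin(\pi x/a_N)e^{-x}\Ind_{(0,a_N)}(x)$, and on the long time scale $\log^3 N$ the front is pushed forward by rare ``breakouts'' in which a single particle reaches the upper end of the strip and produces an exponentially large progeny before the rest of the system catches up. My plan is to render this picture quantitative via a sandwiching argument between two explicit processes.

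First I would introduce BBM on a strip, killed at two parallel lines of slope~$\mu_N$ placed just below~$0$ and just above~$a_N$. On such a strip, the Feynman--Kac generator admits $\sin(\pi x/a_N)e^{-x}$ as its principal eigenfunction with eigenvalue~$0$; this simultaneously explains the choice of initial condition and the centering~$\mu_N t$. By monotonicity of the selection rule, the $N$-BBM should then be sandwiched between two such strip-BBMs, one fed by a slightly smaller and one by a slightly larger effective population, each seeded with the explicit eigenfunction density.

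Next I would partition $[0,t\log^3 N]$ into sub-intervals short enough that at most one breakout occurs per interval with high probability, yet long enough for the profile to relax in between. A many-to-one computation on the strip should show that the rate at which some particle reaches height $y$ above $a_N$ scales like $e^{-y}/\log^3 N$ with an explicit prefactor of $\pi^2$, yielding, after rescaling time by $\log^3 N$, a Poisson point process of breakouts on the rescaled time axis. Writing each breakout of overshoot~$y$ as producing an effective ``bulge'' of size $Y\approx e^y$ at the front, a Yaglom-type argument for the offspring distribution identifies the resulting median shift as $\log(1+Y)$, so that pushing forward the intensity $Y^{-2}\,\dd Y$ by $Y\mapsto \log(1+Y)$ produces exactly the Lévy measure~$\Lambda$ in~\eqref{eq:laplace_levy}. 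Summing shifts over an interval of rescaled length~$t$ and passing to the limit should then give convergence of the finite-dimensional distributions to those of~$L$.

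The hard part will be to justify that successive breakouts behave as independent Poisson events: the selection mechanism introduces long-range dependence, so one needs a careful coupling and relaxation estimate showing that after each breakout the configuration returns to (a neighbourhood of) the quasi-stationary sine-exponential profile on a time scale $o(\log^3 N)$, short enough to be invisible in the limit but long enough to decorrelate from the previous jump. A secondary difficulty is the identification of the deterministic constant~$c$: it should arise as the sum of a spine-type expectation, contributing $\const{eq:W_expec}$, and the small-jump compensator of the Lévy--Khintchine representation, contributing $\const{eq:c_log}-\log(2\pi^2)$, both of which must be computed at precision $o(\log^{-3}N)$ in the deterministic drift of the front.
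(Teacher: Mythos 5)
Your proposal reproduces the correct heuristic (it is essentially the Brunet--Derrida picture recalled in the introduction) and identifies the right limiting object, but the two places where you wave your hands are exactly where the proof lives, and one of your steps is false as stated. The claim that ``by monotonicity of the selection rule, the $N$-BBM is sandwiched between two strip-BBMs'' killed at fixed parallel lines does not hold: the $N$-BBM kills by \emph{rank} while a strip-BBM kills by \emph{position}, and the two are not comparable in the stochastic order $\preceq$ on empirical measures --- the strip process sometimes carries more than $N$ particles (killing too few) and sometimes fewer (killing too many). The monotone coupling of Lemma~\ref{lem:coupling} only compares rank-based rules ($N^+$- and $N^-$-BBM), and the entire point of the \Bfl- and \Bsh-BBM constructions (red particles coloured when $N^\flat$ particles sit to their right, blue particles allowed to survive below $0$ for a time $O(a^2)$) is to build processes that are simultaneously realisations of $N^\mp$-BBM \emph{and} tractable barrier processes. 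Moreover the left barrier cannot be a fixed line of slope $\mu_N$: after a breakout producing $W N$ extra descendants the population would grow by the factor $1+W$, and over time $\log^3 N$ it would evolve like Neveu's CSBP rather than stay of order $N$. The shift $\Delta\approx\log(1+\pi e^{-A}W)$ must be built \emph{into the killing mechanism} (the random barrier $X^{[\infty]}$) to keep the population near $N$ and the comparison valid; it is not something you can read off a fixed strip afterwards.

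Two further gaps. First, the particles that reach the right boundary but do \emph{not} break out cannot be ignored: they are the source of the deterministic drift $i\lambda\pi^2 A$ in Proposition~\ref{prop:piece} (equivalently, of the $3\log\log N$ speed correction) and of part of the constant $c$; handling them requires the tier decomposition of Section~\ref{sec:BBBM_tiers} and the conditioned (``penalised'') first- and second-moment machinery of Section~\ref{sec:weakly_conditioned}, together with a renewal structure at the stopping times $\Theta_n$ and conditioning on $\F_\Delta$ at the earlier time $T^-$ to decorrelate successive breakouts --- none of which is sketched in your plan beyond naming it ``the hard part''. Second, Theorem~\ref{th:1} concerns the $\alpha$-quantile $M^N_\alpha$, not the barrier itself: you still need the analogue of Proposition~\ref{prop:Bflat_Bsharp_med}, i.e.\ uniform first- and second-moment control of $N_t(r)$ under the conditioned law showing that the $\lceil\alpha N\rceil$-th particle sits at $x_\alpha+o(1)$ to the right of the barrier at the sampling times; your proposal silently identifies the quantile shift with the barrier shift.
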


\subsection{Motivation and related work}
\label{sec:motivation}

The $N$-BBM and the results of this article are related to several other mathematical models, a few of which we wish to outline in this section. We are not going to elaborate on the applications to other sciences such as physics or biology, for which we refer to \cite{Brunet2006,Brunet2008a} and the review articles \cite{VanSaarloos2003,Panja2004}.

Most importantly, the $N$-BBM is a prototype for \emph{noisy travelling waves of FKPP type}, i.e.\ travelling waves of the stochastic partial differential equation $\partial_t u = \partial_x^2 u + u(1-u) + \sqrt {u/N}\, \dot W$, where $\dot W$ is space-time white noise. This equation is known as the FKPP equation with weak multiplicative noise and is believed to share the same   phenomenology as the $N$-BBM. The connection between BBM \emph{without} selection and the FKPP equation \emph{without} noise is indeed well known, at least since McKean's work \cite{McKean1975}, and has had many fruitful applications, one of the most important being Bramson's study of the law of the right-most particle of BBM \cite{Bramson1983}. The relation between the $N$-BBM and the noisy FKPP equation is less explicit, but there is an exact duality relation between this equation and a system of branching and coalescing Brownian motions \cite{Shiga1988}. More generally, particle systems governed by the FKPP equation in the large 
population limit can often be modelled by the noisy FKPP equation, the parameter $N$ having the meaning of an ``effective population size'' (\cite{Brunet2001}, \cite[Chapter~7]{VanSaarloos2003}, \cite{Panja2004}).

Another example of BBM with selection is BBM with absorption at a linear space-time barrier. This process is well-studied \cite{Kesten1978,Neveu1988,HHK2006,Derrida2007,Gantert2011,Berestycki2010,Berestycki2010a} and is much more tractable than $N$-BBM due to the greater independence between the particles and its connection with some differential equations \cite{Neveu1988,HHK2006,Maillard2010}. It has been known since Kesten \cite{Kesten1978} that this process dies out almost surely if and only if the slope of the barrier is at least 1 (with our choice of parameters). Of particular importance to this article is the near-critical case, where the slope of the barrier is slightly smaller than one. This case has been studied in detail in the literature \cite{Derrida2007,Gantert2011,Berestycki2010,Berestycki2010a}. Our article draws a lot on these results, especially on \cite{Berestycki2010}.

We finally mention the relation between the $N$-BBM or $N$-BRW and an instance of a Fleming--Viot-type process: Let $N$ particles perform independent continuous-time nearest-neigh\-bour random walks on the integers with drift towards the origin. Furthermore, as soon as a particle reaches the origin, let it jump at random onto one of the other particles. This type of model was introduced in order to yield a particle representation of quasi-stationary distributions \cite{Burdzy1996}, and it is indeed conjectured that the empirical distribution of this system under the stationary distribution converges, as $N$ goes to infinity, to the quasi-stationary distribution of the random walk (see e.g.\ \cite{Asselah2012,Asselah2012a,Villemonais2014} for recent progress on this subject). The relation with $N$-BRW (with binary branching) is obvious.

\subsection{Comparison of Theorem~\ref{th:1} with previous results and discussion}

The quantitative study of $N$-BBM and similar models has started with the seminal article by Brunet and Derrida \cite{Brunet1997}. There, the authors define a variant of the FKPP equation with ``cutoff'', which (heuristically) models the effects of an $N$-particle discretisation on the solutions to the FKPP equation. In a later article with co-authors  \cite{Brunet2006}, they extended this model with random shifts, supposed to model the fluctuations of the $N$-BBM. In terms of the $N$-BBM, this yielded the following heuristic semi-deterministic description of the system:
\begin{enumerate}
 \item Most of the time, the particles are in a meta-stable state. In this state, the cloud of particles (also called the \emph{front}) has a diameter of $\log N+O(1)$, its empirical density of particles seen from the left-most is proportional to $e^{- x} \sin(\pi  x/\log N)$, and the system moves at a linear speed $v_{\mathrm{cutoff}} = 1 - \pi^2/(2\log^2N)$. In particular, most of the particles are at $O(1)$ distance from the left-most particle. This is the description provided by the \emph{cutoff} approximation from \cite{Brunet1997} mentioned above.
 \item This meta-stable state is perturbed from time to time by particles moving far to the right. Fix a point in the bulk, for example the median or the barycentre, and say that a particle reaches a point $y$ if it moves to distance $y$ of that fixed point. Playing with the initial conditions of the FKPP equation with cutoff, the authors of \cite{Brunet2006} find that a particle moving up to the point $\log N + x$ causes, after a relaxation time of order $\log^2 N$, a shift of the front by
\(
 \Delta = \log \Big(1+\frac{Ce^{x}}{\log^3 N}\Big),
\)
for some constant $C>0$. In particular, in order to have an effect on the position of the front, a particle has to reach a point near $\log N + 3 \log \log N$.
\item Assuming that such an event where a particle ``escapes'' to the point $\log N + x$ happens at rate proportional to $e^{-x}$, one sees that the time it takes for a particle to come close to $\log N + 3 \log \log N$ (and thus causing shifts of the front) is of the order of $\log^3 N$, which is greater than the relaxation time.
\item With this information, the speed of the front is found to satisfy \(v_N - v_{\mathrm{cutoff}} \approx \pi^2 \frac{3\log\log N}{\log^3 N}\) and the fluctuations are found to be given by the L\'evy process from Theorem~\ref{th:1}.
\end{enumerate}

As mentioned above, this description relies on the validity of the approximation of the $N$-BBM by the FKPP equation with cutoff and certain additional random shifts. There have been several attempts to render these results rigorous. B\'erard and Gou\'er\'e \cite{Berard2010} prove that the speed\footnote{By \emph{speed} we mean here the limit $v_N = \lim_{t\to\infty} X_t/t$, where $X_t$ is the barycentre, say, at time $t$.} of a general $N$-BRW satisfies $(1-v_N)/\log^2 N\to c$ for some explicit constant $c$. An important ingredient of their proof is a certain coupling with BRW with absorption at a linear space-time barrier, studied in \cite{Gantert2011}. The empirical distribution of $N$-BRW has been studied by Durrett and Remenik \cite{Durrett2009}, who show that if the empirical distribution of the initial particles converges to a deterministic limit, then the evolution of the empirical distribution of the $N$-BRW converges to the solution of a free-boundary partial integro-differential equation. 
Finally, Mueller, Mytnik and Quastel \cite{Mueller2010} study the propagation speed of solutions to the noisy FKPP equation, which is heuristically related to the $N$-BBM as mentioned in the last section. They show that this propagation speed satisfies $v_N = 1 - \pi^2/(2\log^2N) + O (\log\log N/\log^3 N)$ (again with our choice of parameters), which partly confirms the physicists' predictions.

The present article is therefore to the knowledge of the author the first example of a rigorous proof of the full statistics of $N$-BBM or of a related model. Naturally, Theorem~\ref{th:1} has some important limitations. 
First, it only describes the model on time scales of order $\log^3N$. It is still possible (but in our opinion unlikely), that on larger time scales other behaviour might be observed, leading to a different speed or larger fluctuations. 
Second, the requirement on the initial condition in Theorem~\ref{th:1} is very strong. The results should certainly hold for every initial condition (modulo an additional random shift determined by the beginning of the process). However, we were not able to show this; notably the study of the \Bfl- and \Bsh-BBM (see below) highly depends on the assumption of independence of the initial particles. 
Third, the convergence in finite-dimensional distributions in Theorem~\ref{th:1} is certainly not optimal; one could expect convergence in a suitable function space, possibly w.r.t.\ Skorokhod's $M_1$-topology\footnote{One cannot expect convergence in ``the'' Skorokhod topology, also called the $J_1$ topology, since the jumps of the L\'evy process from Theorem~\ref{th:1} are no real jumps in the $N$-BBM, but rather gradual shifts in the position over the time scale $\log^2N$.
}. 
Fourth, Theorem~\ref{th:1} does not say anything about the stationary distribution of the $N$-BBM, seen from the left-most particle, say. It has been shown by Durrett and Remenik \cite{Durrett2009} that this stationary distribution exists (even for general $N$-BRW), but it is still unknown what it looks like and in particular, how it behaves in the large $N$ limit. By analogy with the $N$-particle Fleming--Viot process mentioned in the last section, we expect the empirical measure to be deterministic under this limit, namely, the quasi-stationary distribution of the underlying random walk with a certain drift towards the origin. 
Finally, we do not consider the genealogy of the $N$-BBM in this article. By analogy with the results in \cite{Berestycki2010}, we expect it to be governed in the large $N$ limit by the Bolthausen--Sznitman coalescent (at the timescale $\log^3 N$). Unfortunately, our methods do not seem to be immediately applicable (see the end of Section~\ref{sec:overview} for details).

\subsection{Overview of the proof}
\label{sec:overview}

Our approach to the proof of Theorem~\ref{th:1} is inspired by the ideas of Berestycki, Berestycki and Schweinsberg \cite{Berestycki2010}, who consider BBM with absorption at the origin and with near-critical drift $-\mu_N$. They show that the evolution of the number of particles in this process converges on the $\log^3 N$ time scale to Neveu's continuous-state branching process\footnote{A continuous-state branching process (CSBP) $(Z_t)_{t\ge0}$ is a time-changed L\'evy process without negative jumps: at time $t$, time is sped up by the factor $Z_{t-}$. CSBPs are scaling limits of Galton--Watson processes and thus have an inherent notion of genealogy. Neveu's CSBP is the CSBP with L\'evy measure $x^{-2}\Ind_{(x>0)}\,\dd x$, whose genealogy is given by the Bolthausen--Sznitman coalescent \cite{Bertoin2000}.} and the genealogy of the system to the Bolthausen--Sznitman coalescent. We will briefly recall the basic ideas of their proof.

Their starting point is to introduce a second barrier at the point\footnote{\label{fn:aNA}They actually consider $a_{N,A} = a_N -  A$ for some large positive constant $A$, which slowly goes to infinity  with $N$. This is an important detail, but we ignore it for the moment.} $a_N = \log N + 3 \log\log N$ and divide the particles at time $t$ into two parts; on the one hand those that have stayed inside the interval $(0,a_N)$, on the other hand those that have hit the point $a_N$ before hitting $0$. This corresponds roughly to the division of the process into a deterministic and a stochastic part. Indeed, killing the particles at $a_N$ prevents the number of particles from fluctuating too much and thus permits to estimate the particle density \emph{via} first and second moment calculations. For example, if at time $0$ we have $N$ 
particles distributed according to the meta-stable density, then the variance of the number of particles at time $\log^3 N$ is of order $N^2$.
The particles inside the interval $(0,a_N)$ therefore behave almost deterministically at the time scale $\log^3 N$. Moreover, the leading term in the Fourier expansion of the transition density of Brownian motion [with drift $-\mu_N$ and killed at the border of the interval $(0,a_N)$] is proportional to $e^{-\mu_N x} \sin(\pi x/a_N)$, which explains the meta-stable density predicted by the physicists. As for the particles that hit $a_N$, the authors of \cite{Berestycki2010} find that 1) the number of descendants at a later time of such a particle is of the order of $WN$, where $W$ is a random variable with tail $P(W > x) \sim 1/x$, as $x\to\infty$ and 2) the rate at which particles hit the right barrier is of the order of $\log^{-3}N$. Putting the pieces together yields their result.

In the present article, we define an auxiliary process, the B-BBM (``B'' stands for ``barrier''), which is a better approximation of the $N$-BBM than BBM with a linear barrier. The B-BBM is a BBM with drift $-\mu_N$ and absorption at a random barrier, the role of which is to keep the number of particles roughly constant over a time scale of order $\log^3 N$. This random barrier should therefore be seen as the approximate position of the left-most particle in the $N$-BBM. Its shape is remarkably simple: most of the time it is constant as in \cite{Berestycki2010} and only when a particle reaches a point at distance $a_N$ from the barrier \emph{and} spawns a large number of particles, does the barrier receive an additional random shift to the right. We call this event a \emph{breakout}. The magnitude of the random shift is approximately $\log(1+\pi W)$, where $W$ is the variable from the last paragraph. This turns out to be the shift that is necessary to keep the number of particles roughly constant. The form of this shift explains\footnote{As mentioned in 
Footnote~\ref{fn:aNA}, we actually use $a_{N,A}$ instead of $a_N$. The random shift 
is then approximately $\log(1+\pi e^{-A}W)$, so that in the limit only the 
large values of $W$, and therefore only the asymptotic $1/x$ of its tail, contributes.} the L\'evy measure which appears in Theorem~\ref{th:1}.

Summing up the above arguments: 1) According to \cite{Berestycki2010}, the population size in BBM with absorption at a linear barrier evolves at the time scale $\log^3N$ like a CSBP with L\'evy measure $x^{-2}\,\dd x$. 2) In order to counteract an increase  in the population by $wN$, we have to shift the barrier by $\log(1+w)$. 3) As a consequence, the new barrier evolves like a L\'evy process whose L\'evy measure  is the image of the measure $x^{-2}\,\dd x$ by the map $x\mapsto \log(1+x)$.

We now describe how we use the results on the B-BBM in order to prove Theorem~\ref{th:1}. Initially, our plan was to \emph{couple} the $N$-BBM and the B-BBM, i.e.\ construct them on the same probability space. We would then assign a colour to each particle: \emph{blue} to the particles which appear in the $N$-BBM but not in the B-BBM, \emph{red} to those that appear in the B-BBM but not in the $N$-BBM, and \emph{white} to the particles that appear in both processes. Our aim was then to show that the number of blue and red particles was negligible after a time of order $\log^3 N$. This, unfortunately, did not work out, because we were not able to handle the intricate dependence between the red and blue particles.

Instead, we couple the $N$-BBM with two different processes, the \Bfl- and the \Bsh-BBM, which are variants of the B-BBM and which bound the position of the $N$-BBM in a certain sense from below and above, respectively. The \Bfl-BBM is defined as follows: Initially, all particles are coloured white and evolve as in the B-BBM. A white particle is coloured red as soon as it has $N$ or more white particles to its right. Children inherit the colour of their parent. After a breakout and the subsequent relaxation, all the red particles are killed immediately and the process restarts with the remaining particles. It is intuitive that the collection of white particles then bounds the $N$-BBM from below (in some sense), because we kill ``more'' particles than in the $N$-BBM. Indeed, in Section~\ref{sec:coupling}, we show by a coupling method that the empirical measure of the white particles in \Bfl-BBM is stochastically dominated by the one of the $N$-BBM with respect to the usual stochastic ordering of measures. The 
study of the \Bfl-BBM therefore yields a lower bound in Theorem~\ref{th:1}.

For an upper bound, we couple the $N$-BBM with another system, the \Bsh-BBM. Again, we colour all initial particles white and particles evolve as in B-BBM with the following change: Whenever a white particle hits $0$ \emph{and} has less than $N$ particles to its right, instead of killing it immediately, we colour it blue and let it survive for a time of order $\log^2 N$. Since we kill ``less'' particles than in the $N$-BBM, we can again show that empirical measure of the white and blue particles in \Bsh-BBM stochastically dominates the one of the $N$-BBM. 

In the limit as $N\to\infty$, the position of the $\lceil\alpha N\rceil$-th particle from the right in both the \Bfl- and \Bsh-BBM will then be shown to evolve according to the L\'evy process defined in the statement of Theorem~\ref{th:1}, which implies the theorem.

We note that although our technique of bounding the $N$-BBM from below and from above works well for the \emph{position} of the particles, it does not give us information about the genealogy; the reason being that the coupling deforms the genealogical tree of the process. Thus, although it should not be difficult to show that the genealogy of the B-BBM (and of the \Bfl- and \Bsh-BBM) converges to the Bolthausen--Sznitman coalescent we do not know at present how one could transfer this information to the $N$-BBM.

\subsection{Organisation of the article}

Here is a brief description of the organisation of this article: Section~\ref{sec:approx} introduces the processes used to approximate the $N$-BBM: the B-, \Bfl- and \Bsh-BBM. The main results about these processes are stated and a proof of Theorem~\ref{th:1} assuming these results is given. Sections~\ref{sec:BBBM}, \ref{sec:Bflat} and \ref{sec:Bsharp} then prove the results about the B-, \Bfl- and \Bsh-BBM, respectively. An appendix consisting of three sections contains a mixture of known and new results on general branching Markov processes and (branching) Brownian motion in an interval.

As a guide to the reader: Start with Section~\ref{sec:approx}, maybe refreshing your knowledge about branching processes (notably about stopping lines) by skimming through the relevant pages in Section~\ref{sec:preliminaries}. Then read Sections~\ref{sec:BBBM}, \ref{sec:Bflat} and \ref{sec:Bsharp}, jumping to the appendix whenever it seems necessary. Note that in general, results are presented as a series of lemmas, with the proofs usually immediately following the statement. This is merely a means to ensure that one may quickly and easily locate a particular result with its proof and should not encourage strictly linear reading. Sections~\ref{sec:Bflat} and~\ref{sec:Bsharp} may be read independently of each other, but they highly depend on Section~\ref{sec:BBBM}.

\subsection{Notation guide}
\label{sec:notation}

This article is quite long and uses a wealth of different notation. Below is a list of recurrent symbols, together with their meaning, often informal, and the section where they are defined. Note that in the appendix, these symbols sometimes have a similar, but different meaning (e.g.\ $\P^x$ in Section~\ref{sec:preliminaries} or $Z_t$ in Section~\ref{sec:interval}). 
Sections \ref{sec:Bflat} and \ref{sec:Bsharp} each use special notation, not listed below, which only appears in those sections.
Following the list are some further remarks about notational conventions.

\begin{longtable}{lp{10.3cm}r}
\textbf{Symbol} & \textbf{Meaning} & \textbf{Sect.}\\[4pt]
$(q(k))_{k\ge\N}$ & Reproduction law & \ref{sec:NBBM_intro}\\
$m$ & $m = \sum_k (k-1) q(k)$ & \ref{sec:NBBM_intro}\\
$\beta$ & Branching rate, $\beta = 1/(2m)$ & \ref{sec:NBBM_intro}\\
$a$ & Large parameter of B-BBM, particles hitting $a$ have a chance to \emph{break out}, i.e.\ spawn a large number of descendants & \ref{sec:BBBM_parameters}\\
$A$ & Large parameter of B-BBM, the number of particles is approx.\ proportional to $e^{A+a}/a^3$. We first let $a$, then $A$ go to infinity & \ref{sec:BBBM_parameters}\\
$\mu$ & $\mu = \sqrt{1-\pi^2/a^2}$. In B-BBM: drift towards the origin  & \ref{sec:BBBM_parameters}\\
$w_Z(x)$ & $w_Z(x) = a\sin(\pi x/a)e^{\mu(x-a)}\Ind_{(x\in[0,a])}$ & \ref{sec:BBBM_parameters}\\
$w_Y(x)$ & $w_Y(x) = e^{\mu(x-a)}$ & \ref{sec:BBBM_parameters}\\
$\P^x,\E^x,\P^x_f,\E^x_f$ & Law of and expectation w.r.t.\ BBM with constant drift $-\mu$ or drift $-\mu - a^{-2} f'(t/a^2)$, started from $x$ & \ref{sec:BBBM_parameters}\\
$\P^{(x,t)},\cdots$ & Same as above, but started at the space-time point $(x,t)$ & \ref{sec:BBBM_parameters}\\ 
$\P^\nu,\cdots$ & Same as above, but started from a collection of particles distributed according to a finite counting measure $\nu$ & \ref{sec:BBBM_parameters}\\
$X_u(t)$ & Position of the individual $u$ at time $t$ & \ref{sec:BBBM_parameters}\\
$o(1)$ & Non-random term that vanishes as $A$ and $a$ go to infinity & \ref{sec:BBBM_parameters}\\
$o_a(1)$ & Non-random term that vanishes as $a$ goes to infinity (and $A$ is fixed) & \ref{sec:BBBM_parameters}\\
$\ep$ & Small parameter of B-BBM, depends on $A$: $e^{-A/6} \le \ep \le A^{-17}$ & \ref{sec:BBBM_parameters}\\
$\eta$ & Small parameter of B-BBM, depends on $A$: $\eta \le e^{-2A}$ & \ref{sec:BBBM_parameters}\\
$y,\zeta$ & Large parameters  of B-BBM depending on $\eta$ and $(q(k))_{k\in\N}$ & \ref{sec:BBBM_parameters}\\
$B$ & Event of a breakout  & \ref{sec:BBBM_breakout}\\
$p_B$ & Probability of a breakout: $p_B = \P^a(B) = (\ep e^A)^{-1}(\pi+o(1))$  & \ref{sec:BBBM_breakout}\\
$\mathscr A_0(t)$ & Particles at time $t$ which have not yet hit the origin & \ref{sec:BBBM_tiers}\\
``critical line'' & (w.r.t.\ $t$): the space-time line $a-y+(1-\mu)(s-t)$ & \ref{sec:BBBM_tiers}\\
$\tau_l(u)$ & Time at which the individual $u$ hits $a$ for the $l$-th time (and returns to the critical line in between) & \ref{sec:BBBM_tiers}\\
$\sigma_l(u)$ & First time after $\tau_{l-1}(u)$ the individual $u$ hits the critical line & \ref{sec:BBBM_tiers}\\
$\mathscr R_t^{(l)}$ & Stopping line of tier $l$ particles hitting $a$ before time $t$, $\mathscr R_t = \bigcup_{l\ge 0} \mathscr R_t^{(l)}$ & \ref{sec:BBBM_tiers}\\
$\mathscr S_t^{(l)}$ & Stopping line of tier $l$ particles descending from $\mathscr R_t^{(l-1)}$ at the moment they hit the critical line & \ref{sec:BBBM_tiers}\\
$\mathscr S^{(u,t)}$ & Stopping line of descendants of $(u,t)$ hitting the critical line & \ref{sec:BBBM_tiers}\\
$Z^{(u,t)}$, $Y^{(u,t)}$ & Sum of $w_Z(x),w_Y(x)$ over particles in $\mathscr S^{(u,t)}$ & \ref{sec:BBBM_tiers}\\
$\sigma_{\mathrm{max}}^{(u,t)}$ & Maximum of $s-t$ for all $(v,s)\in\mathscr S^{(u,t)}$ & \ref{sec:BBBM_tiers}\\
$B^{(u,t)}$ & Event of a breakout of the particle $(u,t)\in\mathscr R_\infty$ & \ref{sec:BBBM_tiers}\\
$T^{(l)}$ & Time of first breakout of a tier $l$ particle & \ref{sec:BBBM_tiers}\\
$\mathscr U$ (the \emph{fugitive}) & The particle that breaks out first & \ref{sec:BBBM_tiers}\\
$T,T_n$ & Time of the first/$n$-th breakout & \ref{sec:BBBM_definition_definition}\\
$T^+$ & $T+\sigma_{\mathrm{max}}^{(\mathscr U,T)}$ & \ref{sec:BBBM_definition_definition}\\
$\Theta_n$ & Beginning of the $(n+1)$-th piece of B-BBM (i.e.\ time of the $n$-th breakout plus relaxation time). $\Theta_1 = (T + e^Aa^2)\vee T^+$ & \ref{sec:BBBM_definition_definition}\\
$X^{[n]}_t$ & The position of the first $n$ pieces of the barrier at time $t$ (barrier process) & \ref{sec:BBBM_definition_definition}\\
$G_n$ & ``Good'' event related to the first $n$ pieces of B-BBM & \ref{sec:BBBM}\\
$\widetilde{\ep}$ & $\widetilde{\ep} = (\pi p_Be^A)^{-1} = (\ep/\pi^2)(1+o(1))$.& \ref{sec:BBBM}\\
$\mathscr N_t^{(l)}$ & Stopping line of tier $l$ particles at time $t$ or at time $\sigma_l(u)$ if $\tau_{l-1}(u)\le t < \sigma_l(u)$, $\mathscr N_t=\bigcup_{l\ge0} \mathscr N_t^{(l)}$  & \ref{sec:BBBM_more_definitions}\\
$Z^{(l)}_t,Y^{(l)}_t$ & Sum of $w_Z(x),w_Y(x)$ over particles from $\mathscr N_t^{(l)}$ & \ref{sec:BBBM_more_definitions}\\
$R^{(l)}_t$ & Number of tier $l$ particles hitting $a$ up to time $t$, $R^{(l)}_t = \#\mathscr R^{(l)}_t$ & \ref{sec:BBBM_more_definitions}\\
$Z_t,Y_t,R_t$ & $Z_t = Z^{(0+)}_t$, $Y_t = Y^{(0+)}_t$, $R_t = R^{(0+)}_t$ & \ref{sec:BBBM_more_definitions}\\
$\widehat{\mathscr N}_t$ & Restriction of $\mathscr N_t$ to particles unrelated to fugitive & \ref{sec:BBBM_more_definitions}\\
$\widebar{\mathscr N}_t$ & Restriction of $\mathscr N_t$ to particles spawned by fugitive between times $\sigma_l(\mathscr U)$ and $\tau_l(\mathscr U)$ for some $l\ge 0$ & \ref{sec:BBBM_more_definitions}\\
$\widecheck{\mathscr N}_t$ & Restriction of $\mathscr N_t$ to particles spawned by fugitive between times $\tau_l(\mathscr U)$ and $\sigma_{l+1}(\mathscr U)$ for some $l\ge 0$ & \ref{sec:BBBM_more_definitions}\\
$\mathscr N^{\mathrm{fug}}_t$ & Restriction of $\mathscr N_t$ to descendants of fugitive & \ref{sec:BBBM_more_definitions}\\
$\widehat Z_t$, $\widebar Z_t$, etc. & Value of $Z_t$ restricted to particles from $\widehat{\mathscr N}_t$, $\widebar{\mathscr N}_t$, etc. & \ref{sec:BBBM_more_definitions}\\
$T^-$ & $(T-e^Aa^2)\vee 0$  & \ref{sec:BBBM_more_definitions}\\
$\Delta$ & The total shift of the barrier after a breakout & \ref{sec:BBBM_more_definitions}\\
$\widecheck Z_{\Delta}$ & Contribution of \emph{check}-particles to $\Delta$ & \ref{sec:BBBM_more_definitions}\\
$\Q^a$ & $\P^a$ conditioned not to break out & \ref{sec:time_breakout}\\
$W$ & A random variable with tail $P(W>x)\sim 1/x$ & \ref{sec:time_breakout}\\
$\Phat,\Ehat$ & Law/expectation of BBM conditioned not to break out before some fixed time & \ref{sec:before_breakout_particles}\\
$\Phat_{(l)},\Ehat_{(l)}$ & Law/expectation of BBM conditioned not to break out form a tier $k\le l$ before some fixed time & \ref{sec:before_breakout_particles}\\
$Z^{(l)}_{\emptyset,t}$ & Sum of $w_Z(x)$ over particles from $\mathscr S_t^{(l)}$ & \ref{sec:before_breakout_particles}\\
$\mathscr E_{\mathscr U}$ & Functional bounding the contribution of the bar-particles & \ref{sec:fugitive}\\
$\mathscr L_{\mathscr U},\mathscr F_{\mathscr U}$ & Random line related to the bar-particles and its sigma-field & \ref{sec:fugitive}\\
$\mathscr L_{\Delta}$ & Random line consisting of the particles contributing to $\Delta$ & \ref{sec:piece_proof}\\
$Z_\Delta$, $Y_\Delta$ & Sum of $w_Z$ and $w_Y$ over particles from $\mathscr L_\Delta$ & \ref{sec:piece_proof}\\
$\mathscr F_\Delta$ & A sigma-field that contains $\F_{\mathscr L_{\Delta}}$ & \ref{sec:piece_proof}\\
\parbox{2cm}{$G_{\mathscr U},G_{\mathrm{fug}},\widehat G$,\\$\widecheck G,G_{\Delta},G_{\mathrm{nbab}}$} & Several good sets regarding B-BBM & \ref{sec:piece_proof}\\
$N^{(l)}_t(r)$ & $N^{(l)}_t(r) = \sum_{(u,s)\in\mathscr N^{(l)}_t}\Ind_{(X_u(s)\ge r)}$ & \ref{sec:Bflat_num_particles}\\
$N_t(r)$ & Number of particles to the right of $r$ at time $t$ ($\ne N^{(0+)}_t(r)$ !) & \ref{sec:Bflat_num_particles}\\
$U$ & The space of individuals & \ref{sec:preliminaries_definition}\\
$\mathscr A(t)$ & Set of individuals alive at time $t$ in a branching Markov process & \ref{sec:preliminaries_definition}\\
$\mathscr F_t$ & $\sigma$-algebra with information up to time $t$ & \ref{sec:stopping_lines}\\
$\mathscr F_{\mathscr L}$ & $\sigma$-algebra with information up to the stopping line $\mathscr L$ & \ref{sec:stopping_lines}\\
$\mathscr L_T$ & Stopping line generated by a stopping time $T$ & \ref{sec:stopping_lines}\\
$\theta,\thbar$ & Functions related to Jacobi theta functions & \ref{sec:killed_bm}\\
$W^x$ & Law of Brownian motion started at $x$ & \ref{sec:killed_bm}\\
$p_t^a(x,y)$ & Transition density of Brownian motion killed outside $[0,a]$ & \ref{sec:killed_bm}\\
$E_t$ & Error term decaying like $e^{-(3/2)\pi^2 t}$ as $t\to\infty$ & \ref{sec:killed_bm}\\
$I^a(\cdot,\cdot),J^a(\cdot,\cdot)$ & Integrals related to Brownian motion killed outside $[0,a]$ & \ref{sec:killed_bm}\\
$W^x_{\mathrm{taboo}},W^{x,t,y}_{\mathrm{taboo}}$ & Law of Brownian taboo process and its bridge & \ref{sec:taboo}
\end{longtable}

The complement of an event $G$ is denoted by $G^c$. $\N$ and $\N^*$ are the sets of natural numbers including and excluding 0, respectively. $\R$ is the set of real numbers, $\R_+ = [0,\infty)$. $a\wedge b$ denotes the minimum of $a$ and $b$. This relation is defined between real numbers, but also between individuals of the branching Brownian motion and between (stopping) lines, where it has an analogous meaning (see Section~\ref{sec:preliminaries}). The maximum of two numbers $a$ and $b$ is denoted by $a\vee b$.


The symbols $C$ and $C'$ stand for a \emph{positive} constant, which may only depend on the reproduction law $q$ and the value of which may change from line to line. Furthermore, if $X$ is any mathematical expression, then the symbol $O(X)$ stands for a possibly random term whose absolute value is bounded by $C|X|$.

\section{The approximating processes: \texorpdfstring{B-, \Bfl- and \Bsh-BBM}{B-, Bb- and B\#-BBM}}
\label{sec:approx}

In this section, we define the three processes which will be used to approximate the $N$-BBM. They are called the B-, \Bfl- and \Bsh-BBM. We also state the main results about these processes (deferring the proofs) and prove Theorem~\ref{th:1} assuming these results.

\subsection{B-BBM: definition and statement of results}
\label{sec:BBBM_definition}

The definition builds upon the ideas of \cite{Berestycki2010} presented in the introduction. The basic idea is to approximate the $N$-BBM by a BBM with absorption at a \emph{random} barrier, which is chosen in such a way that \emph{it keeps the number of particles almost constant}. We call the resulting system the ``B-BBM'' (B stands for ``barrier''). It will serve as a blueprint for the \Bfl-BBM and the \Bsh-BBM defined in \ref{sec:Bflat_Bsharp_definition}, both of which are used in the proof of Theorem~\ref{th:1}.

In defining the random barrier of the B-BBM, we tried to depart as little as possible from the linear barrier. That this has been possible is due to the fact that the $N$-BBM behaves most of the time almost deterministically, advancing with linear speed. In this phase, the approximation with the BBM with absorbing linear barrier can be established even on a rigorous level, as shown later in this article. This approximation breaks down however, as soon as a particle goes far to the right and spawns a large number of descendants (of the order of $N$), which causes a jump to the right of the cloud of particles. Consequently, a first step in the definition of a barrier is to determine exactly when such events occur. This will be done through the definition of a \emph{breakout}, an event where one particle -- the \emph{fugitive} -- reaches a point sufficiently far to the right and then spawns a large number of descendants. Only when such a breakout occurs do we shift the barrier to the right (see Figure~\ref{fig:bbbm}). After relaxation, the process starts anew. Defining this process precisely requires a certain number of definitions which we introduce now.

\begin{figure}[ht]
 \centering
\def\svgwidth{9cm}
\executeiffilenewer{bbbm.svg}{bbbm.pdf}%
{inkscape -z -D --file=bbbm.svg %
--export-pdf=bbbm.pdf --export-latex --export-area-drawing}%
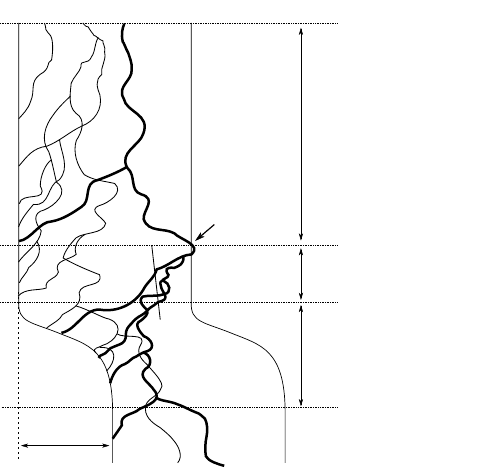%

\caption{A caricatural graphical description of the B-BBM. The fugitive and its descendants are drawn with thick lines, the other particles with thin lines. A breakout happens at time $T$ and the barrier is moved from the time $T^+$ on. The process starts afresh at time $\Theta_1$. Note that technically we increase the drift to the left instead of moving the barrier. The three important time scales ($1$, $a^2$ and $a^3$) are shown as well. The quantities $T,T^+,\Theta_1$ and $a$ are defined below. Note that $a\asymp\log N$.}
\label{fig:bbbm}
\end{figure}

\subsubsection{The parameters and basic definitions}
\label{sec:BBBM_parameters}

As in the introduction, we fix a reproduction law $(q(k))_{k\ge0}$ with $m_2 = \sum k(k-1)q(k) < \infty$ and set $m=\sum (k-1)q(k)$ and $\beta = 1/(2m)$. We further introduce several parameters which will be used during the rest of the paper. The two most important parameters are $a$ and $A$, which are both large positive constants. As indicated in the introduction, the parameter $a$ is the width of an interval the particles stay in most of the time. The parameter $A$ has a more subtle meaning: it controls the number of particles of the system and with it the intensity at which particles hit the right border of the interval. Below, we will indeed choose the initial conditions such that the number of particles is approximately $2\pi e^{A+a}/a^3$. We set
\begin{equation}
 \label{eq:mu}
\mu = \sqrt{1 - \frac{\pi^2}{a^2}},
\end{equation}
so that the following estimate holds:
\begin{equation}
 \label{eq:c0_mu}
 0\le 1 - \mu \le \frac{\pi^2}{2\mu a^2}.
\end{equation}
The following functions are used throughout the article:
\begin{equation}
\label{eq:def_wZY}
w_Z(x) = ae^{\mu (x-a)}\sin(\pi x/a)\Ind_{(x\in[0,a])},\qquad w_Y(x) = e^{\mu(x-a)}.
\end{equation}
Note that $w_Z(x)\le C$ and $w_Y(x)\le 1$ for all $x\in[0,a]$.

Let  $f:\R_+\to\R$ be continuous with $f(0) = 0$. During the whole article, except in the appendix, we denote by $\P^x_f$ the law of the branching Markov process where particles move according to the law of $(X_t-\mu t-f(t/a^2))_{t\ge0}$ for a standard Brownian motion $(X_t)_{t\ge0}$ and reproduce at rate $\beta$ according to the law $(q(k))_{k\ge0}$, starting from a single particle at $x\in\R$. A detailed definition of branching Markov processes and the space of marked trees they live in is given in Section~\ref{sec:preliminaries_definition}. We also denote by $\P_f^{(x,t)}$ and $\P_f^\nu$ the corresponding laws starting from a single particle at the space-time point $(x,t)$ or from a collection of particles given by a counting measure $\nu$, respectively. Expectation with respect to these laws will be denoted by $\E_f^x$, $\E_f^{(x,t)}$ and  $\E_f^\nu$, respectively. Most of the time, we will use these notations with $f\equiv 0$, in which case we omit the subscript.

When we study the system for large $A$ and $a$, we first let $a$ go to infinity, then $A$. Thus, the statement ``For large $A$ and $a$ we have\ldots'' means: ``There exist $A_0$ and a function $a_0(A)$, both depending on the reproduction law $q$ only, such that for $A \ge A_0$ and $a \ge a_0(A)$ we have\ldots''. Likewise, the statement ``As $A$ and $a$ go to infinity\ldots'' means ``For all $A$ there exists $a_0(A)$, depending on the reproduction law $q$ only, such that as $A$ goes to infinity and $a\ge a_0(A)$\ldots''. These phrases will become so common that in Sections~\ref{sec:BBBM} to~\ref{sec:Bsharp} they will often be used implicitly, although they will always be explicitly stated in the theorems, propositions, lemmas etc. We further introduce the symbols $o(1)$ and $o_a(1)$, which stand for a (non-random) term that only depends on the reproduction law $q$ and the parameters $A$, $a$, $\ep$, $\eta$, $y$ and $\zeta$ (defined below) and which goes to $0$ as $A$ and $a$ go to infinity ($o(1)$), or as $a$ 
goes to infinity and $A$ is fixed ($o_a(1)$).

The remaining parameters we introduce all depend on $A$, but not on $a$. First of all, there is the small parameter $\ep$, which controls the intensity of the breakouts. The mean time between two breakouts will in fact be approximately proportional to $\ep a^3$. We expect that one could choose $\ep$ such that $e^{-A/2} \ll \ep \ll A^{-1}$, but for technical reasons we will require that
\begin{align}
 \label{eq:ep_upper}
 \ep &\le A^{-17},\quad \tand\\
 \label{eq:ep_lower}
 \ep &\ge e^{-A/6}.
\end{align}
Another protagonist is $\eta$, which we choose depending on $\ep$ and $A$ and which will be used to bound the probability of very improbable events, as well as some errors. We require that
\begin{equation}
 \label{eq:eta}
 \eta \le e^{-2A},
\end{equation}
which, by \eqref{eq:ep_lower}, implies
\begin{equation}
 \label{eq:eta_ep}
\eta \le \ep^{12}.
\end{equation}
The last parameters are $y$ and $\zeta$, which only depend on $\eta$ and on the reproduction law, and are always supposed to be as large as necessary (it will be enough to assume that $y\ge \eta^{-1}$ and that $y$ and $\zeta$ are chosen in such a way that Lemma~\ref{lem:ZYW} holds). Note that the parameters $\eta$, $y$ and $\zeta$ appeared already in \cite{Berestycki2010} and had the same meaning there.

\subsubsection{The breakout}
\label{sec:BBBM_breakout}

Now suppose we start with a single particle at the point $a$. Let $\mathscr S$ be the stopping line (see Section~\ref{sec:stopping_lines}) consisting of the particles stopped at the space-time line $a-y + (1-\mu)t$, i.e., $\mathscr S = \mathscr L_H$, where $H(X) = \inf\{t: X_t = a-y+(1-\mu)t\}$ for a path $X = (X_t)_{t\ge0}$ starting at $X(0)=a$. It is well-known that $\#\mathscr S < \infty$, $\P^a$-almost surely \cite{Kesten1978}. Recalling that $X_u(t)$ denotes the position at time $t$ of the individual $u$, we define $Z = \sum_{(u,t)\in\mathscr S} w_Z(X_u(t))$, $Y = \sum_{(u,t)\in\mathscr S} w_Y(X_u(t))$ and $W_y = ye^{- y}\#\mathscr S$. As in \cite{Berestycki2010}, the quantity $Z$ is used to estimate the number of particles after a relaxation time of order $a^2$ and the quantity $Y$ is used to bound error terms. Furthermore, define $\sigma_{\mathrm{max}} = \max\{t:(u,t)\in \mathscr S\}$. 

The event of a \emph{breakout} is now defined by
\begin{equation}
 \label{eq:def_B}
B = \{Z > \ep e^A\}\cup  \{\sigma_{\mathrm{max}} > \zeta\}.
\end{equation}
Inclusion of the event $\{\sigma_{\mathrm{max}} > \zeta\}$ is for technical reasons. We also set $p_B = \P^a(B)$. 

\subsubsection{The tiers}
\label{sec:BBBM_tiers}

Having defined the event of a breakout, we would now like to define the (random) time of the first breakout. This leads to the question of how to handle the particles which hit the right barrier but do not cause a breakout. It will turn out that these particles have a non-negligible contribution, so that we cannot simply ignore them. Instead, we classify the particles into \emph{tiers}. Particles that have never hit the point $a$ form the particles of tier 0. As soon as a particle hits $a$ (at time $\tau$, say) it advances to tier 1. Its descendants then belong to tier 1 as well, but whenever a descendant hits $a$ and has an ancestor which has hit the space-time line (the ``critical line'') $a-y+(1-\mu)(t-\tau)$ after time $\tau$, it advances to tier 2 and so on. Whenever a particle advances to the next tier, it has a chance to break out.

\begin{figure}[h]
 \centering
\begin{tabular}{cc}
\executeiffilenewer{tiers.svg}{tiers.pdf}%
{inkscape -z -D --file=tiers.svg %
--export-pdf=tiers.pdf --export-latex --export-area-drawing}%
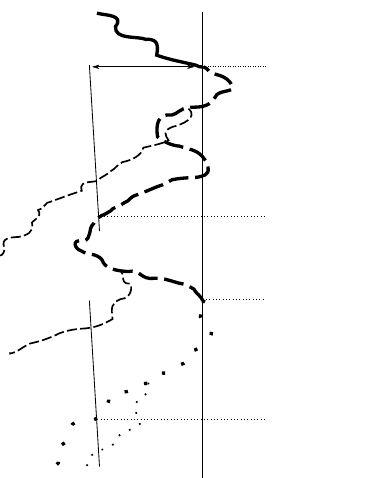%

\end{tabular}
\caption{A graphical view of the \emph{tiers}. The trajectory of one particle $u$ and its ancestors is singled out (thick lines) and the times $\tau_l(u)$ and $\sigma_l(u)$ are shown. The tier~0 trajectories are drawn with straight lines, the tier~1 trajectories with dashed and the tier~2 trajectories with dotted lines.}
\label{fig:tiers}
\end{figure}

We now define a set of stopping lines which implement this ``tier structure''. As introduced in Section \ref{sec:preliminaries_definition}, $U$ denotes the space of individuals and $\mathscr A(t)\subset U$ the set of individuals alive at time~$t$. We define the subset of particles which have not yet hit the origin:
\[
\mathscr A_0(t) = \{u\in \mathscr A(t): H_0(X_u) > t\},
\]
where $H_0$ is the hitting time functional of 0. For each $u\in U$, we now define two sequences of random times $(\tau_l(u))_{l\ge-1}$ and $(\sigma_l(u))_{l\ge0}$ by $\tau_{-1}(u) = 0$, $\sigma_0(u) = 0$ and for $l\ge 0$:
\begin{equation*}
\begin{split}
 \tau_l(u) &= \inf\{t\ge \sigma_l(u):X_u(t) = a\},\\
 \sigma_{l+1}(u) &= \inf\{t\ge \tau_l(u):X_u(t)=a-y+(1-\mu)(t-\tau_l(u))\},
\end{split}
\end{equation*}
where we set $\inf \emptyset = \infty$. See Figure~\ref{fig:tiers} for a graphical description. 
We then define for $t\in\R_+\cup\{\infty\}$ the stopping lines
\begin{align*}
\mathscr R_t^{(l)} &= \{(u,s)\in U\times\R_+: u\in\mathscr A_0(s) \tand s=\tau_l(u)\le t\},\ l \ge -1,\\
\mathscr S_t^{(l)} &= \{(u,s)\in U\times\R_+: u\in\mathscr A_0(s) \tand s=\sigma_l(u)\tand \tau_{l-1}(u) \le t\},\ l\ge 0,
\end{align*}
and set 
$\mathscr R_\infty/\mathscr R_t/\mathscr S_\infty/\mathscr S_t = \bigcup_{l\ge 0} \mathscr R_\infty^{(l)}/\mathscr R^{(l)}_t/\mathscr S_\infty^{(l)}/\mathscr S^{(l)}_t$. That means, $\mathscr R_\infty^{(l)}$ contains the particles which advance from tier $l$ to tier $l+1$ at the moment they hit the right barrier (for $l\ge0$) and $\mathscr S_\infty^{(l)}$ contains the particles of tier $l$ at the moment at which they come back to the critical line. Note that $\mathscr R^{(l-1)}_t \preceq \mathscr S^{(l)}_t \preceq \mathscr R^{(l)}_t$ for every $l\ge0$ and $t\ge 0$ (the relation ``$\preceq$'' for stopping lines is defined in Section~\ref{sec:stopping_lines}). 

Since every pair $(u,t)\in \mathscr R_\infty$ represents a particle located at $a$ and having a chance to break out, we denote by $\mathscr S^{(u,t)}$, $\sigma_{\mathrm{max}}^{(u,t)}$, $B^{(u,t)}$, $Z^{(u,t)}$, $Y^{(u,t)}$ and $W_y^{(u,t)}$ the corresponding objects from Section~\ref{sec:BBBM_breakout} (with $\sigma_{\mathrm{max}}^{(u,t)} = \max\{s-t:(u,s)\in \mathscr S^{(u,t)}\}$).

We can now define the time of the first breakout. For $l\in\N$, define the stopping times
\begin{equation*}
T^{(l)} = \inf\Big\{t\ge0: \sum_{(u,s)\in\mathscr R^{(l)}_t} \Ind_{B^{(u,s)}} = 1\Big\},\quad T = \min_{j\ge 0} T^{(j)},\quad T^{(0;l)} = \min_{0\le j\le l} T^{(j)}.
\end{equation*}
Then, $T$ is the time of the first breakout and $T^{(l)}$ is the time of the first breakout of a particle from tier $l$. We denote by $\mathscr U$ the individual that causes the first breakout, i.e.\ the almost surely unique individual s.t.\ $(\mathscr U,T)\in\mathscr R_\infty$. This individual is called the \emph{fugitive}.

\begin{remark}
\label{rem:hat_f}
  Note that all of these definitions
  also make sense for BBM with varying drift given by a continuous function $f$ as described in Section~\ref{sec:BBBM_parameters}. However, in this case we change the definition of the critical line to be the line $a-y+(1-\mu)(t-\tau)-(f(t/a^2)-f(\tau/a^2))$. The number of particles absorbed at this line, starting with a single particle at the space-time point $(a,\tau)$, then has the same law for every  $f$.
\end{remark}

\subsubsection{B-BBM: definition}
\label{sec:BBBM_definition_definition}

We call a function $f:\R_+\to\R$ a \emph{barrier function} if it is continuous, $f(0)=0$, $f(t)-f(s) \ge -1$ for all $s<t$ and the left-derivative $f'$ exists everywhere and is of bounded variation.
For such a function, we define
\begin{equation}
\label{eq:def_f_norm}
 \|f\| = \max\Big\{\|f\|_\infty, \|f'\|_\infty,\int_0^\infty [f'(s)]^2\,\dd s,\int_0^\infty |\dd f'(s)|\Big\},
\end{equation}
where $\|\cdot\|_\infty$ is the usual supremum norm. 

We are almost ready to define the B-BBM, we only need two more ingredients. The first is a family $(f_x)_{x\in\R}$ of functions $f_x \in \mathscr C^2(\R,\R)$, measurable in $x$, such that,
\begin{enumerate}[nolistsep]
 \item For all $x\in\R$, $f_x(t) = 0$ for $t\le 0$ and $\lim_{t\to\infty} f_x(t) = x$.
 \item For all $x\in\R$ and $t\ge 1$, $|f_x(t)-x| \le C|x|/t$.
 \item The function $x\mapsto \|f_x\|$ is uniformly bounded on compact sets.
 \item For all $x\ge -1$, $f_x(t) - f_x(s) \ge -1$ for all $0\le s\le t$.
\end{enumerate}
For all $x\ge -1$, the function $f_x$ is then a barrier function. The family $(f_x)_{x\in\R}$ will determine the shape of the random barrier in the B-BBM after a breakout. As an example of a family of functions with the above properties, consider the following one ($\thbar(t)$ is defined in \eqref{eq:thbar}):
 \begin{equation}
   \label{eq:wall_fn}
 f_x(t) = \log\left(1+(e^{ x}-1)\thbar(t)\right)\text{ for $t\ge 0$,}\quad f_x(t) = 0\text{ for $t<0$.}
 \end{equation}
As will be seen later, with this choice the number of particles in the B-BBM stays roughly constant over time. It is therefore used for the \Bfl- and \Bsh-BBM below.

The second ingredient is  a (possibly random) finite counting measure  $\nu_0 = \nu_0^{A,a}$ for every $A$ and $a$, which represents the initial configuration of particles. Apart from the fact that we require $\supp \nu_0 \subset [0,a]$ almost surely, this measure is arbitrary for now, but later we will choose it in such a way that $Z'_0 \approx e^A$, where
\[
 Z'_t = \sum_{u\in\mathscr A_0(t)} w_Z(X_u(t)).
\]
with $w_Z$ from \eqref{eq:def_wZY}. $Z_t'$ is an important quantity, which measures for example the number of particles at a future time $t+t'$ with $a^2\ll t' \ll a^3$ (see Section~\ref{sec:interval_number}), or the number of particles which hit the right barrier between $t$ and a future time $t+t''$ with $t'' \ll a^3$ (see Section~\ref{sec:right_border}). 

The B-BBM is now formally a branching Markov process with reproduction law $(q(k))_{k\ge0}$, branching rate $\beta$,  initial configuration of particles $\nu_0$ and where particles move according to Brownian motion with a random, time-dependent drift $-\mu_t$ (defined below) and are absorbed (i.e.\ killed) at the origin. We will denote its law (on the space of marked trees from Section~\ref{sec:preliminaries_definition}) by $\PB$ and expectation w.r.t.\ this law by $\EB$. It remains to define $\mu_t$; for this we define for each $n\in\N$ a stopping time $\Theta_n$, with $0=\Theta_0\le\Theta_1\le\cdots$, and for each $n\in\N^*$ a process $(X^{[n]}_t)_{t\in [\Theta_{n-1},\Theta_n]}$, such that $\mu_t = \mu + (\dd/\dd t) X^{[n]}_t$ for each $t\in [\Theta_{n-1},\Theta_n]$. $\Theta_n$ and $X^{[n]}$ are defined as follows:
\begin{enumerate}[nolistsep]
 \item Initially, the drift is constant, i.e.\ $\mu_t=\mu$, until the time $T$ of the first breakout. We set accordingly $X^{[1]}_t = 0$ for $t\in [0,T]$.
 \item After the breakout, we modify the drift: Define the stopping times $T^+ = T+\sigma_{\mathrm{max}}^{(\mathscr U,T)}$ and  $\Theta_1 = (T + e^Aa^2)\vee T^+$. We then set 
 \[
X^{[1]}_t = X^{[1]}_{\Theta_0} + f_{\Delta}\Big(\frac{t-T^+}{a^2}\Big) \tand \mu_t = \mu + \frac{\dd}{\dd t} X^{[1]}_t,\quad t\in[\Theta_0,\Theta_1].
\]
(In particular, $X^{[1]}_t = 0$ for $t\in[0,T^+]$). Here, $\Delta$ is an $\F_{T^+}$-measurable random variable whose exact definition will be provided later in \eqref{eq:Delta_def}. Morally, we have $\Delta \approx \log ( e^{-A} Z_{T^+}' )$, so that $Z'_{\Theta_n} \approx e^A$ w.h.p., as we will see later.
\item Having defined $T_1=T$, $T^+_1=T^+$, $\Theta_1$ and $X^{[1]}$, we define $T_2$, $T^+_2$, $\Theta_2$ and $X^{[2]}$ by  repeating the above steps, but starting at time $\Theta_1$ and with $X^{[2]}_{\Theta_1} = X^{[1]}_{\Theta_1}$. 
\end{enumerate}
We construct the \emph{barrier process} $(X^{[\infty]}_t)_{t\ge0}$ by $X^{[\infty]}_t = X^{[n]}_t$, if $t\in [\Theta_{n-1},\Theta_n]$.

\begin{remark}
 \label{rem:prob_space}
 One might remark that for the definition of the B-BBM, instead of introducing the law $\PB$, we could have stuck with the law $\P$ and instead introduced new random variables. For example, we could have defined $X^{\mathrm{B}}_u(t) = X_u(t) - X^{[\infty]}_t$ as well as variants of all other random variables (for example of the random times $\tau_n(u)$ and $\sigma_n(u)$) referring to $X^B_u(t)$ instead of $X_u(t)$. However, since below we define two more processes, the \Bfl- and the \Bsh-BBM, we would have had to do the same for these, leading to a zoo of different random variables. Introducing new laws is more economical in terms of notation.
 
 Finally, since the law of the process until time $T^+$ is the same under $\PB$ and $\P$, we can and will often use $\P$ instead of $\PB$ when considering the process up to this time.
\end{remark}

\subsubsection{B-BBM: results}
\label{sec:BBBM_results}

Theorem~\ref{th:barrier} and Theorem~\ref{th:barrier2} below are our main results on the B-BBM. Note that these results are not used directly in the proof of Theorem~\ref{th:1}, their proofs rather serve as blueprints for the proof of the corresponding result for the \Bfl-BBM and \Bsh-BBM (Theorem~\ref{th:Bflat_Bsharp}).

Define the predicate\\
\begin{tabular}{lp{\textwidth-2cm}}
 (H$_\perp$) & $\nu_0$ is obtained from $\lfloor 2\pi  e^Aa^{-3}e^{\mu a} \rfloor$ particles distributed independently according to the density proportional to $\sin(\pi x/a)e^{-\mu x}\Ind_{(0,a)}(x)$.
\end{tabular}\\
Furthermore, if for some $n>0$, $(t^{A,a}_j)_{j=1}^n\in\R_+^n$ for all $A$ and $a$, then define the predicate
\begin{tabular}{lp{\textwidth-2cm}}
 (Ht) & There exists $0\le t_1 < \dots < t_n$, such that for all $j\in\{1,\ldots,n\}$, $t^{A,a}_j\to t_j$ as $A$ and $a$ go to infinity, with $t_1^{A,a}\equiv 0$ if $t_1=0$.
\end{tabular}

\begin{theorem}
 \label{th:barrier}
Suppose (H$_\perp$). Let $n\ge 1$ and $(t^{A,a}_j)_{j=1}^n\in\R_+^n$ such that (Ht) is verified. Define the process
\[
(X_t)_{t\ge 0} = (X^{[\infty]}_{a^3t} -  \pi^2 A t)_{t\ge 0}.
\]
Then, as $A$ and $a$ go to infinity,  under $\PB$, the law of the vector $(X_{t^{A,a}_j})_{j=1}^n$ converges to the law of $(L_{t_j})_{j=1}^n$, where $(L_t)_{t\ge0}$ is the L\'evy process from Theorem~\ref{th:1} but with the constant $c = \pi^2(\const{eq:W_expec} + \const{eq:c_log} - \log \pi)$.
\end{theorem}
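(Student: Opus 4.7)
The plan is to reduce the analysis of $X^{[\infty]}_{a^3t}$ to a triangular array of essentially i.i.d.\ increments $(\Theta_n - \Theta_{n-1}, \Delta_n)_{n \ge 1}$, one per breakout, and then invoke classical row-sum convergence theorems to identify the limit as a Lévy process. The first step is a Markov-renewal reduction: by construction the times $\Theta_n$ mark the ends of successive relaxation windows of length at least $e^A a^2$, and the spectral picture of Brownian motion with drift $-\mu$ killed outside $[0,a]$ (whose principal eigenfunction is proportional to $\sin(\pi x/a)e^{-\mu x}$, i.e.\ exactly the density in (H$_\perp$)), together with first- and second-moment estimates, should give that the configuration at each $\Theta_n$ is, up to negligible errors, again distributed as in (H$_\perp$). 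Combined with the strong Markov property this shows that the pairs $(\Theta_n - \Theta_{n-1}, \Delta_n)$ are asymptotically i.i.d.\ with a common limit law to be identified.

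The second step, which I expect to be the main obstacle, is the analysis of a single piece. Starting from $\nu_0$ as in (H$_\perp$), I would argue using the tier decomposition of Section~\ref{sec:BBBM_tiers} and first/second moment methods: (i) particles hit the right barrier $a$ at a rate that is essentially constant on the time scale $\ep a^3$, so by a thinning with probability $p_B = \pi(1+o(1))/(\ep e^A)$ per hit, the first breakout time $T/a^3$ is asymptotically exponential of mean $\ep/\pi$; (ii) conditionally on causing a breakout, the escape charge $Z^{(\mathscr U, T)}/(\ep e^A)$ converges in law to a Pareto-tailed variable $W$ with $P(W > x) \sim 1/x$, of the Yaglom type analysed in~\cite{Berestycki2010}; and (iii) by construction $\Delta \approx \log(1 + \pi\widetilde{\ep}\cdot Z^{(\mathscr U,T)})$ with $\widetilde{\ep} = (\ep/\pi^2)(1+o(1))$, while the deterministic relaxation $f_\Delta((t-T^+)/a^2)$ differs from $\Delta$ by $O(\Delta/s)$ after time $sa^2$, and thus contributes an instantaneous jump on the $a^3 t$ scale. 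One also needs asymptotic independence between the waiting time $T$ and the shift $\Delta$, which should follow from the fact that conditioning on the breakout event $B$ affects $Z^{(\mathscr U,T)}$ but, up to $o(1)$, not the exponential waiting time.

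Having these ingredients in hand, the jump intensity per unit of rescaled time converges for every $y>0$ to $\pi^2\Lambda((y,\infty)) = \pi^2/(e^y - 1)$, matching the Lévy measure of Theorem~\ref{th:1}. For the drift, a careful expansion of $\E[\Delta \mid B]$ using (ii)--(iii) should yield an asymptotic of the form $A + \const{eq:W_expec} + \const{eq:c_log} - \log\pi + o(1)$, where $\const{eq:W_expec}$ captures the finite part of $\E[\log W]$ against the Yaglom law and $\const{eq:c_log}$ is the correction coming from the relaxation profile $f_x$. Subtracting $\pi^2 A t$ and the standard Lévy compensator $\pi^2\int_0^1 y\,\Lambda(\dd y)\,t$ from the cumulative expected jump then leaves precisely the constant $c = \pi^2(\const{eq:W_expec} + \const{eq:c_log} - \log\pi)$ stated in Theorem~\ref{th:barrier}.

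Finally, convergence in finite-dimensional distributions is essentially automatic from the convergence of the compensated Lévy measure together with the Markov-renewal reduction: for fixed $0 \le t_1 < \cdots < t_n$ one can split the process into the asymptotically independent pieces over $(t_{j-1}, t_j]$ and check the joint characteristic function piece-by-piece by a standard triangular-array criterion. The main technical difficulty, as stressed, is Step~2: one needs the joint law of $(T, Z^{(\mathscr U,T)})$ under $\P^{\nu_0}$ controlled uniformly enough to extract both the large-$y$ tail (fixing the Lévy measure) and the $O(1)$ centering correction (fixing $c$), and one needs to argue that the non-fugitive particles at time $T^+$ remain close enough to the meta-stable profile for the next piece to restart in the same regime. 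These estimates, developed via the tier decomposition and the first/second moment calculations of Section~\ref{sec:BBBM}, will likely constitute the bulk of the work.
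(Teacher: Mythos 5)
You correctly identify the overall architecture (split $X^{[\infty]}$ into pieces between consecutive breakout times $\Theta_n$, show the waiting time per piece is asymptotically exponential with the shift $\Delta$ heavy-tailed, and assemble into a L\'evy process), but two of the steps as written do not go through. First, the claim that ``the configuration at each $\Theta_n$ is, up to negligible errors, again distributed as in (H$_\perp$)'' is both stronger than what is needed and, in any literal sense, false: the particles alive at $\Theta_n$ are correlated through their shared genealogy, whereas (H$_\perp$) posits i.i.d.\ particles, and no such distributional reset is proved in the paper (the large-$N$ behaviour of the configuration seen from the left-most particle is explicitly flagged as open). What the paper actually propagates across breakouts is only the pair of scalar statistics $(Z'_{\Theta_n},Y'_{\Theta_n})$ together with the support condition, encoded in the good events $G_n$ of Proposition~\ref{prop:piece}; every estimate of Section~\ref{sec:BBBM} is carried out under the far weaker hypothesis (HB$_0$), and re-establishing $G_{n+1}$ from $G_n$ through Lemmas~\ref{lem:GDelta}, \ref{lem:Gnbab}, \ref{lem:G1} is the technical heart of the argument. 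Your plan has no viable substitute for this step.

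Second, the approximation $\Delta\approx\log(1+\pi\widetilde\ep\, Z^{(\mathscr U,T)})$ has the wrong normalisation: by \eqref{eq:Delta_def}, on $G_\Delta$ one has $\Delta\approx\log\bigl(1+e^{-A}Z^{(\mathscr U,T)}\bigr)$, and since $e^{-A}=\pi p_B\widetilde\ep$ the proposed prefactor $\pi\widetilde\ep$ is too large by $p_B^{-1}\sim\ep e^A/\pi$. Taken literally your formula would make $\Delta\to\infty$ on the breakout event (where $Z^{(\mathscr U,T)}>\ep e^A$, hence $\pi\widetilde\ep Z^{(\mathscr U,T)}\geq \ep^2 e^A/\pi\gg1$) and give an infinite jump rate for every $y>0$; the L\'evy measure you state is correct, but it does not follow from this $\Delta$. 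Finally, the passage from breakout-count to real time is not as automatic as ``split over $(t_{j-1},t_j]$ and apply a triangular-array criterion'': the $\Theta_n$ are random, and the paper handles this via a coupling of $(\Theta_n)$ with a Poisson process, a Skorokhod-topology convergence (Theorem~\ref{th:barrier2}), and the separate estimate \eqref{eq:950} that $t_ia^3$ avoids the relaxation windows. That part would need to be supplied explicitly, though it is less serious than the first two gaps.
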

A stronger convergence than convergence in the sense of finite-dimensional distributions is convergence in law with respect to Skorokhod's ($J_1$-)topology (see \cite[Chapter~3]{Ethier1986}). The convergence in Theorem~\ref{th:barrier} does not hold in this stronger sense, because the barrier is continuous but the L\'evy process is not and the set of continuous functions is closed in Skorokhod's $J_1$-topology
. However, if we create artificial jumps, we can rectify this:

\begin{theorem}
 \label{th:barrier2}
Suppose (H$_\perp$). Define $J_t = X^{[\infty]}_{\Theta_n}$, if $t\in [\Theta_n,\Theta_{n+1})$, for $n\in\N$. Then as $A$ and $a$ go to infinity, under $\PB$, the process $(X'_t)_{t\ge 0} = (J_{a^3t}-\pi^2 At)_{t\ge 0}$ converges in law with respect to Skorokhod's topology to the L\'evy process defined in the statement of Theorem~\ref{th:barrier}.
\end{theorem}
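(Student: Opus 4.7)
The plan is to upgrade the finite-dimensional convergence already supplied by Theorem~\ref{th:barrier} to convergence in Skorokhod's $J_1$-topology, by first matching the finite-dimensional limits of $X'$ and $X$, and then verifying tightness.

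For the finite-dimensional step, I would observe that $J_t = X^{[\infty]}_t$ except when $t$ lies in a relaxation interval $[T_n^+,\Theta_n]$; by definition $\Theta_n - T_n^+ \le e^A a^2$, so after time-rescaling by $a^3$ each such interval has length at most $e^A/a$, which tends to $0$ for fixed $A$ as $a\to\infty$. Since the proof of Theorem~\ref{th:barrier} establishes that the rescaled breakout times form an asymptotically locally finite point process, the probability that any fixed time $t>0$ falls into such an interval vanishes. Hence $X'_{t^{A,a}_j}$ and $X_{t^{A,a}_j}$ coincide with probability tending to $1$ at any deterministic $t^{A,a}_j$, and share the same limiting law by Theorem~\ref{th:barrier}.

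For tightness, I would exploit the fact that $X'$ is a piecewise-affine càdlàg process with slope $-\pi^2 A$ and jumps at the times $\Theta_n/a^3$ of size approximately $\Delta_n$ (with a discrepancy of order $|\Delta_n|/e^A$ coming from property~2 of the family $(f_x)$). The plan is to apply Aldous's criterion: for every family of stopping times $\tau_a \le T$ and deterministic $\delta_a \to 0$, one needs $\P(|X'_{\tau_a+\delta_a} - X'_{\tau_a}| > \eta) \to 0$. The drift contribution is $O(\delta_a)$, so the argument reduces to controlling the sum of jump sizes in the window $(\tau_a,\tau_a+\delta_a]$. I would split this sum into (i) large jumps, controlled by showing that the number of breakouts with $\Delta_n > c$ in any compact rescaled window has bounded intensity uniformly in $a$ (this is essentially the Poisson-like breakout structure already built in the proof of Theorem~\ref{th:barrier}), and (ii) small jumps, whose contribution is handled by a second-moment bound matching $\pi^2\int_{0<x\le c} x^2 \Lambda(\dd x) \to 0$ as $c\to 0$. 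Together these yield Aldous's condition, hence $J_1$-tightness; any subsequential limit must then coincide with the Lévy process $L$ by the first step.

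The main obstacle will be proving the uniform (in $a$ and, after letting $a\to\infty$, in $A$) second-moment bound for the small-jump contribution. This requires transferring the tail estimates already developed for the random variable $W$ (with $\P(W>x)\sim 1/x$) into uniform control of $\E[(\Delta_n\wedge c)^2]$ under the pre-limit law of breakouts, using the heuristic relation $\Delta_n \approx \log(1+\pi e^{-A} W_n)$ that underlies the construction of the barrier process. Once this uniformity is in place, the remainder of the argument is the standard translation of convergence of characteristics into $J_1$-convergence for processes with asymptotically independent increments.
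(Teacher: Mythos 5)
The central difficulty here is not tightness but the conversion of the breakout-indexed information into statements at deterministic real times, and your proposal does not confront it. Your finite-dimensional step invokes Theorem~\ref{th:barrier}, but in the paper Theorem~\ref{th:barrier} is \emph{derived from} Theorem~\ref{th:barrier2} (the paper proves Theorem~\ref{th:barrier2} first precisely because ``working in Skorokhod's topology makes time changes easier to handle''), so as written your argument is circular. All that the core estimate, Proposition~\ref{prop:piece}, provides is the characteristic function of $X^{[\infty]}_{\Theta_n}$ indexed by the breakout count $n$; to say anything about $X'$ or $X$ at a deterministic time $t$ one must control where $a^3 t$ sits among the random times $\Theta_n$. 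The paper does this by (i) proving convergence (fdd \emph{and} $J_1$-tightness, both via Aldous applied to characteristic-function increments) for the auxiliary process $X''_t = X^{[\infty]}_{\Theta_{\lfloor t\widetilde{\ep}^{-1}\rfloor}} - \pi^2 A t$ in Lemma~\ref{lem:Xt_second}, and then (ii) coupling the increments $\Theta_n-\Theta_{n-1}$ with i.i.d.\ exponentials of mean $\widetilde{\ep}a^3$ (Corollary~\ref{cor:coupling_T}) so as to build explicit random time changes $\varphi_{A,a}$ with $\sup_{[0,M]}|\varphi_{A,a}(t)-t|\to 0$ and $\sup_{[0,M]}|X''_t - X'_{\varphi_{A,a}(t)}|\to 0$ in probability, which yields $J_1$-convergence of $X'$ by the Skorokhod representation theorem. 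This time-change/coupling machinery is the actual content of the proof, and your proposal presupposes its output.

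Your tightness plan is a genuinely different and in principle viable route: Aldous's criterion applied directly to $X'$, splitting the jumps at a level $c$ and using a second-moment bound for the small jumps. The moment estimates you worry about are indeed available from the paper's machinery (the $\Delta_{\mathrm{drift}}$ second moment is $O(A^2\ep^2)$ and the truncated jump part is handled by the computation of $\E^a[h_\rho(e^{-A}Z)]$ with $|h(x)|=O(1\wedge x^2)$), and the rate of jumps exceeding $c$ per unit rescaled time is of order $1/(e^c-1)$, uniformly. But even this route requires controlling the number of breakouts in a short real-time window after a stopping time, which again rests on the exponential coupling of Corollary~\ref{cor:coupling_T}; the paper's choice to verify Aldous for $X''$ instead, where the increment over $n$ breakouts has the explicit characteristic function $\exp(n\widetilde{\ep}(\kappa(\lambda)+\cdots))$ from Proposition~\ref{prop:piece}, avoids the large/small jump decomposition entirely. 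To repair your proposal you should replace the appeal to Theorem~\ref{th:barrier} by a direct argument along the lines of Lemma~\ref{lem:Xt_second} plus the Poisson coupling, at which point you would essentially be reproducing the paper's proof.
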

It will be technically convenient to prove Theorem~\ref{th:barrier2} first, because working in Skorokhod's topology makes time changes easier to handle. Theorem~\ref{th:barrier} then follows almost immediately. We also remark that the assumption (H$_\perp$) in the above theorems can be considerably weakened, in fact, the assumptions from Proposition~\ref{prop:piece} below would be enough.

\subsection{\texorpdfstring{\Bfl}{Bb}-BBM and \texorpdfstring{\Bsh}{B\#}-BBM: definition and statement of results}
\label{sec:Bflat_Bsharp_definition}

In this section, we introduce the two auxiliary processes \Bfl- and \Bsh-BBM, which will bound the $N$-BBM from below and from above, respectively. During the whole section, we fix $\delta \in (0,1/100)$. In the definition of the phrase ``as $A$ and $a$ go to infinity'', (see Section~\ref{sec:BBBM_parameters}), $A_0$ and the function $a_0(A)$ may now also depend on $\delta$. 

\begin{figure}[ht]
 \centering
\begin{tabular}{cc}
 \def\svgwidth{7cm} %
\executeiffilenewer{bflat.svg}{bflat.pdf}%
{inkscape -z -D --file=bflat.svg %
--export-pdf=bflat.pdf --export-latex --export-area-drawing}%
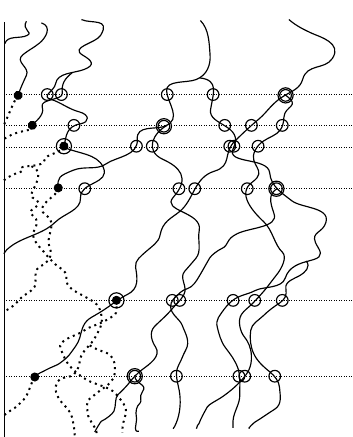%
 & 
 \def\svgwidth{7cm} %
\executeiffilenewer{bsharp.svg}{bsharp.pdf}%
{inkscape -z -D --file=bsharp.svg %
--export-pdf=bsharp.pdf --export-latex --export-area-drawing}%
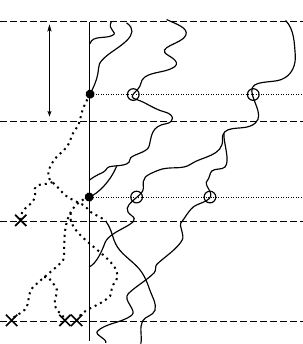%

\end{tabular}
 \caption{Left: The \Bfl-BBM with $N^\flat=6$ (no breakout is shown). White and red particles are drawn with solid and dotted lines, respectively. A blob indicates that a white particle is coloured red, the circles show the six white particles living at that time (thick circles correspond to two particles at the same position). Right: The \Bsh-BBM with $N^\sharp=3$ (no breakout is shown). White and blue particles are drawn with solid and dotted lines, respectively. A blob indicates that a white particles is coloured blue. A cross indicates that a blue particle is killed. $\widetilde K$ is a large constant.}
 \label{fig:bflat_bsharp}
\end{figure}

\subsubsection{\texorpdfstring{\Bfl}{Bb}-BBM: definition}
\label{sec:Bflat_definition}

Define $N^\flat = \lfloor 2\pi  e^{A+\delta}a^{-3}e^{\mu a}\rfloor$. The B$^\flat$-BBM is defined as follows: Given a possibly random initial configuration $\nu_0$ of particles in $(0,a)$, we let particles evolve according to B-BBM with the family of barrier functions given by \eqref{eq:wall_fn}. In addition, we colour the particles white and red as follows: Initially, all particles are coloured white. As soon as a white particle has $N^\flat$ or more white particles to its right, it is coloured red\footnote{This can be ambiguous if there are more than one of the particles at the same position, for example when the left-most particle branches. In order to eliminate this ambiguity, induce for every $t\ge0$ a total order on the particles in $\mathscr A(t)$ by $u<v$ iff $X_u(t) < X_v(t)$ or $X_u(t)=X_v(t)$ and $u$ precedes $v$ in the \emph{lexicographical order on $U$}. Whenever there are more than $N^\flat$ white particles, we then colour the particles in this order, which is well defined.}. Children inherit the 
colour 
of their parent. At each time $\Theta_n$, $n\ge1$, 
all the red particles are killed immediately and the process goes on with the remaining particles. We denote the empirical measure of the white particles at time $t$ by $\nu^\flat_t$. The law of the process is denoted by $\Pfl$, expectation w.r.t.\ this law by $\Efl$. See Figure~\ref{fig:bflat_bsharp} for a graphical description.

\subsubsection{\texorpdfstring{\Bsh}{B\#}-BBM: definition}
\label{sec:Bsharp_definition}
Define now $N^\sharp = \lfloor 2\pi  e^{A-\delta}a^{-3}e^{\mu a} \rfloor $. Further, define $K$ to be the smallest number, such that $K \ge 1$ and $E_K \le \delta/10$, where $E_t$ is defined in \eqref{eq:def_E}. The B$^\sharp$-BBM is defined as follows: Given a possibly random initial configuration $\nu_0$ of particles in $(0,a)$, we let particles evolve according to B-BBM with barrier function given by \eqref{eq:wall_fn} and with the following changes: Define $t_n = n(K+3)a^2$ and $I_n = [t_n,t_{n+1})$. Colour all initial particles white. When a white particle hits $0$ during the time interval $I_n$ and has at least $N^\sharp$ particles to its right, it is killed immediately. If less than $N^\sharp$ particles are to its right, it is coloured blue and survives until the time $t_{n+2}\wedge \Theta_1$, where all of its descendants \emph{to the left of~$0$} are killed and the remaining survive and are coloured white again. At the time $\Theta_1$, the process starts afresh. We denote the empirical measure of all particles (white and blue) at time $t$ by $\nu^\sharp_t$. The law of the process is denoted by $\Psh$, expectation w.r.t.\ this law by $\Esh$. See Figure~\ref{fig:bflat_bsharp} for a graphical description.

\subsubsection{Results}
\label{sec:Bflat_Bsharp_results}

\begin{theorem}
  \label{th:Bflat_Bsharp}
Theorems~\ref{th:barrier} and \ref{th:barrier2} still hold for the \Bfl- and \Bsh-BBM.
\end{theorem}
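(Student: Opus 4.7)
The plan is to derive Theorem~\ref{th:Bflat_Bsharp} from Theorems~\ref{th:barrier} and \ref{th:barrier2} by a coupling argument, showing that each of the \Bfl- and \Bsh-BBM is essentially a B-BBM as far as the barrier process $X^{[\infty]}$ is concerned. For \Bfl-BBM, the dynamics within each piece $[\Theta_{n-1},\Theta_n]$ coincide with those of the B-BBM (the colouring only affects the endpoint $\Theta_n$, where red particles are killed); hence the breakout time $T$, fugitive $\mathscr U$ and barrier shift $\Delta$ are defined and distributed identically given the same initial $\nu_0$. For \Bsh-BBM the dynamics within a piece differ only in that certain particles hitting $0$ are retained as blue and allowed to live for at most $(K+3)a^2$ additional time; morally the \Bsh-BBM is a B-BBM with extra, left-most particles. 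Since the proofs of Theorems~\ref{th:barrier}--\ref{th:barrier2} proceed by inductive piece-by-piece application of a general one-piece result (Proposition~\ref{prop:piece}), which requires essentially that $Z'_0\approx e^A$ and that the initial measure has a meta-stable shape, it suffices to verify that the starting configuration of each new piece in \Bfl- and \Bsh-BBM still satisfies those hypotheses.

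For \Bfl-BBM the red particles are by construction always the left-most among the white ones. Since the weight $w_Z(x)=a\sin(\pi x/a)e^{\mu(x-a)}$ vanishes linearly near $x=0$, removing a block of left-most particles changes $Z'_{\Theta_n}$ negligibly. I would make this quantitative by showing that with probability $1-o(1)$ the empirical density of white particles at $\Theta_n$ remains close to the meta-stable profile proportional to $\sin(\pi x/a)e^{-\mu x}$. The threshold $N^\flat=\lfloor 2\pi e^{A+\delta}a^{-3}e^{\mu a}\rfloor$ leaves a fixed multiplicative buffer $e^\delta$ above the typical count, so only a vanishing fraction of left-most particles is coloured red outside of the transient phase of a breakout; the tail of the meta-stable profile near $0$ further contributes only an $O(1/a)$ fraction of the total $w_Z$-mass. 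Together this ensures that $Z'_{\Theta_n}$ for \Bfl-BBM agrees with its B-BBM counterpart up to a factor $1+o(1)$, and Proposition~\ref{prop:piece} applies to the next piece.

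For \Bsh-BBM one needs to bound the effect of the surviving blue particles. They are born at $0$ and live at most $(K+3)a^2$. A direct first-moment computation for Brownian motion with drift $-\mu$ started at $0$ (using the killed-BM estimates of Section~\ref{sec:interval} of the appendix) shows that the expected $w_Z$-mass contributed by the descendants of one blue particle is exponentially small in $a$; after summation over the $O(e^A)$ blue births per piece this is $o(e^A)$. Similarly, the probability that any blue-descended particle ever reaches the right barrier $a$ within its short lifetime is exponentially small in $a$, since such a trajectory must travel distance $a$ against drift $\mu\approx 1$; so blue particles produce no spurious breakouts. Thus the hypotheses of Proposition~\ref{prop:piece} carry over to \Bsh-BBM as well.

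The main technical obstacle is controlling error accumulation across the $O(\ep^{-1})$ pieces appearing on the rescaled time scale $a^3 t$ of the statements. Since $\ep$ is polynomial in $A$ while the $e^{\pm\delta}$ slack built into $N^\flat$ and $N^\sharp$ is a fixed positive margin, a union bound over pieces still drives the total discrepancy to $0$ as $A,a\to\infty$. The trickiest instant is a breakout inside a piece of the \Bfl-BBM, during which the population can transiently shoot well above $N^\flat$ and create many red particles; however these reds all sit strictly to the left of the bulk produced by the fugitive, and are disposed of at the end of the piece before they can influence the subsequent evolution, so the preceding $w_Z$-based bound still applies.
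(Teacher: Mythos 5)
Your overall architecture is the same as the paper's: reduce the theorem to showing that the one-piece estimate (Proposition~\ref{prop:piece}) survives the colouring, i.e.\ that the good event at $\Theta_1$ still has probability $1-O(\ep^{1+\numcst})$ for \Bfl- and \Bsh-BBM, after which the Poisson coupling and the Skorokhod argument of Section~\ref{sec:BBBM_proofs} transfer verbatim (this is exactly how the paper concludes in Sections~\ref{sec:Bflat_main_results} and~\ref{sec:Bsharp_main_results}). The gaps are in the substance of that verification. For the \Bfl-BBM, ``reds are left-most and $w_Z$ vanishes at $0$'' is not enough: a white particle turns red whenever $N^\flat$ white particles are to its right, and during the population surge around a breakout this can happen to particles located anywhere in $(0,a)$ --- in particular to an ancestor of the fugitive. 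If the fugitive were red, the mass $Z^{(\mathscr U,T)}\ge\ep e^A$ that the shift $\Delta$ compensates would be deleted at $\Theta_1$ and $Z^{\mathrm{white}}_{\Theta_1}$ would be far from $e^A$. The paper therefore needs the spatial tail bound $\P(N_t(r)\ge N)\lesssim \ep(t/a^3+\eta)e^{-(2-\alpha)r}$ (Lemma~\ref{lem:Bflat_N}, resting on the sharp second moment of Lemma~\ref{lem:N_2ndmoment}), the resulting control of the red-creation rate (Lemma~\ref{lem:Nwtr}), and the statement that the fugitive is not red (Lemma~\ref{lem:fred}). Your claim that the reds created during a breakout ``sit strictly to the left of the bulk'' and ``are disposed of before they can influence the subsequent evolution'' is unsubstantiated: reds are only killed at $\Theta_1$, a piece lasts $\asymp\ep a^3$, and a red subpopulation born early has ample time to grow and even to cause the breakout.

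For the \Bsh-BBM your count of blue births is wrong by an enormous factor: it is not $O(e^A)$ per piece but of order $e^{\mu a}/a^2$ (of the $\asymp Na^2$ particles hitting the origin per interval $I_n$, a fraction $\asymp e^{-A}/a$ has fewer than $N$ particles to its right; see Lemmas~\ref{lem:Bsharp_left_border} and~\ref{lem:blue_particles}). Your conclusion about the $w_Z$-mass happens to survive, since each blue particle contributes only $O(ae^{-\mu a})$ in expectation, but the ``exponentially small probability, then union bound'' reasoning for spurious breakouts does not: the expected number of blue descendants reaching $a$ is only polynomially small in $1/a$. More importantly, descendants of a blue particle that are to the right of $0$ at the deadline $t_{n+2}$ are recoloured (grey) with \emph{no further lifetime restriction}, so they can turn blue again later or break out long after the ``short lifetime'' your argument covers; this self-referential structure is precisely what forces the inductive bookkeeping over the intervals $I_n$ in Lemmas~\ref{lem:B_kn}--\ref{lem:B_n} and the grey-particle events in Lemmas~\ref{lem:G_n0_tot}--\ref{lem:G_ex}, none of which your sketch anticipates. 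Finally, keep in mind that the per-piece failure probability must be $O(\ep^{1+\numcst})$, not merely $o(1)$, for the induction over the $\asymp\ep^{-1}$ pieces to close; the ``fixed $e^{\pm\delta}$ margin plus union bound'' remark does not by itself deliver that precision.
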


Recall the definitions of $\med^N_\alpha$ (Eq.\ \eqref{eq:def_med}), $x_\alpha$ (Theorem~\ref{th:1}) and (Ht) (Section~\ref{sec:BBBM_results}).
\begin{proposition}
\label{prop:Bflat_Bsharp_med}
Suppose (H$_\perp$) and let $(t^{A,a}_j)$ satisfy (Ht). Let $\alpha\in (0,e^{-2\delta})$. For large $A$ and $a$,
\begin{enumerate}[nolistsep]
 \item $\Pfl(\forall j: \med^{N^\flat}_\alpha(\nu^{\flat}_{a^3t^{A,a}_j}) \ge x_{\alpha e^{2\delta}})\to 1,$ and
 \item $\Psh(\forall j: \med^{N^\sharp}_\alpha(\nu^{\sharp}_{a^3t^{A,a}_j}) \le x_{\alpha e^{-2\delta}})\to 1.$
\end{enumerate}
\end{proposition}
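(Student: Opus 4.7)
\emph{Strategy.} By the definition of $\med^N_\alpha$, part (1) is equivalent to showing that, with probability tending to $1$, $\nu^\flat_{a^3 t^{A,a}_j}([x_{\alpha e^{2\delta}},\infty))\ge \alpha N^\flat$ for every $j$, and part (2) to the analogous strict upper bound $\nu^\sharp_{a^3 t^{A,a}_j}((x_{\alpha e^{-2\delta}},\infty))<\alpha N^\sharp$. The heart of the matter is that at a fixed rescaled time the empirical distribution of \Bfl- or \Bsh-BBM particles, viewed relative to the barrier, is asymptotically governed by the probability density $xe^{-x}$ on $(0,\infty)$, which is the pointwise limit of the normalised principal eigenfunction $\sin(\pi x/a)e^{-\mu x}$ of BM with drift $-\mu$ killed outside $(0,a)$, after rescaling mass by the appropriate factor of $a$; the total mass is asymptotic to $N_0:=\lfloor 2\pi e^A a^{-3}e^{\mu a}\rfloor$. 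By the defining relation $\int_{x_\alpha}^\infty z e^{-z}\,\dd z=\alpha$, the expected number of particles above the threshold $x_{\alpha e^{\pm 2\delta}}$ is then $(1+o(1))\,\alpha e^{\pm 2\delta}\, N_0$; since $N^\flat=e^{\delta}N_0(1+o(1))$ and $N^\sharp=e^{-\delta}N_0(1+o(1))$, the built-in multiplicative factor $e^{\pm\delta}$ provides precisely the slack needed to absorb all $o(1)$ errors and fluctuations.

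\emph{Plan.} The case $t_1=0$ follows immediately from (H$_\perp$) by the law of large numbers applied to the $N_0$ i.i.d.\ initial positions, combined with the asymptotic equivalence of the initial density to $xe^{-x}\,\dd x$ on $(0,\infty)$ and the $e^\delta$ safety margin. For each $t^{A,a}_j\to t_j>0$ I would proceed in three stages. \emph{First}, using Theorem~\ref{th:Bflat_Bsharp}, argue that with probability tending to $1$ the time $s_j:=a^3 t^{A,a}_j$ does not lie inside any relaxation window $[T_k,T_k+e^A a^2]$: on the $a^3$ time scale there are only finitely many breakouts in bounded intervals (w.h.p.), whereas the width of a relaxation window rescales to $e^A/a\to 0$ as $a\to\infty$ with $A$ fixed. \emph{Second}, on this ``metastable'' event, let $\tau_j$ denote the last breakout time before $s_j$, so that $s_j-\tau_j\gg a^2$ is much larger than the mixing time of BM in $[0,a]$; a first-moment computation via the many-to-one identity, combined with the eigenfunction decomposition of the killed semigroup (developed in Section~\ref{sec:before_breakout_particles} and the appendix), yields
\[
\E\bigl[\nu_{s_j}([y,\infty))\bigm|\F_{\tau_j}\bigr] = (1+o(1))\,N_0\int_y^\infty z e^{-z}\,\dd z
\]
uniformly for $y$ in compact subsets of $\R_+$, and a matching second-moment estimate, of the type used throughout Section~\ref{sec:BBBM}, gives concentration in probability. \emph{Third}, handle the colouring: for the \Bfl-BBM, the previous step applied at $y=0$ shows that the total particle count never exceeds $(1+o(1))N_0<N^\flat$ during the metastable phase, so w.h.p.\ no particle is ever coloured red and $\nu^\flat$ agrees with the underlying B-BBM empirical measure; for the \Bsh-BBM, the blue particles are born at $0$, live for at most $2(K+3)a^2$ units of time and move with drift $-\mu$, so a Gaussian tail estimate (reinforced by the choice of $K$ so that $E_K\le\delta/10$) bounds their contribution to $\nu^\sharp([y,\infty))$ for any fixed $y>0$ by $o(N_0)$. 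Plugging $y=x_{\alpha e^{\pm 2\delta}}$ and using the $e^\delta$ slack delivers both assertions; a union bound over $j\in\{1,\ldots,n\}$ concludes.

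\emph{Main obstacle.} The most delicate step is the control of the blue particles in the \Bsh-BBM: because a blue particle can branch before being relabelled or killed at $t_{n+2}$, its descendants may re-enter positive territory and perturb the upper tail of the empirical distribution. Rigorously bounding this perturbation is what forces the specific choice of the constant $K$ in the definition of the \Bsh-BBM (ensuring $E_K\le\delta/10$) and relies on the Brownian-taboo and spectral-decay estimates collected in Sections~\ref{sec:killed_bm} and~\ref{sec:taboo}. The analogous issue for the \Bfl-BBM is much milder and is resolved by the $e^\delta$ gap between the natural population size $N_0$ and the cap $N^\flat$.
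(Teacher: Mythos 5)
Your overall skeleton (reduce to a fixed observation time lying outside any relaxation window, then a first-/second-moment plus Chebychev argument for the uncoloured population, then control of the coloured particles) matches the paper's. But the treatment of the coloured particles — which is where essentially all the work in Sections~\ref{sec:Bflat} and~\ref{sec:Bsharp} goes — contains a genuine gap on the \Bfl\ side and an inadequate resolution on the \Bsh\ side.

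For the \Bfl-BBM you claim that the fixed-time concentration at $y=0$ shows the total population ``never exceeds $(1+o(1))N_0<N^\flat$ during the metastable phase, so w.h.p.\ no particle is ever coloured red.'' This is false. Concentration at a single time does not give uniform control over an interval of length $\asymp a^3$, and in fact red particles \emph{are} created in large numbers: the probability that $N_t(0)\ge N^\flat$ at a given time is only of order $\ep(t/a^3+\eta)$ (Lemma~\ref{lem:Bflat_N}), so integrating the creation rate over a time window of length $a^3$ gives an expected number of white-to-red transitions that is far from zero (Lemma~\ref{lem:Nwtr} with $r=0$). What saves the day is not the absence of red particles but the fact that they are created at the \emph{left} edge of the cloud, far below the threshold $\rho=x_{\alpha e^{2\delta}}$, and only few of their descendants ever reach $\rho$ by the observation time; quantifying this requires tracking the space-time measure of red-creation events and convolving it with Green-function estimates for the subsequent motion, which is the content of Lemma~\ref{lem:Nred}. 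Without such an argument the proof of part~(1) does not close. On the \Bsh\ side, a ``Gaussian tail estimate'' for the blue particles is not enough either: a blue particle's descendants are recoloured white at time $t_{n+2}\wedge\Theta_1$, can later hit $0$ again and turn blue again, so the population of blue/grey particles obeys a recursion over the intervals $I_n$; controlling it requires the inductive scheme of Lemmas~\ref{lem:B_hat}, \ref{lem:B_kn} and~\ref{lem:B_n} (expected number of blue creations per interval $\asymp e^{\mu a}/a^2$, propagated forward in time), followed by the explicit $I^a$-integral estimates for how many of their descendants sit above $\rho$ at the observation time. You correctly identify the blue particles as the main obstacle, but the mechanism you propose does not address the recursive regeneration, which is precisely why the paper needs the grey bookkeeping and the events $G^{\mathrm{tot}}_n$.
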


\begin{lemma}
  \label{lem:Csharp}
Define a variant called C$^\sharp$-BBM of the B$^\sharp$-BBM by killing blue particles only if there are at least $N^\sharp$ particles to their right. Then the parts of Theorem~\ref{th:Bflat_Bsharp} and Proposition~\ref{prop:Bflat_Bsharp_med} concerning the \Bsh-BBM hold for the C$^\sharp$-BBM as well.
\end{lemma}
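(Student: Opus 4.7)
The strategy is to adapt the proofs of Theorem~\ref{th:Bflat_Bsharp} and Proposition~\ref{prop:Bflat_Bsharp_med} for the \Bsh-BBM to the C$^\sharp$-BBM. The two processes differ only in the mechanism for removing blue particles: in the \Bsh-BBM, blue particles and their descendants are removed at the deterministic time $t_{n+2}\wedge\Theta_1$ (with descendants to the right of~$0$ turning white again), while in the C$^\sharp$-BBM each blue particle is killed only once at least $N^\sharp$ particles lie to its right. The white particles obey the same rule in both processes, so all differences in the dynamics are encoded in the blue particle lifetimes.

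I would proceed by coupling the two processes on a common probability space using identical Brownian motions and branching events, and by working on a \emph{good event} on which the blue particles and their descendants remain confined to a bounded neighborhood of the origin. On this good event the dominance counts used to colour or kill white particles are determined by the bulk of the white population, and the white particles of the C$^\sharp$-BBM coincide with those of the \Bsh-BBM up to negligible discrepancies. To establish the good event, one uses first-moment estimates in the spirit of Sections~\ref{sec:BBBM} and~\ref{sec:Bsharp}: a blue particle is a BBM with drift $-\mu$ started at the origin, and its descendants either return to $0$ (where they die in the C$^\sharp$-BBM by the white-particle rule) or quickly become dominated by the bulk; hence the expected number of blue descendants above any fixed level $L>0$ is $o(N^\sharp)$ uniformly on the relevant timescale.

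Once the white particle coupling is controlled, Theorem~\ref{th:Bflat_Bsharp} for the C$^\sharp$-BBM follows from the corresponding statement for the \Bsh-BBM because the barrier process $X^{[\infty]}$ depends only on the white particles. Similarly, Proposition~\ref{prop:Bflat_Bsharp_med} carries over because the level $x_{\alpha e^{-2\delta}}$ is a strictly positive constant for $\alpha\in(0,e^{-2\delta})$, and the blue particles, being concentrated near the origin, cannot contribute to the count of particles above this level. The principal obstacle is the feedback between the blue and white populations: blue particles themselves count towards the dominance threshold, so a blue particle at a positive height could flip the colour of a white particle hitting the origin and thereby perturb the entire subsequent evolution. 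I would handle this via a bootstrap between successive pieces $[\Theta_{n-1},\Theta_n]$: on each piece, conditional on the good event holding up to $\Theta_{n-1}$, one establishes the confinement of blue particles with probability $1-o(1)$, and sums the errors over the $O(1/\ep)$ pieces in the finite time horizon $t\cdot a^3$, using \eqref{eq:ep_lower} to ensure that the per-piece error can be made small enough.
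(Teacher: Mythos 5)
Your proposal misses the one-line argument that actually proves this lemma and replaces it with a coupling-plus-moment-estimate strategy that does not close. The paper's proof is simply that \emph{the B$^\sharp$-BBM and the C$^\sharp$-BBM coincide pathwise up to time $\Theta_n$ on the event $G^\sharp_n$}. Indeed, the only difference between the two processes is that at the deterministic times $t_{k+2}\wedge\Theta_1$ (measured from the start of each piece), when the B$^\sharp$-BBM unconditionally kills the descendants, to the left of $0$, of a blue particle created during $I_k$, the C$^\sharp$-BBM additionally requires the killed particle to have at least $N^\sharp$ particles to its right. But the definition of $G^\sharp_n$ contains the requirement $N^{\mathrm{tot}}_{\Theta_{n-1}+t_j}\ge N^\sharp$ for all relevant $j$: at those times there are at least $N^\sharp$ particles strictly to the right of the origin, hence at least $N^\sharp$ particles to the right of any particle at a nonpositive position, so the extra condition is vacuous and both processes kill exactly the same particles. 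Since $\Psh(G^\sharp_n)\to1$ by Proposition~\ref{prop:Bsharp}, every distributional statement about the B$^\sharp$-BBM transfers verbatim to the C$^\sharp$-BBM; no new estimate is needed.

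The genuine gap in your route is the feedback you yourself flag but do not resolve. Whether a white particle hitting $0$ is killed or coloured blue depends on the exact count of \emph{all} particles to its right, and blue particles at positive positions --- even ones ``confined to a bounded neighborhood of the origin'' --- contribute to that count. Confinement therefore does not imply that the two processes make the same colour/kill decisions; it only bounds the number of particles by which the counts may differ, which can still flip a decision at the threshold. Once a single decision diverges, the two processes are different point processes and ``negligible discrepancies'' is not a notion one can propagate through the selection mechanism by first-moment bounds. The bootstrap over pieces does not repair this, since it presupposes that the white populations agree at the start of each piece, which is precisely what an uncontrolled divergence within a piece would destroy.
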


\begin{remark}
The definitions of the \Bfl- and \Bsh-BBM are quite natural, except for the choice of the length of the intervals during which the blue particles do not feel any selection in \Bsh-BBM. There is actually not much choice for the length of this interval: If we increase its length, the blue particles would spawn too many descendants.
On the other hand, if we \emph{decreased} the length of the intervals $I_n$, then the errors due to the application of Lemma~\ref{lem:N_2ndmoment} (namely, due to the appearance of the $w_Y$ term in that lemma) would blow up. 

We remark that for deriving the above results on \Bfl- and \Bsh-BBM, Lemma~\ref{lem:N_2ndmoment} and the other results from  Section~\ref{sec:interval_number} play a pivotal role in several places, in particular in order to bound the number of white particles in \Bfl- and \Bsh-BBM that get coloured red or blue. In deriving Lemma~\ref{lem:N_2ndmoment}, we were actually quite lucky: Its bound is exactly the one that is needed for all errors to be negligible (as seen for example in the case of the \Bsh-BBM explained above). Note that the corresponding result in \cite{Berestycki2010} (Proposition~14) would not yield good enough bounds for $t \asymp \log^2 N$, neither can its proof be optimised to yield these bounds.

Finally, we remark that assumption (H$_\perp$) is really necessary for the proof of the above results, weakening it considerably seems to be difficult.
\end{remark}

\subsection{A monotone coupling between \texorpdfstring{$N$}{N}-BBM and more general particle systems}
\label{sec:coupling}

In this section, we establish a monotone coupling between the $N$-BBM and a class of slightly more general BBM with selection which includes the B$^\flat$-BBM and C$^\sharp$-BBM.

A \emph{selection mechanism} for branching Brownian motion is by definition a stopping line $\mathscr L$, which has the interpretation that if $(u,t)\in\mathscr L$, we think of $u$ being \emph{killed} at time $t$. The set of particles in the system at time $t$ then consists of all the particles $u\in\mathscr A(t)$ which do not have an ancestor which has been killed at a time $s\le t$, i.e.\ all the particles $u\in\mathscr A(t)$ with $\mathscr L \not\preceq (u,t)$.

Now suppose we have two systems of BBM with selection, the $N^+$-BBM and the
$N^-$-BBM, whose selection mechanisms satisfy the following rules.
\begin{enumerate}[nolistsep]
\item Only left-most particles are killed, i.e.\ if $(u,t)\in\mathscr L$, then $X_u(t)\le X_v(t)$ for all $v\in\mathscr A(t)$ with $\mathscr L \not\preceq (v,t)$.
\item $N^+$-BBM: Whenever a particle gets killed, there are at least $N$
  particles to its right (but not necessarily all the particles which have $N$ particles
  to their right get killed). $N^-$-BBM: Whenever at least $N$ particles are to
  the right of a particle, it gets killed (but possibly more particles get
  killed).
\end{enumerate}
Note that the $N$-BBM is both an $N^+$-BBM and an $N^-$-BBM.

Let $\nu^+_t$, $\nu^-_t$ and $\nu^N_t$ be the empirical measures of the
particles at time $t$ in $N^+$-BBM, $N^-$-BBM and $N$-BBM, respectively. On the space of finite counting measures on $\R$ we denote by $\preceq$ the usual stochastic ordering: For two counting measures $\nu_1$ and $\nu_2$, we write $\nu_1\preceq\nu_2$ if and only if $\nu_1([x,\infty)) \le \nu_2([x,\infty))$ for every $x\in\R$. If $x_1,\ldots,x_n$ and $y_1,\ldots,y_m$ denote the atoms (with multiplicity) of $\nu_1$ and $\nu_2$ respectively, then this is equivalent to the existence of an injective map $\phi:\{1,\ldots,n\}\to\{1,\ldots,m\}$ with $x_i \le y_{\phi(i)}$ for all $i\in\{1,\ldots,n\}$. Furthermore, for two families of counting measures $(\nu_1(t))_{t\ge0}$ and $(\nu_2(t))_{t\ge0}$, we write $(\nu_1(t))_{t\ge0}\preceq (\nu_2(t))_{t\ge0}$ if $\nu_1(t)\preceq\nu_2(t)$ for every $t\ge 0$. If $(\nu_1(t))_{t\ge0}$ and $(\nu_2(t))_{t\ge0}$ are random, then we write $(\nu_1(t))_{t\ge0}\stackrel{\mathrm{st}}{\preceq} (\nu_2(t))_{t\ge0}$ if there exists a coupling between the two (i.e.\ a realisation of both 
on the same probability space), such that $(\nu_1(t))_{t\ge0}\preceq (\nu_2(t))_{t\ge0}$.

\begin{lemma}
\label{lem:coupling}
Suppose that $\nu^-_0 \stackrel{\mathrm{st}}{\preceq} \nu^N_0 \stackrel{\mathrm{st}}{\preceq} \nu^+_0$. Then $(\nu^-_t)_{t\ge0} \stackrel{\mathrm{st}}{\preceq} (\nu^N_t)_{t\ge0} \stackrel{\mathrm{st}}{\preceq} (\nu^+_t)_{t\ge0}$.
\end{lemma}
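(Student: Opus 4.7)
The plan is to construct all three processes on a common probability space so that the pathwise inequalities $\nu^-_t\preceq\nu^N_t\preceq\nu^+_t$ hold for every $t\ge0$ almost surely; by the definition of $\stackrel{\mathrm{st}}{\preceq}$ this immediately yields the stochastic dominance. By symmetry it suffices to handle the lower bound $\nu^-\preceq\nu^N$ (the upper bound is identical with the roles swapped, using that $N^+$-BBM kills no more than $N$-BBM in any configuration). I would realise $\nu^-_0$ and $\nu^N_0$ jointly on the same probability space with $\nu^-_0\preceq\nu^N_0$ a.s., and use the characterisation of $\preceq$ via injective maps to fix an initial injection $\phi_0$ from the atoms of $\nu^-_0$ to those of $\nu^N_0$ with $x_i\le y_{\phi_0(i)}$.

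The dynamics are then coupled event-by-event. Between event times, each pair $(u,\phi_t(u))$ in the current injection is driven by the same Brownian increments, while unmatched particles of $\nu^N$ perform independent Brownian motions; this preserves the pointwise inequality $X_u(t)\le X_{\phi_t(u)}(t)$ for each matched pair. A branching of a matched particle occurs simultaneously in both systems with the same offspring count, and the children inherit the matching; an unmatched branching in $\nu^N$ leaves $\nu^-$ untouched. At a selection event in $\nu^-$, any killed particle is simply dropped from $\phi_t$, which only strengthens the dominance. At a selection event in $\nu^N$ (killing of leftmost particles, which under the $N$-BBM rule occurs only when the count strictly exceeds $N$), the matching is repaired whenever the killed particle lies in the image of $\phi_t$. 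The invariant carried through time is the measure-valued inequality $\nu^-_t([x,\infty))\le\nu^N_t([x,\infty))$ for every $x\in\R$, which is equivalent to the existence of such an injection.

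The main obstacle is verifying this invariant at simultaneous selection events. Treating a branching together with its induced selections as a single atomic transition, the pointwise calculation runs as follows. A matched branching into $k$ offspring at positions $X_u\le X_{\phi(u)}$ increases $\nu^N([x,\infty))$ by $(k-1)\Ind_{x\le X_{\phi(u)}}$ and $\nu^-([x,\infty))$ by $(k-1)\Ind_{x\le X_u}$, so the inequality is preserved since the right-hand side gain is at least the left-hand side gain. The subsequent removals of leftmost particles then need a check at each $x$; the only nontrivial range is $y_-<x\le y_N$, where $y_-$ and $y_N$ are the leftmost positions of $\nu^-$ and $\nu^N$ after branching. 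In this range $\nu^N([x,\infty))=|\nu^N|$ while $\nu^-([x,\infty))\le|\nu^-|-1$ (since the leftmost $\nu^-$-particle at $y_-$ is excluded), so the slack is at least $|\nu^N|-|\nu^-|+1\ge 1$, which is exactly what is needed to absorb the drop of $\nu^N([x,\infty))$ by $1$ caused by killing the leftmost of $\nu^N$. Unmatched branchings in $\nu^N$ only increase the right-hand side and so trivially preserve the inequality, and the full statement then follows by induction on the discrete sequence of joint event times.
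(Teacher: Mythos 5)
Your proposal is correct and follows essentially the same route as the paper: a pathwise coupling that maintains an injection between the two particle systems, drives matched pairs by common increments and branchings, rewires orphaned particles at kill events, and closes the argument with a counting step (the paper phrases it as the existence of a \emph{free} particle to the right of the killed one, justified by the strong branching property; you phrase it as a slack of at least one in $\nu^N([x,\infty))-\nu^-([x,\infty))$ — these are equivalent). One small slip: for a death event ($k=0$, which the reproduction law permits) your stated reason is backwards — the change $(k-1)\Ind_{(x\le X_{\phi(u)})}$ to $\nu^N$ is then \emph{more negative} than the change to $\nu^-$ on $(X_u,X_{\phi(u)}]$ — but the invariant still survives there by the same injectivity slack (the partner $\phi(u)$ lies in $[x,\infty)$ while $u$ does not, so the pre-existing gap is at least one), so this is repairable with tools already in your write-up.
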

\begin{proof}
 We only prove the second inequality $\nu^N_t \stackrel{\mathrm{st}}{\preceq} \nu^+_t$ in detail, the proof of the first one is sketched at the end of the proof. Coupling $\nu_0^N$ and $\nu_0^+$ and conditioning on $\F_0$, it is enough to show it for deterministic $\nu^N_0$ and $\nu^+_0$. Let $n^+ = \nu^+_0(\R)$ and let $\Pi = (\Pi^{(1)},\Pi^{(2)},\ldots,\Pi^{(n^+)})$ be a forest of independent BBM trees with the atoms of $\nu^+_0$ as initial positions. We denote by $\mathscr A^\Pi(t)$ the set of individuals alive\footnote{The term ``alive'' has the same meaning here as in Section~\ref{sec:preliminaries_definition}, i.e.\ it includes the ``killed'' particles.} at time $t$ and by $X^\Pi_u(t)$ the position of an individual $u\in\mathscr A^\Pi(t)$. Denote by $\mathscr A^+(t)\subset \mathscr A^\Pi(t)$ the subset of individuals which form the $N^+$-BBM (i.e.\ those which have not been killed by the selection mechanism of the $N^+$-BBM). We set $\nu^+_t = \sum_{u\in\mathscr A^+(t)} \delta_{X^\Pi_u(t)}$.

From the forest $\Pi$ we will construct a family of forests $\left(\Xi_t = (\Xi^{(1)}_t,\ldots,\Xi^{(N)}_t)\right)_{t\ge0}$ (not necessarily comprised of independent BBM trees), such that
\begin{enumerate}[nolistsep]
 \item if $t_1\le t_2$, then the forests $\Xi_{t_1}$ and $\Xi_{t_2}$ agree on
   the time interval $[0,t_1]$,
\item the initial positions in the forest $\Xi_0$ are the atoms of $\nu^N_0$,
\item for every $t\ge 0$, the $N$-BBM, regarded as a measure-valued process, is embedded in $\Xi_t$ up to the time $t$, in the sense that for every $s\in[0,t]$, if $\mathscr A^\Xi(s)$ denotes the set of individuals\footnote{Note that this does not depend on $t$.} from $\Xi_t$ alive at time $s$ and $X^\Xi_u(s)$ the position of the individual $u\in\mathscr A^\Xi(s)$, then there is a subset $\mathscr A^N(s)\subset \mathscr A^\Xi(s)$ such that $(\nu^N_s)_{0\le s\le t} = \big(\sum_{u\in\mathscr A^N(s)} \delta_{X^\Xi_u(s)}\big)_{0\le s\le t}$ is equal in law to the empirical measure of $N$-BBM run until time $t$,
\item for every $t\ge0$, there exists a (random) injective map $\phi_t:\mathscr A^N(t) \to \mathscr A^+(t)$, such that $X^\Xi_u(t) \le X^\Pi_{\phi_t(u)}(t)$ for every $u\in\mathscr A^N(t)$.
\end{enumerate}
We will say that the individuals $u$ and $\phi_t(u)$ are \emph{connected}. If at a time $t$ an individual $v\in \mathscr A^{+}(t)$ is not connected to another individual (i.e.\ $v\notin \phi_t(\mathscr A^N(t))$), we say that $v$ is \emph{free}.

The construction of the coupling goes as follows: Since $\nu^N_0 \preceq \nu^+_0$, we can construct $\Xi_0$, $\mathscr A^N(0)$ and $\phi_0$, such that for every $u\in \mathscr A^N(0)$ we have $X^\Xi_u(0) \le X^\Pi_{\phi_0(u)}(0)$ and the trees $\Xi^{(u)}_0$ and $\Pi^{(\phi_0(u))}$ are the same up to translation. We now construct the forests $(\Xi_t)_{t\ge0}$ along with a sequence of random times $(t_n)_{n\ge0}$ recursively as follows: Set $t_0 = 0$. For a BBM forest $\Sigma$ and an individual $u$ in the forest alive at some time $t$, denote by $\Sigma^{(u,t)}$ the subtree rooted at $u$ and $t$, i.e., for an individual $uw$ alive at some time $t+s$, $s\ge0$, we have $X^{\Sigma^{(u,t)}}_w(s) = X^\Sigma_{uw}(t+s)$. Let $n\in\N$ and suppose that
\begin{itemize}[nolistsep]
 \item[a)] $\Xi_t$ and $\phi_t$ have been defined for all $t\le t_n$ and satisfy the above-mentioned points 1.-4. up to the time $t_n$,
 \item[b)] for each $u\in \mathscr A^N(t_n)$, the subtrees $\Xi_{t_n}^{(u,t_n)}$ and $\Pi^{(\phi_{t_n}(u),t_n)}$ are the same, up to translation.
\end{itemize}
Note that this is the case for $n=0$. We then define $t_{n+1}$ to be the first time after $t_n$ at which either
\begin{enumerate}[nolistsep]
 \item[i)] a particle of the $N$-BBM branches, or
 \item[ii)] the left-most particle of the $N^+$-BBM dies without a particle of the $N$-BBM branching.
\end{enumerate}
Between $t_n$ and $t_{n+1}$, we simply set $\Xi_t = \Xi_{t_n}$ and $\phi_t = \phi_{t_n}$ for all $t\in[t_n,t_{n+1})$. The above-mentioned points 1.-4.  are then still satisfied for $t < t_{n+1}$, because by hypothesis b), connected particles move and branch together. At the time $t_{n+1}$ however, we may need to ``rewire'' particles. We show that it is always possible to do this in such a way that the above conditions on the family $\Xi_t$ are satisfied. Figure~\ref{fig:coupling} is a graphical description of the key step.
\begin{figure}[ht]
 \begin{center}
\executeiffilenewer{coupling.svg}{coupling.pdf}%
{inkscape -z -D --file=coupling.svg %
--export-pdf=coupling.pdf --export-latex --export-area-drawing}%
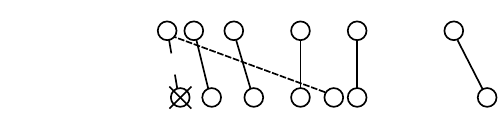%

 \caption{The connection between the particles $w$ of the $N$-BBM and $w'$ of the $N^+$-BBM breaks. By definition of the $N^+$-BBM, there exists a free particle $v'$ to the right of $w'$ and $w$ is rewired to that particle.}
 \label{fig:coupling}
 \end{center}
\end{figure}

We distinguish between the two cases above, starting with the second:

\emph{Case ii):} The left-most particle $w'$ of the $N^+$-BBM gets killed without a particle of the $N$-BBM branching. If $w'$ is free, nothing has to be done, i.e.\ we set $\Xi_{t_{n+1}} = \Xi_{t_n}$ and $\phi_{t_{n+1}} = \phi_{t_n}$. Suppose therefore that $w'$ is connected to a particle $w$ of the $N$-BBM. Then, since there are at most $N-1$ remaining particles in the $N$-BBM and there are at least $N$ particles to the right of $w'$ in the $N^+$-BBM (otherwise it would not have been killed), at least one of those particles is free. Denote this particle by $v'$. We then ``rewire'' the particle $w$ to $v'$ by setting $\phi_{t_{n+1}}(w) := v'$ and define $\Xi_{t_{n+1}}$ by replacing the subtree $\Xi_{t_n}^{(w,t_{n+1})}$ in $\Xi_{t_n}$ by $\Pi^{(v',t_{n+1})}$, properly translated. Note that we then have 
\[
X^\Pi_{\phi_{t_{n+1}}(w)}(t_{n+1}) = X^\Pi_{v'}(t_{n+1}-)\ge X^\Pi_{w'}(t_{n+1}-) \ge X^\Xi_w(t_{n+1}-),
\]
where the first inequality follows from the fact that $w'$ is the left-most individual in $N^+$-BBM at time $t_{n+1}$ and the second inequality holds by the induction hypothesis.

If more than one particle of the $N^+$-BBM gets killed at time $t_{n+1}$, we repeat the above procedure for every particle, starting from the left-most.

\emph{Case i):} A particle $u$ of the $N$-BBM branches at time $t_{n+1}$. By the hypothesis b), the particle $\phi_{t_n}(u)$ then branches as well into the same number of children. We then define $\phi_{t_n}(uk) = \phi_{t_n}(u)k$ for each $k\in\{1,\ldots,k_u\}$ (recall that $k_u$ denotes the number of children of $u$), i.e.\ we connect each child of $u$ to the corresponding child of $\phi_{t_n}(u)$. Now first define $\phi_{t_{n+1}}'$ to be the restriction of $\phi_{t_n}$ to the surviving particles, i.e.\ to $\mathscr A^N(t_{n+1})$. Then continue as in Case~i), i.e.\ for each particle $w'$ of the $N^+$-BBM which gets killed and which is connected through  $\phi_{t_{n+1}}'$ to a particle $w$ of the $N$-BBM, rewire $w$ to a free particle $v'$. In the end, we get $\phi_{t_{n+1}}$. 

This procedure gives a recursive definition of $(\Xi_t)_{t\ge0}$, $(\phi_t)_{t\ge0}$ and the sequence $(t_n)_{n\ge0}$. Note that each time we rewire a particle, we rewire it to a particle whose subtree is independent of the others by the strong branching property, whence the particles from $\mathscr A^N(t)$ and $\mathscr A^+(t)$ still follow the law of $N$-BBM and $N^+$-BBM, respectively. Furthermore, we have for every realisation (hence, almost surely): $\nu^N_t \preceq \nu^+_t$ for every $t\ge 0$. This finishes the proof of the second inequality.

The first inequality is proven very similarly. Here, one defines for every $t\ge0$ a random injective map $\phi_t:\mathscr A^-(t) \to \mathscr A^N(t)$, such that $X^\Pi_u(t) \le X^\Xi_{\phi_t(u)}(t)$ for every $u\in\mathscr A^N(t)$. This connects each particle from the $N^-$-BBM to a particle from the $N$-BBM, at each time $t$. Each time a connection breaks because of particle from the $N$-BBM being killed, we then rewire the corresponding particle from the $N^-$-BBM, similarly as above. The fact that in the $N^-$-BBM the number of particles is always at most $N$ ensures that there always exists a particle to rewire to. We omit the details. 
\end{proof}

\subsection{Proof of Theorem \ref{th:1} assuming the results from Sections~\ref{sec:BBBM_definition} and \ref{sec:Bflat_Bsharp_definition}}
Recall the definitions from the introduction, in particular, let $N\in\N$ be large and set $a_N = \log N + 3\log\log N$ and $\mu_N = \sqrt{1-\pi^2/a_N^2}$.
Let $(\nu_t^N)_{t\ge 0}$ be the empirical measure process of $N$-BBM starting from $N$ particles independently distributed according to the
density proportional to $\sin(\pi x/a_N) e^{- x}\Ind_{(x\in(0,a_N))}$. We wish to show that for every $\alpha \in (0,1)$, the finite-dimensional distributions of the process $(M^N_\alpha(t \log^3 N))_{t\ge0}$ (defined in the introduction)
converge weakly as $N\to\infty$ to those of the L\'evy process $(L_t)_{t\ge0}$ stated in Theorem~\ref{th:1} starting from $x_\alpha$. We will do this by proving separately a lower and an upper bound and show that in the limit these bounds coincide and equal the L\'evy process $(L_t)_{t\ge0}$.

\paragraph{Lower bound.} Fix $\alpha\in(0,1)$ and $\delta > 0$. We assume that $\delta$ is small enough such that $\alpha < e^{-2\delta}$. We let $N$ and in parallel $A$ and $a$ go to infinity (in the meaning of Section~\ref{sec:BBBM_parameters}) in such a way that $N = N^\flat = 2\pi  e^{A+\delta}a^{-3}e^{\mu a}$, where $\mu$ is defined in \eqref{eq:mu}. Since $\mu = 1 + O(1/a^2)$ for large $a$, this gives, 
\begin{align*}
 \log N = a - 3 \log a  + A+\delta +\log(2\pi) + o(1) = (1+o(1))a.
\end{align*}
This gives $\log \log N = \log a + o(1)$ and thus, 
\begin{equation}
 \label{eq:a_asymp}
 a = a_N - (A+\delta +\log(2\pi) + o(1)).
\end{equation}
Write $\ep = a_N - a$. Then
\[
 \frac 1 {a^2} = \frac 1 {a_N^2}\frac 1 {(1-\ep/a_N)^2} = \frac 1 {a_N^2} \left(1+\frac {2\ep}{a_N} + O((\ep/a_N)^2)\right),
\]
so that
\begin{align*}
\nonumber
\mu &= \sqrt{1-\frac{\pi^2}{a^2}} = 1 - \frac{\pi^2}{2a^2} + O(1/a^4)\\
\nonumber
&= 1 - \frac{\pi^2}{2a_N^2} + \frac{\pi^2\ep}{a_N^3} + O(1/a^4 + \ep^2/a_N^4).
\end{align*}
With $\mu_N = \sqrt{1-\pi^2/a_N^2} =  1 - \pi^2/(2a_N^2) + O(1/a_N^4)$ and $1/a_N^3 = 1/a^3 + O(\ep/a^4)$ we get,
\begin{equation}
\label{eq:127}
\mu = \mu_N + \frac{\pi^2}{a^3}(A+\delta+\log(2\pi) + o(1)).
\end{equation}
Let $(\nu^{\flat}_t)_{t\ge 0}$ be the empirical measure process of B$^\flat$-BBM starting from the initial configuration (H$_\perp$), i.e.\ $\nu^\flat_0$ is obtained from $\lfloor e^{-\delta}N\rfloor$ particles distributed independently according to the density proportional to $\sin(\pi x/a)e^{-\mu x}\Ind_{(0,a)}(x)$. We claim that one can couple $\nu^\flat_0$ and $\nu^N_0$ such that $\nu^\flat_0 \stackrel{\mathrm{st}}{\preceq} \nu^N_0$ with high probability for large $N$ (in the sense of  Section~\ref{sec:coupling}). Indeed, write
\[
 \varphi(x) = c_\varphi \sin(\pi x/a)e^{-\mu x}\Ind_{(0,a)}(x),\quad \varphi_N(x) = c_N \sin(\pi x/a_N)e^{-x}\Ind_{(0,a_N)}(x),
\]
where $c_\varphi$ and $c_N$ are such that the functions are probability densities. Expanding the sine functions at $0$ and using the fact that $a/a_N\to1$ and $\mu\to 1$, we get $c_\varphi/c_N\to 1$ as $N\to\infty$. We now apply the inequality $b\sin(\pi x/b)\le c\sin(\pi x/c)$, $b\le c$, $0\le x\le b$ with $b=a$ and $c = a_N$, to get for large $N$, for every $\delta'\in(0,\delta)$,
\begin{align*}
 e^{-\delta'} \varphi(x) &\le e^{-\delta'} c_\varphi \frac{a_N} a \sin(\pi x/a_N) e^{-x + O(x/a^2)} \Ind_{(0,a)}(x)\\
&\le c_N \sin(\pi x/a_N) e^{-x} \Ind_{(0,a_N)}(x) = \varphi_N(x).
\end{align*}
The rejection algorithm then allows us to construct from $\nu^N_0$ a random number $\widetilde N$ of iid particles distributed according to the density $\varphi(x)$ whose empirical measure $\widetilde \nu$ satisfies $\widetilde \nu \stackrel{\mathrm{st}}{\preceq} \nu^N_0$. We can then construct $\nu^\flat_0$ from $\widetilde \nu$ by choosing $\lfloor e^{-\delta} N\rfloor$ particles amongst these, completing with independent particles if necessary. Since $\widetilde N$ is binomially distributed with parameters $N$ and $e^{-\delta'}$ and therefore greater than $e^{-\delta} N$ with high probability, this gives a coupling between $\nu^\flat_0$ and $\nu^N_0$ such that $\nu^\flat_0 \stackrel{\mathrm{st}}{\preceq} \nu^N_0$ with high probability for large $N$.

Now, if $X_t^\flat$ denotes the barrier process of the B$^\flat$-BBM, then $(\nu^-_t)_{t\ge 0} = (\nu^\flat_t+\mu t + X^\flat_t)_{t\ge0}$  is by definition an instance of the $N^-$-BBM defined in Section~\ref{sec:coupling}. Here, for a measure $\nu$ and a number $x$ we denote by $\nu+x$ the measure $\nu$ translated by $x$. Lemma~\ref{lem:coupling} now gives for large $N$, $(\nu^-)_{t\ge0} \stackrel{\mathrm{st}}{\preceq} (\nu^N_t)_{t\ge0}$ w.h.p., which by definition implies
\begin{equation}
  \label{eq:128}
 (\med^N_\alpha(\nu^-_t))_{t\ge0} \stackrel{\mathrm{st}}{\le} (\med^N_\alpha(\nu^N_t))_{t\ge0},\quad\text{w.h.p.}
\end{equation}
Given $0\le t_1<\ldots<t_n$, we now define $t_i^N = a^{-3}t_i \log^3 N$, so that $t_i^N\to t_i$ for every $i$, as $N\to\infty$. We then have w.h.p.,
\begin{align*}
\Big(M^N_\alpha\big(t_i \log^3 N\big)\Big)_{i=1}^n 
&\stackrel{\mathrm{st}}{\ge} \big(\med^N_{\alpha}(\nu^-_{t_i \log^3 N})-\mu_Nt_i \log^3 N\big)_{i=1}^n && \text{by \eqref{eq:128}}\\
&= \big(\med^N_{\alpha}(\nu^\flat_{a^3t_i^N}) + X^\flat_{a^3t_i^N}-\pi^2(A+\delta+\log(2\pi)+o(1))t_i^N\big)_{i=1}^n && \text{by \eqref{eq:127}.}
\end{align*}
By the \Bfl-BBM part of Theorem~\ref{th:Bflat_Bsharp}, the vector  $(X^\flat_{a^3t_i^N}-\pi^2(A+\log(2\pi)t_i^N))_{i=1}^n$ converges in law to $(L_{t_i})_{i=1}
^n$, with $(L_t)_{t\ge 0}$ being the L\'evy process from the statement of Theorem~\ref{th:1}. Together with the previous inequality and the \Bfl-BBM part of Proposition~\ref{prop:Bflat_Bsharp_med}, this yields for large $N$, w.h.p.,
\[
 \Big(M^N_\alpha\big(t_i \log^3 N\big)\Big)_{i=1}^n \stackrel{\mathrm{st}}{\ge} \big(x_{\alpha e^{2\delta}} + L_{t_i} - O(\delta)\big)_{i=1}^n.
\]
Letting first $N\to\infty$, then $\delta\to 0$ yields the proof of the lower bound.

\paragraph{Upper bound.} The proof is analogous to the previous case, with two differences: First, the B$^\sharp$-BBM is not a realisation of the $N^+$-BBM, as defined in Section~\ref{sec:coupling}. However, the C$^\sharp$-BBM (defined in Lemma~\ref{lem:Csharp}) is such a realisation and by that lemma, the B$^\sharp$-BBM parts of Theorem~\ref{th:Bflat_Bsharp} and Proposition~\ref{prop:Bflat_Bsharp_med} hold for the C$^\sharp$-BBM as well. Second, the construction of the coupling between $\nu^\sharp_0$ and $\nu^N_0$ has to be slightly modified since it is not true that for all $\delta'\in(0,\delta)$ and $x\ge0$, we have $\varphi(x) \ge e^{-\delta'}\varphi_N(x)$. However, one can check that $\varphi(x) \ge e^{-\delta'}\varphi_N(x)\Ind_{x\in[0,a_N-A^2]}$ for large $N$. Hence, if $\widetilde \nu^N_0$ is obtained from $\nu^N_0$ by removing the particles in the interval $[a_N-A^2,a_N]$, then we can show as above that $\widetilde \nu^N_0\stackrel{\mathrm{st}}{\preceq}\nu^\sharp_0$ w.h.p. Furthermore, since $N\varphi_N([a_N-A^2,a_N]) = O(A^2 Ne^{-a}) = o(1)$, we have $\widetilde\nu^N_0 = \nu^N_0$ with high probability, whence $\nu^N_0\stackrel{\mathrm{st}}{\preceq}\nu^\sharp_0$ w.h.p. This finishes the proof of the upper bound and of Theorem~\ref{th:1}.

\section{The B-BBM}
\label{sec:BBBM}

In this section, the longest of the article, we prove Theorems~\ref{th:barrier} and~\ref{th:barrier2} about the B-BBM (refer to Section~\ref{sec:BBBM_definition} for the definition of the B-BBM and several quantities associated to it). The core of the proof is Proposition~\ref{prop:piece} below. Recall that $\nu_0$ is the empirical measure of the initial particles and let $\nu_t$ be the empirical measure of the particles of B-BBM at time $t$. Further recall the definitions of the functions $w_Z$ and $w_Y$ from \eqref{eq:def_wZY}:
\begin{equation*}
w_Z(x) = ae^{\mu (x-a)}\sin(\pi x/a)\Ind_{(x\in[0,a])},\qquad w_Y(x) = e^{\mu(x-a)}.
\end{equation*}
The variable $Z'_t$ defined in Section~\ref{sec:BBBM_definition_definition} then can be written as \(Z_t' = \int w_Z(x)\,\nu_t(\dd x)\). We also define \(Y_t' = \int w_Y(x)\,\nu_t(\dd x)\). Note that later (in Section~\ref{sec:BBBM_more_definitions}), we will define $Z_t$ and $Y_t$, whose definitions closely match those of $Z_t'$ and $Y_t'$ but are technically more convenient to work with.

We now define a sequence of events $(G_n)_{n\ge 0}$ recursively as follows. The event $G_0$ is defined as 
\[
G_0 = \{\supp \nu_0 \subset (0,a),\ |e^{-A}Z_0' - 1| \le \ep^{3/2},\ Y_0' \le \eta\},
\]
The role of this event is basically to ensure that the time before a particle hits the right barrier is of order $e^{-A} a^3$. It is easy to show that under the hypothesis (H$_\perp$), the event $G_0$ holds with high probability (tending to 1 as $A$ and $a$ go to infinity). 

The event $G_n$ for $n\ge 1$ is then defined to be the intersection of $G_{n-1}$ with the following events
\begin{itemize}[nolistsep]
 \item $\supp \nu_{\Theta_n} \subset (0,a)$, $|e^{-A}Z_{\Theta_n}' -1| \le \ep^{3/2}$ and $Y_{\Theta_n}' \le \eta$.
 \item $\forall u\in\mathscr A_0(\Theta_n): \Theta_n \not\in \bigcup_{l\ge0} [\tau_l(u),\sigma_{l+1}(u))$, i.e.\ no particles at time $\Theta_n$ are in an ``intermediate state'' (this is equivalent to $\mathscr N_{\Theta_n} \subset U\times \{\Theta_0\}$),
 \item $\Theta_n > T_n^+$, i.e.\ in the $n$-th piece of the process, the descendants of the fugitive reach the critical line at the latest at time $T+e^Aa^2$.
\end{itemize}
Note that $G_n\in\F_{\Theta_n}$ for every $n\ge 0$.

Recall that $p_B$ is the probability of a breakout as defined in Section~\ref{sec:BBBM_breakout}.
\begin{proposition}
\label{prop:piece}
Fix $\lambda \in \R$ and define $\widetilde{\ep} = (\pi p_Be^A)^{-1}$.
There exists a numerical constant $\numcst > 0$, such that for large $A$ and $a$, we have $\PB(G_n^c\,|\,\F_0)\Ind_{G_0} \le n\ep^{1+\numcst}$ for every $n\ge 0$ and
\begin{equation}
\label{eq:piece_fourier}
 \EB\Big[e^{i\lambda X^{[\infty]}_{\Theta_n}}\,\Big|\,\F_0\Big]\Ind_{G_0} = \exp\Big(n\widetilde{\ep}(\kappa(\lambda) + i\lambda \pi^2A + o(1))\Big)\Ind_{G_0} + O(n\ep^{1+\numcst}).
\end{equation} 
where $\kappa(\lambda)$ is the right-hand side of \eqref{eq:laplace_levy} with $c = \pi^2(\const{eq:W_expec}+\const{eq:c_log}-\log \pi)$ and where $o(1)$ and $O(\ep^\numcst)$ may depend on $\lambda$.
\end{proposition}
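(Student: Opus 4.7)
The plan is induction on $n$, where the strong Markov property at $\Theta_{n-1}$ reduces the general statement to the one-piece case $n=1$. The base case $n=0$ is immediate since $X^{[\infty]}_{\Theta_0}=0$. For the inductive step I would write
\begin{equation*}
\EB\bigl[e^{i\lambda X^{[\infty]}_{\Theta_n}}\Ind_{G_n}\,\big|\,\F_0\bigr] = \EB\Bigl[e^{i\lambda X^{[\infty]}_{\Theta_{n-1}}}\Ind_{G_{n-1}}\,\EB\bigl[e^{i\lambda(X^{[\infty]}_{\Theta_n}-X^{[\infty]}_{\Theta_{n-1}})}\Ind_{G_n\setminus G_{n-1}}\,\big|\,\F_{\Theta_{n-1}}\bigr]\,\Big|\,\F_0\Bigr].
\end{equation*}
The event $G_{n-1}$ is defined precisely so that on it the empirical measure $\nu_{\Theta_{n-1}}$ satisfies all the hypotheses defining $G_0$ (support in $(0,a)$, $Z'$ close to $e^A$, $Y'$ small, and no particle in an intermediate state). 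Hence the one-piece estimate applies to the inner conditional expectation, and iterating collects $n$ identical factors to produce the exponential form. The probability bound $\PB(G_n^c\mid\F_0)\Ind_{G_0}\le n\ep^{1+\numcst}$ follows by a telescoping union bound over pieces.

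\textbf{The one-piece estimate.} The main work lies in the $n=1$ case: for large $A,a$,
\begin{equation*}
\EB\bigl[e^{i\lambda X^{[\infty]}_{\Theta_1}}\Ind_{G_1}\,\big|\,\F_0\bigr]\Ind_{G_0} = \bigl(1+\widetilde\ep(\kappa(\lambda)+i\lambda\pi^2A+o(1))\bigr)\Ind_{G_0}+O(\ep^{1+\numcst}),
\end{equation*}
together with $\PB(G_0\setminus G_1\mid\F_0)\Ind_{G_0}\le\ep^{1+\numcst}$; exponentiating the $1+\widetilde\ep(\cdot)$ form (valid because $\widetilde\ep=O(\ep)$ is small) gives the stated expression. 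By construction $X^{[\infty]}_{\Theta_1}=f_\Delta((\Theta_1-T^+)/a^2)$, which on $G_1$ differs from $\Delta$ by a negligible $O(|\Delta|a^2/(\Theta_1-T^+))$ since $\Theta_1\ge T+e^Aa^2$. Thus the computation ultimately reduces to $\EB[e^{i\lambda\Delta}\Ind_{G_1}\mid\F_0]\Ind_{G_0}$.

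\textbf{Computing the characteristic function of $\Delta$.} I would split the analysis into (a) the process up to the first breakout, and (b) the breakout and subsequent relaxation. For (a), the quasi-stationary structure ensured by $G_0$ (via the appendix results on BBM in an interval) implies that tier-$0$ particles hit $x=a$ at an approximately constant rate of order $e^A/a^3$, each triggering a breakout with probability $p_B$. Hence $T$ is approximately exponential of mean of order $\widetilde\ep a^3$, which lies safely inside $[0,\Theta_1]$; a careful bookkeeping via the stopping-line decomposition of Section~\ref{sec:BBBM_more_definitions} together with the second-moment estimates for $Z_t^{(l)}$ and $Y_t^{(l)}$ bounds each bad event defining $G_0\setminus G_1$ by $O(\ep^{1+\numcst})$. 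For (b), conditionally on the breakout $\Delta$ is to leading order $\log(1+\pi e^{-A}W)$, for a random variable $W$ with tail $\P(W>x)\sim 1/x$ (this is where Section~\ref{sec:fugitive} on the fugitive enters). Combining,
\begin{equation*}
\EB[e^{i\lambda\Delta}]-1 \approx \widetilde\ep\pi^2\int_0^\infty\bigl(e^{i\lambda\log(1+x)}-1-i\lambda\log(1+x)\Ind_{(\log(1+x)\le 1)}\bigr)\frac{\dd x}{x^2}+i\lambda\widetilde\ep(\pi^2 A+c),
\end{equation*}
and the change of variables $y=\log(1+x)$ transforms the integral into $\int(e^{i\lambda y}-1-i\lambda y\Ind_{(y\le 1)})\Lambda(\dd y)$, producing exactly $\widetilde\ep\kappa(\lambda)+i\lambda\widetilde\ep\pi^2 A$.

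\textbf{Main obstacle.} The delicate point is the exact identification of the constants $\pi^2 A$ and $c$. Since $\Lambda$ has infinite total mass near $0$, the small-jump compensator is non-trivial and must be produced by matching a truncated-log moment of $W$ against the deterministic relaxation drift of $f_x$; the shift $\pi^2 A$ in particular appears because $\log(1+\pi e^{-A}W)$ lives naturally near $\log\pi-A$ rather than at $0$, and this mismatch contributes a linear drift after compensation. A second technical challenge is decomposing $\Delta$ into contributions from particles unrelated to the fugitive and from its bar- and check-descendants (the stopping lines of Section~\ref{sec:BBBM_more_definitions}): these must be shown to be asymptotically independent to leading order, with the bar-contribution controlled by the functional $\mathscr E_{\mathscr U}$ of Section~\ref{sec:fugitive} and the check-contribution producing the dominant $1/x$-tailed shift.
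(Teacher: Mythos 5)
Your architecture coincides with the paper's: induction via the restart of the process at $\Theta_{n-1}$ (so the probability bound and \eqref{eq:piece_fourier} both follow from the one-piece case), reduction to the characteristic function of $\Delta$ using $X^{[\infty]}_{\Theta_1}=f_\Delta((\Theta_1-T^+)/a^2)=\Delta+O(e^{-A/2})$ on the good event, and a drift-plus-jump decomposition of $\Delta$ in which the L\'evy measure $\Lambda$ arises from the $1/x$ tail of $W$ through the substitution $y=\log(1+x)$. (One cosmetic slip: in your inductive display the indicator should be $\Ind_{G_n}$, not $\Ind_{G_n\setminus G_{n-1}}$, which is empty since $G_n\subset G_{n-1}$.)

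The genuine gap is your account of where the drift $i\lambda\pi^2A$ comes from. You attribute it to the jump term, arguing that $\log(1+\pi e^{-A}W)$ ``lives naturally near $\log\pi-A$''. But the jump only occurs conditionally on a breakout, i.e.\ on $\{W>\pi^{-1}\ep e^A\}$ up to small errors, so it is always at least $\log(1+\ep)>0$, and its compensated expectation contributes only $\pi^2\widetilde{\ep}(-\log\ep+\const{eq:c_log}+o(1))$ --- no $A$ appears there. The $A$ is produced by the \emph{drift} part $\Delta_{\mathrm{drift}}=e^{-A}(\widehat Z_{T^-}+\widecheck Z_{\Delta})-1$: between breakouts the population unrelated to the fugitive grows because each non-breakout excursion above $a$ returns on average $\E_\Q^a[Z]=\pi(A+\log\ep+\const{eq:QaZ}+o(1))$ units of $Z$ (Equation \eqref{eq:QaZ}, itself a consequence of $E[W\Ind_{(W\le x)}]-\log x\to\const{eq:W_expec}$ evaluated at the truncation level $x\approx\pi^{-1}\ep e^A$), and such excursions occur at rate $\pi Z_t/a^3$ over a time $\E[T\Ind_{G_{\mathscr U}}]=(\widetilde{\ep}+O(A^{-1}\ep^{3/2}))a^3$. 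Combining (Lemma \ref{lem:hat_quantities} applied with $\vartheta=T$, then integrated against the law of $T$ as in \eqref{eq:714}) gives $\E[\Delta_{\mathrm{drift}}\,|\,G_{\mathscr U}]=\pi^2\widetilde{\ep}(A+\log\ep+\const{eq:QaZ}+o(1))+O(\ep^{3/2})$, whose $+\log\ep$ cancels the $-\log\ep$ from the compensated jumps and whose $A$ is exactly the linear drift in the statement. Without this first-moment computation your plan cannot produce the $\pi^2A$, and searching for it inside the compensated jump integral would fail. A minor further point: the independence of the jump $e^{-A}Z^{(\mathscr U,T)}$ from $\Delta_{\mathrm{drift}}$ and from $G_{\mathscr U}$ is exact by the strong branching property (the fugitive's subtree above $a$ is fresh given the breakout event), not merely asymptotic; this is what makes the factorisation of the characteristic function into drift and jump parts clean.
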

Proposition~\ref{prop:piece} is proved in Section~\ref{sec:piece_proof}.
Note that we will show in Proposition~\ref{prop:T} that $\Theta_1$ is approximately exponentially distributed with mean $\widetilde{\ep}a^3$. 

\subsection{More definitions}
\label{sec:BBBM_more_definitions}

The random variables $Z'_t$ and $Y'_t$ defined above are not useful when there are particles near or above the point $a$. To remedy this, we define for each $l\ge 0$ and $t\ge0$ the stopping line $\mathscr N^{(l)}_t$ containing the particles of tier $l$ that have already come back to the critical line by time $t$, as well as the descendants of those that haven't, at the moment at which they hit the critical line:
\begin{multline*}
 \mathscr N^{(l)}_t = \{(u,s)\in U\times \R_+: u\in\mathscr A_0(s),\ \tau_{l-1}(u) \le s < \tau_{l}(u),\\
\text{ and either }t<\sigma_l(u) = s\tor \sigma_l(u) \le t = s\},\ l\ge 0,
\end{multline*}
and set $\mathscr N_t = \bigcup_{l\ge 0}  \mathscr N^{(l)}_t.$ We then define for $l\ge 0$,
\[
 Z_t^{(l)} = \sum_{(u,s)\in\mathscr N^{(l)}_t} w_Z(X_u(s)),\quad Y_t^{(l)} = \sum_{(u,s)\in\mathscr N^{(l)}_t} w_Y(X_u(s)).
\]
Similarly, we set \(R^{(l)}_I = \#(\mathscr R^{(l)}_\infty\cap (U\times I))\) for every $l\ge0$ and every interval $I\subset\R_+$ and set $R^{(l)}_t = R^{(l)}_{[0,t]}$ for $t\ge0$. We then write for any symbol $S$ (e.g.~$S\in\{Z_t,Y_t,R_t,R_I\}$) and $0\le k\le l\le \infty$,
\begin{equation}
\label{eq:sum_over_tiers}
 S^{(k;l)} = \sum_{i=k}^l S^{(i)},\quad S^{(l+)} = S^{(l;\infty)},\quad S = S^{(0+)}.
\end{equation}

Later, we will also want to distinguish between the particles related to the fugitive and the others (recall from Section~\ref{sec:BBBM_tiers}, that the \emph{fugitive}, denoted by $\mathscr U$, is the particle that breaks out first). We will actually split the particles into four groups, denoted by a \emph{hat}, \emph{bar}, \emph{check} and the superscript ``fug'', respectively. The first group, the hat-particles, consists of those which share no relation with the fugitive, i.e.\ their ancestor at time 0 is not the ancestor of the fugitive. The second type of particles, denoted by a bar, are those whose most recent common ancestor with the fugitive has died between the times $\sigma_l(\mathscr U)$ and $\tau_l(\mathscr U)$ for some $l\ge 0$. The third group, denoted by a check, consists of those particles whose most recent common ancestor with the fugitive has died between the times $\tau_l(\mathscr U)$ and $\sigma_{l+1}(\mathscr U)$ for some $l\ge0$. The last group then consists of the fugitive and its 
descendants and is denoted by the superscript ``fug''.

Formally, this reads as follows. Recall the notation $u\wedge v$ for the most recent common ancestor of $u$ and $v$ (with $u\wedge v = \emptyset$ if it does not exist) and $d_u$ for the death time of the individual $u$ (see Section~\ref{sec:preliminaries_definition}).
\begin{align*}
 \mathscr{\widehat A}_0(t) &= \{u\in \mathscr A_0(t): u\wedge \mathscr U = \emptyset\},\\
 \mathscr{\widebar A}_0(t) &= \{u\in \mathscr A_0(t): d_{u\wedge \mathscr U} \in \bigcup_{l\ge 0}[\sigma_l(\mathscr U),\tau_l(\mathscr U))\},\\
 \mathscr{\widecheck A}_0(t) &= \{u\in \mathscr A_0(t): d_{u\wedge \mathscr U} \in \bigcup_{l\ge 0}[\tau_l(\mathscr U),\sigma_{l+1}(\mathscr U))\},\\
 \mathscr A^{\mathrm{fug}}_0(t) &= \{u\in \mathscr A_0(t): \mathscr U \preceq u \}.
\end{align*}
We also use this notational convention for all the other quantities, e.g.\ $\mathscr{\widehat N}_t = \{(u,s)\in\mathscr N_t:u\in\mathscr{\widehat A}_0(s)\}$, $\mathscr{\widebar R}^{(l)}_t = \{(u,s)\in\mathscr R^{(l)}_t:u\in\mathscr{\widebar A}_0(s)\}$, $\widecheck Z^{(l)}_t = \sum_{u\in \mathscr{\widecheck N}_t}w_Z(X_u(t))$, $R^{\mathrm{fug}}_t = \#\mathscr R^{\mathrm{fug}}_t$.

\paragraph{The random shift $\Delta$.}
With the previous definitions, we can now properly define the random shift $\Delta$ from the definition of the B-BBM in Section~\ref{sec:BBBM_definition_definition}. It would be tempting to define it to equal $\log(e^{-A}Z_T)$, but there is a slight problem here: In order to study the system after the time $T$, we would have to condition on the $\sigma$-field $\F_T$, otherwise the shape of the barrier, or, equivalently, the varying drift determined by $\Delta$ would be random (this would be undesirable because the results of Section~\ref{sec:interval} require the barrier function $f$ to be deterministic). The problem about conditioning on $\F_T$ is that the estimates on the distribution of particles in the B-BBM would then only be good for times $t$ with $t - T \gg a^2$. This would cause severe problems in Sections~\ref{sec:Bflat} and \ref{sec:Bsharp}.

Instead, we will condition on the particles at a time $T^- < T$ and define $\Delta$ in terms of the particles at that time. Furthermore, we do not take into account all particles, but only those which will have a non-negligible impact later on.

The actual definitions are as follows: Define $T^- = (T-e^Aa^2)\vee 0$ and set 
\begin{equation}
 \label{eq:def_check_Z_Delta}
 \widecheck Z_{\Delta} = \begin{cases}
                          \widecheck Z^{(1)}_{T^-} & \tif \tau_0(\mathscr U) \le T^-\\
                          Z^{(\mathscr U_0',\tau_0(\mathscr U))} & \tif \tau_0(\mathscr U) \in (T^-,T)\\
                          0 & \tif \tau_0(\mathscr U) = T,
                         \end{cases}
\end{equation}
where $\mathscr U_0'$ is the ancestor of $\mathscr U$ alive at time $\tau_0(\mathscr U)$. Note that $\widecheck Z_{\Delta}\ne 0$ implies $T\ne T^{(0)}$. We then define
\begin{equation}
\label{eq:Delta_def}
\Delta =  \log\Big(e^{-A}(\widehat Z_{T^-} + \widecheck Z_{\Delta} +Z^{(\mathscr U,T)})\Big).
\end{equation}

\subsection{The time of the first breakout}
\label{sec:time_breakout}

In this section, we study the law of the time of the first breakout. Specifically, we show in Proposition~\ref{prop:T} that it is approximately exponentially distributed with parameter $p_B\pi Z_0/a^3$ (we will assume throughout the section that the initial conditions are non-random). Since the law of the process until the first breakout is the same under the laws $\PB$ and $\P$, we will work with the law $\P$ in this section, as well as in Sections~\ref{sec:before_breakout_particles} and~\ref{sec:fugitive}. Define
\begin{equation}
 \label{eq:tcrit}
 \tconst{eq:tcrit}= \tfrac{1}{20 A} a^3.
\end{equation}

\begin{proposition}
\label{prop:T}
 Let $0\le t\le  \tconst{eq:tcrit}$. For $A$ and $a$ large enough, we have
\begin{equation}
\label{eq:T}
\P(T> t) = \exp\Big(-\pi p_B Z_0\frac t {a^3}\Big(1 + O(At/a^3 + p_B)\Big)+O(p_B Y_0)\Big).
\end{equation}
\end{proposition}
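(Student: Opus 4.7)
The plan is to reduce the computation of $\P(T>t)$ to a Laplace-transform calculation for $R_t = \sum_{l\ge 0}R^{(l)}_t$, the total number of barrier-hits up to time $t$. By the strong branching property applied at each hitting time $\tau_l(u)$, the subtree $\mathscr S^{(u,\tau_l(u))}$ is independent of $\F_{\tau_l(u)}$ and has the same law as $\mathscr S$ under $\P^a$, so $\P(B^{(u,\tau_l(u))}\mid \F_{\tau_l(u)}) = p_B$. Enumerating the barrier-hits chronologically and iterating this identity, I would obtain the clean representation
\[
\P(T>t) = \E\bigl[(1-p_B)^{R_t}\bigr],
\]
since $\{T>t\} = \bigcap_{(u,s)\in\mathscr R_t}(B^{(u,s)})^c$.

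Next, I would compute $\E[R_t]$ via the many-to-one formula combined with the spectral expansion of BBM with drift $-\mu$ killed outside $[0,a]$ (the Jacobi theta-function estimates of Section~\ref{sec:killed_bm}, packaged through the estimates of Section~\ref{sec:right_border}). The tier-$0$ count should satisfy
\[
\E[R_t^{(0)}\mid\F_0] = \frac{\pi Z_0}{a^3}\,t\,\bigl(1+O(At/a^3)\bigr)+O(Y_0),
\]
where the leading constant $\pi/a^3$ arises from the flux of the ground-state eigenfunction at $a$ (which, up to normalisation, is $w_Z$); the $O(At/a^3)$ term absorbs the next-order spectral corrections over the time window $t\le\tconst{eq:tcrit}$; and the $O(Y_0)$ term handles the contribution of initial particles close to $a$. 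Higher tiers are treated iteratively: each tier-$l$ hit descends from a non-breaking-out tier-$(l{-}1)$ hit, whose expected $Z$-mass brought back to the critical line is uniformly bounded (this is essentially how the parameters $y$ and $\zeta$ were chosen), so summing the geometric series in $l$ contributes only a factor $1+O(p_B)$. A parallel second-moment bound from Section~\ref{sec:right_border} should yield $p_B^2\Var(R_t)\ll p_B\E[R_t]$ in the relevant regime.

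Finally, expanding $\log(1-p_B) = -p_B(1+O(p_B))$ and using the concentration of $R_t$,
\[
\E[(1-p_B)^{R_t}] = \exp\Bigl(-p_B\E[R_t]\bigl(1+O(p_B)\bigr)+O(p_B^2\Var(R_t))\Bigr),
\]
which, combined with the moment estimates of the previous paragraph, yields \eqref{eq:T} (the $O(Y_0)$ error in $\E[R_t]$ producing the additive $O(p_B Y_0)$ term in the exponent). The main obstacle will be controlling the variance of $R_t$ in the presence of the tier cascade: a single barrier-hit can rapidly spawn many more through its progeny, so $R_t$ is noticeably ``burstier'' than a pure Poisson count. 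The restriction $t\le a^3/(20A)$ is tailored precisely to keep this compounding benign, and verifying the second-moment bound together with the higher-tier corrections in the first moment is where the bulk of the work will sit.
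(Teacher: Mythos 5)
Your starting point is right for tier $0$: the identity $\P(T^{(0)}>t)=\E\big[(1-p_B)^{R^{(0)}_t}\big]$ is exactly how the paper begins (Lemma~\ref{lem:T0}), because conditioned on $\F_{\mathscr R^{(0)}_t}$ the events $B^{(u,s)}$, $(u,s)\in\mathscr R^{(0)}_t$, are i.i.d.\ Bernoulli($p_B$); together with the moment bounds of Lemma~\ref{lem:Rt} this already gives the upper bound of the proposition via $T\le T^{(0)}$. But your ``clean representation'' $\P(T>t)=\E\big[(1-p_B)^{R_t}\big]$ for the \emph{full} hit count is false. The breakout event $B^{(u,s)}$ is a function of the stopping line $\mathscr S^{(u,s)}$, and the tier-$(l+1)$ hits descending from $(u,s)$ are generated by precisely that same line: a hit with large $Z^{(u,s)}$ is both more likely to break out \emph{and} produces more future hits. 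So the indicators $\Ind_{(B^{(u,s)})^c}$ are not conditionally independent of $R_t$, and your chronological peeling does not close: after conditioning the first hit on $(B^{(u_1,s_1)})^c$, the law of its descendants changes to $\Q^a$, and all subsequent hit counts must be computed under that conditioned law.

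This is not a cosmetic issue, because the unconditioned moments of $R_t$ for tiers $\ge 1$ are dominated by exactly the breakout events you are trying to exclude. A short many-to-one computation gives $\E^a[Z]\approx\pi y$ (the factor $e^{\beta m H}=e^{H/2}$ exactly cancels the $e^{-y}$ in $w_Z$ on the critical line), and $y\ge\eta^{-1}\ge e^{2A}$; by contrast $\E^a_{\Q}[Z]\le\pi A$ by \eqref{eq:QaZC}, since conditioning on $B^c$ truncates $Z$ at $\ep e^A$. Hence the per-tier expansion ratio is of order $\pi^2 y\,t/a^3$ unconditionally, not $\pi^2 A\,t/a^3$, and your geometric series does not contribute a harmless $1+O(p_B)$ — it need not even be summable on the time window $t\le\tconst{eq:tcrit}$; the second moment (your ``burstiness'' worry) is worse still. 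The paper circumvents this by never computing unconditioned higher-tier moments: the lower bound is obtained by writing $\P(T>t)=\P(T>t\,|\,T^{(0)}>t)\,\P(T^{(0)}>t)$, bounding the first factor by $\Q^a(T>t)^{\E[R^{(0)}_t]}$ via Jensen, and controlling $\Q^a(T>t)$ through the self-referential inequality of Lemma~\ref{lem:T_a}, in which every tier is handled under the non-breakout conditioning. To repair your argument you would need to replace the identity by this conditioned, tier-by-tier scheme; as written, both the representation and the moment estimates it relies on fail.
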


Before attacking the proof of Proposition~\ref{prop:T}, we will collect some results about the quantities from Section~\ref{sec:BBBM_breakout}. The proof then proceeds by a sequence of lemmas. Lemma~\ref{lem:T0} gives a estimate on $\P(T^{(0)} > t)$. This is used in Lemma~\ref{lem:T_a}, in order to obtain an estimate on $\P^a(T > t\,|\,T\ne 0)$, using a recursive argument. Finally, Proposition \ref{prop:T} is proven by combining Lemmas~\ref{lem:T0} and \ref{lem:T_a}.

Recall the definition of the breakout event $B$ with its probability $p_B$ and the associated quantities $Z$, $Y$, $W_y$ and $\sigma_{\mathrm{max}}$ from Section~\ref{sec:BBBM_breakout}. For later use of the results that follow, we allow for a varying drift given by a barrier function $f$; note that the definition of the critical line then has to be adapted as in Remark~\ref{rem:hat_f}. Define the law of BBM started at $a$ with the first particle conditioned not to break out:
\[
 \Q_f^a(\cdot) =  \P_f^a(\cdot\,|\, T > 0) = \P_f^a(\cdot\,|\, B^c) = \frac{\P_f^a(\cdot,\ B^c)}{1-p_B}.
\]
If $f\equiv 0$, then we omit it from the subscript as usual.
The following lemma, which essentially already appeared in \cite{Berestycki2010}, will be crucial to the study of the law $\Q_f^a$. Note that $\sigma_{\mathrm{max}} \le \zeta$ on the event $B^c$, by definition.
\begin{lemma}Let $f$ be a barrier function with $\|f\|$ bounded by some fixed function of $A$. 
\label{lem:ZYW}
 \begin{enumerate}
  \item On the event $\sigma_{\mathrm{max}} \le \zeta$, we have $\P_f^a$-almost surely, for large $a$,
  \[
 Z =\pi W_y\Big(1+O\Big(\frac 1 a \Big)\Big)\quad\tand\quad  Y = \frac 1 { y} W_y \Big(1+O\Big(\frac{1}{a}\Big)\Big),
\]
and, in particular, $Y \le \eta Z$ for large $a$. 
 \item There exists a random variable $W\ge 0$, whose law depends only on the reproduction law $q(k)$, such that $\P_f^a(|W_y - W| > \eta) < \eta/2$  for $\eta$ fixed and $y$ large enough. Furthermore, the variable $W$ satisfies for some constant $\const{eq:W_expec}\in\R$,
\begin{alignat}{2}
\label{eq:W_tail}
 &P(W > x) \sim \frac{1}{x},&\quad &\tas x\to\infty,\\
 \label{eq:W_expec}
 &E[W\Ind_{(W\le x)}] - \log x \to \const{eq:W_expec},&\quad &\tas x\to\infty.
\end{alignat} 
(Note: probability and expectation with respect to $W$ alone will always be denoted by $P$ and $E$, respectively).
\item For fixed $\eta$ and $y$ and large $\zeta$, we have for all $a$ and $f$, $\P_f^a(\sigma_{\mathrm{max}} > \zeta) < \eta/2$.
 \end{enumerate}
\end{lemma}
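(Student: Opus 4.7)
The plan is to first strip out the dependence on $a$ and $f$ via a coordinate change, then dispatch parts 1 and 3 by direct calculation, leaving the sharp asymptotics of the limit variable $W$ in part 2 as the substantive point. For the coordinate change, set $\tilde Y_u(t) := Y_u(t) - (a-y) - (1-\mu)t + f(t/a^2)$, where $Y_u$ denotes an individual's trajectory under $\P^a_f$; an It\^o computation gives $\dd\tilde Y_u = \dd B_u - \dd t$, with branching rate $\beta$ and offspring law $q$ unchanged. Since $\sqrt{2\beta m}=1$, the process $(\tilde Y_u)$ is \emph{critical} BBM started from $\tilde Y_u(0)=y$ and absorbed at $0$ (the condition $\tilde Y_u(t)=0$ corresponds to $Y_u(t)$ lying on the modified critical line), and its law depends only on $y$ and $q$; this is the substance of Remark~\ref{rem:hat_f}. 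In particular, $\#\mathscr S$, $W_y$ and $\sigma_{\mathrm{max}}$ all have laws depending only on $y$ and the reproduction law.

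For part 1, on $\{\sigma_{\mathrm{max}}\le\zeta\}$ every $(u,t)\in\mathscr S$ satisfies $Y_u(t) = a-y+(1-\mu)t-f(t/a^2)$; since $1-\mu=O(a^{-2})$, $|f(t/a^2)|\le \|f'\|_\infty\cdot \zeta/a^2 = O(a^{-2})$, and $y,\zeta$ are fixed, the identity $\sin(\pi-\theta)=\sin\theta$ and first-order Taylor expansion yield $w_Z(Y_u(t)) = \pi y e^{-y}(1+O(1/a))$ and $w_Y(Y_u(t)) = e^{-y}(1+O(1/a))$. Summing over $\mathscr S$ gives the two displayed identities, and $Y\le\eta Z$ follows from $Y/Z=(\pi y)^{-1}(1+O(1/a))$ together with the standing hypothesis $y\ge\eta^{-1}$ and $\pi>1$. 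For part 3, the reduced process is critical BBM absorbed at $0$, which dies out in almost surely finite time (\cite{Kesten1978}); hence $\sigma_{\mathrm{max}}<\infty$ a.s., and one chooses $\zeta$ large (depending only on $\eta,y,q$) so that $\P^a_f(\sigma_{\mathrm{max}}>\zeta)<\eta/2$.

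The main obstacle is part 2, which asks for both convergence in probability $W_y\to W$ and the precise tail $P(W>x)\sim 1/x$ (with constant exactly $1$) and the truncated-mean expansion $E[W\Ind_{(W\le x)}]-\log x\to \const{eq:W_expec}$. In the reduced process, $\phi_y(\theta):=\E[\exp(-\theta\#\mathscr S)]$ satisfies, via Feynman--Kac, the critical FKPP ODE $\tfrac12\phi''-\phi'+\beta(F(\phi)-\phi)=0$ with $\phi_0(\theta)=e^{-\theta}$ and $\phi_\infty(\theta)=1$, i.e.\ $1-\phi_y$ is the critical FKPP travelling wave. A Bramson-type sharp analysis yields the scaling limit $ye^{-y}(1-\phi_y(\theta))\to\Psi(\theta)$ as $y\to\infty$, which is the Laplace-transform statement $W_y\to W$ in distribution; the upgrade to convergence in probability exploits that $W$ is a.s.\ determined by the genealogy of the tree, being the limit of a suitable critical derivative martingale. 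The tail and truncated-mean statements translate via Karamata's Tauberian theorem to the expansion $\Psi(\theta)=-\theta\log\theta+(\const{eq:W_expec}+1)\theta+o(\theta)$ as $\theta\to 0^+$, and I expect the technical heart of the proof to lie precisely in pinning down these constants (rather than just the order of magnitude). The cleanest route is to import these asymptotics directly from the analogous Proposition~23 of \cite{Berestycki2010}, established there under the same hypotheses on the reproduction law; the $f$-independence of $W_y$'s law is already ensured by the reduction above.
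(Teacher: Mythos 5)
Your proposal is correct and follows essentially the same route as the paper: part 1 by Taylor expansion of $w_Z,w_Y$ along the critical line using $1-\mu=O(a^{-2})$ and $y\ge\eta^{-1}$, part 3 from almost sure extinction of the drift-free reduced (critical, absorbed) process, and part 2 by importing the law, tail and truncated-mean asymptotics of $W$ from \cite{Berestycki2010} (the paper cites Propositions~27 and~39 there rather than Proposition~23). One minor slip in your sketch of the travelling-wave route: the Laplace-transform statement of $W_y\to W$ is $\phi_y(\theta ye^{-y})\to E[e^{-\theta W}]$, not $ye^{-y}(1-\phi_y(\theta))\to\Psi(\theta)$, but since you ultimately defer to the cited result this does not affect the argument.
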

\begin{proof}
By \eqref{eq:c0_mu}, every point $(x,t)$ on the critical line with $t\le \zeta$ satisfies 
$$|x-(a-y)| \le (1-\mu)t + \|f'\|_\infty t/a^2 \le (\pi^2/2+\|f\|)\zeta/a^2.$$
By the hypothesis on $\|f\|$ and the fact that $\zeta$ depends only on $\eta$ and thus only on $A$, this is $O(1/a)$ for large $a$. This readily implies the two equalities of the first part, where for the first equality we also use the fact that for $|z|\le 1$, $w_Z(a-y + z) = e^{-y+z}(\pi y + O(z + (y-z)^3/a^2)$. The remaining inequality of the first part then follows from the fact that $y^{-1}\le \eta$ by hypothesis, see Section~\ref{sec:BBBM_parameters}. For the remaining parts, we can assume that $f\equiv 0$, since the law of $(W_y)_{y\ge0}$ and the law of $\sigma_{\mathrm{max}}$ do not depend on $f$, nor $a$, by definition of the critical line. The second part then has been proven in \cite{Berestycki2010} for the case of dyadic branching (but their arguments carry over to our setting), see Proposition~27 and the remark following it for \eqref{eq:W_tail} and the proof of Proposition~39, in particular Equation~(134), for  \eqref{eq:W_expec}. Their arguments are very ingenious but indirect; for a more direct proof, see \cite[Section~2.4]{MaillardThese}. The third part is immediate (see also \cite[Corollary~25]{Berestycki2010}).
\end{proof}

Together with \eqref{eq:eta}, Lemma~\ref{lem:ZYW} now yields for large $A$ and $a$,
\begin{align}
\nonumber
p_B &= \P_f^a(Z > \ep e^A,\,\sigma_{\mathrm{max}} \le \zeta) + \P_f^a(\sigma_{\mathrm{max}} > \zeta)\\
\nonumber
&= P(W > \pi^{-1}\ep e^A(1+o(1))) + O(\eta)\\
\label{eq:pB}
&= \Big(\pi + o(1)\Big)\frac 1 {\ep e^A} = o(1),
\end{align}
where the last equality follows from \eqref{eq:ep_lower}. Furthermore, for large $A$ and $a$,
\begin{align}
\nonumber
 \E_{\Q_f}^a[Z] &= (1-p_B)^{-1}\E_f^a[Z\Ind_{(Z\le \ep e^A)}\Ind_{(\sigma_{\mathrm{max}} \le \zeta)}]\\
 \nonumber
 &= \Big(E[\pi W\Ind_{(W\le \pi^{-1}\ep e^A(1+o(1)))}] + O(\ep e^A\P_f^a(\sigma_{\mathrm{max}} > \zeta))\Big)(1+O(p_B))\\
\label{eq:QaZ}
&= \pi(A+\log\ep+\const{eq:QaZ}+ o(1)),
\end{align}
by Lemma~\ref{lem:ZYW}, \eqref{eq:ep_upper}, \eqref{eq:ep_lower}, \eqref{eq:eta} and \eqref{eq:pB}. Here, $\const{eq:QaZ} = \const{eq:W_expec} - \log \pi$. In particular, we have for large $A$ and $a$, by \eqref{eq:ep_upper},
\begin{equation}
 \label{eq:QaZC}
\E_{\Q_f}^a[Z] \le \pi A.
\end{equation}

By \eqref{eq:W_tail}, we have $E[W^2\Ind_{(W\le x)}] \sim x$ as $x\to\infty$. Similarly as above, we then have for large $A$ and $a$,
\begin{equation}
 \label{eq:QaZsquared}
\E_{\Q_f}^a[Z^2] \le C\ep e^A.
\end{equation}

We can now start the proof of Proposition~\ref{prop:T} by stating the first lemma:
\begin{lemma}
 \label{lem:T0}
Let $0\le t\le a^3$.  Suppose that $p_B \le 1/2$. Then,
\begin{equation*}
\P(T^{(0)}> t) = \exp\Big( -\pi p_B Z_0 \frac t {a^3} \big(1+O(p_B)\big) + O(p_BY_0)\Big).
\end{equation*}
\end{lemma}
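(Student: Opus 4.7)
The starting point is a \emph{thinning observation}. By the strong branching property applied at the stopping line $\mathscr R^{(0)}_t$, the breakout indicators $(\Ind_{B^{(u,s)}})_{(u,s)\in\mathscr R^{(0)}_t}$ are conditionally iid Bernoulli$(p_B)$ given $\F_{\mathscr R^{(0)}_t}$, because each $B^{(u,s)}$ is determined by the subtree rooted at the space-time point $(u,s)$, which under that property is a fresh BBM started from a single particle at $a$. Consequently,
\[
\P(T^{(0)}>t) \;=\; \E\bigl[(1-p_B)^{R^{(0)}_t}\bigr].
\]

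To evaluate this Laplace-type transform, I would use independence at time $0$: writing $\nu_0 = \sum_i \delta_{x_i}$, the subtrees emanating from the initial particles are independent, so the expectation factorises and
\[
\log \P(T^{(0)}>t) \;=\; \sum_i \log \E^{x_i}\bigl[(1-p_B)^{R^{(0)}_t}\bigr].
\]
The elementary two-sided bound $n p_B - \tfrac12 n^2 p_B^2 \le 1-(1-p_B)^n \le n p_B$, valid for $n\in\N$ and $p_B\in[0,1/2]$, together with $\log(1-z) = -z + O(z^2)$ applied to each factor, yields
\[
\log \P(T^{(0)}>t) \;=\; -p_B\,\E[R^{(0)}_t]\bigl(1+O(p_B)\bigr) \;+\; O\bigl(p_B^2\,V(t)\bigr),
\]
where $V(t) := \sum_i \E^{x_i}[(R^{(0)}_t)^2]$.

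The final step is to insert first- and second-moment estimates for the hitting count $R^{(0)}_t$, which come from the spectral theory of Brownian motion with drift $-\mu$ killed outside $[0,a]$ (cf.\ Section~\ref{sec:right_border}). The principal Dirichlet eigenfunction is proportional to $w_Z$, and its eigenvalue produces the outflow rate $\pi/a^3$ to the right boundary; higher Fourier modes (encoded by the $E_t$-type error) and the initial boundary layer contribute an $O(Y_0)$ correction. This gives
\[
\E[R^{(0)}_t] \;=\; \frac{\pi Z_0}{a^3}\,t \;+\; O(Y_0),\qquad 0\le t\le a^3,
\]
and a parallel many-to-two computation yields a bound on $V(t)$ for which $p_B^2 V(t)$ matches the advertised error $O(p_B)\cdot(p_B Z_0 t/a^3) + O(p_B Y_0)$. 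Plugging these into the previous display produces the claimed identity.

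The main technical obstacle is the second-moment estimate: $R^{(0)}_t$ can cluster heavily, since a single ancestor may spawn of order $\ep e^A$ descendants that all hit $a$ during a single excursion of length $O(a^2)$. Bounding $V(t)$ in the correct form therefore requires a careful many-to-two identity involving the two-point Green's function of the killed, branching Brownian motion with drift. Once both moment estimates are available in the right form, the remainder of the argument is purely algebraic bookkeeping of the Jensen-type expansion.
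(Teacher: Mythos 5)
Your proposal is correct and follows essentially the same route as the paper: the identity $\P(T^{(0)}>t)=\E[(1-p_B)^{R^{(0)}_t}]$ via the strong branching property, factorisation over independent initial particles, and the first- and second-moment estimates for $R^{(0)}_t$ from Lemma~\ref{lem:Rt} (which indeed give $\E[R^{(0)}_t]=\pi Z_0 t/a^3+O(Y_0)$ and $V(t)\le C(Z_0 t/a^3+Y_0)$, so that $p_B^2V(t)$ is absorbed into the stated errors). The only cosmetic difference is that you use the two-sided Bonferroni bound on $1-(1-p_B)^n$ where the paper uses Jensen's inequality for the lower bound and $(1-p)^n\le 1-np+n(n-1)p^2/2$ for the upper bound; both yield the same expansion.
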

\begin{proof}
 Let $x_1,\ldots,x_n$ be the positions of the initial particles. Since the initial particles spawn independent branching Brownian motions, we have
\begin{equation*}
 \P(T^{(0)} > t) = \prod_i \P^{x_i}(T^{(0)}>t).
\end{equation*}
We have for every $x\in(0,a)$,
\begin{equation}
 \label{eq:308}
  \P^x(T^{(0)} > t) = \E^x\Big[\prod_{(u,s)\in\mathscr R^{(0)}_t} \Ind_{(B^{(u,s)})^c}\Big] = \E^x\Big[(1-p_B)^{R^{(0)}_t}\Big],
\end{equation}
since by the strong branching property, the events $B^{(u,s)}$ are independent conditioned on $\F_{\mathscr R^{(0)}_t}$. By Lemma~\ref{lem:Rt} and the assumption $t\le a^3$, we have
\begin{align}
 \label{eq:310}
&|\E^x[R^{(0)}_t] - \pi w_Z(x) t/a^3| \le C w_Y(x),\\
 \label{eq:315}
&\E^x[(R^{(0)}_t)^2] \le C (w_Z(x) t/a^3+w_Y(x)).
\end{align}
By Jensen's inequality, \eqref{eq:310} and the inequality $|\log(1-z)| \le z+z^2$ for $z\in[0,1/2]$,
\begin{equation}
 \label{eq:317}
 \E^x\Big[(1-p_B)^{R^{(0)}_t}\Big] \ge \exp\Big( -\pi p_B w_Z(x) \frac t {a^3} (1+ p_B) - Cp_Bw_Y(x)\Big).
\end{equation}
Furthermore, the inequality $(1-p)^n \le 1-np+n(n-1)p^2/2$ and Equations \eqref{eq:310} and \eqref{eq:315} give
\begin{equation}
\label{eq:325}
 \E^x\Big[(1-p_B)^{R^{(0)}_t}\Big] \le 1-\pi p_B w_Z(x) \frac t {a^3}  (1- Cp_B) + Cp_Bw_Y(x).
\end{equation}
The lemma now follows from \eqref{eq:308}, \eqref{eq:317} and \eqref{eq:325} together with the inequality $1+z \le e^z$ for $z\in\R$.
\end{proof}

\begin{lemma}
 \label{lem:T_a}
Let $0\le t\le \tconst{eq:tcrit}$. Then, for large $A$ and $a$,
\begin{equation*}
\Q^a(T > t) \ge \exp\Big(-C p_B A (\tfrac t {a^3} + \eta)\Big).
\end{equation*}
\end{lemma}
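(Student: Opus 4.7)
The plan is to upper-bound $\Q^a(T \le t)$ by something of order $p_B A(t/a^3 + \eta)$ and then convert to the stated lower bound via the inequality $1-x \ge e^{-2x}$ for small $x$. A naive union bound of the form $\Q^a(T \le t) \le \sum_l p_B \E_{\Q^a}[R^{(l)}_t]$ is doomed: for $l \ge 2$, the sub-BBMs initiated by tier-$\ge 1$ hits of $a$ are unconditioned $\P^a$-BBMs and, by the tail of $W$ in Lemma~\ref{lem:ZYW}, one has $\E^a[Z] = \infty$, hence $\E_{\Q^a}[R^{(l)}_t] = \infty$. I would therefore restrict attention to ``good'' hits.

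Concretely, for $l \ge 1$ let $\widetilde{\mathscr R}^{(l)}_t \subseteq \mathscr R^{(l)}_t$ denote the sub-line of tier-$l$ hits $(u,s)$ such that the ancestor tier-$k$ hit of $(u,s)$ satisfies $B^c$ for every $0 \le k < l$. Under $\Q^a$, the unique tier-$0$ hit $(\emptyset,0)$ automatically satisfies $B^c$, so $\widetilde{\mathscr R}^{(1)}_t = \mathscr R^{(1)}_t$. Since $\{(u,s) \in \widetilde{\mathscr R}^{(l)}_t\}$ is $\F_{\mathscr R^{(l)}_t}$-measurable while $B^{(u,s)}$ is, conditionally on $\F_{\mathscr R^{(l)}_t}$, an independent Bernoulli$(p_B)$ by the strong branching property, a union bound on the tier of the first breakout gives
\[
 \Q^a(T \le t) \le \sum_{l \ge 1} p_B \, \E_{\Q^a}\big[\#\widetilde{\mathscr R}^{(l)}_t\big].
\]

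The key one-step estimate is $\E^a[R^{(1)}_s \Ind_{B^c}] \le \beta(s) := \pi^2 A s/a^3 + C A \eta$. To obtain it, apply the strong branching property at $\mathscr S$: conditional on $\F_{\mathscr S}$, $R^{(1)}_s$ is the sum over $(u,\sigma) \in \mathscr S$ of independent $R^{(0)}_{s-\sigma}$ values in sub-BBMs started at $x_{u,\sigma}$, and the first-moment bound \eqref{eq:310} from the proof of Lemma~\ref{lem:T0} gives $\E^x[R^{(0)}_r] \le \pi w_Z(x) r/a^3 + C w_Y(x)$. Summing, and using $B^c \in \F_{\mathscr S}$, one finds $\E^a[R^{(1)}_s \Ind_{B^c}] \le \pi s a^{-3} (1-p_B) \E_{\Q^a}[Z] + C(1-p_B) \E_{\Q^a}[Y]$; then \eqref{eq:QaZC} bounds $\E_{\Q^a}[Z]$ by $\pi A$, and Lemma~\ref{lem:ZYW} yields $Y \le \eta Z$ on $B^c$, giving $\E_{\Q^a}[Y] \le \eta \pi A$. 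Iterating via the strong branching property at $\mathscr R^{(l-1)}_t$ (each good hit initiates an independent $\P^a$-distributed sub-BBM) gives the recursion $\E_{\Q^a}[\#\widetilde{\mathscr R}^{(l)}_t] \le \beta(t)\, \E_{\Q^a}[\#\widetilde{\mathscr R}^{(l-1)}_t]$, with base case $\E_{\Q^a}[\#\widetilde{\mathscr R}^{(1)}_t] = \E^a[R^{(1)}_t \Ind_{B^c}]/(1-p_B) \le 2\beta(t)$; by induction, $\E_{\Q^a}[\#\widetilde{\mathscr R}^{(l)}_t] \le 2\beta(t)^l$.

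Since $t \le \tconst{eq:tcrit} = a^3/(20A)$ and $\eta \le e^{-2A}$ together force $\beta(t) \le \pi^2/20 + o(1) < 1/2$ for large $A$, the geometric series sums to $\Q^a(T \le t) \le 4 p_B \beta(t) \le C p_B A(t/a^3 + \eta)$, and the conclusion follows from the Taylor inequality above. The main delicate point, and the reason the naive approach fails, is the restriction to $\widetilde{\mathscr R}^{(l)}_t$: one must carefully verify that the ``no prior breakout'' indicator is measurable at the appropriate stopping line so that the strong branching property can be invoked cleanly at each step of the recursion, replacing the divergent unconditional expectation $\E^a[Z]$ by the controlled conditional expectation $\E_{\Q^a}[Z] \le \pi A$ from \eqref{eq:QaZC}.
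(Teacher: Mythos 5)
Your proposal is correct, but it takes a genuinely different route from the paper. The paper never performs a union bound over tiers: instead it applies the strong branching property at $\mathscr S$, factors $\P^\nu(T>t) = \P^\nu(T>t\,|\,T^{(0)}>t)\,\P^\nu(T^{(0)}>t)$, bounds the second factor by Lemma~\ref{lem:T0}, and bounds the first by noting that, conditionally on no tier-$0$ breakout, each tier-$0$ hit of $a$ spawns a $\Q^a$-BBM, whence (via Jensen and \eqref{eq:conditioned_Rt_constant}) $\P^\nu(T>t\,|\,T^{(0)}>t) \ge \Q^a(T>t)^{\pi Z(t/a^3+O(\eta))}$. This produces the self-referential inequality \eqref{eq:370}, $\Q^a(T>t)\ge \Q^a(T>t)^{c}\,e^{-b}$ with $c=\pi^2A(t/a^3+O(\eta))<1/2$, which is then solved for $\Q^a(T>t)$; the bootstrap absorbs all higher tiers at once and the issue $\E^a[Z]=\infty$ never surfaces because one stays under $\Q^a$ throughout. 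Your argument replaces the bootstrap by an explicit union bound over the tier of the first breakout together with a geometric recursion, and your restriction to the sub-lines $\widetilde{\mathscr R}^{(l)}_t$ of hits whose ancestor hits all satisfy $B^c$ is exactly the right fix for the divergence of the unconditional expectation --- it is legitimate because the first breakout necessarily has non-breaking-out ancestors, and the indicator of ``no ancestor breakout'' is indeed $\F_{\mathscr R^{(l)}_t}$-measurable since each ancestor's line $\mathscr S^{(v,r)}$ precedes $\mathscr R^{(l)}_t$ in the genealogical order. Your one-step estimate correctly reduces to $\E^a[\Ind_{B^c}Z]=(1-p_B)\E_\Q^a[Z]\le\pi A$ via \eqref{eq:QaZC} and $Y\le\eta Z$ on $B^c$ from Lemma~\ref{lem:ZYW}, matching the ingredients of \eqref{eq:364}. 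What the paper's route buys is brevity (no tier bookkeeping, no measurability verification for the good sub-lines); what yours buys is a more transparent, purely first-moment argument that in addition yields the matching upper bound $\Q^a(T\le t)\le Cp_BA(t/a^3+\eta)$ explicitly. The only point you should flesh out before this is complete is the measurability claim you flag, together with the observation that conditioning on $B^c$ (an $\F_{\mathscr R^{(l-1)}_t}$-measurable event for $l\ge2$) does not perturb the conditional law of the subtrees used in the recursion.
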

\begin{proof}
We will use the simplified notation from Section~\ref{sec:BBBM_breakout}, in particular, $\mathscr S = \mathscr S^{(\emptyset,0)}$, $Z=Z^{(\emptyset,0)}$ and $Y=Y^{(\emptyset,0)}$. By the strong branching property applied to the stopping line $\mathscr S$, we have
\begin{equation}
 \label{eq:360}
 \Q^a(T > t) = \E_\Q^a\Big[\prod_{(u,s)\in\mathscr S}\P^{X_u(s)}(T > t-s)\Big] \ge \E_\Q^a[\P^\nu(T > t)],
\end{equation}
where $\nu = \sum_{(u,s)\in\mathscr S} \delta_{X_u(s)}$. Since $T>t$ implies $T^{(0)}>t$, we have
\begin{equation}
 \label{eq:362}
\P^\nu(T > t) = \P^\nu(T > t|T^{(0)}>t) \P^\nu(T^{(0)} > t).
\end{equation}
Now note that $Y \le \eta Z$, $\Q^a$-almost surely, by the first part of Lemma~\ref{lem:ZYW}. By Lemma~\ref{lem:T0}, we have for large $A$,
 \begin{equation}
\label{eq:363}
\P^\nu(T^{(0)} > t) \ge \exp\Big(- C p_B Z\left(\tfrac t {a^3}+\eta\right)\Big).
\end{equation} 
Furthermore, with the notation from Section \ref{sec:weakly_conditioned}, with $p(s) \equiv p_B$,
\begin{align*}
 \P^\nu(T > t|T^{(0)}>t) &= \widetilde{\P}^\nu(T > t) = \widetilde \E^\nu\Big[\prod_{(u,s)\in\mathscr R^{(0)}_t}\Q^a(T>t-s)\Big] \ge \widetilde \E^\nu\Big[\Q^a(T>t)^{R^{(0)}_t}\Big].
\end{align*}
By Jensen's inequality, \eqref{eq:conditioned_Rt_constant} and Lemma~\ref{lem:Rt}, this implies
\begin{equation}
\label{eq:364}
 \P^\nu(T > t|T^{(0)}>t) \ge \Q^a(T>t)^{\widetilde{\E}^\nu[R^{(0)}_t]} \ge \Q^a(T>t)^{\pi Z (t/a^3 + O(\eta))}.
\end{equation}
Equations \eqref{eq:360}, \eqref{eq:362}, \eqref{eq:363} and \eqref{eq:364}, together with Jensen's inequality and \eqref{eq:QaZ}, now yield for large $A$ and $a$,
\begin{equation}
\label{eq:370}
 \Q^a(T > t) \ge \Q^a(T>t)^{\pi^2A (t/a^3 + O(\eta))}\times\exp\Big(- C p_B A\left(\tfrac t {a^3}+\eta\right)\Big).
\end{equation}
By the hypothesis on $t$ and \eqref{eq:eta}, the exponent of $\Q^a(T>t)$ in \eqref{eq:370} is smaller than $1/2$ for large $A$ and $a$. This yields the statement.
\end{proof}

\begin{proof}[Proof of Proposition \ref{prop:T}]
The upper bound follows from Lemma~\ref{lem:T0} and the trivial inequality $T\le T^{(0)}$. For the lower bound, we note that as in the proof of Lemma~\ref{lem:T_a}, we have by Jensen's inequality and \eqref{eq:conditioned_Rt_constant},
\begin{equation}
\label{eq:380}
 \P(T > t) = \P(T > t\,|\,T^{(0)} > t)\P(T^{(0)} > t) \ge \Q^a(T > t)^{\E[R_t^{(0)}]}\P(T^{(0)} > t).
\end{equation}
By Lemmas~\ref{lem:T_a} and \ref{lem:Rt}, we have
\begin{equation}
\label{eq:382}
 \Q^a(T > t)^{\E[R_t^{(0)}]} \ge \exp\Big(-C p_B A (\tfrac t {a^3} + \eta)(\tfrac t {a^3} Z_0 + Y_0)\Big),
\end{equation}
The lower bound in \eqref{eq:T} now follows from \eqref{eq:380}, \eqref{eq:382} and Lemma~\ref{lem:T0}, together with the hypothesis on $t$, \eqref{eq:eta} and \eqref{eq:pB}.
\end{proof}

We now state two corollaries of Proposition~\ref{prop:T}. The first gives a handy estimate for the moments of $T$, the second a coupling of $T$ with an exponentially distributed random variable.

\begin{corollary}
\label{cor:moments_T}
Define $\gamma = (\pi p_B Z_0)^{-1}$. Suppose that $Y_0\le C$. Then, for large $A$ and $a$, for every $n\in\N$ and $l\in\N\cup\{\infty\}$,
\begin{equation*}
 \E[((T^{(0;l)}/a^3) \wedge 1)^n] \le C n! (2\gamma)^n.
\end{equation*}
\end{corollary}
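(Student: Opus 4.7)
The key observation is that since $T^{(0;l)} = \min_{0 \le j \le l} T^{(j)}$, we have $T^{(0;l)} \le T^{(0)}$ for every $l \in \N \cup \{\infty\}$. Consequently $((T^{(0;l)}/a^3)\wedge 1)^n \le ((T^{(0)}/a^3)\wedge 1)^n$ pointwise, so it suffices to prove the bound for $T^{(0)}$ alone. This reduces the statement to a direct tail estimate on $T^{(0)}$, which is exactly the content of Lemma~\ref{lem:T0}.

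Next, I would use the tail formula
\[
\E\!\left[((T^{(0)}/a^3) \wedge 1)^n\right] = n\int_0^1 t^{n-1} \P(T^{(0)} > ta^3)\,\dd t.
\]
Applying Lemma~\ref{lem:T0} with $t$ replaced by $ta^3$ (which lies in the allowed range $[0,a^3]$ for $t \in [0,1]$), together with the hypothesis $Y_0 \le C$ and the fact that $p_B = o(1)$ as $A, a \to \infty$ (by \eqref{eq:pB}), gives
\[
\P(T^{(0)} > ta^3) \le \exp\!\bigl(-\pi p_B Z_0\, t\, (1 - C p_B) + C p_B Y_0\bigr) \le C'\, e^{-t/(2\gamma)}
\]
for $A, a$ large enough, where I use that $(1-Cp_B) \ge 1/2$ and $1/\gamma = \pi p_B Z_0$.

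Finally, I would substitute this estimate and extend the integral to $[0,\infty)$:
\[
\E\!\left[((T^{(0)}/a^3) \wedge 1)^n\right] \le C' n \int_0^\infty t^{n-1} e^{-t/(2\gamma)}\,\dd t = C'\, n!\,(2\gamma)^n,
\]
which yields the claim after absorbing $C'$ into the constant $C$. The only point requiring any care is checking that the multiplicative error $(1+O(p_B))$ in the exponent of Lemma~\ref{lem:T0} can indeed be bounded below by $1/2$ and that the additive error $O(p_B Y_0)$ contributes only a bounded prefactor; both follow straightforwardly from $p_B = o(1)$ and $Y_0 \le C$, so there is no real obstacle.
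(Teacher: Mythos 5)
Your proposal is correct and follows exactly the paper's route: the paper's one-line proof invokes the same tail formula, the trivial inequality $T^{(0;l)}\le T^{(0)}$, Lemma~\ref{lem:T0}, the hypothesis on $Y_0$, and $p_B\to 0$ from \eqref{eq:pB}. You have merely written out the computation the paper leaves implicit, and all the details check out.
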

\begin{proof}
Follows from the equality
\[
\E[((T^{(0;l)}/a^3) \wedge 1)^n] = n\int_0^1 t^{n-1}\,\P(T^{(0;l)}>ta^3)\,\dd t,
\]
together with Lemma~\ref{lem:T0}, the trivial inequality $T^{(0;l)}\le T^{(0)}$, the hypothesis on $Y_0$ and the fact that $p_B\to0$ as $A$ and $a$ go to infinity by \eqref{eq:pB}.
\end{proof}

\begin{corollary}
\label{cor:coupling_T}
 Suppose that $e^{-A}Z_0 = 1 + O(\ep^{3/2})$ and that $Y_0 \le \eta Z_0$. Then there exists a coupling $(T,V)$, such that $V$ is a random variable which is exponentially distributed with parameter $\pi p_Be^A$, $T$ is $\sigma(V)$-measurable and $\P(|T/a^3 - V| > \ep^{3/2}) \le C\ep^2$ for large $A$ and  $a$.
\end{corollary}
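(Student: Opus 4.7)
The plan is a standard inverse-CDF (quantile) coupling. I would introduce an auxiliary uniform $[0,1]$ variable $U$ on a new probability space, set $V = -\lambda^{-1}\log U$ with $\lambda = \pi p_B e^A$ (so that $V$ has the claimed exponential law), and define $T = a^3 \bar{F}^{-1}(U)$, where $\bar{F}(s) = \P(T/a^3 > s)$ is the survival function of $T/a^3$ under the given initial configuration. Then $T$ has the correct marginal law by construction and is a deterministic (hence $\sigma(V)$-measurable) function of $V$, so the measurability requirement is automatic. All the work then lies in bounding $|T/a^3 - V|$.

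For that bound I would first rewrite Proposition~\ref{prop:T} as
\[
\bar{F}(s) = \exp\bigl(-\lambda s(1 + \delta_1(s)) + \delta_0\bigr), \qquad 0 \le s \le 1/(20A),
\]
where the hypothesis $e^{-A}Z_0 = 1+O(\ep^{3/2})$ turns $\pi p_B Z_0$ into $\lambda(1+O(\ep^{3/2}))$, giving $\delta_1(s) = O(\ep^{3/2} + As + p_B)$, while $Y_0 \le \eta Z_0$ together with \eqref{eq:eta_ep} and \eqref{eq:pB} yield $\delta_0 = O(p_B Y_0) = O(\ep^{11})$. Note also that $p_B \le C\ep^5$ by \eqref{eq:ep_lower} and \eqref{eq:pB}, so the $p_B$ term in $\delta_1$ is absorbed by $\ep^{3/2}$.

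Next, I would restrict to the good event $G = \{V \le \ep\log(1/\ep)\}$, whose complement has probability $e^{-\lambda \ep \log(1/\ep)} = \ep^{\pi^2(1+o(1))} \le C\ep^2$ for large $A$ and $a$. On $G$, I would first check that $s := T/a^3 \le 1/(20A)$, so that the above expansion of $\bar F$ applies: indeed $\bar{F}(1/(20A)) \le \exp(-c/(A\ep))$ for some $c > 0$, and by \eqref{eq:ep_upper} this is much smaller than $e^{-\lambda V} \ge \ep^{\pi^2(1+o(1))}$. Equating $\bar{F}(s) = e^{-\lambda V}$ on $G$ and taking logarithms then gives $\lambda V = \lambda s(1+\delta_1(s)) - \delta_0$, and a one-step bootstrap yields $s \le 2V$, hence
\[
|V - s| = O\bigl(V\ep^{3/2} + V^2 A + \ep^{12}\bigr) = O\bigl(\ep^{5/2}\log(1/\ep) + \ep^2 A\log^2(1/\ep) + \ep^{12}\bigr).
\]
Using $\log(1/\ep) \le A/6$ (from \eqref{eq:ep_lower}) and $\ep^{1/2}A^3 \to 0$ (from \eqref{eq:ep_upper}), each of these three terms is $\le \ep^{3/2}$ for large $A$ and $a$.

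The main obstacle is purely technical: one has to keep track of all error terms and verify that the constraints \eqref{eq:ep_upper}--\eqref{eq:eta_ep} on $\ep$ and $\eta$ are tight enough to fit every error into the $\ep^{3/2}$ budget (and the complement of $G$ into the $\ep^2$ budget). There is no conceptual difficulty beyond this: the exponential form of Proposition~\ref{prop:T} is directly suited to an exponential-tail comparison via quantile coupling.
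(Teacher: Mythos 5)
Your proposal is essentially the paper's own proof: an inverse-CDF (quantile) coupling that makes $T$ a deterministic function of $V$, followed by plugging the expansion of the survival function of $T$ from Proposition~\ref{prop:T} into the relation $\bar F(T/a^3)=e^{-\lambda V}$, and then discarding a small bad event. The one place where you deviate, and where your argument is slightly loose, is the choice of good event and the resulting bootstrap. The paper restricts directly to $\{T/a^3\le\ep^{7/8}\}$, whose probability is controlled via Lemma~\ref{lem:T0} together with $T\le T^{(0)}$; since $\ep^{7/8}\ll 1/(20A)$, the factor $1+O(AT/a^3+p_B)$ in Proposition~\ref{prop:T} is then $1+o(1)$ outright and no bootstrap is needed. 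You instead restrict to $\{V\le\ep\log(1/\ep)\}$ and then want $s:=T/a^3\le 2V$. As stated, the step from $\lambda V = \lambda s(1+\delta_1(s))-\delta_0$ to $s\le 2V$ requires $1+\delta_1(s)$ bounded away from $0$, and at that point you only know $s\le 1/(20A)$, so $\delta_1(s)=O(As+p_B+\ep^{3/2})$ is bounded by roughly $C/20$, where $C$ is the (untracked) implied constant in Proposition~\ref{prop:T}; this is not guaranteed to be below~$1$. The fix is easy and in the spirit of what the paper does: use the crude monotone bound $\bar F(s)\le\P(T^{(0)}>sa^3)$, whose error factor is $1+O(p_B)=1+o(1)$ by Lemma~\ref{lem:T0}, to obtain $s\le(1+o(1))V+O(\ep^{12})$, and only then invoke the full expansion to estimate $|V-s|$. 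With that repair, your error bookkeeping (the bounds $\delta_0=O(\ep^{11})$, $\P(G^c)=\ep^{\pi^2(1+o(1))}\le C\ep^2$, and the reductions using \eqref{eq:ep_upper}, \eqref{eq:ep_lower}, \eqref{eq:eta_ep}, \eqref{eq:pB}) is correct, and so is the measurability claim (one should just note, as the paper does, that the survival function of $T$ is continuous so that the quantile transform is well defined and deterministic in $V$).
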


\begin{proof}
Set $\gamma := \pi p_Be^A$. Define $F(t) := \P(T \ge t)$, which is a continuous function since  $T$ has no atoms except at infinity\footnote{This follows from the facts that 1) the first hitting time of a point by a Brownian motion has no atoms and 2) the number of individuals alive at a fixed time in branching Brownian motion is almost surely finite.}. We can therefore define a random variable $U$, uniformly distributed on $(0,1)$, by setting
\[
  U = F(T)\Ind_{(T<\infty)} + U' F(\infty)\Ind_{(T=\infty)},
\]
where $U'$ is a uniformly distributed random variable on $(0,1)$, independent of $T$. Now define $V = -\gamma^{-1} \log U$. Then $V$ is exponentially distributed with parameter $\gamma$ and $T = F^{-1}(e^{-\gamma V})$, where $F^{-1}$ denotes the generalised inverse of $F$. Hence, $T$ is $\sigma(V)$-measurable. On $\{T<\infty\}$, we have by Proposition \ref{prop:T}, for $a$ large enough,
\[
\begin{split}
 V &= -\gamma^{-1} (\pi p_Be^A e^{-A} Z_0 T/a^3(1+ O(AT/a^3 + p_B))) + O(p_BY_0)\\
&= T/a^3(1 + O(\ep^{3/2} + AT/a^3 + p_B)) + O(p_B e^A\eta),
\end{split}
\]
by the hypotheses on $Z_0$ and $Y_0$. Hence, by \eqref{eq:ep_lower}, \eqref{eq:eta_ep} and \eqref{eq:pB}, we have for $a$ large enough,
\begin{equation}
\label{eq:390}
 |T/a^3 - V| = O(\ep^{3/2}T/a^3 + A(T/a^3)^2) + O(\ep^{2}).
\end{equation}
But now we have by Lemma~\ref{lem:T0}, for large $A$ and $a$,
\begin{equation}
\label{eq:392}
 \P(T/a^3 > \ep^{7/8}) \le \P(T^{(0)}/a^3 > \ep^{7/8}) \le Ce^{-O(\ep^{-1/8})} \le \ep^2/2.
\end{equation}
The statement now follows from \eqref{eq:390} and \eqref{eq:392} together with \eqref{eq:ep_upper}.
\end{proof}

\subsection{The particles that do not participate in the breakout}
\label{sec:before_breakout_particles}

In this section, we fix $\vartheta \le  \tconst{eq:tcrit}$. As in the previous section, we assume that the initial condition is non-random. We define the law of BBM conditioned not to break out before $\vartheta$ from the tiers $0,\ldots,l$ by
\[
 \Phat_{(l)}(\cdot) = \P(\cdot\,|\,T^{(0;l)} > \vartheta),\quad\text{with }\Phat = \Phat_\infty.
\]
Expectation w.r.t. $\Phat_{(l)}$ is denoted by $\Ehat_{(l)}$. Under the law $\Phat_{(l)}$, the process stopped at $\mathscr R^{(0)}_\infty$ then follows the law $\Ptilde$ from Section \ref{sec:weakly_conditioned}, with
\begin{equation}
 \label{eq:def_ps}
p(t) = \P^{(a,t)}(T^{(0;l)} \le \vartheta).
\end{equation}
By Proposition~\ref{prop:T}, we have for every $x\in[0,a]$ and $t\ge0$, for large $A$ and $a$,
\begin{equation}
  \label{eq:hbar}
 \P^{(x,t)}(T^{(0;l)} > \vartheta) = \P^{x}(T^{(0;l)} > \vartheta-t) \ge \P^{x}(T > \vartheta) \ge e^{-Cp_B}.
\end{equation}
In particular, \eqref{eq:def_ps} and \eqref{eq:hbar} give,
\begin{equation}
 \label{eq:pbar_estimate}
\|p\|_\infty  \le C p_B.
\end{equation}
Moreover, \eqref{eq:hbar} implies for large $A$ and $a$,
\begin{equation}
 \label{eq:burglars}
 \forall t\le \vartheta,\quad \Q^a(T^{(0;l)} > t) \ge \Q^a(T > \vartheta) = \frac{\P^a(T>\vartheta)}{\P^a(T>0)} \ge e^{-Cp_B}.
\end{equation}

The following corollary is an immediate consequence of \eqref{eq:hbar} together with some results from Section~\ref{sec:weakly_conditioned}, namely, 
Lemma~\ref{lem:many_to_one_conditioned}, Corollary~\ref{cor:conditioned_upperbound} and Lemma~\ref{lem:conditioned_Z_lowerbound} (applied with $h(x,t) = \P^{(x,t)}(T^{(0;l)} > \vartheta)$).
\begin{corollary}
  \label{cor:hat_many_to_one}
 Let  $x\in (0,a)$, $t\in[0,a^3]$ and  $g:[0,a]\to \R_+$ be measurable. Define $S^{(0)}_t = \sum_{(u,s)\in\mathscr N^{(0)}_t} g(X_u(s))$ or $S^{(0)}_t = R^{(0)}_t$. Then, for large $A$ and $a$,
\begin{equation*}
 \Ehat_{(l)}^x[S^{(0)}_t] = (1+O(p_B)) \E^x[S^{(0)}_t] \quad\tand\quad \Ehat_{(l)}^x[(S^{(0)}_t)^2] \le (1+O(p_B)) \E^x[(S^{(0)}_t)^2].
\end{equation*}
\end{corollary}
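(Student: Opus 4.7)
The plan is to recognize the conditional law $\Phat_{(l)}$ as fitting precisely into the weakly-conditioned framework of Section~\ref{sec:weakly_conditioned}: under $\Phat_{(l)}$, the branching process stopped at the line $\mathscr R^{(0)}_\infty$ follows the law $\Ptilde$ built from the unconditioned $\P$ with the breakout function $p(t)$ of \eqref{eq:def_ps}, which is exactly the $h$-transform corresponding to $h(x,t) = \P^{(x,t)}(T^{(0;l)} > \vartheta)$. The only quantitative input I need is the uniform sandwich $1 - Cp_B \le h(x,t) \le 1$, valid for all $x \in [0,a]$ and $t \ge 0$; this is immediate from \eqref{eq:hbar} together with the trivial upper bound $h \le 1$.

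First I would handle the first-moment equality. Since $S^{(0)}_t$ is a nonnegative sum over the stopping line $\mathscr N^{(0)}_t$, which sits strictly before $\mathscr R^{(0)}_\infty$, Lemma~\ref{lem:many_to_one_conditioned} applies with the above choice of $h$ and expresses $\Ehat_{(l)}^x[S^{(0)}_t]$ as the unconditioned expectation of $S^{(0)}_t$, weighted pathwise by a ratio of values of $h$ at the root and along the stopping line. Substituting $h = 1 + O(p_B)$ in numerator and denominator collapses this weight to $1 + O(p_B)$, and Lemma~\ref{lem:conditioned_Z_lowerbound} is what supplies the matching lower bound implicit in the equality sign.

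For the second-moment estimate I would invoke Corollary~\ref{cor:conditioned_upperbound}, which, for a nonnegative additive functional of the stopping line $\mathscr N^{(0)}_t$, bounds $\Etilde^x[(S^{(0)}_t)^2]$ above by $h(x,0)^{-1}$ times the unconditioned second moment $\E^x[(S^{(0)}_t)^2]$. The same sandwich then turns $h(x,0)^{-1}$ into $1+O(p_B)$, yielding the claimed upper bound. Both choices $S^{(0)}_t = \sum g(X_u(s))$ and $S^{(0)}_t = R^{(0)}_t$ are covered uniformly because they are both nonnegative additive functionals of $\mathscr N^{(0)}_t$.

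I do not expect any real obstacle here: the corollary is essentially a bookkeeping statement that switching from $\P^x$ to $\Phat_{(l)}^x$ only perturbs first and second moments of tier-$0$ functionals by a multiplicative $(1+O(p_B))$. The one substantive point is the uniform estimate $h \ge e^{-Cp_B}$ supplied by Proposition~\ref{prop:T} via \eqref{eq:hbar}; once that is in hand, everything reduces to a direct application of the three cited results of Section~\ref{sec:weakly_conditioned}.
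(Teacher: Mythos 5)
Your proposal is correct and matches the paper's proof, which is precisely a one-line appeal to \eqref{eq:hbar} (giving the uniform bound $h(x,t)=\P^{(x,t)}(T^{(0;l)}>\vartheta)\ge e^{-Cp_B}$) combined with Lemma~\ref{lem:many_to_one_conditioned}, Corollary~\ref{cor:conditioned_upperbound} and Lemma~\ref{lem:conditioned_Z_lowerbound}. The only point to be careful about is that the weight in Lemma~\ref{lem:many_to_one_conditioned} also carries the factor $e^{-\int_0^t e(X_s,s)\,\dd s}$, which a naive substitution $h=1+O(p_B)$ would only control as $e^{-O(p_B t)}$ (not good enough for $t\asymp a^3$) — but since you delegate the lower bound to Lemma~\ref{lem:conditioned_Z_lowerbound}, which handles exactly this via the taboo-bridge estimate, the argument is complete.
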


We now first study the particles from tiers 1 and higher, the moment they come back to the critical line. We set for $k\ge0$ and $t\ge0$,
\[
 Z_{\emptyset,t}^{(k)} = \sum_{(u,s)\in\mathscr S^{(k)}_t} w_Z(X_u(s)),\quad Y_{\emptyset,t}^{(k)} = \sum_{(u,s)\in\mathscr S^{(k)}_t} w_Y(X_u(s)).
\]
For every $k\le l$ and $t\le \vartheta$, we then have by the first part of Lemma~\ref{lem:ZYW},
\begin{equation}
 \label{eq:imam}
 Y_{\emptyset,t}^{(k+1)} \le \eta Z_{\emptyset,t}^{(k+1)},\quad\text{$\Phat_{(l)}$-almost surely}.
\end{equation}

\begin{lemma}
 \label{lem:Zl_exp_upperbound}
We have for large $A$ and $a$, for all $t\le \vartheta$,
\begin{align}
 \label{eq:Z1_exp_lowerbound}
 &\Ehat_{(l)}[Z_{\emptyset,t}^{(1)}] = (\pi+O(p_B+A^{-1}\vartheta/a^3))\E_\Q^a[Z](\tfrac t {a^3}Z_0 +O(Y_0)),\tand\\
 \label{eq:Zl_exp_upperbound_2}
 &\Ehat_{(l)}[Z_{\emptyset,t}^{(k+1)}] \le C A(\tfrac t {a^3} Z_0 + Y_0)\left((\pi^2+Cp_B)A(\tfrac t {a^3} +C\eta)\right)^{k},\text{ for all $0\le k\le l$.}
\end{align}
\end{lemma}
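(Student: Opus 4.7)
\medskip

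\noindent\textbf{Proof plan.} Both estimates will follow by induction on $k$ from the stopping-line decomposition
\[
Z^{(k+1)}_{\emptyset,t} \;=\; \sum_{(u,s)\in \mathscr R^{(k)}_t} Z^{(u,\tau_k(u))},
\]
obtained by applying the strong branching property at $\mathscr R^{(k)}_t$. Under $\Phat_{(l)}$ (for $k\le l$), each sub-process rooted at $(u,s)\in\mathscr R^{(k)}_t$ is conditioned not to break out during the remaining time-window $[s,\vartheta]$, hence its law is a truncation of $\Q^a$. The key uniform control is provided by \eqref{eq:hbar}--\eqref{eq:pbar_estimate}: $\|p\|_\infty \le Cp_B$ and $\P^{(x,s)}(T^{(0;l)}>\vartheta)= 1+O(p_B)$ uniformly, so after interfacing with the $\Ptilde$-framework of Section~\ref{sec:weakly_conditioned} each conditional sub-process has mean contribution $(1+O(p_B))\E_\Q^a[Z]$ (up to an edge correction from hitting times close to $\vartheta$).

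\medskip

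\noindent\textbf{Base case (proof of \eqref{eq:Z1_exp_lowerbound}).} Conditioning on $\F_{\mathscr R^{(0)}_t}$ and applying the strong branching property,
\[
\Ehat_{(l)}[Z^{(1)}_{\emptyset,t}] \;=\; \Ehat_{(l)}\Big[\sum_{(u,s)\in\mathscr R^{(0)}_t} \E_\Q^{(a,s)}[Z]\Big]\cdot (1+O(p_B)).
\]
The conditional mean $\E_\Q^{(a,s)}[Z]$ equals $\E_\Q^a[Z]$ except when $\vartheta - s$ is small; bounding this edge term by the trivial $\E_\Q^a[Z] \le \pi A$ from \eqref{eq:QaZC} yields an error of order $A^{-1}\vartheta/a^3\cdot \E_\Q^a[Z]$. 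Corollary~\ref{cor:hat_many_to_one} then gives $\Ehat_{(l)}[R^{(0)}_t] = (1+O(p_B))\E[R^{(0)}_t]$, and Lemma~\ref{lem:Rt} (summed over the initial particles) gives $\E[R^{(0)}_t]=\pi(t/a^3\,Z_0+O(Y_0))$. Assembling these pieces yields \eqref{eq:Z1_exp_lowerbound}.

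\medskip

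\noindent\textbf{Inductive step (proof of \eqref{eq:Zl_exp_upperbound_2}).} Taking the same decomposition with $k\ge 1$,
\[
\Ehat_{(l)}[Z^{(k+1)}_{\emptyset,t}] \;\le\; (1+O(p_B))\,\E_\Q^a[Z]\,\Ehat_{(l)}\!\left[R^{(k)}_t\right].
\]
To bound $\Ehat_{(l)}[R^{(k)}_t]$ in terms of $\Ehat_{(l)}[Z^{(k)}_{\emptyset,t}]$, I apply the strong branching property a second time at $\mathscr S^{(k)}_t$ and invoke Lemma~\ref{lem:Rt} on each sub-process starting from a point on the critical line: for $(v,s')\in\mathscr S^{(k)}_t$,
\[
\Ehat_{(l)}^{(v,s')}[R^{(0)}_{t-s'}] \;\le\; (\pi+O(p_B))\bigl(w_Z(X_v(s'))\cdot t/a^3 + C w_Y(X_v(s'))\bigr).
\]
Since $(v,s')$ sits on the critical line, the first part of Lemma~\ref{lem:ZYW} (in the guise of \eqref{eq:imam}) gives $w_Y(X_v(s'))\le (\eta/\pi)\,w_Z(X_v(s'))$ up to negligible corrections. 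Summing over $\mathscr S^{(k)}_t$,
\[
\Ehat_{(l)}\!\left[R^{(k)}_t\right] \;\le\; (\pi+O(p_B))(t/a^3 + C\eta)\,\Ehat_{(l)}[Z^{(k)}_{\emptyset,t}].
\]
Combined with $\E_\Q^a[Z]\le \pi A$ from \eqref{eq:QaZC}, this yields the one-step inequality
\[
\Ehat_{(l)}[Z^{(k+1)}_{\emptyset,t}] \;\le\; (\pi^2+Cp_B)A(t/a^3+C\eta)\,\Ehat_{(l)}[Z^{(k)}_{\emptyset,t}],
\]
and iterating down to the bound $\Ehat_{(l)}[Z^{(1)}_{\emptyset,t}] \le CA(t/a^3\,Z_0+Y_0)$ (a direct consequence of the base case) gives~\eqref{eq:Zl_exp_upperbound_2}.

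\medskip

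\noindent\textbf{Main obstacle.} The delicate point is the propagation of the conditioning $\Phat_{(l)}$ through two layers of strong-branching decomposition: the sub-processes at particles of $\mathscr R^{(k)}_t$ are not quite independent under $\Phat_{(l)}$, and each one's conditional law is a mixture over the remaining time-to-$\vartheta$. The results of Section~\ref{sec:weakly_conditioned} (Lemma~\ref{lem:many_to_one_conditioned}, Corollary~\ref{cor:conditioned_upperbound}, Lemma~\ref{lem:conditioned_Z_lowerbound}), together with the uniform bound \eqref{eq:pbar_estimate}, are precisely tailored to convert these corrections into multiplicative $(1+O(p_B))$ factors that do not compound badly across the $k$ iterations (since $\ep\to 0$ and the induction has depth $k\le l$ controlled by the same small parameter). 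The minor loss $A^{-1}\vartheta/a^3$ in \eqref{eq:Z1_exp_lowerbound} is unavoidable in this approach, but harmless since $\vartheta\le \tconst{eq:tcrit}=a^3/(20A)$.
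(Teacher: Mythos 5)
Your skeleton is the paper's: decompose $Z^{(k+1)}_{\emptyset,t}=\sum_{(u,s)\in\mathscr R^{(k)}_t}Z^{(u,s)}$ by the strong branching property at $\mathscr R^{(k)}_t$, identify the conditional law of each summand as $Z$ under $\Q^a(\cdot\,|\,T^{(0;l-k)}>\vartheta-s)$, bound $\Ehat_{(l)}[R^{(k)}_t\,|\,\F_{\mathscr S^{(k)}_t}]$ through Lemma~\ref{lem:Rt} and Corollary~\ref{cor:hat_many_to_one}, absorb the $Y$-terms via \eqref{eq:imam}, and iterate down to tier $1$, treating tier $0$ separately with Lemma~\ref{lem:conditioned_Rt}. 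All of that is sound and matches the paper step for step.

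There is, however, one genuine gap: your justification of $\E_\Q^a[Z\,|\,\text{no breakout before }\vartheta-s]=(1+O(p_B))\E_\Q^a[Z]$ ``up to an edge correction from hitting times close to $\vartheta$'' is not correct, and it is exactly where the $A^{-1}\vartheta/a^3$ term in \eqref{eq:Z1_exp_lowerbound} originates. The upper bound $\E_\Q^a[Z\,|\,T^{(0;l-k)}>\vartheta-s]\le(1+O(p_B))\,\E_\Q^a[Z]$ is indeed immediate from \eqref{eq:burglars} and positivity of $Z$, but the \emph{lower} bound is the delicate direction: conditioning the descendants not to break out biases $Z$ downwards uniformly in $s$ (breakouts are precisely the configurations where the offspring population, hence $Z$, is large), not only for $s$ near $\vartheta$, and a crude bound by $\E_\Q^a[Z]\le\pi A$ on an ``edge set'' gives nothing in the right direction. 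The paper's argument (the unnumbered lemma containing \eqref{eq:QhatZ}) writes $\E_\Q^a[Z\,|\,T^{(0;l)}>t]\ge\E_\Q^a\bigl[Z\exp(-Cp_B(\vartheta/a^3+\eta)Z)\bigr]\ge\E_\Q^a[Z]-Cp_B(\vartheta/a^3+\eta)\,\E_\Q^a[Z^2]$ and then crucially invokes the second-moment bound $\E_\Q^a[Z^2]\le C\ep e^A$ from \eqref{eq:QaZsquared}; together with $p_B=(\pi+o(1))(\ep e^A)^{-1}$ and $\E_\Q^a[Z]\ge cA$ (from \eqref{eq:QaZ} and \eqref{eq:ep_lower}) this converts the absolute error $O(\vartheta/a^3+\eta)$ into the relative error $O(p_B+A^{-1}\vartheta/a^3)$. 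Since \eqref{eq:Z1_exp_lowerbound} is a two-sided asymptotic whose lower bound is what is actually used downstream (it feeds \eqref{eq:710} and hence the drift of the barrier), your proof cannot close without this second-moment ingredient; as written, the step would fail.
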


We will need the following lemma:
\begin{lemma}
For every $l\in\N$ and $t\le \vartheta$, we have for large $A$ and $a$,
\begin{align}
 \label{eq:QhatZ}
\E_\Q^a[Z\,|\,T^{(0;l)} > t] &= (1+O(p_B+A^{-1}\vartheta/a^3)) \E_\Q^a[Z],\\
 \label{eq:QhatZsquared}
 \E_\Q^a[Z^2\,|\,T^{(0;l)} > t] &\le C\ep e^A.
\end{align}
\end{lemma}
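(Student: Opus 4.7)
The plan is to condition on $\F_\mathscr{S}$ and exploit the strong branching property together with Proposition~\ref{prop:T} applied to each sub-BBM rooted at a particle of $\mathscr{S}$. Note that $Z$ and $Y$ are $\F_\mathscr{S}$-measurable, so the only $T$-dependence I need to control lies in the conditional probability $\Q^a(T^{(0;l)}\le t\mid\F_\mathscr{S})$. The denominator in both claims is handled directly by \eqref{eq:burglars}, which gives $\Q^a(T^{(0;l)}>t)\ge e^{-Cp_B}$ for $t\le\vartheta\le\tconst{eq:tcrit}$, contributing a factor $1+O(p_B)$ on the way to the final estimates.

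The heart of the argument is an upper bound on $\Q^a(T^{(0;l)}\le t\mid\F_\mathscr{S})$. Under $\Q^a$ the initial particle at $a$ does not break out, so every subsequent breakout must originate in one of the independent sub-BBMs spawned at the particles $(u,s_v)\in\mathscr{S}$. A quick check of the tier definitions shows that sub-BBM tier $k$ coincides with original tier $k+1$ (the ancestor has already crossed the critical line via $\mathscr{S}$), so under $\Q^a$,
\[
T^{(0;l)} \;=\; \min_{v\in\mathscr{S}}\bigl(s_v + T^{\mathrm{sub}}_{v,(0;l-1)}\bigr).
\]
Using the monotone bound $T^{\mathrm{sub}}_{v,(0;l-1)}\ge T^{\mathrm{sub}}_v$ and applying Proposition~\ref{prop:T} to each single-particle sub-BBM (legitimate since $t-s_v\le\tconst{eq:tcrit}$), then substituting $\sum_v w_Z(X_u(s_v))=Z$ and $\sum_v w_Y(X_u(s_v))=Y$, using $0\le s_v\le\sigma_{\max}\le\zeta\ll a^3$ on $B^c$ to absorb the $s_v$-dependent corrections, applying $1-e^{-x}\le x$, and invoking Lemma~\ref{lem:ZYW}'s bound $Y\le\eta Z$, I obtain
\[
\Q^a(T^{(0;l)}\le t\mid\F_\mathscr{S}) \;\le\; C\,p_B Z\,(t/a^3+\eta).
\]

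For \eqref{eq:QhatZ}, I write
\(
\E_\Q^a[Z\mid T^{(0;l)}>t] = (\E_\Q^a[Z] - \E_\Q^a[Z\Ind_{T^{(0;l)}\le t}])/\Q^a(T^{(0;l)}>t)
\)
and invoke the tower property to get
\[
\E_\Q^a[Z\Ind_{T^{(0;l)}\le t}] \;\le\; C\,p_B(t/a^3+\eta)\,\E_\Q^a[Z^2] \;\le\; C(t/a^3+\eta),
\]
where the last step uses \eqref{eq:QaZsquared} and the asymptotic $p_B\ep e^A=\pi+o(1)$ from \eqref{eq:pB}. Dividing by $\E_\Q^a[Z]=\Theta(A)$ (see \eqref{eq:QaZ}, noting $|\log\ep|\le A/6$ by \eqref{eq:ep_lower}) and observing that $A^{-1}\eta$ is swallowed by $p_B$ (since $\eta\le e^{-2A}$ while $p_B\ge e^{-A}/A^{17}$ by \eqref{eq:pB} and \eqref{eq:ep_upper}) yields the relative error $O(p_B+A^{-1}\vartheta/a^3)$ claimed in \eqref{eq:QhatZ}. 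Claim \eqref{eq:QhatZsquared} then follows at once:
\[
\E_\Q^a[Z^2\mid T^{(0;l)}>t] \;\le\; \frac{\E_\Q^a[Z^2]}{\Q^a(T^{(0;l)}>t)} \;\le\; \frac{C\ep e^A}{1-O(p_B)} \;=\; O(\ep e^A).
\]

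The main delicacy will be the tier-index shift in the strong branching decomposition at $\mathscr{S}$: one must check that every breakout of the original process under $\Q^a$ is captured as a tier-$(0;l-1)$ breakout of some sub-BBM, which requires careful bookkeeping for descendants of the initial particle that may hit $a$ before their lineage first crosses the critical line.
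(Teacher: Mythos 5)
Your proof is correct and follows essentially the same route as the paper: the denominator is controlled via \eqref{eq:burglars}, and the key lower bound comes from conditioning on $\F_{\mathscr S}$ and bounding $\Q^a(T^{(0;l)}\le t\,|\,\F_{\mathscr S})\le Cp_B Z(t/a^3+\eta)$ via Proposition~\ref{prop:T} and $Y\le\eta Z$, followed by \eqref{eq:QaZsquared}, \eqref{eq:pB} and $\E_\Q^a[Z]\ge CA$. The tier-shift bookkeeping you flag as the main delicacy is exactly what the paper's \eqref{eq:hbar} (together with the monotonicity $T^{(0;l)}\ge T$) already encapsulates, so there is no gap.
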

\begin{proof}
 The upper bound in \eqref{eq:QhatZ} directly follows from \eqref{eq:burglars} and the fact that $Z$ is positive. Similarly, \eqref{eq:QhatZsquared} follows from \eqref{eq:burglars} together with \eqref{eq:QaZsquared}. As for the lower bound in \eqref{eq:QhatZ}, we have by \eqref{eq:hbar} and the first part of Lemma~\ref{lem:ZYW},
 \[
 \E_\Q^a[Z\,|\,T^{(0;l)} > t] \ge \E_\Q^a[Z\Ind_{(T^{(0;l)} > t)}] \ge \E_\Q^a[Z\,\P(T^{(0;l)} > t\,|\,\F_{\mathscr S})]\ge \E_\Q^a[Z\exp(-Cp_B(\vartheta/a^3+\eta)Z)].
 \]
 Equations~\eqref{eq:QaZsquared} and \eqref{eq:pB} now give,
\[
 \E_\Q^a[Z\exp(-Cp_B(\vartheta/a^3+\eta)Z)] \ge \E_\Q^a[Z] - Cp_B\E_\Q^a[Z^2](\vartheta/a^3+\eta) \ge \E_\Q^a[Z] - C(\vartheta/a^3+\eta).
\]
By \eqref{eq:QaZ}, \eqref{eq:ep_lower} and \eqref{eq:eta}, we have $\E_\Q^a[Z]\ge CA$ and $\eta \le p_B\E_\Q^a[Z]$ for large $A$ and $a$. The preceding equations now yield the lower bound in \eqref{eq:QhatZ}.
\end{proof}

\begin{proof}[Proof of Lemma~\ref{lem:Zl_exp_upperbound}]
Write $\G_k = \F_{\mathscr S^{(k)}_t}$ and $\H_k = \F_{\mathscr R^{(k)}_t}$, so that $\G_k \subset \H_k \subset \G_{k+1}$ for every $k$. For $0\le k\le l$,
\begin{equation*}
 \Ehat_{(l)}[Z_{\emptyset,t}^{(k+1)}\,|\,\H_{k}] = \Ehat_{(l)}\Big[\sum_{(u,s)\in\mathscr R^{(k)}_t} Z^{(u,s)}\,\Big|\,\H_{k}\Big].
\end{equation*}
Conditioned on $\H_{k}$, the random variables $Z^{(u,s)}$ in the last equation are iid under $\Phat_{(l)}$, of the same law as $Z$ under $\Q^a(\cdot\,|\,T^{(0;l-k)} > \vartheta - s)$, and independent of $\H_{k}$, by the strong branching property. With \eqref{eq:QhatZ}, this gives
\begin{equation}
\label{eq:Zl_Hl}
 \Ehat_{(l)}[Z_{\emptyset,t}^{(k+1)}\,|\,\H_{k}] = (1+O(p_B+A^{-1}\vartheta/a^3))\E_\Q^a[Z] R_t^{(k)}.
\end{equation}

Now, we have
\[
 \Ehat_{(l)}[R_t^{(k)}\,|\,\G_{k}] = \sum_{(u,s)\in\mathscr S^{(k)}_t} \E^{X_u(s)}[R_{t-s}^{(0)}\,|\,T^{(0;l-k)} > t-s],
\]
such that by Lemma~\ref{lem:Rt} and Corollary~\ref{cor:hat_many_to_one},
\begin{equation}
\label{eq:400}
\Ehat_{(l)}[R_t^{(k)}\,|\,\G_{k}] \le (1+O(p_B))\Big(\pi \frac t {a^3} Z_{\emptyset,t}^{(k)} + C Y_{\emptyset,t}^{(k)}\Big).
\end{equation}
Equations \eqref{eq:Zl_Hl} and \eqref{eq:400} now yield
\begin{equation}
 \label{eq:Zl_exp_upperbound}
 \Ehat_{(l)}[Z_{\emptyset,t}^{(k+1)}\,|\,\G_k] \le (\pi+C(p_B+A^{-1}\vartheta/a^3))\E_\Q^a[Z](\tfrac t {a^3} Z_{\emptyset,t}^{(k)} + C Y_{\emptyset,t}^{(k)}).
\end{equation}
Equation \eqref{eq:Zl_exp_upperbound_2} follows easily from this by \eqref{eq:QaZC} and \eqref{eq:imam}. Now, in the case $k=0$, we have $\G_0 = \F_0$ by definition. Denote the positions of the initial particles by $x_1,\ldots,x_n$. By Lemmas~\ref{lem:Rt} and \ref{lem:conditioned_Rt},
\[
\begin{split}
 \Ehat_{(l)}[R_t^{(0)}] = \sum_{i=1}^n \Ehat_{(l)}^{x_i}[R_t^{(0)}] &\ge \sum_{i=1}^n \E^{x_i}[R_t^{(0)}] - \|p\|_\infty  \E^{x_i}[(R_t^{(0)})^2] \\
&\ge \pi \frac t {a^3} Z_0 - C Y_0 - \|p\|_\infty  C\left(\tfrac t {a^3} Z_0 +Y_0\right),
\end{split}
\]
which, together with \eqref{eq:Zl_exp_upperbound} and \eqref{eq:pbar_estimate}, yields \eqref{eq:Z1_exp_lowerbound}.
\end{proof}

\begin{corollary}
 \label{cor:Zl_exp_upperbound}
 For every $k\ge 1$ and $t\le \vartheta$, we have for large $A$ and $a$,
 \[
  \Ehat[Z_{\emptyset,t}^{(k+)}] \le CA(\tfrac t {a^3} Z_0 + Y_0)\left(A(\tfrac t {a^3} +C\eta)\right)^{k-1}.
 \]
\end{corollary}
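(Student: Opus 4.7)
The corollary is essentially a geometric-series summation of the bounds in Lemma~\ref{lem:Zl_exp_upperbound}. Writing $Z_{\emptyset,t}^{(k+)} = \sum_{i\ge k} Z_{\emptyset,t}^{(i)}$ and using Tonelli, the plan is to show
\[
\Ehat[Z_{\emptyset,t}^{(k+)}] \;=\; \sum_{i\ge k} \Ehat[Z_{\emptyset,t}^{(i)}],
\]
bound each summand by the $l\to\infty$ limit of the $\Ehat_{(l)}$-bound given by Lemma~\ref{lem:Zl_exp_upperbound}, and then sum the resulting geometric series.

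First I would transfer Lemma~\ref{lem:Zl_exp_upperbound} from $\Ehat_{(l)}$ to $\Ehat$ for each fixed $i\ge 1$. Since $T^{(0;l)}$ is non-increasing in $l$ with limit $T$, the events $\{T^{(0;l)}>\vartheta\}$ decrease to $\{T>\vartheta\}$, and $\P(T>\vartheta)>0$ by \eqref{eq:hbar}. Monotone convergence applied to $Z_{\emptyset,t}^{(i)}\,\mathbf{1}_{\{T^{(0;l)}>\vartheta\}}$ (which is decreasing in $l$, with $l=i$-th term integrable by the Lemma) gives $\Ehat_{(l)}[Z_{\emptyset,t}^{(i)}]\to\Ehat[Z_{\emptyset,t}^{(i)}]$. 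Passing to the limit in the bound of Lemma~\ref{lem:Zl_exp_upperbound} yields, for $i\ge 1$,
\[
\Ehat[Z_{\emptyset,t}^{(i)}] \;\le\; CA\bigl(\tfrac{t}{a^3}Z_0+Y_0\bigr)\,r^{i-1}, \qquad r := (\pi^2+Cp_B)A\bigl(\tfrac{t}{a^3}+C\eta\bigr).
\]

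The final step is to check that the series $\sum_{i\ge k} r^{i-1}$ converges geometrically. By the hypothesis $t\le\vartheta\le\tconst{eq:tcrit}=a^3/(20A)$ we have $At/a^3\le 1/20$; by \eqref{eq:eta} the quantity $A\eta\le Ae^{-2A}$ is $o(1)$; and by \eqref{eq:pB} we have $p_B=o(1)$. Hence $r\le \pi^2/20+o(1)\le 1/2$ for large $A$ and $a$, so the tail of the geometric series is bounded by $2r^{k-1}$. Since $(\pi^2+Cp_B)$ is bounded by an absolute constant, the quantity $r^{k-1}$ is in turn bounded by a constant multiple of $\bigl(A(\tfrac{t}{a^3}+C\eta)\bigr)^{k-1}$ (possibly after enlarging the constant $C$), giving the stated estimate.

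There is no real obstacle here; the only point requiring some care is the measure-theoretic passage $\Ehat_{(l)}\to\Ehat$, which is routine once one observes that $T^{(0;l)}\downarrow T$ and $\P(T>\vartheta)$ is bounded away from $0$ uniformly in $l$. The constants $\pi^2+Cp_B$ that arise from the Lemma are absorbed into the generic constant $C$ when writing the final bound, using the convention that $C$ may change from line to line.
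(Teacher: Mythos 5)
Your proof is correct and follows essentially the paper's own route: the paper likewise obtains the corollary by summing the geometric series coming from Lemma~\ref{lem:Zl_exp_upperbound}, using $t\le\vartheta\le\tconst{eq:tcrit}$, \eqref{eq:eta} and \eqref{eq:pB} to make the ratio at most $1/2$, and absorbing the factor $(\pi^2+Cp_B)$ into constants exactly as you do. The only superfluous step is your monotone-convergence passage from $\Ehat_{(l)}$ to $\Ehat$: since $\Phat=\Phat_\infty$ by definition and Lemma~\ref{lem:Zl_exp_upperbound} is stated for general $l$ (including $l=\infty$, i.e.\ conditioning on $T>\vartheta$), the bound \eqref{eq:Zl_exp_upperbound_2} already applies directly under $\Ehat$.
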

\begin{proof}
By the hypothesis $t\le \vartheta \le \tconst{eq:tcrit}$, \eqref{eq:eta} and \eqref{eq:pB}, we have $(\pi^2+Cp_B)A(\tfrac t {a^3} + C \eta)\le 1/2$ for large $A$ and $a$. Summing \eqref{eq:Zl_exp_upperbound} and using \eqref{eq:QaZC} yields the result.
\end{proof}

We now come to the first moment estimates for the variables $Z_t$ and $Y_t$:
\begin{lemma}
\label{lem:hat_quantities}
Suppose that $t\le \vartheta$. Then for large $A$ and $a$, we have
\begin{equation*}
 \Ehat[Z_t] = Z_0\left(1+\pi \E_\Q^a[Z] \tfrac t {a^3} + O\left(p_B + \left(A\tfrac \vartheta {a^3}\right)^2\right)\right)+O(AY_0),\quad \Ehat[Y_t] \le C \left(Y_0 + \eta A\tfrac t {a^3}Z_0\right).
\end{equation*} 
If moreover $t\ge 2a^2$, then
\begin{equation*}
\Ehat[Y_t\Ind_{(R_{[t-a^2,t]} = 0)}] \le \frac C a (Z_0 + AY_0)\quad\tand\quad \Phat(R_{[t-a^2,t]} \ne 0) \le C\eta(Z_0 + A Y_0).
\end{equation*} 
\end{lemma}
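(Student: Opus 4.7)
The plan is to split $Z_t=Z_t^{(0)}+Z_t^{(1+)}$ and $Y_t=Y_t^{(0)}+Y_t^{(1+)}$ according to tier. For the tier-$0$ part I would apply Corollary~\ref{cor:hat_many_to_one} to exchange $\widehat\E$ for $\E$ at multiplicative cost $1+O(p_B)$, then combine the many-to-one formula with the interval-killed Brownian-motion results from Section~\ref{sec:interval_number}. The key fact is that $w_Z$ is, up to a scalar, the principal eigenfunction of the drifted killed generator on $(0,a)$ coupled with the branching, with eigenvalue $0$ (since $\beta m = 1/2$ exactly cancels the Dirichlet eigenvalue $\mu^2/2+\pi^2/(2a^2)=1/2$), so that $Z$ is conserved at leading order. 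Quantitatively, the semigroup bounds from Section~\ref{sec:interval_number} yield $\E^x[Z_t^{(0)}]=w_Z(x)+O(w_Y(x))$, while $w_Y$ is a supersolution, giving $\E^x[Y_t^{(0)}]\le Cw_Y(x)$; summing over initial particles delivers the tier-$0$ pieces of both $\widehat\E[Z_t]$ and $\widehat\E[Y_t]$.

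For the tier-$1{+}$ contribution, the strong branching property at the stopping line $\mathscr S^{(k)}_t$ allows $\widehat\E[Z_t^{(k)}]$ to be compared, at $1+O(p_B)$ cost, to $\widehat\E[Z_{\emptyset,t}^{(k)}]$, since each particle returning to the critical line evolves like a fresh BBM in the interior until its next hit of $a$. Lemma~\ref{lem:Zl_exp_upperbound} then gives the exact leading constant for $k=1$ via \eqref{eq:Z1_exp_lowerbound}, producing the announced correction $\pi\E_\Q^a[Z]\, Z_0\, t/a^3$. The tail $k\ge 2$ is summed geometrically using Corollary~\ref{cor:Zl_exp_upperbound} together with \eqref{eq:QaZC} and the restriction $t\le\vartheta\le\tconst{eq:tcrit}$, contributing the $O((At/a^3)^2)$ error. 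The $Y$ bound is obtained by the same reduction, but with $Y_{\emptyset,t}^{(k)}\le\eta Z_{\emptyset,t}^{(k)}$ from \eqref{eq:imam} replacing the $Z$ estimate, converting the $A t Z_0/a^3$ bound into the claimed $\eta A t Z_0/a^3$.

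For the conditional bounds under $\{R_{[t-a^2,t]}=0\}$, the key observation is that every tier-$0$ particle alive at time $t$ has spent an uninterrupted window of length $a^2$ in $(0,a)$ since its last visit to $a$. Since the spectral gap of the drifted killed semigroup is of order $a^{-2}$, this window suffices to contract the conditional distribution of such a particle onto the quasi-stationary profile proportional to $\sin(\pi x/a)e^{-\mu x}$; integrating $w_Y$ against it yields a ratio of order $1/a$ relative to the $Z$-profile, delivering the $(C/a)(Z_0+AY_0)$ bound after absorbing the higher-tier contribution via the $\eta$-bound from the previous paragraph. For the probability bound, Markov plus Lemma~\ref{lem:Rt} applied at time $t-a^2$ controls $\widehat\P(R_{[t-a^2,t]}\ne 0)$ by a multiple of $a^{-1}\widehat\E[Z_{t-a^2}]+\widehat\E[Y_{t-a^2}]$; the sharp $\eta$ factor then emerges from plugging in the tier-$0$ $Y$-bound just derived (valid since $t-a^2\ge a^2$) rather than the naive $Z$ control.

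The hardest step will be the last one. The naive Markov-with-$Z$ bound only gives $O(1/a)$, and improving to $\eta$ requires inserting the $Y$-bound into the Markov estimate, which has a bootstrap flavour. The $Y$-bound itself must be established first, using only the quasi-stationary contraction after an $a^2$-relaxation window, and the probability bound is then deduced from it, so the order of the deductions must be arranged carefully to avoid circularity.
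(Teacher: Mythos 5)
Your proposal is correct and follows essentially the same route as the paper: tier decomposition with Corollary~\ref{cor:hat_many_to_one} and the martingale property of $Z$ for tier $0$, Lemma~\ref{lem:Zl_exp_upperbound}/Corollary~\ref{cor:Zl_exp_upperbound} and \eqref{eq:imam} for the higher tiers, and then \eqref{eq:Yt} restarted at time $t-a^2$ plus Lemma~\ref{lem:Rt} and Markov for the last two bounds. The circularity you worry about in the final step is not actually present — $\Ehat[Y_{t-a^2}]\le\eta(Z_0+AY_0)$ follows directly from \eqref{eq:Yt} and Corollary~\ref{cor:Zl_exp_upperbound} without any bootstrap, after which the probability bound is an immediate consequence.
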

\begin{proof}
By Proposition~\ref{prop:quantities} and Corollary~\ref{cor:hat_many_to_one}, we have
\begin{align}
\label{eq:pasteque}
 \Ehat[Z_t] = \sum_{l\ge0}\Ehat\left[\Ehat[Z_t^{(l)}\mid \F_{\mathscr S_t^{(l)}}]\right] = (1+O(p_B))(Z_0 + \Ehat[Z_{\emptyset,t}^{(1)}] + \Ehat[Z_{\emptyset,t}^{(2+)}]),
\end{align}
and similarly, 
\begin{equation}
\label{eq:afatiach}
  \Ehat[Y_t] \le C(1+O(p_B))(Y_0 + \Ehat[Y_{\emptyset,t}^{(1+)}]) \le C(Y_0+\eta\Ehat[Z_{\emptyset,t}^{(1+)}]), 
\end{equation}
where the last inequality follows from \eqref{eq:imam}. The first statement now readily follows from the previous inequalities together with Corollary~\ref{cor:Zl_exp_upperbound} and \eqref{eq:Z1_exp_lowerbound}.

Now, let $t\ge 2a^2$. By \eqref{eq:pasteque} and Corollary~\ref{cor:Zl_exp_upperbound}, we have for large $A$ and $a$,
\[
\Ehat[Z_{t-a^2}] \le C(Z_0 + AY_0).
\]
Furthermore, as in \eqref{eq:afatiach}, but using \eqref{eq:Yt} for the tier 0 particles,
\[
 \Ehat[Y_{t-a^2}] \le C(a^{-1}Z_0+\eta\Ehat[Z_{\emptyset,t}^{(1+)}]) \le \eta(Z_0 + AY_0),
\]
for large $A$ and $a$. Together with Proposition~\ref{prop:quantities}, Lemma~\ref{lem:Rt} and Corollary~\ref{cor:hat_many_to_one}, this proves the other two inequalities.
\end{proof}

 If the drift of the BBM is changed according to a barrier function $f$ (see Sections~\ref{sec:BBBM_parameters} and \ref{sec:BBBM_definition_definition}), then some of the above estimates still hold under some conditions:
 \begin{lemma}
  \label{lem:hat_Z_f}
  Let $f$ be a barrier function and suppose that $\|f\|$ is bounded by a function depending on $A$ only. Furthermore, suppose that $\vartheta$ and $f$ are such that $\P^{(x,t)}_f(T > \vartheta) \ge 1-Cp_B$ for all $x\in[0,a]$ and $t\ge 0$. Then for $t\le\vartheta\le \tconst{eq:tcrit}$, for large $A$ and $a$,
  \begin{align}
  \label{eq:28}
\Ehat^x_f[Z^{(1+)}_{\emptyset,t}] \le CA\Big(\frac {t} {a^3}w_Z(x)+w_Y(x)\Big),\quad\tand\quad Y^{(1+)}_{\emptyset,t} \le \eta Z^{(1+)}_{\emptyset,t}.
\end{align}
Furthermore, with the notation of Corollary~\ref{cor:hat_many_to_one}, we have for $t\le a^3$, for large $A$ and $a$,
\begin{align}
\label{eq:ichhassedas}
 \Ehat_f^x[S^{(0)}_t] \le (1+O(p_B)) \E_f^x[S^{(0)}_t] \quad\tand\quad \Ehat_f^x[(S^{(0)}_t)^2] \le (1+O(p_B)) \E_f^x[(S^{(0)}_t)^2].
\end{align}
 If $f(s) = 0$ for all $s\le \vartheta$, we also have, for $t\le a^3$, for large $A$ and $a$,
\begin{align}
 \label{eq:scheisse}
 \Ehat_f^x[S^{(0)}_t] = (1+O(p_B)) \E_f^x[S^{(0)}_t].
\end{align}
\end{lemma}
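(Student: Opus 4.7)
The three inequalities are the $f$-perturbed analogs of Corollary~\ref{cor:Zl_exp_upperbound} (for \eqref{eq:28}) and of the two halves of Corollary~\ref{cor:hat_many_to_one} (for \eqref{eq:ichhassedas} and \eqref{eq:scheisse}). My plan is to show that the hypothesis $\P^{(x,t)}_f(T>\vartheta)\ge 1-Cp_B$ plays the role of \eqref{eq:hbar} everywhere the latter was invoked, while the uniform bound on $\|f\|$ keeps Lemma~\ref{lem:ZYW} applicable, since that lemma is already formulated for general barrier functions. In particular, the replacements $\P\leadsto\P_f$, $\Ehat\leadsto\Ehat_f$, $\Q^a\leadsto\Q^a_f$ do not alter the structure of any proof, only the reference constants; and the $f$-analog of the basic calculation \eqref{eq:QaZ} has been recorded in the statement of that equation.

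\textbf{Proof of \eqref{eq:28}.} The pointwise inequality $Y^{(1+)}_{\emptyset,t}\le \eta Z^{(1+)}_{\emptyset,t}$ follows term-by-term from part~1 of Lemma~\ref{lem:ZYW} applied at each $(u,s)\in\mathscr R^{(0+)}_t$, using that under the hat-law $\sigma_{\max}^{(u,s)}\le\zeta$ holds at each such point. For the expectation bound, I would replay the proof of Lemma~\ref{lem:Zl_exp_upperbound} and Corollary~\ref{cor:Zl_exp_upperbound}: the $f$-analogs of \eqref{eq:QaZ}, \eqref{eq:QaZC} and \eqref{eq:QaZsquared} hold by the same computation, and the $f$-analog of \eqref{eq:burglars}, which is used to derive \eqref{eq:QhatZ}, is exactly the hypothesis. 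Combining these via the strong branching property at each tier yields the $f$-version of the iterative bound \eqref{eq:Zl_exp_upperbound_2}; summing over tiers exactly as in Corollary~\ref{cor:Zl_exp_upperbound} gives \eqref{eq:28}.

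\textbf{Proof of \eqref{eq:ichhassedas} and \eqref{eq:scheisse}.} Under $\Phat_f$, the process stopped at $\mathscr R^{(0)}_\infty$ is a weakly conditioned branching Markov process in the sense of Section~\ref{sec:weakly_conditioned}, with conditioning probabilities $p(s)=\P^{(a,s)}_f(T\le\vartheta)$. By hypothesis $\|p\|_\infty\le Cp_B$, hence Corollary~\ref{cor:conditioned_upperbound} (quoted in the proof of Corollary~\ref{cor:hat_many_to_one}) yields \eqref{eq:ichhassedas} for both the first and the second moment of $S^{(0)}_t$. For the matching lower bound in \eqref{eq:scheisse}, I would invoke Lemma~\ref{lem:conditioned_Z_lowerbound} with harmonic-type weight $h(x,t) = \P^{(x,t)}_f(T>\vartheta)$. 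The hypothesis $f\equiv 0$ on $[0,\vartheta]$ ensures that on the relevant slab $\{t\le\vartheta\}$ the weight $h$ coincides with its $f\equiv 0$ analog (because $\P_f$ and $\P$ agree on $\F_\vartheta$), so that the harmonicity/near-martingale property with respect to the tier-0 motion required by that lemma is preserved; for $t>\vartheta$ the conditioning is trivial and $h\equiv 1$. Combining with \eqref{eq:ichhassedas} gives the two-sided bound.

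\textbf{Main obstacle.} Most of the work is bookkeeping, and the only genuine subtlety lies in \eqref{eq:scheisse}: without $f\equiv 0$ on $[0,\vartheta]$ one loses the exact agreement between $h$ and its unperturbed counterpart on the tier-0 dynamics, and the lower-bound step of Lemma~\ref{lem:conditioned_Z_lowerbound} would pick up extra error terms of order $\|f\|$ that need not be $O(p_B)$. The care required is therefore to verify that, under the stated hypothesis, no such terms appear and the estimate transfers with the constant $1+O(p_B)$ intact.
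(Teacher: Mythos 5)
Your proposal is correct and follows essentially the same route as the paper: \eqref{eq:ichhassedas} and \eqref{eq:scheisse} via the weakly conditioned process of Section~\ref{sec:weakly_conditioned} (Corollary~\ref{cor:conditioned_upperbound} and Lemma~\ref{lem:conditioned_Z_lowerbound}, the latter being exactly where the hypothesis $f\equiv 0$ on $[0,\vartheta]$ enters), the second half of \eqref{eq:28} from part~1 of Lemma~\ref{lem:ZYW}, and the first half by replaying Lemma~\ref{lem:Zl_exp_upperbound}/Corollary~\ref{cor:Zl_exp_upperbound} with the hypothesis substituting for \eqref{eq:burglars}. The only detail the paper flags that you leave implicit is that each use of Lemma~\ref{lem:Rt} in that replay must be supplemented by Lemma~\ref{lem:R_f}, but this is covered by your general bookkeeping remark.
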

\begin{proof}
As for Corollary~\ref{cor:hat_many_to_one}, Equations \eqref{eq:ichhassedas} and \eqref{eq:scheisse} follow from the hypothesis on $\P^{(x,t)}_f(T > \vartheta)$ together with Lemma~\ref{lem:many_to_one_conditioned}, Corollary~\ref{cor:conditioned_upperbound} and Lemma~\ref{lem:conditioned_Z_lowerbound}. It remains to prove the first inequality in \eqref{eq:28}, the second follows from the first part of Lemma~\ref{lem:ZYW}. Inspection of the proof of the corresponding result with $f\equiv 0$, namely, Corollary~\ref{cor:Zl_exp_upperbound}, shows that one only needs to perform the following changes:
\begin{itemize}
 \item Replace the uses of \eqref{eq:burglars} by the inequality $\Q^{(a,t)}_f(T>\vartheta) := \P^{(a,t)}_f(T>\vartheta|T>t) \ge e^{-Cp_B}$, which follows from the hypothesis on $\P^{(x,t)}_f(T > \vartheta)$.
 \item Replace the uses of Corollary~\ref{cor:hat_many_to_one} by \eqref{eq:28} (only the upper bound of the first-moment estimate is used from Corollary~\ref{cor:hat_many_to_one}).
 \item Use Lemma~\ref{lem:R_f} together with Lemma~\ref{lem:Rt} wherever the latter is used, together with the hypothesis on $\P^{(x,t)}_f(T > \vartheta)$.
\end{itemize}
\end{proof}
In order to verify $\P^{(x,t)}_f(T > \vartheta) \ge 1-Cp_B$ for all $x\in[0,a]$ and $t\ge0$, we cannot use Proposition~\ref{prop:T} anymore (and its proof does not adapt easily). Luckily, we will only need to look at times $\vartheta \le 2e^Aa^2$; for these times, we have the following result:
\begin{lemma}
\label{lem:rit3}
  Let $f$ be a barrier function and suppose that $\|f\|$ is bounded by a function depending on $A$ only. Then for $\vartheta \le 2e^Aa^2$, for large $A$ and $a$, we have for all $x\in[0,a]$, $t\ge0$, 
\begin{equation*}
 \P^{(x,t)}_f(T > \vartheta) \ge 1-Cp_B (w_Y(x)+w_Z(x) \vartheta/a^3) \ge 1-C'p_B.
 \end{equation*}
\end{lemma}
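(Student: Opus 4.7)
The plan is to extend the argument behind Proposition~\ref{prop:T} to the varying-drift setting and then specialise to a single initial particle at $(x,t)$. The target is the exponential form
\[
\P^{(x,t)}_f(T>\vartheta)\ \ge\ \exp\!\big(-Cp_B\,(w_Y(x)+w_Z(x)\vartheta/a^3)\big),
\]
from which both inequalities in the lemma follow: the first via $1-e^{-z}\le z$, and the second (with $C'$ in place of $C$) because $w_Y(x)\le 1$ and $w_Z(x)\vartheta/a^3\le Ce^A/a=o(1)$ under the hypothesis $\vartheta\le 2e^A a^2$. Observe also that $\vartheta\le 2e^A a^2\le \tconst{eq:tcrit}=a^3/(20A)$ once $a$ is large enough given $A$, so one stays inside the regime where Proposition~\ref{prop:T} was proved.

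First I would redo Lemma~\ref{lem:T0} with $f$ present. Since Remark~\ref{rem:hat_f} gives $\P^{(u,s)}_f(B)=p_B$ for every $(u,s)\in\mathscr R$ regardless of $f$, the strong branching property at $\mathscr R^{(0)}_{t+\vartheta}$ yields $\P^{(x,t)}_f(T^{(0)}>\vartheta)=\E^{(x,t)}_f[(1-p_B)^{R^{(0)}_{[t,t+\vartheta]}}]$. Inserting the first- and second-moment estimates on $R^{(0)}$ supplied by the varying-drift analogue of Lemma~\ref{lem:Rt} (namely, Lemma~\ref{lem:R_f}, used in the proof of Lemma~\ref{lem:hat_Z_f}) into the computation of Lemma~\ref{lem:T0} will give
\[
\P^{(x,t)}_f(T^{(0)}>\vartheta)\ge \exp\!\big(-\pi p_B w_Z(x)\vartheta/a^3(1+p_B)-Cp_B w_Y(x)\big).
\]
Next I would establish the varying-drift analogue of Lemma~\ref{lem:T_a}, in the form of a lower bound on $q^*:=\inf_{s\ge 0}\Q^{(a,s)}_f(T>\vartheta)$ with $\Q^{(a,s)}_f(\cdot):=\P^{(a,s)}_f(\cdot\mid B^c)$. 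Mirroring Lemma~\ref{lem:T_a}, I would apply the strong branching property at $\mathscr S^{(\emptyset,s)}$, factorise $\P^\nu_f(T>\vartheta)=\P^\nu_f(T>\vartheta\mid T^{(0)}>\vartheta)\,\P^\nu_f(T^{(0)}>\vartheta)$, use the tier-$0$ bound just obtained for the second factor, and Jensen through the weakly conditioned measure of Section~\ref{sec:weakly_conditioned} on the internal line $\mathscr R^{(0)}$ for the first. Lemma~\ref{lem:ZYW} is already stated for barrier functions with $\|f\|$ bounded by a function of $A$, so~\eqref{eq:QaZ}--\eqref{eq:QaZsquared} carry over and yield $\E^{(a,s)}_{\Q,f}[Z]\le \pi A$.

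To close, I would combine the two bounds exactly as in the proof of Proposition~\ref{prop:T}:
\[
\P^{(x,t)}_f(T>\vartheta)\ \ge\ (q^*)^{\widetilde{\E}^{(x,t)}_f[R^{(0)}_{[t,t+\vartheta]}]}\,\P^{(x,t)}_f(T^{(0)}>\vartheta),
\]
using Lemma~\ref{lem:conditioned_Rt} and~\eqref{eq:pbar_estimate} to replace $\widetilde{\E}$ by $\E$ at the cost of a $(1+O(p_B))$ factor, and Lemma~\ref{lem:R_f} to control the exponent. With $\vartheta\le 2e^A a^2$, $p_B\sim 1/(\ep e^A)$,~\eqref{eq:ep_lower} and~\eqref{eq:eta}, the factor $A(\vartheta/a^3+\eta)$ is $o(1)$, and the target exponential bound drops out. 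The main obstacle will be closing the recursion in the varying-drift version of Lemma~\ref{lem:T_a}: because $f'$ is time-dependent, $\Q^{(a,s)}_f(T>\vartheta-(s-t))$ is not stationary in $s$, so instead of the direct monotonicity step used in Lemma~\ref{lem:T_a} I would bound this probability below by the uniform infimum $q^*$ and verify that the resulting recursion
\[
q^*\ \ge\ (q^*)^{\pi^2 A(\vartheta/a^3+O(\eta))}\,\exp\!\big(-Cp_B A(\vartheta/a^3+\eta)\big)
\]
still has $q^*$-exponent below $1/2$ in the regime $\vartheta\le\tconst{eq:tcrit}$ for $A$ and $a$ large, which it does thanks to~\eqref{eq:tcrit} and~\eqref{eq:eta}.
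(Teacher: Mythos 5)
Your route is genuinely different from the paper's. The paper explicitly avoids adapting Proposition~\ref{prop:T} (it remarks that its proof ``does not adapt easily'') and instead exploits the shortness of the window $\vartheta\le 2e^Aa^2$ together with the single initial particle: it writes $\P^{(x,t)}_f(T>\vartheta)\ge \P^{(x,t)}_f(T^{(0)}>\vartheta)-\P^{(x,t)}_f(R^{(1)}>0,\,T^{(0)}>\vartheta)$, lower-bounds the first term exactly as you do via Lemmas~\ref{lem:Rt} and~\ref{lem:R_f}, and kills the second by a first-moment bound $\E_f[R^{(1)}]\le C\eta A(w_Y(x)+w_Z(x)\vartheta/a^3)$ (tier~$1$ is simply never reached), which is $\ll p_B$ since $\eta\le e^{-2A}$ while $p_B\ge ce^{-5A/6}$ by \eqref{eq:pB} and \eqref{eq:ep_lower}. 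This sidesteps the tier recursion entirely. Your approach — rebuilding Lemmas~\ref{lem:T0} and~\ref{lem:T_a} for varying drift and closing the recursion through the uniform infimum $q^*=\inf_s\Q^{(a,s)}_f(T>\vartheta)$ — can be made to work and would give the stronger exponential form, but it is substantially heavier and has two soft spots you should address. First, extracting $q^*\ge e^{-2D}$ from $q^*\ge (q^*)^{E}e^{-D}$ requires knowing a priori that $q^*>0$ uniformly over the uncountable family of starting times $s$; this needs a separate argument (e.g.\ a uniform lower bound on total extinction before any return to $a$, using that the drift is uniformly bounded and that the law of $\#\mathscr S$ under $\Q^{(a,s)}_f$ is independent of $s$ by Remark~\ref{rem:hat_f}). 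Second, the multiplicative constant $e^{1+O(\|f\|/a)}$ from Lemma~\ref{lem:R_f} inflates the $q^*$-exponent, so the recursion does \emph{not} close all the way up to $\vartheta\le\tconst{eq:tcrit}$ as you claim at the end (at $\vartheta=\tconst{eq:tcrit}$ the exponent is about $e\pi^2/20>1$); it closes only where $A\vartheta/a^3=o(1)$, which fortunately covers the regime $\vartheta\le 2e^Aa^2$ actually needed here.
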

\begin{proof}
Let $x\in[0,a]$ and $t\ge0$. From Lemmas~\ref{lem:Rt} and~\ref{lem:R_f}, \eqref{eq:QaZC} and the inequality from the first part of Lemma~\ref{lem:ZYW}, we easily get for $\vartheta \le 2e^Aa^2$ and for large $A$ and $a$ the following estimates (we omit the details which are similar to parts of the proofs of Lemmas~\ref{lem:T0} and \ref{lem:Zl_exp_upperbound}):
 \begin{align*}
 &\P^{(x,t)}_f(T^{(0)} > \vartheta) \ge 1-Cp_B (w_Y(x)+w_Z(x) \vartheta/a^3)\\
 \tand\quad &\P^{(x,t)}_f(R^{(1)} > 0,\,T^{(0)} > \vartheta) \le C \eta A (w_Y(x)+w_Z(x) \vartheta/a^3).
\end{align*}
By \eqref{eq:pB}, \eqref{eq:eta} and \eqref{eq:ep_lower}, this gives for large $A$ and $a$,
\begin{equation*}
 \P^{(x,t)}_f(T > \vartheta) \ge \P^{(x,t)}_f(T^{(0)} > \vartheta) - \P^{(x,t)}_f(R^{(1)} > 0,\,T^{(0)} > \vartheta) \ge 1-Cp_B (w_Y(x)+w_Z(x) \vartheta/a^3).
\end{equation*}
This yields the lemma.
\end{proof}

In order to estimate second moments, we will make use of the following extension to the many-to-two lemma (Lemma~\ref{lem:many_to_two}). It is valid for time-varying drift as well, but we omit this from the notation, for simplicity. For $x,z\in(0,a)$ and $0\le t\le\vartheta$, we define $\widehat{\p}^{(l)}_t(x,z)$ to be the (expected) density of tier $l$ particles at position $z$ and time $t$ under the law $\Phat^x$, not counting the particles $u$ which haven't come back to the critical line, i.e.\ with $\tau_{l-1}(u) \le t < \sigma_l(u)$. Then set $\widehat{\p}_t = \widehat{\p}^{(0+)}_t$.

\begin{lemma}
  \label{lem:many_to_two_with_tiers}
Let $w:[0,a]\to\R_+$ be measurable, $t\le \vartheta$ and define $S_t = \sum_{(u,t)\in \mathscr N_t} w(X_u(t))$. Then
\begin{multline*}
  \Ehat^x[S_t^2] \le \Ehat^x[\sum_{(u,s)\in\mathscr N_t} w(X_u(s))^2] + \beta m_2 \int_0^t\int_0^a
   \widehat{\p}_s(x,z) \Ehat^{(z,s)}[S_t]^2\,\dd z\,\dd s\\
+ \Ehat^x\Big[\sum_{(u,s)\in\mathscr R_t} \sum_{(v_1,s_1),(v_2,s_2)\in\mathscr
  S^{(u,s)},\,v_1\ne v_2} \Ehat^{(X_{v_1}(s_1),s_1)}[S_t]\Ehat^{(X_{v_2}(s_2),s_2)}[S_t]\Big].
\end{multline*}
\end{lemma}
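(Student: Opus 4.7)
The plan is to expand $S_t^2$ as a sum over ordered pairs in $\mathscr N_t\times\mathscr N_t$, isolate the diagonal contribution (which yields the first term on the right-hand side exactly), and then classify each off-diagonal pair by the genealogy of its most recent common ancestor $u^*:=u_1\wedge u_2$. By definition of the cycle $\sigma_l(u^*)\le\tau_l(u^*)\le\sigma_{l+1}(u^*)$, the death time $d_{u^*}$ lies either in some free-motion interval $[\sigma_l(u^*),\tau_l(u^*))$, which I shall call Case~A, or in some excursion interval $[\tau_l(u^*),\sigma_{l+1}(u^*))$, Case~B. Each off-diagonal pair falls into exactly one case, and the two cases will produce the second and third terms respectively.

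For Case~A, the ancestor $u^*$ branches during ordinary drifted BBM with absorption at $0$ (no $a$-absorption is active in the middle of a free-motion phase), so by the strong branching property its two children spawn independent subprocesses that themselves obey the tiered structure. Applying the standard many-to-two identity (Lemma~\ref{lem:many_to_two}) tier-by-tier to the tier-$l$ subpopulation and summing over $l$, the contribution takes the form $\beta m_2\int_0^t\!\int_0^a \rho_s(x,z)\Ehat^{(z,s)}[S_t]^2\,\dd z\,\dd s$, where $\rho_s(x,z)\,\dd z$ is the expected density at $(z,s)$ of particles in their free-motion phase. By the very definition of $\widehat{\p}_s(x,z)$ this density equals $\widehat{\p}_s(x,z)$, yielding the second term.

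For Case~B the ancestor $u^*$ has already hit $a$ at time $\tau_l(u^*)$, so $(u^*,\tau_l(u^*))\in\mathscr R_t^{(l)}$, and both children of $u^*$ extend the excursion begun by $u^*$ and eventually reach the critical line as distinct particles $(v_1,s_1'),(v_2,s_2')\in\mathscr S^{(u^*,\tau_l(u^*))}$ with $v_1\ne v_2$. Conditioning on $\F_{\mathscr S^{(u^*,\tau_l(u^*))}}$ and invoking the strong branching property, the subtrees rooted at the two $(v_i,s_i')$ are conditionally independent, so the expected product factorises into $\Ehat^{(X_{v_1}(s_1'),s_1')}[S_t]\cdot\Ehat^{(X_{v_2}(s_2'),s_2')}[S_t]$. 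Summing over such pairs $v_1\ne v_2$ in $\mathscr S^{(u,s)}$ and over $(u,s)\in\mathscr R_t$ (harmlessly enlarging the range from $(u^*,\tau_l(u^*))$ to all of $\mathscr R_t$) gives the third term.

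The main technical point I expect to handle carefully is Case~A: namely, justifying both the tier-by-tier application of Lemma~\ref{lem:many_to_two} — which requires that within a fixed tier the dynamics is honestly drifted BBM absorbed at $0$, the $a$-barrier only acting to promote a particle to the next tier without interfering with ongoing branching — and the reason the overall bound is ``$\le$'' rather than ``$=$''. The latter comes from the fact that under $\Phat$ the non-break-out event $\{T^{(0;l)}>\vartheta\}$ does not factorise across sibling subtrees, but the induced Radon--Nikodym corrections only shrink the relevant factors by amounts at most $1$ (as already exploited in Corollary~\ref{cor:hat_many_to_one} and Lemma~\ref{lem:hat_Z_f}), so each cross-term is bounded \emph{above} by its independent-subtree counterpart. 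Apart from this bookkeeping, the decomposition into Cases~A and~B is clean and essentially forced by the definitions of $\mathscr N_t$ and $\mathscr R_t$.
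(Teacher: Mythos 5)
Your decomposition---diagonal, then off-diagonal pairs split according to whether the most recent common ancestor dies in a free-motion interval $[\sigma_l,\tau_l)$ or in an excursion interval $[\tau_l,\sigma_{l+1})$---is exactly the paper's splitting of $\mathscr N_t^2$ into the diagonal and the sets $\mathscr A_2^{(l)}$ and $\mathscr A_1^{(l+1)}$, and your treatment of each piece (many-to-two applied tier by tier with the density $\widehat{\p}_s$, resp.\ the strong branching property at the stopping line $\mathscr S^{(u,s)}$) is the paper's proof. The only point where the paper is sharper than your sketch of Case~A: the ``$\le$'' in the second term does not come from a failure of factorisation across sibling subtrees, but from viewing the conditioned tier-$l$ population as the tilted branching Markov process of Section~\ref{sec:weakly_conditioned}, which \emph{is} a genuine branching Markov process (so the many-to-two cross term factorises exactly) whose modified branching rate and second factorial moment satisfy $\widehat{\beta}(y,s)\widehat{m_2}(y,s)\le\beta m_2$.
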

\begin{proof}
Recall the notation $u\wedge v$ for the most recent common ancestor of $u$ and $v$ (with $u\wedge v = \emptyset$ if it does not exist) and $d_u$ for the death time of the individual $u$ (see Section~\ref{sec:preliminaries_definition}). Define for $l\ge 0$,
\begin{align*}
\mathscr A_1^{(l+1)} &= \{((v_1,s_1),(v_2,s_2))\in \mathscr N_t^2: v_1\ne v_2 \text{ and }  \tau_l(v_0) \le d_{v_1\wedge v_2} < \sigma_{l+1}(v_0)\},\\
\mathscr A_2^{(l)} &= \{((v_1,s_1),(v_2,s_2))\in \mathscr N_t^2: v_1\ne v_2 \text{ and }  \sigma_l(v_0)\le d_{v_1\wedge v_2} < \tau_l(v_0)\}.
\end{align*}
(this is akin to the definition of the check- and bar-particles in Section~\ref{sec:BBBM_more_definitions}). Then,
\begin{equation}
 \label{eq:union}
 \mathscr N_t^2 = \{((v,s),(v,s)): (v,s)\in\mathscr N_t\} \cup \bigcup_{l\ge0} \mathscr A_1^{(l+1)}  \cup \bigcup_{l\ge0} \mathscr A_2^{(l)}.
\end{equation}

We now have
\begin{multline*}
  \Ehat^x\Big[\sum_{((v_1,s_1),(v_2,s_2))\in \mathscr A^{(l+1)}_1} w(X_{v_1}(s_1))w(X_{v_2}(s_2))\,\Big|\,\F_{\mathscr S^{(l+1)}}\Big] \\
= \sum_{(u,s)\in\mathscr R^{(l)}_t} \sum_{(v_1,s_1),(v_2,s_2)\in\mathscr S^{(u,s)},\,v_1\ne v_2} \Ehat^{(X_{v_1}(s_1),s_1)}[S_t]\Ehat^{(X_{v_2}(s_2),s_2)}[S_t].
\end{multline*}

As for the contribution of the pairs in $\mathscr A_2^{(l)}$, note that under $\Phat^x$, if particles are stopped at $a$ (and $0$), then the resulting process is the process $\Ptilde^x$ from Section~\ref{sec:weakly_conditioned}, with $p(\cdot)$ given by \eqref{eq:def_ps}. This is again a BBM with space- and time-dependent branching rate $\widehat{\beta}(y,s)$ and second factorial moment of the reproduction law $\widehat{m_2}(y,s)$. By \eqref{eq:def_tilde_quantities} and the description of this process at the beginning of Section~\ref{sec:weakly_conditioned}, we have $\widehat{\beta}(y,s)\widehat{m_2}(y,s)\le \beta m_2$ for all $y$ and $s$. Conditioning on $\mathscr R^{(l)}_t$, applying Lemma~\ref{lem:many_to_two} and then using the tower property of conditional expectation, we get
\begin{align*}
  \Ehat^x\Big[\sum_{((v_1,s_1),(v_2,s_2))\in \mathscr A^{(l)}_2} w(X_{v_1}(s_1))w(X_{v_2}(s_2))\Big] \le \beta m_2 \int_0^t\int_0^a \widehat{\p}^{(l)}_s(x,z) \Ehat^{(z,s)}[S_t]^2\,\dd z\,\dd s.
\end{align*}
Summing over $l$ in the last two displays and using \eqref{eq:union} yields the lemma.
\end{proof}
\begin{remark}
  \label{rem:many_to_two_with_tiers}
An analogous result holds for $R_t$.
\end{remark}

\begin{lemma}
  \label{lem:Zt_variance}
We have for every $t\le \vartheta$, for large $A$ and $a$,
\[
\Varhat(Z_t) \le C\ep e^A\Big(\frac{t}{a^3} Z_0 + Y_0\Big)\quad\tand\quad \Varhat(R_t) \le C\ep e^A\Big(\frac{t}{a^3} Z_0 + Y_0\Big).
\]
\end{lemma}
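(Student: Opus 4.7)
The strategy is to apply Lemma~\ref{lem:many_to_two_with_tiers} with $w=w_Z$ (and Remark~\ref{rem:many_to_two_with_tiers} with $w\equiv 1$ for $R_t$) to decompose $\Ehat^x[Z_t^2]$ into three contributions: a diagonal term $\Ehat^x[\sum w_Z(X_u(s))^2]$, a BBM-branching integral $\beta m_2\int\widehat{\p}_s(x,z)(\Ehat^{(z,s)}[Z_t])^2\,\dd z\,\dd s$, and a ``breakout-pair'' term indexed by $\mathscr R_t$. One then forms $\Varhat^x(Z_t) = \Ehat^x[Z_t^2]-(\Ehat^x[Z_t])^2$ for a single initial particle and sums over initial particles, the latter being justified because under $\P$ the BBMs from distinct ancestors are independent, and the passage to $\Phat$ costs only a factor $1+O(p_B)$ by \eqref{eq:hbar}.

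The main term is the breakout-pair contribution, and it already carries the bound we want. Indeed, for each $(u,s)\in\mathscr R_t$, the first part of Lemma~\ref{lem:ZYW} puts all descendants in $\mathscr S^{(u,s)}$ within $O(1/a)$ of $a-y$, and a single-particle application of Lemma~\ref{lem:hat_Z_f} gives $\Ehat^{(X_v(s'),s')}[Z_t]\le C(w_Z(X_v(s'))+Aw_Y(X_v(s')))$. Using $Y^{(u,s)}\le \eta Z^{(u,s)}$, the Cauchy--Schwarz-type bound
\[
\sum_{v_1\ne v_2}\Ehat^{(X_{v_1}(s_1),s_1)}[Z_t]\Ehat^{(X_{v_2}(s_2),s_2)}[Z_t]\le \Bigl(\sum_{(v,s')\in\mathscr S^{(u,s)}}\Ehat^{(X_v(s'),s')}[Z_t]\Bigr)^2\le C(Z^{(u,s)})^2
\]
holds. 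By the strong branching property the $Z^{(u,s)}$, conditioned on $\F_{\mathscr R_t^{(l-1)}}$, are i.i.d.\ with the law of $Z$ under $\Q^a(\cdot\mid T^{(0;l)}>\vartheta-s)$, so \eqref{eq:QhatZsquared} yields $\Ehat^x\bigl[\sum_{(u,s)\in\mathscr R_t}(Z^{(u,s)})^2\bigr]\le C\ep e^A\,\Ehat^x[R_t]$. Summing Corollary~\ref{cor:Zl_exp_upperbound} over tiers (whose contributions form a geometric series with ratio $\le 1/2$ thanks to the assumption $t\le\vartheta\le\tconst{eq:tcrit}$) gives $\Ehat^x[R_t]\le C(w_Z(x)t/a^3+w_Y(x))$, which after summation over initial particles produces exactly $C\ep e^A((t/a^3)Z_0+Y_0)$.

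What remains is to show that the diagonal and BBM-branching terms combine with $-(\Ehat^x[Z_t])^2$ to give a negligible remainder. The diagonal satisfies $\sum w_Z^2\le C Z_t$ (since $w_Z\le C$), hence is at most $C\Ehat^x[Z_t]=O(w_Z(x)+Aw_Y(x))$ by Lemma~\ref{lem:hat_quantities}. The branching integral, rewritten by substituting the first-moment bound $\Ehat^{(z,s)}[Z_t]\le C(w_Z(z)+Aw_Y(z))$, can be matched term-by-term against the expansion of $(\Ehat^x[Z_t])^2$ using the many-to-one formula of Corollary~\ref{cor:hat_many_to_one}; after the cancellation, what is left is of the same order $O(w_Z(x)+A^2 w_Y(x))$, which is dominated by the breakout bound because $\ep e^A\gg A^2$ by \eqref{eq:ep_lower}. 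The argument for $R_t$ is entirely parallel via Remark~\ref{rem:many_to_two_with_tiers}, with $\Ehat^{(z,s)}[R_t]\le C(w_Z(z)t/a^3+w_Y(z))$ (Lemma~\ref{lem:Rt} together with Corollary~\ref{cor:Zl_exp_upperbound}) playing the role of the first-moment bound on $Z_t$.

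The main obstacle is the cancellation in the last paragraph: writing the BBM-branching integral in a form that visibly cancels against $(\Ehat^x[Z_t])^2$ up to an $O(\Ehat^x[Z_t])$ remainder. This parallels the classical variance identity for BBM without selection and must be done using the explicit first-moment asymptotics in Lemma~\ref{lem:hat_quantities} (and the estimates \eqref{eq:QhatZ}--\eqref{eq:QhatZsquared}) so that the error terms introduced by the conditioning $\Phat$ do not spoil the cancellation.
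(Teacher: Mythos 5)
Your decomposition via Lemma~\ref{lem:many_to_two_with_tiers} and your treatment of the breakout-pair term are exactly what the paper does, and that term is indeed the dominant one. But the way you propose to close the argument has a gap, on two counts. First, the cancellation against $(\Ehat^x[Z_t])^2$ that you flag as ``the main obstacle'' is neither carried out nor needed: the paper simply discards the square of the mean, bounding $\Varhat^x(Z_t)\le\Ehat^x[Z_t^2]$, and estimates the branching integral \emph{directly}. From $\Ehat^{(z,s)}[Z_t]\le C(w_Z(z)+Aw_Y(z))\le CAe^{-\frac34(a-z)}$ one gets $\Ehat^{(z,s)}[Z_t]^2\le CA^2e^{-\frac32(a-z)}$, and the resulting integral $\int_0^t\int_0^a\p_s(x,z)e^{-\frac32(a-z)}\,\dd z\,\dd s$ is the same computation as in the proof of \eqref{eq:Zt_2ndmoment}, giving $CA^2\bigl(\tfrac t{a^3}w_Z(x)+w_Y(x)\bigr)$ (plus a tier-$\ge1$ correction controlled via Corollary~\ref{cor:Zl_exp_upperbound} and \eqref{eq:imam}). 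This is then absorbed into the breakout term because $A^2\le\ep e^A$ by \eqref{eq:ep_lower} --- the same inequality you invoke, but applied to a bound you never actually establish; attempting the exact variance cancellation under the conditioned law $\Phat$ would be delicate and buys nothing here.

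Second, your diagonal estimate is too weak. Bounding $\sum w_Z^2\le CZ_t$ and hence the diagonal by $C\Ehat^x[Z_t]=O(w_Z(x)+Aw_Y(x))$ leaves a bare $w_Z(x)$, i.e.\ after summing over initial particles a term $O(Z_0)$ carrying no factor $t/a^3$. This is \emph{not} dominated by $C\ep e^A\bigl(\tfrac t{a^3}Z_0+Y_0\bigr)$: take $t$ of order $1$ and $x$ in the bulk, where $w_Z(x)=a\sin(\pi x/a)e^{\mu(x-a)}$ is of order $a\,w_Y(x)$; since $a\to\infty$ with $A$ (hence $\ep e^A$) fixed, neither $\ep e^A\tfrac t{a^3}w_Z(x)$ nor $\ep e^A w_Y(x)$ controls $w_Z(x)$. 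The paper instead uses $w_Z^2\le Cw_Y$ (valid since $\mu\ge 1/2$), so the diagonal is at most $C\Ehat^x[Y_t]\le C\bigl(w_Y(x)+\eta A\tfrac t{a^3}w_Z(x)\bigr)$ by Lemma~\ref{lem:hat_quantities}, which does carry the required factor $t/a^3$ in front of $w_Z$.
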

\begin{proof}
  We want to apply Lemma~\ref{lem:many_to_two_with_tiers} for every initial particle but first have to collect some estimates.
  By Lemma~\ref{lem:hat_quantities}, \eqref{eq:QaZC} and the hypothesis  $t\le \vartheta\le \tconst{eq:tcrit}$, we have for every $x\in(0,a)$ and $s\le t$, for large $A$ and $a$,
  \begin{equation}
    \label{eq:56}
    \Ehat^{(x,s)}[Z_t] \le C \Big(w_Z(x) + Aw_Y(x)\Big).
  \end{equation}
In particular, since $\mu > 7/8$ for large $a$, $\Ehat^{(x,s)}[Z_t] \le CAe^{-\frac 3 4 (a-x)}$ for large $A$ and $a$. Now, for $s_0\le s\le t$, let $\widehat{\p}^{(0)}_{s_0,s}(x,z)$ be defined as $\widehat{\p}^{(0)}_{s}(x,z)$, but starting from the space-time point $(x,s_0)$. As in the proof of \eqref{eq:Zt_2ndmoment}, we have by Corollary~\ref{cor:hat_many_to_one},
\begin{align}
  \nonumber
 \int_{s_0\wedge t}^t\int_0^a \widehat{\p}^{(0)}_{s_0,s}(x,z) \Ehat^{(z,s)}[Z_t]^2\,\dd z\,\dd s &\le CA^2\int_{s_0\wedge t}^t\int_0^a \p^{(0)}_{s_0,s}(x,z) e^{-\frac 3 2 (a-x)}\,\dd z\,\dd s \\
 \nonumber
 &\le CA^2 \Big(\frac{t}{a^3} w_Z(x) + w_Y(x) \Big),
\end{align}
which yields
\begin{align}
\nonumber
  \int_0^t\int_0^a \widehat{\p}_s(x,z) \Ehat^{(z,s)}[Z_t]^2\,\dd z\,\dd s &\le C \Ehat^x\Big[\sum_{(u,s_0)\in\mathscr S_t} \int_{s_0\wedge t}^t\int_0^a \widehat{\p}^{(0)}_{s_0,s}(X_u(s_0),z) \Ehat^{(z,s)}[Z_t]^2\,\dd z\,\dd s\Big]\\
  \nonumber
 &\le CA^2 \Big(\frac{t}{a^3} \Big(w_Z(x) + \Ehat^x[Z_{\emptyset,t}^{(1+)}]\Big) + w_Y(x) + \Ehat^x[Y_{\emptyset,t}^{(1+)}]\Big)\\
  \nonumber
 &\le CA^2 \Big(\frac{t}{a^3} w_Z(x) +  w_Y(x)\Big), 
\end{align}
by Corollary~\ref{cor:Zl_exp_upperbound}, \eqref{eq:imam}, \eqref{eq:eta} and the hypothesis $t\le \vartheta\le \tconst{eq:tcrit}$. Furthermore, 
\begin{align*}
\Ehat^x\Big[&\sum_{(u,s)\in\mathscr R_t} \sum_{(v_1,s_1),(v_2,s_2)\in\mathscr S^{(u,s)},\,v_1\ne v_2} \Ehat^{(X_{v_1}(s_1),s_1)}[Z_t]\Ehat^{(X_{v_2}(s_2),s_2)}[Z_t]\Big]\hspace{-30cm} \\
&\le C \Ehat^x\Big[\sum_{(u,s)\in\mathscr R_t} (Z^{(u,s)}+AY^{(u,s)})^2\Big] && \text{by \eqref{eq:56}}\\
&\le C \ep e^A \Ehat^x[R_t] && \text{by \eqref{eq:imam},  \eqref{eq:eta} and \eqref{eq:QhatZsquared}}\\
&\le C\ep e^A \Big(\frac{t}{a^3} w_Z(x) +  w_Y(x)\Big)&&\text{by Lemma~\ref{lem:Rt} and Corollary~\ref{cor:hat_many_to_one}}
\end{align*}
We can now apply Lemma~\ref{lem:many_to_two_with_tiers} together with the two previous displays,  Lemma~\ref{lem:hat_quantities} and the inequality $w_Z^2 \le Cw_Y$, to get
\begin{equation*}
  \Varhat^x(Z_t) \le \Ehat^x[Z_t^2] \le C (A^2+\ep e^A)\Big(\frac{t}{a^3} w_Z(x) +  w_Y(x)\Big).
\end{equation*}
Summing over the initial particles and using \eqref{eq:ep_lower} yields the inequality for $\Varhat(Z_t)$. The proof of the second inequality is similar, relying on Lemma~\ref{lem:Rt} and Remark~\ref{rem:many_to_two_with_tiers}.
\end{proof}

We finish the section with a corollary which will be useful in the next section.
\begin{corollary}
\label{cor:hat_quantities}
 Suppose that $t\le \vartheta$ and $x\in(0,a)$. Then for large $A$ and $a$, we have $\Ehat^x[Z_t] \le CAe^{-(a-x)/2}$, $\Ehat^x[Z_t^2] \le C\ep e^Ae^{-(a-x)/2}$ and $\Ehat^x[Y_t] \le Ce^{-(a-x)/2}$.
\end{corollary}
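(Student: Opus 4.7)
The statement is a specialization of Lemmas~\ref{lem:hat_quantities} and~\ref{lem:Zt_variance} to the single-particle initial condition $\nu_0 = \delta_x$, combined with elementary pointwise bounds on the weight functions. The plan is to first prove the uniform pointwise estimates $w_Z(x), w_Y(x) \le Ce^{-(a-x)/2}$ valid on $(0,a)$ for large $a$, and then plug $Z_0 = w_Z(x)$, $Y_0 = w_Y(x)$ into the moment estimates already established.

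The bound on $w_Y$ is immediate from $\mu > 1/2$. For $w_Z$, I would use $\sin(\pi x/a) \le \pi(a-x)/a$ to write $w_Z(x) \le \pi(a-x)e^{-\mu(a-x)}$, and then absorb the linear factor $(a-x)$ into the exponential: since $\mu \ge 3/4$ for large $a$, the expression $(a-x) e^{-(\mu - 1/2)(a-x)}$ is bounded by a universal constant, leaving a clean $e^{-(a-x)/2}$.

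Next, with $\nu_0 = \delta_x$ the hypotheses of Lemma~\ref{lem:hat_quantities} reduce to $Z_0 = w_Z(x)$, $Y_0 = w_Y(x)$. Using $t \le \vartheta \le \tconst{eq:tcrit} = a^3/(20A)$ together with the bound $\E_\Q^a[Z] \le \pi A$ from~\eqref{eq:QaZC}, the multiplicative factor $1 + \pi\E_\Q^a[Z]\,t/a^3 + O(p_B + (A\vartheta/a^3)^2)$ is a constant, so Lemma~\ref{lem:hat_quantities} gives $\Ehat^x[Z_t] \le C(w_Z(x) + A\,w_Y(x)) \le CA e^{-(a-x)/2}$ by the first step. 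The bound on $\Ehat^x[Y_t]$ follows from the second assertion of Lemma~\ref{lem:hat_quantities} and the observation that $\eta A(t/a^3) \le \eta/20$, yielding $\Ehat^x[Y_t] \le C(w_Y(x) + w_Z(x)) \le Ce^{-(a-x)/2}$.

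For the second moment I would decompose $\Ehat^x[Z_t^2] = \Varhat^x(Z_t) + \Ehat^x[Z_t]^2$. Lemma~\ref{lem:Zt_variance} directly gives $\Varhat^x(Z_t) \le C\ep e^A(t/a^3\cdot w_Z(x) + w_Y(x)) \le C\ep e^A e^{-(a-x)/2}$. For the square of the first moment, the previous paragraph gives $\Ehat^x[Z_t]^2 \le CA^2 e^{-(a-x)} \le CA^2 e^{-(a-x)/2}$, and this is in turn bounded by $C\ep e^A e^{-(a-x)/2}$ because~\eqref{eq:ep_lower} forces $\ep e^A \ge e^{5A/6}$, which dominates $A^2$ for large $A$. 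I expect no real obstacle: the only mildly delicate point is verifying that every factor of $A$ produced by $\E_\Q^a[Z]$ is absorbed either by the restriction $\vartheta \le a^3/(20A)$ or by the lower bound on $\ep$, but both absorption steps are already built into the statements of Lemmas~\ref{lem:hat_quantities} and~\ref{lem:Zt_variance}.
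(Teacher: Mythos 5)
Your proof is correct and follows essentially the same route as the paper, which simply declares the corollary immediate from Lemmas~\ref{lem:hat_quantities} and~\ref{lem:Zt_variance} together with \eqref{eq:c0_mu}; your write-up supplies exactly the omitted details (the pointwise bounds $w_Z(x),w_Y(x)\le Ce^{-(a-x)/2}$, the absorption of the $A$-factors via $\vartheta\le a^3/(20A)$, and the domination $A^2\le C\ep e^A$ from \eqref{eq:ep_lower} when passing from the variance to the second moment).
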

\begin{proof}
  Immediate from Lemmas~\ref{lem:hat_quantities} and \ref{lem:Zt_variance} and \eqref{eq:c0_mu}.
\end{proof}

\subsection{The fugitive and its family}
\label{sec:fugitive}

We now describe the BBM conditioned to break out at a given time. Recall that $\mathscr U$ denotes the fugitive. For simplicity, write $\tau_l$ and $\sigma_l$ for $\tau_l(\mathscr U)$ and $\sigma_l(\mathscr U)$, respectively, $l\in\N$. 
By the strong branching property, we have the following decomposition:

\begin{lemma}
  \label{lem:decomposition_Tl}
Let $k\in\N$, $l\in \N\cup\{\infty\}$ with $l\ge k$ and $t\ge 0$. Conditioned on $\F_0$, $T^{(0;l)}=T^{(k)}=t$, $\tau_0$ and $\mathscr U_0$, the BBM admits the following recursive decomposition:
\begin{enumerate}[nolistsep]
\item The initial particles $u\ne \mathscr U_0$ spawn independent BBM conditioned on $T^{(0;l)}>t$,
\item independently, the particle $\mathscr U_0$ spawns BBM conditioned on the event\footnote{This event has zero probability  but the conditioned law can be defined in obvious ways, for example by a limiting procedure. Note that this law is described in Lemma~\ref{lem:many_to_one_fugitive}.} that 
\begin{itemize}
 \item there exists a particle (call it $\mathscr U_0'$) hitting the point $a$ for the first time at time $\tau_0$ and
 \item no particle which is a descendant of $\mathscr U_0$ but not of $\mathscr U_0'$ breaks out before time $t$ from tiers $0,\ldots,l$.
\end{itemize}
Then, at time $\tau_0$:
\begin{enumerate}
\item If $k=0$, the particle $\mathscr U_0'$ spawns BBM conditioned on the event $B$ of a breakout.
\item If $k>0$, it spawns BBM starting at the space-time point $(a,\tau_0)$ conditioned on $T^{(0;l)}=T^{(k)}=t$. In particular, if we write $\mathscr S' = \mathscr S^{(\mathscr U_0',\tau_0)}$, then conditioned on $\F_{\mathscr S'}$\footnote{This $\sigma$-algebra is defined in Section~\ref{sec:stopping_lines}.}, the particles in $\mathscr S'$ spawn BBM starting from the collection of space-time points $\mathscr S'$, conditioned on $T^{(0;l-1)}=T^{(k-1)} = t$.
\end{enumerate}
\end{enumerate}
\end{lemma}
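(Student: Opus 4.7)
}
The plan is to successively apply the strong branching property at three locations: at time $0$, at the stopping line consisting of the first hit of $a$ in the subtree of $\mathscr U_0$, and finally at the stopping line $\mathscr S'$; at each step I would check that the conditioning event factorises nicely across the corresponding independent sub-BBMs. Since $\{T^{(0;l)}=T^{(k)}=t\}$ has probability zero, conditional laws will be understood throughout in the sense of regular conditional distributions (equivalently, as the limit obtained by conditioning on shrinking neighbourhoods of $(\tau_0,t)$, as suggested by the footnote to the statement).

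First, I would condition on $\F_0$ together with the identity of $\mathscr U_0$ among the initial particles. By the branching property, the subtrees rooted at distinct initial particles are independent BBMs. On the event $\{\mathscr U_0=u\}$, the joint event $\{T^{(0;l)}=T^{(k)}=t,\ \tau_0(\mathscr U)=\tau_0\}$ decouples into (i)~for each initial particle $u'\neq u$, no breakout from tiers $0,\ldots,l$ occurs in its subtree before time~$t$, i.e.\ $T^{(0;l)}>t$ in that subtree; and (ii)~in the subtree rooted at $u$, the first breakout from those tiers occurs at time $t$ and from tier $k$, and the first hit of $a$ by the fugitive's ancestral line happens at $\tau_0$. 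Independence of the subtrees forces the conditional law to factorise as a product, which is exactly item~1 and isolates the law of the $\mathscr U_0$-subtree that we still need to describe.

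Next, I would apply the strong branching property to the $\mathscr U_0$-subtree at the stopping line consisting of $\mathscr U_0'$ (the first descendant of $\mathscr U_0$ to hit $a$, which exists on our event). The sub-BBM rooted at $(a,\tau_0)$ is then independent of the history up to this stopping line. The pre-$\tau_0$ information carries the remaining conditioning: the spine from $\mathscr U_0$ to $\mathscr U_0'$ hits $a$ for the first time at $\tau_0$, no off-spine particle reaches $a$ before $\tau_0$, and no off-spine subtree produces a breakout from tiers $0,\ldots,l$ before time~$t$; this is precisely the conditioning stated in the first bullet of item~2. In case $k=0$, the event forces $\tau_0=t$ and the breakout event $B$ on $\mathscr U_0'$, which is (2)(a). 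In case $k>0$, the sub-BBM starts with $\mathscr U_0'$ already in tier~$1$, so $T^{(0)}=\infty$ trivially, and the residual condition reads $T^{(0;l)}=T^{(k)}=t$ for this sub-BBM.

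Finally, to obtain (2)(b) I would apply the strong branching property once more, at the stopping line $\mathscr S'=\mathscr S^{(\mathscr U_0',\tau_0)}$. The key observation is that the tier-advancement rule requires an ancestor to have hit the critical line, so in the sub-BBM rooted at $(a,\tau_0)$ no breakout can occur strictly before the trajectory reaches $\mathscr S'$; in particular every breakout originates from a descendant of some point of $\mathscr S'$. Hence the conditioning transfers entirely to the further sub-BBM rooted at $\mathscr S'$, and since particles starting at points of $\mathscr S'$ are themselves on the critical line, their tier indices are shifted down by one (a new tier $j$ breakout is exactly an old tier $j+1$ breakout), so $T^{(0;l)}=T^{(k)}=t$ becomes $T^{(0;l-1)}=T^{(k-1)}=t$ in the new indexing. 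The main obstacle I anticipate is the careful handling of the zero-probability conditioning on $\{\tau_0(\mathscr U)=\cdot\}\cap\{T^{(k)}=t\}$ together with the bookkeeping of tiers under the two re-rootings; once a measurable parametrisation of $(\mathscr U_0,\tau_0,t)$ is fixed and disintegration is applied, the three applications of the strong branching property are routine.
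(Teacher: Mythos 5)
Your overall architecture --- three successive applications of the strong branching property, with the tier-index bookkeeping at $\mathscr S'$ in the last step --- is exactly what the paper intends (it offers no written proof beyond the phrase ``by the strong branching property''), and your first and third steps are correct. The problem is in the second step. You define $\mathscr U_0'$ as ``the first descendant of $\mathscr U_0$ to hit $a$'' and include in the conditioning the requirement that ``no off-spine particle reaches $a$ before $\tau_0$''. Neither is right: $\mathscr U_0'$ is the ancestor of the \emph{fugitive} alive at the time $\tau_0=\tau_0(\mathscr U)$ at which the fugitive's ancestral trajectory first hits $a$; other branches of $\mathscr U_0$'s subtree may perfectly well hit $a$ before $\tau_0$ --- they simply must not break out from tiers $0,\ldots,l$ before time $t$. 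This is visible in the statement of the lemma itself, whose second bullet only forbids breakouts of non-descendants of $\mathscr U_0'$ and says nothing about earlier visits to $a$ by such particles. Conditioning additionally on ``nobody else reaches $a$ before $\tau_0$'' is a strictly stronger conditioning and changes the law of the $\mathscr U_0$-subtree outside the descendants of $\mathscr U_0'$; carried out literally, your plan proves a different (false) decomposition.

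A related structural point: the singleton $\{(\mathscr U_0',\tau_0)\}$ is not a stopping line, because whether a given particle hitting $a$ is the fugitive's tier-$0$ ancestor depends on the future (on which subtree produces the breakout at time $t$). The correct procedure is to apply the strong branching property at the genuine stopping line $\mathscr R^{(0)}_\infty$ restricted to $\mathscr U_0$'s subtree (all tier-$0$ particles at the moment they hit $a$), and then to decompose the event $\{T^{(0;l)}=T^{(k)}=t,\ \tau_0(\mathscr U)=\tau_0\}$ according to which element $(v,\tau_0)$ of that line is the fugitive's ancestor: the sub-BBM rooted at $(v,\tau_0)$ must produce its first breakout at time $t$ from tier $k$, while the rest of $\mathscr U_0$'s subtree --- including particles that hit $a$ before or after $\tau_0$ --- must produce no breakout from tiers $0,\ldots,l$ before $t$. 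Summing over $v$ and disintegrating in $\tau_0$ yields precisely the conditioning event of item~2, after which your steps (2a), (2b) and the downward shift of tiers at $\mathscr S'$ go through as you describe.
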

Note that in the case 2(b) above, the subtree spawned by $(\mathscr U_0',\tau_0)$ follows the law $\Q^a$ \emph{conditioned} on $T^{(0;l)}=T^{(k)} = t$, hence the law of $\mathscr S^{(\mathscr U_0',\tau_0)}$ is not the same as under $\Q^a$. In particular,  $\E[Z^{(\mathscr U_0',\tau_0)}] \not\le CA$. Indeed, conditioning on one of its descendants breaking out at a later time corresponds to a kind of size-bias on the number of particles. However, it is still true that $Z^{(\mathscr U_0',\tau_0)} < \ep e^A$, by the definition of a breakout.

In order to study the particles related to the fugitive, the following definition will be helpful. Let $L$ be the tier from which the breakout arises, i.e.\ $T^{(L)} = T$. For $l\le L$, let $\mathscr U_l$ and $\mathscr U'_l$ be the ancestors of the fugitive alive at $\sigma_l$ and $\tau_l$, respectively. We define a random proper line $\mathscr L_{\mathscr U}$ to contain the descendants and the cousins of the fugitive, at the time they are born or, if they are born between times $\tau_l(\mathscr U)$ and $\sigma_{l+1}(\mathscr U)$ for some $l\ge0$, at the time they hit the critical barrier. Formally,
\begin{align*}
 \mathscr L_{\mathscr U} &= \mathscr L^{\mathrm{fug}}_{\mathscr U}\wedge \widecheck{\mathscr L}_{\mathscr U}\wedge \widebar{\mathscr L}_{\mathscr U},\quad\text{where}\\
  \mathscr L^{\mathrm{fug}}_{\mathscr U} &= \mathscr S^{(\mathscr U,T)}\\
  \widecheck{\mathscr L}_{\mathscr U} &= \bigwedge_{l=0}^{L-1} \Big(\mathscr S^{(\mathscr U'_l,\tau_l)}\backslash\{(\mathscr U_{l+1},\sigma_l)\}\Big)\\
  \widebar{\mathscr L}_{\mathscr U} &= \bigcup_{u\prec \mathscr U,\,d_u\in \bigcup_{l=0}^L[\sigma_l,\tau_l)} \{(uk,d_u): k\in\{1,\ldots,k_u\},\,uk\npreceq \mathscr U\}.
\end{align*}
Here, we recall that $k_u$ and $d_u$ denote the number of children, resp., the time of death of the individual $u$. Note that the $\wedge$ symbols could have been replaced by unions in the above formulas. We further write
\[
 \F_{\mathscr U} = \F_{\mathscr L_{\mathscr U}},
\]
as defined in Section~\ref{sec:stopping_lines}. The following statement is a direct corollary of Lemma~\ref{lem:decomposition_Tl}.
\begin{corollary}
 \label{cor:L_U}
 With the notation introduced above, for every $j\le l$, conditioned on the event $T^{(0;l)}=T^{(k)} = t$ and on the $\sigma$-algebra $\mathscr F_{\mathscr U}$, the (space-time) tier-$j$ particles on the line $\mathscr L_{\mathscr U}$ spawn independent BBM conditioned not to break out from tiers $0,\ldots,l-j$ before time $t$.
\end{corollary}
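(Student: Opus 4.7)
I would prove the corollary by induction on $k$, iterating Lemma~\ref{lem:decomposition_Tl}. The scheme is that each application of the Lemma peels off the outermost level of the fugitive's ancestry: it describes the conditional law of the initial non-$\mathscr U_0$ particles directly (these are exactly the tier-$0$ bar-particles on $\mathscr L_{\mathscr U}$), and reduces the description of all remaining particles to a sub-BBM rooted at $\mathscr S' = \mathscr S^{(\mathscr U_0', \tau_0)}$ and conditioned on $T^{(0;l-1)} = T^{(k-1)} = t$, in which both $k$ and $l$ are decreased by one and tier numbering is shifted by $+1$ when translating back to the original BBM.

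For the base case $k = 0$ (so $L = 0$), $\widecheck{\mathscr L}_{\mathscr U} = \emptyset$ and $\widebar{\mathscr L}_{\mathscr U}$ consists of exactly the initial particles other than $\mathscr U_0$. Part~$1$ of Lemma~\ref{lem:decomposition_Tl} immediately yields the $j=0$ case, since these bar-particles spawn independent BBM conditioned on $T^{(0;l)} > t$. For the fugitive part $\mathscr L^{\mathrm{fug}}_{\mathscr U} = \mathscr S^{(\mathscr U,T)}$, the strong branching property applied to this stopping line shows the descendants of its particles spawn independent BBM; the residual condition $T^{(1;l)}>t$ (implied by $T^{(0;l)} = T^{(0)} = t$) becomes, under the sub-tier relabeling $j' = (\text{original tier}) - (L+1)$, precisely the claimed no-breakout condition from sub-tiers $0, \ldots, l-1$ for the tier-$(L+1)$ particles.

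For the induction step $k \ge 1$, part~$2$(b) of Lemma~\ref{lem:decomposition_Tl} provides the sub-BBM structure described above. I then apply the induction hypothesis to this sub-BBM, whose fugitive line $\mathscr L^{\mathrm{sub}}_{\mathscr U}$ is exactly the restriction of $\mathscr L_{\mathscr U}$ to the descendants of $\mathscr S'$: namely the check-particles $\mathscr S' \setminus \{\mathscr U_1\}$ at original tier $j = 1$, together with all bar-, check-, and fugitive-particles at $j \ge 2$. Translating sub-tiers back via the shift $j = j'+1$ gives the advertised conditioning for $j \ge 1$, while the $j = 0$ case is handled, as in the base case, by part~$1$ of the Lemma applied directly to the initial non-$\mathscr U_0$ particles. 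The inclusion $\mathscr F_{\mathscr S'} \subseteq \mathscr F_{\mathscr U}$ and the identification of $\mathscr F^{\mathrm{sub}}_{\mathscr U}$ with the sub-BBM portion of $\mathscr F_{\mathscr U}$ are immediate from the definition of $\mathscr L_{\mathscr U}$ as the concatenation of $\mathscr L^{\mathrm{fug}}_{\mathscr U}$, $\widecheck{\mathscr L}_{\mathscr U}$ and $\widebar{\mathscr L}_{\mathscr U}$.

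The step requiring the most care is the bookkeeping around the tier relabeling across recursion levels—in particular verifying at each recursion step that the distinguished particle $\mathscr U_{l+1}$ excluded from the check-line $\mathscr S^{(\mathscr U_l', \tau_l)} \setminus \{\mathscr U_{l+1}\}$ is exactly the sub-BBM's analogue of $\mathscr U_0$, so that the induction hypothesis matches case~$2$(b) of the Lemma correctly. Once this identification is in place (which is straightforward from the definition of $\mathscr U_l, \mathscr U_l'$ as the ancestors of the fugitive alive at $\sigma_l, \tau_l$), the result follows.
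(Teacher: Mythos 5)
Your overall architecture --- induction on $k$, peeling one level off with Lemma~\ref{lem:decomposition_Tl}, and shifting the tier index by one at each recursion step --- is the right reading of why the paper calls this a ``direct corollary'', and your treatment of the check- and fug-particles via part~2(b) and the strong branching property is sound. However, there is a genuine gap in your treatment of the tier-$0$ (and, by recursion, every tier-$m$) bar-particles. You identify $\widebar{\mathscr L}_{\mathscr U}$ in the base case with ``the initial particles other than $\mathscr U_0$'' and invoke part~1 of the Lemma for them. This is a misidentification: by its definition, $\widebar{\mathscr L}_{\mathscr U}$ only contains pairs $(uk,d_u)$ with $u\prec\mathscr U$, i.e.\ the side-branches born along the fugitive's ancestral line during the intervals $[\sigma_m,\tau_m)$. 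The initial particles $u\ne\mathscr U_0$ are the hat-particles; they do not lie on $\mathscr L_{\mathscr U}$ at all (they are dealt with separately, e.g.\ through $\widehat\F_{T^-}$ in the definition of $\F_\Delta$), so part~1 of Lemma~\ref{lem:decomposition_Tl} says nothing about the particles your argument actually needs to control.

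The missing ingredient is a spine decomposition of the conditioned law appearing in part~2 of Lemma~\ref{lem:decomposition_Tl}. Conditioning on $\F_{\mathscr U}$ means conditioning on the entire trajectory of the fugitive's ancestral line together with its branching times and offspring numbers; one must then argue that, given this spine, the side-branches $(uk,d_u)$ with $d_u\in[\sigma_m,\tau_m)$ spawn independent BBM, each conditioned not to break out from tiers $0,\dots,l-m$ before $t$. That is exactly the content of the spine description of the law ``BBM conditioned to have a particle hitting $a$ for the first time at $\tau_0$ with no other descendant breaking out'' (see the footnote to part~2 of Lemma~\ref{lem:decomposition_Tl} and Lemma~\ref{lem:many_to_one_fugitive}), and it is precisely the form in which the corollary is used later, e.g.\ via $\mathscr E_{\mathscr U}$ in Lemma~\ref{lem:E_U}. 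Without this step your induction establishes the claim only for the check- and fug-particles on $\mathscr L_{\mathscr U}$, not for the bar-particles.
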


The following $\F_{\mathscr U}$-measurable functional will be of use in the study of first and second moments of the bar-quantities conditioned on $\F_{\mathscr U}$ (Corollary~\ref{cor:hat_quantities} tells us why):
\begin{equation}
  \label{eq:E_U}
\mathscr E_{\mathscr U} = \sum_{(u,s)\in\widebar{\mathscr L}_{\mathscr U}} e^{-(a-X_u(s))/2} = \sum_{l=0}^L\  \sum_{u\prec \mathscr U,\,d_u\in[\sigma_l,\tau_l)} (k_u-1)e^{-(a-X_u(d_u-))/2}.
\end{equation}

\begin{lemma}
  \label{lem:E_U}
For large $A$ and $a$, we have for every $l\in\N$,
\[
\E[\mathscr E_{\mathscr U}\,|\,T=T^{(l)} \le \tconst{eq:tcrit}] \le C(l+1).
\]
\end{lemma}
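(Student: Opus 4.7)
The natural approach is to decompose $\mathscr E_{\mathscr U}$ along the ancestral line of the fugitive (the \emph{spine}) into contributions from the $l+1$ up--segments $[\sigma_j,\tau_j]$, $j=0,\ldots,l$, each of which carries the spine from the critical line (at height $\approx a-y$) up to the point $a$; all bar-particles are born in one of these segments by the definition of $\widebar{\mathscr L}_{\mathscr U}$. The goal is to show that each such segment contributes $O(1)$ in expectation, so that the total is $O(l+1)$. The down-segments $[\tau_{j-1},\sigma_j)$ are irrelevant since no bar-particles are born there.

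The first step is to describe the joint law of the spine and the bar-particles conditionally on the event $\{T=T^{(l)}\le\tconst{eq:tcrit}\}$. Applying the decomposition in Lemma~\ref{lem:decomposition_Tl} (and Corollary~\ref{cor:L_U}) inductively on $j$, the spine in $[\sigma_j,\tau_j]$ is, conditionally on $\sigma_j$, $\tau_j$ and the starting point, a Brownian motion with drift $-\mu$ conditioned to hit $a$ at $\tau_j$ together with some vanishing additional conditioning (no breakout from tiers $0,\ldots,l-j$). Conditionally on the spine, a standard spine/many-to-one computation (the same size-biasing that drives Lemma~\ref{lem:many_to_two_with_tiers}) shows that the cousin-births along the spine form, to leading order, a Poisson process of intensity $\beta m_2\,\dd s$. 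Therefore
\[
 \E\!\left[\sum_{u\prec\mathscr U,\,d_u\in[\sigma_j,\tau_j)}(k_u-1)\,e^{-(a-X_u(d_u-))/2}\,\Big|\,\text{spine}\right] \le C\int_{\sigma_j}^{\tau_j}e^{-(a-X_{\mathscr U}(s))/2}\,\dd s.
\]

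The second step is the per-segment bound. Write $h(x)=e^{2\mu x}$, which is harmonic for the generator $\tfrac12\partial^2-\mu\partial$; the $h$-transform identifies the BM-with-drift-$-\mu$ conditioned to hit $a$ with a BM with drift $+\mu$ killed on hitting $a$. The Green's function $G(x,y)$ of the latter on $(-\infty,a]$ is bounded uniformly by $C/\mu$, so for any starting point $x\le a$,
\[
 \E_x\!\left[\int_0^{\tau_a}e^{-(a-X(s))/2}\,\dd s\right] \;=\; \int_{-\infty}^{a} G(x,y)\,e^{-(a-y)/2}\,\dd y \;\le\; \frac{C}{\mu}\int_0^\infty e^{-z/2}\,\dd z \;=\; O(1).
\]
The extra conditioning ``no breakout from tiers $0,\ldots,l-j$ before time $t$'' changes the spine's density by a factor $1+O(p_B)$ (cf.\ the bound~\eqref{eq:hbar} used throughout Section~\ref{sec:before_breakout_particles}) and can be absorbed. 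Averaging over the (random) starting point of the spine in each segment and summing over $j=0,\ldots,l$ yields the desired bound $C(l+1)$.

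The main technical obstacle I foresee is justifying the spine description under the zero-probability conditioning $\{T=T^{(l)}=t\}$ and extracting the clean ``Poisson$(\beta m_2\,\dd s)$ of cousins along the spine'' picture; this presumably is exactly the content of the spine-type Lemma~\ref{lem:many_to_one_fugitive} alluded to in the footnote of Lemma~\ref{lem:decomposition_Tl}, which one would invoke here. A secondary point of care is the initial segment $j=0$, where the spine does not start on the critical line but at a generic point of $(0,a)$; the Green's function bound is uniform in the starting point, so this causes no essential difficulty, but one should make sure that the random starting position (biased by the event of being an ancestor of the fugitive) satisfies $x\le a$ so that the integral estimate applies.
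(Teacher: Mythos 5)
Your decomposition along the fugitive's ancestral line into the $l+1$ up-segments $[\sigma_j,\tau_j]$, and the reduction via the spine change of measure to the integral $\int_{\sigma_j}^{\tau_j}e^{-(a-X_{\mathscr U}(s))/2}\,\dd s$, is exactly the paper's route (Lemma~\ref{lem:decomposition_Tl} together with Corollary~\ref{cor:many_to_one_fugitive}, the latter being precisely the ``Poisson of cousins along the spine'' statement you anticipate needing). The gap is in the per-segment estimate. You correctly identify the law of the spine on $[\sigma_j,\tau_j]$ as Brownian motion conditioned to stay in $(0,a)$ and to hit $a$ for the first time \emph{at the prescribed time} $\tau_j$ — i.e.\ a (taboo) bridge — but your Green's-function computation then silently replaces this by Brownian motion with drift conditioned merely to hit $a$ eventually, with no constraint on the hitting time and no absorption at $0$. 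These are different laws, and the bound does not transfer: for a bridge of length $t$ ending at $a$, the correct estimate for $\E[\int_0^t e^{-(a-X_s)/2}\,\dd s]$ carries an additional term of order $t/a^3$ (the taboo stationary measure puts mass $\asymp a^{-3}$ within distance $O(1)$ of $a$), which is $O(1)$ here only because the conditioning $T\le\tconst{eq:tcrit}$ forces $\tau_j-\sigma_j\le a^3/(20A)$. Your argument never uses this restriction and would ``prove'' a uniform $O(1)$ bound that is false for bridges of length $\gg a^3$, which shows the step as written cannot be correct.

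The missing ingredient is exactly Lemma~\ref{lem:k_integral}: its proof handles the bridge conditioning via self-duality of the taboo process, splitting the bridge at its midpoint for $t\ge a^2$, and comparison with Bessel-$3$ bridges for $t\le a^2$. This is the genuinely technical part of the lemma, and it cannot be bypassed by the unconditioned Green's-function bound. Once you invoke \eqref{eq:taboo_integral_error_2}--\eqref{eq:taboo_integral_error_3} (after reflecting $x\mapsto a-x$, so the endpoint $a$ becomes $0$ and contributes the harmless term $1\wedge 0^{-1}=1$) in place of your Green's-function step, the rest of your argument goes through and coincides with the paper's proof.
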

\begin{proof}
By Lemma~\ref{lem:decomposition_Tl} and Corollary~\ref{cor:many_to_one_fugitive} (which can be applied because of \eqref{eq:pbar_estimate}), we have for every $l\in\N$,
\[
\E[\mathscr E_{\mathscr U}\,|\,T=T^{(l)} \le \tconst{eq:tcrit}]\le C  \sum_{i=0}^l \E\Big[W_i\Big[\int_0^{\tau_i-\sigma_i} e^{-(a-X_t)/2}\,\dd t\Big]\,\Big|\,T=T^{(l)}\le \tconst{eq:tcrit}\Big],
\]
where $W_i$ is the law of a Brownian bridge of length $\tau_i-\sigma_i$ from $X_{\mathscr U}(\sigma_i)$ to $X_{\mathscr U}(\tau_i)$. The statement now follows readily from Lemma~\ref{lem:k_integral}.
\end{proof}

We can now bound the probability that the fugitive stems from tier 1 or 2.
\begin{lemma}
\label{lem:T1_T2}
Suppose that $C_1 e^A \le Z_0 \le C_2e^A$ and $Y_0 \le \eta Z_0$. Then for large $A$ and $a$, we have for $l\in\{1,2\}$,
 \[
  \P(T^{(l+)} < T^{(0;l-1)}\le \tconst{eq:tcrit}) \le C(\ep A)^l,
 \]
\end{lemma}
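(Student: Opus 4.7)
The plan is to reduce the probability to a first-moment bound via the strong branching property, then control the expected count of tier-$\ge l$ particles hitting $a$ before the first lower-tier breakout by a competing-risks calculation. The key is that the intensity of tier-$k$ hits of $a$ grows polynomially like $(t/a^3)^k$, while the first tier-$<l$ breakout time decays exponentially at rate $\lambda := \pi p_BZ_0/a^3 \sim \pi^2/(\ep a^3)$.

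First, applying the strong branching property at the stopping line $\mathscr R^{(l+)}_{T^{(0;l-1)}\wedge\vartheta}$ (where $\vartheta = \tconst{eq:tcrit}$), each of its particles has independent breakout probability $p_B$. The union bound gives
\[
\P\bigl(T^{(l+)} < T^{(0;l-1)} \le \vartheta\bigr) \le p_B \, \E\bigl[R^{(l+)}_{T^{(0;l-1)}\wedge \vartheta}\bigr].
\]
Next, the tier-$<l$ breakout time is dominated by $T^{(0)}$, which by Lemma~\ref{lem:T0} satisfies $\P(T^{(0)}>t)\le \exp(-c\lambda t)$; equivalently, Corollary~\ref{cor:moments_T} gives $\E[(T^{(0;l-1)}/a^3\wedge 1)^{l+1}]\le C(l+1)!\,(2\gamma)^{l+1}$ with $\gamma = (\pi p_B Z_0)^{-1}$. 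Iterating Lemmas~\ref{lem:Rt} and~\ref{lem:Zl_exp_upperbound} (the $Y$-terms being negligible thanks to $Y_0\le\eta Z_0$ and $\eta\le e^{-2A}$) yields the polynomial-in-$t$ bound $\E[R^{(k)}_t] \le C^k(\pi^2A)^k(t/a^3)^{k+1}Z_0$ for $t\le\vartheta$.

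Combining these, I would write via Fubini
\[
\E\bigl[R^{(l+)}_{T^{(0;l-1)}\wedge\vartheta}\bigr] = \sum_{k\ge l}\int_0^\vartheta \E\bigl[\Ind_{T^{(0;l-1)}>t}\,dR^{(k)}_t\bigr],
\]
and decouple each integrand by the key inequality $\E[\Ind_{T^{(0;l-1)}>t}\,dR^{(k)}_t]\le (1+o(1))\,\P(T^{(0;l-1)}>t)\,\E[dR^{(k)}_t]$. With the explicit moment $\int_0^\infty (t/a^3)^k e^{-\lambda t}\,dt = k!/((\lambda a^3)^{k+1}a^{3k})^{-1}$ and $p_BZ_0\sim \pi/\ep$, $Z_0\le C_2 e^A$, one computes the contribution of tier $k$ to be $C^k(k+1)!(A\ep)^k$. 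The dominant term is $k=l$, giving $C_l(A\ep)^l$; successive terms decrease by a factor of order $(k+1)A\ep\ll 1$ as long as $k\ll 1/(A\ep)$, so the geometric series converges to $O((A\ep)^l)$. For $k\gtrsim 1/(A\ep)$, where the exponential cutoff becomes ineffective, one replaces it by the $\vartheta$-cutoff $A(\vartheta/a^3) = 1/20$, yielding a super-exponentially small tail. Multiplication by $p_B\sim \pi/(\ep e^A)$ then gives the bound $C(A\ep)^l$.

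The main technical obstacle is the decorrelation step $\E[\Ind_{T^{(0;l-1)}>t}\,dR^{(k)}_t] \le (1+o(1))\,\P(T^{(0;l-1)}>t)\,\E[dR^{(k)}_t]$: a tier-$k$ particle descends from lineages whose tier-0 subtrees also determine the tier-$<l$ breakout events, so the two factors are not truly independent. The required inequality should follow from extending Corollary~\ref{cor:hat_many_to_one} to higher tiers, using that conditioning on no lower-tier breakout shifts expectations by at most a $(1+O(p_B))$ factor per tier. A cleaner alternative is to establish the pointwise bound $\E[R^{(l+)}_s\mid T^{(0;l-1)}\wedge\vartheta =s]\le C A^l(s/a^3)^{l+1}Z_0$ and then apply Corollary~\ref{cor:moments_T} directly to the random argument, bypassing the explicit integration over $t$; this route is more in line with the moment estimates already assembled in Section~\ref{sec:before_breakout_particles}, but requires verifying that the conditional first moment does not blow up under the conditioning on the random time.
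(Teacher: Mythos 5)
Your overall architecture (union bound over breakout attempts giving a factor $p_B$, iterated tier-by-tier first-moment bounds polynomial in $t/a^3$, then integration against the law of $T^{(0;l-1)}$ via Corollary~\ref{cor:moments_T}) matches the skeleton of the paper's proof. But there is a genuine gap at your ``decorrelation'' step, and it is not a technicality. You claim $\E[\Ind_{(T^{(0;l-1)}>t)}\,\dd R^{(k)}_t]\le(1+o(1))\P(T^{(0;l-1)}>t)\,\E[\dd R^{(k)}_t]$ with the \emph{unconditional} expectation on the right, justified by ``conditioning on no lower-tier breakout shifts expectations by at most a $(1+O(p_B))$ factor per tier.'' That comparison is true for tier~$0$ (Corollary~\ref{cor:hat_many_to_one}) but false for $k\ge 1$. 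The point of the conditioning is that it truncates the heavy tail of $W$: a particle hitting $a$ produces, on the critical line, a mass with $\E_{\Q}^a[Z]=\pi(A+\log\ep+O(1))\asymp A$ by \eqref{eq:QaZ}, whereas \emph{unconditionally} $\E^a[Z]=\pi E[W_y](1+o(1))\asymp y$, and $y$ is taken $\ge\eta^{-1}\ge e^{2A}$ (this is just the statement that $E[W]=\infty$ since $P(W>x)\sim 1/x$). Consequently your intermediate claim $\E[R^{(k)}_t]\le C^k(\pi^2A)^k(t/a^3)^{k+1}Z_0$ is false for the unconditional law — the correct unconditional bound carries a factor of order $y^k$, not $A^k$ — and the decoupled product would then be off by $(y/A)^l$, destroying the target bound $C(\ep A)^l$. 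Lemma~\ref{lem:Zl_exp_upperbound}, which you cite for this step, is a statement about $\Ehat_{(l)}$, not about $\E$; the factor $A$ per tier lives only under the conditioning. The fix is to never compare to the unconditional law: write $\E[\Ind_{(T>t)}\,\dd R^{(k)}_t]=\P(T>t)\,\E[\dd R^{(k)}_t\,|\,T>t]$ as an identity and bound the conditional intensity directly with the machinery of Section~\ref{sec:before_breakout_particles}, which is essentially what the paper does (it conditions on $T^{(0;l-1)}=t$ and applies \eqref{eq:Zl_exp_upperbound_2} together with Proposition~\ref{prop:T}).

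A second, smaller omission: the paper's conditioning on $\{T^{(0;l-1)}=t\}$ size-biases the subtrees along the low-tier fugitive's ancestral line (see the remark after Lemma~\ref{lem:decomposition_Tl}), and a nontrivial part of the proof (equations \eqref{eq:14}--\eqref{eq:13}, using $\mathscr E_{\mathscr U}$ and the deterministic cap $Z^{(\mathscr U_j',\tau_j)}\le\ep e^A$ from the definition of a breakout) is devoted to the bar- and check-particles descending from that line. Your primary route conditions only on survival $\{T^{(0;l-1)}>t\}$ and so could in principle sidestep this decomposition — a genuine simplification if carried out — but your ``cleaner alternative'' (conditioning on $T^{(0;l-1)}\wedge\vartheta=s$) walks straight back into the size-biasing problem that you flag but do not resolve. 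As written, the proposal does not close either gap.
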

\begin{proof}
Fix $t\le \tconst{eq:tcrit}$ and $l\ge 1$ and write for this proof $\Phat_{(j)}(\cdot) = \P(\cdot\,|\,T^{(0;j)} > t)$ for all $j\le l-1$. Let $\nu = \sum_{i=1}^n \delta_{x_i}$, where $x_1,\ldots,x_n$ are the positions of the initial particles (we assume w.l.o.g.\ that $\nu$ is non-random). Conditioned on $T^{(0;l-1)}=t$, let $p_i$ be the probability that the first breakout from tiers $0,\ldots,l-1$ is caused by a descendant of the $i$-th initial particle. Then,
\begin{equation}
  \label{eq:1}
 \P^\nu(T^{(l+)} < t\,|\,T^{(0;l-1)} = t) \le \sum_{i=1}^n p_i \left(\Phat_{(l-1)}^{\nu-\delta_{x_i}}(T^{(l+)} < t)
+\P^{x_i}(T^{(l+)} < t\,|\,T^{(0;l-1)} = t)\right).
\end{equation}

Define $c_t = 10A( t /{a^3} + C\eta)$, where $C$ is the constant from \eqref{eq:Zl_exp_upperbound_2}. Then $c_t$ is less than $2/3$ for large $A$ and $a$ by the hypothesis on $t$ and \eqref{eq:eta}. By Lemma~\ref{lem:Zl_exp_upperbound}, we have  for every $j\le l-1$ and every  $(x,s)\in (0,a)\times[0,t]$,
\begin{equation}
 \label{eq:deezer}
\Ehat_{(l-j-1)}^{(x,s)}[Z_{\emptyset,t}^{(l-j)}] \le CA(\tfrac t{a^3}w_Z(x)+w_Y(x))c_t^{l-j-1}.
\end{equation}
We now have for $l\ge 1$,
\begin{align*}
 \Phat_{(l-1)}^{\nu-\delta_{x_i}}(T^{(l+)} < t) &\le Cp_B\Ehat_{(l-1)}^{\nu-\delta_{x_i}}[\tfrac t {a^3} Z_{\emptyset,t}^{(l)} + Y_{\emptyset,t}^{(l)}] && \text{by Proposition~\ref{prop:T}}\\
 &\le Cp_BA^{-1}c_t\Ehat_{(l-1)}^{\nu-\delta_{x_i}}[Z_{\emptyset,t}^{(l)}] && \text{by \eqref{eq:imam}}\\
 &\le Cp_BZ_0A^{-1} c_t^{l+1} && \text{by \eqref{eq:deezer} and the hyp.\ on $Y_0$}.\\
 &\le C(A\ep)^{-1} c_t^{l+1} && \text{by \eqref{eq:pB} and the hyp.\ on $Z_0$.}
\end{align*}
With \eqref{eq:pB} and the hypothesis on $Z_0$, this gives
\begin{equation}
\label{eq:7777}
 \Phat_{(l-1)}^{\nu-\delta_{x_i}}(T^{(l+)} < t) \le C(A\ep)^{-1} c_t^{l+1}.
\end{equation}

In order to bound the second term on the RHS of \eqref{eq:1}, we note that as above, we have for every $x\in(0,a)$ and every $k=0,\ldots,l-1$, by Proposition~\ref{prop:T} and \eqref{eq:imam},
\begin{equation}
 \label{eq:olives}
 \P^x(T^{(l+)} < t\,|\,T^{(k)} = T^{(0;l-1)} = t) \le Cp_BA^{-1}c_t\E^x[\widecheck Z^{(l)}_{\emptyset,t} + \widebar Z^{(l)}_{\emptyset,t}\,|\,T^{(k)} = T^{(0;l-1)} = t].
\end{equation}
By \eqref{eq:deezer}, Lemma~\ref{lem:E_U} and the inequality $w_Z(x) + w_Y(x) \le Ce^{-(a-x)/2}$, we have for every $k=0,\ldots,l-1$,
\begin{equation}
  \label{eq:14}
  \E^x[\widebar Z^{(l)}_{\emptyset,t}\,|\,T^{(k)} = T^{(0;l-1)} = t] \le C(k+1)A c_t^{l-k-1}.
\end{equation}
Moreover, if $\mathscr U_j'$ denotes the ancestor of $\mathscr U$ at time $\tau_j(\mathscr U)$, then by \eqref{eq:deezer},
\begin{align}
\nonumber
\E^x[\widecheck Z^{(l)}_{\emptyset,t}\,|\,T^{(k)} = T^{(0;l-1)} = t] &\le \E^x\Big[\sum_{j=0}^{k-1} CA(\tfrac t{a^3}Z^{(\mathscr U_j',\tau_j(\mathscr U))}+Y^{(\mathscr U_j',\tau_j(\mathscr U))}) c_t^{l-j-2}\Big]\\
\label{eq:13}
&\le C \ep e^A c_t^{l-k},
\end{align}
where the last equation follows from \eqref{eq:imam}, the fact that $Z^{(\mathscr U_j',\tau_j(\mathscr U))}\le \ep e^A$ by the definition of a breakout, and the fact that $c_t \le 2/3$.
Equations \eqref{eq:olives}, \eqref{eq:14} and \eqref{eq:13} and the law of total expectation now give
\begin{equation}
  \label{eq:115}
 \P^x(T^{(l+)} < t\,|\,T^{(0;l-1)} = t) \le C p_B (A^{-1}\ep e^A c_t^2 + lc_t) \le C (A^{-1}c_t^2 + l(\ep e^A)^{-1} c_t),
\end{equation}
by \eqref{eq:pB}.

Summing up the above results, we have by \eqref{eq:1}, \eqref{eq:7777} and \eqref{eq:115},
\begin{equation*}
 \P^\nu(T^{(l+)} < t\,|\, T^{(0;l-1)} = t) \le C ((A\ep)^{-1}c_t^{l+1} + A^{-1}c_t^2 + l(\ep e^A)^{-1} c_t),
\end{equation*}
so that, with $\widehat T = T^{(0;l-1)}\wedge \tconst{eq:tcrit}$,
\begin{align}
\nonumber
\P(T^{(l+)} < T^{(0;l-1)}\le \tconst{eq:tcrit}) &= \int_0^{\tconst{eq:tcrit}} \P^\nu(T^{(l+)} < t\,|\, T^{(0;l-1)} = t) \P^\nu(T^{(0;l-1)} \in \dd t)\\
\label{eq:techno}
&\le C \E^\nu\Big[(A\ep)^{-1}c_{\widehat T}^{l+1} + A^{-1}c_{\widehat T}^2 + l(\ep e^A)^{-1} c_{\widehat T}\Big],
\end{align}
By the hypotheses on $Z_0$ and $Y_0$, Corollary~\ref{cor:moments_T} and \eqref{eq:eta_ep} now give for $j=1,2,3$,
\[
 \E^\nu[c_{\widehat T}^j] \le C(A\ep)^j.
\]
The lemma now follows easily from this equation together with \eqref{eq:techno} and \eqref{eq:ep_lower}.
\end{proof}

\begin{remark}
  One may wonder whether one can simply calculate $\P(T^{(l)}\in\dd t\,|\,T^{(0;l-1)}> t)$ for every $l\ge 1$ and $t\le\tconst{eq:tcrit}$, using only the tools from Section~\ref{sec:before_breakout_particles}. This would require fine estimates on the density of the point process formed by the particles from tier $l-1$ hitting $a$ just before $t$. These estimates can be most easily obtained if one stops descendants of the particles hitting $a$ at a (large) fixed time $\zeta$ instead of the line from the first part of Lemma~\ref{lem:ZYW}, with which the results in this paper would hold as well. However, in order not make our results dependent on a particular form of the critical line, we stick to Lemma~\ref{lem:T1_T2}, which is enough for our purposes.
\end{remark}

\subsection{Proof of Proposition \ref{prop:piece}}
\label{sec:piece_proof}

Fix $\lambda \in \R$. In this section, the symbols $o(1)$ and $O(\cdot)$ may depend on~$\lambda$ (we could make the dependence precise, but we won't need it). Also, in this section, we always assume that $A$ and $a$ are large and will sometimes omit mentioning it explicitly.

Recall that  $\widetilde{\ep} = (\pi p_Be^A)^{-1}$. In particular, by \eqref{eq:pB} and \eqref{eq:ep_upper}, we have for large $A$ and $a$,
\begin{equation}
\label{eq:ep_tilde_ep}
 \widetilde{\ep} = (\ep/\pi^2)(1+o(1)) = O(\ep) = o(1).
\end{equation}

In order to prove Proposition~\ref{prop:piece}, we will prove the following statements: There exists a numerical constant $\numcst > 0$, such that for large $A$ and $a$, we have
\begin{align}
\label{eq:P_G1}
&\PB(G_1^c\,|\,\F_0)\Ind_{G_0} = O(\ep^{1+\numcst})\\
\label{eq:fourier_G1}
&\EB\Big[e^{i\lambda X^{[\infty]}_{\Theta_1}}\Ind_{G_1}\,\Big|\,\F_0\Big] = \exp\Big(\widetilde{\ep}(\kappa(\lambda) + i\lambda \pi^2A + o(1)) \Big)\Ind_{G_0} +  O(\ep^{1+\numcst}),
\end{align}
with $\kappa(\lambda)$ as in the statement of Proposition~\ref{prop:piece}.
Let us show how this implies the proposition. Since the process starts afresh at the stopping time $\Theta_n$, it is easily proven from \eqref{eq:P_G1} by induction that
\begin{equation*}
\PB(G_n^c\,|\,\F_0)\Ind_{G_0} = O(n\ep^{1+\numcst}),\quad\forall n\ge1. 
\end{equation*}
This yields the first statement of Proposition~\ref{prop:piece}. In order to show \eqref{eq:piece_fourier}, it is then enough to show that 
\begin{equation}
\label{eq:fourier_Gn}
\EB\Big[e^{i\lambda X^{[\infty]}_{\Theta_n}}\Ind_{G_n}\,\Big|\,\F_0\Big] = \exp\Big(n\widetilde{\ep}(\kappa(\lambda) + i\lambda \pi^2A + o(1))\Big)\Ind_{G_0} + O(n\ep^{1+\numcst}).
\end{equation}
We prove this again by induction: for every $n$, we have by \eqref{eq:fourier_G1},
\begin{align*}
 \EB\Big[e^{i\lambda X^{[\infty]}_{\Theta_{n+1}}}\Ind_{G_{n+1}}\,\Big|\,\F_n\Big] &= \EB\Big[e^{i\lambda (X^{[\infty]}_{\Theta_{n+1}}-X^{[\infty]}_{\Theta_{n}})}\Ind_{G_{n+1}}\,\Big|\,\F_n\Big]e^{i\lambda X^{[\infty]}_{\Theta_{n}}}\\
 &= \exp\Big(\widetilde{\ep}(\kappa(\lambda) + i\lambda \pi^2A + o(1))\Big)e^{i\lambda X^{[\infty]}_{\Theta_{n}}}\Ind_{G_n} + O(\ep^{1+\numcst}).
\end{align*}
Assume now that $\lambda \ne 0$ (for $\lambda=0$, \eqref{eq:fourier_G1} follows from \eqref{eq:P_G1}). Then $\Re(\kappa(\lambda) + o(1))< 0$ for large $A$ and $a$. In particular, $|\exp(\widetilde{\ep}(\kappa(\lambda) + i\lambda \pi^2A + o(1)))| \le 1$ for large $A$ and $a$. 
From this, one easily concludes by induction that \eqref{eq:fourier_Gn} holds for every $n$.

Now let us turn to the proof of \eqref{eq:P_G1} and \eqref{eq:fourier_G1}. We first introduce some notation. Define the random proper line
\[
 \mathscr L_\Delta = \widehat{\mathscr N}_{T^-} \wedge \mathscr S^{(\mathscr U,T)} \wedge \widecheck{\mathscr N}^{(1)}_{T^-}\wedge(\widecheck{\mathscr S}^{(1)}_T \backslash \widecheck{\mathscr S}^{(1)}_{T^-}).
\]
By definition, we then have ($\widecheck Z_{\Delta}$ was defined in \eqref{eq:def_check_Z_Delta})
\[
 Z_\Delta := \sum_{(u,s)\in\mathscr L_\Delta} w_Z(X_u(s)) = \widehat Z_{T^-}+Z^{(\mathscr U,T)}+\widecheck Z_{\Delta},
\]
so that  $\Delta = \log(e^{-A}Z_\Delta)$ by \eqref{eq:Delta_def}. Define $Y_\Delta$ and $\widecheck Y_{\Delta}$ analogously to $Z_\Delta$ and $\widecheck Z_{\Delta}$. 

Recall the definition of $\F_{\mathscr U}$ from Section~\ref{sec:fugitive}. We set
\[
 \F_\Delta := \F_{\mathscr U} \vee \widehat \F_{T^-}\vee \widecheck \F_{T^-}.
\]
We then have $\F_{\mathscr L_\Delta} \subset \F_\Delta$, in particular, the random variables $Z_\Delta$, $Y_\Delta$ and $\Delta$ are measurable with respect to $\F_\Delta$. Furthermore, it follows from Corollary~\ref{cor:L_U} and the strong branching property (applied to BBM conditioned not to break out) that \emph{conditioned on $\F_\Delta$, the (space-time) particles on the line $\mathscr L_\Delta$ spawn independent BBM conditioned not to break out before time $T$}.

In what follows, we will work on several ``good sets'', all of which we define here for easy reference, since they will be reused in the following sections. For this reason, some of them (for example $\widehat G$) are actually more restrictive than what would be necessary for the proof of Proposition~\ref{prop:piece}.

\begin{itemize}[nolistsep]
\item $G_{\mathscr U} =\{2e^Aa^2\le T\le \sqrt \ep a^3,\,T=T^{(0;1)},\,\mathscr E_{\mathscr U} \le e^{A/3}\}$ ($\mathscr E_{\mathscr U}$ was defined in \eqref{eq:E_U}).
\item $G_{\mathrm{fug}} = \{\sigma_{\mathrm{max}}^{(\mathscr U,T)}\le \zeta,\ Z^{(\mathscr U,T)} \le e^A/\ep,\ Y^{(\mathscr U,T)}\le \eta Z^{(\mathscr U,T)}\}$.
\item $\widehat G = \{|e^{-A}\widehat Z_{T^-} - 1| \le \ep^{1/4},\ \widehat Y_{T^-} \le a^{-1}e^{3A/2}\}$.
\item $\widecheck G = \{\widecheck Z_{\Delta} \le \ep^{1/4}e^A,\ \widecheck Y_{\Delta} \le e^{-A/2}\}$.
\item $G_\Delta = G_{\mathscr U}\cap G_{\mathrm{fug}}\cap \widehat G\cap \widecheck G \in \F_\Delta$.
\item $G_{\mathrm{nbab}}$ (``nbab'' stands for ``no breakout after breakout'') is the event that no bar-particle breaks out between $T$ and $\Theta_1+e^Aa^2$ and that no descendant of the particles in $\mathscr L_\Delta$ hits $a$ between $T^-$ and $\Theta_1+e^Aa^2$. We also define $\PB_{\mathrm{nbab}} = \PB(\cdot\,|\,G_{\mathrm{nbab}})$.
\end{itemize}
Note that for large $A$ and $a$, by \eqref{eq:eta} and \eqref{eq:ep_lower},
\begin{equation}
 \label{eq:ZYDelta}
 \text{on $G_\Delta$,}\quad Z_\Delta \le 2e^A/\ep\quad\tand\quad Y_\Delta \le 2e^{-A/2}.
\end{equation}

Finally, we recall that from  time $T^+ = T+\sigma_{\mathrm{max}}^{(\mathscr U,T)}$ on, we have the drift $-\mu_t = -\mu-f_\Delta((t-T^+)/a^2)/a^2$. On $G_{\mathrm{fug}}$, we have $T^+ \le T+\zeta$ and $\Theta_1 = T+e^Aa^2$. Furthermore, for large $A$, $-1\le \Delta \le C\log(1/\ep)$ on $G_\Delta$. By the hypotheses on the family $(f_x)_{x\in\R}$ from Section~\ref{sec:BBBM_definition_definition} and \eqref{eq:ep_lower} it follows that for large $A$ and $a$, for some function $g$,
\begin{equation}
 \label{eq:Delta_f}
\ton G_\Delta: \text{$f_\Delta$ is a barrier function,}\quad \|f_\Delta\| \le g(A)\quad\tand\quad \Delta - f_\Delta((\Theta_1-T^+)/a^2)= O(e^{-A/2}),
\end{equation}
where $\|\cdot\|$ is defined in \eqref{eq:def_f_norm}.

\paragraph{The probability of $G_\Delta$.}

We say that hypothesis (HB$_0$) is verified if $\nu_0$ is deterministic and such that $G_0$ holds.
\begin{lemma} 
\label{lem:GDelta}
Suppose (HB$_0$). For large $A$ and $a$, we have $\PB(G_\Delta) \ge 1 - C\ep^{5/4}$.
\end{lemma}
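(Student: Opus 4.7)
The plan is to decompose $G_\Delta^c = G_{\mathscr U}^c \cup G_{\mathrm{fug}}^c \cup \widehat G^c \cup \widecheck G^c$ and bound each of the four pieces by $C\ep^{5/4}$ separately, then apply a union bound. The first two are handled by the results of Sections~\ref{sec:time_breakout}--\ref{sec:fugitive}. For $G_{\mathscr U}$, Proposition~\ref{prop:T} gives $\PB(T\le 2e^Aa^2) = O(e^A/(\ep a))$ (which vanishes in the limit $a\to\infty$ taken first) and $\PB(T > \sqrt{\ep}\,a^3)$ exponentially small in $\ep^{-1/2}$; Lemma~\ref{lem:T1_T2} with $l=2$ bounds $\PB(T\ne T^{(0;1)},\,T\le\tconst{eq:tcrit})$ by $O((\ep A)^2)$, which is $o(\ep^{5/4})$ by \eqref{eq:ep_upper}; and Lemma~\ref{lem:E_U} with Markov gives $\PB(\mathscr E_{\mathscr U}>e^{A/3},\,T=T^{(0;1)}\le \tconst{eq:tcrit}) = O(e^{-A/3})\le O(\ep^2)$ by \eqref{eq:ep_lower}. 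For $G_{\mathrm{fug}}$, the decomposition of Lemma~\ref{lem:decomposition_Tl} shows that the subtree of the fugitive follows $\P^a(\cdot\mid B)$; combining Lemma~\ref{lem:ZYW}, part~3, with $p_B\sim\pi/(\ep e^A)$ gives $\PB(\sigma_{\max}^{(\mathscr U,T)}>\zeta)=O(\eta\ep e^A)=O(\ep^7)$ by \eqref{eq:eta} and \eqref{eq:ep_lower}; the tail $P(W>x)\sim 1/x$ from part~2 yields $\PB(Z^{(\mathscr U,T)}>e^A/\ep)=O(\ep^2)$; and the inequality $Y^{(\mathscr U,T)}\le\eta Z^{(\mathscr U,T)}$ is automatic on $\sigma_{\max}^{(\mathscr U,T)}\le\zeta$ by part~1 of the same lemma.

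The main obstacle is the $\widehat G$-step, where we need $|e^{-A}\widehat Z_{T^-}-1|\le\ep^{1/4}$ at the random time $T^-$. Conditionally on the fugitive data and on $T$, Lemma~\ref{lem:decomposition_Tl} shows that the hat-particles form a BBM with initial configuration $\nu_0-\delta_{\mathscr U_0}$ conditioned not to break out before $T$, i.e.\ they follow $\Phat^{\nu_0-\delta_{\mathscr U_0}}$ with $\vartheta=T$. Lemma~\ref{lem:hat_quantities} then gives $|\Ehat[\widehat Z_{T^-}\mid T]-e^A| = e^A\cdot o(\ep^{1/4})$, using (HB$_0$), the bound $T^-/a^3\le\sqrt\ep$ on $G_{\mathscr U}$, the estimate $\E_\Q^a[Z]\le\pi A$, and $A\sqrt\ep = O(A^{-15/2})$ from \eqref{eq:ep_upper}. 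For the fluctuation, Lemma~\ref{lem:Zt_variance} gives $\Varhat(\widehat Z_{T^-}\mid T)\le C\ep e^{2A}\,T^-/a^3$, so a conditional Chebyshev inequality yields $\PB(|\widehat Z_{T^-}-\Ehat[\widehat Z_{T^-}\mid T]| > \tfrac12\ep^{1/4}e^A\mid T) \le C\ep^{1/2}\,T^-/a^3$; averaging over $T$ with $\EB[T^-/a^3]\le\widetilde\ep = O(\ep)$ produces the bound $O(\ep^{3/2})\le C\ep^{5/4}$. The bound on $\widehat Y_{T^-}$ follows from the second half of Lemma~\ref{lem:hat_quantities}, applicable because $T^-\ge e^Aa^2\ge 2a^2$ on $G_{\mathscr U}$, and yields $O(e^{-A/2})\le O(\ep^3)$.

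Finally, the $\widecheck G$-step uses the three-case split of \eqref{eq:def_check_Z_Delta}. The case $\tau_0(\mathscr U)=T$ is trivial since $\widecheck Z_\Delta=\widecheck Y_\Delta=0$. If $\tau_0(\mathscr U)\in(T^-,T)$, then the tier-0 subtree of $\mathscr U_0'$ did \emph{not} itself cause a breakout (it is a later tier-1 descendant that does, on $G_{\mathscr U}$), so deterministically $Z^{(\mathscr U_0',\tau_0(\mathscr U))}\le\ep e^A$, whence $\widecheck Z_\Delta\le\ep^{1/4}e^A$ for $\ep\le 1$, and by part~1 of Lemma~\ref{lem:ZYW}, $\widecheck Y_\Delta\le\eta\ep e^A\le e^{-A/2}$. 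The remaining case $\tau_0(\mathscr U)\le T^-$ forces a tier-1 fugitive, for which Lemma~\ref{lem:T1_T2} with $l=1$ pays a factor $O(\ep A)$; conditionally on $\F_{\mathscr S^{(\mathscr U_0',\tau_0(\mathscr U))}}$, which still satisfies $Z^{(\mathscr U_0',\tau_0(\mathscr U))}\le\ep e^A$, the non-fugitive descendants spawn BBM conditioned not to break out up to time $T$, and Lemma~\ref{lem:hat_quantities} combined with Markov give failure probabilities $O(\ep^{3/4})$ and $O(\ep^4)$ for the $Z$- and $Y$-parts respectively; multiplying by $O(\ep A)$ these stay below $C\ep^{5/4}$ thanks to \eqref{eq:ep_upper}.
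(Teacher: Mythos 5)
Your proposal is correct and follows essentially the same route as the paper: a union bound over $G_{\mathscr U}$, $G_{\mathrm{fug}}$, $\widehat G$, $\widecheck G$, with Proposition~\ref{prop:T}, Lemma~\ref{lem:T1_T2} and Lemma~\ref{lem:E_U} handling $G_{\mathscr U}$, Lemma~\ref{lem:ZYW} handling $G_{\mathrm{fug}}$, and first/second-moment bounds from Lemmas~\ref{lem:hat_quantities} and~\ref{lem:Zt_variance} combined with Chebyshev/Markov handling $\widehat G$ and $\widecheck G$. The only differences are presentational (you condition on $T$ and average the conditional Chebyshev bound, and you split $\widecheck G$ into the three cases of \eqref{eq:def_check_Z_Delta}, whereas the paper computes unconditional second moments of the deviation directly), and the resulting estimates match the paper's.
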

\begin{proof}
Since under $\PB$, the drift does not change before $T^+$ and the event $G_\Delta$ only depends on the process up to this time, we have $\PB(G_\Delta) = \P(G_\Delta)$. It is therefore enough to show that $\P(G_\Delta) \ge 1-C\ep^{5/4}$.

First note that $\sqrt\ep a^3 \le \tconst{eq:tcrit}$ for large $A$ and $a$, by \eqref{eq:ep_upper}. With Proposition~\ref{prop:T} and Lemma~\ref{lem:T1_T2},
\begin{equation*}
  \P(2e^Aa^2 \le T \le \sqrt{\ep}a^3,\,T=T^{(0;1)}) \ge 1 - CA^2\ep^2.
\end{equation*}
With Lemma~\ref{lem:E_U} and Markov's inequality, this yields
\begin{equation}
  \label{eq:32}
  \P(G_{\mathscr U}) \ge 1 - CA^2\ep^2 - Ce^{-A/3} \ge 1 - CA^2\ep^2,
\end{equation}
where the last inequality follows from \eqref{eq:ep_lower}. Furthermore, by Lemma~\ref{lem:ZYW}, with the notation used there and in Section~\ref{sec:BBBM_breakout},
\begin{align*}
 \P(G_{\mathrm{fug}}^c) &= \P^a(Z > e^A/\ep,\,\sigma_{\mathrm{max}} \le \zeta\,|\,B) + \P^a(\sigma_{\mathrm{max}}> \zeta\,|\,B)\\
 &\le p_B^{-1}(P(\pi (W+\eta) > (1+O(1/a))e^A/\ep) + \eta)\\
 &\le C p_B^{-1}(\ep e^{-A} + \eta),
\end{align*}
where the last inequality follows from \eqref{eq:W_tail} and \eqref{eq:eta}. With \eqref{eq:pB}, \eqref{eq:ep_lower} and \eqref{eq:eta}, this now gives
\begin{equation}
 \label{eq:Gfug}
\P(G_{\mathrm{fug}}) \ge 1- C\ep^2.
\end{equation}

In order to estimate the probability of $\widehat G$, we will calculate first and second moments of the quantities in the definition of $\widehat G$. These estimates will be needed again later on, when we will calculate the Fourier transform of the random variable $\Delta$. Recall that $\E_\Q^a[Z]\le \pi A$ for large $A$ and~$a$ by \eqref{eq:QaZC} and that $T^- = (T-e^Aa^2)\vee 0$. Also, by hypothesis, we have $\widehat Z_0 = (1+O(\ep^{3/2}))e^A$ and $\widehat Y_0 \le \eta$. By Lemma~\ref{lem:hat_quantities} (applied with $\vartheta = T$), we then have for large $A$ and $a$,
\begin{align}
\nonumber
 \text{on $G_{\mathscr U}$,}\quad\E[e^{-A}\widehat Z_{T^-}\,|\,\F_{\mathscr U}] &= (1+O(\ep^{3/2}))\Big(1+\pi \E_\Q^a[Z] \tfrac {T^-} {a^3} + O\Big(p_B + \big(A\tfrac {T} {a^3}\big)^2\Big)\Big)+O(A\eta)\\
  \label{eq:710}
&= 1+\pi \E_\Q^a[Z] \tfrac T {a^3} + O\Big(\ep^{3/2} + \big(A\tfrac T {a^3}\big)^2\Big),
\end{align}
where the last inequality follows from the fact that $T\le \sqrt{\ep}a^3$ on $G_{\mathscr U}$, together with \eqref{eq:pB}, \eqref{eq:ep_lower} and \eqref{eq:eta}. As for the second moment, Lemma~\ref{lem:Zt_variance} and the hypothesis give
\[
 \Var(e^{-A}\widehat Z_{T^-}\,|\,\F_{\mathscr U})\Ind_{G_{\mathscr U}} \le C(\ep\tfrac T {a^3} + \ep e^{-A}\eta)\Ind_{G_{\mathscr U}}.
\]
Together with \eqref{eq:710}, \eqref{eq:eta_ep} and the inequality $(x+y)^2 \le 2(x^2+y^2)$, this yields
\begin{equation}
\label{eq:712}
\E[(e^{-A}\widehat Z_{T^-}-1)^2\,|\,\F_{\mathscr U}]\Ind_{G_{\mathscr U}} 
\le C\left(\ep \tfrac T {a^3}+(A\tfrac T {a^3})^2+\ep^3\right)\Ind_{G_{\mathscr U}},
\end{equation}

In order to integrate away the conditioning on $\F_{\mathscr U}$ in the above estimates, we need to estimate the first and second moment of $T\Ind_{G_{\mathscr U}}$. First note that $\P(G_{\mathscr U}^c) = O(\ep^{3/2})$ by \eqref{eq:32} and \eqref{eq:ep_upper}. This gives
\begin{align}
\nonumber
\E[T\Ind_{G_\mathscr U}] &= \E[T \wedge \tconst{eq:tcrit}] - \E[(T \wedge \tconst{eq:tcrit})\Ind_{G^c_\mathscr U}]\\
\nonumber
&= \int_0^{\tconst{eq:tcrit}} \P(T > t)\,\dd t + O(A^{-1}a^3\P(G_{\mathscr U}^c))\\
\nonumber
 &= (\widetilde{\ep} + O(A^{-1}\ep^{3/2}))a^3,
\end{align}
where the last equality is an easy consequence of Proposition~\ref{prop:T} and the hypothesis, together with the usual inequalities \eqref{eq:ep_upper}, \eqref{eq:ep_lower}, \eqref{eq:eta_ep} and \eqref{eq:pB}. Furthermore, Corollary~\ref{cor:moments_T} and the hypothesis give
\begin{equation*}
 \E[\big(\tfrac T {a^3}\big)^2\Ind_{G_\mathscr U}] \le  \E[\big(\tfrac T {a^3}\wedge 1\big)^2] \le C\ep^2.
\end{equation*}
With \eqref{eq:710}, \eqref{eq:QaZ} and \eqref{eq:ep_upper}, this now gives 
\begin{align}
 \nonumber 
  \E[e^{-A}\widehat Z_{T^-}\,|\,G_{\mathscr U}] &= (1+O(\ep^{3/2}))\Big(1+\pi \E_\Q^a[Z] \E[\tfrac T {a^3}\Ind_{G_\mathscr U}] + O\Big(\ep^{3/2} + A^2\E\big[\big(\tfrac T {a^3}\big)^2\Ind_{G_\mathscr U}\big]\Big)\Big)\\
   \label{eq:714}
  &= 1+\pi^2\widetilde{\ep} (A+\log \ep+\const{eq:QaZ}+o(1)) + O(\ep^{3/2}).
\end{align}
Similarly, with \eqref{eq:712} instead of \eqref{eq:710}, we get,
\begin{equation}
 \label{eq:716}
 \E[(e^{-A}\widehat Z_{T^-}-1)^2\,|\,G_{\mathscr U}] = O(A^2\ep^2).
\end{equation}
Furthermore, by the second part of Lemma~\ref{lem:hat_quantities} and Markov's inequality, together with \eqref{eq:eta} and the hypothesis, we have
\begin{align}
  \nonumber
  \P(\widehat Y_{T^-}>e^{3A/2}/a,\,G_{\mathscr U}) &\le \P(\widehat Y_{T^-}>e^{3A/2}/a,\,\widehat R_{[T^--a^2,T^-]}=0, \,G_{\mathscr U}) + \P(\widehat R_{[T^--a^2,T^-]}\ne 0, \,G_{\mathscr U})\\
  \label{eq:117}
  &\le C e^{-A/2}.
\end{align}

Turning now to the check-quantities, note that with the notation introduced in Section~\ref{sec:before_breakout_particles}, $\widecheck Z_{\Delta} = \widecheck Z^{(1)}_{T^-}$ if $\tau_0(\mathscr U) \le T^-$ and $\widecheck Z_{\Delta}=\widecheck Z^{(1)}_{\emptyset,T}$ otherwise, and similarly for $\widecheck Y_{\Delta}$. Recall that by definition of a breakout, $\widecheck Z^{(1)}_{\emptyset,T} \le \ep e^A\Ind_{(T=T^{(1+)})}$ and $\widecheck Y^{(1)}_{\emptyset,T} \le \eta \widecheck Z^{(1)}_{\emptyset,T}$.
By Lemmas~\ref{lem:hat_quantities} and~\ref{lem:Zt_variance} and \eqref{eq:eta} we then have (conditioning on $\mathscr S^{(1)}_T$ inside the expectations)
\begin{align}
\label{eq:121}
 \E[e^{-A}\widecheck Z_{\Delta}\,|\,\F_{\mathscr U}]\Ind_{G_{\mathscr U}} &\le C \E[e^{-A}(\widecheck Z^{(1)}_{\emptyset,T} + A \widecheck Y^{(1)}_{\emptyset,T})\,|\,\F_{\mathscr U}]\Ind_{G_{\mathscr U}} \le C \ep\Ind_{(T=T^{(1)})}\\
 \nonumber
 \E[(e^{-A}\widecheck Z_{\Delta})^2\,|\,\F_{\mathscr U}]\Ind_{G_{\mathscr U}} &\le C \E[e^{-2A}\left(\ep e^A(\widecheck Z^{(1)}_{\emptyset,T} + \widecheck Y^{(1)}_{\emptyset,T}) + (\widecheck Z^{(1)}_{\emptyset,T} + A \widecheck Y^{(1)}_{\emptyset,T})^2\right)\,|\,\F_{\mathscr U}]\Ind_{G_{\mathscr U}}\\
  \label{eq:119}
  &\le C \ep^2\Ind_{(T=T^{(1)})}\\
\label{eq:122}
\E[\widecheck Y_{\Delta}\,|\,\F_{\mathscr U}]\Ind_{G_{\mathscr U}} &\le C\E[\widecheck Y^{(1)}_{\emptyset,T} + \eta \widecheck Z^{(1)}_{\emptyset,T}\,|\,\F_{\mathscr U}]\Ind_{G_{\mathscr U}} \le Ce^{A}\eta \le Ce^{-A}.
\end{align}
Equations \eqref{eq:117} and \eqref{eq:122} and Markov's inequality applied to \eqref{eq:716} and \eqref{eq:119} together with \eqref{eq:ep_upper} now give for large $A$ and $a$,
\begin{equation}
  \label{eq:118}
  \P((\widehat G^c\cup\widecheck G^c)\cap G_{\mathscr U}) \le \ep^{5/4}.
\end{equation}
The lemma now follows from \eqref{eq:32}, \eqref{eq:Gfug} and \eqref{eq:118}.
\end{proof}

\paragraph{The probability of $G_{\mathrm{nbab}}$.}

\begin{lemma}
  \label{lem:Gnbab}
Suppose (HB$_0$). Then $\PB((G_{\mathrm{nbab}})^c\,|\,\F_\Delta)\Ind_{G_\Delta} \le C\ep^2$ for large $A$ and $a$.
\end{lemma}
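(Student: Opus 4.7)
Work conditionally on $\F_\Delta$ on the event $G_\Delta$. The bad event $G_{\mathrm{nbab}}^c$ decomposes as the union of (a) at least one bar-particle (a descendant of a space-time point on $\widebar{\mathscr L}_{\mathscr U}$) breaking out during $[T,\Theta_1+e^Aa^2]$, and (b) at least one descendant of a particle on $\mathscr L_\Delta$ hitting the level $a$ during $[T^-,\Theta_1+e^Aa^2]$. By Corollary~\ref{cor:L_U} and the strong branching property, conditionally on $\F_\Delta$ the bar- and $\mathscr L_\Delta$-subtrees evolve as independent branching Brownian motions, subject only to the conditioning (inherited from $\F_\Delta$) that they do not cause a breakout strictly before~$T$; from $T^+$ onwards the drift is $-\mu-a^{-2}f_\Delta'((\cdot-T^+)/a^2)$. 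It therefore suffices to bound the expected count of events of each type and apply Markov.

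For (b), on $G_{\mathrm{fug}}$ the window has length at most $3e^Aa^2$. Split it into the pre-shift segment $[T^-,T^+]$ (length at most $e^Aa^2+\zeta$, constant drift $-\mu$) and the post-shift segment $[T^+,\Theta_1+e^Aa^2]$ (length at most $2e^Aa^2$, drift perturbed by $f_\Delta$). On the first segment, Lemma~\ref{lem:Rt} applied to each particle of $\mathscr L_\Delta$, combined with \eqref{eq:ZYDelta}, bounds the expected number of hits of $a$ by $C(Z_\Delta\cdot e^A/a+Y_\Delta)=O(e^{2A}/(\ep a)+e^{-A/2})$. On the second segment, Lemma~\ref{lem:R_f} (whose hypotheses on $\|f_\Delta\|$ are ensured by \eqref{eq:Delta_f}, with Lemma~\ref{lem:rit3} giving the requisite non-breakout bound) yields an analogous estimate, but the crucial point built into the definition \eqref{eq:wall_fn} of $f_\Delta$ is that the effective $Z$-weight after the shift is reduced by a factor $e^{-f_\Delta(\cdot)}$ converging to $e^{-\Delta}\approx e^A/Z_\Delta$, so the total effective weight is stabilised near $e^A$ rather than $Z_\Delta\sim e^A/\ep$; the resulting expected number of hits on this segment is bounded by $Ce^A\cdot e^Aa^2/a^3=Ce^{2A}/a$. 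Since $\ep\ge e^{-A/6}$ while $a$ is eventually larger than any polynomial in $e^A$, both contributions are $o(\ep^2)$.

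For (a), the starting space-time points of the bar-particles form $\widebar{\mathscr L}_{\mathscr U}$, which is $\F_\Delta$-measurable, and by definition of $G_\Delta\subseteq G_{\mathscr U}$ one has $\mathscr E_{\mathscr U}=\sum_{(u,s)\in\widebar{\mathscr L}_{\mathscr U}}e^{-(a-X_u(s))/2}\le e^{A/3}$. Combining the exponential-decay bound $\Ehat^x[Z_t]\le CAe^{-(a-x)/2}$ from Corollary~\ref{cor:hat_quantities} (adapted to the varying drift via Lemma~\ref{lem:hat_Z_f}) with Lemma~\ref{lem:Rt}/Lemma~\ref{lem:R_f}, the total expected number of hits of $a$ by bar-particle descendants over the window is $O(A\mathscr E_{\mathscr U}\cdot e^A/a)=O(Ae^{4A/3}/a)$. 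Multiplying by the breakout probability $p_B=O(1/(\ep e^A))$ bounds the expected number of bar-particle breakouts by $O(Ae^{A/3}/(\ep a))$, which is also $o(\ep^2)$.

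Summing the two Markov bounds yields $\PB(G_{\mathrm{nbab}}^c\mid\F_\Delta)\Ind_{G_\Delta}\le C\ep^2$; in fact the bound obtained is much stronger, but $C\ep^2$ suffices for the next step. The main technical obstacle is the rigorous handling of the time-varying drift on the relaxation window: one must verify that $f_\Delta$ satisfies the hypotheses of Lemma~\ref{lem:R_f} and Lemma~\ref{lem:rit3} (ensured by \eqref{eq:Delta_f}) and then exploit the explicit form \eqref{eq:wall_fn}, built from Jacobi theta functions, to conclude that the post-shift effective $Z$-weight equilibrates near $e^A$ instead of remaining at the much larger value $Z_\Delta$. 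This is precisely the design criterion behind the family $(f_x)_{x\in\R}$ in Section~\ref{sec:BBBM_definition_definition}.
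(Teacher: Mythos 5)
Your proposal is correct and follows essentially the same route as the paper's proof: the same decomposition of $G_{\mathrm{nbab}}^c$ into bar-particle breakouts and $\mathscr L_\Delta$-descendants hitting $a$, first-moment bounds via Lemmas~\ref{lem:Rt} and~\ref{lem:R_f}, Corollary~\ref{cor:hat_quantities} together with $\mathscr E_{\mathscr U}\le e^{A/3}$ on $G_\Delta$, and Markov's inequality. Two minor remarks: the claimed ``stabilisation near $e^A$'' produced by $f_\Delta$ is not actually needed here (Lemma~\ref{lem:R_f} only gives a constant-factor comparison with the constant-drift case, and the crude bound $C(Z_\Delta\cdot O(e^A/a)+Y_\Delta)=o_a(1)+O(e^{-A/2})\le C\ep^2$ already suffices, which is exactly what the paper does); and in the bar-particle estimate the $\widebar Y_T$-contribution, of order $\mathscr E_{\mathscr U}=O(e^{A/3})$, in fact dominates the $\widebar Z_T\cdot e^A/a$ term you retained, though $p_B e^{A/3}\le Ce^{-A/2}\le C\ep^2$ still closes the argument.
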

\begin{proof}
Define $R_1 = \widehat R_{[T^-,\Theta_1+e^Aa^2]} + R^{\mathrm{fug}}_{[T,\Theta_1+e^Aa^2]} +\widecheck R^{(1)}_{[T^-,\Theta_1+e^Aa^2]}$. The second condition in the definition of $G_{\mathrm{nbab}}$ then is equivalent to $R_1 = 0$. As mentioned at the beginning of the section, conditioned on $\F_\Delta$, the (space-time) particles on the line $\mathscr L_\Delta$ spawn independent BBM conditioned not to break out before time $T$. Corollary~\ref{cor:conditioned_upperbound} and \eqref{eq:hbar} together with  Lemmas~\ref{lem:Rt} and \ref{lem:R_f} and Markov's inequality then imply
\begin{align*}
  \PB(R_1 > 0\,|\,\F_\Delta)\Ind_{G_\Delta}\le C(Z_\Delta \cdot o_a(1) + Y_\Delta) \le C\ep^2,
\end{align*}
where the last inequality follows \eqref{eq:ZYDelta} and \eqref{eq:ep_lower}.

It remains to prove the statement about the bar-particles. First, by  Corollary~\ref{cor:hat_quantities} and the definition of $G_\Delta$,
\begin{align}
\label{eq:die_antwoord}
 \EB[\widebar Z_T\,|\,\F_\Delta]\Ind_{G_\Delta} \le CA\mathscr E_{\mathscr U}\Ind_{G_\Delta} \le CAe^{A/3}\quad\tand\quad\EB[\widebar Y_T\,|\,\F_\Delta]\Ind_{G_\Delta} \le C \mathscr E_{\mathscr U}\Ind_{G_\Delta} \le Ce^{A/3}.
\end{align}
We then apply Lemma~\ref{lem:rit3} to the bar-particles at time $T$, the hypotheses being verified on $G_\Delta$ by \eqref{eq:Delta_f} and since we apply it with $\vartheta = \Theta_1+e^Aa^2 - T = 2e^Aa^2$ on $G_\Delta$. With the tower property of conditional expectation, this gives for large $A$ and $a$,
\begin{multline*}
 \PB(\text{a bar-particle breaks out between $T$ and $\Theta_1+e^Aa^2$}\,|\,\F_\Delta)\Ind_{G_\Delta} \\
 \le Cp_B\EB[\widebar Z_T\cdot o_a(1)+\widebar Y_T] \le Cp_Be^{A/3} \le C\ep^2,
\end{multline*}
where the last inequality follows from \eqref{eq:pB} and \eqref{eq:ep_lower}. This finishes the proof.
\end{proof}

\paragraph{The particles at time $\Theta_1$. The probability of $G_1$.}

\begin{lemma}
\label{lem:G1}
Suppose (HB$_0$). Then $\PB(G_1) \ge 1-C\ep^{5/4}$ for large $A$ and $a$.
\end{lemma}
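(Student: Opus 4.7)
The plan is to work on the intersection $G_\Delta \cap G_{\mathrm{nbab}}$, which by Lemmas~\ref{lem:GDelta} and~\ref{lem:Gnbab} holds under $\PB$ with probability at least $1 - C\ep^{5/4}$. I then verify that each of the three conditions defining $G_1$ is satisfied on this event, up to a further bad set of probability $O(\ep^{5/4})$.

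The third bullet ($\Theta_1 > T_1^+$) is immediate: on $G_{\mathrm{fug}} \subseteq G_\Delta$ one has $\sigma_{\mathrm{max}}^{(\mathscr U,T)} \le \zeta$, so $T^+ \le T + \zeta$, while $\Theta_1 = T + e^A a^2$ once $a$ is large. For the ``no intermediate state'' bullet and the support condition $\supp \nu_{\Theta_1} \subseteq (0,a)$, the event $G_{\mathrm{nbab}}$ asserts that no descendant of a particle on $\mathscr L_\Delta$ hits $a$ in $[T^-, \Theta_1 + e^A a^2]$, which takes care of every hat, check, and fug particle alive at $\Theta_1$. The bar-particles require a separate first-moment argument: using Corollary~\ref{cor:hat_quantities} together with Lemmas~\ref{lem:R_f} and~\ref{lem:rit3} applied to the sub-BBMs rooted on $\widebar{\mathscr L}_{\mathscr U}$, the expected number of bar-particle hits of $a$ during $[T, \Theta_1]$ is bounded by $C A \mathscr E_{\mathscr U} e^A / a$; since $\mathscr E_{\mathscr U} \le e^{A/3}$ on $G_\Delta$ and $a$ is taken large before $A$, this is $o(\ep^{5/4})$, and Markov's inequality then rules out bar-particles in intermediate state at $\Theta_1$ or above $a$.

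The heart of the argument is the first bullet, namely $|e^{-A} Z'_{\Theta_1} - 1| \le \ep^{3/2}$ and $Y'_{\Theta_1} \le \eta$. Conditioning on $\F_\Delta$, Corollary~\ref{cor:L_U} tells us that the particles on $\mathscr L_\Delta$ spawn independent sub-BBMs under the drift $-\mu - f_\Delta'/a^2$, conditioned not to break out before $T$; on $G_\Delta$, by \eqref{eq:Delta_f} the function $f_\Delta$ is a barrier function with $\|f_\Delta\|$ bounded by a function of $A$ alone. Applying the drift-dependent moment estimates of Lemmas~\ref{lem:hat_Z_f} and~\ref{lem:rit3}, together with a variant of Lemma~\ref{lem:Zt_variance} over the relaxation window $\Theta_1 - T^+ \in [(e^A - \zeta)a^2, e^A a^2]$, I will derive
\[
  \EB[Z'_{\Theta_1} \mid \F_\Delta]\,\Ind_{G_\Delta}
    = e^A\bigl(1 + o(\ep^{3/2})\bigr)\Ind_{G_\Delta},
  \qquad
  \VarB(Z'_{\Theta_1} \mid \F_\Delta)\,\Ind_{G_\Delta}
    \le C\ep\, e^{3A}/a.
\]
The first-moment identity relies on the choice of $\thbar$ in \eqref{eq:wall_fn}, which is engineered precisely so that the Brownian semigroup killed outside $[0,a]$, acting on the meta-stable density starting from mass $Z_\Delta \approx e^{A+\Delta}$, relaxes at time $\Theta_1 - T^+$ to mass $Z_\Delta e^{-\Delta} = e^A$ up to errors of order $E_{(\Theta_1 - T^+)/a^2}$ from Section~\ref{sec:killed_bm}. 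Combining this with the bound $|e^{-A}Z_\Delta - 1| \le 3\ep^{1/4}$ coming from $\widehat G \cap G_{\mathrm{fug}} \cap \widecheck G$ and the identity $\Delta = \log(e^{-A}Z_\Delta)$ transfers the relaxation to $|e^{-A}\EB[Z'_{\Theta_1} \mid \F_\Delta] - 1| = o(\ep^{3/2})$. Chebyshev's inequality then delivers the desired concentration with conditional probability $1 - C\ep\, e^A/(a\,\ep^3) = 1 - o(\ep^{5/4})$ as soon as $a$ is large enough compared to $A$. The $Y'_{\Theta_1} \le \eta$ bound is handled analogously, using the cruder first-moment estimate for $Y_t$ in Lemma~\ref{lem:hat_quantities}.

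The main obstacle is the sharp first-moment identity $\EB[Z'_{\Theta_1}\mid \F_\Delta]\Ind_{G_\Delta} = e^A(1 + o(\ep^{3/2}))\Ind_{G_\Delta}$: it reduces to the defining analytic property of $\thbar$, namely that \eqref{eq:wall_fn} is precisely the family $(f_x)$ making the effective mass $\int w_Z\,d\nu_t$ relax without bias under the killed-heat semigroup on $[0,a]$. This computation, together with patching in the small errors from the tier-$\ge 1$ contributions on $\mathscr L_\Delta$ (for which only a crude upper bound in the style of Lemma~\ref{lem:hat_Z_f} is needed, since $\widecheck Z_\Delta \le \ep^{1/4}e^A$ and $\widebar Z_T = O(Ae^{A/3})$), is the only step where one genuinely exploits the explicit form of the random barrier rather than rough moment control.
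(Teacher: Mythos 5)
Your overall route coincides with the paper's: work on $G_\Delta\cap G_{\mathrm{nbab}}$, get the sharp conditional first moment of the $Z$-mass of the descendants of $\mathscr L_\Delta$ from the exact identity $Z_\Delta e^{-\Delta}=e^A$ together with the martingale property $\E_f[Z_t]=Z_0e^{-f(t/a^2)+O(\|f\|/a)}$ and \eqref{eq:Delta_f}, conclude by Chebychev, and dispose of the bar-particles by crude first-moment bounds plus Markov. (One side remark: the sharp relaxation of $Z'$ does not actually use the specific form of $\thbar$ in \eqref{eq:wall_fn} — any barrier family with $f_x(t)\to x$ at rate $C|x|/t$ would do, since $Z_t$ is a martingale under constant drift; $\thbar$ is engineered for the particle \emph{count}, which matters for the \Bfl- and \Bsh-BBM, not here.)

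There is, however, a concrete gap in your treatment of the support and no-intermediate-state conditions. You claim that the expected number of bar-particle hits of $a$ during all of $[T,\Theta_1]$ is $CA\mathscr E_{\mathscr U}e^A/a$ and hence $o(\ep^{5/4})$ for large $a$. This drops the $w_Y$ term in Lemma~\ref{lem:Rt}: summing $\E^{(x,s)}[R_{[s,\Theta_1]}]\le C(w_Z(x)e^A/a + w_Y(x))$ over $(u,s)\in\widebar{\mathscr L}_{\mathscr U}$ produces an additive contribution of order $\sum w_Y(X_u(s))\asymp\mathscr E_{\mathscr U}$, which is typically of order $1$ (and only bounded by $e^{A/3}$ on $G_\Delta$) and does \emph{not} vanish as $a\to\infty$. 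Indeed bar-particles are born arbitrarily close to $a$ along the fugitive's path, so a positive expected number of them do hit $a$ before $\Theta_1$; you cannot rule this out, and you do not need to. What you need is that no bar-particle is still ``in between'' (or above $a$) \emph{at time $\Theta_1$}, and on $G_{\mathrm{nbab}}$ any bar-particle hitting $a$ at time $s$ has all its descendants back on the critical line by time $s+\zeta$ (since $\sigma_{\mathrm{max}}>\zeta$ would be a breakout). Hence only hits in the window $[\Theta_1-2a^2,\Theta_1]$ matter, and for that window the relevant first moment is controlled by $\widebar Z_{\Theta_1-2a^2}/a$ for tier~$0$ and carries a factor $\eta$ for tiers $\ge1$, giving $\E[\widebar R_{[\Theta_1-2a^2,\Theta_1]}]\le C\eta Ae^{A/3}\le C\ep^2$ by \eqref{eq:eta}. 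Restricting to this window, and noting that $\widebar R_{[\Theta_1-2a^2,\Theta_1]}=0$ together with $G_{\mathrm{nbab}}$ implies $\supp\nu_{\Theta_1}\subset(0,a)$, closes the gap; as written, your Markov step does not.
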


\begin{proof}
We first bound $\widebar Z_{\Theta_1}$ and  $\widebar Y_{\Theta_1}$. As in the proof of Lemma~\ref{lem:Gnbab}, we note that the hypotheses of Lemma~\ref{lem:rit3} are verified on $G_\Delta$ by \eqref{eq:Delta_f}, so that we can apply the results of Lemma~\ref{lem:hat_Z_f}. Equation~\eqref{eq:28} together with Corollary~\ref{cor:conditioned_upperbound} and Proposition~\ref{prop:quantities} then gives for large $A$ and $a$,
\begin{align}
 \EB_{\mathrm{nbab}}[\widebar Z_{\Theta_1}\,|\,\F_\Delta]\Ind_{G_\Delta} &\le C \EB[\widebar Z_T + A \widebar Y_T]\,|\,\F_\Delta]\Ind_{G_\Delta} \le C A e^{A/3}\\
 \nonumber
 \EB_{\mathrm{nbab}}[\widebar Y_{\Theta_1}\Ind_{(\widebar R_{[\Theta_1-2a^2,\Theta_1]} = 0)}\,|\,\F_\Delta]\Ind_{G_\Delta} &\le C\EB[\widebar Z_{\Theta_1-2a^2}/a] \le  C \EB[\widebar Z_T/a + A \widebar Y_T/a\,|\,\F_\Delta]\Ind_{G_\Delta}\\
 &\le CAe^{A/3}/a.
\end{align}
where the last inequalities in each numbered equation follow from \eqref{eq:die_antwoord}. We can similarly bound the first moment of $\widebar R_{[\Theta_1-2a^2,\Theta_1]}$. If we start afresh the definition of the tiers at time $T$, we have by \eqref{eq:ichhassedas} and \eqref{eq:28}, together with Lemmas~\ref{lem:R_f} and \ref{lem:Rt},
\begin{multline*}
 \EB_{\mathrm{nbab}}[\widebar R_{[\Theta_1-2a^2,\Theta_1]}\,|\,\F_\Delta]\Ind_{G_\Delta} = \EB_{\mathrm{nbab}}[\widebar R^{(0)}_{[\Theta_1-2a^2,\Theta_1]} + \widebar R^{(1+)}_{[\Theta_1-2a^2,\Theta_1]}\,|\,\F_\Delta]\Ind_{G_\Delta}\\
 \le C \EB[\widebar Z_T/a + \eta A(\widebar Y_T + \widebar Z_T e^A/a)\,|\,\F_\Delta]\Ind_{G_\Delta} \le C \eta A e^{A/3}.
\end{multline*}
With Markov's inequality we then get, for large $A$ and $a$,
\begin{align}
 \nonumber
 \PB_{\mathrm{nbab}}&(\widebar R_{[\Theta_1-2a^2,\Theta_1]} > 0 \tor e^{-A}\widebar Z_{\Theta_1} > \ep^{3/2}/2\tor \widebar Y_{\Theta_1}> \eta/2\,|\,\F_\Delta)\Ind_{G_\Delta} \\
 \label{eq:fuck}
 &\le  C (\eta A e^{A/3} + Ae^{-2A/3}\ep^{-3/2} + Ae^{A/3}/(\eta a)) 
 \le C\ep^2,
\end{align}
where the last inequality follows from \eqref{eq:eta} and \eqref{eq:ep_lower}. Note that $\widebar R_{[\Theta_1-2a^2,\Theta_1]} = 0$ implies $\supp \nu_{\Theta_1} \subset (0,a)$ on $G_{\mathrm{nbab}}$ for large $A$ and $a$.

As for the remaining particles, denote by $Z^\Diamond_{\Theta_1}$ the contribution to $Z_{\Theta_1}$ of the descendants of the particles in $\mathscr L_\Delta$ which have not hit $a$ after $\mathscr L_\Delta$. On  $G_{\mathrm{nbab}}\cap G_\Delta$, we then have $Z_{\Theta_1} = Z^\Diamond_{\Theta_1} + \widebar Z_{\Theta_1}$. Define analogously $Y^\Diamond_{\Theta_1}$.  Proposition~\ref{prop:quantities} together with \eqref{eq:scheisse}, \eqref{eq:ichhassedas} and \eqref{eq:Delta_f} then gives for large $A$ and $a$,
 \begin{align*}
   \EB[e^{-A}Z^\Diamond_{\Theta_1}\,|\,\F_\Delta]\Ind_{G_\Delta} &= e^{-\Delta - A}Z_\Delta(1+O(e^{-A/2}+p_B))\Ind_{G_\Delta} = 1+O(\ep^3)\\
\VarB(e^{-A} Z^\Diamond_{\Theta_1}\,|\,\F_\Delta)\Ind_{G_\Delta} &\le C e^{- 2A} \left(Z_\Delta  \cdot o_a(1) + Y_\Delta\right)\Ind_{G_\Delta} \le C e^{-2A}\\
  \EB[Y^\Diamond_{\Theta_1}\,|\,\F_\Delta]\Ind_{G_\Delta} &\le C e^{-\Delta} Z_\Delta\Ind_{G_\Delta} \cdot o_a(1) \le o_a(1).
 \end{align*}
where the last inequality in the first line follows from \eqref{eq:pB} and \eqref{eq:ep_lower} and in the second and third lines from \eqref{eq:ZYDelta}. By Chebychev's inequality and \eqref{eq:ep_lower}, the previous inequalities then give,
\begin{equation}
  \label{eq:123}
  \PB(\{|e^{-A}Z^\Diamond_{\Theta_1} - 1| \ge \ep^{3/2}/2\} \cap G_{\mathrm{nbab}}\,|\,\F_\Delta)\Ind_{G_\Delta} \le C\ep^2.
\end{equation}
Moreover, by Markov's inequality, for large $A$ and $a$,
\begin{equation}
 \label{eq:123a}
 \PB(\{Y^\Diamond_{\Theta_1} > \eta/2\} \cap G_{\mathrm{nbab}}\,|\,\F_\Delta)\Ind_{G_\Delta} \le o_a(1) \le \ep^2.
\end{equation}
The lemma now follows from \eqref{eq:fuck}, \eqref{eq:123} and \eqref{eq:123a} together with Lemmas~\ref{lem:GDelta} and \ref{lem:Gnbab}.
\end{proof}

\begin{remark}
  \label{rem:piece}
Lemma~\ref{lem:G1} obviously still holds if one replaces $G_1$ by $G_1\cap\{|e^{-A}Z_{\Theta_1}-1| < \tfrac 1 2 \ep^{3/2}\}$. This will be needed in Sections~\ref{sec:Bflat} and \ref{sec:Bsharp}.
\end{remark}

\paragraph{The Fourier transform of the barrier process.}
In this paragraph we will prove \eqref{eq:fourier_G1}. We will assume throughout that hypothesis (HB$_0$) is verified. Fix $\lambda\in\R$. Throughout the proof, the symbols $O(\cdot)$, $o(1)$ and $C$ may depend on $\lambda$ (but are uniform in $\rho$ appearing below). Define $\Delta_{\mathrm{drift}} = e^{-A} (\widehat Z_{T^-} + \widecheck Z_{\Delta}) - 1$ and $\Delta_{\mathrm{jump}} = e^{-A} Z^{(\mathscr U,T)}$, so that
\[
 \Delta = \log(1+\Delta_{\mathrm{drift}} + \Delta_{\mathrm{jump}}) =  \log(1+\Delta_{\mathrm{drift}}) + \log(1+\frac{\Delta_{\mathrm{jump}}}{1+\Delta_{\mathrm{drift}}}).
\]
Recall that $X^{[\infty]}_{\Theta_1} = f_\Delta((\Theta_1-T^+)/a^2)$ on $G_\Delta$.
Equation~\eqref{eq:Delta_f} together with Lemmas~\ref{lem:GDelta} and~\ref{lem:G1} and \eqref{eq:ep_lower} then yield,
\begin{equation}
\label{eq:782a}
\begin{split}
 \EB[e^{i\lambda X^{[\infty]}_{\Theta_1}}\Ind_{G_1}] &= \E[e^{i\lambda \Delta}\Ind_{G_{\mathscr U}}] + O(\ep^{5/4})\\
&= \E[e^{i\lambda \log(1+\Delta_{\mathrm{drift}})}e^{i\lambda\log(1+\frac{\Delta_{\mathrm{jump}}}{1+\Delta_{\mathrm{drift}}})}\Ind_{G_{\mathscr U}}] + O(\ep^{5/4}).
\end{split}
\end{equation}
We now claim that for every $|\rho|<1/2$, we have for large $A$ and $a$,
\begin{gather}
 \label{eq:789}
\E[e^{i\lambda\log(1+\frac{\Delta_{\mathrm{jump}}}{1+ \rho})}] = \exp(\Psi + O(\ep |\log\ep|\rho)),\\
\nonumber
\text{where}\quad \Psi := \widetilde{\ep}  \Big(\pi^2\int_0^\infty [e^{i\lambda x}-1-i\lambda x\Ind_{(x\le 1)}]\,\Lambda(\dd x) + i\lambda \pi^2 (-\log\ep  + \const{eq:c_log} + o(1)) \Big),
\end{gather}
with $\Lambda(\dd x)$ as in the statement of Theorem~\ref{th:1} and $\const{eq:c_log}$ as defined in \eqref{eq:c_log}. Let us show how \eqref{eq:789} implies \eqref{eq:fourier_G1}. We use throughout without mention that $\P(G_{\mathscr U}) = 1-O(A^2\ep^2)$ by \eqref{eq:32}. First, by \eqref{eq:121} and Lemma~\ref{lem:T1_T2}, we have
\begin{equation*}
  \E[e^{-A}\widecheck Z_{\Delta}\,|\,G_{\mathscr U}]\le C\ep \P(T=T^{(1)}\,|\,G_{\mathscr U}) \le CA\ep^2,
\end{equation*}
so that with \eqref{eq:714} and \eqref{eq:ep_upper}, we get,
\begin{equation}
 \label{eq:Delta_drift}
\E\left[\Delta_{\mathrm{drift}}\,|\,G_{\mathscr U}\right] = \pi^2\widetilde{\ep}(A+\log\ep+\const{eq:QaZ}+o(1)) + O(\ep^{3/2}).
\end{equation}
Furthermore, \eqref{eq:716} and \eqref{eq:119} and the inequality $(x+y)^2 \le 2(x^2+y^2)$ yield
\begin{equation}
 \label{eq:Delta_drift_2}
\E\left[(\Delta_{\mathrm{drift}})^2\,|\,G_{\mathscr U}\right] = O(A^2\ep^2) = O(\ep^{3/2}),
\end{equation}
where the last inequality follows from \eqref{eq:ep_upper}. In particular, Markov's inequality applied to \eqref{eq:Delta_drift_2} yields,
\begin{equation}
 \label{eq:Delta_drift_bound}
\P(|\Delta_{\mathrm{drift}}| \ge 1/2\,|\, G_{\mathscr U}) = O(\ep^{3/2}).
\end{equation}
Equations \eqref{eq:782a}, \eqref{eq:789} and \eqref{eq:Delta_drift_bound} then give (note that $\Delta_{\mathrm{jump}}$ is independent of $\Delta_{\mathrm{drift}}$ and $G_{\mathscr U}$ and that $\Psi$ is deterministic)
\begin{align*}
 \E[e^{i\lambda X^{[\infty]}_{\Theta_1}}\Ind_{G_1}] &= \E[e^{i\lambda \log(1+\Delta_{\mathrm{drift}})}\Ind_{G_{\mathscr U}}\Ind_{(|\Delta_{\mathrm{drift}}| < 1/2)}e^{\Psi + O(\ep |\log\ep|\Delta_{\mathrm{drift}})}] + O(\ep^{5/4})\\
 &= e^{\Psi}\E[(1+(i\lambda + O(\ep |\log\ep|)) \Delta_{\mathrm{drift}} + O(\Delta_{\mathrm{drift}}^2))\Ind_{G_{\mathscr U}}\Ind_{(|\Delta_{\mathrm{drift}}| < 1/2)}] + O(\ep^{5/4})\\
 &= e^{\Psi}\E[(1+i\lambda) \Delta_{\mathrm{drift}}\Ind_{G_{\mathscr U}}] + O(\ep^{5/4}),
\end{align*}
where the last equality follows from \eqref{eq:Delta_drift_2} and \eqref{eq:Delta_drift_bound} together with standard applications of the Cauchy--Schwarz inequality and the bound $\Psi = O(1)$ valid for large $A$ and $a$. With \eqref{eq:Delta_drift} and \eqref{eq:ep_tilde_ep}, this gives
\begin{align*}
  \E[e^{i\lambda X^{[\infty]}_{\Theta_1}}\Ind_{G_1}] &= e^\Psi (1+i\lambda\pi^2\widetilde{\ep}(A+\log\ep+\const{eq:QaZ}+o(1))) + O(\ep^{5/4})\\
  &= \exp\Big\{\widetilde{\ep}\Big(\pi^2\int_0^\infty [e^{i\lambda x}-1-i\lambda x\Ind_{(x\le 1)}]\,\Lambda(\dd x) + i\lambda\pi^2(A+\const{eq:QaZ}+\const{eq:c_log}+o(1))   \Big)\Big\} + O(\ep^{5/4}).
\end{align*}
But this implies \eqref{eq:fourier_G1} with $c = \pi^2(\const{eq:QaZ}+\const{eq:c_log}) = \pi^2(\const{eq:W_expec} + \const{eq:c_log} - \log \pi)$.

It remains to show \eqref{eq:789} for any $|\rho|<1/2$. By the definition of the breakout event $B$ in \eqref{eq:def_B}, we have
\begin{equation*}
\E[e^{i\lambda \log(1+\frac{\Delta_{\mathrm{jump}}}{1+ \rho})}] = \E^a[e^{i\lambda \log(1+\frac{e^{-A}Z}{1+ \rho})}\,|\,B] = p_B^{-1}\E^a[e^{i\lambda \log(1+\frac{e^{-A}Z}{1+ \rho})}\Ind_{(Z > \ep e^A,\,\sigma_{\text{max}} > \zeta)}].
\end{equation*}
Recall that $\P^a(\sigma_{\text{max}} > \zeta) = O(\eta) = O(e^{-2A})$ by the third part of Lemma~\ref{lem:ZYW} and \eqref{eq:eta}. Together with \eqref{eq:pB}, this yields,
\begin{equation}
 \label{eq:PaZpB}
 \begin{split}
\P^a(Z>\ep e^A) &= \P^a(Z > \ep e^A,\,\sigma_{\text{max}} > \zeta) - O\left(\P^a(\sigma_{\text{max}} > \zeta)\right)\\
&= p_B + O(\eta) = p_B(1+O(e^{-A})).
\end{split}
\end{equation}
Furthermore, setting
\(
 g(x) = \exp(i\lambda \log(1+ x)),
\)
the above two equations yield
\begin{equation}
\label{eq:785}
\begin{split}
\E[e^{i\lambda \log(1+\frac{\Delta_{\mathrm{jump}}}{1+ \rho})}] &= p_B^{-1}\E^a[e^{i\lambda \log(1+\frac{e^{-A}Z}{1+ \rho})}\Ind_{(Z>\ep e^A)}] + O(p_B^{-1}\P^a(\sigma_{\text{max}} > \zeta)),\\
&=\E^a[g(\tfrac{e^{-A}Z}{1+ \rho})\,|\,Z>\ep e^A] + O(e^{-A}).
\end{split}
\end{equation}
Define $h(x) =  g(x) - 1 - i\lambda x \Ind_{(x\le 1)}$ and $h_\rho(x) = h(\tfrac x {1+\rho})$ for $x\ge 0$, so that
\begin{multline}
\label{eq:786}
\E^a[g(\tfrac{e^{-A}Z}{1+ \rho})\,|\,Z>\ep e^A] =  1 + \frac{i\lambda}{1+ \rho}\E^a[e^{-A}Z\Ind_{(Z\le e^A)}\,|\,Z>\ep e^A] + \E^a[h_\rho(e^{-A}Z)\,|\,Z>\ep e^A].
\end{multline}
Recall that $\widetilde{\ep} = (\pi p_B e^A)^{-1}$. With \eqref{eq:PaZpB}, this gives for large $A$ and $a$,
\begin{align*}
\E^a[e^{-A}Z\Ind_{(Z\le e^A)}\,|\,Z>\ep e^A] &= (1+O(e^{-A}))p_B^{-1}\E^a[e^{-A}Z\Ind_{(\ep e^A<Z\le  e^A)}] \\
 &= \pi \widetilde{\ep}\E^a[Z\Ind_{(\ep e^A<Z\le  e^A)}](1+O(e^{-A})).
\end{align*}
By the first two parts of Lemma~\ref{lem:ZYW} and \eqref{eq:eta}, we have for large $A$ and $a$,
\begin{align*}
 \E^a[Z\Ind_{(\ep e^A<Z\le  e^A)}] &= E[\pi W\Ind_{(\pi^{-1}\ep e^A+O(\eta) < W \le \pi^{-1}e^A+O(\eta))}] + O(e^A\eta)\\
 &= \pi\left(\log\left(\pi^{-1}e^A+O(\eta)\right) - \log \left(\pi^{-1}\ep e^A+O(\eta)\right)\right) + o(1)\\
 &= -\pi\log \ep + o(1).
\end{align*}
The two previous equations yield together,
\begin{equation}
\label{eq:786_0}
\E^a[e^{-A}Z\Ind_{(Z\le e^A)}\,|\,Z>\ep e^A] = \pi^2\widetilde{\ep} (-\log \ep + o(1)).
\end{equation}
As for the last expectation in \eqref{eq:786} first note that $|h_\rho(x)|=O(1\wedge x^2)$ for every $x\ge0$ and $|\rho|\le 1/2$ (recall that we allow the symbol $O(\cdot)$ to depend on $\lambda$ in this proof), so that 
\begin{align*}
 \E^a[h_\rho(e^{-A}Z)\Ind_{(Z>\ep e^A)}] &= \E^a[h_\rho(e^{-A}Z)] + O(\E^a[(e^{-A}Z)^2\Ind_{(Z\le\ep e^A)}]) \\
 &= \E^a[h_\rho(e^{-A}Z)] + O(\ep e^{-A}),
\end{align*}
where the last equality follows for the same reasons as \eqref{eq:QaZsquared}.
Furthermore, since $h_\rho$ has a bounded derivative, uniformly in $|\rho|\le 1/2$, except at $x=1+\rho$, the first two parts of Lemma~\ref{lem:ZYW} yield for large $A$ and $a$,
\begin{align*}
 \E^a[h_\rho(e^{-A}Z)] &= E[h_\rho(e^{-A}\pi W)] + O(\eta + P(|\pi W-(1+\rho)e^A| \le C\eta))\\
 &= E[h_\rho(e^{-A}\pi W)] + O(\eta) + e^{-A}o(1).
\end{align*}
The previous equations and \eqref{eq:eta} now yield
\begin{equation}
 \label{eq:786_1}
  \E^a[h_\rho(e^{-A}Z)\Ind_{(Z>\ep e^A)}] = E[h_\rho(e^{-A}\pi W)] + e^{-A}o(1).
\end{equation}

We wish to express $E[h_\rho(e^{-A}\pi W)]$ for large $A$.  Denote by $h'(x)$ the left derivative of $h$, which satisfies $|h'(x)| \le C(x\wedge x^{-1})$ for $x\ge 0$. Integration by parts gives for every $\alpha>0$,
\begin{align*}
 E[h(\alpha W)] = \int_0^\infty h'(x) P(\alpha W>x)\,\dd x + (h(1+)-h(1))P(\alpha W > 1).
\end{align*}
By the second part of Lemma~\ref{lem:ZYW}, we have $P(W>x)\sim x^{-1}$ as $x\to\infty$, in particular, 
\[
h'(x)\alpha^{-1} P(\alpha W>x) \le h'(x)\alpha^{-1}C(\alpha^{-1}x)^{-1}\le C(1\wedge x^{-2}),\quad\text{ for all $\alpha>0$ and $x\ge0$.} 
\]
Setting $\alpha = \pi e^{-A}/(1+\rho)$ and using dominated convergence then yields the following limit, uniformly in $|\rho|\le 1/2$,
\begin{multline*}
 \lim_{A\to\infty}(1+\rho)\pi^{-1} e^A E[h_\rho(e^{-A}\pi W)]
 = \lim_{\alpha\to 0} \alpha^{-1} E[h(\alpha W)]\\
 = \int_0^\infty h'(x) \frac 1 x\,\dd x + (h(1+)-h(1)) = \int_0^\infty h(x)\frac 1 {x^2}\,\dd x,
\end{multline*}
where the last equality follows again by integration by parts.
Together with \eqref{eq:786_1}, \eqref{eq:PaZpB} and the definition of $\widetilde{\ep}$, this gives
\begin{equation}
 \label{eq:786_2}
 \E^a[h_\rho(e^{-A}Z)\,|\,Z>\ep e^A]
 = \pi^2\widetilde{\ep}\Big(\int_0^\infty h(x)\frac 1 {x^2}\,\dd x +O(\rho)+o(1)\Big).
\end{equation}
Collecting \eqref{eq:785}, \eqref{eq:786}, \eqref{eq:786_0} and \eqref{eq:786_2} and using the fact that $e^{-A} = \widetilde{\ep}\cdot o(1)$ by \eqref{eq:ep_tilde_ep} and \eqref{eq:ep_lower}, we now have
\begin{align}
 \nonumber
\E[e^{i\lambda \log(1+\frac{\Delta_{\mathrm{jump}}}{1+ \rho})}]
 &=  1 + \widetilde{\ep}\left(-\frac{i\lambda}{1+ \rho} \pi^2 \log \ep + \pi^2 \int_0^\infty h(x)\frac 1 {x^2}\,\dd x + O(\rho)+o(1)\right) \\
\nonumber
&=\exp\left(\widetilde{\ep}\left(-i\lambda \pi^2 \log \ep + \pi^2 \int_0^\infty h(x)\frac 1 {x^2}\,\dd x + o(1)\right) + O(\ep|\log\ep|\rho)\right)
\end{align}
Set
\begin{equation}
 \label{eq:c_log}
  \const{eq:c_log} = \int_0^\infty [\log(1+x) \Ind_{(\log(1+x)\le 1)} - x \Ind_{(x\le 1)}]x^{-2}\,\dd x,
\end{equation}
which is well defined since $\log(1+x) = x + O(x^2)$ for $|x|\le 1/2$. We have,
\begin{align*}
 \int_0^\infty h(x) \frac 1 {x^2}\,\dd x &=  \int_0^\infty [e^{i\lambda \log(1+x)} - 1 - i\lambda \log(1+x) \Ind_{(\log(1+x)\le 1)}]\frac 1 {x^2}\,\dd x + i\lambda \const{eq:c_log} \\ 
 &= \int_0^\infty [e^{i\lambda x} - 1 - i\lambda x \Ind_{(x\le 1)}]\,\Lambda(\dd x) + i\lambda \const{eq:c_log},
\end{align*}
where $\Lambda$ denotes the push-forward/image of the measure $x^{-2}\,\dd x$ by the map $x\mapsto \log(1+x)$, as in the statement of Theorem~\ref{th:1}. The preceding equations now finally yield  \eqref{eq:789}, which was the last missing piece in the proof of \eqref{eq:fourier_G1}. Together with \eqref{eq:P_G1} (which holds by Lemma~\ref{lem:G1}), this finally yields Proposition~\ref{prop:piece}.

\subsection{Proof of Theorems \ref{th:barrier} and \ref{th:barrier2}}
\label{sec:BBBM_proofs}

We start with two lemmas: the first, Lemma~\ref{lem:Hperp}, shows that hypothesis (H$_\perp$) implies the event $G_0$ with high probability, the second, Lemma~\ref{lem:Xt_second}, shows Skorokhod convergence to the L\'evy process $(L_t)_{t\ge0}$ from Theorem~\ref{th:barrier} for an auxiliary process $(X_t'')_{t\ge0}$. This process is essentially a time-change of the process $(X_t')_{t\ge0}$; Lemma~\ref{lem:Xt_second} thus technically bridges the gap between Proposition~\ref{prop:piece} and Theorem~\ref{th:barrier2}. The step from Lemma~\ref{lem:Xt_second} to Theorem~\ref{th:barrier2} is done through a coupling of the random times $(\Theta_n)_{n\ge0}$ with a Poisson process of intensity $\widetilde\ep^{-1}$. Theorem~\ref{th:barrier} is then easily derived from Theorem~\ref{th:barrier2}.

\begin{lemma}
 \label{lem:Hperp} (H$_\perp$) implies $\PB(G_0) \ge 1-o_a(1)$.
\end{lemma}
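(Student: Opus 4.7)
The plan is to verify each of the three conditions in the definition of $G_0$ separately under hypothesis (H$_\perp$). The first condition, $\supp\nu_0\subset(0,a)$, holds $\PB$-almost surely because (H$_\perp$) samples each particle from a density supported in $(0,a)$, so it contributes nothing to the error.

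For the condition $|e^{-A}Z_0'-1|\le\ep^{3/2}$, the plan is first moment / second moment plus Chebyshev. Since (H$_\perp$) makes $Z_0'$ an iid sum of $N_0=\lfloor 2\pi e^A a^{-3}e^{\mu a}\rfloor$ terms, we compute
\[
\int_0^a w_Z(x)\varphi(x)\,\dd x \;=\; c_\varphi\,a\,e^{-\mu a}\int_0^a\sin^2(\pi x/a)\,\dd x\;=\;\tfrac{1}{2}c_\varphi a^2 e^{-\mu a},
\]
where $c_\varphi$ is the normalisation constant of $\varphi$. The normalisation $c_\varphi$ is available in closed form using integration by parts and the key identity $\mu^2+\pi^2/a^2=1$, giving $c_\varphi=(a/\pi)(1+O(e^{-\mu a}))$. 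Substituting $N_0$ yields $\E[Z_0']=e^A(1+O(e^{-\mu a}))$. For the variance, it suffices to bound $\E[w_Z(X)^2]$ by changing variables $u=a-x$ and using $\sin(\pi u/a)\le\pi u/a$ together with the near-exponential integral $\int_0^\infty e^{-u}u^3\,\dd u$; this gives $\Var(e^{-A}Z_0')=O(e^{-A}a^{-3})=o_a(1)$. Since $\ep$ and $A$ are fixed as $a\to\infty$, the bias $O(e^{-\mu a})$ is eventually smaller than $\ep^{3/2}/2$, and Chebyshev produces the desired $o_a(1)$ bound.

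For the condition $Y_0'\le\eta$, a Markov-type argument suffices. A similar but easier computation, using $\int_0^a\sin(\pi x/a)\,\dd x=2a/\pi$, gives
\[
\E[Y_0']\;=\;N_0\,c_\varphi e^{-\mu a}\cdot\tfrac{2a}{\pi}\;=\;O(e^A/a),
\]
which is $o_a(1)$ because $A$ is fixed. Since $\eta$ depends only on $A$, Markov's inequality yields $\PB(Y_0'>\eta)\le\E[Y_0']/\eta=o_a(1)$.

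The three bounds together give $\PB(G_0)\ge 1-o_a(1)$ by a union bound. There is no serious obstacle: the calculation is essentially the one that motivates the exact prefactor $2\pi e^A a^{-3}e^{\mu a}$ in (H$_\perp$), and the only point requiring minor care is separating the deterministic bias of $e^{-A}\E[Z_0']$ (handled by taking $a$ large relative to the $A$-dependent quantity $\ep$) from the stochastic fluctuation (handled by Chebyshev).
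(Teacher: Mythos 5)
Your proof is correct and follows exactly the route of the paper's own (very terse) proof: compute $\EB[Z_0']=e^A(1+o_a(1))$, $\VarB(Z_0')\le Ce^A/a^3$ and $\EB[Y_0']\le Ce^A/a$ from the iid structure under (H$_\perp$), then conclude by Chebyshev and Markov. The paper simply states these three estimates as "an elementary calculation''; your write-up supplies that calculation, including the normalisation $c_\varphi=(a/\pi)(1+O(e^{-\mu a}))$ via $\mu^2+\pi^2/a^2=1$, and all the bounds check out.
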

\begin{proof}
Recall that under  (H$_\perp$), there are $\lfloor 2\pi  e^Aa^{-3}e^{\mu a} \rfloor$ particles distributed independently according to the probability density proportional to $\sin(\pi x/a)e^{-\mu x}\Ind_{(0,a)}(x)$.  An elementary calculation yields that
\begin{equation}
  \label{eq:49}
  \EB[Z_0'] = e^A(1+o_a(1)),\quad\VarB(Z_0') \le Ce^A/a^3,\quad \EB[Y_0'] \le Ce^A/a.
\end{equation}
This immediately yields the statement, by Chebychev's and Markov's inequalities.  
\end{proof}

\begin{lemma}
 \label{lem:Xt_second}
As $A$ and $a$ go to infinity, supposing $\PB(G_0)\to1$, the process $(X_t'')_{t\ge 0}$, defined by $X_t'' = X^{[\infty]}_{\Theta_{\lfloor t\widetilde{\ep}^{-1}\rfloor}} - \pi^2A t$, converges in law (with respect to Skorokhod's topology) to the L\'evy process $(L_t)_{t\ge 0}$ defined in Theorem~\ref{th:barrier}.
\end{lemma}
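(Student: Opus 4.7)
The strategy is to deduce both finite-dimensional convergence and Skorokhod tightness from Proposition~\ref{prop:piece} combined with the strong Markov property at the stopping times $\Theta_n$. The key observation is that the conditions entering the definition of $G_n$ (namely $\supp \nu_{\Theta_n}\subset (0,a)$, $|e^{-A}Z'_{\Theta_n}-1|\le \ep^{3/2}$, and $Y'_{\Theta_n}\le\eta$) are precisely those defining $G_0$ at the stopping time $\Theta_n$. Hence on $G_n$, the process started afresh at $\Theta_n$ satisfies the hypothesis of Proposition~\ref{prop:piece}, and the proposition can be iterated.

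\textbf{Step 1 (finite-dimensional distributions).} Fix $0=t_0<t_1<\cdots<t_k$ and $\lambda_1,\ldots,\lambda_k\in\R$, and set $n_j=\lfloor t_j\widetilde{\ep}^{-1}\rfloor$. Writing $\Xi_j = X^{[\infty]}_{\Theta_{n_j}}-X^{[\infty]}_{\Theta_{n_{j-1}}}$, we condition successively on $\mathscr F_{\Theta_{n_{k-1}}},\ldots,\mathscr F_{\Theta_{n_1}},\mathscr F_0$. On $G_{n_j}$, the strong Markov property together with the remark that $G_{n_j}$ implies $G_0$ at time $\Theta_{n_j}$ allows us to apply Proposition~\ref{prop:piece} to the last factor. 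Using $\PB(G_n^c)\le \PB(G_0^c)+n\ep^{1+\numcst}$ from the proposition and our assumption $\PB(G_0)\to 1$, a $k$-fold iteration yields
\[
 \EB\Big[\prod_{j=1}^k e^{i\lambda_j\Xi_j}\Big]=\prod_{j=1}^k\exp\!\big((n_j-n_{j-1})\widetilde{\ep}(\kappa(\lambda_j)+i\lambda_j\pi^2A+o(1))\big)+O(n_k\ep^{1+\numcst})+o(1).
\]
Since $(n_j-n_{j-1})\widetilde{\ep}\to t_j-t_{j-1}$ and, using \eqref{eq:ep_tilde_ep}, $n_k\ep^{1+\numcst}=O(t_k\ep^{\numcst})\to 0$, and since $X''_{t_j}-X''_{t_{j-1}} = \Xi_j-\pi^2A(t_j-t_{j-1})$, the characteristic function on the left converges after subtraction of the drift to $\prod_j\exp((t_j-t_{j-1})\kappa(\lambda_j))=E\big[\prod_j e^{i\lambda_j(L_{t_j}-L_{t_{j-1}})}\big]$. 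This establishes finite-dimensional convergence of $(X''_t)$ to $(L_t)$.

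\textbf{Step 2 (Skorokhod $J_1$-convergence).} The process $(X''_t)$ is piecewise constant with jumps only at the deterministic grid times $k\widetilde{\ep}$, $k\in\N^*$, hence trivially càdlàg. The computation of Step~1 shows, in particular, that the increments $(X^{[\infty]}_{\Theta_{n+1}}-X^{[\infty]}_{\Theta_n})_n$ form an asymptotically i.i.d.\ triangular array whose row sums $\sum_{k=1}^{\lfloor t/\widetilde{\ep}\rfloor}\xi_k^{(\widetilde{\ep})}$ converge in finite-dimensional distributions to $L_t+t\pi^2A$. The limit is a L\'evy process, hence a semimartingale without fixed times of discontinuity, so the classical functional limit theorem for triangular arrays of row-wise i.i.d.\ random variables (Jacod--Shiryaev, Theorem VII.3.7, or Kallenberg, Theorem~15.17) upgrades finite-dimensional convergence to convergence in the Skorokhod $J_1$-topology. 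The required tightness hypothesis (absence of a spike in the L\'evy measure of the step distribution at any fixed large value) is automatic here because the step $X^{[\infty]}_{\Theta_1}$ has a continuous law (inherited from $W$ via \eqref{eq:789}) and the error $O(\ep^{1+\numcst})$ in Proposition~\ref{prop:piece} dominates any ``atom.''

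\textbf{Main obstacle.} The delicate point is the accumulation of error over $n\asymp \widetilde{\ep}^{-1}\asymp \ep^{-1}$ steps. Proposition~\ref{prop:piece} was stated precisely with a bound $n\ep^{1+\numcst}$ with $\numcst>0$, which leaves a spare factor of $\ep^{\numcst}$ after iteration and is what makes the above argument work; a bound of the form $n\ep$ would have been insufficient. A secondary technical point is that the restart at $\Theta_n$ is genuinely a strong-Markov restart in law because, by the hypotheses on $(f_x)_{x\in\R}$ from Section~\ref{sec:BBBM_definition_definition}, the drift $\mu_t$ returns to the constant value $\mu$ on time scales $\gg a^2$ (hence well before the next breakout on $G_n$), so that the piece of B-BBM run between $\Theta_n$ and $\Theta_{n+1}$ has, conditional on $\mathscr F_{\Theta_n}$ and restricted to $G_n$, exactly the law considered in Proposition~\ref{prop:piece}.
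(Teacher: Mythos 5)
Your Step 1 is correct and is essentially the paper's argument: the paper verifies the $L^1$ criterion of Kurtz (Proposition~3.1 in \cite{Kurtz1975}, or Lemma~8.1 in \cite{Ethier1986}) for the conditional characteristic functions $\EB[e^{i\lambda(X''_{t+s}-X''_t)}\,|\,\F''_t]$, which amounts to exactly the telescoping computation you perform, with the same bookkeeping of $\PB(G_n^c)$ and the same use of $\Re\kappa(\lambda)\le 0$ to control the accumulated factors. Your closing remarks about why the $n\ep^{1+\numcst}$ error bound and the restart structure at $\Theta_n$ are what make the iteration work are also accurate.

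Step 2, however, contains a genuine gap. The increments $X^{[\infty]}_{\Theta_{n+1}}-X^{[\infty]}_{\Theta_n}$ are \emph{not} row-wise i.i.d.: conditionally on $\F_{\Theta_n}$ their law depends on the random configuration $\nu_{\Theta_n}$, and Proposition~\ref{prop:piece} only asserts that on the good event $G_n$ the conditional characteristic function is close to a fixed quantity, up to an $O(\ep^{1+\numcst})$ error and up to the exceptional set $G_n^c$. The classical functional limit theorem for row-wise i.i.d.\ triangular arrays (Jacod--Shiryaev VII.3.7, Kallenberg 15.17) therefore does not apply as cited; to invoke a triangular-array result for dependent increments one would have to verify conditions on the conditional (modified) characteristics that are not weaker than what a direct tightness proof requires, and your parenthetical about ``absence of a spike in the L\'evy measure'' does not address this dependence. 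The paper closes exactly this gap by proving tightness via Aldous' criterion: for stopping times $\tau$ of the filtration $(\F''_t)$ taking finitely many values in $[0,M]$ and $h\to 0$, it shows $X''_{\tau+h}-X''_\tau\to 0$ in probability, by decomposing over the values of $\tau$ and reusing the same conditional characteristic function estimate \eqref{eq:912} together with $\PB(G^c_{\lfloor M\widetilde\ep^{-1}\rfloor})=O(M\ep^{\numcst})$. This is the step you need to supply; everything it requires is already contained in your Step 1 computation, applied at a stopping time rather than a fixed time.
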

\begin{proof}
Note that the process $(X_t'')_{t\ge0}$ is adapted to the filtration $(\F_t'')_{t\ge0} := (\F_{\Theta_{\lfloor t\widetilde{\ep}^{-1}\rfloor}})_{t\ge0}$. In order to show convergence of the finite-dimensional distributions, it is then enough to show (see Proposition~3.1 in \cite{Kurtz1975} or Lemma~8.1 in \cite{Ethier1986}, p.\ 225), that for every $\lambda\in\R$ and $t,s\ge 0$,
\begin{equation}
\begin{split}
\label{eq:910}
 \EB\Big[\Big|\EB[e^{i\lambda (X_{t+s}''-X_t'')}\,|\,\F_t''] - e^{s\kappa(\lambda)}\Big|\Big] \to 0,
\end{split}
\end{equation} 
as $A$ and $a$ go to infinity, where $\kappa(\lambda)$ denotes the right-hand side of \eqref{eq:laplace_levy}. Fix $\lambda\in\R$ and $t,s\ge 0$ and define $m := \lfloor t\widetilde{\ep}^{-1}\rfloor$ and $n := \lfloor (t+s)\widetilde{\ep}^{-1}\rfloor$. Then, $(n-m)\widetilde{\ep} = s+A^{-1}o(1)$, by \eqref{eq:ep_upper} and \eqref{eq:ep_tilde_ep}. By Proposition \ref{prop:piece}, we have
\begin{equation}
\label{eq:912}
 \begin{split}
  \EB[e^{i\lambda (X_{t+s}''-X_t'')}\,|\,\F_{\Theta_m}]\Ind_{G_m} &= e^{-i\lambda \pi^2 As} \EB[e^{i\lambda (X^{[\infty]}_{\Theta_n} - X^{[\infty]}_{\Theta_m})}\,|\,\F_{\Theta_m}]\Ind_{G_m}\\
&= \exp\Big(s\big(\kappa(\lambda) + o(1) + O(\ep^\numcst)\big)\Big)\Ind_{G_m}.
 \end{split}
\end{equation}
In total, we get for $A$ and $a$ large enough,
\[
 \EB\Big[\Big|\EB[e^{i\lambda (X_{t+s}''-X_t'')}\,|\,\F_t''] - e^{s\kappa(\lambda)}\Big|\Big] \le e^{s \kappa(\lambda)}\EB[|e^{s(o(1) + O(\ep^\numcst))} - 1|] + \PB(G_m^c),
\]
where we used the fact that $\Re \kappa(\lambda) \le 0$ and therefore $|e^{s\kappa(\lambda)}|\le 1$.
By Proposition \ref{prop:piece}, this goes to $0$ as $A$ and $a$ go to infinity, which proves \eqref{eq:910}.

In order to show tightness in Skorokhod's topology, we use Aldous' famous criterion \cite{Aldous1978} (see also \cite{Billingsley1999}, Theorem 16.10): If for every $M > 0$, every family of $(\F_t'')$-stopping times $\tau = \tau(A,a)$ taking only finitely many values, all in $[0,M]$, and every $h = h(A,a)\ge 0$ with $h(A,a) \to 0$ as $A$ and $a$ go to infinity, we have
\begin{equation}
\label{eq:920}
 X_{\tau+h}'' - X_{\tau}'' \to 0,\quad\text{ in probability as $A$ and $a$ go to infinity},
\end{equation}
then tightness follows for the processes $(X_t'')_{t\ge0}$ (note that the second point in the criterion, namely tightness of $X_t''$ for every fixed $t$, follows from the convergence in finite-dimensional distributions proved above). Now let $\tau$ be such a stopping time and let $V_\tau$ be the (finite) set of values it takes. We first note that since $G_n \supset G_{n+1}$ for every $n\in \N$, we have for every $t\in V_\tau$ and every $A$ and $a$ large enough,
\begin{equation}
\label{eq:922}
\PB(G^c_{\lfloor t \widetilde{\ep}^{-1}\rfloor}) \le \PB(G^c_{\lfloor M \widetilde{\ep}^{-1}\rfloor}) = O(M \ep^\numcst).
\end{equation}
by Proposition \ref{prop:piece} and \eqref{eq:ep_tilde_ep}. Moreover, we have for every $\lambda > 0$,
\begin{align*}
 \EB[e^{i\lambda(X_{\tau+h}'' - X_{\tau}'')}] &= \sum_{t\in V_\tau}\EB\Big[e^{i\lambda(X_{t+h}'' - X_t'')}\Ind_{(\tau = t)}\Big]\\
&= \sum_{t\in V_\tau}\EB\Big[\EB[e^{i\lambda(X_{t+h}'' - X_t'')}\,|\,\F_t'']\Ind_{(\tau = t)}\Ind_{G_{\lfloor t\widetilde{\ep}^{-1}\rfloor}}\Big] + O(M \ep^\numcst) && \text{by \eqref{eq:922}}\\
&= e^{h (\kappa(\lambda)+o(1) + O(\ep^\numcst))}(1-O(M\ep^\numcst)) + O(M \ep^\numcst), && \text{by \eqref{eq:912}},
\end{align*}
which converges to $1$ as $A$ and $a$ go to infinity. This implies \eqref{eq:920} and therefore proves tightness in Skorokhod's topology, since $M$ was arbitrary. Together with the convergence in finite-dimensional distributions proved above, the lemma follows.
\end{proof}

\paragraph{A coupling with a Poisson process.} 
Let $(V_n)_{n\ge 1}$ be a sequence of independent exponentially distributed random variables with mean $\widetilde{\ep}$. In order to prove convergence of the processes $(X_t)_{t\ge0}$ and $(X_t')_{t\ge0}$ (the latter was defined in the statement of Theorem~\ref{th:barrier2}), we are going to couple the BBM with the sequence $(V_n)_{n\ge1}$ in the following way: Suppose we have constructed the BBM until time $\Theta_{n-1}$. Now, on the event $G_{n-1}$, by Corollary~\ref{cor:coupling_T}, the strong Markov property of BBM and the transfer theorem (a theorem which allows to ``transfer'' random variables to another probability space, see \cite{Kallenberg1997}, Theorem 5.10), we can construct the BBM up to time $\Theta_n$ such that $\PB(|(T_n-\Theta_{n-1})/a^3 - V_n| > \ep^{3/2}) = O(\ep^2)$ (recall that $T_n$ denotes the time of the first breakout after $\Theta_{n-1}$). On the event $G_{n-1}^c$, we simply let the BBM evolve independently of $(V_j)_{j\ge n}$. Setting $G_n' = G_n \cap \{\forall j\le n: |(T_j-\Theta_{j-1})/a^3 - V_j| \le \ep^{3/2}\}$, there exists by Corollary~\ref{cor:coupling_T} and Proposition~\ref{prop:piece} a 
numerical constant $\numcst > 0$, such 
that for large $A$ and $a$,
\begin{equation}
 \label{eq:prob_Gnprime}
\PB(G_n') \ge \PB(G_0) - nO(\ep^{1+\numcst})
\end{equation}
Furthermore, we have $\Theta_n = T_n + e^Aa^{2}$ on $G_n'$, whence for large $A$ and $a$,
\begin{equation*}
 \ton G_n': |(\Theta_n-\Theta_{n-1})/a^3 - V_n| \le 2\ep^{3/2}.
\end{equation*} 
This construction now permits us to prove Theorem~\ref{th:barrier2}:

\begin{proof}[Proof of Theorem~\ref{th:barrier2}]
The main idea of the proof is to compare the process $(X_t')_{t\ge0}$ with the process $(X_t'')_{t\ge0}$ and deduce convergence of the former from the convergence of the latter. For this, we first recall some basic facts about Skorokhod convergence.

Let $d$ denote the Skorokhod metric on the space of real-valued cadlag functions $D([0,\infty))$ (see \cite{Ethier1986}, Section 3.5). Let $\Phi$ be the space of strictly increasing, continuous, maps of $[0,\infty)$ onto itself. Let $x,x_1,x_2,\ldots$ be elements of $D([0,\infty))$. Then (\cite{Ethier1986}, Proposition 3.5.3), $d(x_n,x) \to 0$ as $n\to\infty$ if and only if for every $M > 0$ there exists a sequence $(\varphi_n)$ in $\Phi$, such that
\begin{equation}
 \label{eq:940}
\sup_{t\in [0,M]}|\varphi_n(t) - t| \to 0,\quad\tand\quad\sup_{t\in [0,M]} |x_n(\varphi_n(t)) - x(t)| \to 0.
\end{equation}
If $(x_n')_{n\in \N}$ is another sequence of functions in $D([0,\infty))$, with $d(x_n',x)\to 0$, then by the triangle inequality and the fact that $\Phi$ is stable under the operations of inverse and convolution, we have $d(x_n,x)\to 0$ if and only if there exists a sequence $(\varphi_n)$ in $\Phi$, such that the first inequality in \eqref{eq:940} holds and
\begin{equation*}
\sup_{t\in [0,M]} |x_n(\varphi_n(t)) - x_n'(t)| \to 0.
\end{equation*}
For every $A$ and $a$, we now define the (random) map $\varphi_{A,a} \in \Phi$ by
\[
 \varphi_{A,a}(\widetilde{\ep}(n+r)) = ((1-r) \Theta_n + r \Theta_{n+1})a^{-3}, \text{ with } n\in\N,\ r\in [0,1].
\]
Let $M > 0$ and define $n_M = \lceil M\widetilde{\ep}^{-1} \rceil$. Let $(X''_t)_{t\ge0}$ be the process from  Lemma~\ref{lem:Xt_second}. Then,
\begin{align*}
\sup_{t\in [0,M]} |\varphi_{A,a}(t) - t| &\le  \max_{n\in\{0,\ldots,n_M\}} \left|a^{-3}\Theta_n - \widetilde{\ep} n\right|,\\
\sup_{t\in [0,M]}|X''_t - X'_{\varphi_{A,a}(t)}| &\le  \max_{n\in\{0,\ldots,n_M\}} \pi^2 A \left|a^{-3}\Theta_n - \widetilde{\ep} n\right|.
\end{align*}
By Doob's $L^2$ inequality and the fact that $\widetilde{\ep} = \EB[V_1]$, we get
\[
 \PB\Big(\max_{n\in\{0,\ldots,n_M\}} \left|\sum_{i=1}^n V_i - n\widetilde{\ep}\right| > \ep^{1/3}\Big) \le C \ep^{-2/3} n_M \VarB(V_i) \le CM\ep^{1/3},
\]
by \eqref{eq:ep_tilde_ep}.
Furthermore, on the event $G_{n_M}'$ defined above, we have for every $n\le n_M$,
\(
|\Theta_n - \sum_{i=1}^n V_i| \le Cn_M \ep^{3/2} \le CM\ep^{1/2}.
\)
By Lemma~\ref{lem:Hperp} and \eqref{eq:prob_Gnprime}, $\PB(G_{n_M}')\to1$ as $A$ and $a$ go to infinity.
In total,
\begin{equation}
\label{eq:948}
 \forall M > 0:\sup_{t\in [0,M]} |\varphi_{A,a}(t) - t| \vee |X''_t - X'_{\varphi_{A,a}(t)}| \to 0,\quad\text{in probability},
\end{equation}
as $A$ and $a$ go to infinity, which is equivalent to 
\begin{equation}
\label{eq:949}
 \sum_{M=1}^\infty 2^{-M}\Big[1\wedge\Big(\sup_{t\in [0,M]} |\varphi_{A,a}(t) - t| \vee |X''_t - X'_{\varphi_{A,a}(t)}|\Big)\Big] \to 0,\quad\text{in probability}.
\end{equation}
Now, suppose that $A$ and $a$ go to infinity along a sequence $(A_n,a_n)_{n\in\N}$ and denote by $X'_{A_n,a_n}$, $X''_{A_n,a_n}$ and $\varphi_{A_n,a_n}$ the processes corresponding to these parameters. By Lemma~\ref{lem:Xt_second} and Skorokhod's representation theorem (\cite{Billingsley1999}, Theorem 6.7), there exists a probability space on which the sequence $(X''_{A_n,a_n})$ converges almost surely as $n\to\infty$ to the limiting L\'evy process $L = (L_t)_{t\ge 0}$ stated in the theorem. Applying again the representation theorem as well as the transfer theorem, we can transfer the processes $X'_{A_n,a_n}$ and $\varphi_{A_n,a_n}$ to this probability space in such a way that the convergence in \eqref{eq:949} holds almost surely, which implies that the convergence in \eqref{eq:948} holds almost surely as well. By the remarks at the beginning of the proof, it follows that on this new probability space,
\[
 d(X'_{A_n,a_n},L) \le d(X'_{A_n,a_n},X''_{A_n,a_n}) + d(X''_{A_n,a_n},L)\to 0,
\]
almost surely, as $n\to\infty$. This proves the theorem.
\end{proof}

\begin{proof}[Proof of Theorem~\ref{th:barrier}]
Let $(t_i^{A,a})_{i=1}^k$ be as in the statement of the theorem (with $n$ replaced by $k$) and write $t_i=t_i^{A,a}$. By Theorem~\ref{th:barrier2}, it suffices to show that
\begin{equation}
\label{eq:950}
 \PB\Big(\forall i: X^{[\infty]}_{t_ia^3} = J_{t_ia^3}\Big) \to 1.
\end{equation}
Let $n := \lceil 2(t_k+2)/\widetilde{\ep}\rceil$, so that $n = O(\ep^{-1})$, by \eqref{eq:ep_tilde_ep}, where we allow the $O(\cdot)$ symbol to depend on $t_k$. By Chebychev's inequality, we then have
\begin{equation}
 \label{eq:952}
\PB(\sum_{i=1}^n V_i \le t_k+2) \le \PB\Big(\sum_{i=1}^n (V_i - \widetilde{\ep}) \le -\frac n 2 \widetilde{\ep}\Big) = O(n\VarB(V_i)) = O(\ep).
\end{equation}
Furthermore, define the intervals $I_i = t_i + [-2n\ep^{3/2}-e^A/a,2n\ep^{3/2}]$, $i=1,\ldots,k$ and denote by $\mathscr P$ the point process on the real line with points at the positions $V_1,V_1+V_2,V_1+V_2+V_3,\ldots$ Then $\mathscr P$ is a Poisson process with intensity $\widetilde{\ep}^{-1} = O(\ep^{-1})$ and thus, for large $A$ and $a$,
\begin{equation}
 \label{eq:954}
\PB\Big(\mathscr P \cap \bigcup_{i=1}^k I_i \ne \emptyset\Big) \le Ck\ep^{1/2}.
\end{equation}
We now have
\begin{align*}
 \PB\Big(\forall i: X^{[\infty]}_{t_ia^3} = J_{t_ia^3}\Big) &\ge \PB\Big(\nexists (i,j): t_ia^3\in[\Theta_j-T_{j-1},\Theta_j]\Big) && \text{by definition}\\
&\ge \PB\Big(G_{n}',\ \sum_{i=1}^{n}V_i > t_k+2,\ \mathscr P \cap \bigcup_{i=1}^k I_i = \emptyset\Big) && \text{by definition of $G_n'$}\\
&\ge \PB(G_0) - O(\ep^\numcst) && \text{by \eqref{eq:prob_Gnprime}, \eqref{eq:952}, \eqref{eq:954}}.
\end{align*}
Letting $A$ and $a$ go to infinity and using again Lemma~\ref{lem:Hperp}, yields \eqref{eq:950} and thus proves the theorem.
\end{proof}

\section{The \texorpdfstring{\Bfl}{Bb}-BBM}
\label{sec:Bflat}
In this section, we prove the parts of Theorem~\ref{th:Bflat_Bsharp} and Proposition~\ref{prop:Bflat_Bsharp_med} concerning the \Bfl-BBM, which was defined in Section~\ref{sec:Bflat_definition}. This section relies very much on Section~\ref{sec:BBBM} and we will use all of the notation introduced there. 

\subsection{More definitions and results}
\label{sec:Bflat_moredefs}
Recall from the beginning of Section~\ref{sec:Bflat_Bsharp_definition} that we fix $\delta\in(0,1/100)$ and that the phrase ``for large $A$ and $a$'' may now depend on $\delta$. Depending on $\delta$, we fix $K\ge 1$ such that $E_K \le \delta/10$, where $E_K$ is defined in \eqref{eq:def_E}.  We will use the symbols $C_\delta$ and $C_{\delta,\alpha}$, which have the same meaning as $C$ (see Section~\ref{sec:notation}), except that they may depend on $\delta$ or $\delta$ and $\alpha$ as well, $\alpha$ being defined later.

For a Borel set $S\subset\R_+$, we define the stopping line $\mathscr L^{\mathrm{red}}_{\Box,S}$ by $(u,t)\in \mathscr L^{\mathrm{red}}_{\Box,S}$ if and only if the particle $u$ gets coloured red at time $t$ and has been white up to time $t$, with $t\in S$. We then set $Z^{\mathrm{red}}_{\Box,S}$ and  $Y^{\mathrm{red}}_{\Box,S}$ by summing respectively $w_Z$ and $w_Y$ over the particles of this stopping line. Furthermore, we define $\mathscr N^{\mathrm{red}}_t$ and $\mathscr N^{\mathrm{white}}_t$ to be the subsets of $\mathscr N_t$ formed by the red and white particles, respectively, and define $Z^{\mathrm{red}}_t$, $Y^{\mathrm{red}}_t$, $Z^{\mathrm{white}}_t$ and $Y^{\mathrm{white}}_t$ accordingly. Recall that we kill all red particles at every time $\Theta_n$, $n\ge0$, so that $\mathscr N^{\mathrm{red}}_{\Theta_n} = \emptyset$ and $\mathscr N_{\Theta_n} = \mathscr N^{\mathrm{white}}_{\Theta_n}$ for every $n\ge0$.

Note that the law of the process until time $\Theta_1$ is the same as under $\PB$, which allows us to use the results from Section~\ref{sec:BBBM}.

Recall that $\nu^\flat_t$ denotes the empirical measure of the white particles at time $t$ and abuse notation by setting $\nu^\flat_n =\nu^\flat_{\Theta_n}$. We set $G^{\flat}_{-1}=\Omega$ and for each $n\in\N$, we define the event $G^\flat_n$ to be the intersection of $G^\flat_{n-1}$ with the following events:
\begin{itemize}[nolistsep]
 \item $\supp\nu^\flat_n\subset (0,a)$,
 \item $\mathscr N^{\mathrm{white}}_{\Theta_n} \subset U\times \{\Theta_n\}$ and $\Theta_n > T_n^+$ (for $n>0$),
 \item $|e^{-A}Z^{\mathrm{white}}_{\Theta_n} -1| \le \ep^{3/2}$ and $Y^{\mathrm{white}}_{\Theta_n} \le \eta$.
 \item $\Pfl\Big(Z^{\mathrm{red}}_{\Box,[\Theta_n,\Theta_n+Ka^2]} + Y^{\mathrm{red}}_{\Box,[\Theta_n,\Theta_n+Ka^2]} \le a^{-1/2}\,\Big|\,\F_{\Theta_n}\Big) \ge 1-\ep^2$.
\end{itemize}
The last event is of course uniquely defined up to a set of probability zero. Note that $G^\flat_n\in\F_{\Theta_n}$ for each $n\in\N$.

We now state the main results from this section, which will imply the \Bfl-BBM parts of Theorem~\ref{th:Bflat_Bsharp} and Proposition~\ref{prop:Bflat_Bsharp_med}. They are proved in Section~\ref{sec:Bflat_main_results}. Recall the definition of (H$_\perp$) from Section~\ref{sec:BBBM_results}.
\begin{lemma}
\label{lem:HBflat}
(H$_\perp$) implies that $\Pfl(G^\flat_0)\to 1$ as $A$ and $a$ go to infinity.
\end{lemma}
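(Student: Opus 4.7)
Under hypothesis (H$_\perp$) the initial count $N:=\lfloor 2\pi e^A a^{-3}e^{\mu a}\rfloor$ is strictly smaller than $N^\flat=\lfloor 2\pi e^{A+\delta}a^{-3}e^{\mu a}\rfloor$ (since $\delta>0$ is fixed), so every initial particle is white and $\nu^\flat_0=\nu_0$. The conditions $\supp\nu^\flat_0\subset(0,a)$ and $\mathscr N^{\mathrm{white}}_0\subset U\times\{0\}$ in the definition of $G^\flat_0$ are then immediate: the first from the support of the density in (H$_\perp$), the second because $\mathscr N_0$ reduces to the initial tier-$0$ line at $t=0$ by definition of $\mathscr N^{(l)}_t$. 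For the moment conditions $|e^{-A}Z^{\mathrm{white}}_0-1|\le\ep^{3/2}$ and $Y^{\mathrm{white}}_0\le\eta$, I reuse the estimates from the proof of Lemma~\ref{lem:Hperp}, namely $\EB[Z'_0]=e^A(1+o_a(1))$, $\VarB(Z'_0)\le Ce^A/a^3$ and $\EB[Y'_0]\le Ce^A/a$. Since all initial particles are white we have $Z^{\mathrm{white}}_0=Z'_0$ and $Y^{\mathrm{white}}_0=Y'_0$, and Chebyshev (resp.\ Markov) bounds the failure probabilities by $O(\ep^{-3}e^{-A}a^{-3})$ and $O(e^A/(a\eta))$, both of which tend to $0$ once $a_0(A)$ is taken large enough (absorbing the $A$-dependence of $\ep$ and $\eta$ into $a_0(A)$).

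The genuine work is the fourth component of $G^\flat_0$, which asks that $\Pfl\bigl(Z^{\mathrm{red}}_{\Box,[0,Ka^2]}+Y^{\mathrm{red}}_{\Box,[0,Ka^2]}\le a^{-1/2}\,|\,\F_0\bigr)\ge 1-\ep^2$ on a set of $\F_0$-probability tending to $1$. By the tower property combined with Markov's inequality in $\F_0$ it suffices to prove the unconditional estimate $\Pfl(\mathscr L^{\mathrm{red}}_{\Box,[0,Ka^2]}\ne\emptyset)=o(\ep^2)$: on its complement both $Z^{\mathrm{red}}_\Box$ and $Y^{\mathrm{red}}_\Box$ vanish identically and the bound is trivial. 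Because a red particle is produced precisely when the white-particle count first attains $N^\flat$, and the initial count equals $N=e^{-\delta}N^\flat(1+o(1))$, the problem reduces to
\[
\Pfl\Bigl(\sup_{t\in[0,Ka^2]}\#\mathscr A_0(t)\ge N^\flat\Bigr)=o(\ep^2).
\]

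The intuition is that $Ka^2$ is minuscule compared to the breakout scale $\widetilde\ep a^3$, while (H$_\perp$) coincides (up to the deficit factor $e^{-\delta}$) with the quasi-stationary density $\sin(\pi\cdot/a)e^{-\mu\cdot}$ of BBM in $(0,a)$ with drift $-\mu$ killed at $\{0,a\}$, so the particle count is essentially stationary on $[0,Ka^2]$ and cannot reach $N^\flat$ without a huge fluctuation. To make this quantitative, I would combine the first-moment computation of Section~\ref{sec:killed_bm} (giving $|\EB[\#\mathscr A_0(t)]-N|=O(E_t N)$ uniformly on $[0,Ka^2]$, with $E_t$ from~\eqref{eq:def_E}) with the sharp second-moment estimate Lemma~\ref{lem:N_2ndmoment} of Section~\ref{sec:interval_number} applied to the count $N_t(0)=\#\mathscr A_0(t)$, which provides an instantaneous variance of order $KN/a$. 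Upgrading this to a uniform bound $\sup_{t\le Ka^2}|\#\mathscr A_0(t)-N|\le \delta N/2$ via Doob's $L^2$-maximal inequality applied to the $\F_t$-adapted martingale obtained by projecting the particle density onto the principal eigenfunction $e^{\mu\cdot}\sin(\pi\cdot/a)$ then yields a failure probability $O(K/(\delta^2 Na))$, which is $o(\ep^2)$ for $a$ large enough relative to $A$ since $N\asymp e^Aa^{-3}e^{\mu a}$. The main obstacle is exactly this uniform-in-$t$ fluctuation bound over a window of length $\asymp a^2$: as the remark following Proposition~\ref{prop:Bflat_Bsharp_med} stresses, the refined $O(w_Y)$ error term in Lemma~\ref{lem:N_2ndmoment} is essential, the coarser bound available through Proposition~14 of \cite{Berestycki2010} being insufficient by a factor of $a$ or more. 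Combining the four estimates finally gives $\Pfl(G^\flat_0)\to1$.
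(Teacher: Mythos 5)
Your treatment of the first three components of $G^\flat_0$ is fine and matches the paper (they are exactly the event $G_0$, handled by Lemma~\ref{lem:Hperp}), and the outer Markov-inequality reduction for the conditional probability in the fourth component is also the right move. The gap is in what you reduce that fourth component to. You replace the requirement ``$Z^{\mathrm{red}}_{\Box,[0,Ka^2]}+Y^{\mathrm{red}}_{\Box,[0,Ka^2]}\le a^{-1/2}$ w.h.p.'' by the far stronger claim that \emph{no} red particle is created on $[0,Ka^2]$, i.e.\ that $\sup_{t\le Ka^2}\#\mathscr A_0(t)<N^\flat$ with probability $1-o_a(1)$. This is not what is needed, and it is not established by your sketch: Doob's $L^2$ inequality applies to the martingale $Z^{(0)}_t$ (the projection onto $e^{\mu x}\sin(\pi x/a)$), not to the raw count $N_t(0)$, and a uniform bound on $Z^{(0)}_t$ does not control $N_t(0)$ on windows of length $\asymp a^2$ --- the two are only comparable after a relaxation time, which is exactly why $K$ is chosen so that $E_K\le\delta/10$ and why Lemma~\ref{lem:N_expec_large_t} carries the error $E_{t/a^2}$. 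The fixed-time Chebyshev bound of Lemma~\ref{lem:Bflat_start} gives $\P(N_t(0)\ge N^\flat)\le C_\delta/a$ at each $t$, but upgrading this to a supremum over a continuum of times is a genuinely different (maximal-inequality/chaining) problem that you do not solve. Worse, the target itself is doubtful: the paper's own accounting (Lemma~\ref{lem:Nwtr}, third part) only bounds the expected number of white-to-red colourings to the right of $r$ by a quantity that at $r=0$ is enormous (polynomial in $a$ times $N$), so one should expect red particles \emph{to be} created during $[0,Ka^2]$ --- just all of them very close to the origin.

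That last observation is the actual mechanism of the paper's proof, and it is what your reduction throws away. The paper splits $\Efl[1-X]$ on the event $\{R_{Ka^2}=0\}$, bounds $\Pfl(R_{Ka^2}>0)\le C_\delta e^A/a$ by Lemma~\ref{lem:Bflat_start}, and on $\{R_{Ka^2}=0\}$ uses the $e^{-(2-\alpha)r}$ decay of $\E[N^{\mathrm{wtr}}([0,Ka^2],r)]$ in $r$ (Lemma~\ref{lem:Nwtr}, built on the fixed-time bound of Lemma~\ref{lem:Bflat_start}) integrated against $w_Z'$ and $w_Y'$ to get $\E[(Z^{\mathrm{red}}_{\Box,[0,Ka^2]}+Y^{\mathrm{red}}_{\Box,[0,Ka^2]})\Ind_{(R_{Ka^2}=0)}]\le C_\delta e^A/a^2$ (Lemma~\ref{lem:ZY_red}); two applications of Markov's inequality then finish the proof. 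In short: you must tolerate the creation of red particles and show their total $w_Z+w_Y$ weight is small because they sit at height $O(1)$ where these weights are $O(e^{-a})$, rather than try to exclude their creation altogether. (Minor additional slip: the variance of $N^{(0)}_t(0)$ under (H$_\perp$) is of order $(K/a)e^{-A}N^2$, as in \eqref{eq:84}, not $KN/a$.)
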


\begin{proposition}
  \label{prop:Bflat}
Proposition~\ref{prop:piece} still holds with $G_n$, $\PB$, $\EB$ replaced by $G_n^\flat$, $\Pfl$, $\Efl$.
\end{proposition}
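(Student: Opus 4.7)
The plan is to follow the skeleton of the proof of Proposition~\ref{prop:piece} in Section~\ref{sec:piece_proof}, with the same induction-on-$n$ reduction. As in that proof, the strong Markov property at $\Theta_n$ together with an iteration on the Fourier identity reduces everything to the $n=1$ statements
\[
 \Pfl(G_1^{\flat,c}\,|\,\F_0)\Ind_{G_0^\flat} = O(\ep^{1+\numcst})\quad\text{and}\quad \Efl[e^{i\lambda X^{[\infty]}_{\Theta_1}}\Ind_{G_1^\flat}\,|\,\F_0] = \exp\!\left(\widetilde{\ep}(\kappa(\lambda) + i\lambda\pi^2A + o(1))\right)\Ind_{G_0^\flat} + O(\ep^{1+\numcst}).
\]
The crucial structural observation is that during a single piece $[0,\Theta_1]$ the dynamics of the \Bfl-BBM are those of the B-BBM: the red/white colouring is a bookkeeping label, and killing of red particles happens only at $\Theta_1$. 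Consequently, up to time $\Theta_1$, the laws $\Pfl$ and $\PB$ agree, and one may import essentially verbatim the analyses of $G_\mathscr U$, $G_{\mathrm{fug}}$, $\widehat G$, $\widecheck G$, $G_{\mathrm{nbab}}$ and of the random variables $\Delta_{\mathrm{drift}}$, $\Delta_{\mathrm{jump}}$ from Section~\ref{sec:piece_proof}.

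The substantive new work lies in controlling the red population, since $G_1^\flat$ requires $|e^{-A}Z^{\mathrm{white}}_{\Theta_1}-1|\le \ep^{3/2}$ and $Y^{\mathrm{white}}_{\Theta_1}\le \eta$, whereas Lemma~\ref{lem:G1} only supplies the analogous bounds for $Z_{\Theta_1}$ and $Y_{\Theta_1}$ (which include the red particles). The decisive fact is the separation $e^{-A}Z_0 \le 1+\ep^{3/2} < e^\delta \approx N^\flat/\lfloor 2\pi e^Aa^{-3}e^{\mu a}\rfloor$: there is an $O(\delta)$ buffer between the typical particle count and the red-colouring threshold. During the quasi-stationary phase $[0,T^-]$, I will use the second-moment estimate on the number of particles above a level (Lemma~\ref{lem:N_2ndmoment} referenced in the Remark at the end of Section~\ref{sec:Bflat_Bsharp_results}) together with Chebyshev's inequality applied at a dense grid of times to show that with probability $1-O(\ep^{1+\numcst})$ no white particle ever sees $N^\flat$ others to its right in this window; this forces $Z^{\mathrm{red}}_{T^-}=Y^{\mathrm{red}}_{T^-}=0$ on that event. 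During the relaxation window $[T,\Theta_1]$ of length $O(e^Aa^2)$, red particles can genuinely accumulate (whenever $\Delta>\delta$), but they are necessarily the leftmost ones; since $w_Z(x)$ and $w_Y(x)$ are exponentially small at the bottom of the interval, an $\F_\Delta$-conditioned first-moment computation parallel to Lemma~\ref{lem:G1}, combined with Lemma~\ref{lem:N_2ndmoment} applied to count the overshoot above level $a-O(1)$, will bound $Z^{\mathrm{red}}_{\Theta_1}+Y^{\mathrm{red}}_{\Theta_1}$ in $L^2$ with an error of order $\ep^{3/2}e^A$ uniformly on $G_\Delta\cap G_{\mathrm{nbab}}$.

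The third ingredient is to verify that the fugitive is white with probability $1-O(\ep^{1+\numcst})$, which together with the above implies that the definition of $\Delta$ and the computation of its Fourier transform in Section~\ref{sec:piece_proof} carry over unchanged to $\Pfl$. This is essentially automatic: red particles are by construction the \emph{leftmost} particles (they are those with many particles to their right), while the fugitive is at the right edge $a$; a first-moment bound using the decay of $w_Y$ at $0$ shows that a red particle has probability $O(\ep^2)$ of hitting $a$ before $\Theta_1$. Putting these pieces together, one obtains the Fourier identity for $X^{[\infty]}_{\Theta_1}$ with the same error exponent as in Proposition~\ref{prop:piece}.

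Finally, to verify the last bullet of $G_1^\flat$, I will bound $\Efl[Z^{\mathrm{red}}_{\Box,[\Theta_1,\Theta_1+Ka^2]}+Y^{\mathrm{red}}_{\Box,[\Theta_1,\Theta_1+Ka^2]}\,|\,\F_{\Theta_1}]$ on the (high-probability) event that $G_1^\flat$'s first three bullets hold, by a first-moment argument in the style of Lemma~\ref{lem:hat_quantities}, again leveraging the $e^\delta$ buffer and the choice of $K$ with $E_K\le \delta/10$ so that the spatial profile has already relaxed close to the invariant shape. A Markov bound then yields the desired probabilistic conclusion; the exponent $-1/2$ in the threshold $a^{-1/2}$ gives plenty of slack. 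The hardest step to execute cleanly is the $L^2$ control of $Z^{\mathrm{red}}_{\Theta_1}$ during the post-breakout relaxation: it is here that Lemma~\ref{lem:N_2ndmoment} is indispensable, and here that the precise exponent $\numcst$ in the error $\ep^{1+\numcst}$ is determined.
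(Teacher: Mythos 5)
Your overall architecture matches the paper's: reduce to a single piece by induction, use that $\Pfl$ and $\PB$ agree up to $\Theta_1$ so that the barrier/Fourier analysis of Section~\ref{sec:piece_proof} transfers, and isolate as the new work (i) the smallness of $Z^{\mathrm{red}}_{\Theta_1}$, $Y^{\mathrm{red}}_{\Theta_1}$, (ii) the whiteness of the fugitive, and (iii) the last bullet of $G_1^\flat$. (In fact, once $\Pfl(G_1^\flat)\ge 1-\ep^{1+\numcst}$ is known, the Fourier identity follows from the B-BBM version by replacing $\Ind_{G_1}$ with $\Ind_{G_1^\flat}$ at a cost $O(\ep^{1+\numcst})$; you do not need to redo that computation.)

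However, there is a genuine gap in your treatment of the quasi-stationary phase $[0,T^-]$. You propose to show, via Chebyshev at a dense grid of times, that with probability $1-O(\ep^{1+\numcst})$ no white particle ever has $N^\flat$ particles to its right before $T^-$, so that no red particles are created at all. This cannot work: the window has length of order $\ep a^3$, the counting process $N_t(0)$ decorrelates only on the scale $a^2$, and the single-time bound $\P(N_t(0)\ge N\mid T>t)\le CA^2\ep(t/a^3+\eta)$ (Lemma~\ref{lem:Bflat_N}) union-bounded over the $\gtrsim \ep a$ required grid points gives $\gtrsim A^2\ep^3 a$, which diverges as $a\to\infty$; nor is a maximal inequality available for $N_t$ (it is not a martingale, unlike $Z_t$). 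Indeed the expected number of white-to-red conversions over the whole window is of order $\ep^3 N a^3\gg 1$, so red particles \emph{are} created in large numbers. The correct mechanism — and the technical core of Section~\ref{sec:Bflat} — is different: one introduces the occupation measure $\widetilde{\mathfrak m}_r(S)=\E[\Nwtr(S,r)]$ of red-creation events above height $r$, computes its density in time as $\lim_{h\to 0}\widetilde{\mathfrak m}_r([s,s+h])/h\le CN\,\P(N_s(r)\ge N)$, obtains the decay $e^{-(2-\alpha)r}$ in $r$ from Lemma~\ref{lem:Bflat_N} (Lemma~\ref{lem:Nwtr}), and then integrates by parts against $w_Z'(r)\sim (1+r)e^{r-a}$ to conclude that the total $w_Z$-weight of the red-creation line is only $O(\ep^3 e^A)$ in expectation (Lemma~\ref{lem:ZY_red}), even though the line itself is huge; a separate $o_a(1)$ event excludes creations above $2a/3$. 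This first-moment-on-the-weighted-line bound is what propagates to $e^{-A}Z^{\mathrm{red}}_{\Theta_1}\le\ep^{3/2}/2$ and what feeds the ``fugitive is white'' estimate (via Proposition~\ref{prop:T} applied to the red stopping line). Your $L^2$/grid strategy for the pre-breakout phase would need to be replaced by this rate-and-integration argument; your treatment of the post-breakout window and of items (ii)--(iii) is otherwise in the right spirit.
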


\subsection{The total number of particles: upper bounds}
\label{sec:Bflat_num_particles}
In this section, we will establish some fine estimates for the number of particles  of the process, which will be used later to bound the number of creations of red particles. For this, we define $N_t(r)$ to be the number of particles to the right of $r$ at time $t$, i.e.\ $N_t(r) = \sum_{u\in\mathscr A_0(t)}\Ind_{(X_u(t) \ge r)}$. Unfortunately, this quantity is not very convenient to work with, because of the ``in between'' particles, namely, particles $u\in\mathscr A_0(t)$ with $\tau_l(u) \le t < \sigma_{l+1}(u)$ for some $l\ge 0$ (see Section~\ref{sec:BBBM_tiers}). We therefore also define for $l\ge0$,
\[
  N^{(l)}_t(r) = \sum_{(u,s)\in\mathscr N^{(l)}_t}\Ind_{(X_u(s) \ge r)},
\]
and define  $N^{(l+)}_t(r)$ etc.\ by \eqref{eq:sum_over_tiers} as before. Note that contrary to $Z_t$, $Y_t$ and $R_t$, we may have $N_t(r) \ne N^{(0+)}_t(r)$!
We also apply the superscripts ``white'' and ``red'' to all of these quantities with the obvious meanings.

Write for short $N = N^\flat = \lfloor 2\pi  e^{A+\delta}a^{-3}e^{\mu a}\rfloor$. As in Section~\ref{sec:piece_proof}, we say that hypothesis (HB$_0$) is verified if $\nu_0$ is deterministic and such that $G_0$ holds.
The main lemma in this section is the following:
\begin{lemma}
\label{lem:Bflat_N} Suppose (HB$_0$). Let $t\in [Ka^2,\tconst{eq:tcrit}\wedge \sqrt\ep a^3]$.  Then, for $0\le r\le 9a/10$ and every $\alpha > 0$ there exists $C_{\delta,\alpha}$, such that for large $A$ and $a$,
\[
\P(N_t(r) \ge N\,|\,T > t) \le C_{\delta,\alpha} A^2\ep\Big(\frac t {a^3}+\eta\Big)e^{- (2 - \alpha)  r}
\]
Furthermore, conditioned on $\F_\Delta$, for $t\in[T,\Theta_1+e^Aa^2]$, for large $A$ and $a$,
\[
\Pfl_{\mathrm{nbab}}(N_t(r) \ge N\,|\,\F_\Delta)\Ind_{G_\Delta}\le C_{\delta,\alpha} A^2\ep^2e^{- (2 - \alpha)  r}.
\]
\end{lemma}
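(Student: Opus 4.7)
The two bounds are instances of Chebyshev's inequality, and the first one reduces easily to an unconditional estimate. Indeed, under (HB$_0$) Proposition~\ref{prop:T} gives $\P(T>t)\ge e^{-Cp_B Z_0 t/a^3+O(p_B Y_0)}\ge 1/2$ for $t\le \sqrt{\ep}a^3$ (using $p_B Z_0 \le CA/\widetilde{\ep}$, $t/a^3\le\sqrt{\ep}$ and $\widetilde\ep = O(\ep)$), so
\[
 \P(N_t(r)\ge N^\flat\,|\,T>t)\le 2\P(N_t(r)\ge N^\flat),
\]
and it suffices to bound the unconditional probability.

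To apply Chebyshev, I first separate $N_t(r)\le N_t^{(0)}(r)+(\text{tier $\ge 1$ particles and in-between particles})$. For tier~$0$, the many-to-one formula combined with the spectral expansion of $p_t^a$ from Section~\ref{sec:killed_bm} yields, for $t\ge Ka^2$ (so $E_{t/a^2}\le E_K\le\delta/10$),
\[
 \E[N_t^{(0)}(r)]=(1+O(E_K))\cdot\frac{2e^{\mu a}Z_0}{a^2}\cdot I(r),\qquad I(r):=\int_r^a e^{-\mu y}\sin(\pi y/a)\,\dd y.
\]
Under (HB$_0$), $Z_0=e^A(1+O(\ep^{3/2}))$, giving $\E[N_t^{(0)}(r)]\le (1+o(1))e^{-\delta}N^\flat$, so $N^\flat-\E[N_t^{(0)}(r)]\ge c_\delta N^\flat$ for $\delta$ small. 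The higher-tier and in-between contributions are dominated in expectation by $C\cdot Y_t^{(1+)}\cdot e^{\mu(a-r)}$ together with the $R^{(l)}$-type counting; Lemma~\ref{lem:hat_quantities} and Corollary~\ref{cor:Zl_exp_upperbound} show these are negligibly small on the timescale $t\le\sqrt{\ep}a^3$ and thus do not spoil the bound $\E[N_t(r)]\le(1-c_\delta)N^\flat$.

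Chebyshev then gives $\P(N_t(r)\ge N^\flat)\le\Var(N_t(r))/(c_\delta N^\flat)^2$, and the task is reduced to a variance bound of the form
\[
 \Var(N_t(r))\le C(N^\flat)^2\,\ep\,(t/a^3+\eta)\,e^{-(2-\alpha)r}.
\]
This is the main obstacle. I would obtain it from the second-moment estimate of Section~\ref{sec:interval_number} (Lemma~\ref{lem:N_2ndmoment}), whose ``many-to-two'' origin (cf.\ Lemma~\ref{lem:many_to_two_with_tiers}) produces precisely the right exponential decay rate in $r$ for particles confined to $(0,a)$ with near-critical drift $-\mu$; the spatial factor $e^{-(2-\alpha)r}$ emerges from integrating the killed-BM transition density against the weight $\Ind_{y\ge r}$ at both endpoints of the pair decomposition. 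The factors $\ep$ and $(t/a^3+\eta)$ arise from summing the many-to-two contributions over the two sources of branching events: intra-tier (yielding $t/a^3$) and the tier-crossing contributions (yielding $\eta$, through Lemma~\ref{lem:ZYW}(1)), both weighted by a factor $\E_\Q^a[Z^2]\le C\ep e^A$ from \eqref{eq:QaZsquared}. An additional Chebyshev argument controls the small probability that the weighted quantities exit a ``good'' range, matching the $A^2\ep^2$ from Lemma~\ref{lem:T1_T2} (this yields the factor $A^2$).

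For Part~2, I would run the same argument under $\Pfl_{\mathrm{nbab}}$ conditioned on $\F_\Delta$, starting from the collection of particles on the line $\mathscr L_\Delta$. On $G_\Delta$ one has $Z_\Delta\le 2e^A/\ep$ and $Y_\Delta\le 2e^{-A/2}$ by \eqref{eq:ZYDelta}, and the drift from $T^+$ on is $-\mu-a^{-2}f_\Delta'((\cdot-T^+)/a^2)$ with $f_\Delta$ satisfying the hypothesis of Lemma~\ref{lem:rit3} (via \eqref{eq:Delta_f}). The analogues of the first- and second-moment estimates then follow from Lemma~\ref{lem:hat_Z_f} (in particular \eqref{eq:28} and \eqref{eq:ichhassedas}), with the factor $\ep^2$ replacing $\ep(t/a^3+\eta)$ arising from the $1/\ep^2$ blow-up in the initial $Z_\Delta^2/Z_0^2$ combined with the $\ep^4$ savings from $\widetilde\ep\cdot\P^{(x,t)}_f(T>\vartheta)^{-1}$-type control (Lemma~\ref{lem:Gnbab}) and the no-breakout event forcing $R$-lines to remain small.
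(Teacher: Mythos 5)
Your overall strategy (first moment $\le(1-c_\delta)N^\flat$ plus a Chebyshev bound with the variance controlled by a many-to-two computation) is the paper's strategy, and your account of where the factors $\ep$, $t/a^3+\eta$ and $e^{-(2-\alpha)r}$ come from in the variance is essentially right. But there are two genuine gaps.

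First, the reduction of the conditional probability to the unconditional one fails. You claim $\P(T>t)\ge 1/2$ for $t\le\sqrt\ep\,a^3$, but under (HB$_0$) one has $p_BZ_0\asymp 1/\ep$ (since $p_B\asymp(\ep e^A)^{-1}$ and $Z_0\asymp e^A$), so Proposition~\ref{prop:T} gives $\P(T>t)\approx\exp(-\pi^2 t/(\ep a^3))$, which for $t$ of order $\sqrt\ep\,a^3$ is $\exp(-C\ep^{-1/2})$ --- superexponentially small, not bounded below. The typical breakout time is $\widetilde\ep\,a^3\asymp\ep a^3\ll\sqrt\ep\,a^3$, so conditioning on $\{T>t\}$ is a conditioning on a rare event over most of the stated time range and cannot be discarded. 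The correct route is to compute first and second moments directly under the conditioned law $\Phat$, which is what Corollary~\ref{cor:hat_many_to_one} and Lemma~\ref{lem:hat_Z_f} are for: they show the conditioned moments differ from the unconditioned ones only by factors $1+O(p_B)$. (Incidentally, once this is done the factor $A^2$ in the first bound is pure slack; your attempt to produce it from Lemma~\ref{lem:T1_T2} is not needed.)

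Second, for the bound after the breakout your sketch omits the key point. Conditioned on $\F_\Delta$, the quantity $Z$ has jumped by $Z^{(\mathscr U,T)}$, which on $G_\Delta$ can be as large as $e^A/\ep\gg e^A$, so a naive first-moment bound on the descendants of $\mathscr L_\Delta$ would exceed $N^\flat$ by a factor $1/\ep$. What saves the day is the exact cancellation between the growth of the fugitive's progeny, governed by $\thbar((t-T)/a^2)Z^{(\mathscr U,T)}$ (Corollary~\ref{cor:N_expec_thbar}), and the barrier shift $M_t=e^{-X^{[1]}_t}=(1+\thbar((t-T^+)/a^2)(e^\Delta-1))^{-1}$, which is built into the definition of $\Delta$ and $f_\Delta$. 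One must split the particles into bulk (hat and check), fugitive descendants, and bar-particles, treat the first two via this cancellation, and handle the bar-particles separately (Lemma~\ref{lem:N_bar_with_tiers}) using the bound $\mathscr E_{\mathscr U}\le e^{A/3}$ on $G_{\mathscr U}$. Invoking Lemma~\ref{lem:hat_Z_f} alone does not produce a first moment below $(1-\delta/4)N^\flat$ here, and without that the Chebyshev step has nothing to work with.
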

The proof of this lemma goes by a first-second moment argument, making use of the results from Section~\ref{sec:interval_number} for BBM in an interval. It is more convenient to calculate the moments of $N^{(0+)}_t(r)$ instead of $N_t(r)$, which is why we will work with this quantity for the next lemmas. 

The following lemma about BBM conditioned not to break out will be used many times in the proof. Recall the definition of a barrier function $f$ and the norm $\|f\|$ from Section~\ref{sec:BBBM_definition_definition}. Also, see Remark~\ref{rem:hat_f} about the definition of the tier structure in the case of varying drift.

\begin{lemma}
  \label{lem:N_with_tiers}
Let $f$ be a barrier function, $0\le t\le t_0\le \tconst{eq:tcrit}$ and suppose that $\|f\|$ is bounded by a function depending on $A$ only and that either $f\equiv 0$ or $t_0 \le 2e^Aa^2$. For $x\in(0,a)$, define $\Phat_f^x = \P_f^x(\cdot\,|\,T>t_0)$.   Then, for all $r\le (9/10)a$ and for large $A$ and $a$,
\begin{align}
  \label{eq:66}
&\Ehat_f^x[N^{(1+)}_{t}(r)] \le C Ae^{-A}N(1+r^2)e^{-\mu r}\Big(\frac {t} {a^3}w_Z(x)+w_Y(x)\Big),\\
\label{eq:66a}
&\Ehat_f^x[N^{(0+)}_{t}(r)] \le C Ae^{-A}N(1+ r^2)e^{-\mu r}e^{-3(a-x)/4},\\
  \label{eq:67}
&\Ehat_f^x[(N^{(0+)}_{t}(r))^2] \le C (1+r^4)e^{- 2  r} \ep e^{-A} N^2 \Big(\frac {t} {a^3}w_Z(x)+w_Y(x)\Big).
\end{align}
\end{lemma}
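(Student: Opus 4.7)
The plan is to reduce the three bounds to (i) a sharp many-to-one estimate for the tier-0 particles of BBM in $[0,a]$, supplied by Section~\ref{sec:interval_number}, and (ii) the tier-by-tier decomposition developed in Sections~\ref{sec:before_breakout_particles} and~\ref{sec:fugitive}; in both steps, Lemma~\ref{lem:hat_Z_f} is what allows us to pass from the unconditioned law $\P_f$ to the conditioned law $\Phat_f$ without loss.

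\emph{Tier-0 bound.} First I would establish
\[
\Ehat^x_f[N^{(0)}_t(r)]\le C Ae^{-A}N(1+r^2)e^{-\mu r}\Big(\tfrac{t}{a^3}w_Z(x)+w_Y(x)\Big).
\]
For $t\ge Ka^2$ the expected density of tier-0 particles at $y\in[0,a]$ starting from $x$ is dominated by the principal Fourier mode proportional to $w_Z(x)w_Z(y)/a$, up to the error $E_t\le\delta/10$ fixed by our choice of $K$ (Section~\ref{sec:killed_bm}); integrating against $\Ind_{[r,a]}(y)$ after the standard Girsanov tilt by $e^{-\mu(\cdot)}$ produces the factor $(1+r^2)e^{-\mu r}$ for $r\le 9a/10$ (the $(1+r^2)$ comes from integration by parts near $y=a$, where $\sin(\pi y/a)$ vanishes). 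The barrier function $f$ with $\|f\|\le g(A)$ is absorbed via Lemma~\ref{lem:R_f}, whose hypotheses hold thanks to the assumption $f\equiv 0$ or $t_0\le 2e^A a^2$, and the conditioning on $\{T>t_0\}$ costs at most a factor $1+O(p_B)$ by \eqref{eq:ichhassedas}--\eqref{eq:scheisse}. The identity $e^{\mu a}/a^3\asymp Ne^{-A}$ then converts the absolute bound into the form above.

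\emph{Higher tiers and \eqref{eq:66}, \eqref{eq:66a}.} For tier $l+1$ I would condition on the stopping line $\mathscr S^{(l+1)}_t$, whose particles sit within $O(1/a)$ of $a-y$ by part~1 of Lemma~\ref{lem:ZYW}; at those starting points $w_Z$ and $w_Y$ are both $O(1)$, so the tier-0 bound gives per-particle contribution $O\bigl(Ae^{-A}N(1+r^2)e^{-\mu r}\bigr)$, uniformly in the remaining time. Summing over $\mathscr S^{(l+1)}_t$ via $\#\mathscr S\asymp Z/\pi$ (again Lemma~\ref{lem:ZYW}) and over $l\ge 0$ via Corollary~\ref{cor:Zl_exp_upperbound} in its $f$-variant (obtained from Lemma~\ref{lem:hat_Z_f} as in \eqref{eq:28}) yields \eqref{eq:66}. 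Adding this to the tier-0 bound and using $w_Z(x)+w_Y(x)\le Ce^{-3(a-x)/4}$ (which holds since $\mu\ge 3/4$ for $a$ large) gives \eqref{eq:66a}.

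\emph{Second moment.} For \eqref{eq:67} I would apply Lemma~\ref{lem:many_to_two_with_tiers} with $w(z)=\Ind_{[r,a]}(z)$. The diagonal sum $\sum w^2=N^{(0+)}_t(r)$ is dominated by \eqref{eq:66a}. The branching integral
\[
\int_0^t\!\!\int_0^a \widehat{\p}_s(x,z)\,\Ehat^{(z,s)}_f[N^{(0+)}_t(r)]^2\,\dd z\,\dd s
\]
is bounded by plugging in \eqref{eq:66a}: the squared prefactor gives $(1+r^2)^2e^{-2\mu r}\le C(1+r^4)e^{-2r}$ (using $\mu\ge 1-O(a^{-2})$ and $r\le 9a/10$), and the remaining density integral is handled exactly as in the proof of Lemma~\ref{lem:Zt_variance}. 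The dominant contribution is the cross-cousins term
\[
\Ehat^x_f\Big[\sum_{(u,s)\in\mathscr R_t}\sum_{\substack{(v_1,s_1),(v_2,s_2)\in\mathscr S^{(u,s)}\\ v_1\ne v_2}}\prod_{i=1,2}\Ehat^{(X_{v_i}(s_i),s_i)}_f[N^{(0+)}_t(r)]\Big].
\]
Bounding each inner conditional expectation uniformly and factoring reduces it to controlling $\Ehat^x_f[\sum_{(u,s)\in\mathscr R_t}(\#\mathscr S^{(u,s)})^2]$, which by Lemma~\ref{lem:ZYW} and the conditioned analogue of \eqref{eq:QaZsquared} is bounded by $C\ep e^A\,\Ehat^x_f[R_t]$; Lemmas~\ref{lem:Rt}, \ref{lem:R_f} and \ref{lem:hat_Z_f} give $\Ehat^x_f[R_t]\le C\bigl(\tfrac{t}{a^3}w_Z(x)+w_Y(x)\bigr)$, and combining the pieces yields \eqref{eq:67}.

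\emph{Main obstacle.} The hardest point is achieving the \emph{sharpness} of the prefactors, stressed in the remark following Proposition~\ref{prop:Bflat_Bsharp_med}: the exponent must be $\mu$ (not $1$) in $(1+r^2)e^{-\mu r}$ if the bound is to survive for $r$ of order $a$, and the second-moment bound must avoid picking up spurious factors of $A$ beyond those visible in \eqref{eq:67}. This is what rules out naively squaring the many-to-one bound and forces us to exploit the cross-cousins structure of Lemma~\ref{lem:many_to_two_with_tiers} together with the sharp second-moment estimate \eqref{eq:QaZsquared} for $Z$ under $\Q^a$.
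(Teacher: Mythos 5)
Your overall strategy is the right one — bound tier $0$ with the Fourier-mode estimates from Section~\ref{sec:interval_number}, push to higher tiers via \eqref{eq:28} (the $f$-variant of Corollary~\ref{cor:Zl_exp_upperbound}), and use Lemma~\ref{lem:many_to_two_with_tiers} together with \eqref{eq:QaZsquared} for the second moment — and this matches the paper's proof. However, the tier-$0$ estimate you take as the foundation of the argument is not correct, and this gap propagates.

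You claim
\[
\Ehat^x_f[N^{(0)}_t(r)]\le C Ae^{-A}N(1+r^2)e^{-\mu r}\Big(\tfrac{t}{a^3}w_Z(x)+w_Y(x)\Big),
\]
but this bound is false. Two objections. First, the $t/a^3$ prefactor in front of $w_Z(x)$ does not arise for tier $0$: a single surviving tier-$0$ particle at $x$ contributes roughly $Ne^{-A}(1+r)e^{-\mu r}w_Z(x)$ at any $t\ge Ka^2$ (this is exactly what Lemma~\ref{lem:N_expec_large_t} says, up to the $E_{t/a^2}$ error), with no $t/a^3$; the $t/a^3$ factor appears only in the \emph{higher-tier} contribution because those particles first have to reach $a$. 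Your own computation in the paragraph actually produces $Ne^{-A}(1+r)e^{-\mu r}w_Z(x)$, which contradicts the inequality you then state. Second, and more damagingly, your bound fails trivially in the small-$t$, bulk-$x$ regime that Corollary~\ref{cor:N_expec_upper_bound} does \emph{not} cover: take $t=0$ and $x=r$, so $N^{(0)}_0(r)=1$, while the right-hand side is $CAe^{-A}N(1+r^2)e^{-\mu r}\,w_Y(r) = CAe^{-A}N(1+r^2)e^{-\mu a}\asymp CA(1+r^2)/a^3$, which vanishes as $a\to\infty$ for $r$ of order $1$. The sharp form of Corollary~\ref{cor:N_expec_upper_bound} requires $x\ge (r+a/20)\Ind_{(t\le a^2)}$, and outside that regime one has to fall back on the crude bound $\E^x[N_t(r)]\le Ce^{\mu(x-r)} = Ce^{\mu(a-r)}w_Y(x)$, which introduces an extra factor of order $a^3$ relative to your claim. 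The paper handles this with a two-case estimate (its \eqref{eq:metro}): for $x\ge 19a/20$ the sharp form $Ce^{-A}N(1+r^2)e^{-\mu r}(w_Z(x)+w_Y(x))$ holds, but for $x<19a/20$ only $Ce^{-A}Ne^{-\mu r}\,a^3\,w_Y(x)$ is available. Without this case split, neither \eqref{eq:66a} nor the diagonal/branching terms in the second-moment estimate \eqref{eq:67} can be established. Note that the case split is harmless for deriving \eqref{eq:66}, since all the starting points on $\mathscr S^{(1+)}_t$ lie above $19a/20$ for large $a$, which is exactly why the bounded-$w_Z,w_Y$ observation you make works for those particles; but it is essential for \eqref{eq:66a} and \eqref{eq:67}.

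The rest of your argument is sound as a blueprint: summing \eqref{eq:28} over tiers gives \eqref{eq:66}; combining with the (corrected) tier-$0$ bound and $w_Z(x)+w_Y(x)\le Ce^{-3(a-x)/4}$ gives \eqref{eq:66a}; and the second-moment estimate via Lemma~\ref{lem:many_to_two_with_tiers}, the bound $(1+r^2)^2e^{-2\mu r}\le C(1+r^4)e^{-2r}$ (valid since $(1-\mu)r = O(1/a)$ for $r\le 9a/10$), and \eqref{eq:QaZsquared} together with $\Ehat^x_f[R_t]\lesssim \tfrac{t}{a^3}w_Z(x)+w_Y(x)$ indeed yields \eqref{eq:67}. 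You should also record the appeal to \eqref{eq:ptilde_estimate_f} when passing the density $\widehat{\p}$ to $\p$ inside the branching integral, which the proof needs and which you gesture at only implicitly.
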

\begin{proof}
First note that by the hypotheses together with Proposition~\ref{prop:T} and Lemma~\ref{lem:rit3}, we can apply the results of Lemma~\ref{lem:hat_Z_f}. By \eqref{eq:ichhassedas}, Lemma~\ref{lem:mu_t_density},  Corollary~\ref{cor:N_expec_upper_bound}, the definition of $N$ and the hypothesis on $r$, we have for every $x\in(0,a)$ and $s\in[0,t]$,
\begin{equation}
 \label{eq:metro}
 \Ehat_f^{(x,s)}[N^{(0)}_t(r)] \le \begin{cases}
                                    C e^{-A} N (1+r^2) e^{-\mu r} (w_Z(x) + w_Y(x)) & \tif x\ge 19a/20\\
                                    C e^{-A} N e^{-\mu r} a^3 w_Y(x)                & \tif x< 19a/20.
                                   \end{cases}
\end{equation}
Equation \eqref{eq:66} then follows from \eqref{eq:metro} and \eqref{eq:28}, noting that $a-y-f(\zeta/a^2) \ge 19a/20$ for large $a$, by the hypothesis on $f$. Equation~\eqref{eq:66a} easily follows from \eqref{eq:66} and \eqref{eq:metro}.

For the proof of \eqref{eq:67}, we omit $f$ from the notation, for simplicity. By Lemma~\ref{lem:many_to_two_with_tiers}, we have for all $x\in(0,a)$,
  \begin{multline}
    \label{eq:27}
    \Ehat^x[(N^{(0+)}_{t}(r))^2] \le \Ehat^x[N^{(0+)}_{t}(r)] + C\int_0^t\int_0^a
    \widehat{\p}^{(0+)}_s(x,z) \Ehat^{(z,s)}[N^{(0+)}_{t}(r)]^2\,\dd z\,\dd s\\
+ \Ehat^x\Big[\sum_{(u,s)\in\mathscr R_{t}} \Big(\sum_{(v,t')\in\mathscr
  S^{(u,s)}} \Ehat^{(X_v(t'),t')}[N^{(0+)}_{t}(r)]\Big)^2\Big].
  \end{multline}
We first bound the second summand in \eqref{eq:27}. By \eqref{eq:ptilde_estimate_f},
\begin{multline}
  \label{eq:30}
\int_0^t\int_0^a \widehat{\p}^{(0+)}_s(x,z)\Ehat^{(z,s)}[N^{(0+)}_{t}(r)]^2\,\dd z\,\dd s\\
\le C \Ehat^x\Big[\sum_{(v,t')\in\mathscr S_t} \int_{t'\wedge t}^t\int_0^a \p^{(0)}_{s-t'}(X_v(t'),z)\Ehat^{(z,s)}[N^{(0+)}_{t}(r)]^2\,\dd z\,\dd s\Big]
\end{multline}
As in the proof of Lemma~\ref{lem:N_2ndmoment}, but using \eqref{eq:66a} instead of Corollary~\ref{cor:N_expec_upper_bound}, we have for every $t'\le t$,
\begin{equation}
  \label{eq:29}
  \int_{t'}^t\int_0^a \p^{(0)}_{s-t'}(x,z)  \Ehat^{(z,s)}[N^{(0+)}_{t}(r)]^2\,\dd z\,\dd s \le CA^2e^{-2A}N^2(1+r^4)e^{-2\mu r}\Big(\frac t {a^3} w_Z(x)+w_Y(x)\Big).
\end{equation}
Equations \eqref{eq:30} and \eqref{eq:29} together with \eqref{eq:28} and the hypothesis on $t$ now give
\begin{equation}
\label{eq:29b}
\int_0^t\int_0^a \widehat{\p}^{(0+)}_s(x,z)\Ehat^{(z,s)}[N^{(0+)}_{t}(r)]^2\,\dd z\,\dd s \le 
CA^2e^{-2A}N^2(1+r^4)e^{-2\mu r}\Big(\frac t {a^3} w_Z(x)+A w_Y(x)\Big).
\end{equation}
As for the last summand in \eqref{eq:27}, let $(u,s)\in\mathscr R_{t}$. Note again that for every $(v,t')\in\mathscr S^{(u,s)}$, $X_v(t') \ge 19a/20$ for large $a$. By \eqref{eq:66}, \eqref{eq:metro} and the hypothesis on $t$, we then have
\[
\sum_{(v,t')\in\mathscr S^{(u,s)}} \Ehat^{(X_v(t'),t')}[N^{(0+)}_{t}(r)] 
\le C e^{-A} N (1+r^2) e^{-\mu r} (Z^{(u,s)} + AY^{(u,s)}).
\]
Furthermore, we have $AY^{(u,s)} \le A\eta Z^{(u,s)} \le Z^{(u,s)}$ for large $A$ and $a$ by the first part of Lemma~\ref{lem:ZYW} and \eqref{eq:eta}. This gives,
\begin{align}
\nonumber \Ehat^x\Big[&\sum_{(u,s)\in\mathscr R_{t}} \Big(\sum_{(v,t')\in\mathscr
  S^{(u,s)}} \Ehat^{(X_v(t'),t')}[N^{(0+)}_{t}(r)]\Big)^2\Big]\\
\label{eq:31} &\le C (1+r^4)e^{-2\mu r} e^{-2A}N^2 \Ehat^x\Big[\sum_{(u,s)\in\mathscr R_{t}} (Z^{(u,s)})^2\Big].
\end{align}
By now familiar arguments (namely, first \eqref{eq:QaZsquared}, then \eqref{eq:28}, \eqref{eq:ichhassedas}, Lemmas~\ref{lem:R_f} and~\ref{lem:Rt} and the hypothesis on $t$) give 
\begin{equation}
 \label{eq:31a}
 \Ehat^x\Big[\sum_{(u,s)\in\mathscr R_{t}} (Z^{(u,s)})^2\Big] \le C \ep e^A \Ehat^x[R_t] \le C\ep e^A\Big(\frac {t} {a^3} w_Z(x) + w_Y(x)\Big).
\end{equation}
Equations \eqref{eq:29b}, \eqref{eq:31} and \eqref{eq:31a}, together with \eqref{eq:ep_lower} and \eqref{eq:mu}, now bound the second and third summands in \eqref{eq:27}. The first summand is easily bounded by \eqref{eq:66} and \eqref{eq:metro}. This yields \eqref{eq:67} and finishes the proof.
\end{proof}

\begin{lemma}
  \label{lem:N_bar_with_tiers}
Suppose (HB$_0$). For large $A$ and $a$, we have for every $0\le r\le 9a/10$ and $t\in[T,\Theta_1+e^Aa^2]$, 
  \begin{equation}
    \label{eq:65}
  \Pfl_{\mathrm{nbab}}(\overline N^{(0+)}_t(r)> \delta N/4\,|\,\F_\Delta)\Ind_{G_\Delta}\le C_\delta \ep e^{-2A/3}(1+r^4)e^{-2 r}.
  \end{equation}
\end{lemma}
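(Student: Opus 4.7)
The plan is to use a Chebyshev-type argument conditional on $\F_\Delta$, exploiting two key structural facts: on $G_\Delta$ we have the bound $\mathscr E_{\mathscr U}\le e^{A/3}$, and conditional on $\F_\Delta$ together with the event $G_{\mathrm{nbab}}$, the subtrees rooted at the initial bar-particles in $\widebar{\mathscr L}_{\mathscr U}$ are independent BBMs conditioned not to break out before $\Theta_1+e^Aa^2$ (by Corollary~\ref{cor:L_U} and the strong branching property). Writing $\overline N^{(0+)}_t(r) = \sum_{(u_0,s_0)\in\widebar{\mathscr L}_{\mathscr U}} N^{(u_0,s_0)}_t(r)$ where $N^{(u_0,s_0)}_t(r)$ counts the $(0+)$-tier descendants of $(u_0,s_0)$ that lie to the right of $r$ at time $t$, the independence reduces moment computations to sums over single initial bar-particles.

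For each $(u_0,s_0)$ with $x_0 = X_{u_0}(s_0)$, I would apply Lemma~\ref{lem:N_with_tiers} to the subtree; the relevant barrier function, namely $f_\Delta$ shifted so as to start at $T^+-s_0$, is still a barrier function with norm bounded by $g(A)$ on $G_\Delta$ by \eqref{eq:Delta_f}, and the time window $t-s_0 \le \Theta_1+e^Aa^2 \le 3\sqrt\ep a^3 \le \tconst{eq:tcrit}$ on $G_\Delta$. This yields, via \eqref{eq:66a} and \eqref{eq:67} respectively,
\begin{align*}
&\Efl_{\mathrm{nbab}}[N^{(u_0,s_0)}_t(r)\mid \F_\Delta]\Ind_{G_\Delta} \le CAe^{-A}N(1+r^2)e^{-\mu r}e^{-3(a-x_0)/4},\\
&\Efl_{\mathrm{nbab}}[(N^{(u_0,s_0)}_t(r))^2\mid \F_\Delta]\Ind_{G_\Delta} \le C(1+r^4)e^{-2r}\ep e^{-A}N^2 e^{-(a-x_0)/2},
\end{align*}
where in the second bound I used $w_Z(x), w_Y(x) \le Ce^{-(a-x)/2}$ and $(t-s_0)/a^3 = O(1)$ on $G_\Delta$. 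Summing over bar-particles and using $e^{-3(a-x_0)/4}\le e^{-(a-x_0)/2}$ together with $\sum_{(u_0,s_0)} e^{-(a-x_0)/2} = \mathscr E_{\mathscr U} \le e^{A/3}$ on $G_\Delta$, and using the independence for the variance, I obtain
\[
\Efl_{\mathrm{nbab}}[\overline N^{(0+)}_t(r)\mid \F_\Delta]\Ind_{G_\Delta} \le CAe^{-2A/3}N(1+r^2)e^{-\mu r},\qquad \Var_{\mathrm{nbab}}(\overline N^{(0+)}_t(r)\mid\F_\Delta)\Ind_{G_\Delta} \le C\ep e^{-2A/3}N^2(1+r^4)e^{-2r}.
\]

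Finally, since $(1+r^2)e^{-\mu r}$ is bounded uniformly in $r\ge 0$, the first-moment bound is $\le \delta N/8$ for $A$ large enough (independent of $a$ and $r$). Chebyshev's inequality then gives
\[
\Pfl_{\mathrm{nbab}}(\overline N^{(0+)}_t(r) > \delta N/4 \mid \F_\Delta)\Ind_{G_\Delta} \le \frac{64}{\delta^2 N^2}\Var_{\mathrm{nbab}}(\overline N^{(0+)}_t(r)\mid\F_\Delta)\Ind_{G_\Delta} \le C_\delta \ep e^{-2A/3}(1+r^4)e^{-2r},
\]
which is the desired bound.

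The main technical obstacle I anticipate is verifying cleanly the applicability of Lemma~\ref{lem:N_with_tiers} with the shifted barrier function: the drift experienced by a bar-subtree is $-\mu$ on $[s_0, T^+]$ and then $-\mu - f_\Delta'/a^2$ on $[T^+,t]$, so the effective barrier function is a time-shift of $f_\Delta$ rather than $f_\Delta$ itself. One needs to check that the shift does not alter the norm and that the relevant hypotheses on $t_0$ continue to hold, possibly splitting into the two time regimes or invoking a slightly adapted version of the lemma along the lines of the argument already used in the proof of Lemma~\ref{lem:G1}.
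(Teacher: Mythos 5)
Your proposal is correct and follows essentially the same route as the paper: decompose $\overline N^{(0+)}_t(r)$ over the independent subtrees rooted at $\widebar{\mathscr L}_{\mathscr U}$, apply Lemma~\ref{lem:N_with_tiers} (Equations \eqref{eq:66a} and \eqref{eq:67}) with the shifted barrier function $f_\Delta^+=f_\Delta(\cdot-T^+/a^2)$, sum via $\mathscr E_{\mathscr U}\le e^{A/3}$ on $G_\Delta$, and conclude by the conditional Chebyshev inequality since the first moment is $o(N)$ as $A\to\infty$. The technical point you flag about the time-shifted barrier function is exactly how the paper handles it.
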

\begin{proof}
Recall the line $\widebar{\mathscr L}_{\mathscr U}$ from Section~\ref{sec:fugitive}. By independence of its descendants, we have with $f_{\Delta}^+ = f_\Delta(\cdot - T^+/a^2)$,
\begin{equation*}
\Varfl_{\mathrm{nbab}}(\overline N^{(0+)}_t(r)\,|\,\F_\Delta)\Ind_{G_\Delta}\le \sum_{(u,s)\in \widebar{\mathscr L}_{\mathscr U}} \E^{(X_u(s),s)}_{f_\Delta^+}[N^{(0+)}_t(r)^2\,|\,T>\Theta_1+e^Aa^2],
\end{equation*}
and Lemma \ref{lem:N_with_tiers} now implies
\begin{equation}
  \label{eq:68}
 \Varfl_{\mathrm{nbab}}(\overline N^{(0+)}_t(r)\,|\,\F_\Delta)\Ind_{G_\Delta}\le C (1+r^4)e^{- 2  r} \ep e^{-A} N^2 \mathscr E_{\mathscr U}\Ind_{G_\Delta} \le C (1+r^4)e^{- 2  r}\ep e^{-2A/3}N^2,
\end{equation}
where the last inequality follows from the fact that $\mathscr E_{\mathscr U}\le e^{A/3}$ on $G_{\mathscr U}\subset G_\Delta$. 
Furthermore, by Lemma~\ref{lem:N_with_tiers},
\begin{equation}
  \label{eq:70}
  \Efl_{\mathrm{nbab}}[\overline N^{(0+)}_t(r)\,|\,\F_\Delta]\Ind_{G_\Delta} \le  C Ae^{-A}N(1+r^2)e^{-\mu r} \mathscr E_{\mathscr U}\Ind_{G_\Delta} \le CAe^{-2A/3}N.
\end{equation}
Equation \eqref{eq:65} now follows from \eqref{eq:68} and \eqref{eq:70} together with the conditional Chebychev inequality.
\end{proof}

\begin{proof}[Proof of Lemma \ref{lem:Bflat_N}]
For simplicity, we will only prove the lemma with $N_t(r)$ replaced by $N^{(0+)}_t(r)$. This is certainly enough if $q(0) = 0$, since then $N_t(r) \le N^{(0+)}_t(r)$ almost surely. If $q(0) \ne 0$, one can bound the second moment of the number of in-between particles at time $t$ by a constant $C_{A,\zeta}$ depending on $A$ and $\zeta$ only (all results needed for this have been introduced, most notably the second statement of Lemma~\ref{lem:Zt_variance}). This yields the bound $\P(N_t(r)-N^{(0+)}_t(r) > (\delta/10) N) \le 100 \delta^{-2} C_{A,\zeta}/N^2$, which can be used with the bounds in the proof to yield the lemma. We omit the (technical) details.

Assume  $Ka^2 \le t \le  \tconst{eq:tcrit}\wedge \sqrt\ep a^3$. By Lemma~\ref{lem:N_expec_large_t} and Corollary~\ref{cor:hat_many_to_one}, we have for large $A$ and $a$,
\begin{align}
\E[N^{(0)}_{t}(r)\,|\,T>t] 
\le 2\pi a^{-3} e^{\mu a} Z_0 \big(1+E_K+\big(\tfrac{1+r}{a}\big)^2\big) (1+\mu r)e^{-\mu r}
\label{eq:22}
\le (1-3\delta/4)N,
\end{align}
where the last inequality follows from hypothesis (HB$_0$) and the definitions of $K$ and $N$ from  Sections~\ref{sec:Bflat_moredefs} and \ref{sec:Bflat_definition}, respectively.
Moreover, we have by Lemma~\ref{lem:N_with_tiers},
 \begin{equation}
   \label{eq:23}
\E[N^{(1+)}_{t}(r)\,|\,T>t] \le C(1+r^2) e^{-\mu r}N A e^{-A}(\frac t {a^3}Z_0 + Y_0) \le C\sqrt{\ep}AN,
 \end{equation}
by hypothesis (HB$_0$), the hypothesis on $t$ and \eqref{eq:eta}. Equations \eqref{eq:22}, \eqref{eq:23} and \eqref{eq:ep_upper} now give for large $A$ and $a$, 
\begin{equation}
  \label{eq:33}
   \E[ N^{(0+)}_t(r)\,|\,T>t] \le (1-\delta/2) N.
\end{equation}
Furthermore, by Lemma~\ref{lem:N_with_tiers} and the hypotheses on $Z_0$ and $Y_0$, we have
\begin{equation}
  \label{eq:75}
\Var( N^{(0+)}_t(r)\,|\,T>t) \le C \ep N^2(1+r^4)e^{-2r} (t/a^3 +\eta).
\end{equation}
Chebychev's inequality, \eqref{eq:33} and \eqref{eq:75} yield the first equation of the lemma.

Conditioned on $\F_\Delta$, let $t\in[T,\Theta_1+e^Aa^2]$. Define $N^{(0),\mathrm{bulk}}_t(r)$ to be the number of hat- and check-particles to the right of $r$ at time $t$ that have not hit $a$ between $T^-$ and $t$ and likewise $N^{(0),\mathrm{fug}}_t(r)$ the number of fug-particles with the same properties. Set $M_t = e^{- X^{[1]}_{t}} = (1+\Delta_t)^{-1}$, where $\Delta_t =  \thbar((t-T^+)/a^2)(e^\Delta-1)$. Note that we have on $G_\Delta$:  $|e^\Delta-1 - e^{-A}Z^{(\mathscr U,T)}| \le 2\ep^{1/4}$.

Define $\alpha_r = (1+\mu r)e^{-\mu r}$. By Lemmas~\ref{lem:mu_t_density} and~\ref{lem:N_expec_large_t}, together with \eqref{eq:pB} and \eqref{eq:ichhassedas} from Lemma~\ref{lem:hat_Z_f} (which we can apply by Lemma~\ref{lem:rit3}), we have for large $A$ and $a$,
\begin{equation}
  \label{eq:48}
  \begin{split}
  \Efl[N^{(0),\mathrm{bulk}}_t(r)\,|\,\F_\Delta]\Ind_{G_\Delta} \le \alpha_re^{o(1)}NM_t(1+E_K)e^{-A}(\widehat Z_{T^-}+\widecheck Z_{\Delta})\Ind_{G_\Delta}
\le \alpha_rNM_t(1-3\delta/4),
  \end{split}
\end{equation}
by the definition of $\widehat G$ and $\widecheck G$.
Furthermore, we have by Corollary~\ref{cor:N_expec_thbar} and Lemma~\ref{lem:mu_t_density}, for large $a$,
\begin{align}
\nonumber
  \Efl[N^{(0),\mathrm{fug}}_t(r)\,|\,\F_\Delta] \Ind_{G_\Delta} &\le \alpha_re^{o_a(1)} N M_t \left(\thbar\left((t-T)/a^2\right) + O\left((y+\Delta+r)^2\eta/a\right)\right) e^{-A}Z^{(\mathscr U,T)}\Ind_{G_\Delta}\\
    \label{eq:51}
&\le \alpha_r N M_t \left(\Delta_t+ O(\ep^{1/4}+a^{-1}r^2)\right)\Ind_{G_\Delta}.
\end{align}
Equations \eqref{eq:48} and \eqref{eq:51} now give,
\begin{equation}
  \label{eq:55}
\Efl[N^{(0),\mathrm{bulk}}_t(r)+N^{(0),\mathrm{fug}}_t(r)\,|\,\F_\Delta]\Ind_{G_\Delta} \le \alpha_r  N \left(1-\delta/2 + O(a^{-1}r^2)\right).
\end{equation}
Similarly, one has by Lemma~\ref{lem:N_2ndmoment} and \eqref{eq:ichhassedas},
\begin{equation}
  \label{eq:40}
\Varfl(N^{(0),\mathrm{bulk}}_t(r)+N^{(0),\mathrm{fug}}_t(r)\,|\,\F_\Delta) \Ind_{G_\Delta}\le Ce^{-A} (1+r^4) e^{-2\mu r}N^2.
\end{equation}
Equations \eqref{eq:55} and \eqref{eq:40} and the conditional Chebychev inequality now yield for large $A$ and $a$,
\[
\Pfl(N^{(0),\mathrm{bulk}}_t(r)+N^{(0),\mathrm{fug}}_t(r) \ge(1-\delta/4)N\,|\,\F_\Delta)\Ind_{G_\Delta} \le C_\delta (1+r^4)e^{-A}e^{-2 \mu r}.
\]
This, together with Lemma \ref{lem:N_bar_with_tiers} and Lemma~\ref{lem:Gnbab} and the fact that the hat-, fug- and check-particles do not hit $a$ on $G_{\mathrm{nbab}}$ finishes the proof of the lemma.
\end{proof}

We finish this section with a result which extends Lemma~\ref{lem:Bflat_N} to $t\le Ka^2$ under (H$_\perp$).

\begin{lemma}
  \label{lem:Bflat_start}
Suppose (H$_\perp$) and let $\alpha > 0$. Then for large $A$ and $a$, we have for every $t\le Ka^2$ and $0\le r\le 9a/10$,
\[
\Pfl(N_t(r) \ge N,\,R^{(0)}_{Ka^2} = 0) \le C_{\alpha,\delta} e^{-(2 -\alpha) r}/a,\quad\tand\quad \Pfl(R^{(0)}_{Ka^2} = 0) \ge 1-C_\delta e^A/a.
\]
\end{lemma}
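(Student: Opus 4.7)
My plan is to split the proof into the two claimed inequalities, handling them by first- and second-moment arguments respectively, and crucially exploiting the quasi-stationary form of the initial density in $(H_\perp)$.

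For the second inequality, I would apply Lemma~\ref{lem:Rt} summed over the initial particles, obtaining $\EB[R^{(0)}_{Ka^2}]\le \pi(Ka^2/a^3)\EB[Z_0']+C\EB[Y_0']$. Under $(H_\perp)$, the moments in \eqref{eq:49} give $\EB[Z_0']=O(e^A)$ and $\EB[Y_0']=O(e^A/a)$, yielding $\EB[R^{(0)}_{Ka^2}]\le C_\delta e^A/a$. Since $R^{(0)}_{Ka^2}$ is integer-valued, Markov's inequality produces $\Pfl(R^{(0)}_{Ka^2}\ge 1)\le C_\delta e^A/a$.

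The first inequality requires more care. The key observation is that the density $\varphi_0(x)\propto\sin(\pi x/a)e^{-\mu x}\Ind_{(0,a)}$ in $(H_\perp)$ is, up to normalization, the principal eigenfunction of the generator of BBM with drift $-\mu$ and branching rate $\beta$, killed on leaving $(0,a)$; the identity $\mu^2+\pi^2/a^2=1$ ensures that the corresponding principal eigenvalue equals zero, so the expected particle density in this killed BBM is preserved in time. Denoting by $\widetilde N_t(r)$ the number of particles to the right of $r$ at time $t$ in the BBM killed outside $[0,a]$, and noting that on $\{R^{(0)}_{Ka^2}=0\}$ no ancestor has hit $a$ by time $t\le Ka^2$, I get the pointwise domination $N_t(r)\Ind_{R^{(0)}_{Ka^2}=0}\le \widetilde N_t(r)$. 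Combining the spectral expansion of the killed heat kernel from Section~\ref{sec:killed_bm} with the $L^2([0,a])$-orthogonality of the sine modes, only the first eigenmode survives after integrating against $\varphi_0$, yielding for all $t\ge 0$ the identity
\[
\EB[\widetilde N_t(r)] \;=\; n\int_r^a \varphi_0(y)\,\dd y \;\le\; n\,(1+\mu r)e^{-\mu r}(1+o_a(1)),
\]
where $n=\lfloor 2\pi e^A a^{-3}e^{\mu a}\rfloor = e^{-\delta}N(1+o(1))$. Since $(1+\mu r)e^{-\mu r}\le 1$ and $e^{-\delta}\le 1-\delta/2$ for $\delta\in(0,1/100)$, this gives $\EB[\widetilde N_t(r)]\le(1-\delta/3)N$ for $a$ large.

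To finish via Chebyshev's inequality I need a variance bound on $\widetilde N_t(r)$. I would derive this by applying the many-to-two formula (Lemma~\ref{lem:many_to_two}) to a single initial particle together with the transition-density estimates of Section~\ref{sec:killed_bm}, in the spirit of Lemma~\ref{lem:N_2ndmoment}, then integrate the per-particle second moment against $\varphi_0$ and multiply by $n$. The quasi-stationary profile is decisive here: the integrals $n\int\varphi_0(x)w_Y(x)\,\dd x$ and similar quantities are of order $1/a$ after the cancellation $ne^{-\mu a}=O(e^A/a^3)$, ultimately producing
\[
\VarB(\widetilde N_t(r)) \;\le\; C_\delta\, N^2\,(1+r^4)\,e^{-2\mu r}/a
\]
uniformly for $t\le Ka^2$. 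Controlling this variance at short times $t\ll a^2$ is the main obstacle, since then the higher spectral modes have not decayed and one cannot argue exactly as in Lemma~\ref{lem:Bflat_N}; the saving grace is that the branching contribution to the many-to-two integral remains tamed by the smallness of $\int\varphi_0 w_Y$. Once this bound is in hand, Chebyshev's inequality gives
\[
\Pfl(N_t(r)\ge N,\,R^{(0)}_{Ka^2}=0) \;\le\; \frac{\VarB(\widetilde N_t(r))}{(N-\EB[\widetilde N_t(r)])^2} \;\le\; \frac{C_\delta(1+r^4)e^{-2\mu r}}{a} \;\le\; \frac{C_{\alpha,\delta}\,e^{-(2-\alpha)r}}{a},
\]
for any $\alpha>0$, the last step using $2\mu\ge 2-\alpha/2$ and $(1+r^4)e^{-\alpha r/2}\le C_\alpha$ for $a$ large.
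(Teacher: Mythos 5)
Your proposal is correct and matches the paper's proof essentially step for step: the second inequality via Markov's inequality with Lemma~\ref{lem:Rt} and \eqref{eq:49}, and the first via stationarity of the initial density (the paper's $N^{(0)}_t(r)$ is exactly your $\widetilde N_t(r)$), a per-particle second moment integrated against $\varphi_0$ using independence, and Chebyshev. The only remark is that the "main obstacle" you flag at short times is already resolved: Lemma~\ref{lem:N_2ndmoment} is stated and proved for \emph{every} $t\ge 0$, so the paper simply cites it and integrates $\E^x[(N^{(0)}_t(r))^2]\le Ce^{-2A}N^2(1+r^4)e^{-2\mu r}((t/a^3)w_Z(x)+w_Y(x))$ against $\phi$, which is precisely the computation you sketch.
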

\begin{proof}
On the event $\{R^{(0)}_{Ka^2} = 0\}$, we have $N_t(r) = N^{(0)}_t(r)$ and $T>Ka^2$, so it is enough to prove the statement with $N_t(r)$ replaced by $N^{(0)}_t(r)$ and $\PB$ replaced by $\P$. Lemmas~\ref{lem:Rt} together with  \eqref{eq:49} from the proof of Lemma~\ref{lem:Hperp} yields
  \begin{equation*}
\P(R^{(0)}_{Ka^2} \ge 1) \le \E[R^{(0)}_{Ka^2}] \le C \Big(\frac K a \E[Z_0] + \E[Y_0]\Big) \le C \frac {Ke^A} a,
  \end{equation*}
which gives the second statement. As for the first one, note that $\lfloor 2\pi  e^Aa^{-3}e^{\mu a} \rfloor$ equals $\lfloor e^{-\delta} N\rfloor$ or $\lceil e^{-\delta} N\rceil$. Suppose the former for simplicity. Now, since the density $\phi(x)$ is stationary w.r.t. BBM with absorption at $0$ and $a$, we have
\begin{equation}
  \label{eq:69}
  \E[N_t^{(0)}(r)] =  \E[N_0^{(0)}(r)] = \lfloor e^{-\delta}N\rfloor \int_r^a \phi(x)\,\dd x \le e^{-\delta}N (1+\mu r) e^{-\mu r}.
\end{equation}
 Furthermore, by the independence of the initial particles and the tower property of conditional expectation,
\begin{equation*}
  \Var(N_t^{(0)}(r)) \le N E_X[\E^X[(N_t^{(0)}(r))^2]],
\end{equation*}
where $X$ is a random variable distributed according to the density $\phi$ and the outer expectation is with respect to $X$.  
By Lemma~\ref{lem:N_2ndmoment}, we have for every $x\in [0,a]$,
\[
  \E^x[(N_t^{(0)}(r))^2] \le Ce^{-2A}N^2(1+r^4)e^{-2\mu r}((t/a^3)w_Z(x)+w_Y(x)),
\]
and a simple calculation then yields for $t\le Ka^2$,
\begin{equation}
  \label{eq:84}
  \Var(N_t^{(0)}(r)) \le N \int_0^a \E^x[(N_t^{(0)}(r))^2] \phi(x)\,\dd x \le C(K/a)e^{-A} N^2 (1+r^4)e^{-2\mu r}.
\end{equation}
The lemma now follows from \eqref{eq:69} and \eqref{eq:84}, together with Chebychev's inequality.
\end{proof}

\def\Nwtr{N^{\mathrm{wtr}}}

\subsection{Bounds on the number of red particles}
For a   Borel $S\subset\R_+$ and $r\in[0,a]$, denote by $\Nwtr(S,r)$ (``wtr'' stands for ``white to red'') the number of white particles turning red to the right of $r$ at some time $t\in S$. More precisely, set 
\[
 \Nwtr(S,r) = \# \{(u,t)\in\mathscr L^{\mathrm{red}}_{\Box,S}:X_u(t)\ge r\}.
\]
Furthermore, we denote by $N^{\mathrm{white}}_{t}(r)$ the number of white particles with positions $\ge r$ at time $t$ (including the ``in between particles'').

\begin{lemma}
\label{lem:Nwtr}
 For a Borel set $S\subset\R_+$, write $|S|$ for its Lebesgue measure. Then for every $\alpha > 0$, for sufficiently small $\delta$, there exists $C_{\delta,\alpha}$, such that for large $A$ and $a$, the following holds:
  \begin{enumerate}
   \item  Suppose (HB$_0$). For every $r\in [0,9a/10]$, $t\in[Ka^2,\tconst{eq:tcrit}\wedge\sqrt\ep a^3]$ and every interval $I\subset[Ka^2,t]$,
  \[
   \E[N^{\mathrm{wtr}}(I,r)\,|\,T>t] \le C_{\delta,\alpha}\ep\Big(\frac t {a^3} +\eta\Big)e^{-(2-\alpha) r}N (|I|+1).
  \]
  \item  Suppose (HB$_0$). Conditioned on $\F_\Delta$, for every $r\in [0,9a/10]$ and every interval $I\subset [T,\Theta_1+e^Aa^2]$,
  \[
   \Efl_{\mathrm{nbab}}[N^{\mathrm{wtr}}(I,r)\,|\,\F_\Delta]\Ind_{G_\Delta} \le C_{\delta,\alpha}\ep e^{-2A/3} e^{-(2-\alpha) r}N (|I|+1)
  \]
  \item Suppose (H$_\perp$). For every $r\in [0,9a/10]$,
  \[
   \Efl[N^{\mathrm{wtr}}([0,Ka^2],r)\Ind_{(R_{Ka^2} = 0)}] \le C_{\delta,\alpha}a^{-1}e^{-(2-\alpha) r}N.
  \]
 
  \end{enumerate}

\end{lemma}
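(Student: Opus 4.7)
\textbf{Proof plan for Lemma~\ref{lem:Nwtr}.} I will focus on part~(1); parts~(2) and~(3) follow by the same scheme, with the $\F_\Delta$-conditional first- and second-moment estimates used in the proof of Lemma~\ref{lem:N_bar_with_tiers} (in place of \eqref{eq:33}--\eqref{eq:75}) for part~(2), and with the tier-0 moment bounds~\eqref{eq:69}--\eqref{eq:84} from the proof of Lemma~\ref{lem:Bflat_start} for part~(3).

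The central observation is combinatorial: a white-to-red coloring at position $\ge r$ can only occur at a moment when the number of white particles at positions $\ge r$ has just exceeded the capacity~$N$. Such an \emph{excess} event is triggered either by a branching of a white particle at position $\ge r$ whose $k_u-1$ extra children push the right-of-$r$ count over~$N$, or by a crossing of two white particles through level~$r$. Applying many-to-one to the stopping line of branching events of white particles (for the branching contribution) and a many-to-two--type estimate in the spirit of Lemma~\ref{lem:many_to_two_with_tiers} (for the crossing contribution), I obtain, up to an endpoint term,
\[
 \E\big[N^{\mathrm{wtr}}(I,r)\,\big|\,T>t\big]
 \ \le\ C\int_I \E\Big[\big(N_s(r)-N\big)^{+}\,\Big|\,T>s\Big]\,\mathrm{d}s
 \ +\ C\cdot N\cdot \sup_{s\in I}\P(N_s(r)\ge N\mid T>s).
\]
Both terms are of the form (capacity excess at level $r$) $\times$ (length), and the extra $+1$ in $|I|+1$ in the conclusion will come from the supremum above.

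The second step is a sharp tail estimate for the excess. From the proof of Lemma~\ref{lem:Bflat_N} we have $\E[N_s(r)\mid T>s]\le (1-\delta/2)N$ by~\eqref{eq:33} and $\Var(N_s(r)\mid T>s)\le C\,\ep\,N^2(1+r^4)\,e^{-2r}\,(s/a^3+\eta)$ by~\eqref{eq:75}. The one-sided Chebyshev (Cantelli) inequality, integrated over the tail, yields
\[
 \E\big[(N_s(r)-N)^{+}\mid T>s\big]
 \ =\ \int_0^{\infty}\!\P(N_s(r)\ge N+u\mid T>s)\,\mathrm{d}u
 \ \le\ \frac{\Var(N_s(r)\mid T>s)}{N-\E[N_s(r)\mid T>s]}
 \ \le\ C_\delta\,\ep\,N\,(1+r^4)\,e^{-2r}\Big(\tfrac{s}{a^3}+\eta\Big).
\]
Integrating over $s\in I$ (using $s\le t$) and bounding $(1+r^4)\le C_\alpha e^{\alpha r}$ gives the stated bound $C_{\delta,\alpha}\,\ep\,N\,e^{-(2-\alpha)r}(t/a^3+\eta)(|I|+1)$. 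Crucially, the Cantelli computation is what produces the improved~$\ep$ dependence for the \emph{excess}, as opposed to the naive bound $N\cdot\P(N_s(r)\ge N)$ which would inflate the result by the extra factor~$A^2$ present in Lemma~\ref{lem:Bflat_N}.

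\textbf{Main obstacle.} The delicate point is the contribution of crossings of white particles through level~$r$: unlike branchings, a crossing creates no new particle, yet can still trigger a coloring by reshuffling the right-to-left order. For branchings, many-to-one gives a clean linear bound of intensity $\beta m\,\E[(N_s(r)-N)^{+}]$; for crossings, one must count pairs of white paths passing through the same level in an infinitesimal time window, which is most naturally controlled via the many-to-two inequality of Lemma~\ref{lem:many_to_two_with_tiers}. Fortunately, the resulting bound is of the same order as the branching contribution, because the variance estimate~\eqref{eq:67} (which is precisely the ingredient of that many-to-two bound) is exactly what already feeds the Cantelli step. An equivalent and more economical route is to compute $\E[\#(\mathscr L^{\mathrm{red}}_{\Box,I}\cap (U\times[r,a]))]$ directly using the first-moment formula for stopping lines in branching Markov processes, which accounts for crossings and branchings simultaneously; with this formulation, the entire proof reduces cleanly to Cantelli's inequality applied to the moment estimates already established in Lemma~\ref{lem:Bflat_N} (part~1), Lemma~\ref{lem:N_bar_with_tiers} (part~2), and the proof of Lemma~\ref{lem:Bflat_start} (part~3).
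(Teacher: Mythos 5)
Your central inequality
\[
\E\big[N^{\mathrm{wtr}}(I,r)\,\big|\,T>t\big]\ \le\ C\int_I\E\big[(N_s(r)-N)^{+}\,\big|\,T>s\big]\,\dd s + CN\sup_{s\in I}\P\big(N_s(r)\ge N\,\big|\,T>s\big)
\]
cannot be correct as stated, and the first term is the problem. The density of colourings at positions $\ge r$ is governed by the \emph{at-capacity} event $\{N^{\mathrm{white}}_s(r)=N\}$ (exactly $N$ white particles to the right of $r$, which, since the white count is capped at $N$, means \emph{all} whites sit above $r$); on that event a single branching of a white particle above $r$ already produces a colouring at $\ge r$. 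But at such an instant one may well have $N_s(r)=N$ --- for instance at any time before the first colouring, when no red particle exists yet --- so that $(N_s(r)-N)^{+}=0$ while the colouring intensity is of order $N$. The expected overcapacity $\E[(N_s(r)-N)^{+}]$ is therefore not a valid bound on that density; what one must use, as the paper does, is $CN\,\Phat(N_s(r)\ge N)$ (via the inclusion $\{N^{\mathrm{white}}_s(r)=N\}\subset\{N_s(r)\ge N\}$), and then invoke Lemma~\ref{lem:Bflat_N}. The paper's route is to define the measure $\widetilde{\mathfrak m}_r(S)=\Ehat[\widetilde N^{\mathrm{wtr}}(S,r)]$ and show $\limsup_{h\downarrow 0}\widetilde{\mathfrak m}_r([s,s+h])/h\le CN\,\Phat(N_s(r)\ge N)$, which after integration over $I$ yields the $|I|$ factor; the extra $+1$ comes from the separately handled in-between particles (those $u$ with $\tau_l(u)\le t<\sigma_{l+1}(u)$), not from an endpoint term.

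Your ``main obstacle'' paragraph also rests on a misconception: crossings of two white particles cannot trigger a colouring. If a white $v$ were to acquire $\ge N$ white particles to its right because a white $u$ crossed it from left to right, then immediately before the crossing $u$ had $\ge N$ white particles to \emph{its} right (namely $v$ together with all of $v$'s right-neighbours) and was therefore already red --- a contradiction. So between branching events the right-count of every white particle stays $\le N-1$, only branchings produce colourings, and this is precisely why the paper's density formula, which sums $\Ehat^{(X_u(s),s)}[(N_{s+h}(r)-1)_+]$ over the whites above $r$, needs no crossing correction and no many-to-two step. Finally, the Cantelli observation about ``avoiding $A^2$'' is moot: the $A^2$ in the displayed bound of Lemma~\ref{lem:Bflat_N} is not what the Chebyshev argument in its proof actually produces (compare \eqref{eq:33}--\eqref{eq:75}), so the bound $CN\,\Phat(N_s(r)\ge N)$ via Chebyshev already has the same $\ep(t/a^3+\eta)$ dependence as your Cantelli estimate.
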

\begin{proof}
Define $\widetilde N^{\mathrm{wtr}}(I,r)$ like $N^{\mathrm{wtr}}(I,r)$ but not counting those particles which become red because of an in-between particle branching. We will only prove the lemma with $N^{\mathrm{wtr}}(I,r)$ replaced by $\widetilde N^{\mathrm{wtr}}(I,r)$, in which case one can actually take $|I|$ instead of $|I|+1$ in the inequalities from the statement of the lemma. Taking account of the remaining particles, i.e.\ of the difference $\widetilde N^{\mathrm{wtr}}(I,r) - N^{\mathrm{wtr}}(I,r)$ goes through straightforward but technical and uninteresting arguments.

We first prove the first statement. Fix $r\in[0,a]$ and $t\in [Ka^2,\tconst{eq:tcrit}\wedge\sqrt\ep a^3]$. Write $\Ehat[\cdot] =  \E[\cdot\,|\,T>t]$ and define the measure $\widetilde{\mathfrak{m}}_r$ on $[Ka^2,t]$ by 
\[
 \widetilde{\mathfrak{m}}_r(S) = \Ehat[\widetilde N^{\mathrm{wtr}}(S,r)],\quad\text{ for all Borel $S\subset[Ka^2,t]$.}
\]
We want to show that the measure $\widetilde{\mathfrak{m}}_r$ is dominated by a constant multiple of Lebesgue measure (with the constant given in the statement of the lemma). For this, it is enough to bound the limit of  $\widetilde{\mathfrak{m}}_r([s,s+h])/h$ as $h\to 0$, for every $s\in [Ka^2,t]$. Now, one easily sees that in this limit, only those events contribute where $N$ white particles lie to the right of $r$ at time $s$. More precisely, fix $s\in [Ka^2,t]$, let $\widetilde{\mathcal N}^{\mathrm{white}}_s(r)$ be the stopping line formed by the non-in-between white particles to the right of $r$ at time $s$. Then, since a white particle branching into $k\ge0$ descendants creates $(k-1)_+$ new white particles, we have,
\begin{equation}
\label{eq:234798}
\lim_{h\to0} \widetilde{\mathfrak{m}}_r([s,s+h])/h = \lim_{h\to0} \Ehat\Big[\Ind_{(N^{\mathrm{white}}_s(r) = N)}\sum_{(u,s)\in\widetilde{\mathcal N}^{\mathrm{white}}_s(r)} \Ehat^{(X_u(s),s)}[(N_{s+h}(r) - 1)_+]\Big]/h. 
\end{equation}
Since for every $x\in(0,a)$ the law  $\Phat^{(x,s)}$ is dominated in the sense of measures by $2\P^{(x,s)}$ for large $A$ and $a$, we have $\Ehat^{(x,s)}[(N_{s+h}(r)-1)_+] \le Ch$ for small $h$. Together with \eqref{eq:234798} this gives
\[
 \lim_{h\to0}  \widetilde{\mathfrak{m}}_r([s,s+h])/h \le CN\Phat(N^{\mathrm{white}}_s(r) = N) \le CN\Phat(N_s(r) \ge N) \le C_{\delta,\alpha}\ep\Big(\frac t {a^3} +\eta\Big)e^{-(2-\alpha) r}N,
\]
by the first part of Lemma~\ref{lem:Bflat_N}. This proves the first statement. The second and third statements are proven similarly, drawing on the second part of Lemma~\ref{lem:Bflat_N} and on Lemma~\ref{lem:Bflat_start}, respectively.
\end{proof}

For an interval $I\subset \R_+$, let $G^{\mathrm{wtr}}_I$ be the event that $\Nwtr(I,2a/3)$ equals zero. 
\begin{corollary}
  \label{cor:Ga2}
Suppose (HB$_0$). Then, for large $A$ and $a$,
\[
\P((G^{\mathrm{wtr}}_{[Ka^2,T\wedge \sqrt\ep a^3]})^c) \le C_\delta o_a(1)\quad\tand\quad \Pfl_{\mathrm{nbab}}\left((G^{\mathrm{wtr}}_{[T,\Theta_1+e^Aa^2]})^c\,|\,\F_\Delta\right)\Ind_{G_\Delta}\le C_\delta o_a(1).
\]
\end{corollary}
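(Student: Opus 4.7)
The plan is to apply Markov's inequality to $\Nwtr(I,2a/3)$ in each case and then invoke Lemma~\ref{lem:Nwtr}. In both computations, the choice $r=2a/3$ produces a factor $e^{-(2-\alpha)\cdot 2a/3}$; combined with $N=N^\flat\le 2\pi e^{A+\delta}a^{-3}e^{\mu a}$ and the bound $e^{(\mu-1)a}\le 1$, this yields an overall factor $e^{-(1/3-2\alpha/3)a}$ up to polynomial corrections in $a$ and in $\ep$, and choosing $\alpha\in(0,1/4)$ makes this $o_a(1)$.

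The second bound is immediate from Lemma~\ref{lem:Nwtr}(2): on $G_\Delta$ the interval $[T,\Theta_1+e^Aa^2]$ has length at most $3e^Aa^2$, and substituting $r=2a/3$, $|I|\le 3e^Aa^2$, and the bound on $N$ into the conclusion of Lemma~\ref{lem:Nwtr}(2) yields
\[
\Efl_{\mathrm{nbab}}\bigl[\Nwtr([T,\Theta_1+e^Aa^2],2a/3)\bigm|\F_\Delta\bigr]\Ind_{G_\Delta}\le C_{\delta,\alpha}\,\ep\,e^{4A/3+\delta}\,a^{-1}\,e^{-(1/3-2\alpha/3)a},
\]
which is $o_a(1)$ for $\alpha<1/4$; Markov then concludes.

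For the first bound, the random upper endpoint $T\wedge\sqrt\ep a^3$ prevents a one-shot application of Lemma~\ref{lem:Nwtr}(1), whose statement fixes the upper endpoint of the conditioning event $\{T>t\}$. The proposal is to reopen the intensity computation that underlies the proof of Lemma~\ref{lem:Nwtr}(1) and adapt it to the random time $T\wedge\sqrt\ep a^3$. Write $\widetilde\Nwtr$ for the version that ignores transitions produced by the branching of an ``in-between'' particle (the residual $\Nwtr-\widetilde\Nwtr$ is handled by the same technical arguments alluded to at the start of the proof of Lemma~\ref{lem:Nwtr}). A $\widetilde{\mathrm{wtr}}$-transition at position $\ge 2a/3$ and at a time $s\le T$ can only occur when $N^{\mathrm{white}}_s(2a/3)=N^\flat$, which forces $N_s(2a/3)\ge N$, and the rate of such transitions at time $s$ is bounded by $CN$, exactly as in the proof of Lemma~\ref{lem:Nwtr}. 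Consequently,
\[
\E\bigl[\widetilde\Nwtr([Ka^2,T\wedge\sqrt\ep a^3],2a/3)\bigr]\le\int_{Ka^2}^{\sqrt\ep a^3}CN\,\P\bigl(N_s(2a/3)\ge N,\,T>s\bigr)\,ds.
\]
Using $\P(\cdot,T>s)\le\P(\cdot\mid T>s)$ followed by Lemma~\ref{lem:Bflat_N}(1) pointwise in $s$ (with $r=2a/3$), and then $\int_{Ka^2}^{\sqrt\ep a^3}(s/a^3+\eta)\,ds\le C\ep a^3$ (using $\eta\le\ep^{12}$), one obtains
\[
\E\bigl[\widetilde\Nwtr([Ka^2,T\wedge\sqrt\ep a^3],2a/3)\bigr]\le C_{\delta,\alpha}\,A^2\,\ep^2\,e^{A+\delta}\,e^{-(1/3-2\alpha/3)a},
\]
which is again $o_a(1)$ for $\alpha<1/4$; Markov concludes.

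The main obstacle is the random upper endpoint in the first bound, which forces one to reopen the proof of Lemma~\ref{lem:Nwtr}(1) rather than quote its conclusion. The observation that unblocks the argument is that $\{s\le T\}\subset\{T>s\}$ (up to a null set), so the pointwise conditional estimate of Lemma~\ref{lem:Bflat_N}(1) is enough to bound the transition intensity at every $s$, and integration in $s$ over $[Ka^2,\sqrt\ep a^3]$ yields the same $o_a(1)$ bound one would formally expect from a direct application of Lemma~\ref{lem:Nwtr}(1).
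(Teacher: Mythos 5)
Your proof is correct and follows essentially the same route as the paper: both use Markov's inequality together with the intensity estimate underlying Lemma~\ref{lem:Nwtr}, with the paper decomposing $\E[\Nwtr([Ka^2,T\wedge\sqrt\ep a^3],r)]=\int_{Ka^2}^{\sqrt\ep a^3}\E[\Nwtr(\dd s,r)\Ind_{(T>s)}]$ and then applying the pointwise bound from Lemma~\ref{lem:Bflat_N}(1), exactly as you do. The only cosmetic differences are that the paper retains the harmless factor $\P(T>s)$ inside the integral and bounds $\int_{Ka^2}^{\sqrt\ep a^3}(s/a^3+\eta)\,\dd s$ more crudely by $Ca^3$ rather than by $C\ep a^3$; both versions give $o_a(1)$ because the exponential $e^{-(1/3-2\alpha/3)a}$ dominates.
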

\begin{proof}
The second inequality follows directly from the second part of Lemma~\ref{lem:Nwtr} and Markov's inequality. For the first inequality, we have for every $r\le 9a/10$, by the first part of Lemma~\ref{lem:Nwtr},
\begin{align}
\nonumber
 \E[\Nwtr([Ka^2,T\wedge \sqrt\ep a^3],r)] &= \int_{Ka^2}^{\sqrt\ep a^3} \E[\Nwtr(\dd s,r)\Ind_{(T>s)}] \\
 \label{eq:horn}
 &\le C_{\delta}\ep e^{-1.9 r}N \int_{Ka^2}^{\sqrt\ep a^3} \Big(\frac s {a^3}+\eta\Big) \P(T>s)\,\dd s.
 \end{align}
In particular, $\E[\Nwtr([Ka^2,T\wedge \sqrt\ep a^3],2a/3)] \le C_\delta o_a(1)$, since the last integral in \eqref{eq:horn} is trivially bounded by $Ca^3$. The first inequality follows from this together with Markov's inequality.
\end{proof}

\begin{lemma} Suppose (HB$_0$). Define the intervals $I = [Ka^2,T\wedge \sqrt\ep a^3]$ and $J = [T,\Theta_1+e^Aa^2]$. For large $A$ and $a$,
\label{lem:ZY_red}
\begin{align*}
\E[(Z^{\mathrm{red}}_{\Box,I} + Y^{\mathrm{red}}_{\Box,I})\Ind_{G^{\mathrm{wtr}}_{I}}] \le C_{\delta} \ep^3 e^A,\quad\tand\quad
\Efl_{\mathrm{nbab}}[(Z^{\mathrm{red}}_{\Box,J} + Y^{\mathrm{red}}_{\Box,J})\Ind_{G^{\mathrm{wtr}}_{J}}\,|\,\F_\Delta]\Ind_{G_\Delta} \le C_{\delta} \ep e^{4A/3}/a. 
\end{align*}
Now suppose (H$_\perp$). Then $\E[(Z^{\mathrm{red}}_{\Box,[0,Ka^2]} + Y^{\mathrm{red}}_{\Box,[0,Ka^2]})\Ind_{(R_{Ka^2}=0)}] \le C_{\delta}e^A/a^2.$
\end{lemma}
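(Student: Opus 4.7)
The plan is to reduce all three bounds to the density estimates of Lemma~\ref{lem:Nwtr} via a single integration-by-parts identity. Setting $F(r):=\Nwtr(S,r)$ for the relevant $S$ and using $w_Z(0)=w_Z(a)=0$ together with $w_Y'=\mu w_Y$, Fubini gives
\[
Z^{\mathrm{red}}_{\Box,S}=\int_0^a w_Z'(x)F(x^+)\,\dd x,\qquad Y^{\mathrm{red}}_{\Box,S}=e^{-\mu a}F(0)+\mu\int_0^a e^{-\mu(a-x)}F(x^+)\,\dd x.
\]
A short calculation shows that $w_Z'>0$ on $[0,a/2]$ and $w_Z'<0$ on $[a/2,a]$, so one can drop the $[a/2,a]$ contribution by positivity of $F$ and use $w_Z'(x)\le \pi(1+\mu x)e^{-\mu(a-x)}$ on $[0,a/2]$. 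Inserting the spatial decay $\E[F(x)]\propto e^{-(2-\alpha)x}$ supplied by Lemma~\ref{lem:Nwtr} makes the resulting $x$-integral $\int_0^{a/2}(1+\mu x)e^{(\mu-(2-\alpha))x}\,\dd x=O(1)$, since $\mu-(2-\alpha)<-1/2$ for $\alpha$ small and $a$ large. The $Y$-estimate is handled identically, the boundary term $e^{-\mu a}F(0)$ and the integral contributing the same order of magnitude.

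For Part~1, I would extract from the proof of Corollary~\ref{cor:Ga2} the density bound
\[
\E[\Nwtr(\dd s,r)\Ind_{s<T}]\le C_{\delta,\alpha}\ep(s/a^3+\eta)e^{-(2-\alpha)r}N\,\P(T>s)\,\dd s,
\]
yielding $\E[\Nwtr(I,r)]\le C_{\delta,\alpha}\ep e^{-(2-\alpha)r}N\int_{Ka^2}^{\sqrt\ep a^3}(s/a^3+\eta)\P(T>s)\,\dd s$. The crucial quantitative point is the sharp evaluation of this time integral: Proposition~\ref{prop:T} together with $Z_0\asymp e^A$ and $\pi p_Be^A=\widetilde\ep^{-1}\asymp 1/\ep$ yields $\P(T>s)\le Ce^{-cs/(\ep a^3)}$, so the integral concentrates on $s\asymp \ep a^3$ and equals $O(\ep^2 a^3)$. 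Combining this with $Na^3e^{-\mu a}\asymp e^A$ and the $x$-integration gives the required $C_\delta\ep^3 e^A$.

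For Part~2, Lemma~\ref{lem:Nwtr}(2), combined with $|J|\le 2e^Aa^2$ on $G_\Delta$, gives directly $\Efl_{\mathrm{nbab}}[\Nwtr(J,r)\mid\F_\Delta]\Ind_{G_\Delta}\le C_{\delta,\alpha}\ep e^{-2A/3}e^{-(2-\alpha)r}Ne^Aa^2$. Plugging into the identity above and using $Ne^Aa^2e^{-\mu a}\asymp e^{2A}/a$ produces the claimed $C_\delta\ep e^{4A/3}/a$. For Part~3, Lemma~\ref{lem:Nwtr}(3) gives $\E[\Nwtr([0,Ka^2],r)\Ind_{R_{Ka^2}=0}]\le C_{\delta,\alpha}Na^{-1}e^{-(2-\alpha)r}$; the same calculation together with $Ne^{-\mu a}/a\asymp e^A/a^4$ yields a bound of order $e^A/a^4$, easily dominated by $C_\delta e^A/a^2$. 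The contribution to $Y^{\mathrm{red}}$ coming from $x\ge 9a/10$, where the spatial decay estimate of Lemma~\ref{lem:Nwtr}(3) is not stated, is handled by the monotonicity $F(x)\le F(9a/10)$, giving a negligible term of order $Ne^{-1.7a}$.

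The principal obstacle will be Part~1, specifically this sharp evaluation of the time integral: the trivial bound $\P(T>s)\le 1$ would lose a factor of $\ep$ and give only $C_\delta\ep^2e^A$ in place of $C_\delta\ep^3e^A$. The remaining work in all three parts is routine but careful bookkeeping of the various exponents, with the negativity of $\mu-(2-\alpha)$ driving the convergence of every $x$-integral.
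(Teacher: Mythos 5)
Your proposal follows essentially the same route as the paper's proof: integration by parts to express $Z^{\mathrm{red}}_{\Box,S}$ and $Y^{\mathrm{red}}_{\Box,S}$ in terms of $\Nwtr(S,r)$, then plug in the density bound from (the proof of) Corollary~\ref{cor:Ga2}/Lemma~\ref{lem:Nwtr} and integrate in $r$; for Part~1 the paper evaluates the time integral via $\E[(T/a^3\wedge1)^2+\eta(T/a^3\wedge1)]$ using Corollary~\ref{cor:moments_T}, whereas you use the exponential tail $\P(T>s)\le Ce^{-cs/(\ep a^3)}$ from Proposition~\ref{prop:T} directly -- equivalent calculations that both give $O(\ep^2 a^3)$ and hence $C_\delta\ep^3 e^A$, and you correctly identify this as the step where a crude $\P(T>s)\le 1$ would be off by a factor $\ep$. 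One small slip: $w_Z'$ does not change sign at $a/2$; since $w_Z'(x)=ae^{\mu(x-a)}[\mu\sin(\pi x/a)+(\pi/a)\cos(\pi x/a)]$, it is positive on $[0,a-1/\mu+o(1)]$ and negative only on a final $O(1)$-window near $a$. This is inessential, because the range $[2a/3,a]$ is actually controlled not by the sign of $w_Z'$ but by $G^{\mathrm{wtr}}$ (which forces $\Nwtr(\cdot,r)=0$ there) in Parts~1 and~2, and by the monotonicity $\Nwtr(S,\cdot)\le\Nwtr(S,9a/10)$ together with $\sup_{x\ge 9a/10}w_Z(x)=O(1)$ in Part~3 -- exactly the arguments you in fact invoke.
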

\begin{proof}
We only prove the first inequality, the others follow similarly, using the second and third parts of Lemma~\ref{lem:Nwtr} instead of the first. By integration by parts, we have on the event $G^{\mathrm{wtr}}_{I}$,
\[
 Z^{\mathrm{red}}_{\Box,I} = \int_0^{2a/3} w_Z'(r) \Nwtr(I,r)\,\dd r\quad\tand\quad Y^{\mathrm{red}}_{\Box,I} = w_Y(0)\Nwtr(I,0) + \int_0^{2a/3} w_Y'(r) \Nwtr(I,r)\,\dd r.
\]
Now note that by \eqref{eq:horn}, we have for every $r\le 2a/3$, by Corollary~\ref{cor:moments_T} and \eqref{eq:eta_ep},
\[
 \E[\Nwtr(I,r)] \le C_{\delta}\ep e^{-1.9 r}N a^3\E\Big[\Big(\frac T{a^3}\wedge 1\Big)^2 + \eta \Big(\frac T{a^3}\wedge 1\Big) \Big] \le C_{\delta}\ep^3 e^{-1.9 r}N a^3.
\]
The inequality then follows easily from the previous inequalities, since $w_Z'(x)\le C(1+x)e^{x-a}$ and $w_Y'(x) \le Cw_Y(x) \le 2Ce^{x-a}$ for $x\in[0,a]$ and large $a$.
\end{proof}

\subsection{The probability of \texorpdfstring{$G^{\flat}_1$}{Gb1}}

We say that hypothesis (HB$^\flat_0$) is verified if $\nu^\flat_0$ is deterministic and such that $G^\flat_0$ holds.

\begin{lemma} Suppose (HB$^\flat_0$). Let $G_{\mathrm{fred}}$ be the event that the fugitive does not get coloured red. Then, for large $A$ and $a$,
  \label{lem:fred}
\[
\Pfl(G_{\mathrm{fred}}^c \cap (G_{\mathscr U}\cap G^{\mathrm{wtr}}_{[Ka^2,T]}) ) \le C_\delta \ep^2.
\]
\end{lemma}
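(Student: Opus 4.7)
The event $G_{\mathrm{fred}}^c$ that the fugitive $\mathscr U$ is coloured red is equivalent to the existence of an ancestor $(u^\star,\tau)\in \mathscr L^{\mathrm{red}}_{\Box,[0,T]}$ of $\mathscr U$, because colour is inherited: once a white particle turns red at time $\tau$, all its descendants (including $\mathscr U$ if $u^\star\preceq\mathscr U$) are red thereafter. On $G_{\mathscr U}$ we have $T\le\sqrt\ep\,a^3$, so $\mathscr L^{\mathrm{red}}_{\Box,[0,T]}\subset \mathscr L^{\mathrm{red}}_{\Box,[0,\sqrt\ep a^3]}$, and the latter is a stopping line in the usual sense (membership at time $s$ is $\F_s$-measurable).

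The plan is to apply the strong branching property at $\mathscr L^{\mathrm{red}}_{\Box,[0,\sqrt\ep a^3]}$. For each $(u,\tau)$ on this line, the event $\{u\preceq\mathscr U\}\cap G_{\mathscr U}$ forces a descendant of $u$ to cause a breakout at some time in $[\tau,\sqrt\ep a^3]$. Since the colour plays no role before time $T$, conditionally on $\mathscr F_{\mathscr L^{\mathrm{red}}_{\Box,[0,\sqrt\ep a^3]}}$ the sub-tree rooted at $(u,\tau)$ is a fresh BBM started at $(X_u(\tau),\tau)$. Union-bounding tier-0 hits of $a$ via Lemma~\ref{lem:Rt} and multiplying by the breakout probability $p_B$ (with the contribution of higher tiers absorbed using Lemmas~\ref{lem:Zl_exp_upperbound} and~\ref{lem:T1_T2}), one obtains
\[
\P^{(X_u(\tau),\tau)}\bigl(\text{a descendant breaks out by time }\sqrt\ep a^3\bigr)\le Cp_B\bigl(\sqrt\ep\, w_Z(X_u(\tau))+w_Y(X_u(\tau))\bigr).
\]
Summing over $(u,\tau)$ and taking $\Efl$-expectation yields
\[
\Pfl\bigl(G_{\mathrm{fred}}^c\cap G_{\mathscr U}\cap G^{\mathrm{wtr}}_{[Ka^2,T]}\bigr)\le Cp_B\,\Efl\bigl[(\sqrt\ep\,Z^{\mathrm{red}}_{\Box,[0,T]}+Y^{\mathrm{red}}_{\Box,[0,T]})\Ind_{G_{\mathscr U}\cap G^{\mathrm{wtr}}_{[Ka^2,T]}}\bigr].
\]

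I then split the red mass over $[0,T]=[0,Ka^2]\cup[Ka^2,T]$. For the second piece, on $G_{\mathscr U}$ we have $[Ka^2,T]=[Ka^2,T\wedge\sqrt\ep a^3]$, so the second inequality of Lemma~\ref{lem:ZY_red} gives $\Efl[(Z^{\mathrm{red}}_{\Box,[Ka^2,T]}+Y^{\mathrm{red}}_{\Box,[Ka^2,T]})\Ind_{G^{\mathrm{wtr}}_{[Ka^2,T]}}]\le C_\delta\ep^3 e^A$. The first piece is not covered by any of the $Z/Y$-red estimates in Section~4, and this is the one delicate point: here I invoke the fourth clause in the definition of $G^\flat_0$, which (under (HB$^\flat_0$), $\F_0$ being trivial) reads $\Pfl(Z^{\mathrm{red}}_{\Box,[0,Ka^2]}+Y^{\mathrm{red}}_{\Box,[0,Ka^2]}>a^{-1/2})\le\ep^2$. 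On the complementary event the $[0,Ka^2]$ contribution is at most $2a^{-1/2}$, while the probability $\ep^2$ of the failure enters the final bound as a separate additive term.

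Putting the two pieces together with $p_B\le C(\ep e^A)^{-1}$ from \eqref{eq:pB} and $\ep\ge e^{-A/6}$ from \eqref{eq:ep_lower}, I obtain
\[
\Pfl\bigl(G_{\mathrm{fred}}^c\cap G_{\mathscr U}\cap G^{\mathrm{wtr}}_{[Ka^2,T]}\bigr)\le \frac{C}{\ep e^A}\bigl(a^{-1/2}+C_\delta\ep^3 e^A\bigr)+\ep^2\le C_\delta\ep^2
\]
for large $A$ and $a$, using that $a^{-1/2}/(\ep e^A)\le a^{-1/2}e^{-A/2}\to 0$ as $a\to\infty$. The hardest part of the argument is conceptual rather than computational: once one recognises that the fourth condition built into $G^\flat_0$ is exactly tailored to control the early-time red mass that Lemma~\ref{lem:ZY_red} misses, the proof is a clean application of strong branching plus the breakout-probability estimate $p_B\asymp(\ep e^A)^{-1}$.
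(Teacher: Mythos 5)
Your proposal is correct and follows essentially the same route as the paper: condition on the red colouring stopping line, apply the breakout-time estimate (Proposition~\ref{prop:T}) to the subtrees rooted there, split the red mass at $Ka^2$ so that the early piece is handled by the fourth clause of $G^\flat_0$ and the later piece by Lemma~\ref{lem:ZY_red} on $G^{\mathrm{wtr}}_{[Ka^2,T]}$, and combine with $p_B\asymp(\ep e^A)^{-1}$. The paper merely phrases the event as $\{T=T_{\mathrm{red}}\}$ instead of "the fugitive has a red ancestor", which is the same thing.
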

\begin{proof}
Set $G^{\mathrm{wtr}} = G^{\mathrm{wtr}}_{[Ka^2,T]}$. 
Let $T_{\mathrm{red}}$ be the first breakout of a red particle. Then
\begin{equation}
  \label{eq:92}
  \Pfl(G_{\mathrm{fred}}^c \cap (G_{\mathscr U}\cap G^{\mathrm{wtr}})) = \P(T = T_{\mathrm{red}},\,G_{\mathscr U}\cap G^{\mathrm{wtr}}) \le \P(T_{\mathrm{red}} \le \sqrt\ep a^3,\,G^{\mathrm{wtr}}).
\end{equation} 
Define $T' = T\wedge \sqrt\ep a^3$. Then, since $\mathscr L^{\mathrm{red}}_{\Box,[0,T]}$ is a stopping line, we have by Proposition~\ref{prop:T} and \eqref{eq:pB},
\begin{equation}
\label{eq:93}
\P(T_{\mathrm{red}} \le \sqrt\ep a^3\,|\,\F_{\mathscr L^{\mathrm{red}}_{\Box,[0,T]}})\Ind_{G^{\mathrm{wtr}}} \le C \ep^{-1/2}e^{-A} (Z^{\mathrm{red}}_{\Box,[0,T']}+Y^{\mathrm{red}}_{\Box,[0,T']}).
\end{equation}
Recall that $\P(Z^{\mathrm{red}}_{\Box,[0,Ka^2]} + Y^{\mathrm{red}}_{\Box,[0,Ka^2]} \le a^{-1/2}) \ge 1-\ep^2$ by hypothesis.
By the tower property of conditional expectation and \eqref{eq:93}, this gives
\begin{align}
  \P(T_{\mathrm{red}} \le \sqrt\ep a^3,\,G^{\mathrm{wtr}}) 
  \label{eq:9470}
\le C\E[\ep^{-1/2}e^{-A} (Z^{\mathrm{red}}_{\Box,[Ka^2,T']}+Y^{\mathrm{red}}_{\Box,[Ka^2,T']})\Ind_{G^{\mathrm{wtr}}}] + \ep^2 + o_a(1).
\end{align}
The lemma now follows from \eqref{eq:92} and \eqref{eq:9470} together with Lemma~\ref{lem:ZY_red}.
\end{proof}

\begin{lemma}
\label{lem:G_Bflat}
Suppose (HB$^\flat_0$). There exists a numerical constant $\numcst > 0$, such that for large $A$ and $a$,
\[
\Pfl(G^{\flat}_1)\ge 1- \ep^{1+\numcst}.
\]
\end{lemma}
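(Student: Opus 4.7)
The plan is to mimic the proof of Lemma~\ref{lem:G1}, with the extra task of controlling the red particles so that the white-particle quantities $Z^{\mathrm{white}}_{\Theta_1}, Y^{\mathrm{white}}_{\Theta_1}$ inherit the estimates that were proved there for $Z_{\Theta_1}, Y_{\Theta_1}$, and with an additional strong Markov argument for the conditional fourth clause of $G_1^\flat$. A first observation is that as long as the fugitive is white (event $G_{\mathrm{fred}}$) and no white particle becomes red to the right of $2a/3$ (events $G^{\mathrm{wtr}}_{[Ka^2,T]}$ and $G^{\mathrm{wtr}}_{[T,\Theta_1+e^Aa^2]}$), the breakout, relaxation, and barrier shift in B$^\flat$-BBM coincide with those in B-BBM; so we can freely transfer the output of Lemma~\ref{lem:G1} and Remark~\ref{rem:piece}.

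First I would introduce the good event $G = G_{\mathscr U}\cap G_\Delta \cap G_{\mathrm{nbab}}\cap G_{\mathrm{fred}}\cap G^{\mathrm{wtr}}_{[Ka^2,T\wedge\sqrt\ep a^3]}\cap G^{\mathrm{wtr}}_{[T,\Theta_1+e^Aa^2]}\cap \{Z^{\mathrm{red}}_{\Box,[0,Ka^2]}+Y^{\mathrm{red}}_{\Box,[0,Ka^2]}\le a^{-1/2}\}$. Gathering Lemmas~\ref{lem:GDelta}, \ref{lem:Gnbab}, \ref{lem:fred}, Corollary~\ref{cor:Ga2} and the last clause of (HB$^\flat_0$) gives $\Pfl(G^c)\le C_\delta(\ep^{5/4}+o_a(1))\le \tfrac12\ep^{1+\numcst}$ once $\numcst=1/8$, say, and $A$, $a$ are large.

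Second, on $G$ the law of B$^\flat$-BBM up to time $\Theta_1$ (as a measure-valued process, ignoring colours) coincides with that of B-BBM, so Lemma~\ref{lem:G1} together with Remark~\ref{rem:piece} yields $\supp\nu_{\Theta_1}\subset (0,a)$, $\mathscr N_{\Theta_1}\subset U\times\{\Theta_1\}$, $\Theta_1>T^+$, $|e^{-A}Z_{\Theta_1}-1|\le\ep^{3/2}/2$, $Y_{\Theta_1}\le\eta/2$, all with probability $\ge 1-C\ep^{5/4}$. It remains to show that the contribution to $(Z_{\Theta_1},Y_{\Theta_1})$ coming from descendants of particles that have been coloured red before $\Theta_1$ is, with high probability, bounded by $\ep^{3/2}e^A/2$ and $\eta/2$, respectively. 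Conditioning on the stopping line $\mathscr L^{\mathrm{red}}_{\Box,[0,\Theta_1]}$ and applying first-moment estimates for BBM with varying drift on $[0,a]$ (as in \eqref{eq:die_antwoord} and the subsequent estimates in the proof of Lemma~\ref{lem:G1}, appropriately applied to $f_\Delta$ via \eqref{eq:Delta_f}), the expected descendant-contributions at $\Theta_1$ are bounded by $C(Z^{\mathrm{red}}_{\Box,[0,\Theta_1]}+AY^{\mathrm{red}}_{\Box,[0,\Theta_1]})$ and $C(Y^{\mathrm{red}}_{\Box,[0,\Theta_1]}+Z^{\mathrm{red}}_{\Box,[0,\Theta_1]}/a)$ respectively. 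Using Lemma~\ref{lem:ZY_red} (first and second parts) together with the $a^{-1/2}$ bound from the definition of $G$, one has $Z^{\mathrm{red}}_{\Box,[0,\Theta_1]}+AY^{\mathrm{red}}_{\Box,[0,\Theta_1]}=O(a^{-1/2}+A\ep e^{4A/3}/a)=o(\ep^{3/2}e^A)$ in expectation on $G$, and Markov's inequality delivers the required transfer $Z_{\Theta_1}\mapsto Z^{\mathrm{white}}_{\Theta_1}$ with loss $\le\ep^{5/4}$ in probability.

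Third, for the fourth (conditional) clause of $G_1^\flat$, apply the strong Markov property at $\Theta_1$: conditionally on $\F_{\Theta_1}$, the process restarts from $\nu^\flat_1$, which on the event just established satisfies (HB$^\flat_0$) in the sense of $|e^{-A}Z^{\mathrm{white}}_{\Theta_1}-1|\le\ep^{3/2}$, $Y^{\mathrm{white}}_{\Theta_1}\le\eta$. It suffices to show that under any such deterministic initial configuration, $\Efl[(Z^{\mathrm{red}}_{\Box,[0,Ka^2]}+Y^{\mathrm{red}}_{\Box,[0,Ka^2]})\Ind_{(R_{Ka^2}=0)}]\le C_\delta e^A/a^2$, which by Markov translates into a conditional probability bound $\le C_\delta e^A/a^{3/2}\le\ep^2$ for large $a$. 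This is the (HB$^\flat_0$)-analogue of Lemma~\ref{lem:ZY_red} third part, whose (H$_\perp$) proof relied on the stationarity of $\phi$ under BBM in the interval via Lemma~\ref{lem:Bflat_start}. Under the weaker (HB$^\flat_0$), I would redo Lemma~\ref{lem:Bflat_start}: the first moment $\E^{\nu_0}[N^{(0)}_s(r)]$ for $s\le Ka^2$ is controlled by Lemma~\ref{lem:N_expec_large_t} using $|e^{-A}Z_0-1|\le\ep^{3/2}$ and the defining inequality $E_K\le\delta/10$, yielding $\le (1-\delta/2)N$; the variance is controlled by summing Lemma~\ref{lem:N_2ndmoment} over the (independent) initial-particle subtrees, using $Y_0\le\eta$. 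Chebychev then gives $\Pfl(N_s(r)\ge N)\le C_{\delta,\alpha}e^{-(2-\alpha)r}/a$, which feeds into the argument of Lemma~\ref{lem:Nwtr} third part exactly as before, and then into Lemma~\ref{lem:ZY_red} third part.

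The main obstacles are the descendant-of-red-particles control in step two (because the drift changes across $T^+$, so the varying-drift tools of Lemma~\ref{lem:hat_Z_f} and \eqref{eq:Delta_f} must be invoked with care) and the generalisation of Lemma~\ref{lem:Bflat_start} to deterministic (non-iid) initial data in step three; both are technical but use only tools already developed in this section and Sections~\ref{sec:interval}--\ref{sec:BBBM}. Combining the three bounds gives $\Pfl((G_1^\flat)^c)\le C_\delta\ep^{5/4}\le\ep^{1+\numcst}$ for large $A$, $a$, with $\numcst=1/8$.
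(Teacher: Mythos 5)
Your steps one and two, apart from some imprecision about which stopping line and which variant of $G_{\mathrm{nbab}}$ to condition on, are in the spirit of the paper's argument: one restricts to the good event, invokes Lemma~\ref{lem:G1} and Remark~\ref{rem:piece} to control the uncoloured $Z_{\Theta_1}$, $Y_{\Theta_1}$, and subtracts the descendant contribution of the red stopping line via first-moment estimates and Lemma~\ref{lem:ZY_red}. This part is essentially sound.

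Step three contains a genuine gap. You propose to verify the fourth clause of $G^\flat_1$ (the conditional-probability clause about $Z^{\mathrm{red}}_{\Box,[\Theta_1,\Theta_1+Ka^2]}+Y^{\mathrm{red}}_{\Box,[\Theta_1,\Theta_1+Ka^2]}$) by restarting the process at $\Theta_1$ and redoing Lemma~\ref{lem:Bflat_start} for a general deterministic initial configuration satisfying only $|e^{-A}Z_0-1|\le\ep^{3/2}$ and $Y_0\le\eta$. But Lemma~\ref{lem:Bflat_start} leans on \eqref{eq:69}, i.e.\ the exact stationarity of the sine density $\phi$ under (H$_\perp$), which gives a bound on $\E[N^{(0)}_t(r)]$ that is \emph{uniform for all $t\le Ka^2$, including $t\to 0$}. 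Your suggested replacement, Lemma~\ref{lem:N_expec_large_t}, carries an error term of order $E_{t/a^2}$, which diverges as $t\to 0$; it says nothing about $\E^{\nu_0}[N^{(0)}_s(r)]$ for $s\ll a^2$. And indeed the bound you want, $\E^{\nu_0}[N^{(0)}_s(r)]\le(1-\delta/2)N$ for all $s\le Ka^2$, is simply false under those hypotheses alone: take $\nu_0$ with all $\lfloor e^{-\delta}N\rfloor$ particles at a single interior point $x_0$ close to $a$; one can have $Z_0\approx e^A$ and $Y_0\le\eta$ yet $N_0(r)= \lfloor e^{-\delta}N\rfloor > (1-\delta/2)N$ for $r<x_0$. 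This is exactly why the fourth clause is baked into the definition of $G^\flat_n$ as a propagated invariant rather than something derivable from the other three clauses. The paper verifies it for $n=1$ without any restart: since $[\Theta_1,\Theta_1+Ka^2]\subset J=[T,\Theta_1+e^Aa^2]$, the conditional probability $X$ is bounded through $\Efl_{\mathrm{nbab}}[(Z^{\mathrm{red}}_{\Box,J}+Y^{\mathrm{red}}_{\Box,J})\Ind_{G^{\mathrm{wtr}}_J}\,|\,\F_\Delta]\Ind_{G_\Delta}\le C_\delta\ep e^{4A/3}/a$ (second inequality of Lemma~\ref{lem:ZY_red}), plus Corollary~\ref{cor:Ga2}, followed by two applications of Markov's inequality; the estimates needed live entirely in the current piece of the process, not in a restarted one.
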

\begin{proof}
Recall the event $G_1$ from Section~\ref{sec:BBBM}. Define $G_1' = G_1 \cap \{|e^{-A}Z'_{\Theta_1}-1| \le \ep^{3/2}/2\}$. Define the random variable 
\[
X = \Pfl(Z^{\mathrm{red}}_{\Box,[\Theta_1,\Theta_1+Ka^2]} + Y^{\mathrm{red}}_{\Box,[\Theta_1,\Theta_1+Ka^2]} \le a^{-1/2}\,|\,\F_{\Theta_1}).
\]
By Proposition~\ref{prop:piece} and Remark~\ref{rem:piece}, we have $\Pfl(G_1') \ge 1-O(\ep^{1+\numcst})$, for some numerical constant $\numcst>0$. It therefore suffices to show that for large $A$ and $a$,
\begin{equation}
\label{eq:3947}
\Pfl(e^{-A}Z^{\mathrm{red}}_{\Theta_1} > \ep^{3/2}/2) + \Pfl(X < 1-\ep^2) \le \ep^{1+\numcst},
\end{equation}
By Markov's inequality and Lemma~\ref{lem:ZY_red}, together with Corollary~\ref{cor:Ga2}, we have
\begin{equation*}
  \Efl_{\mathrm{nbab}}[1-X\,|\,\F_\Delta]\Ind_{G_\Delta} \le C_\delta o_a(1).
\end{equation*}
A second application of Markov's inequality yields
\begin{equation}
  \label{eq:52}
  \Pfl(X<1-\ep^2,\ G_\Delta\cap G_{\mathrm{nbab}}) \le C_\delta o_a(1).
\end{equation}
This bounds the second summand in \eqref{eq:3947}. The first summand will be bounded by a first moment estimate, restricted to a set of high probability. Define 
\[
G_{\mathrm{red}} = \{Ka^2\le T\le\sqrt\ep a^3,\,Z^{\mathrm{red}}_{\Box,[0,Ka^2]} + Y^{\mathrm{red}}_{\Box,[0,Ka^2]} \le a^{-1/2}\}\cap G^{\mathrm{wtr}}_{I}\cap G_{\mathrm{fred}},
\] where $I = [Ka^2,T\wedge \sqrt\ep a^3]$. Conditioned on $T$ and $\F_{\mathscr L^{\mathrm{red}}_{\Box,[0,T]}}$ and on the event $G_{\mathrm{red}}$, the particles from the stopping line $\mathscr L^{\mathrm{red}}_{\Box,[0,T]}$ then all spawn BBM conditioned not to break out before $T$ (because neither the fugitive nor any in-between particles are on the stopping line). By  Lemmas~\ref{lem:hat_quantities} and~\ref{lem:ZY_red} and \eqref{eq:ep_upper}, we then have
\begin{align}
\label{eq:beaufort}
  \E[Z^{\mathrm{red}}_{T}\Ind_{G_{\mathrm{red}}}] &\le C\Big(\E[(Z^{\mathrm{red}}_{\Box,I} + AY^{\mathrm{red}}_{\Box,I})\Ind_{G^{\mathrm{wtr}}_{I}}]+Aa^{-1/2}\Big) \le C_\delta A\ep^3e^A\\
\label{eq:beaufort2}
  \E[Y^{\mathrm{red}}_{T}\Ind_{G_{\mathrm{red}}}] &\le C\Big(\E[(Z^{\mathrm{red}}_{\Box,I} + Y^{\mathrm{red}}_{\Box,I})\Ind_{G^{\mathrm{wtr}}_{I}}]+Aa^{-1/2}\Big) \le C_\delta \ep^3e^A.
\end{align}

Now define an event $\widetilde G_{\mathrm{nbab}}$ similarly to $G_{\mathrm{nbab}}$, but with the difference that red hat-, check- and fug-particles are now only required not to break out before time $\Theta_1+e^Aa^2$. Trivially, $\widetilde G_{\mathrm{nbab}}\supset G_{\mathrm{nbab}}$. Inspecting the proof of Lemma~\ref{lem:Bflat_N}, one sees that its second part is still valid if $\Pfl_{\mathrm{nbab}}$ is replaced by $\Pfltilde_{\mathrm{nbab}} = \Pfl(\cdot\,|\,\widetilde G_{\mathrm{nbab}})$ and $N_t(r)$ is replaced by $N^{\mathrm{white}}_t(r)$. As a consequence, the second part of Lemma~\ref{lem:Nwtr} and the second inequality in Lemma~\ref{lem:ZY_red} are still valid if $\Efl_{\mathrm{nbab}}$ is replaced by $\Efltilde_{\mathrm{nbab}}$. This permits to bound $Z^{\mathrm{red}}_{\Theta_1}$ in expectation under $\Pfltilde_{\mathrm{nbab}}$: Define a random line
\[
 \mathscr L' = \mathscr N_T^{\mathrm{red}} \wedge \mathscr L^{\mathrm{red}}_{\Box,J},
\]
where  $J = [T,\Theta_1+e^Aa^2]$. Then conditioned on $\F_{\mathscr L'}$ and on the event\footnote{When $a$ is large, this event prevents that $\mathscr L'$ contains in-between particles.} $G^{\mathrm{wtr}}_J$, under the law  $\Pfltilde_{\mathrm{nbab}}$ the descendants of the particles in $\mathscr L'$ follow independent BBM (with drift according to the barrier function) conditioned not to break out before $\Theta_1+e^Aa^2$. By Lemmas~\ref{lem:hat_Z_f} and~\ref{lem:rit3} and Proposition~\ref{prop:quantities}, we then have
\begin{align}
  \label{eq:95}
\Efltilde_{\mathrm{nbab}}[Z^{\mathrm{red}}_{\Theta_1}\,|\,\F_{\mathscr L'}]\Ind_{G^{\mathrm{wtr}}_J} \le C \big(Z^{\mathrm{red}}_T + Z^{\mathrm{red}}_{\Box,J}+ A(Y^{\mathrm{red}}_T + Y^{\mathrm{red}}_{\Box,J})\big).
\end{align}
Note that $G_\Delta\cap G^{\mathrm{red}} \cap G^{\mathrm{wtr}}_J \in \F_{\mathscr L'}$. Equations \eqref{eq:95}, \eqref{eq:beaufort} and \eqref{eq:beaufort2} together with the second inequality of Lemma~\ref{lem:ZY_red} then give,
\begin{equation*}
  \Efl[Z^{\mathrm{red}}_{\Theta_1}\Ind_{G_\Delta\cap \widetilde G_{\mathrm{nbab}} \cap G_{\mathrm{red}}\cap G^{\mathrm{wtr}}_J }] \le C_\delta A\ep^3 e^A,
\end{equation*}
so that by Markov's inequality and the inclusion $\widetilde G_{\mathrm{nbab}} \supset G_{\mathrm{nbab}}$,
\begin{equation}
 \label{eq:96}
 \Pfl(e^{-A}Z^{\mathrm{red}}_{\Theta_1} > \ep^{3/2}/2,\,G_\Delta\cap G_{\mathrm{nbab}} \cap G_{\mathrm{red}}\cap G^{\mathrm{wtr}}_J ) \le C_\delta A\ep^{3/2}.
\end{equation}
Finally, by the hypothesis, Corollary~\ref{cor:Ga2} and Lemmas~\ref{lem:GDelta}, \ref{lem:Gnbab} and \ref{lem:fred}, we have
\begin{equation}
\label{eq:979}
\Pfl(G_\Delta\cap G_{\mathrm{nbab}}\cap G_{\mathrm{red}}\cap G^{\mathrm{wtr}}_J ) \ge 1-\ep^{1+\numcst},
\end{equation}
for some numerical constant $\numcst > 0$.
The lemma now follows from \eqref{eq:52}, \eqref{eq:96} and \eqref{eq:979}, together with \eqref{eq:ep_upper}.
\end{proof}

\subsection{Proofs of the main results}
\label{sec:Bflat_main_results}

\begin{proof}[Proof of Lemma~\ref{lem:HBflat}]
Note that all particles are white at time $0$, so that the first three events in the definition of $G^\flat_0$ are contained in $G_0$. By Lemma~\ref{lem:Hperp}, it then remains to estimate the probability of the last event. Define the random variable $X = \Pfl(Z^{\mathrm{red}}_{\Box,[0,Ka^2]} + Y^{\mathrm{red}}_{\Box,[0,Ka^2]} \le a^{-1/2}\,|\,\F_0)$. Then,
\[
 \Efl[1-X] \le \Pfl(Z^{\mathrm{red}}_{\Box,[0,Ka^2]} + Y^{\mathrm{red}}_{\Box,[0,Ka^2]} \ge a^{-1/2},\,R_{Ka^2}=0) + \Pfl(R_{Ka^2}>0) \le C_\delta o_a(1),
\]
where the first summand is bounded by Lemma~\ref{lem:ZY_red} and Markov's inequality, and the second by Lemma~\ref{lem:Bflat_start}. Applying Markov's inequality once again yields $\Pfl(X\le 1-\ep^2)  \le C_\delta o_a(1)$, which finishes the proof.
\end{proof}

\begin{proof}[Proof of Proposition~\ref{prop:Bflat} and the \Bfl-BBM part of Theorem~\ref{th:Bflat_Bsharp}]
Let $n\ge 0$. Conditioned on $\F_{\Theta_n}$,  the barrier process until $\Theta_{n+1}$ is by definition the same in B$^\flat$-BBM and B-BBM. Furthermore, $G^\flat_n\subset G_n$ and $G^\flat_n\in\F_{\Theta_n}$. Proposition~\ref{prop:Bflat} then follows by induction from Lemma~\ref{lem:G_Bflat}, as in the beginning of the proof of Proposition~\ref{prop:piece} in Section~\ref{sec:piece_proof}.

As for the \Bfl-BBM part of Theorem~\ref{th:Bflat_Bsharp}, inspection of the proofs of Theorems~\ref{th:barrier} and \ref{th:barrier2} shows that they only rely on Proposition~\ref{prop:piece} and on the existence of the coupling with a Poisson process constructed in Section~\ref{sec:BBBM_proofs}. But this construction only relied on the law of $T_{n+1}$ conditioned on $\F_{\Theta_n}$ and $G_n$ and thus readily transfers to the \Bfl-BBM.
\end{proof}

For the proof of the first part of Proposition~\ref{prop:Bflat_Bsharp_med}, we need the following lemma. Recall that the superscript $^{(0)}$ denotes restriction to the particles from tier 0.
\begin{lemma}
\label{lem:Nred}
Suppose (HB$^\flat_0$). Let $r\in[0,9a/10]$ and $(K+1)a^2\le t\le \tconst{eq:tcrit}\wedge \sqrt\ep a^3$. Then, for all $\gamma > 0$, for large $A$ and~$a$,
  \begin{equation*}
  \P(N^{{(0)},\mathrm{red}}_{t}(r)>\gamma N,\,T>t) \le C_\delta \gamma^{-1}(1+ \mu r)e^{-\mu r}\ep \Big(\frac t {a^3} +\eta\Big)+ C\ep^2.
  \end{equation*}
\end{lemma}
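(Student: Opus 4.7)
I apply Markov's inequality to $N^{(0),\mathrm{red}}_t(r)\Ind_{T>t}$ after bounding its expectation on a high-probability event whose complement contributes at most $C\ep^2$. By the strong branching property along the white-to-red stopping line $\mathscr L^{\mathrm{red}}_{\Box,[0,t]}$, a tier-$0$ red particle at time $t$ descends via a tier-$0$ path from some $(v,s)$ on this stopping line with $v\in\mathscr A_0(s)$, so
\[
\E[N^{(0),\mathrm{red}}_t(r)\Ind_{T>t}]\le \E\Big[\sum_{(v,s)\in\mathscr L^{\mathrm{red}}_{\Box,[0,t]},\,v\in\mathscr A_0(s)}\E^{(X_v(s),s)}[N^{(0)}_{t-s}(r)]\Big].
\]
When $t-s\ge Ka^2$, the eigenfunction expansion of the killed-Brownian transition density used in the proof of Lemma~\ref{lem:Bflat_N} (eq.~(22)) yields the sharp bound $\E^{(x,s)}[N^{(0)}_{t-s}(r)]\le C(1+E_K)e^{-A}Nw_Z(x)(1+\mu r)e^{-\mu r}$, supplying the correct spatial factor $(1+\mu r)e^{-\mu r}$ in the final estimate.

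The dominant contribution comes from transitions in the middle window $s\in[Ka^2, t-Ka^2]$. Using integration by parts together with the density bound from Lemma~\ref{lem:Nwtr} part~1 (rate of transitions at position $x$ bounded by $C_{\delta,\alpha}N\ep(s/a^3+\eta)e^{-(2-\alpha)x}$ conditional on $T>t$) and the elementary estimate $\int_0^a w_Z(x)e^{-(2-\alpha)x}\,\dd x\le C_\alpha e^{-\mu a}$, the expected value of $\sum w_Z(X_v(s))\Ind_{T>t}$ over this window is at most $C_{\delta,\alpha}(e^A/a^3)\ep(t/a^3+\eta)\cdot t$. Multiplied by the descendant-count prefactor $Ce^{-A}N(1+\mu r)e^{-\mu r}$ and absorbing the extra factor $t/a^3\le\sqrt{\ep}\le 1$, this gives a contribution to the expectation of at most $C_\delta N(1+\mu r)e^{-\mu r}\ep(t/a^3+\eta)$.

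Transitions in the early window $[0,Ka^2]$ are controlled by the hypothesis (HB$^\flat_0$): the last clause of $G^\flat_0$ yields $Z^{\mathrm{red}}_{\Box,[0,Ka^2]}\le a^{-1/2}$ with $\Pfl$-probability $\ge 1-\ep^2$. Since $t\ge (K+1)a^2$ guarantees residual time $t-s\ge a^2$, the sharp descendant bound still applies, and the resulting contribution to the expectation is at most $Ce^{-A}N(1+\mu r)e^{-\mu r}a^{-1/2}$. For $a$ sufficiently large (depending on $A$), this is dominated by the main term $C_\delta N(1+\mu r)e^{-\mu r}\ep(t/a^3+\eta)$ after dividing by $\gamma N$; the $\ep^2$ failure probability becomes the additive $C\ep^2$ in the statement.

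The hard part will be the late window $s\in[t-Ka^2, t]$, where the residual time $t-s<Ka^2$ falls outside the regime in which a single eigenfunction captures $\E^{(x,s)}[N^{(0)}_{t-s}(r)]$ with a clean $w_Z(x)$ prefactor. To handle it I plan to work on the event $G^{\mathrm{wtr}}_{[t-Ka^2,t]}$ (whose failure probability is $\le C\ep^2$ by the argument of Corollary~\ref{cor:Ga2}), restricting red transitions to positions $x\le 2a/3$, and combine a pointwise Gaussian-tail estimate on $\E^{(x,s)}[N^{(0)}_{t-s}(r)]$ exploiting the drift $-\mu$ and the tier-$0$ killing at the boundary $0$ with the full multi-eigenfunction expansion truncated at effective index $n\sim a/\sqrt{t-s}$. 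Integrating the resulting short-time descendant bound against the transition-rate density from Lemma~\ref{lem:Nwtr} should yield a contribution of the same order as the middle window. Collecting all pieces and applying Markov's inequality then completes the proof.
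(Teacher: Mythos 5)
Your decomposition (Markov's inequality on a good event of probability $1-C\ep^2$, the measure of white-to-red transitions controlled via Lemma~\ref{lem:Nwtr}, the eigenfunction bound $\E^{(x,s)}[N^{(0)}_{t-s}(r)]\le Ce^{-A}N(1+\mu r)e^{-\mu r}w_Z(x)$ for large residual time, and the hypothesis on $Z^{\mathrm{red}}_{\Box,[0,Ka^2]}$ for the early window) is exactly the paper's strategy for the early and middle windows. But the late window $s\in[t-Ka^2,t]$ is left as a plan, not a proof, and that plan does not close the genuine difficulty there. The issue is not merely finding a pointwise short-time bound on $\E^{(x,s)}[N^{(0)}_{t-s}(r)]$: it is that the transition measure $\mathfrak{m}_x(\dd s)$ from Lemma~\ref{lem:Nwtr} is only controlled on intervals, with the bound $C\ep(t/a^3+\eta)e^{-(3/2)x}N(|I|+1)$ containing an irreducible additive $+1$, so a crude supremum bound over a window of length $O(a^2)$ costs a factor of order $a^2$ (or, per unit interval, leaves you integrating a kernel that is not monotone in $s$ or $x$ against a measure you only know at interval resolution). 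The paper resolves this by replacing the killed-interval kernel with the half-line kernel $p^\infty_{t-s}(x,y)$, discretizing in $x$ and regularizing the kernel via $\overline p^\infty_s(x,y)=\max_{z\in[\lfloor x\rfloor,\lfloor x\rfloor+1]}p^\infty_s(z,y)$ (using unimodality in $x$), and then invoking the Green-function identity $\int_0^\infty p^\infty_s(x,y)\,\dd s=2(x\wedge y)\le 2x$ together with its unit-interval maximal version. The resulting factor $x$ is what makes $\sum_x e^{\mu x}\,x\,e^{-(3/2)x}$ summable and brings the late-window contribution down to the same order $C_\delta e^{-\mu r}\ep(t/a^3+\eta)N$ as the middle window; a further split of $[t-a^2,t]$ at $t-1$ is needed because the regularization in $s$ requires $t-s\ge 1$.

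Your proposed substitute --- a Gaussian-tail estimate combined with ``the full multi-eigenfunction expansion truncated at effective index $n\sim a/\sqrt{t-s}$'' --- does not address this: for $t-s=O(1)$ the eigenfunction expansion of $p^a_{t-s}$ is useless (it converges only after $\sim a$ terms and gives no decay), and even with a correct pointwise kernel bound you still must explain how to integrate it in $s$ against $\mathfrak{m}_x(\dd s)$ without losing the factor $a^2$ from the window length. Some form of the potential-kernel bound $\int_0^\infty p^\infty_s(x,y)\,\dd s\le 2x$ (or an equivalent occupation-time estimate), plus a regularization compatible with interval-level control of $\mathfrak m$, is the missing idea. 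Until that is supplied, the proof is incomplete precisely at the step you yourself flag as the hard part.
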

\begin{proof}
Define the event 
\(G = G^{\mathrm{wtr}}_{[0,t]}\cap \{Z^{\mathrm{red}}_{\Box,[0,Ka^2]} + Y^{\mathrm{red}}_{\Box,[0,Ka^2]} \le a^{-1/2}\}.\)
Then $\P(G^c,\,T>t) \le C\ep^2$ for large $A$ and $a$, by Corollary~\ref{cor:Ga2} and the hypothesis. By Markov's inequality, it therefore suffices to show that
\begin{equation}
\label{eq:soso}
 \Ehat[N^{{(0)},\mathrm{red}}_t(r)\Ind_{G}] \le  C_{\delta}(1+\mu r)e^{-\mu r}\ep\Big(\frac t {a^3} +\eta\Big)N,
\end{equation}
where $\Ehat = \E[\cdot\,|\,T>t]$. Define a measure $\mathfrak{m}$ on $[0,a]\times [0,t]$ by
\[
\mathfrak{m}(S',S) = \Ehat[\#\{(u,s)\in\mathscr L^{(0),\mathrm{red}}_{\Box,[0,t]}: X_u(s)\in S',\,s\in S\}], 
\]
for all Borel $S'\subset[0,a],\,S\subset[0,t]$. Further set for every $x\in[0,a]$, $\mathfrak{m}_x(S) = \mathfrak{m}([x,a],S)$. By Lemma~\ref{lem:Nwtr}, we then have for $x\le 9a/10$ and every interval $I\subset[Ka^2,t]$
\begin{equation}
 \label{eq:m_r}
 \mathfrak{m}_x(I) \le C_\delta\ep\Big(\frac t {a^3} +\eta\Big)e^{-(3/2) x}N (|I|+1).
\end{equation}
Let $N^{{(0)},\mathrm{red}}_t(x,S)$ denote the number of red tier-0 particles to the right of $x$ at time $t$ which have turned red during some time $s\in S$.
If $\Ehat^{(x,s)}[N^{(0)}_{t}(r)]\le f(x,s)$ for some positive measurable function $f$, then by definition,
\begin{align}
 \label{eq:tacky}
   \Ehat[N^{{(0)},\mathrm{red}}_t(r,S)\Ind_{G}] \le \int_{S\times [0,2a/3]} f(x,s)\,\mathfrak{m}(\dd x,\dd s),
\end{align}
 for Borel $S\subset[0,t]$.
If furthermore $f(x,s)$ is jointly differentiable in $x$ and continuous in $s$, then by \eqref{eq:tacky} and integration by parts,
  \begin{align}
  \label{eq:tacky2}
  \Ehat[N^{{(0)},\mathrm{red}}_t(r,S)\Ind_{G}] 
\le \int_0^{2a/3} \int_B \frac \dd {\dd x}f(x,s)\,\mathfrak{m}_x(\dd s)\,\dd x + \int_B f(0,s)\,\mathfrak{m}_0(\dd s).
  \end{align}
Define $I_1 = [0,t-a^2]$ and $I_2 = [t-a^2,t]$. By Lemma~\ref{lem:N_expec_large_t}, \eqref{eq:250} and Corollary~\ref{cor:hat_many_to_one}, we have for $x\in[0,2a/3]$,
\begin{equation}
\label{eq:wordcrimes}
  \Ehat^{(x,s)}[N^{(0)}_{t}(r)] \le 
  \begin{cases} 
  C(1+\mu r)e^{-\mu r}N e^{-A}w_Z(x), &\tif s\in I_1\\
  Ce^{\mu x}\int_r^\infty e^{-\mu z} p^a_{t-s}(x,y),& \tif s\in I_2.
  \end{cases}
\end{equation}
By \eqref{eq:tacky2} and \eqref{eq:wordcrimes} and the inequality $w_Z'(x)\le C(1+x)e^{x-a}$, we have for large $A$ and $a$,
\begin{align}
\nonumber
\Ehat[N^{{(0)},\mathrm{red}}_t(r,I_1)\Ind_{G}]  &\le Ce^{-A}N(1+\mu r)e^{-\mu r} \Big(\int_0^{2a/3}\mathfrak{m}_x([Ka^2,t-a^2])(1+x)e^{x-a}\,\dd x + \Ehat[Z^{\mathrm{red}}_{\Box,[0,Ka^2]}\Ind_G]\Big)\\
\label{eq:prefinally}
&\le C_\delta (1+\mu r)e^{-\mu r}N \big(\ep((t/a^{3})+\eta)(t/a^{3})+o_a(1)\big),
\end{align}
the last inequality following from \eqref{eq:m_r} and the definition of $G$. 

We now further subdivide the interval $I_2$ into $I_2' = [t-a^2,t-1]$ and $I_2'' = [t-1,t]$. Writing $p_s^\infty(x,y) = (2/(\pi s))^{1/2} \exp(-(x^2+y^2)/2s)\sinh(xy/s)$ for the transition density of Brownian motion killed at $0$, we have trivially,
\begin{equation}
 \label{eq:wordcrimes2}
 \Ehat^{(x,s)}[N^{(0)}_{t}(r)] \le \begin{cases}  Ce^{\mu x}\int_r^\infty e^{-\mu y} p^\infty_{t-s}(x,y),& \tif s\in I_2'\\
 Ce^{\mu x} e^{-\mu y},& \tif s\in I_2''.
 \end{cases}
\end{equation}
By \eqref{eq:tacky2}, \eqref{eq:wordcrimes2} and Fubini's theorem, we now have
\[
 \Ehat[N^{{(0)},\mathrm{red}}_t(r,I_2')\Ind_{G}] \le C \int_r^\infty e^{-\mu y}  \int_{I_2'\times [0,2a/3]}    e^{\mu x}p^\infty_{t-s}(x,y)\,\mathfrak{m}(\dd x,\dd s)\,\dd y.\\
\]
In order to use our estimate on $\mathfrak{m}_x$, we regularise the integrand. For this, write $\overline p^\infty_s(x,y) = \max_{z\in[\lfloor x \rfloor, \lfloor x \rfloor+1]} p^\infty_s(z,y)$. By elementary calculations, using notably the unimodality of $p^\infty_s(x,y)$ in~$x$, we have $\overline p^\infty_s(x,y) \le C(p^\infty_s(x+1,y)+p^\infty_s(x-1,y) + s^{-3/2}\Ind_{(\max(x,y)<2)})$ for every $s\ge 1$ and all $x,y\ge0$, where we set $p^\infty_s(x,y) = 0$ for $x<0$. From the above inequality, we then get,
\begin{align}
\nonumber
 \Ehat[N^{{(0)},\mathrm{red}}_t(r,I_2')\Ind_{G}]&\le C \int_r^\infty e^{-\mu y} \sum_{x\in\N\cap[0,2a/3]} \int_{I_2'}  e^{\mu x} \overline p^\infty_{t-s}(x,y) (\mathfrak{m}_x(\dd s) - \mathfrak{m}_{x+1}(\dd s))\,\dd y\\
 \nonumber
 &\le C \int_r^\infty e^{-\mu y}\sum_{x\in\N\cap[0,2a/3+1]} e^{\mu x} \int_{I_2'}[p^\infty_{t-s}(x,y)+(t-s)^{-3/2}\Ind_{(\max(x,y)<2)}]\,\mathfrak{m}_{x-1}(\dd s)\,\dd y.
\end{align}
It is well-known that $\int_0^\infty p^\infty_s(x,y)\,\dd s = 2(x\wedge y) \le 2x$ (take for example $a\to\infty$ in \eqref{eq:p_potential}). One can also check that $\int_1^\infty \max_{\lfloor s\rfloor \le s'\le \lfloor s\rfloor+1} p^\infty_{s'}(x,y)\,\dd s \le Cx$. Together with the last display and \eqref{eq:m_r}, this finally gives
\begin{equation}
\label{eq:finally}
 \Ehat[N^{{(0)},\mathrm{red}}_t(r,I_2')\Ind_{G}] \le C_\delta e^{-\mu r} \ep\Big(\frac t {a^3} +\eta\Big)N.
\end{equation}

By \eqref{eq:m_r}, \eqref{eq:tacky2} and \eqref{eq:wordcrimes2}, one easily sees that \eqref{eq:finally} also  holds with $I_2'$ replaced by $I_2''$. Together with \eqref{eq:prefinally}, this yields \eqref{eq:soso} and therefore finishes the proof.
\end{proof}

\begin{proof}[Proof of the first part of Proposition~\ref{prop:Bflat_Bsharp_med}]
Define $\rho = x_{\alpha e^{2\delta}}$, so that $(1+\mu\rho)e^{-\mu\rho} = \alpha e^{2\delta}(1+o_a(1))$. 
For simplicity, we will only show that for every $t>0$, $\Pfl(\med^N_\alpha(\nu^{\flat}_{ta^3}) \ge \rho) \to 1$ as $A$ and $a$ go to infinity. The general case is a straightforward extension (and the case $t= 0$ is an easy calculation). 
Fix $t>0$ and set $n_0 := \lceil \widetilde{\ep}^{-1}(t+1)\rceil$. 
Define $\widetilde n$ to be the (random) number, such that $ta^3 \in [\Theta_{\widetilde n},\Theta_{\widetilde n+1})$. Define the good set
\[
 G = G^\flat_{n_0} \cap \{\widetilde n < n_0\} \cap \{ta^3 \in [\Theta_{\widetilde n}+e^Aa^2,T_{\widetilde n+1})\}.
\]
Then $\Pfl(G)\to 1$ as $A$ and $a$ go to infinity, since by Proposition~\ref{prop:Bflat}, we have $\Pfl(G^\flat_{n_0}) \ge 1-O((t+1)\ep^{\numcst})$ for large $A$ and $a$, and using the coupling  of the process $(\Theta_n/a^3)_{n\ge 0}$ with a Poisson process  of intensity $\widetilde{\ep}^{-1}$ from  Section~\ref{sec:BBBM_proofs}, it is easy to show that the other events in the definition of $G$ happen with high probability as well (see e.g.\ the proof of Theorem~\ref{th:barrier}). We now have
\begin{multline*}
 \Pfl(\med^N_\alpha(\nu^{\flat}_{ta^3}) \ge \rho,\,G) = \sum_{n=0}^{n_0-1} \Pfl(\med^N_\alpha(\nu^{\flat}_{ta^3}) \ge \rho,\,G,\,\widetilde n = n)\\
 \le \sum_{n=0}^{n_0-1} \Efl\Big[\Pfl(\med^N_\alpha(\nu^{\flat}_{ta^3}) \ge \rho,\, T_{n+1} > ta^3\,|\,\F_{\Theta_n})\Ind_{G^\flat_n \cap \{ta^3 - \Theta_n \in [e^Aa^2,\sqrt\ep a^3]\}}\Big].
\end{multline*}
In order to finish the proof, by \eqref{eq:ep_tilde_ep} it remains to show that for every initial configuration $\nu$ such that $G^\flat_0$ is satisfied, and for every $s\in[e^Aa^2,\sqrt\ep a^3]$, we have for large $A$ and $a$, for some $b>0$,
\begin{equation}
 \label{eq:3049}
 \P^\nu(N^{\mathrm{white}}_{s}(\rho) < \alpha N,\,T> s) = O(\ep^{1+\numcst}),
\end{equation}
where we recall from Section~\ref{sec:Bflat_num_particles} that $N^{\mathrm{white}}_{s}(\rho)$ denotes the number of white particles to the right of $\rho$ at time $s$. Write $\Phat^\nu = \P^\nu(\cdot\,|\,T>s)$. We have for large $A$ and $a$,
\begin{align*}
  &\Ehat^\nu[N^{(0)}_s(\rho)] \ge (1-O(p_B + E_{e^A}) - o_a(1))e^{2\delta} \alpha e^{-\delta} N \ge (1+\delta/2) \alpha N,\\
  &\Varhat^\nu(N^{(0)}_s(\rho)) \le C_\alpha e^{-2A} N^2,
\end{align*}
where the  first line follows from Lemma~\ref{lem:N_expec_large_t} and Corollary~\ref{cor:hat_many_to_one} together with \eqref{eq:pB} and \eqref{eq:def_E} and the second from Lemma~\ref{lem:N_2ndmoment} and Corollary \ref{cor:hat_many_to_one}. It follows that
\begin{align*}
  \P^\nu(N^{{(0)},\mathrm{white}}_{s}(\rho) < \alpha N,\,T>s) &\le \Phat^\nu(N^{(0)}_{s}(\rho) < (1+\delta/4)\alpha N) +  \P^\nu(N^{{(0)},\mathrm{red}}_{s}(\rho) \ge (\delta/4)\alpha N,\,T>s)\\
&\le C_{\delta,\alpha} \left(e^{-2A} + \ep(s/a^3+\eta) + \ep^2\right).
\end{align*}
by Chebychev's inequality applied to the first term and Lemma~\ref{lem:Nred} to the second (with $\gamma = (\delta/4)\alpha$). Since $N^{{(0)},\mathrm{white}}_{s}(\rho) \le N^{\mathrm{white}}_{s}(\rho)$, this implies \eqref{eq:3049} (using \eqref{eq:ep_lower} and \eqref{eq:eta_ep}). This finishes the proof.
\end{proof}

\section{The \texorpdfstring{\Bsh}{B\#}-BBM}
\label{sec:Bsharp}

In this section, we prove the parts of Theorem~\ref{th:Bflat_Bsharp} and Proposition~\ref{prop:Bflat_Bsharp_med} concerning the \Bsh-BBM, which was defined in Section~\ref{sec:Bsharp_definition}. As for the previous section, this section relies very much on Section~\ref{sec:BBBM} and we will use all of the notation introduced there. 

\subsection{More definitions}
\label{sec:Bsharp_more_defs}
As in the previous section, recall that we fix $\delta\in(0,1/100)$ and that the phrase ``for large $A$ and $a$'' may now depend on $\delta$. Define $K$ to be the smallest number, such that $K \ge 1$ and $E_K \le \delta/10$. Write for short $N = N^\sharp = \lfloor 2\pi  e^{A-\delta}a^{-3}e^{\mu a} \rfloor $. The symbols $C_\delta$ and $C_{\delta,\alpha}$ have the same meaning as in the last section.

For bookkeeping, we add a shade of grey to the white particles which have hit 0 at least once (and call them hence the grey particles). We then add the superscripts ``nw'', ``gr'', ``blue'' or ``tot'' to the quantities referring respectively to the non-white, grey, blue or all the particles. Quantities without this superscript refer to the white particles. This convention enables us often to use interchangeably $\Psh$, $\P$ and $\PB$ (or $\Esh$, $\E$ and $\EB$) in formulae concerning the white particles; we will do so without further mention.

We will also use the notation $N_t(r)$ and its variants from Section~\ref{sec:Bflat_num_particles} and set $N_t = N_t(0)$, $N^{(0)}_t = N^{(0)}_t(0)$ etc. In particular, $N^{\mathrm{tot}}_t$ denotes the total number of particles to the right of the origin at time $t$. We also define $B_n$ and $B^{\mathrm{tot}}_n$ to be the number of white, respectively, white and grey particles touching the left barrier during the time interval $I_n$ with less than $N$ particles to their right (i.e.\ those which are coloured blue). 

Recall that $\nu^\sharp_t$ denotes the empirical measure of all particles at time $t$ and abuse notation by setting $\nu^\sharp_n =\nu^\sharp_{\Theta_n}$. We set $G^{\sharp}_{-1}=\Omega$ and for each $n\in\N$, we define the event $G^\sharp_n$ to be the intersection of $G^\sharp_{n-1}$ with the following events:
\begin{itemize}[nolistsep]
 \item $\supp\nu^\sharp_n\subset (0,a)$,
 \item $\mathscr N^{\mathrm{tot}}_{\Theta_n} \subset U\times \{\Theta_n\}$ and $\Theta_n > T_n^+$ (for $n>0$),
 \item $|e^{-A}Z^{\mathrm{tot}}_{\Theta_n} -1| \le \ep^{3/2}$ and $Y^{\mathrm{tot}}_{\Theta_n} \le \eta$,
 \item $N^{\mathrm{tot}}_{\Theta_n} \ge N$ and for all $j\ge 0$ with $\Theta_{n-1}+t_j<\Theta_n$: $N^{\mathrm{tot}}_{\Theta_{n-1}+t_j} \ge N$,
 \item $\Psh\Big(B^{\mathrm{tot}}_{[\Theta_n,\Theta_n+t_1]} \le e^{-A} e^{\mu a}/a\,\Big|\,\F_{\Theta_n}\Big) \ge 1-\ep^2$.
\end{itemize}
The last event is of course uniquely defined up to a set of probability zero. Note that $G^\sharp_n\in\F_{\Theta_n}$ for each $n\in\N$.
 
We now state  the main results from this section, which will imply the \Bsh-BBM parts of Theorem~\ref{th:Bflat_Bsharp} and Proposition~\ref{prop:Bflat_Bsharp_med}. They are proved in Section~\ref{sec:Bsharp_main_results}. Recall the definition of (H$_\perp$) from Section~\ref{sec:BBBM_results}.
\begin{lemma}
\label{lem:HBsharp}
(H$_\perp$) implies that $\Psh(G^\sharp_0)\to 1$ as $A$ and $a$ go to infinity.
\end{lemma}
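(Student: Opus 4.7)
The proof I propose follows the structure of Lemma~\ref{lem:HBflat}. Since all initial particles are white, we have $Z^{\mathrm{tot}}_0 = Z_0$, $Y^{\mathrm{tot}}_0 = Y_0$, and $\mathscr N^{\mathrm{tot}}_0 \subset U\times\{0\}$, so the first three events in the definition of $G^\sharp_0$ are contained in the event $G_0$ from Section~\ref{sec:BBBM} and are therefore handled by Lemma~\ref{lem:Hperp}. The fourth event $N^{\mathrm{tot}}_0 \ge N$ holds deterministically under (H$_\perp$), since the initial number of particles is $\lfloor 2\pi e^A a^{-3}e^{\mu a}\rfloor \ge N = \lfloor 2\pi e^{A-\delta}a^{-3}e^{\mu a}\rfloor$ for large $a$, and the clause involving $\Theta_{n-1}+t_j$ is vacuous when $n=0$.

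For the fifth event, set $X = \Psh(B^{\mathrm{tot}}_{[0,t_1]} \le e^{-A}e^{\mu a}/a \mid \F_0)$. As in the proof of Lemma~\ref{lem:HBflat}, two successive applications of Markov's inequality reduce the problem to showing that $\Esh[B^{\mathrm{tot}}_{[0,t_1]}] = o_a(\ep^2\cdot e^{-A}e^{\mu a}/a)$. Since $\ep$ depends only on $A$ (fixed before $a\to\infty$), this amounts to $\Esh[B^{\mathrm{tot}}_{[0,t_1]}] = o_a(e^{\mu a}/a)$.

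The key observation is that a white or grey particle gets coloured blue only at a moment $t$ at which $N_t^{\mathrm{tot}}(0) < N$, that is, only while the total count has fallen below its equilibrium value $\approx e^\delta N$. Under (H$_\perp$), the first- and second-moment estimates of Section~\ref{sec:interval_number} (specifically Lemma~\ref{lem:N_expec_large_t} and Lemma~\ref{lem:N_2ndmoment}, used just as in the proof of Lemma~\ref{lem:Bflat_start}) give $\E[N_t^{(0)}(0)] = e^\delta N(1+o_a(1))$ and $\Var(N_t^{(0)}(0)) = o(N^2)$ uniformly in $t\in[0,t_1]$. Chebychev combined with a union bound over a sufficiently fine mesh of times in $[0,t_1]$ then yields $\Psh(\exists t\in[0,t_1]: N_t^{\mathrm{tot}}(0) < N) = o_a(1)$; on the complementary event $B^{\mathrm{tot}}_{[0,t_1]} = 0$. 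To control the contribution of the bad event, I would dominate $B^{\mathrm{tot}}_{[0,t_1]}$ by the total number of particles hitting $0$ during $[0,t_1]$ and estimate the latter via a many-to-one calculation against the initial density $\varphi$; this gives an expectation of order $Na\asymp e^A e^{\mu a}/a^2$, after which a Cauchy--Schwarz step closes the argument.

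The main obstacle is making the time-uniform control on $N_t^{\mathrm{tot}}(0)$ quantitative enough to beat the deterministic contribution of the bad event. Since Lemma~\ref{lem:N_2ndmoment} controls only the variance at a fixed time, propagating it to an estimate uniform in $t\in[0,t_1]$ requires choosing the mesh finely enough that $N_t^{\mathrm{tot}}(0)$ changes by only $o(N)$ between consecutive grid points, while keeping the union bound over the mesh compatible with the variance estimate; this balance is feasible because the variance is $o(N^2)$ while the deficit $(e^\delta-1)N$ is of order $N$, leaving ample room.
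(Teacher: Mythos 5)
Your reduction is sound up to the last step: the first three events are contained in $G_0$ and handled by Lemma~\ref{lem:Hperp}, the fourth is deterministic under (H$_\perp$), and the double Markov argument correctly reduces the fifth to showing $\E[B^{\mathrm{tot}}_{[0,t_1]}] = o_a(e^{\mu a}/a)$ (the paper in fact gets $C_\delta e^{\mu a}/a^2$, after discarding the event $R_{t_1}>0$, which has probability $O(e^A/a)$). The gap is in how you propose to prove that moment bound. Your plan is to establish the \emph{time-uniform} statement $\Psh(\exists t\in[0,t_1]: N^{\mathrm{tot}}_t(0)<N)=o_a(1)$ by a union bound over a mesh, and your claim that there is ``ample room'' is quantitatively false. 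The single-time Chebychev bound coming from \eqref{eq:69} and \eqref{eq:84} is $\P(N^{(0)}_s(0)<N)\le C_\delta e^{-A}/a$, and it cannot be improved within this second-moment framework. On the other hand, under the stationary density the flux of particles into the origin is of order $N$ per unit time, so over a mesh interval of length $h$ the count can drop by order $Nh$; to keep this below the deficit $(e^\delta-1)N$ you need $h=O_\delta(1)$, hence at least $\asymp a^2$ mesh points over $[0,t_1]=[0,(K+3)a^2]$. The union bound then gives $a^2\cdot C_\delta e^{-A}/a = C_\delta a e^{-A}\to\infty$ as $a\to\infty$ ($A$ being fixed first), so the good event cannot be shown to have high probability this way. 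The Cauchy--Schwarz patch on the bad event is also hopeless: $\E[L_{[0,t_1]}^2]^{1/2}$ is at least of order $e^{\mu a+A}/a$, so you would need $\P(\mathrm{bad})\lesssim \ep^4 e^{-4A}$, far beyond anything a union bound could deliver.

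The paper's argument (the same device as in Lemmas~\ref{lem:blue_particles} and~\ref{lem:Bflat_start}) avoids time-uniform control entirely. Write $B_0\Ind_{(R_{t_1}=0)}\le \sum_{(u,s)\in\mathscr L_{H_0},\,s\in I_0}\Ind_{(N^{\neg u_0}_s<N)}$, where $u_0$ is the initial ancestor of $u$ and $N^{\neg i}_s$ counts particles not descending from $i$. Conditioning on the initial positions and using the independence of the initial particles' subtrees, the indicator $\Ind_{(N^{\neg i}_s<N)}$ decouples from the subtree of $i$, so
\[
\E[B_0\Ind_{(R_{t_1}=0)}]\le \sum_{i}\E\Big[\sup_{s\in I_0}\P\big(N^{\neg i}_s<N,\,R_{t_1}=0\,\big|\,X_i(0)\big)\cdot \E^{X_i(0)}[L_{I_0}\Ind_{(R_{t_1}=0)}]\Big].
\]
Here the supremum sits \emph{outside} a single-time probability, so the uniform-in-$s$ bound $C_\delta e^{-A}/a$ from \eqref{eq:69} and \eqref{eq:84} suffices with no union bound, and $\sum_i\E^{X_i(0)}[L_{I_0}]\le Ce^{\mu a}\E[Y_0]\le Ce^{\mu a+A}/a$ by Lemma~\ref{lem:Bsharp_left_border} and \eqref{eq:49}, giving $C_\delta e^{\mu a}/a^2$ as required. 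You should replace your mesh argument by this decoupling step.
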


\begin{proposition}
  \label{prop:Bsharp}
Proposition~\ref{prop:piece} still holds with $G_n$, $\PB$ and $\EB$ replaced by $G_n^\sharp$, $\Psh$ and $\Esh$.
\end{proposition}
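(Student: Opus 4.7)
The plan is to imitate closely the strategy of Section~\ref{sec:Bflat} used to prove the analogous Proposition~\ref{prop:Bflat}. By the induction argument from the beginning of the proof of Proposition~\ref{prop:piece} in Section~\ref{sec:piece_proof} (which only relied on the process restarting at $\Theta_n$ and on $G^\sharp_n \in \F_{\Theta_n}$ being monotone in $n$), it suffices to prove the one-step estimate: assuming (HB$_0^\sharp$), namely that $\nu^\sharp_0$ is deterministic and $G_0^\sharp$ holds, we want $\Psh(G_1^\sharp) \ge 1 - \ep^{1+\numcst}$ together with the Fourier identity of Proposition~\ref{prop:piece}. Since the barrier dynamics in \Bsh-BBM agree identically with those of B-BBM (the blue/white coloring affects only which particles are removed, not the drift or the definition of breakout), the Fourier computation from Section~\ref{sec:piece_proof} transfers verbatim once all good sets $G_\Delta$, $G_{\mathrm{nbab}}$, $G_1$ used there are shown still to have the required probability under $\Psh$; the extra work is to add control of the blue particles.

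The central new ingredient is a lower-tail analog of Lemma~\ref{lem:Bflat_N}: one shows, for $t\in[Ka^2,\tconst{eq:tcrit}\wedge\sqrt\ep a^3]$,
\[
\Psh(N^{\mathrm{tot}}_t < N^\sharp \mid T > t) \le C_\delta e^{-A},
\]
and an analogous inequality conditional on $\F_\Delta$ for $t\in[T,\Theta_1+e^Aa^2]$. The proof parallels that of Lemma~\ref{lem:Bflat_N} but exploits the gap $e^\delta$ in the opposite direction: under (HB$_0^\sharp$), Lemma~\ref{lem:N_expec_large_t}, Corollary~\ref{cor:hat_many_to_one} and the definition of $K$ give $\Ehat[N^{(0)}_t(0)] \ge (1 - O(E_K + p_B))\,e^\delta N^\sharp \ge (1 + \delta/2)\,N^\sharp$, while Lemma~\ref{lem:N_2ndmoment} together with Corollary~\ref{cor:hat_many_to_one} yields $\Varhat(N^{(0)}_t(0)) \le C e^{-A}(N^\sharp)^2$; Chebyshev closes the argument, with the conditional-on-$\F_\Delta$ version using the decomposition into hat-, check-, fug- and bar-particles exactly as in the proof of the second half of Lemma~\ref{lem:Bflat_N}. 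One must also handle the time window $t \le Ka^2$ using the stationarity argument of Lemma~\ref{lem:Bflat_start}.

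From this lower-tail bound, the next step mirrors Lemma~\ref{lem:Nwtr}: a white particle becomes blue only at a time when $N^{\mathrm{tot}}_t < N^\sharp$, so integrating the small probability above bounds the measure on $[Ka^2,T\wedge\sqrt\ep a^3]$ counting blueings by a constant multiple of Lebesgue measure; this yields $\Esh[B^{\mathrm{tot}}_{[\Theta_0,\Theta_0+t_1]}\Ind_G] \le C_\delta\, e^{-A} e^{\mu a}/a^2$ on a good set, from which Markov's inequality produces the last event in the definition of $G_1^\sharp$ with probability $1-O(\ep^2)$. Similarly, on the interval $[T,\Theta_1+e^Aa^2]$ one controls the blueings via the second (conditional) lower-tail estimate. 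The contribution of blue particles and their descendants to $Z^{\mathrm{tot}}_{\Theta_1}$, $Y^{\mathrm{tot}}_{\Theta_1}$ is then bounded by restarting the BBM from the stopping line of blueing events and applying Lemma~\ref{lem:hat_Z_f} and Proposition~\ref{prop:quantities} over the grace period of length at most $2(K+3)a^2$, exactly as in the proof of Lemma~\ref{lem:G_Bflat}, leading to the analog of that lemma. The lower bound $N^{\mathrm{tot}}_{\Theta_n} \ge N^\sharp$ demanded by $G_1^\sharp$ then follows directly from the lower-tail estimate applied at times $\Theta_0+t_j$ and $\Theta_1$, using that the grace period keeps blue particles alive long enough for the population to recover.

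The principal technical obstacle, as flagged in the remark at the end of Section~\ref{sec:Bflat_Bsharp_results}, is the tight balance imposed by the choice of grace-period length $(K+3)a^2$: were it longer, blue particles would spawn too many descendants to be treated perturbatively; were it shorter, the $w_Y$-term in Lemma~\ref{lem:N_2ndmoment}, which accumulates over a time scale $\asymp a^2$, would overwhelm the expected lower-tail gain of $\delta N^\sharp$ in the Chebyshev step. One must therefore carry the estimates of both the lower-tail bound and the $Z^{\mathrm{tot}},Y^{\mathrm{tot}}$ bookkeeping through the \emph{same} choice of $K$, and this coupling between the two chains of estimates (rather than any single inequality) is where the work lies. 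Once these bounds are in place, the Fourier computation and the probability estimates of Section~\ref{sec:piece_proof} apply under $\Psh$ with only cosmetic modifications, completing the proof of Proposition~\ref{prop:Bsharp}.
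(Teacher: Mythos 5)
Your high-level plan is on target and matches the paper's strategy: reduce to a one-step estimate $\Psh(G_1^\sharp)\ge 1-\ep^{1+b}$ (the analog of Lemma~\ref{lem:G_Bflat}, which the paper proves as Lemma~\ref{lem:G_sharp}), carry the Fourier computation of Section~\ref{sec:piece_proof} over unchanged, and control blue particles via a lower-tail estimate on the population size. But there are two genuine gaps in how you propose to carry this out, both stemming from the fact that the blue particles are structurally very different from the red particles of the \Bfl-BBM.

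First and most importantly, you treat blue-particle creation as a one-shot phenomenon. A blue particle is recoloured white at the end of its grace interval and its descendants can subsequently become blue again; this cascading effect is what the paper's recursion over the intervals $I_n$ (Lemmas~\ref{lem:B_kn} and~\ref{lem:B_n}, together with the grey-particle bookkeeping and the quantities $B^{\mathrm{tot}}_n$, $B^{\mathrm{tot}}_{k,n}$) is designed to control. Your sketch bounds the expectation of blueings on a single grace interval and then ``restarts the BBM from the stopping line of blueing events,'' but never closes the loop on the ex-blue particles producing further blue particles. Without the geometric recursion $\E_{[n]}[B^{\mathrm{tot}}_n] \le C_\delta(e^{-A}a^{-1}\sum_{k<n}\E_{[k]}[B^{\mathrm{tot}}_k] + e^{\mu a}/a^2)$, the contribution of grey descendants to $Z^{\mathrm{tot}}_{\Theta_1}$ remains uncontrolled. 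In contrast, red particles in the \Bfl-BBM are killed at $\Theta_1$ and never feed back, which is why the corresponding analysis there is genuinely simpler.

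Second, the lower-tail bound cannot be stated for all $t\in[Ka^2,\tconst{eq:tcrit}\wedge\sqrt\ep a^3]$ directly from $\F_0$ as you write: Lemma~\ref{lem:N_2ndmoment} gives variance growing like $(t/a^3)Z_0$, so Chebyshev from time $0$ loses a factor of roughly $\sqrt\ep e^A$ relative to what you claim, and the resulting integrated bound on blueings becomes too weak. The paper's Lemma~\ref{lem:Nt_prob_less_N} is restricted to $t\in[Ka^2,C_\delta a^2]$, and longer times are treated by conditioning at an intermediate time $s$ with $t-s\in[Ka^2,C_\delta a^2]$ where $Z_s$ is controlled (Corollary~\ref{cor:Nt_prob_less_N} via the events $G_{Z,n}$). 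Relatedly, your analogy with Lemma~\ref{lem:Nwtr} is misplaced: blueings in \Bsh-BBM are triggered by hits at the origin, not by branching, so the ``intensity'' is not of order $N$ per unit time but is governed by the boundary local time $L_I$, which must itself be estimated via Lemma~\ref{lem:Bsharp_left_border}; the correct analog is Lemma~\ref{lem:blue_particles}, not a direct copy of Lemma~\ref{lem:Nwtr}.
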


\subsection{The number of white particles: lower bounds}
\label{sec:Bsharp_num_particles}

In this section, we bound the probability that the number of white particles is less than $N$ at a given time $t$. 

\begin{lemma}
\label{lem:Nt_prob_less_N}
Let $\Phat = \P(\cdot\,|\,T>t')$ with $t'\le \tconst{eq:tcrit}$ and let $Ka^2\le t\le C_\delta a^2$. We have for $A$ and $a$ large enough,
\[
 \Phat(N^{(0)}_{t} < N\,|\,\F_0)\Ind_{(Z_0 \ge (1-\delta/2)e^A)} \le C_\delta e^{-A} (a^{-1} + e^{-A}Y_0).
\]
\end{lemma}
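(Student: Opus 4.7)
\medskip

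\noindent\textit{Proof plan.} The strategy is a standard first-moment/second-moment argument: establish that under $\Phat(\cdot\mid \F_0)$, the conditional mean of $N^{(0)}_{t}$ exceeds $N$ by a factor $1 + c(\delta)$, bound the conditional variance, and apply Chebyshev.

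\emph{First moment.} Let $x_1, \dots, x_n$ be the positions of the initial particles. For each $i$, I would apply the (lower-bound direction of) Lemma~\ref{lem:N_expec_large_t} together with Corollary~\ref{cor:hat_many_to_one} (first-moment part) to get, for $Ka^2 \le t \le C_\delta a^2$ and large $A, a$,
\[
\Ehat^{x_i}[N^{(0)}_t] \ge (1+O(p_B))\, 2\pi a^{-3} e^{\mu a} \big(1 - E_K - O(a^{-2})\big)\, w_Z(x_i),
\]
so summing over initial particles,
\[
\Ehat[N^{(0)}_t \mid \F_0] \ge 2\pi a^{-3} e^{\mu a}\, Z_0 \,(1 - E_K)(1 - o(1)).
\]
On $\{Z_0 \ge (1-\delta/2)e^A\}$, using $E_K \le \delta/10$ and $N \le 2\pi e^{A-\delta} a^{-3} e^{\mu a}$, this yields
\[
\Ehat[N^{(0)}_t \mid \F_0]\,\Ind_{(Z_0\ge(1-\delta/2)e^A)} \ge (1 - \delta/2)(1-\delta/10)\,e^\delta (1-o(1))\, N \ge (1 + \delta/4)\, N,
\]
for all sufficiently small $\delta$ and large $A,a$.

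\emph{Second moment.} By the independence of the initial subtrees, Lemma~\ref{lem:N_2ndmoment} (at $r=0$), and the upper-bound part of Corollary~\ref{cor:hat_many_to_one},
\[
\Varhat(N^{(0)}_t \mid \F_0) \le \sum_i \Ehat^{x_i}\big[(N^{(0)}_t)^2\big] \le C N^2 e^{-2A}\Big(\frac{t}{a^3} Z_0 + Y_0\Big).
\]
Using $t \le C_\delta a^2$ and $Z_0 \le C e^A$ (which holds on the event considered, e.g.\ from the hypothesis, modulo absorbing an extra factor into $C_\delta$),
\[
\Varhat(N^{(0)}_t \mid \F_0)\,\Ind_{(Z_0\ge(1-\delta/2)e^A)} \le C_\delta N^2 e^{-A}\big(a^{-1} + e^{-A} Y_0\big).
\]

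\emph{Conclusion.} Since $\Ehat[N^{(0)}_t \mid \F_0] - N \ge (\delta/4) N$ on the event $\{Z_0 \ge (1-\delta/2) e^A\}$, Chebyshev's inequality gives
\[
\Phat(N^{(0)}_t < N \mid \F_0)\,\Ind_{(Z_0\ge(1-\delta/2)e^A)} \le \frac{\Varhat(N^{(0)}_t \mid \F_0)}{(\delta/4)^2 N^2}\,\Ind_{(Z_0\ge(1-\delta/2)e^A)} \le C_\delta e^{-A}\big(a^{-1} + e^{-A} Y_0\big),
\]
which is the claimed bound.

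\emph{Main obstacle.} The nontrivial point is \emph{quantitative} control of the first-moment constant: we need a lower bound that strictly exceeds $N$ by a $\Theta(\delta)$ fraction. This requires using the specific form $N = \lfloor 2\pi e^{A-\delta} a^{-3} e^{\mu a}\rfloor$ (the factor $e^{-\delta}$ is the whole point of the \Bsh-BBM construction) and combining it with the three small losses $E_K \le \delta/10$ (from the bound on spectral remainders), $\delta/2$ (from the hypothesis on $Z_0$), and $O(a^{-2}) + O(p_B)$ (from finite-$A,a$ corrections). Everything else --- the variance bound and the Chebyshev step --- is routine once Lemmas~\ref{lem:N_expec_large_t} and~\ref{lem:N_2ndmoment} are in hand.
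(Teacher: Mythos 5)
Your overall route --- first moment exceeding $N$ by a $\Theta(\delta)$ fraction, a variance bound from Lemma~\ref{lem:N_2ndmoment}, then Chebyshev --- is the same as the paper's, and the individual moment estimates you quote are correct. However, there is a genuine gap in your Chebyshev step: you invoke an upper bound $Z_0\le Ce^A$ to absorb the variance term $e^{-2A}(t/a^3)Z_0$ into $C_\delta e^{-A}a^{-1}$, and this does \emph{not} follow from the hypothesis. The indicator in the statement imposes only the \emph{lower} bound $Z_0\ge(1-\delta/2)e^A$; the lemma is a pointwise inequality conditional on $\F_0$ that must hold for arbitrary configurations with $Z_0$ possibly much larger than $e^A$ (and it is applied in that generality, e.g.\ in Corollary~\ref{cor:Nt_prob_less_N}, whose conclusion explicitly carries the unbounded factor $1+e^{-A}Z_0$). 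With your deviation bound $\Ehat[N^{(0)}_t\,|\,\F_0]-N\ge(\delta/4)N$, Chebyshev yields $C\delta^{-2}e^{-2A}\big((t/a^3)Z_0+Y_0\big)$, whose first term is $C_\delta e^{-A}a^{-1}\cdot e^{-A}Z_0$ and is not dominated by the claimed right-hand side when $e^{-A}Z_0$ is large.

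The fix is precisely what the paper does: retain the $Z_0$-dependence in the first-moment lower bound. Lemma~\ref{lem:N_expec_large_t} and Corollary~\ref{cor:hat_many_to_one} give $\Ehat[N^{(0)}_t\,|\,\F_0]\ge(1+\tfrac34\delta)\,Ne^{-A}Z_0$, so on the event $\{Z_0\ge(1-\delta/2)e^A\}$ the deviation from $N$ is at least $c\,\delta\, e^{-A}NZ_0$ rather than merely $c\,\delta\,N$. Dividing the variance $C(e^{-A}N)^2\big((t/a^3)Z_0+Y_0\big)$ by $(c\,\delta\, e^{-A}NZ_0)^2$ leaves $C\delta^{-2}\big((t/a^3)Z_0^{-1}+Y_0Z_0^{-2}\big)$, and now the lower bound $Z_0\ge e^A/2$ together with $t\le C_\delta a^2$ gives exactly $C_\delta e^{-A}(a^{-1}+e^{-A}Y_0)$. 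With this single modification your argument is complete.
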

\begin{proof}
By Lemma~\ref{lem:N_expec_large_t}, Corollary~\ref{cor:hat_many_to_one}, \eqref{eq:pB} and the definitions of $N$ and $K$, we have for  $A$ and $a$ large enough,
\begin{equation*}
\Ehat[N^{(0)}_{t}\,|\,\F_0] \ge (1-Cp_B)\E[N^{(0)}_{t}\,|\,\F_0] \ge (1+(3/4)\delta) N e^{-A}Z_0.
\end{equation*}
By the conditional Chebychev inequality, we then have for $A$ and $a$ large enough, since $\delta\le 1/100$,
\begin{equation*}
\Phat(N^{(0)}_{t} < N\,|\,\F_0)\Ind_{(Z_0\ge (1-\delta/2)e^A)} \le C \frac{\Varhat(N^{(0)}_{t}\,|\,\F_0)}{(e^{-A} N\delta Z_0)^2}.
\end{equation*}
By Lemma~\ref{lem:N_2ndmoment} and Corollary~\ref{cor:hat_many_to_one}, $\Varhat(N^{(0)}_{t}\,|\,\F_0) \le C (e^{-A}N)^2((t/a^3)Z_0+Y_0)$. The lemma now follows from the previous inequalities and the hypothesis on $t$.
\end{proof}
\begin{corollary}
\label{cor:Nt_prob_less_N}
Let $\Phat = \P(\cdot\,|\,T>t')$ with $t'\le \tconst{eq:tcrit}$ and let $t\le t'$ and $s\in[a^2,t]$ such that $t-s \in[Ka^2,C_\delta a^2]$. 
Then, for $A$ and $a$ large enough,
\[
 \Phat(N^{(0)}_t < N,\,Z^{(0)}_s \ge (1-\delta/2)e^A\,|\,\F_0) \le C_\delta e^{-A}(1+e^{-A}Z_0)/a.
\]
\end{corollary}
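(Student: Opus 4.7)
The strategy is to reduce the statement to Lemma~\ref{lem:Nt_prob_less_N} by restarting the process at time $s$. By the strong branching property applied to the stopping line $\mathscr N^{(0)}_s$, conditional on $\mathscr F_s$ each tier-$0$ particle alive at time $s$ initiates an independent BBM, whose tier-$0$ descendants at time $t$ (i.e.\ descendants that avoid the point $a$ on $[s,t]$) are precisely the particles counted by $N^{(0)}_t$. The conditioning $\{T > t'\}$ factorises over these subtrees into independent no-breakout events on $[s,t']$, which matches the conditioning appearing in Lemma~\ref{lem:Nt_prob_less_N} with $\tilde t':= t'-s$ in place of $t'$. Setting $\tilde t:= t-s$, the hypotheses $\tilde t \in [Ka^2, C_\delta a^2]$ and $\tilde t' \le \tconst{eq:tcrit}$ are all met, so Lemma~\ref{lem:Nt_prob_less_N} applied with $(Z^{(0)}_s, Y^{(0)}_s)$ playing the roles of $(Z_0, Y_0)$ yields, on the event $\{Z^{(0)}_s \ge (1-\delta/2)e^A\}$,
\[
  \Phat\bigl(N^{(0)}_t < N \,\bigm|\, \mathscr F_s\bigr)\, \Ind_{\{Z^{(0)}_s \ge (1-\delta/2)e^A\}} \;\le\; C_\delta\, e^{-A}\bigl(a^{-1} + e^{-A} Y^{(0)}_s\bigr).
\]

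Taking $\Ehat[\,\cdot \mid \mathscr F_0\,]$ on both sides reduces the corollary to the estimate $\Ehat[Y^{(0)}_s \mid \mathscr F_0] \le C(1+Z_0)/a$, up to terms of smaller order. The first-moment inequality of Lemma~\ref{lem:hat_quantities} gives $\Ehat[Y_s \mid \mathscr F_0] \le C(Y_0 + \eta A (s/a^3) Z_0)$; since $s \le \tconst{eq:tcrit}$ and $\eta A \le e^{-A}$ by \eqref{eq:eta}, the second summand is bounded by $Ce^{-A} Z_0/a$, which is absorbed into the target. To shed the $Y_0$ contribution one exploits that $s \ge a^2$: the tier-$0$ taboo process relaxes over a window of length $a^2$ toward its quasi-stationary profile $\propto \sin(\pi x/a) e^{-\mu x}$, along which $Y \asymp Z/a$. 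Quantitatively, this is captured by the refined estimate from the second part of Lemma~\ref{lem:hat_quantities} (valid for $s \ge 2a^2$), namely $\Ehat[Y_s \Ind_{R^{(0)}_{[s-a^2,s]} = 0} \mid \mathscr F_0] \le (C/a)(Z_0 + A Y_0)$ together with $\Phat(R^{(0)}_{[s-a^2, s]} \ne 0 \mid \mathscr F_0) \le C\eta(Z_0 + A Y_0)$ to control the exceptional event; combined with the ambient assumption $Y_0 = O(\eta)$ that holds under the good events of Section~\ref{sec:Bsharp}, this delivers the required $\Ehat[Y^{(0)}_s \mid \mathscr F_0] \le C(1+Z_0)/a$.

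\textbf{Main obstacle.} The delicate step is eliminating the explicit $Y^{(0)}_s$ term produced by Lemma~\ref{lem:Nt_prob_less_N}, since the target bound depends on $Z_0$ alone. The hypothesis $s \ge a^2$ is the essential ingredient: it is precisely the time needed for the initial $Y$-mass, concentrated near the right boundary, to be either absorbed into tier~$1$ or diffuse into the interior under the taboo dynamics, so that what remains is of order $Z_0/a$. A secondary technical nuisance is the transitional window $s \in [a^2, 2a^2)$, to which the second part of Lemma~\ref{lem:hat_quantities} does not literally apply; this is handled by splitting $[0,s]$ into two sub-windows of length at least $a^2/2$ and applying the relaxation estimate on each, at the cost of a larger constant $C_\delta$.
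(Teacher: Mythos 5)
Your first step is exactly the paper's: condition on $\F_s$, note that the tier-$0$ particles at time $s$ restart independent copies of the conditioned process, and apply Lemma~\ref{lem:Nt_prob_less_N} with $(Z^{(0)}_s,Y^{(0)}_s)$ in the role of $(Z_0,Y_0)$, yielding $\Phat(N^{(0)}_t<N\mid\F_s)\Ind_{(Z^{(0)}_s\ge(1-\delta/2)e^A)}\le C_\delta e^{-A}(a^{-1}+e^{-A}Y^{(0)}_s)$. The divergence — and the gap — is in how you then eliminate $Y^{(0)}_s$. You bound $\Ehat[Y^{(0)}_s\mid\F_0]$ via Lemma~\ref{lem:hat_quantities}, which leaves a residual term of order $(A/a)Y_0$ (plus an exceptional event), and you discharge it by invoking an ``ambient assumption $Y_0=O(\eta)$''. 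But the corollary carries no hypothesis on $Y_0$: its conclusion must depend on the initial configuration through $Z_0$ only, and it is applied in Section~\ref{sec:Bsharp} (e.g.\ in the proof of Lemma~\ref{lem:B_kn}, restarting from $\mathscr N^{(0)}_{t_{n-1}}$ on $G_{Z,n}$) where only $Z^{(0)}_{t_{n-1}}\le 2e^A$ is controlled and $Y$ of that configuration can a priori be much larger than $Z$ (particles near $a$ have $w_Y\approx 1$ but $w_Z$ small). So as written your argument does not prove the stated inequality.

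The fix, and what the paper actually does, is to use the first-moment bound \eqref{eq:Yt} of Proposition~\ref{prop:quantities}: for $s\ge a^2$ one has $\E[Y^{(0)}_s\mid\F_0]\le CZ_0/a$ with \emph{no} $Y_0$ term at all, because for times $\ge a^2$ the killed transition density has already collapsed onto the $\sin$-profile (via \eqref{eq:p_estimate}), erasing all dependence on the initial configuration except through $Z_0$. Transferring this to the conditioned law via Corollary~\ref{cor:hat_many_to_one} (the quantity is a tier-$0$ additive functional, so the lemma applies directly) gives $\Ehat[Y^{(0)}_s\mid\F_0]\le CZ_0/a$, and substituting into the conditional bound yields $C_\delta e^{-A}(a^{-1}+e^{-A}Z_0/a)=C_\delta e^{-A}(1+e^{-A}Z_0)/a$ as claimed. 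This also removes your secondary complication: there is no need for the $s\ge 2a^2$ refinement of Lemma~\ref{lem:hat_quantities}, nor for the window-splitting patch on $s\in[a^2,2a^2)$, since \eqref{eq:Yt} is valid for all $s\ge a^2$.
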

\begin{proof}
 First condition on $\F_s$ and apply Lemma~\ref{lem:Nt_prob_less_N}. Then condition on $\F_0$ and apply \eqref{eq:Yt} and Corollary~\ref{cor:hat_many_to_one}.
\end{proof}

The following lemma is the analogue of Lemma~\ref{lem:Nt_prob_less_N} for the system after the breakout. For an individual $u$, we define $\widehat N^{\neg u}_t$ and $N^{\mathrm{fug},\neg u}_t$ to be the number of white hat-, respectively, fug-particles at time $t$ which are not descendants of $u$.

\begin{lemma}
  \label{lem:Nt_prob_less_N_moving_wall}
For large $A$ and $a$, we have for each $t\in[T,\Theta_1]$ and $(u,t_u)\in \widehat{\mathscr N}_{T^-}\cup \mathscr S^{(\mathscr U,T)}$,
\[
 \Psh_{\mathrm{nbab}}(\widehat N^{\neg u}_t + N^{\mathrm{fug},\neg u}_t<N\,|\,\F_\Delta)\Ind_{G_\Delta} \le C_{\delta} \eta e^{-A}/\ep.
\]
\end{lemma}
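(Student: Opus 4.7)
This is the post-breakout, moving-wall analogue of Lemma~\ref{lem:Nt_prob_less_N}; the natural approach is a conditional Chebyshev inequality for $W := \widehat N^{\neg u}_t + N^{\mathrm{fug},\neg u}_t$ under $\Psh_{\mathrm{nbab}}(\cdot\mid\mathscr F_\Delta)$. Conditionally on $\mathscr F_\Delta$, the particles on $\mathscr L_\Delta$ spawn independent BBMs with the time-varying drift determined by the wall function $f_\Delta$; on $G_\Delta\cap G_{\mathrm{nbab}}$ none of their descendants touches $a$ during $[T^-,\Theta_1]$, so they contribute only to the (reset) tier-$0$ population, and by \eqref{eq:Delta_f} and Lemma~\ref{lem:rit3} the estimates of Lemma~\ref{lem:hat_Z_f} apply. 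For the first moment I would proceed exactly as in equations~\eqref{eq:48} and \eqref{eq:51} in the proof of the second part of Lemma~\ref{lem:Bflat_N} (now read as lower rather than upper bounds), invoking Lemma~\ref{lem:mu_t_density}, Lemma~\ref{lem:N_expec_large_t}, Corollary~\ref{cor:N_expec_thbar}, and the algebraic identity $e^{f_\Delta(s)} = 1 + (e^\Delta - 1)\thbar(s)$ built into \eqref{eq:wall_fn}, which gives the pointwise cancellation $M_t(1+\Delta_t) \equiv 1$. This yields
\[
\Esh_{\mathrm{nbab}}\bigl[\widehat N^{(0)}_t + N^{\mathrm{fug},(0)}_t \bigm| \mathscr F_\Delta\bigr]\Ind_{G_\Delta}\;\ge\; 2\pi e^A a^{-3} e^{\mu a}\bigl(1 - O(E_K + \ep^{1/4} + o_a(1))\bigr)\Ind_{G_\Delta}.
\]
Since $N=N^\sharp=\lfloor 2\pi e^{A-\delta} a^{-3} e^{\mu a}\rfloor$ and $E_K \le \delta/10$, the right-hand side exceeds $(1+\delta/2) N$ for large $A,a$; the expected contribution of $u$'s descendants is at most $CN/a$ (the same first-moment estimate applied to a single initial point, even for $u \in \mathscr S^{(\mathscr U,T)}$ where $(t-t_u)/a^3 \le e^A/a$), leaving $\Esh_{\mathrm{nbab}}[W \mid \mathscr F_\Delta]\Ind_{G_\Delta} \ge (1+\delta/4) N$.

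For the variance, conditional independence of distinct subtrees together with the tier-$0$ second-moment estimate of Lemma~\ref{lem:N_2ndmoment} in its drift-$f_\Delta$ version (justified by Lemma~\ref{lem:hat_Z_f}) gives
\[
\Varsh_{\mathrm{nbab}}(W \mid \mathscr F_\Delta)\Ind_{G_\Delta}\;\le\; C\,e^{-2A}\,N^2 \sum_{v \in \mathscr L_\Delta}\bigl((t - t_v)/a^3\,w_Z(X_v) + w_Y(X_v)\bigr).
\]
Substituting $t - t_v \le 2 e^A a^2$ and the defining bounds on $G_\Delta$ --- most crucially $Z^{(\mathscr U,T)} \le e^A/\ep$ and $Y^{(\mathscr U,T)} \le \eta e^A/\ep$, together with $\widehat Z_{T^-}+\widecheck Z_\Delta = O(e^A)$ and $\widehat Y_{T^-}+\widecheck Y_\Delta = O(e^{3A/2}/a)$ --- the inner sum is bounded by $Ce^{2A}/(a\ep) + C\eta e^A/\ep$, hence
\[
\Varsh_{\mathrm{nbab}}(W \mid \mathscr F_\Delta)\Ind_{G_\Delta} \;\le\; C N^2/(a\ep) + C \eta N^2 e^{-A}/\ep.
\]
Since $a$ is taken to infinity before $A$ is fixed, one may assume $a \ge e^A/\eta$, which absorbs the first summand into the second; Chebyshev's inequality with margin $\delta N/4$ then delivers the claimed bound $C_\delta \eta e^{-A}/\ep$.

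\textbf{Main obstacle.} The delicate point is the variance estimate: the $1/(a\ep)$ contribution of the fugitive descendants --- stemming from the large product $(t - t_v)/a^3 \cdot Z^{(\mathscr U,T)} \le e^{2A}/(a\ep)$ --- is only absorbed into the target $\eta e^{-A}/\ep$ in the regime $a \ge e^A/\eta$. This is legitimate under the convention of Section~\ref{sec:BBBM_parameters} (where $a_0(A)$ may be taken arbitrarily large for fixed $A$), but its legitimacy in the downstream use of the lemma has to be checked carefully. A secondary technical issue is verifying that Lemma~\ref{lem:N_2ndmoment} extends to the time-varying drift $f_\Delta$ with unchanged constants, which is available through Lemma~\ref{lem:hat_Z_f} and the uniform bound $\|f_\Delta\| \le g(A)$ on $G_\Delta$ recorded in \eqref{eq:Delta_f}.
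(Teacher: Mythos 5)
Your proposal is correct and follows essentially the same route as the paper: a conditional Chebyshev inequality, with the first moment bounded below by $(1+\delta/2)N$ via Lemmas~\ref{lem:mu_t_density}, \ref{lem:N_expec_large_t} and Corollary~\ref{cor:N_expec_thbar} together with the cancellation $M_t(1+\Delta_t)=1$, and the variance bounded by $C(Ne^{-A})^2\bigl((\widehat Z_{T^-}+Z^{(\mathscr U,T)})(t-T^-)/a^3+\widehat Y^{(0)}_{T^-}+Y^{(\mathscr U,T)}\bigr)\le CN^2\eta e^{-A}/\ep$ using Lemma~\ref{lem:N_2ndmoment}, \eqref{eq:ichhassedas} and the defining bounds of $G_\Delta$. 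The ``obstacle'' you flag is handled exactly as you suggest: the term $e^{2A}/(a\ep)$ is absorbed into $\eta e^{A}/\ep$ because $a$ is sent to infinity for fixed $A$ (so one may take $a\ge e^A/\eta$), a step the paper performs implicitly in the final inequality of \eqref{eq:3}.
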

\begin{proof}
As in the proof of Lemma~\ref{lem:Bflat_N}, set $M_t = e^{- X^{[1]}_{t}} = (1+\Delta_t)^{-1}$, where $\Delta_t = \thbar((t-T^+)/a^2)(e^\Delta-1)$. Recall from that proof that on $G_\Delta$: $|e^\Delta-1 - e^{-A}Z^{(\mathscr U,T)}| \le 2\ep^{1/4}$. For a particle $(u,t_u)\in \widehat{\mathscr N}_{T^-}\cup \mathscr S^{(\mathscr U,T)}$, we define $\widehat N^{(0),\neg u}_t$ and $N^{(0),\mathrm{fug},\neg u}_t$ to be the number of hat-, respectively, fug-particles which are not descendants of $u$ and which have not hit $a$ after the time $T^-$, respectively, $T$.

As noted in the proof of Lemma~\ref{lem:G1}, we can apply the results of Lemma~\ref{lem:hat_Z_f} on $G_\Delta$, by Lemma~\ref{lem:rit3}. 
Then, by Lemma~\ref{lem:mu_t_density}, Lemma~\ref{lem:N_expec_large_t} and \eqref{eq:scheisse} for the first, and by Lemma~\ref{lem:mu_t_density}, Corollary~\ref{cor:N_expec_thbar} and the first part of Lemma~\ref{lem:ZYW} for the second inequality, we have for large $A$ and $a$,
\begin{align*}
 \Esh[\widehat N^{(0),\neg u}_t\,|\,\F_\Delta]\Ind_{G_\Delta} & \ge (1+\delta-o(1)) N e^{-A}M_t(\widehat Z_{T^-}-\|w_Z\|_\infty)\Ind_{G_\Delta}\ge N M_t(1+3\delta/4)\Ind_{G_\Delta},\tand\\
  \Esh[N^{(0),\mathrm{fug},\neg u}_t\,|\,\F_\Delta]\Ind_{{G_\Delta}} &\ge (1+\delta-o(1)) Ne^{-A}M_t (Z^{(\mathscr U,T)}-\|w_Z\|_\infty)(\thbar((t-T)/a^2) + o_a(1))\Ind_{{G_\Delta}}\\
 &\ge NM_t((1+\delta/2)\Delta_t -\delta/4)\Ind_{{G_\Delta}}.
\end{align*}
In total, this gives for large $A$ and $a$,
\begin{equation}
  \label{eq:7}
  \Esh[\widehat N^{(0),\neg u}_t + N^{(0),\mathrm{fug},\neg u}_t\,|\,\F_\Delta]\Ind_{{G_\Delta}} \ge N(1+\delta/2)\Ind_{{G_\Delta}}.
\end{equation}
Moreover, by Lemma~\ref{lem:N_2ndmoment}, \eqref{eq:ichhassedas} and independence, we have for large $A$ and $a$,
\begin{multline}
  \label{eq:3}
  \Varsh(\widehat N^{(0),\neg u}_t+N^{(0),\mathrm{fug},\neg u}_t\,|\,\F_\Delta)\Ind_{G_\Delta} \\
\le C(Ne^{-A})^2 ((\widehat Z_{T^-}+Z^{(\mathscr U,T)})(t-T^-)/a^3 +(\widehat Y^{(0)}_{T^-}+Y^{(\mathscr U,T)}))\Ind_{G_\Delta}\le CN^2 \eta e^{-A}/\ep,
\end{multline}
by the definition of $G_\Delta$. Now note that under $\Psh_{\mathrm{nbab}}$, $\widehat N^{\neg u}_t=\widehat N^{(0),\neg u}_t$ and $N^{\mathrm{fug},\neg u}_t=N^{(0),\mathrm{fug},\neg u}_t$ almost surely. Equations \eqref{eq:7} and \eqref{eq:3}, together with Lemma~\ref{lem:Gnbab} and the conditional Chebychev inequality now yield the lemma.
\end{proof}

\subsection{Bounds on the number of blue particles}

The bulk of this section will be to bound the number of blue particles. For this, first moment estimates will turn out to be enough. We start with some lemmas which bound the number of particles hitting the origin. Define for every interval $I\subset \R_+$ and for every $n\ge0$ the random variables
\begin{equation*}
L_I = \sum_{(u,t)\in\mathscr L_{H_0}}\Ind_{(t\in I)} \quad\tand\quad L_n = L_{I_n}.
\end{equation*}

\begin{lemma}
\label{lem:Bsharp_left_border}
Let $f$ be a barrier function, $0\le t\le \tconst{eq:tcrit}$ and suppose that $\|f\|$ is bounded by a function depending on $A$ only and that either $f\equiv 0$ or $t\le 2e^Aa^2$. Furthermore, let $I = [t_l,t_r] \subset [0,t]$ and $x\in[0,a]$. Then, for large $A$ and $a$, we have
\[
\E^{x}_f[L_I\,|\,T>t] \le \begin{cases}
C e^{\mu a}a^{-3}(t_r-t_l+a^2)(Aw_Y(x) + w_Z(x)), &\tif t_l \ge a^2 \tor x\ge a/2\\
C e^{\mu a} (w_Y(x) +a^{-3}(t_r-t_l+a^2)w_Z(x)), &\text{ for any $t_l$ and $x$.}
\end{cases}
\]
\end{lemma}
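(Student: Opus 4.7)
The plan is to decompose $L_I = L^{(0)}_I + L^{(1+)}_I$ according to whether the hitting particle's ancestry has ever touched $a$ (tier~$\ge 1$) or not (tier~$0$), and to bound each piece separately. The assumptions on $f$ and $t$ let me invoke Lemma~\ref{lem:hat_Z_f}: by \eqref{eq:ichhassedas}, conditioning on $\{T>t\}$ costs only a factor $1+O(p_B)$ for tier-$0$ first moments, so it is enough to bound $\E^x_f[L^{(0)}_I]$ and $\Ehat^x_f[L^{(1+)}_I]$.

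For $L^{(0)}_I$, the many-to-one lemma (Section~\ref{sec:preliminaries}) combined with a Girsanov transform (absorbing the constant part of the drift into the factor $e^{\mu x}e^{-\mu^2 s/2}$ and the $f$-dependent part into a bounded factor, using $\|f\|\le g(A)$ and either $f\equiv 0$ or $t\le 2e^A a^2$) reduces things to
\[
\int_I \tfrac{1}{2}\,e^{\mu x}\,e^{\pi^2 s/(2a^2)}\,\partial_y p^a_s(x,0^+)\,\dd s.
\]
The eigenfunction expansion $\partial_y p^a_s(x,0^+) = (2\pi/a^2)\sum_{k\ge 1} k\sin(k\pi x/a)\,e^{-k^2\pi^2 s/(2a^2)}$ together with the tuning $(1-\mu^2)/2 = \pi^2/(2a^2)$ makes the $k=1$ harmonic stationary in $s$ and equal to $\pi a^{-3}e^{\mu a}w_Z(x)$, contributing $\pi a^{-3}e^{\mu a}w_Z(x)(t_r-t_l)$, while the $k\ge 2$ harmonics each carry the damping factor $e^{-(k^2-1)\pi^2 s/(2a^2)}$. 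A uniform bound on the tail sum is $Ce^{\mu a}w_Y(x)$, which yields the general form of the lemma. Restricting to $t_l\ge a^2$ produces an extra $e^{-(k^2-1)\pi^2/2}$ in each harmonic; for $x\ge a/2$, the image-method representation of $\partial_y p^a_s(x,0^+)$ combined with the reflection-principle bound $P^x(\tau_0\le s)\le Ce^{-x^2/(2s)}$ shows that the absorption rate at $0$ is essentially supported on $s\gtrsim a^2$. Either improvement converts the $w_Y$ contribution into the tight form $Ce^{\mu a}a^{-3}(t_r-t_l+a^2)(Aw_Y(x)+w_Z(x))$.

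For $L^{(1+)}_I$, I apply the strong branching property at the stopping line $\mathscr S^{(1+)}_{t_r}$. By part~1 of Lemma~\ref{lem:ZYW}, every particle on this line lies within $o(1)$ of the critical line at height $a-y$, so $X_u(s)\ge a/2$ for large $a$ and $w_Y(X_u(s))\asymp w_Z(X_u(s))\asymp 1$. Applying the tier-$0$ bound (in its tight form) starting from each such particle gives $O(Ae^{\mu a}a^{-3}(t_r-t_l+a^2))$ per particle, and summing using $\#\mathscr S^{(1+)}_{t_r}\le CZ^{(1+)}_{\emptyset,t_r}$ (again part~1 of Lemma~\ref{lem:ZYW}) together with \eqref{eq:28} of Lemma~\ref{lem:hat_Z_f} yields
\[
\Ehat^x_f[L^{(1+)}_I]\;\le\;CA\,e^{\mu a}a^{-3}(t_r-t_l+a^2)\Big(\tfrac{t_r}{a^3}w_Z(x)+w_Y(x)\Big).
\]
Since $At_r/a^3\le 1/20$ by $t\le \tconst{eq:tcrit}$, the $w_Z$-part is absorbed into the tier-$0$ bound, and the $w_Y$-part is exactly the $CAe^{\mu a}a^{-3}(t_r-t_l+a^2)w_Y(x)$ required in the tight case.

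The main obstacle I foresee is the tail bound on the higher-harmonic Fourier sum in the tier-$0$ estimate when $x\ge a/2$: the naive $\ell^1$ bound only yields $Ce^{\mu a}w_Y(x)$ and does not produce the extra $(t_r-t_l+a^2)/a^3$ factor, so one has to exploit the image representation of $p^a_s$ (or an equivalent reflection-principle estimate) to show that the rate of hitting $0$ is essentially supported on $s\gtrsim a^2$. Everything else is a routine combination of the many-to-one lemma, the strong branching property at a stopping line, and Lemma~\ref{lem:hat_Z_f}.
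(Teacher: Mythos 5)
Your proof follows the paper's argument essentially verbatim: the same tier decomposition $L_I = L^{(0)}_I + \sum_{(u,s)\in\mathscr S^{(1+)}_{t_r}}(\cdots)$, the same removal of the conditioning via \eqref{eq:ichhassedas}, the same many-to-one/Girsanov reduction of the tier-$0$ part to the weighted hitting integral, and the same use of \eqref{eq:28} together with $t\le\tconst{eq:tcrit}=a^3/(20A)$ to absorb the higher tiers. The only ``obstacle'' you flag -- controlling the higher harmonics of the hitting density uniformly in $s$, including the $x\ge a/2$ case -- is exactly the content of \eqref{eq:Ia}, \eqref{eq:IJ_scaling} and Lemma~\ref{lem:I_estimates} in the appendix, which the paper simply cites, so nothing further needs to be proved there.
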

\begin{proof}
As in Lemma~\ref{lem:N_with_tiers}, the hypotheses allow us to use the results of Lemma~\ref{lem:hat_Z_f} without further mention.
Write $\Ehat[\cdot] = \E[\cdot\,|\,T>t]$. Define for every interval $I$, $L^{(0)}_I = \sum_{(u,s)\in\mathscr L_{H_0}}\Ind_{(H_a(X_u) > s\in I)}$. By \eqref{eq:ichhassedas} and as in the proof of Lemma~\ref{lem:mu_t_density}, we have $\Ehat^{x}_f[L^{(0)}_I] \le C e^{O(\|f\|/a)}\E[L^{(0)}_I ]$ for every $x\in[0,a]$ and for large $A$ and $a$. The hypothesis on $f$, Lemma~\ref{lem:many_to_one_simple}, Girsanov's theorem and \eqref{eq:Ia} then yield
\begin{equation}
\label{eq:left_border}
\Ehat^{x}_f[L^{(0)}_I] \le C W^x_{-\mu}[e^{\frac{1}{2}H_0}\Ind_{(H_a > H_0\in I)}] = C e^{\mu x} W^{a-x}[e^{\frac{\pi^2}{2a^2}H_a}\Ind_{(H_0 > H_a\in I)}] = C e^{\mu x} I^a(a-x,I).
\end{equation}
By  \eqref{eq:IJ_scaling} and  Lemma~\ref{lem:I_estimates} we have 
\(
 I^a(a-x,I) \le C\left(a^{-2}(t_r-t_l+a^2)\sin(\pi x/a)+\Ind_{(t_l< a^2\tand x\le a/2)}\right).
\)
Together with \eqref{eq:left_border}, this gives for large $A$ and $a$,
\begin{align}
\nonumber
\Ehat^{x}_f[L_I] &= \Ehat^{x}_f[L^{(0)}_I ] + \Ehat^{x}_f\Big[\sum_{(u,s)\in  \mathscr S^{(1+)}_{t_r}} \Ehat^{(X_u(s),s)}_f[L^{(0)}_I] \Big]\\
  \label{eq:99}
&\le Ce^{\mu a}\left(\frac {t_r-t_l+a^2} {a^3} w_Z(x) + w_Y(x)\Ind_{(t_l< a^2\tand x\le a/2)} + \frac {t_r-t_l+a^2} {a^3} \Ehat^{x}_f[Z^{(1+)}_{\emptyset}]\right).
\end{align}
The lemma now follows from \eqref{eq:99} and \eqref{eq:28}, together with the hypothesis $t_r \le \tconst{eq:tcrit} \le Ca^3/A$. 
\end{proof}

The following lemma is crucial. It will permit us to estimate the number of particles turning blue upon hitting the origin. 

\begin{lemma}
  \label{lem:blue_particles}
Suppose that $\nu_0$ is deterministic with $Z_0 \ge (1-\delta/4)e^A$.
Let $t\le\tconst{eq:tcrit}$ and $I=[t_l,t_r] \subset [Ka^2,t]$ with $t_r\le C_\delta a^2$. Write $\Phat=\P(\cdot\,|\,T>t).$ Then, for large $A$ and $a$,
\begin{equation*}
  \Ehat\Big[\sum_{(u,s)\in\mathscr L_{H_0}} \Ind_{(s\in I,\,N^{(0)}_s < N)}\Big] \le C_\delta e^{-A}(a^{-1} + e^{-A}Y_0)\Ehat[L_I].
\end{equation*}
\end{lemma}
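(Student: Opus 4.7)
The plan is to split the sum over $\mathscr L_{H_0}$ by the initial ancestor of each absorbed particle and exploit the independence of the sub-BBMs rooted at distinct initial particles. Write $\nu_0=\sum_i \delta_{x_i}$ and, for each $i$, denote by $\mathscr L^{(i)}_{H_0}$ the particles in $\mathscr L_{H_0}$ descending from $x_i$, by $T^{(i)}$ the first-breakout time in the tree of $x_i$, by $T^{\neg i}$ the analogous time for the BBM started from $\nu_0-\delta_{x_i}$ (so $T=T^{(i)}\wedge T^{\neg i}$), and by $N^{(0),\neg i}_s$ the number of tier-$0$ particles at time $s$ not descending from $x_i$. Since $N^{(0)}_s\ge N^{(0),\neg i}_s$ and $T\le T^{(i)}$, for every $s\in I$ the event $\{N^{(0)}_s<N,\,T>t\}$ is contained in $\{N^{(0),\neg i}_s<N,\,T^{\neg i}>s\}\cap\{T^{(i)}>t\}$. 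After writing $\Ehat[\,\cdot\,]=\E[\,\cdot\,\Ind_{T>t}]/\P(T>t)$, the claim reduces to an unconditional bound; conditioning on the tree of $x_i$, on which the last two indicators are measurable, the independence of the BBM from $\nu_0-\delta_{x_i}$ transforms the contribution of $\mathscr L^{(i)}_{H_0}$ into
\[
\E\Big[\sum_{(u,s)\in\mathscr L^{(i)}_{H_0}}\Ind_{s\in I,\,T^{(i)}>t}\cdot\P^{\nu_0-\delta_{x_i}}(N^{(0)}_s<N,\,T>s)\Big].
\]

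For the inner probability, note that $w_Z\le C$ on $[0,a]$, so the hypothesis $Z_0\ge(1-\delta/4)e^A$ guarantees $Z_0-w_Z(x_i)\ge(1-\delta/2)e^A$ for large $A$. Combined with the restrictions $I\subset[Ka^2,C_\delta a^2]$ and $t\le\tconst{eq:tcrit}$, this places us exactly in the scope of Lemma~\ref{lem:Nt_prob_less_N} applied with $t'=s$, yielding
\[
\P^{\nu_0-\delta_{x_i}}(N^{(0)}_s<N,\,T>s)\le C_\delta e^{-A}(a^{-1}+e^{-A}Y_0)
\]
uniformly in $s\in I$. This bound factors out, and summing over $i$ gives
\[
\E\Big[\sum_{(u,s)\in\mathscr L_{H_0}}\Ind_{s\in I,\,N^{(0)}_s<N,\,T>t}\Big]\le C_\delta e^{-A}(a^{-1}+e^{-A}Y_0)\sum_i\E\Big[\sum_{(u,s)\in\mathscr L^{(i)}_{H_0}}\Ind_{s\in I,\,T^{(i)}>t}\Big].
\]

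It remains to convert the right-hand sum from $T^{(i)}>t$ to $T>t$, i.e.\ to $\E[L_I\Ind_{T>t}]$. By the same independence,
\[
\E[L_I\Ind_{T>t}]=\sum_i\P^{\nu_0-\delta_{x_i}}(T>t)\,\E\Big[\sum_{(u,s)\in\mathscr L^{(i)}_{H_0}}\Ind_{s\in I,\,T^{(i)}>t}\Big],
\]
and Proposition~\ref{prop:T} together with \eqref{eq:pB}, \eqref{eq:ep_lower} and the hypotheses on $Z_0, Y_0$ and $t$ yields $\P^{\nu_0-\delta_{x_i}}(T>t)\ge 1/2$ for large $A$ and $a$, so the extra factor is absorbed. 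Dividing by $\P(T>t)$ to restore the conditioning and recover $\Ehat[L_I]$ on the right finishes the proof. The only delicate point is checking that removing a single initial particle preserves both the threshold $Z_0\ge(1-\delta/2)e^A$ needed by Lemma~\ref{lem:Nt_prob_less_N} and the lower bound on $\P(T>t)$; both are immediate consequences of the uniform boundedness of $w_Z$ and $w_Y$ on $[0,a]$ together with $Z_0$ being of order $e^A$.
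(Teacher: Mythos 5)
Your decomposition by initial ancestor and the use of Lemma~\ref{lem:Nt_prob_less_N} on the reduced configuration $\nu_0-\delta_{x_i}$ follow the paper's argument, but the final conversion step has a genuine gap. You need $\sum_i\E\big[\sum_{(u,s)\in\mathscr L^{(i)}_{H_0}}\Ind_{(s\in I,\,T^{(i)}>t)}\big]\le C\,\E[L_I\Ind_{(T>t)}]$, and since by independence $\E[L_I\Ind_{(T>t)}]=\sum_i\P^{\nu_0-\delta_{x_i}}(T>t)\,\E\big[\sum_{(u,s)\in\mathscr L^{(i)}_{H_0}}\Ind_{(s\in I,\,T^{(i)}>t)}\big]$, this forces you to bound $\P^{\nu_0-\delta_{x_i}}(T>t)$ from below by a positive constant. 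Your claim that this probability is at least $1/2$ is false in the regime covered by the lemma: by Proposition~\ref{prop:T} and \eqref{eq:pB}, for $Z_0\asymp e^A$ one has $\P(T>t)=\exp\big(-(\pi^2+o(1))\,t/(\ep a^3)\big)$ up to controlled corrections, so for $t$ of order $\tconst{eq:tcrit}=a^3/(20A)$ this is at most $\exp(-cA^{16})$ because $\ep\le A^{-17}$; and $\P^{\nu_0-\delta_{x_i}}(T>t)=\P(T>t)/\P^{x_i}(T>t)$ is of the same order. The lemma is genuinely invoked with such large $t$ (e.g.\ through $\P_{[n]}$ in Lemma~\ref{lem:B_hat}), so the resulting factor $1/\min_i\P^{\nu_0-\delta_{x_i}}(T>t)$ cannot be absorbed into $C_\delta$.

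The error enters when you weaken $\{T^{\neg i}>t\}$ to $\{T^{\neg i}>s\}$ before estimating the probability: this detaches the normalising factor $\P^{\nu_0-\delta_{x_i}}(T>t)$ from the terms you must later reassemble into $\Ehat[L_I]$. The repair is to stay under the conditioned measure throughout: the initial subtrees remain independent under $\Phat=\P(\cdot\,|\,T>t)$ because the conditioning event factorises over subtrees, so you may replace $\Ind_{(N^{(0)}_s<N)}$ by $\Ind_{(N^{(0),\neg u_0}_s<N)}$ and then by its conditional probability given the subtree of $u_0$, namely $\Phat(N^{(0),\neg u_0}_s<N)=\P^{\nu_0-\delta_{x_{u_0}}}(N^{(0)}_s<N\,|\,T>t)$, which Lemma~\ref{lem:Nt_prob_less_N} (applied with $t'=t$ rather than $t'=s$) bounds by $C_\delta e^{-A}(a^{-1}+e^{-A}Y_0)$. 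Taking the supremum over $u_0$ and $s\in I$ then produces $\Ehat[L_I]$ directly, with no lower bound on survival probabilities required. Your verification that $Z_0-w_Z(x_i)\ge(1-\delta/2)e^A$ via the boundedness of $w_Z$ is correct and is the other ingredient of the paper's proof.
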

\begin{proof}
For an individual $u\in U$, denote by $u_0$ its ancestor at time 0. If $u\in\mathscr A(0)$ and $s\ge 0$, define $N^{(0),\neg u}_s = \sum_{(v,s)\in \mathscr N^{(0)}_s}\Ind_{(v\nsucceq u)}$. By the trivial inequality $N^{(0)}_s \ge N^{(0),\neg u}_s$ for every $u$ and by the independence of the initial particles,
  \begin{align*}
  \Ehat\Big[\sum_{(u,s)\in\mathscr L_{H_0}} \Ind_{(s\in I,\,N^{(0)}_s < N)}\Big] 
  &\le \Ehat\Big[\sum_{(u,s)\in\mathscr L_{H_0}} \Ind_{(s\in I)} \Phat(N^{(0),\neg u_0}_s < N)\Big]\\
  &\le \Big(\sup_{u\in\mathscr A(0),\,s\in I}  \Phat(N^{(0),\neg u}_s < N)\Big) \Ehat[L_I].
 \end{align*}
The lemma now follows from Lemma~\ref{lem:Nt_prob_less_N}, since for every $u\in\mathscr A(0)$, we have $Z_0 - w_Z(X_u(0)) \ge Z_0 - C \ge (1-\delta/2)e^A$ for large $A$, by hypothesis.
\end{proof}

Finally, the following lemma is needed to take care of the particles which survive upon hitting the origin (i.e.\ the blue particles). We introduce the random variables $Z^{\mathrm{free}}_t$, $Y^{\mathrm{free}}_t$, $R^{\mathrm{free}}_t$ and $N^{\mathrm{free}}_t(r)$, which are defined as $Z_t$, $Y_t$, $R_t$ and $N_t(r)$ but taking into account \emph{all} particles, including those who have hit the origin. 
\begin{lemma}
\label{lem:BBM_0}
Let $t \ge 0$ and $f$ be a barrier function. Then,
\begin{equation*}
\E^0_f[Z^{\mathrm{free}}_t] \le Ca e^{\frac{\pi^2}{2a^2}t - \mu a},\quad
\E^0_f[Y^{\mathrm{free}}_t] \le Ce^{\frac{\pi^2}{2a^2}t - \mu a}\quad\tand\quad
\E^0_f[R^{\mathrm{free}}_t] \le Ce^{\frac{\pi^2}{2a^2}t - \mu a}.
\end{equation*}
\end{lemma}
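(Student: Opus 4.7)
My plan is to derive all three bounds by the same two-step recipe: first apply the many-to-one lemma to reduce to a single Brownian path computation under $W^0$, and then perform a Girsanov tilt that absorbs the drift $-\mu$ into a tilting, producing the ratio $(1-\mu^2)/2 = \pi^2/(2a^2)$ that governs the stated decay. Throughout, $\beta m = 1/2$ supplies the exponential growth factor $e^{t/2}$, and the barrier condition $f(\cdot)\ge -1$ (from the definition of a barrier function in Section~\ref{sec:BBBM_definition_definition}) will absorb $f$ into the constant $C$ via the bound $-\mu f \le \mu \le 1$.

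For $Z^{\mathrm{free}}_t$ and $Y^{\mathrm{free}}_t$ I would invoke the fixed-time many-to-one lemma (Lemma~\ref{lem:many_to_one_simple}) to write
\[
\E^0_f[Y^{\mathrm{free}}_t] = e^{t/2}\,W^0\bigl[e^{\mu(B_t-\mu t-f(t/a^2)-a)}\bigr],
\]
with the analogue for $Z^{\mathrm{free}}_t$ carrying the additional bounded factor $a\sin(\pi Y_t/a)\Ind_{(Y_t\in[0,a])}\le a$. I would then pull out the deterministic prefactor $e^{-\mu a-\mu f(t/a^2)}$ and use that $e^{\mu B_s-\mu^2 s/2}$ is a $W^0$-martingale to get $W^0[e^{\mu B_t-\mu^2 t}]=e^{-\mu^2 t/2}$. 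Combining this with $e^{t/2}$ gives the factor $e^{(1-\mu^2)t/2}=e^{\pi^2 t/(2a^2)}$, and $-\mu f(t/a^2)\le 1$ converts the remaining $f$-dependence into a universal constant, yielding the first two bounds.

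For $R^{\mathrm{free}}_t$ the plan is to use the stopping-line version of many-to-one, applied to the stopping line generated by the hitting time of $a$ (this is exactly the reduction performed in the proof of Lemma~\ref{lem:Bsharp_left_border}), which gives
\[
\E^0_f[R^{\mathrm{free}}_t]=W^0\bigl[e^{H/2}\,\Ind_{(H\le t)}\bigr],\qquad H:=\inf\{s\ge 0:\ B_s = a+\mu s + f(s/a^2)\}.
\]
I would then tilt by the exponential martingale $e^{\mu B_\cdot-\mu^2\cdot/2}$ stopped at $H\wedge t$, i.e.\ pass to the law $\widetilde W^0$ under which $B$ has drift $+\mu$, and substitute the boundary identity $B_H = a+\mu H+f(H/a^2)$ on $\{H<\infty\}$. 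A short computation shows that the total exponent collapses to
\[
\tfrac{H}{2}-\mu B_H+\tfrac{\mu^2 H}{2}\;=\;\tfrac{\pi^2 H}{2a^2}-\mu a-\mu f(H/a^2),
\]
after which bounding $H\le t$ and $-\mu f(H/a^2)\le 1$ finishes the argument. The only slightly delicate point is the justification of Girsanov up to the stopping time $H\wedge t$, but this is standard (optional stopping for $e^{\mu B_\cdot-\mu^2\cdot/2}$), so I do not anticipate a genuine obstacle.
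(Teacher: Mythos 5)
Your proposal is correct and follows essentially the same route as the paper, whose proof is exactly the one-line recipe you expand: the many-to-one lemma (Lemma~\ref{lem:many_to_one_simple}) plus a Girsanov tilt absorbing the drift into the factor $e^{(1-\mu^2)t/2}=e^{\pi^2 t/(2a^2)}$, with the $Z^{\mathrm{free}}$ bound obtained from $Z_t\le aY_t$ (your pointwise bound $a\sin(\pi x/a)\Ind_{(x\in[0,a])}\le a$) and the $f$-dependence absorbed into $C$ via $f\ge -1$. Your treatment of $R^{\mathrm{free}}_t$ via the stopping line at the (drifted) level $a$ is likewise the same computation the paper performs in the proof of Lemma~\ref{lem:Rt}.
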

\begin{proof}
Follows easily from Lemma~\ref{lem:many_to_one_simple} and Girsanov's theorem, together with the inequality $Z_t\le a Y_t$ for the first inequality.
\end{proof}

\paragraph{Creation of blue particles: before the breakout.}

Recall the definitions from Section~\ref{sec:Bsharp_more_defs}, in particular of the random variables $B_n$ and $B^{\mathrm{tot}}_n$. The goal in this section is to bound the first moment of these random variables under the (sub-probability) measure $\P_{[n]} := \P(\cdot,\ T>t_{n+1})$, $n\in\N$. Since $\P_{[n]}$ is a subprobability measure, we introduce the notation $\E_{[n]}[\cdot\,|\,\F] = \E[\,\cdot\,\Ind_{(T>t_{n+1})}\,|\,\F]$ for a sigma-field $\F$. Furthermore, we define the good events
\begin{equation*}
G_{Z,t} = \left\{\sup_{0\le s\le t} |Z^{(0)}_s - e^A| < (\delta/4)e^A \right\}\quad\text{ and }\quad G_{Z,n} = G_{Z,t_n}.
\end{equation*}
\begin{lemma}
\label{lem:B_hat}
For every $n\ge 1$ with $t_{n+1}\le \tconst{eq:tcrit}$, we have for large $A$ and $a$,
 \[
\E_{[n]}[B_n\Ind_{G_{Z,n}\cap\{Y_0\le 1\}}] \le C_\delta \frac{e^{\mu a}}{a^2} \le C_\delta e^{-A} a N.
 \]
\end{lemma}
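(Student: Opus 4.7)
The plan is to dominate $B_n$ by a counting functional on the underlying B-BBM and then combine Lemmas~\ref{lem:blue_particles} and~\ref{lem:Bsharp_left_border} after a carefully chosen time shift.

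\emph{Reduction to a tier-0 count.} I would begin by observing that a white particle in \Bsh-BBM becomes blue at time $s$ only when the total surviving population strictly to its right is below $N$. Since the tier-0 particles of the underlying B-BBM form a subset of this population, this forces $N^{(0)}_s < N$ in the B-BBM reference frame. Hence
$$B_n \le \sum_{(u,s)\in\mathscr L_{H_0}} \Ind_{(s\in I_n,\ N^{(0)}_s<N)},$$
where the right-hand side is now a functional of the B-BBM, whose law under the conditioning $\{T > t_{n+1}\}$ is familiar from Sections~\ref{sec:BBBM} and \ref{sec:before_breakout_particles}.

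\emph{Time shift.} The key observation is that, because $n\ge 1$ and $t_n = n(K+3)a^2$, I have room to apply the strong Markov property at the instant $t_n-Ka^2\ge 3a^2$ rather than at $t_n$. After this shift, the interval $I_n$ becomes $[Ka^2,(2K+3)a^2]\subset[Ka^2,C_\delta a^2]$ for $C_\delta:=2K+3$, which is precisely the range of intervals allowed by Lemma~\ref{lem:blue_particles}. Moreover, since $G_{Z,n}=\{\sup_{0\le s\le t_n}|Z^{(0)}_s-e^A|<(\delta/4)e^A\}$ controls $Z^{(0)}$ at every intermediate time, on $G_{Z,n}$ I automatically have $Z^{(0)}_{t_n-Ka^2}\ge (1-\delta/4)e^A$, matching the hypothesis of that lemma.

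\emph{Applying the key lemmas.} Conditioning on $\F_{t_n-Ka^2}$ and treating $\nu_{t_n-Ka^2}$ as the new initial measure, Lemma~\ref{lem:blue_particles} yields, with $\Ehat$ denoting expectation conditioned on no breakout in the remaining time window,
$$\Ehat\Big[\sum_{(u,s)\in\mathscr L_{H_0}}\Ind_{(s\in I_n,\,N^{(0)}_s<N)}\,\Big|\,\F_{t_n-Ka^2}\Big]\Ind_{G_{Z,n}}\le C_\delta e^{-A}\big(a^{-1}+e^{-A}Y^{(0)}_{t_n-Ka^2}\big)\,\Ehat[L_{I_n}\,|\,\F_{t_n-Ka^2}]\Ind_{G_{Z,n}}.$$
In turn, the first case of Lemma~\ref{lem:Bsharp_left_border}, applied with $t_l=Ka^2$ and $t_r-t_l=(K+3)a^2$ after shift, gives
$$\Ehat[L_{I_n}\,|\,\F_{t_n-Ka^2}]\Ind_{G_{Z,n}}\le C_\delta e^{\mu a} a^{-1}\big(AY^{(0)}_{t_n-Ka^2}+e^A\big)\Ind_{G_{Z,n}},$$
using $Z^{(0)}_{t_n-Ka^2}\le(1+\delta/4)e^A$ on $G_{Z,n}$.

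\emph{Integration and conclusion.} Multiplying the two inequalities and taking expectation, the dominant term is $C_\delta\,e^{-A}\cdot a^{-1}\cdot e^{\mu a}\cdot e^A/a=C_\delta e^{\mu a}/a^2$. All cross terms involve $\E[Y^{(0)}_{t_n-Ka^2}]$ or $\E[(Y^{(0)}_{t_n-Ka^2})^2]$, which I would control by Lemma~\ref{lem:hat_quantities}: using $Y_0\le 1$, $Z_0\le 2e^A$ (from $G_{Z,0}\supset G_{Z,n}$), $t_n\le\tconst{eq:tcrit}$ and $\eta\le e^{-2A}$, one has $\E[Y^{(0)}_{t_n-Ka^2}]=O(1)$, so these cross terms are lower order. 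The final identity $C_\delta e^{\mu a}/a^2\le C_\delta e^{-A}aN$ follows directly from $N=\lfloor 2\pi e^{A-\delta}a^{-3}e^{\mu a}\rfloor$.

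\emph{Main obstacle.} The main subtlety is precisely the choice of shift: a naive shift by $t_n$ would leave the early portion $[0,Ka^2]$ of $I_n$ outside the range of Lemma~\ref{lem:blue_particles}, for which no estimate of the required strength is directly available; the observation $n\ge 1$, combined with the monotonicity of $G_{Z,t}$ in $t$, lets me shift back by $Ka^2$ and handle the whole of $I_n$ at once. A secondary task is propagating the almost-sure bound $Y_0\le 1$ forward to a first-moment bound on $Y^{(0)}_{t_n-Ka^2}$, which Lemma~\ref{lem:hat_quantities} is exactly tailored to.
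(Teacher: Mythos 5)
Your overall strategy (dominate $B_n$ by a count over $\mathscr L_{H_0}$ carrying the condition $N^{(0)}_s<N$, shift time back so that $I_n$ falls into the window $[Ka^2,C_\delta a^2]$ where Lemmas~\ref{lem:blue_particles} and~\ref{lem:Bsharp_left_border} apply, and close with moment bounds on $Y$) is the right one and is the paper's. The gap is in the claim that a single application of Lemma~\ref{lem:blue_particles} to the configuration $\nu_{t_n-Ka^2}$ "handles the whole of $I_n$ at once". That lemma is proved for a deterministic initial configuration supported in $(0,a)$, with the tier structure and the conditioning $\{T>t\}$ starting from scratch and the initial particles spawning independent subtrees. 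At time $t_n-Ka^2$ the surviving population contains particles that have already hit $a$ (tier~$\ge1$ and ``in-between'' particles, possibly located at or above $a$), so the support condition fails; restarting the tiers at $t_n-Ka^2$ changes the set of breakout opportunities, so the event $\{T>t_{n+1}\}$ appearing in $\E_{[n]}$ is not the conditioning event of Lemma~\ref{lem:blue_particles} for the restarted process; and $G_{Z,n}$ only controls the tier-0 mass $Z^{(0)}$, giving no control whatsoever on the mass carried by the particles that have visited $a$.

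The paper resolves exactly this by splitting $B_n=B_n^{(0)}+B_n^{(1+)}$ at $t'=t_n-(K+1)a^2$: the part $B_n^{(0)}$ (descendants of $\mathscr N^{(0)}_{t'}$) is treated essentially as in your proposal, while $B_n^{(1+)}$ (descendants of $\mathscr R^{(0)}_{t'}$) requires a separate argument via the strong branching property at the stopping line $\mathscr L_{H_a\wedge t'}$, Corollary~\ref{cor:Nt_prob_less_N} (to decouple the event $\{N^{(0)}_r<N\}$ from those descendants), the bounds $\E_\Q^a[Z+AY]\le CA$ and $\E[R^{(0)}_{t'}]\le C$. Your proposal contains no estimate playing this role, and without one the contribution to $B_n$ of particles that went up to $a$ before the shift time is uncontrolled. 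Two smaller points: $\Ind_{G_{Z,n}}$ is not $\F_{t_n-Ka^2}$-measurable, so you must enlarge to $G_{Z,t_n-Ka^2}$ before conditioning (as the paper does with $G_{Z,t'}$); and the second moment of $Y^{(0)}_{t_n-Ka^2}$ needed for the cross term comes from \eqref{eq:Yt_variance} in Proposition~\ref{prop:quantities} (via conditioning at time $a^2$), not from Lemma~\ref{lem:hat_quantities}.
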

\begin{proof}
For simplicity, assume that $\P(G_{Z,0}\cap\{Y_0\le 1\}) = 1$, the general case is a simple adaptation.
Let $n\ge 1$ and set  $t'=t_n-(K+1)a^2 = t_{n-1}+2a^2$.
In this proof, if a quantity has a superscript~$^{(0)}$, it refers to the particles which have not hit $a$ before time $t'$ (i.e. to the descendants of $\mathscr N_{t'}^{(0)}$), and if it has a superscript~$^{(1+)}$ it refers to the remaining particles (i.e. to the descendants of $\mathscr R_{t'}^{(0)}$).
By Lemmas~\ref{lem:Bsharp_left_border} and \ref{lem:blue_particles} and the inequality $Z_{t'}^{(0)}\le aY_{t'}^{(0)}$,
\begin{equation*}
  \begin{split}
 \E_{[n]}[B^{(0)}_n\Ind_{G_{Z,n}}\,|\,\F_{t'}] &\le  C_\delta e^{ - A} \Big(\frac 1 a + e^{-A}Y_{t'}^{(0)}\Big)\Ind_{G_{Z,t'}} \E_{[n]}[L_n^{(0)}\,|\,\F_{t'}]\\
&\le C_\delta e^{\mu a - A} \Big(\frac 1 a Y_{t'}^{(0)} + e^{-A}(Y_{t'}^{(0)})^2\Big)\Ind_{G_{Z,t'}},
  \end{split}
\end{equation*}
Proposition~\ref{prop:quantities} now gives, since $Z_0\le 2e^A$ on $G_{Z,0}$ and $t' = t_{n-1}+2a^2 \ge 2a^2$,
\begin{align}
  \nonumber
  \E_{[n]}[B^{(0)}_n\Ind_{G_{Z,n}}] &\le C_\delta e^{\mu a - A}  \Big(\frac 1 a \E[Y_{t'}^{(0)}] + e^{-A}\E\big[\Var(Y_{t'}^{(0)}\,|\,\F_{a^2}) + (\E[Y_{t'}^{(0)}\,|\,\F_{a^2}])^2\Ind_{G_{Z,a^2}}\big]\Big)\\
  \label{eq:132}
  &\le C_\delta e^{\mu a} \Big(a^{-2} + e^{-2A}\big(\E[a^{-1}Y_{a^2}^{(0)}] + a^{-2}e^{2A}\big)\Big) \le C_\delta e^{\mu a} /a^2.
\end{align}
As for the remaining particles, by the strong branching property, with $\Phat_{[n]} = \P(\cdot\,|\,T>t_{n+1})$,
\begin{equation*}
\begin{split}
   \E_{[n]}[B^{(1+)}_n\Ind_{G_{Z,n}}\,|\,\F_{\mathscr L_{H_a\wedge t'}}] &= \E_{[n]}\Big[\sum_{(u,s)\in\mathscr R^{(0)}_{t'}} \sum_{(v,r)\in\mathscr L_{H_0},\,v\succeq u}\Ind_{(r\in I_n,\,N^{(0)}_r< N)}\Ind_{G_{Z,n}}\,\Big|\,\F_{\mathscr L_{H_a\wedge t'}}\Big]\\
   &\le \Big[\sup_{r\in I_n}\Phat_{[n]}(N^{(0)}_r< N,\,G_{Z,n}\,|\,\F_{\mathscr L_{H_a\wedge t'}})\Big] \sum_{(u,s)\in\mathscr R^{(0)}_{t'}} \E_{[n]}^{(a,s)}[L_{I_n}]\\
&\le C_\delta e^{-A}(e^{\mu a}/a^2) R^{(0)}_{t'} \Q^a[Z+AY],
\end{split}
\end{equation*}
the last inequality following from Lemma~\ref{lem:Bsharp_left_border} together with Corollary~\ref{cor:Nt_prob_less_N} and the fact that $Z_{t'} \le 2e^A$ on $G_{Z,t'}$. By the first part of Lemma~\ref{lem:ZYW}, \eqref{eq:QaZC} and \eqref{eq:eta}, we have $\Q^a[Z+AY]\le CA$. This gives
\begin{equation}
  \label{eq:134}
  \E_{[n]}[B^{(1+)}_n\Ind_{G_{Z,n}}] \le C_\delta (e^{\mu a}/a^2) e^{-A} A \E[R^{(0)}_{t'}] \le C_\delta e^{\mu a}/a^2,
\end{equation}
the last inequality following from Lemma~\ref{lem:Rt} and the hypotheses on $Z_0$ and $Y_0$. The lemma now follows from \eqref{eq:132} and \eqref{eq:134}.
\end{proof}

Up to now, we have only considered the \emph{white} particles turning blue. In order to estimate $B_n^{\mathrm{tot}}$, we will use an inductive argument. For $n\ge 0$ and $0\le k \le n-2$, define $B_{k,n}^{\mathrm{tot}}$ to be the number of (white or grey) particles that turn blue at a time $t\in I_n$, have an ancestor that turned blue at a time $t'\in I_k$ and have none that has hit 0 between $t_{k+2}$ and $t_n$.

Let $G_{\mathrm{b},n}$ be the event that no blue particle hits $a$ before $t_n$ and $G_{\mathrm{g},n}$ the event that there exists no grey particle that breaks out before $\tconst{eq:tcrit}$ and has been grey all the time between $t_n$ and the time of its breakout. Then set 
\[
G_n^{\mathrm{tot}} = G_{Z,n}\cap G_{\mathrm{b},n} \cap G_{\mathrm{g},n}\cap 
\{B_0 \le e^Ae^{\mu a}/a^2,\,Y_0\le \eta\}.
\]

\begin{lemma}
 \label{lem:B_kn}
For every $n\ge 2$ with $t_{n+1}\le \tconst{eq:tcrit}$ and $k\le n-2$, we have for large $A$ and $a$,
\[
 \E_{[n]}[B_{k,n}^{\mathrm{tot}}\Ind_{G_n^{\mathrm{tot}}}] \le C_{\delta} \E_{[k]}[B^{\mathrm{tot}}_k\Ind_{G_k^{\mathrm{tot}}}] \frac{e^{-A}}{a}.
\]
\end{lemma}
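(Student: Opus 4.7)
Let $\mathscr B_k^{\mathrm{tot}}$ denote the stopping line consisting of all (white or grey) particles that turn blue at some time $t'\in I_k$, so that $|\mathscr B_k^{\mathrm{tot}}|=B_k^{\mathrm{tot}}$. By definition of $B_{k,n}^{\mathrm{tot}}$,
\[
B_{k,n}^{\mathrm{tot}}=\sum_{(u,t')\in\mathscr B_k^{\mathrm{tot}}} B^{(u,t')},
\]
where $B^{(u,t')}$ counts the descendants of $(u,t')$ that turn blue at some time $s\in I_n$ and whose lineage does not hit $0$ during $(t_{k+2},t_n)$. Conditioning on $\F_{\mathscr B_k^{\mathrm{tot}}}$ and invoking the strong branching property, the subtrees rooted at the particles of $\mathscr B_k^{\mathrm{tot}}$ are independent. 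The plan is to establish the per-particle bound
\[
\E_{[n]}\bigl[B^{(u,t')}\Ind_{G_n^{\mathrm{tot}}}\,\big|\,\F_{\mathscr B_k^{\mathrm{tot}}}\bigr]\Ind_{G_k^{\mathrm{tot}}}\le C_\delta\,e^{-A}/a
\]
for every $(u,t')\in\mathscr B_k^{\mathrm{tot}}$, and then sum.

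To obtain this per-particle bound I would split the evolution of the subtree rooted at $(0,t')$ into three pieces. Between $t'$ and $t_{k+2}$ the blue particle and its progeny evolve freely (no killing at $0$), so Lemma~\ref{lem:BBM_0}, applied with the drift induced by the barrier function, gives that the expected $Y$-mass on $(0,a)$ at time $t_{k+2}$ is at most $Ce^{\pi^2(t_{k+2}-t')/(2a^2)-\mu a}\le C_K e^{-\mu a}$, since $t_{k+2}-t'$ is of order $a^2$. From $t_{k+2}$ onwards the surviving progeny evolves as BBM killed at $0$ (any intermediate hit of $0$ before $t_n$ is forbidden by the definition of $B^{(u,t')}$), and Lemma~\ref{lem:Bsharp_left_border} with $I=I_n$ bounds the conditional expected number of first hits of $0$ in $I_n$ by $C_\delta e^{\mu a}Y^{(u,t')}_{t_{k+2}}$, using $|I_n|$ of order $a^2$ and $A w_Y+w_Z\le C a w_Y$ on $[0,a]$. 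Chaining these two estimates yields an unconditional bound of order $C_\delta$ on the expected number of such first hits. Finally, each such first hit $(v,s)$ produces a blue particle only if $N^{\mathrm{tot}}_s<N$, which implies $N^{(0)}_s<N$; the goal is to show that the conditional probability of this event is at most $C_\delta e^{-A}/a$, at which point multiplying the three contributions yields the per-particle bound.

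The main obstacle is the third step, since $N^{(0)}_s$ couples the descendants of $(u,t')$ to the rest of the system, and neither the strong branching property nor a direct application of Corollary~\ref{cor:Nt_prob_less_N} is immediately available. The remedy is a leave-one-out argument: write $N^{(0)}_s=N^{(0),\neg(u,t')}_s+R_s$, where $R_s$ is the contribution of the descendants of $(u,t')$ to $N^{(0)}_s$. On $G_n^{\mathrm{tot}}\subset G_{Z,n}$ one has $Z^{(0)}_{s^*}\ge(1-\delta/4)e^A$ at a suitable intermediate time $s^*\in[t_n-C_\delta a^2,t_n-Ka^2]$, and by the first stage of the argument above the $Z$-mass of descendants of $(u,t')$ at $s^*$ is at most $C a e^{-\mu a}\ll e^A$ in expectation, hence negligible with high probability. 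Consequently $Z^{(0),\neg(u,t')}_{s^*}\ge(1-\delta/2)e^A$ with large probability, and Corollary~\ref{cor:Nt_prob_less_N} applied to the outside subsystem produces $\Phat(N^{(0),\neg(u,t')}_s<N)\le C_\delta e^{-A}/a$ for each $s\in I_n$. A further Markov estimate controlling $R_s$ shows that replacing $N^{(0),\neg(u,t')}_s<N$ by $N^{(0)}_s<N$ costs only a negligible factor, completing the per-particle bound and hence the proof.
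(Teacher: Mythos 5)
Your proposal is correct and follows essentially the same route as the paper: decompose $B_{k,n}^{\mathrm{tot}}$ over the stopping line of particles turning blue in $I_k$, control the free phase up to $t_{k+2}$ with Lemma~\ref{lem:BBM_0}, the subsequent hits of $0$ during $I_n$ with Lemma~\ref{lem:Bsharp_left_border}, and the probability that fewer than $N$ white particles are present with Corollary~\ref{cor:Nt_prob_less_N} on $G_{Z,n}$. The only superfluous element is your leave-one-out step: under the paper's bookkeeping, descendants of blue particles are grey and never contribute to $N^{(0)}_s$ or $Z^{(0)}_{s^*}$, so $N^{(0),\neg(u,t')}_s=N^{(0)}_s$ and your $R_s$ vanishes identically; the paper instead obtains the required decoupling between the blue subtrees and the white population by conditioning on the joint stopping line $\mathscr B_k\wedge\mathscr N^{(0)}_{t_{n-1}}$.
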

\begin{proof}
Let $\mathscr B_k$ be the stopping line consisting of the particles that turn blue during $I_k$, at the moment at which they turn blue (hence, $B^{\mathrm{tot}}_k = \#\mathscr B_k$). Note that $\mathscr B_k\wedge \mathscr N^{(0)}_{t_{n-1}} = \mathscr B_k \cup \mathscr N^{(0)}_{t_{n-1}}$, so that the descendants of $\mathscr B_k$ and of $\mathscr N^{(0)}_{t_{n-1}}$ are independent conditioned on $\F_{\mathscr B_k\wedge \mathscr N^{(0)}_{t_{n-1}}}$, by the strong branching property. Note also that $G_k^{\mathrm{tot}}\in\F_{\mathscr B_k\wedge \mathscr N^{(0)}_{t_{n-1}}}$.
It follows that with $\Phat_{[n]} = \P(\cdot\,|\,T > t_{n+1})$,
\begin{multline}
 \label{eq:103}
 \E_{[n]}[B_{k,n}^{\mathrm{tot}}\Ind_{G_n^{\mathrm{tot}}}\,|\,\F_{\mathscr B_k\wedge \mathscr N^{(0)}_{t_{n-1}}}] 
\le \sup_{t\in I_n}\Big(\Phat_{[n]}(N^{(0)}_t < N,\,G_{Z,n}\,|\,\F_{\mathscr N^{(0)}_{t_{n-1}}}) \Ind_{G_k^{\mathrm{tot}}\cap G_{Z,n-1}}\Ind_{(T>t_{n-1})}\Big)\\
\times \sum_{(u,s)\in\mathscr B_k}\E_{[n]}^{(0,s)}\Big[\sum_{(v,t)\in\mathscr L_{H_0^{k+2}}}\Ind_{(t\in I_n,\,H_a(X_v) > t_{k+2},\,X_v(t_{k+2})\in (0,a))}\Big],
\end{multline}
where here $H_0^{k+2}(X) = \inf\{t\ge t_{k+2}: X_t = 0\}$. By Lemma~\ref{lem:Bsharp_left_border} and the inequality $w_Z(x) \le aw_Y(x)$, each summand in the sum on the right-hand side of the above inequality is bounded by $C e^{\mu a} \E^0[Y^{\mathrm{free}}_{t_{k+2}-s}]$ (with the notation of Lemma~\ref{lem:BBM_0}), which by Lemma~\ref{lem:BBM_0} is bounded by $Ce^{CK}$ for $s\in I_k$. Furthermore, by Corollary~\ref{cor:Nt_prob_less_N} and the fact that $Z^{(0)}_{t_{n-1}} \le 2e^A$ on $G_{Z,n}$, the supremum in \eqref{eq:103} is bounded by $C_{\delta} e^{-A}/a$.
The lemma follows by taking expectation on both sides of \eqref{eq:103} and using the fact that $t_{k+1} \le t_{n-1}$ by the hypothesis on $k$.
\end{proof}
\begin{lemma}
\label{lem:B_n}
For all $n\ge 1$ with $t_{n+1}\le \tconst{eq:tcrit}$, we have $\E_{[n]}[B^{\mathrm{tot}}_n\Ind_{G_n^{\mathrm{tot}}}] \le C_\delta e^{\mu a}/a^2$ for large $A$ and $a$.
\end{lemma}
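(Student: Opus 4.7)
The plan is to decompose $B^{\mathrm{tot}}_n$ according to the most recent blue ancestor of each counted particle, and then run a Gronwall-type induction in $n$.

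\textbf{Step 1 (decomposition).} Any particle contributing to $B^{\mathrm{tot}}_n$ turns blue at some time $t\in I_n$. It is either originally white with no blue ancestor---contributing then to $B_n$---or has at least one blue ancestor. In the latter case let $v$ be its \emph{most recent} blue ancestor, blued at some $s\in I_k$ with $k\le n-1$. Since the descendants of $v$ remain blue throughout $[s,t_{k+2})$, the case $k=n-1$ is impossible: a descendant of $v$ would still be blue throughout $I_n$ and could not itself ``turn blue'' at $t\in I_n$. Hence $k\le n-2$. Maximality of $k$ forces no ancestor of the particle to have hit $0$ in $(t_{k+2},t_n)$: any such hit would be a hit of a (white or) grey particle and would automatically produce a new blueing, contradicting maximality. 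This is exactly the condition defining $B^{\mathrm{tot}}_{k,n}$, and therefore
\begin{equation*}
B^{\mathrm{tot}}_n = B_n + \sum_{k=0}^{n-2} B^{\mathrm{tot}}_{k,n}.
\end{equation*}

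\textbf{Step 2 (applying the previous lemmas).} On $G_n^{\mathrm{tot}}$ we have $G_{Z,n}\cap\{Y_0\le \eta\}\subset G_{Z,n}\cap\{Y_0\le 1\}$, so Lemma~\ref{lem:B_hat} gives $\E_{[n]}[B_n\Ind_{G_n^{\mathrm{tot}}}]\le C_\delta e^{\mu a}/a^2$. Lemma~\ref{lem:B_kn} yields, for every $0\le k\le n-2$,
\begin{equation*}
\E_{[n]}[B^{\mathrm{tot}}_{k,n}\Ind_{G_n^{\mathrm{tot}}}] \le C_\delta (e^{-A}/a)\,\E_{[k]}[B^{\mathrm{tot}}_k\Ind_{G_k^{\mathrm{tot}}}].
\end{equation*}
Writing $f(n):= \E_{[n]}[B^{\mathrm{tot}}_n\Ind_{G_n^{\mathrm{tot}}}]$ and $M := e^{\mu a}/a^2$, the decomposition and these two estimates combine to
\begin{equation*}
f(n)\le C_\delta M + C_\delta (e^{-A}/a)\sum_{k=0}^{n-2} f(k).
\end{equation*}

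\textbf{Step 3 (induction).} The base case $f(0)\le e^A M$ is pointwise, since no grey particle exists at time $0$ (so $B^{\mathrm{tot}}_0=B_0$) and $G_0^{\mathrm{tot}}\subset\{B_0\le e^A e^{\mu a}/a^2\}$. For $n=1$ the sum is empty and $B^{\mathrm{tot}}_1=B_1$ (no grey particles exist in $I_1$, since blue particles from $I_0$ only become grey at time $t_2$), so $f(1)\le C_\delta M$ by Step~2. For the inductive step, assume $f(k)\le D M$ for $1\le k\le n-1$ with $D$ a constant to be chosen. Inserting into the recursion and treating the $k=0$ term separately,
\begin{equation*}
f(n)\le M\Bigl(C_\delta + C_\delta/a + C_\delta D (n-2) e^{-A}/a\Bigr).
\end{equation*}
The constraint $t_{n+1}\le \tconst{eq:tcrit}=a^3/(20A)$ implies $n\le a/(20(K+3)A)$, so the last summand is at most $C_\delta D e^{-A}/(20(K+3)A)$, which is $o(1)$ for large $A$. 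Choosing $D = 2C_\delta$ closes the induction: for large $A$ and $a$, $f(n)\le 2C_\delta M = C_\delta' e^{\mu a}/a^2$ for all $n\ge 1$ with $t_{n+1}\le \tconst{eq:tcrit}$, as required.

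\textbf{Main obstacle.} The subtle part is Step~1: one must verify that every particle counted in $B^{\mathrm{tot}}_n$ falls into exactly one of the classes $B_n$ or $B^{\mathrm{tot}}_{k,n}$, $0\le k\le n-2$. The two key facts are (i) blue descendants of an $I_{n-1}$-blueing remain blue throughout $I_n$, ruling out $k=n-1$, and (ii) grey particles hitting $0$ trigger a fresh blueing, which is what turns the ``most recent blue ancestor'' characterisation into the forbidden-hit condition defining $B^{\mathrm{tot}}_{k,n}$. Steps~2 and~3 are then mechanical applications of Lemmas~\ref{lem:B_hat} and~\ref{lem:B_kn} together with a discrete Gronwall argument, the key structural input being that the time-horizon $n=O(a/A)$ is small enough for the factor $(n-2)e^{-A}/a$ to be negligible.
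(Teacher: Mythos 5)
Your proof is correct and follows essentially the same route as the paper: the same decomposition $B^{\mathrm{tot}}_n = B_n + \sum_{k=0}^{n-2} B^{\mathrm{tot}}_{k,n}$, the same appeals to Lemmas~\ref{lem:B_hat} and~\ref{lem:B_kn} (with the $k=0$ term handled via the pointwise bound $B^{\mathrm{tot}}_0=B_0\le e^A e^{\mu a}/a^2$ on $G_n^{\mathrm{tot}}$), and the same induction over $n$, for which the paper's cruder bound $n\le a$ already suffices since the recursion coefficient is $C_\delta e^{-A}/a$. Your Step~1 simply makes explicit the justification of the decomposition that the paper states without comment.
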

\begin{proof}
By Lemma~\ref{lem:B_hat}, the statement is true for $n=1$, because $B^{\mathrm{tot}}_1 = B_1$ by definition. Now we have for every $n$, by Lemmas~\ref{lem:B_hat} and \ref{lem:B_kn} and the fact that $B^{\mathrm{tot}}_0 = B_0 \le e^{A}e^{\mu a}/a^2$ on $G_n^{\mathrm{tot}}$, 
\begin{equation}
 \label{eq:840}
\E_{[n]}[B^{\mathrm{tot}}_n\Ind_{G_n^{\mathrm{tot}}}] = \sum_{k=0}^{n-2} \E_{[n]}[B_{k,n}^{\mathrm{tot}}\Ind_{G_n^{\mathrm{tot}}}] + \E_{[n]}[B_n\Ind_{G_n^{\mathrm{tot}}}] \le C_\delta\Big(\frac{e^{-A}}{a}\sum_{k=1}^{n-2} \E_{[k]}[B^{\mathrm{tot}}_k\Ind_{G_k^{\mathrm{tot}}}] + \frac{e^{\mu a}}{a^2}\Big).
\end{equation}
The lemma now follows easily by induction over $n$, since $n\le a$ by hypothesis.
\end{proof}

Define the random variable $n_T:= \lfloor T/t_1\rfloor$ and note that $t_{n_T-1}>T^-$ for large $A$. Define the events $G_{n_T}^{\mathrm{tot}} = \bigcup_n (G_n^{\mathrm{tot}}\cap \{n_T=n\})$ and $G_{\mathrm{nw}} = \{Z^{\mathrm{gr}}_{t_{n_T}}\le e^{A/2},\,Y^{\mathrm{gr}}_{t_{n_T}} \le e^{A/2}/a,\,B^{\mathrm{gr}}_{n_T-1}\le \eta e^{\mu a}/a\}$. The next two lemmas show that these events happen with high probability.

\begin{lemma}
\label{lem:G_n0_tot}
 Suppose $\P(G^\sharp_0) = 1$. For large $A$ and $a$, we have $\P(G_{n_T}^{\mathrm{tot}}) \ge 1-C_\delta \ep^2$.
\end{lemma}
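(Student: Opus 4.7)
The plan is to decompose $(G_{n_T}^{\mathrm{tot}})^c$ into the failures of its four defining events and bound each separately. A preliminary reduction to the event $\{T\le\tconst{eq:tcrit}\}$ is available since, under the hypothesis $\P(G^\sharp_0)=1$ (which forces $Z_0 \asymp e^A$), Proposition~\ref{prop:T} gives $\P(T>\tconst{eq:tcrit}) \le \exp(-c/(A\ep))$, which is much smaller than $\ep^2$ by \eqref{eq:ep_lower}. On $\{T\le\tconst{eq:tcrit}\}$ the random index satisfies $n_T \le N_0 := \lfloor \tconst{eq:tcrit}/t_1\rfloor = O(a/(AK))$, so only finitely many intervals need to be controlled, and the inductive structure of $G_n^{\mathrm{tot}}$ in $n$ can be exploited: we need the three process-level events to hold \emph{at the specific} $n=n_T$, so we may simply union-bound over $n\le N_0$. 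The initial conditions $\{B_0\le e^A e^{\mu a}/a^2\}$ and $\{Y_0\le\eta\}$ follow from the hypothesis, since the final clause in the definition of $G^\sharp_0$ gives an almost-sure conditional probability bound $\ge 1-\ep^2$ on $B_0\le e^{-A}e^{\mu a}/a$ (which is $\le e^Ae^{\mu a}/a^2$ for large $a$) and $Y_0\le Y^{\mathrm{tot}}_0\le\eta$ is deterministic on $G^\sharp_0$.

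For $G_{Z,n_T}^c$ I would apply Doob's $L^2$-maximal inequality to the non-negative martingale $Z^{(0)}_t = \sum_{u\in\mathscr A(t):\,H_0(X_u)\wedge H_a(X_u)>t} w_Z(X_u(t))$, which is a martingale because $w_Z$ is (up to normalisation) the principal Dirichlet eigenfunction of the generator of BBM with drift $-\mu$ killed at $\{0,a\}$, with eigenvalue matching the branching growth rate $\beta m = 1/2$. The required sharp second-moment bound $\Var(Z^{(0)}_{\tconst{eq:tcrit}})\le Ce^A$ should be obtained via a many-to-two argument restricted to tier-0 particles (so without the large $\ep e^A$ tier-$1{+}$ jumps that dominate Lemma~\ref{lem:Zt_variance}), using that $w_Z^2$ integrates against the stationary Jacobi-theta density to $O(1)$. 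Doob then gives $\P(\sup_{s\le \tconst{eq:tcrit}}|Z^{(0)}_s-e^A|\ge(\delta/4)e^A)\le C_\delta e^{-A} \ll \ep^2$ by \eqref{eq:ep_lower}.

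For $G_{\mathrm{b},n_T}^c$ (no blue particle ever hits $a$), the argument is a first-moment/Markov estimate. For each $n\le N_0$, conditioning on the stopping line of blue-creation events in $I_n$ and using the strong branching property, the expected number of descendants of these blue particles that hit $a$ within their $2a^2$-lifetime is at most $\E[B_n^{\mathrm{tot}}\Ind_{G_n^{\mathrm{tot}}}]\cdot \sup_f \E^0_f[R^{\mathrm{free}}_{2a^2}]$. Lemma~\ref{lem:B_n} bounds the first factor by $C_\delta e^{\mu a}/a^2$, Lemma~\ref{lem:BBM_0} bounds the second by $Ce^{-\mu a+\pi^2/2}$, and the product summed over $n\le N_0=O(a/A)$ gives $O_\delta(1/(Aa))$, which is $\ll \ep^2$ as $a\to\infty$ with $A$ fixed. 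The event $G_{\mathrm{g},n_T}^c$ is handled in exactly the same spirit, using that a grey particle which stays grey from some $t_n$ onwards and then breaks out arises from a blue particle and hence is a descendant of a process emanating from the origin. Using Lemma~\ref{lem:BBM_0} to bound its total $w_Z$-weight and $w_Y$-weight propagated to later times, and then applying Proposition~\ref{prop:T} (which gives breakout probability $\approx \pi p_B Z/a^3$ per unit time for a population with $Z$-weight $Z$), the expected number of such breakouts before $\tconst{eq:tcrit}$ is again $\ll \ep^2$ after summing over $n\le N_0$.

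The hard part will be the $G_{Z,n_T}$ estimate: to reach the target $\ep^2$ by Chebyshev-type arguments one needs a variance of $Z^{(0)}$ that is $O(e^A)$ uniformly over $t\in[0,\tconst{eq:tcrit}]$, which is much tighter than the $O(\ep e^{2A})$ bound of Lemma~\ref{lem:Zt_variance}. The improvement must come from restricting the many-to-two calculation to tier-0 pairs (dropping the $\mathscr R_t$ double-sum that contributes the $\ep e^A$ factor) and exploiting that $w_Z$ is bounded (whereas Lemma~\ref{lem:Zt_variance} gives a variance proportional to $\ep e^A$ precisely because it accommodates the rare big tier-1+ jumps). Once this sharper variance bound is in hand and combined with Doob's inequality, the remaining steps are routine applications of Lemmas~\ref{lem:B_n}, \ref{lem:BBM_0} and Proposition~\ref{prop:T}.
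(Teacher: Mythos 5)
Your proposal is correct and follows essentially the same route as the paper's proof: reduce to $\{T\le a^3\}$ via Proposition~\ref{prop:T}, telescope the increments $G_{k-1}^{\mathrm{tot}}\setminus G_k^{\mathrm{tot}}$ over $k=O(a)$ using the first-moment bounds of Lemmas~\ref{lem:B_n} and~\ref{lem:BBM_0} for $G_{\mathrm{b},\cdot}$ and $G_{\mathrm{g},\cdot}$ (plus Proposition~\ref{prop:T} for the grey breakouts), and control $G_{Z,\cdot}$ by Doob's $L^2$-inequality applied to the tier-0 martingale $Z^{(0)}$. The one correction: the ``hard part'' you single out is already available — the needed bound $\Var(Z^{(0)}_{t}\,|\,\F_0)\le C((t/a^3)Z_0+Y_0)\le Ce^A$ is exactly \eqref{eq:Zt_variance} of Proposition~\ref{prop:quantities} (Lemma~\ref{lem:Zt_variance} is the conditioned, all-tiers statement and is not the relevant one here), so no new many-to-two computation is required.
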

\begin{proof}
Let $n\ge 1$. By definition, the event $G_{n-1}^{\mathrm{tot}}\backslash G_{\mathrm{b},n}$ implies that a particle which turned blue during $I_{n-2}$ or $I_{n-1}$ has a descendant which hits $a$ before $t_n$. Markov's inequality and Lemmas~\ref{lem:BBM_0} and~\ref{lem:B_n} then imply,
\begin{equation}
  \label{eq:90}
  \P_{[n]}(G_{n-1}^{\mathrm{tot}}\backslash G_{\mathrm{b},n}) \le \E_{[n]}[(B^{\mathrm{tot}}_{n-2}+B^{\mathrm{tot}}_{n-1})\Ind_{G_{n-1}^{\mathrm{tot}}}] \times \sup_{t\le t_2} \E^0[R^{\mathrm{free}}_t]\le C_\delta a^{-2},
\end{equation}
Furthermore, the event $(G_{n-1}^{\mathrm{tot}}\cap G_{\mathrm{b},n}) \backslash G_{\mathrm{g},n}$ implies that there exists a grey particle which breaks out for the first time before $\tconst{eq:tcrit}$, has not hit the origin between time $t_n$ and the time of its breakout, and which has an ancestor that turned blue during $I_{n-2}$. Proposition~\ref{prop:T} and Lemmas~\ref{lem:BBM_0} and \ref{lem:B_n}, then give,
\begin{multline}
  \label{eq:106}
  \P_{[n]}((G_{n-1}^{\mathrm{tot}}\cap G_{\mathrm{b},n}) \backslash G_{\mathrm{g},n}) \le \E_{[n]}[B^{\mathrm{tot}}_{n-2}\Ind_{G_{n-1}^{\mathrm{tot}}}]\times C\sup_{t\le t_2}p_B\E^0[ \tfrac{\tconst{eq:tcrit}}{a^3}Z^{\mathrm{free}}_t+Y^{\mathrm{free}}_t]\\
  \le C_\delta a^{-2} p_B (a+1) \le C_\delta e^{-2A/3} a^{-1},
\end{multline}
where the last inequality follows from \eqref{eq:pB} and \eqref{eq:ep_lower}. 
Equations \eqref{eq:90} and \eqref{eq:106} now give for every $n\ge 1$, for large $A$ and $a$,
\[
 \P_{[n]}(G_{Z,n} \cap G_{n-1}^{\mathrm{tot}} \backslash G_n^{\mathrm{tot}}) \le C_\delta e^{-2A/3}a^{-1}.
\]
Define $G_{Z,n_T} = \bigcup_n (G_{Z,n}\cap \{n_T=n\})$ and set  $n_0 = \lfloor a^3/t_1\rfloor = O(a)$. Then,
\begin{multline*}
 \P(G_{Z,n_T}\cap G_0^{\mathrm{tot}} \cap \{T\le  a^3\} \backslash G_{n_T}^{\mathrm{tot}}) = \sum_{n=1}^{n_0} \P(\{n_T = n\} \cap G_{Z,n}\cap G_0^{\mathrm{tot}} \backslash G_n^{\mathrm{tot}})\\
 = \sum_{n=1}^{n_0}\sum_{k=1}^{n} \P(\{n_T=n\}\cap G_{Z,n}\cap G_{k-1}^{\mathrm{tot}}\backslash G_k^{\mathrm{tot}})
 \le \sum_{k=1}^{n_0} \P(\{n_T \ge k\} \cap G_{Z,k}\cap G_{k-1}^{\mathrm{tot}}\backslash G_k^{\mathrm{tot}}),
\end{multline*}
where we used the fact that $(G_{Z,n})_{n\ge0}$ and $(G_n^{\mathrm{tot}})_{n\ge0}$ are decreasing sequences.
Now note that $n_T \ge n$ implies $T > t_n$. The last two equations then give
\begin{equation*}
 \P(G_{Z,n_T}\cap G_0^{\mathrm{tot}} \cap \{T\le  a^3\} \backslash G_{n_T}^{\mathrm{tot}}) \le C_\delta e^{-2A/3}.
\end{equation*}
By the hypothesis and Proposition~\ref{prop:T}, we now have for large $A$ and $a$,
\[
 \P(G_0^{\mathrm{tot}} \cap \{T\le  a^3\}) \ge 1-2\ep^2.
\]
It therefore remains to bound the probability of $G_{Z,n_T}^c\cap\{T\le a^3\}$. Since $(G_{Z,n}^c)_{n\ge0}$ is a decreasing sequence of events, we have
\(
 \P(G_{Z,n_T}^c\cap\{T\le a^3\}) \le \P(G_{Z,n_0}^c),
\)
We first note that under $G_0^\sharp$, $|Z_0-e^A| \le \delta e^A/8$ for large $A$ and $a$. Since $(Z^{(0)}_t)_{t\ge0}$ is a martingale under $\P$ (see Section~\ref{sec:interval}), we then have by Doob's $L^2$-inequality,
\[
 \P(G_{Z,n_0}^c\,|\,\F_0) \le \P(\sup_{t\in[0,t_{n_0}]} |Z^{(0)}_t - Z_0| > \delta e^A/8\,|\,\F_0) \le C \frac{\Var(Z^{(0)}_{t_{n_0}}\,|\,\F_0)}{(\delta e^A)^2} \le C_\delta e^{-A},
\]
where the last inequality follows from Proposition~\ref{prop:quantities}.
Together with the previous inequalities and \eqref{eq:ep_lower}, the statement follows.
\end{proof}

\begin{lemma}
  \label{lem:G_ex} Suppose $\P(G^\sharp_0) = 1$. For large $A$ and $a$, $\P(G_{n_T}^{\mathrm{tot}}\cap G_{\mathrm{nw}}) \ge 1- C_\delta \ep^2.$
\end{lemma}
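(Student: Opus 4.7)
My plan is to show that $\P(G_{n_T}^{\mathrm{tot}}\cap G_{\mathrm{nw}}^c) \le C_\delta \ep^2$; combined with Lemma~\ref{lem:G_n0_tot}, this gives the result. By a union bound over the three events defining $G_{\mathrm{nw}}^c$ and Markov's inequality, it is enough to establish the first-moment estimates
\begin{equation*}
\E[Z^{\mathrm{gr}}_{t_{n_T}}\Ind_{G_{n_T}^{\mathrm{tot}}}] \le C_\delta\frac{e^A+a}{a},\quad \E[Y^{\mathrm{gr}}_{t_{n_T}}\Ind_{G_{n_T}^{\mathrm{tot}}}] \le C_\delta\frac{e^A+a}{a^2},\quad \E[B^{\mathrm{gr}}_{n_T-1}\Ind_{G_{n_T}^{\mathrm{tot}}}] \le C_\delta\frac{e^{\mu a}}{a^3}.
\end{equation*}
Using $\eta\le e^{-2A}$, $\ep^{-1}\le e^{A/6}$ and the fact that we let $a\to\infty$ before $A\to\infty$, the resulting Markov bounds $C_\delta e^{A/2}/a$, $C_\delta e^{A/2}/a$ and $C_\delta e^{2A}/a^2$ are each $\le C_\delta\ep^2$ once $a$ is large enough.

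All three bounds will rest on the same decomposition of grey particles by their \emph{most recent blue ancestor}. Let $\mathscr L^{\mathrm{blue}}$ be the stopping line of particles at the moment they first receive the blue coloring, restricted to times $s^*<t_{n_T}$. Every grey particle contributing to $Z^{\mathrm{gr}}_{t_{n_T}}$, $Y^{\mathrm{gr}}_{t_{n_T}}$ or $B^{\mathrm{gr}}_{n_T-1}$ descends from a unique ancestor $(u,s^*)\in\mathscr L^{\mathrm{blue}}$, and its lineage does not hit~$0$ during $(s^*+t_1,t_{n_T}]$ (otherwise the particle would either have been killed or re-coloured blue, producing a strictly later blue ancestor). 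Consequently the relevant grey subtree of $(u,s^*)$ will be stochastically dominated by the corresponding subtree of plain B-BBM with absorption at~$0$, started at time $s^*+t_1$ from the empirical measure $\nu^{(u,s^*)}_{\mathrm{right}}$ of the descendants of $(u,s^*)$ in $(0,a)$ at time $s^*+t_1$.

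For each $(u,s^*)$, the blue period $[s^*,s^*+t_1]$ is free BBM from position~$0$, so Lemma~\ref{lem:BBM_0} gives $\E[Z(\nu^{(u,s^*)}_{\mathrm{right}})]\le C_\delta ae^{-\mu a}$ and $\E[Y(\nu^{(u,s^*)}_{\mathrm{right}})]\le C_\delta e^{-\mu a}$. For the subsequent B-BBM evolution I would apply Proposition~\ref{prop:quantities} together with Lemma~\ref{lem:hat_quantities} (for the $Y$-decay over the final interval $[t_{n_T}-a^2,t_{n_T}]$) to obtain per-ancestor contributions of order $C_\delta ae^{-\mu a}$ and $C_\delta e^{-\mu a}$ to $Z^{\mathrm{gr}}_{t_{n_T}}$ and $Y^{\mathrm{gr}}_{t_{n_T}}$ respectively. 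For $B^{\mathrm{gr}}_{n_T-1}$, Lemma~\ref{lem:Bsharp_left_border} applied to $\nu^{(u,s^*)}_{\mathrm{right}}$ bounds the expected number of first hits at $0$ during $I_{n_T-1}$ by $C_\delta e^{\mu a}/a\cdot\E[AY+Z]\le C_\delta$, and of these hits only those with fewer than $N^\sharp$ particles to the right become blue rather than being killed; by Corollary~\ref{cor:Nt_prob_less_N} this selection probability is at most $C_\delta e^{-A}/a$, giving a per-ancestor bound of $C_\delta e^{-A}/a$.

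To sum over $(u,s^*)$, I would use Lemma~\ref{lem:B_n} to bound $\E[B^{\mathrm{tot}}_k\Ind_{G_k^{\mathrm{tot}}}]\le C_\delta e^{\mu a}/a^2$ for $k\ge 1$, the inclusion $B_0\le e^Ae^{\mu a}/a^2$ built into $G_0^{\mathrm{tot}}$, and the bound $n_T\le\tconst{eq:tcrit}/t_1\le C_\delta a/A$ on $G_{n_T}^{\mathrm{tot}}$, yielding $\E[\#\mathscr L^{\mathrm{blue}}\Ind_{G_{n_T}^{\mathrm{tot}}}]\le C_\delta(e^A+a)e^{\mu a}/a^2$. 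Multiplying by the per-ancestor bounds above gives the three moment estimates (the extra $e^{-A}/a$ factor from Corollary~\ref{cor:Nt_prob_less_N} collapses $(e^A+a)e^{\mu a}/a^3$ to $C_\delta e^{\mu a}/a^3$ for $a$ large). The hard part will be to rigorously justify the ``most recent blue ancestor'' decomposition and the domination by plain B-BBM via the strong branching property and the stopping-line machinery of Section~\ref{sec:stopping_lines}, analogously to the inductive coupling of Lemmas~\ref{lem:B_kn}--\ref{lem:B_n}; once this is in place, the conditional applications of Proposition~\ref{prop:quantities}, Lemma~\ref{lem:hat_quantities}, Lemma~\ref{lem:Bsharp_left_border} and Corollary~\ref{cor:Nt_prob_less_N} should be routine.
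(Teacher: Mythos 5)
Your overall strategy is exactly the paper's: decompose the grey population by most recent blue ancestor, count those ancestors via Lemma~\ref{lem:B_n} and the bound on $B_0$ built into $G_0^{\mathrm{tot}}$ (the paper does this by conditioning on $\F_{\mathscr L_{H_0}}$, using that $\{T\le a^3\}$ is $\F_{\mathscr L_{H_0}}$-measurable to handle the random upper limit $n_T$), bound the blue phase by $\sup_{t\le t_2}\E^0[Z^{\mathrm{free}}_t+AY^{\mathrm{free}}_t]$ from Lemma~\ref{lem:BBM_0}, propagate with Lemma~\ref{lem:hat_quantities}, and treat $B^{\mathrm{gr}}_{n_T-1}$ by the Lemma~\ref{lem:Bsharp_left_border}~+~Corollary~\ref{cor:Nt_prob_less_N} argument, which is precisely Lemma~\ref{lem:B_kn} (which the paper simply invokes). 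So the route is the same, and the $Z^{\mathrm{gr}}$ and $B^{\mathrm{gr}}$ estimates go through as you describe.

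There is one genuine gap: the unconditional moment bound $\E[Y^{\mathrm{gr}}_{t_{n_T}}\Ind_{G^{\mathrm{tot}}_{n_T}}]\le C_\delta(e^A+a)/a^2$ is not attainable, and is in fact false. The obstruction is the tier-$\ge 1$ grey particles: Lemma~\ref{lem:hat_quantities} only gives $\Ehat[Y_t]\le C(Y_0+\eta A\tfrac{t}{a^3}Z_0)$ without further restriction, and feeding in the per-ancestor values $Z_0\asymp ae^{-\mu a}$, $Y_0\asymp e^{-\mu a}$ and the total ancestor count $\asymp(e^A+a)e^{\mu a}/a^2$ produces a term of order $\eta A(e^A+a)/a\asymp\eta A$, which is a constant in $a$; Markov at threshold $e^{A/2}/a$ then gives $C_\delta\eta A\,a\,e^{-A/2}$, which blows up as $a\to\infty$ (the limit in $a$ is taken first). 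This is why the paper, in display \eqref{eq:36}, bounds $\E[Y^{\mathrm{gr}}_{t_{n_T}}\Ind_{(R^{\mathrm{gr}}_{I_{n_T-1}}=0)}\Ind_{\cdots}]\le C_\delta/a$ and separately $\P(R^{\mathrm{gr}}_{I_{n_T-1}}>0,\,\cdots)\le C_\delta\eta\le C_\delta\ep^2$: the $C/a$ decay of $Y$ requires that no grey particle hits $a$ in the final window of length $\ge a^2$, and the complementary event must be absorbed into the probability bound rather than the expectation. You cite the right clause of Lemma~\ref{lem:hat_quantities}, so the repair is mechanical, but your stated intermediate estimate needs this event split to be correct. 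A second, harmless, slip: the product of your ancestor count with the per-ancestor factor $e^{-A}/a$ for $B^{\mathrm{gr}}_{n_T-1}$ is $C_\delta(1+ae^{-A})e^{\mu a}/a^3$, which for $a\gg e^A$ is $C_\delta e^{-A+\mu a}/a^2$ (the paper's bound), not $C_\delta e^{\mu a}/a^3$; either version passes Markov against the threshold $\eta e^{\mu a}/a$ once $a$ is large for fixed $A$, so the conclusion is unaffected.
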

\begin{proof}
By Lemma~\ref{lem:G_n0_tot} and Proposition~\ref{prop:T}, it remains to show that
$\P((G_{n_T}^{\mathrm{tot}}\cap \{T\le  a^3\})\backslash G_{\mathrm{nw}}) \le C_\delta \ep^2.$
The event $\{T\le  a^3\}$ is $\F_{\mathscr L_{H_0}}$-measurable, since by definition, $T$ is the time of the first breakout of a particle which has not hit the origin yet. Lemmas~\ref{lem:hat_quantities}, \ref{lem:BBM_0} and \ref{lem:B_n} then give
\begin{equation}
  \label{eq:2}
  \begin{split}
  \E[Z^{\mathrm{gr}}_{t_{n_T}} \Ind_{G_{n_T}^{\mathrm{tot}}\cap \{T\le  a^3\}}]&\le C\sup_{t\le t_2}\E^0[Z^{\mathrm{free}}_t + AY^{\mathrm{free}}_t]  \E\Big[\sum_{n=0}^{n_T-2}\E_{[n]}[B^{\mathrm{tot}}_n\Ind_{G_n^{\mathrm{tot}}}\,|\,\F_{\mathscr L_{H_0}}]\Ind_{(T\le  a^3)}\Big]\\
&\le C_\delta a e^{\mu a} \sum_{n=0}^{\lfloor a^3/t_1\rfloor} \E_{[n]}[B^{\mathrm{tot}}_n\Ind_{G_n^{\mathrm{tot}}}]\le C_\delta.
  \end{split}
\end{equation}
Similarly, we have
\begin{align}
  \label{eq:36}
\E[Y^{\mathrm{gr}}_{t_{n_T}}\Ind_{(R^{\mathrm{gr}}_{I_{n_T-1}} = 0)}\Ind_{G_{n_T}^{\mathrm{tot}}\cap \{T\le  a^3\}}] \le C_\delta/a\tand \P(R^{\mathrm{gr}}_{I_{n_T-1}} > 0,\ G_{n_T}^{\mathrm{tot}},\ T\le  a^3) \le C_\delta \eta.
\end{align}
Finally, we have as in \eqref{eq:2}, by Lemmas~\ref{lem:B_kn} and \ref{lem:B_n},
\begin{align}
\nonumber
   \E[B^{\mathrm{gr}}_{n_T-1} \Ind_{G_{n_T-1}^{\mathrm{tot}}\cap \{T\le  a^3\}}] &= \E\Big[\sum_{k=0}^{n_T-2}\E[B_{k,{n_T-1}}^{\mathrm{tot}}\Ind_{G_{n_T-1}^{\mathrm{tot}}}\,|\,\F_{\mathscr L_{H_0}}]\Ind_{(T\le  a^3)}\Big] \\
    \label{eq:38}
&\le C_\delta (e^{-A}/a) \sum_{k=0}^{\lfloor  a^3/t_1\rfloor} \E_{[k]}[B^{\mathrm{tot}}_k\Ind_{G_k^{\mathrm{tot}}}] \le C_\delta e^{-A+\mu a}/a^2.
\end{align}
The lemma now follows from \eqref{eq:2}, \eqref{eq:36} and \eqref{eq:38}, together with Markov's inequality and \eqref{eq:ep_lower}.
\end{proof}

\paragraph{Creation of blue particles: after the breakout.}

We study now the system after the breakout. Recall that $n_T = \lfloor T/t_1\rfloor$ by definition and define $n_\Theta := \lfloor \Theta_1/t_1 +
 1\rfloor$. 
We define $\F^\sharp_\Delta = \F_\Delta\cap \F_{\mathscr N^{\mathrm{gr}}_{t_{n_T}}\wedge \mathscr B^{\mathrm{gr}}_{n_T-1}}$, where, analogous to the proof of Lemma~\ref{lem:B_kn}, $\mathscr B^{\mathrm{gr}}_{n_T-1}$ denotes the stopping line of the grey particles turning blue during the interval $I_{n_T-1}$. We then define the event $G^\sharp_\Delta = G_\Delta\cap G_{\mathrm{nw}}\in\F^\sharp_\Delta$. We denote by the superscript ``gr$\preceq$" the quantities relative to particles descending from those that were grey before or at time $t_{n_T}$. Then let $G^\sharp_{\mathrm{nbab}}$ be the intersection of $G_{\mathrm{nbab}}$ with the event that none of these particles hits $a$ between $t_{n_T}$ and $\Theta_1+e^Aa^2$ before hitting 0. Define then the (sub-probability) measure $\P^\sharp_{\mathrm{nbab}} = \P(\cdot,\,G^\sharp_{\mathrm{nbab}})$.

\begin{lemma}
\label{lem:Gsharp_Delta_nbab}
For large $A$ and $a$, $\Psh(G^\sharp_\Delta)\ge 1-C\ep^{5/4}$ and $\Psh((G^\sharp_{\mathrm{nbab}})^c\,|\,\F^\sharp_\Delta)\Ind_{G^\sharp_\Delta} \le C\ep^2$.
\end{lemma}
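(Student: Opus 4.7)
The plan is to reduce the statement to two already-established ingredients: the analogous B-BBM result (Lemmas~\ref{lem:GDelta} and~\ref{lem:Gnbab}) and the control on the grey/blue population from Section~\ref{sec:Bsharp_num_particles} (in particular Lemma~\ref{lem:G_ex}). The crucial observation is that the white particles of the $\Psh$-BBM, up to the stopping time $\Theta_1$, are jointly distributed exactly as the particles of the $\PB$-BBM: blue and grey particles do not alter the barrier dynamics, do not enter the definitions of $T$, $\mathscr U$, $\Delta$, nor the hat/check/fug decomposition, and do not affect the evolution of any white particle. Consequently $\Psh(G_\Delta)=\PB(G_\Delta)\ge 1-C\ep^{5/4}$ by Lemma~\ref{lem:GDelta}, and combining with Lemma~\ref{lem:G_ex} (which under hypothesis (HB$_0^\sharp$) gives $\Psh(G_{\mathrm{nw}})\ge 1-C_\delta\ep^2$) one gets $\Psh(G_\Delta^\sharp)=\Psh(G_\Delta\cap G_{\mathrm{nw}})\ge 1-C\ep^{5/4}$, using \eqref{eq:ep_lower} to absorb the $C_\delta\ep^2$ term.

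For the second inequality I would decompose $(G_{\mathrm{nbab}}^\sharp)^c=G_{\mathrm{nbab}}^c\cup E$, where $E$ is the event that some descendant of a particle that was blue or grey at time $t_{n_T}$ hits $a$ before returning to $0$, at some time in $[t_{n_T},\Theta_1+e^A a^2]$. The same white/grey independence used above gives $\Psh(G_{\mathrm{nbab}}^c\,|\,\F_\Delta^\sharp)=\Psh(G_{\mathrm{nbab}}^c\,|\,\F_\Delta)$, since after $T$ the white hat-, check- and fug-particles evolve independently of the grey/blue world given the configuration recorded in $\F_\Delta$. Lemma~\ref{lem:Gnbab} then bounds this by $C\ep^2$ on $G_\Delta\supset G_\Delta^\sharp$, so only the event $E$ remains.

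To bound $\Psh(E\,|\,\F_\Delta^\sharp)\Ind_{G_\Delta^\sharp}$, I would use a first-moment estimate via Markov's inequality, splitting the contribution according to whether the ancestor at time $t_{n_T}$ is already grey or still blue. On $G_{\mathrm{nw}}$ we have $Z^{\mathrm{gr}}_{t_{n_T}}\le e^{A/2}$, $Y^{\mathrm{gr}}_{t_{n_T}}\le e^{A/2}/a$ and $B^{\mathrm{gr}}_{n_T-1}\le \eta e^{\mu a}/a$; on $G_\Delta$ the barrier function $f_\Delta$ satisfies $\|f_\Delta\|\le g(A)$ by \eqref{eq:Delta_f}, so Lemma~\ref{lem:R_f} combined with Lemma~\ref{lem:Rt} applies throughout $[t_{n_T},\Theta_1+e^Aa^2]\subset[t_{n_T},t_{n_T}+2e^A a^2]$. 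Applied to the grey population (which is subsequently killed at $0$), this yields an expected number of $a$-hits bounded by $\pi(2e^A a^2/a^3)Z^{\mathrm{gr}}_{t_{n_T}}+CY^{\mathrm{gr}}_{t_{n_T}}\le Ce^{3A/2}/a$. For the at most $\eta e^{\mu a}/a$ blue particles sitting at $0$, each one ``lives freely'' at $0$ only for a time $\le 2t_1=O(a^2)$ before its left-of-$0$ descendants are killed at $t_{n_T+1}\wedge\Theta_1$; by dominating crudely by the free-BBM estimate of Lemma~\ref{lem:BBM_0} over the entire interval, the expected number of $a$-hits per blue ancestor is $\le Ce^{\pi^2 e^A-\mu a}$, so the total is $\le C\eta e^{\pi^2 e^A}/a$. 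Both bounds are $o_a(1)$ and therefore $\le\ep^2$ once $a$ is taken sufficiently large depending on $A$, which is permitted by the convention ``for large $A$ and $a$''.

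The main obstacle will be the blue particles, since they are \emph{not} absorbed at $0$ during the blue phase, so the clean martingale/absorption estimates of Lemma~\ref{lem:Rt} cannot be invoked directly; the plan is to absorb this loss of tightness into the factor $e^{\pi^2 e^A}$ from free-BBM growth, which is harmless because the allowed range of $a$ may be pushed exponentially in $A$, whereas the smallness we need, $\eta\le e^{-2A}$, is independent of $a$. The independence statement $\Psh(G_{\mathrm{nbab}}^c\mid \F_\Delta^\sharp)=\Psh(G_{\mathrm{nbab}}^c\mid\F_\Delta)$ should also be argued carefully, since $\F_\Delta^\sharp\supset\F_\Delta$ also contains the grey information $\F_{\mathscr N^{\mathrm{gr}}_{t_{n_T}}\wedge \mathscr B^{\mathrm{gr}}_{n_T-1}}$, but the latter is generated by stopping lines of particles disjoint from the descendants of $\mathscr L_\Delta$, so the strong branching property gives the required conditional independence.
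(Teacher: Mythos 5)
Your proposal is correct and follows essentially the same route as the paper: the first inequality is exactly Lemmas~\ref{lem:GDelta} and~\ref{lem:G_ex}, and the second combines Lemma~\ref{lem:Gnbab} with the first-moment bound $C\big((e^A/a)Z^{\mathrm{gr}}_{t_{n_T}}+Y^{\mathrm{gr}}_{t_{n_T}}\big)\le Ce^{3A/2}/a$ on the grey descendants hitting $a$, obtained from Lemmas~\ref{lem:Rt} and~\ref{lem:R_f} and the definition of $G_{\mathrm{nw}}$. The only deviation is that your treatment of the blue particles at time $t_{n_T}$ is superfluous (though harmless), since $G^\sharp_{\mathrm{nbab}}$ only constrains descendants of particles that were grey \emph{at or before} $t_{n_T}$; the blue ones are dealt with separately via the events $\widetilde G^{\mathrm{tot}}_n$ in Lemma~\ref{lem:G_n_prime}.
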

\begin{proof}
The first inequality follows from Lemmas~\ref{lem:GDelta} and \ref{lem:G_ex}. For the second statement, we have by Lemmas~\ref{lem:Rt} and \ref{lem:R_f} and the definition of $G_{\mathrm{nw}}$,
\begin{equation*}
  \Psh(R^{\mathrm{gr}\preceq}_{[t_{n_T},\Theta_1+e^Aa^2]} > 0\,|\,\F^\sharp_\Delta)\Ind_{G^\sharp_\Delta} \le C((e^A/a)Z^{\mathrm{gr}}_{t_{n_T}}+Y^{\mathrm{gr}}_{t_{n_T}})\Ind_{G^\sharp_\Delta} \le Ce^{3A/2}/a.
\end{equation*}
Together with Lemma~\ref{lem:Gnbab}, this implies the lemma.
\end{proof}

\begin{lemma}
 For large $A$ and $a$ and $n\ge n_T-1$, 
\(
\E^\sharp_{\mathrm{nbab}}[B_n+B_n^{\mathrm{gr}\preceq}\,|\,\F^\sharp_\Delta]\Ind_{G^\sharp_\Delta} \le C_{\delta}e^{-5A/3}{e^{\mu a}}/{a}.
\)
\end{lemma}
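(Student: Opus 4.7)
The plan is to adapt the proof strategy of Lemmas~\ref{lem:B_hat} and \ref{lem:B_n} to the post-breakout regime, combining the bounds on $G^\sharp_\Delta$ with the two key ingredients Lemma~\ref{lem:Bsharp_left_border} (expected number of particles hitting $0$) and Lemma~\ref{lem:Nt_prob_less_N_moving_wall} (probability that the count to the right stays below $N$).

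First I would identify a stopping line $\mathscr L^\sharp$ of ``ancestors'' on which every particle that can contribute to $B_n+B_n^{\mathrm{gr}\preceq}$ descends, and whose associated $\sigma$-algebra is contained in $\F^\sharp_\Delta$. Natural candidates are the components of $\mathscr L_\Delta$ (hat-particles on $\widehat{\mathscr N}_{T^-}$, fug-particles on $\mathscr S^{(\mathscr U,T)}$, and check-particles on $\widecheck{\mathscr N}^{(1)}_{T^-}\wedge(\widecheck{\mathscr S}^{(1)}_T\setminus \widecheck{\mathscr S}^{(1)}_{T^-})$) together with $\mathscr N^{\mathrm{gr}}_{t_{n_T}}$ and $\mathscr B^{\mathrm{gr}}_{n_T-1}$ (the latter two contributing to $B_n^{\mathrm{gr}\preceq}$). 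By the strong branching property, conditioned on $\F_{\mathscr L^\sharp}\subset \F^\sharp_\Delta$, each space-time point $(x,s)\in\mathscr L^\sharp$ spawns an independent BBM with the (possibly time-varying) drift $-\mu-f'_\Delta(\cdot)/a^2$, and by \eqref{eq:Delta_f} the hypotheses of Lemma~\ref{lem:Bsharp_left_border} are satisfied on $G_\Delta$.

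Second, for each ancestor $(x,s)\in\mathscr L^\sharp$ and each $t\in I_n$, Lemma~\ref{lem:Bsharp_left_border} yields an expected number of descendants hitting $0$ during $I_n$ bounded by $Ce^{\mu a}a^{-3}(|I_n|+a^2)(Aw_Y(x)+w_Z(x))\le Ce^{\mu a}a^{-1}(Aw_Y(x)+w_Z(x))$, the distance condition $t_n-s\ge a^2$ being satisfied because $\Theta_1-T^-\ge 2e^Aa^2$ on $G_\Delta$ and $t_n\ge t_{n_T}-t_1\ge t_{n_T-1}\ge s$ for the grey components. For each such particle hitting $0$, the probability that it turns blue (i.e.\ that $N^{\mathrm{tot}}_t<N$) is bounded using Lemma~\ref{lem:Nt_prob_less_N_moving_wall}: since $N^{\mathrm{tot}}_t\ge \widehat N^{\neg u}_t+N^{\mathrm{fug},\neg u}_t$ for any $(u,t_u)\in \widehat{\mathscr N}_{T^-}\cup\mathscr S^{(\mathscr U,T)}$ (which is non-empty on $G_\Delta$), we obtain
\[
\sup_{t\in[T,\Theta_1]}\Psh_{\mathrm{nbab}}(N^{\mathrm{tot}}_t<N\,|\,\F^\sharp_\Delta)\Ind_{G^\sharp_\Delta}\le C_\delta \eta e^{-A}/\ep.
\]

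Third, combining the above via the tower property, I obtain
\[
\E^\sharp_{\mathrm{nbab}}[B_n+B_n^{\mathrm{gr}\preceq}\,|\,\F^\sharp_\Delta]\Ind_{G^\sharp_\Delta}\le C_\delta \frac{\eta e^{-A}}{\ep}\cdot \frac{Ce^{\mu a}}{a}(A\Sigma_Y+\Sigma_Z),
\]
where $\Sigma_Z$ and $\Sigma_Y$ are the sums of $w_Z$ and $w_Y$ over $\mathscr L^\sharp$. The defining inequalities of $G_\Delta$ ($\widehat Z_{T^-}\le 2e^A$, $Z^{(\mathscr U,T)}\le e^A/\ep$, $\widecheck Z_\Delta\le \ep^{1/4}e^A$, and analogous bounds for $Y$) combined with the defining inequalities of $G_{\mathrm{nw}}$ ($Z^{\mathrm{gr}}_{t_{n_T}}\le e^{A/2}$, $Y^{\mathrm{gr}}_{t_{n_T}}\le e^{A/2}/a$, and the blue-count bound $B^{\mathrm{gr}}_{n_T-1}\cdot w_Y(0)\le (\eta e^{\mu a}/a)\cdot e^{-\mu a}$) yield $\Sigma_Z+A\Sigma_Y\le Ce^A/\ep$, the fug-contribution dominating. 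This gives a bound of $C_\delta \eta e^{\mu a}/(a\ep^2)$, and invoking $\eta\le \ep^{12}$ from \eqref{eq:eta_ep} together with $\ep\ge e^{-A/6}$ from \eqref{eq:ep_lower} gives $\eta/\ep^2\le \ep^{10}\le e^{-5A/3}$, which yields the claimed bound $C_\delta e^{-5A/3}e^{\mu a}/a$.

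The main obstacle will be packaging the decomposition cleanly so that all ancestors are captured on a single stopping line measurable with respect to $\F^\sharp_\Delta$, and in particular verifying that for $n=n_T-1$ (where $I_n$ lies entirely before $T^-$) the bound follows by restriction to the hat/check components (the fug and grey contributions being trivially zero), so that the pre-breakout analysis of Lemma~\ref{lem:B_hat} combined with the trivial inequality $e^{\mu a}/a^2\le Ce^{-5A/3}e^{\mu a}/a$ valid for $a$ large enough (with $A$ fixed) closes the edge case.
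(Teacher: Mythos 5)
Your overall strategy (decompose over an ancestral line measurable w.r.t.\ $\F^\sharp_\Delta$, multiply the expected number of hits of $0$ from Lemma~\ref{lem:Bsharp_left_border} by the blue-turning probability from Lemma~\ref{lem:Nt_prob_less_N_moving_wall}) is the paper's, but your ancestral line is incomplete: the \emph{bar}-particles are missing. A white particle contributing to $B_n$ descends from one of $\widehat{\mathscr N}_{T^-}$, $\widecheck{\mathscr N}^{(1)}_{T^-}\wedge(\widecheck{\mathscr S}^{(1)}_T\backslash\widecheck{\mathscr S}^{(1)}_{T^-})$, $\mathscr S^{(\mathscr U,T)}$ \emph{or} $\widebar{\mathscr L}_{\mathscr U}$, and the last line is not part of $\mathscr L_\Delta$. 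This is not mere bookkeeping: the points of $\widebar{\mathscr L}_{\mathscr U}$ sit along the fugitive's excursions above the critical line, so they can be arbitrarily close to $a$, and their $w_Y$-sum is not controlled by any of the quantities in $G_\Delta$ or $G_{\mathrm{nw}}$ — it is controlled only by $\mathscr E_{\mathscr U}\le e^{A/3}$ on $G_{\mathscr U}$. The paper handles $\widebar B_n$ separately, applying the first inequality of Lemma~\ref{lem:Bsharp_left_border} for $x\ge a/2$ and the second together with $w_Y(x)\le a^{-1}e^{-(a-x)/2}$ for $x\le a/2$, obtaining $C_\delta e^{\mu a}\eta(\ep e^Aa)^{-1}A\mathscr E_{\mathscr U}\le C_\delta e^{\mu a}\eta/a$, which then fits under $e^{-5A/3}e^{\mu a}/a$ via \eqref{eq:eta}. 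Without this piece your bound on $B_n$ does not close.

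Two smaller corrections. First, your final chain $\eta/\ep^2\le\ep^{10}\le e^{-5A/3}$ fails at the second step: \eqref{eq:ep_upper} only gives $\ep\le A^{-17}$, so $\ep^{10}\le A^{-170}$, which is much larger than $e^{-5A/3}$; the correct route is $\eta\le e^{-2A}$ from \eqref{eq:eta} together with $\ep^{-2}\le e^{A/3}$ from \eqref{eq:ep_lower}. Second, your reading of the edge case $n=n_T-1$ is reversed: since $T^-=T-e^Aa^2$ and $e^Aa^2\gg t_1$, the interval $I_{n_T-1}$ lies \emph{after} $T^-$ (the paper notes $t_{n_T-1}>T^-$), so the post-breakout decomposition via $\mathscr L_\Delta$ applies there too; the only thing requiring special treatment at $n=n_T-1$ is $B^{\mathrm{gr}\preceq}_{n_T-1}$, which is bounded deterministically by $\eta e^{\mu a}/a$ on $G_{\mathrm{nw}}\supset G^\sharp_\Delta$. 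Relatedly, your verification of the condition $t_l\ge a^2$ in Lemma~\ref{lem:Bsharp_left_border} for the grey components does not work for $n=n_T$ (there $t_n-t_{n_T}=0$), which is why the paper uses the second inequality of that lemma for the grey particles.
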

\begin{proof}
Conditioned on $\F^\sharp_\Delta$, let $n\ge n_T-1$. As in Lemma~\ref{lem:blue_particles}, we have for $t\in I_n$, with $f_{\Delta}^+ = f_\Delta(\cdot - T^+/a^2)$,
\begin{equation*}
  \begin{split}
  \E^\sharp_{\mathrm{nbab}}[\widehat B_n + B^{\mathrm{fug}}_n + \widecheck B_n\,|\,\F^\sharp_\Delta]\Ind_{G^\sharp_\Delta} &\le C\sum_{(u,t)\in \mathscr L_\Delta} \Ehat_{f_{\Delta}^+}^{(X_u(t),t)}[L_n]\sup_{s\in I_n}\P^\sharp_{\mathrm{nbab}}(\widehat N^{\neg u}_s + N^{\mathrm{fug},\neg u}_s < N\,|\,\F_\Delta)\Ind_{G^\sharp_\Delta}\\
&\le C_\delta \eta (\ep e^{A})^{-1} \sum_{(u,t)\in \mathscr L_\Delta}\Ehat_{f_{\Delta}^+}^{(X_u(t),t)}[L_n]\Ind_{G^\sharp_\Delta}
  \end{split}
\end{equation*}
the last inequality following from Lemma~\ref{lem:Nt_prob_less_N_moving_wall}. By the first inequality in Lemma~\ref{lem:Bsharp_left_border} and the definition of $G_\Delta$, this gives for large $A$ and $a$,
\begin{equation*}
  \E^\sharp_{\mathrm{nbab}}[\widehat B_n  + B^{\mathrm{fug}}_n+ \widecheck B_n\,|\,\F^\sharp_\Delta]\Ind_{G^\sharp_\Delta} \le C_\delta e^{\mu a}\eta (\ep e^Aa)^{-1}(Z_\Delta + A Y_\Delta) \Ind_{G^\sharp_\Delta}
\le C_\delta e^{\mu a}\eta\ep^{-2}/a.
\end{equation*}
The reasoning is similar for the bar-particles, using the first inequality of Lemma~\ref{lem:Bsharp_left_border} for those $(x,s)\in\mathscr L_{\mathscr U}$ with $x\ge a/2$ and the second inequality together with the bound  $w_Y(x) \le a^{-1}e^{-(a-x)/2}$ valid for $x\le a/2$ and large $a$, to get
\begin{equation*}
 \E^\sharp_{\mathrm{nbab}}[\widebar B_n\,|\,\F^\sharp_\Delta]\Ind_{G^\sharp_\Delta} \le C_\delta e^{\mu a}\eta (\ep e^{A}a)^{-1}(A \mathscr E_{\mathscr U})\Ind_{G^\sharp_\Delta}
 \le C_\delta e^{\mu a} \eta /a,
\end{equation*}
by \eqref{eq:ep_upper} and the definition of $G_{\mathscr U}\supset G_\Delta$. 
As for $B^{\mathrm{gr}\preceq}_n$, if $n=n_T-1$ then $B^{\mathrm{gr}\preceq}_n \le \eta e^{\mu a}/a$ on $G_{\mathrm{nw}} \supset G^\sharp_\Delta$. Furthermore, if $n\ge n_T$, then by the second inequality of Lemma~\ref{lem:Bsharp_left_border},
\begin{equation*}
  \E^\sharp_{\mathrm{nbab}}[B^{\mathrm{gr}\preceq}_n\,|\,\F^\sharp_\Delta]\Ind_{G^\sharp_\Delta} \le C_\delta e^{\mu a}\eta (\ep e^{A})^{-1}(e^A Z^{\mathrm{gr}}_{t_{n_T}}/a + Y^{\mathrm{gr}}_{t_{n_T}})\Ind_{G^\sharp_\Delta} \le C_\delta e^{\mu a} \eta \ep^{-1} e^{-A/2}/a,
\end{equation*}
by the definition of $G_{\mathrm{nw}}$.
The lemma now follows from the above,
together with \eqref{eq:eta} and \eqref{eq:ep_lower}.
\end{proof}

For $n\ge n_T-1$, define $\widetilde G^{\mathrm{tot}}_n$ to be the event that no descendant of a particle which has been coloured blue between $t_{n_T-1}$ and  $t_n$ hits $a$ before $\Theta_1+e^Aa^2$.
\begin{lemma}
\label{lem:B_n_moving_wall}
 We have for every $n\ge n_T-1$ and for large $A$ and $a$,
\[
\E^\sharp_{\mathrm{nbab}}[B^{\mathrm{tot}}_n\Ind_{\widetilde G^{\mathrm{tot}}_n}\,|\,\F^\sharp_\Delta]\Ind_{G^\sharp_\Delta} \le C_{\delta}e^{-5A/3}\frac{ e^{\mu a}}{a}. 
\]
\end{lemma}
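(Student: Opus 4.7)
The proof is the moving-wall counterpart of Lemma~\ref{lem:B_n} and proceeds by induction on $n\ge n_T-1$. I decompose
\[
 B^{\mathrm{tot}}_n\Ind_{\widetilde G^{\mathrm{tot}}_n} \le B_n + B^{\mathrm{gr}\preceq}_n + \sum_{k=n_T-1}^{n-2}\widetilde B_{k,n},
\]
where $\widetilde B_{k,n}$ counts the particles coloured blue during $I_n$ whose most recent ancestor that was coloured blue was coloured during $I_k$ (and such that the event $\widetilde G^{\mathrm{tot}}_n$ still holds). For $n=n_T-1$ the first two terms alone bound $B^{\mathrm{tot}}_n$ and the claimed estimate follows directly from the unlabeled lemma preceding the statement. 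For $n\ge n_T$, the contribution of $B_n+B^{\mathrm{gr}\preceq}_n$ is already of the correct order by that same lemma, and it only remains to control the sum over $k$ inductively.

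The key step, the moving-wall analogue of Lemma~\ref{lem:B_kn}, is the recursive bound
\[
 \E^\sharp_{\mathrm{nbab}}[\widetilde B_{k,n}\,|\,\F^\sharp_\Delta]\Ind_{G^\sharp_\Delta}\ \le\ C_\delta\,\frac{e^{-A}}{a}\ \E^\sharp_{\mathrm{nbab}}[B^{\mathrm{tot}}_k\Ind_{\widetilde G^{\mathrm{tot}}_k}\,|\,\F^\sharp_\Delta]\Ind_{G^\sharp_\Delta},\qquad n_T-1\le k\le n-2.
\]
To derive it, I let $\widetilde{\mathscr B}_k$ be the stopping line of particles contributing to $B^{\mathrm{tot}}_k\Ind_{\widetilde G^{\mathrm{tot}}_k}$ (coloured blue during $I_k$) and condition on $\F^\sharp_\Delta\vee\F_{\widetilde{\mathscr B}_k\wedge\mathscr N^{(0),\mathrm{tot}}_{t_{n-1}}}$. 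By the strong branching property, the subtrees rooted at $\widetilde{\mathscr B}_k$ and at $\mathscr N^{(0),\mathrm{tot}}_{t_{n-1}}$ evolve independently under B-BBM with the deterministic drift $f_\Delta^+$; by \eqref{eq:Delta_f}, $f_\Delta$ is a barrier function with $\|f_\Delta\|$ bounded by a function of $A$, so Lemma~\ref{lem:Bsharp_left_border} applies. Exactly as in the proof of Lemma~\ref{lem:B_kn}, each summand in the sum over $\widetilde{\mathscr B}_k$ is bounded by $Ce^{\mu a}\E^0_{f_\Delta^+}[Y^{\mathrm{free}}_{t_{k+2}-s}]\le Ce^{CK}e^{\mu a}$ (via Lemma~\ref{lem:BBM_0}), while the supremum over $t\in I_n$ of the probability that any surviving descendant sees fewer than $N$ non-descendants is bounded by $C_\delta e^{-A}/a$ using Lemma~\ref{lem:Nt_prob_less_N_moving_wall} in place of Corollary~\ref{cor:Nt_prob_less_N}. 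Observe that the event $\widetilde G^{\mathrm{tot}}_n$, together with $G^\sharp_{\mathrm{nbab}}$, guarantees that none of the relevant descendants hits $a$, which is what makes Lemma~\ref{lem:Bsharp_left_border} applicable with $t\le 2e^Aa^2$.

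Combining the decomposition with the unlabeled lemma and the recursive estimate, and writing $\beta_n:=\E^\sharp_{\mathrm{nbab}}[B^{\mathrm{tot}}_n\Ind_{\widetilde G^{\mathrm{tot}}_n}\,|\,\F^\sharp_\Delta]\Ind_{G^\sharp_\Delta}$, we obtain
\[
 \beta_n\ \le\ C_\delta\,e^{-5A/3}\frac{e^{\mu a}}{a}\ +\ C_\delta\,\frac{e^{-A}}{a}\sum_{k=n_T-1}^{n-2}\beta_k.
\]
Since the number of time-intervals in play satisfies $n-n_T+1\le (\Theta_1-T)/t_1+2=O(e^A)$ on $G^\sharp_\Delta$, induction on $n$ (as in the closing argument of Lemma~\ref{lem:B_n}, based on \eqref{eq:840}) yields $\beta_n\le C_\delta e^{-5A/3}e^{\mu a}/a$ for large $A$ and $a$, because the accumulated factor $C_\delta e^{-A}/a\cdot O(e^A)=O(1/a)$ is less than $1/2$ for $a$ large.

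\textbf{Main obstacle.} The delicate point is not the induction itself but the precise bookkeeping required so that Lemma~\ref{lem:Nt_prob_less_N_moving_wall} (which bounds the probability that $\widehat N^{\neg u}_t+N^{\mathrm{fug},\neg u}_t<N$, i.e.\ uses only hat- and fug-particles as ``background'') is strong enough in the recursion, where the particles whose descendants we are counting are neither hat nor fug but sit on the random line $\widetilde{\mathscr B}_k$. Verifying that one may discard the contribution of these particles to the background count (it has exactly the same flavour as discarding a single initial particle in Lemma~\ref{lem:blue_particles}) and that the resulting conditioning is compatible with the $\sigma$-field $\F^\sharp_\Delta\vee\F_{\widetilde{\mathscr B}_k\wedge\mathscr N^{(0),\mathrm{tot}}_{t_{n-1}}}$ and with the event $\widetilde G^{\mathrm{tot}}_n$, is what makes the argument technical.
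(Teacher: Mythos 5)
Your proof follows essentially the same route as the paper's: decompose $B^{\mathrm{tot}}_n$ into $B_n+B^{\mathrm{gr}\preceq}_n$ (handled by the preceding lemma) plus the terms $B^{\mathrm{tot}}_{k,n}$ for $n_T-1\le k\le n-2$, prove the moving-wall analogue of Lemma~\ref{lem:B_kn} by substituting Lemma~\ref{lem:Nt_prob_less_N_moving_wall} for Corollary~\ref{cor:Nt_prob_less_N}, and close with the recurrence of \eqref{eq:840}. One slip in the constants: Lemma~\ref{lem:Nt_prob_less_N_moving_wall} yields the probability bound $C_\delta\,\eta e^{-A}/\ep$, not $C_\delta e^{-A}/a$ (the latter is the pre-breakout bound from Corollary~\ref{cor:Nt_prob_less_N}; for fixed $A$ and $a\to\infty$ one does not have $\eta/\ep\le 1/a$), so the recursion constant should be $C_\delta\,\eta\ep^{-1}e^{-A}$. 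Since the number of post-breakout intervals is $O(e^A)$ and $\eta\le\ep^{12}$ by \eqref{eq:eta_ep}, the accumulated factor is $O(\eta/\ep)=o(1)$, so the induction still closes and your conclusion is unaffected.
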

\begin{proof}
  The proof is similar to the proof of Lemma~\ref{lem:B_n}: As in the proof of Lemma~\ref{lem:B_kn}, with Lemma~\ref{lem:Nt_prob_less_N_moving_wall} instead of Corollary~\ref{cor:Nt_prob_less_N}, one first shows that for every $n_T-1 \le k\le n-2$, one has
  \begin{equation*}
    \E^\sharp_{\mathrm{nbab}}[B_{k,n}^{\mathrm{tot}}\Ind_{\widetilde G^{\mathrm{tot}}_n}\,|\,\F_{\mathscr B_k}]\Ind_{G^\sharp_\Delta} \le C_\delta \eta \ep^{-1} e^{-A} \E^\sharp_{\mathrm{nbab}}[B^{\mathrm{tot}}_k\Ind_{\widetilde G^{\mathrm{tot}}_k}\,|\,\F^\sharp_\Delta]\Ind_{G^\sharp_\Delta}.
  \end{equation*}
By a recurrence similar to \eqref{eq:840}, using $B_n+B_n^{\mathrm{gr}\preceq}$ instead of $B_n$, this yields the lemma.
\end{proof}

\subsection{The probability of \texorpdfstring{$G^\sharp_1$}{G\#1}}

\begin{lemma}
  \label{lem:G_n_prime}
 For large $A$ and $a$, $\P^\sharp_{\mathrm{nbab}}((\widetilde G^{\mathrm{tot}}_{n_\Theta})^c\,|\,\F^\sharp_\Delta)\Ind_{G^\sharp_\Delta} \le C_\delta/a.$
\end{lemma}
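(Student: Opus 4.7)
The plan is to decompose $(\widetilde G^{\mathrm{tot}}_{n_\Theta})^c$ according to the last interval $I_n$ during which a blue particle whose descendants eventually hit $a$ was created, and then to use Lemma~\ref{lem:B_n_moving_wall} to control the expected number of such blue particles on the good event, combined with Lemma~\ref{lem:BBM_0} to control, for each of them independently, the probability that their lineage reaches $a$ before they turn back to white (i.e.\ within time $t_2 = 2(K+3)a^2$).

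More concretely, since $\widetilde G^{\mathrm{tot}}_{n_T-1} = \Omega$ and $\widetilde G^{\mathrm{tot}}_{n}$ is decreasing in $n$, the union bound gives
\[
(\widetilde G^{\mathrm{tot}}_{n_\Theta})^c \subset \bigcup_{n=n_T-1}^{n_\Theta-1} \Big(\widetilde G^{\mathrm{tot}}_n \cap (\widetilde G^{\mathrm{tot}}_{n+1})^c\Big).
\]
On the $n$-th event in this union, at least one of the $B^{\mathrm{tot}}_n$ particles coloured blue during $I_n$ has a descendant which hits $a$ before $\Theta_1+e^Aa^2$. By the construction of the \Bsh-BBM, any such particle is killed (or turned back to white) at the time $t_{n+2}\wedge \Theta_1$, so only descendants produced during a time window of length at most $t_2$ are relevant. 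Applying the strong branching property at the stopping line $\mathscr B_n$ of blue particles created during $I_n$ (all sitting at the origin at their birth) and using Lemma~\ref{lem:BBM_0} together with \eqref{eq:Delta_f}, each such particle has a descendant hitting $a$ with conditional probability at most
\[
\sup_{s\le t_2}\E^0_{f_\Delta^+}[R^{\mathrm{free}}_s] \le C e^{\frac{\pi^2}{2a^2}t_2 - \mu a} \le C_\delta\, e^{-\mu a}.
\]
Hence, by a union bound and then the tower property,
\[
\Psh_{\mathrm{nbab}}\big(\widetilde G^{\mathrm{tot}}_n \cap (\widetilde G^{\mathrm{tot}}_{n+1})^c \,\big|\,\F^\sharp_\Delta\big)\Ind_{G^\sharp_\Delta} \le C_\delta\, e^{-\mu a}\, \Esh_{\mathrm{nbab}}\big[B^{\mathrm{tot}}_n \Ind_{\widetilde G^{\mathrm{tot}}_n}\,\big|\,\F^\sharp_\Delta\big]\Ind_{G^\sharp_\Delta}.
\]

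By Lemma~\ref{lem:B_n_moving_wall}, each term on the right is bounded by $C_\delta\, e^{-5A/3}\,e^{\mu a}/a$. On $G^\sharp_\Delta \subset G_{\mathscr U}$ we have $T\le \sqrt\ep a^3$ and $\Theta_1 = T+e^A a^2$, so $n_\Theta - n_T \le e^A a^2/t_1 + O(1) = O(e^A)$. Summing over $n$ from $n_T-1$ to $n_\Theta-1$ gives
\[
\Psh_{\mathrm{nbab}}\big((\widetilde G^{\mathrm{tot}}_{n_\Theta})^c\,\big|\,\F^\sharp_\Delta\big)\Ind_{G^\sharp_\Delta} \le C e^A \cdot C_\delta\, e^{-\mu a} \cdot C_\delta\, e^{-5A/3}\,\frac{e^{\mu a}}{a} = C_\delta\,\frac{e^{-2A/3}}{a} \le \frac{C_\delta}{a}
\]
for large $A$ and $a$. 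The main (very mild) subtlety is to make sure that one only needs to follow the descendants of each blue particle for a time $t_2$, not up to $\Theta_1+e^Aa^2$; once this is recognised, the proof is a clean combination of the two lemmas cited above.
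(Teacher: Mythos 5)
There is a genuine gap, and it sits exactly where you flag the "very mild subtlety": your claim that \emph{only descendants produced during a time window of length at most $t_2$ are relevant} is false given the definition of $\widetilde G^{\mathrm{tot}}_n$. That event requires that no \emph{descendant} of a particle coloured blue before $t_n$ hits $a$ before $\Theta_1+e^Aa^2$. When the blue period ends at $t_{n+2}\wedge\Theta_1$, the blue particle's descendants to the right of $0$ are not killed — they are recoloured white and keep evolving, and they remain descendants of a once-blue particle. So they can hit $a$ at any time up to $\Theta_1+e^Aa^2$, a horizon of length up to about $2e^Aa^2$, not $t_2$. (This full-horizon requirement is not cosmetic: it is what makes $\widetilde G^{\mathrm{tot}}_{n_\Theta}$ usable in \eqref{eq:4} to control $Z^{\mathrm{nw}}_{\Theta_1}$ via Proposition~\ref{prop:quantities}.) Consequently your per-particle bound $\sup_{s\le t_2}\E^0_{f_\Delta^+}[R^{\mathrm{free}}_s]\le C_\delta e^{-\mu a}$ only covers hits at $a$ during the free period; you are missing the contribution of the white descendants afterwards. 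That contribution is controlled by first bounding the population at the end of the blue period via Lemma~\ref{lem:BBM_0} ($Z\le C_\delta a e^{-\mu a}$, $Y\le C_\delta e^{-\mu a}$ in expectation) and then applying Lemmas~\ref{lem:Rt} and~\ref{lem:R_f} over the remaining horizon of length $O(e^Aa^2)$ — this is precisely the substitution the paper makes relative to the pre-breakout argument of Lemma~\ref{lem:G_n0_tot}, and it produces a per-particle probability of order $e^{A-\mu a}$ (the paper records $e^{A-\mu a}/a$), i.e.\ larger than yours by a factor of order $e^A$.

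Your overall architecture (telescoping over $n$, strong branching property at $\mathscr B_n$, Lemma~\ref{lem:B_n_moving_wall} for $\E^\sharp_{\mathrm{nbab}}[B^{\mathrm{tot}}_n\Ind_{\widetilde G^{\mathrm{tot}}_n}\,|\,\F^\sharp_\Delta]$) is the same as the paper's, so the fix is local: replace the per-particle bound and redo the bookkeeping. With the corrected bound each term of the sum is of order $e^{-5A/3}\tfrac{e^{\mu a}}{a}\cdot \tfrac{e^{A-\mu a}}{a}=e^{-2A/3}a^{-2}$, and summing the $n_\Theta-n_T=O_\delta(e^A)$ terms gives $C_\delta e^{A/3}a^{-2}$, which is $\le C_\delta/a$ only because $a\ge a_0(A)$ may be taken larger than $e^{A/3}$ — so the conclusion survives, but not with your claimed margin $C_\delta e^{-2A/3}/a$, and not without the post-blue-period contribution being accounted for.
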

\begin{proof}
  Similarly to the proof of \eqref{eq:90} and \eqref{eq:106} in the proof of Lemma~\ref{lem:G_n0_tot}, but using Lemmas~\ref{lem:Rt} and \ref{lem:R_f} instead of Proposition~\ref{prop:T}, we have for large $A$ and $a$,
\[
\P^\sharp_{\mathrm{nbab}} (\widetilde G^{\mathrm{tot}}_{n-1}\backslash \widetilde G^{\mathrm{tot}}_n\,|\,\F^\sharp_\Delta)\Ind_{G^\sharp_\Delta} \le C_\delta \E^\sharp_{\mathrm{nbab}}[B^{\mathrm{tot}}_{n-2}\Ind_{\widetilde G^{\mathrm{tot}}_{n-1}}\,|\,\F^\sharp_\Delta]\Ind_{G^\sharp_\Delta} e^{A-\mu a}/a \le C_\delta a^{-2},
\]
where the last inequality follows from Lemma~\ref{lem:B_n_moving_wall}. The lemma now follows by induction over $n$.
\end{proof}

\begin{lemma}
\label{lem:G_sharp}
Suppose $\Psh(G^\sharp_0) = 1$. Then $\Psh(G^\sharp_1) \ge 1-C\ep^{5/4}$  for large $A$ and $a$.
\end{lemma}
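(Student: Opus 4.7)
Proof plan. The strategy parallels that of Lemma~\ref{lem:G_Bflat}: we work on the good event $G^\sharp_\Delta\cap G^\sharp_{\mathrm{nbab}}\cap\widetilde G^{\mathrm{tot}}_{n_\Theta}$, which by Lemmas~\ref{lem:Gsharp_Delta_nbab} and~\ref{lem:G_n_prime} has probability $1-O(\ep^{5/4})$ under $\Psh$. On this intersection, no descendant of any blue or grey particle hits $a$ before $\Theta_1+e^Aa^2$, so the white core of the B$^\sharp$-BBM evolves exactly as the B-BBM of Section~\ref{sec:BBBM}, and Lemma~\ref{lem:G1} (via Remark~\ref{rem:piece}) secures the first two bullets of $G^\sharp_1$ (support in $(0,a)$, no intermediate particles, $\Theta_1>T_1^+$) together with the bound $|e^{-A}Z_{\Theta_1}-1|\le\tfrac12\ep^{3/2}$ for the white particles alone.

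For the third bullet I would decompose $Z^{\mathrm{tot}}_{\Theta_1}=Z_{\Theta_1}+Z^{\mathrm{nw}}_{\Theta_1}$ (and similarly for $Y$) and bound the non-white contribution in expectation. Each particle coloured blue or grey at some time $s\le\Theta_1$ contributes in $Z$-mass at $\Theta_1$ at most $C a e^{\pi^2(\Theta_1-s)/(2a^2)-\mu a}$ by Lemma~\ref{lem:BBM_0}; summing against the first-moment bounds on $B^{\mathrm{tot}}_n$ from Lemmas~\ref{lem:B_n} and~\ref{lem:B_n_moving_wall}, and against $Z^{\mathrm{gr}}_{t_{n_T}}\le e^{A/2}$ on $G_{\mathrm{nw}}\subset G^\sharp_\Delta$, leads to an expected total much smaller than $\ep^{3/2}e^A$. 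Markov then yields the desired failure probability. The last bullet is handled by a double Markov argument applied to $B^{\mathrm{tot}}_{[\Theta_1,\Theta_1+t_1]}$: since the first four bullets of $G^\sharp_1$ place the configuration at $\Theta_1$ in the hypothesis of Lemma~\ref{lem:B_n}, restarting that lemma at $\Theta_1$ (with $n=1$) gives the conditional first-moment bound $\le C_\delta e^{\mu a}/a^2$, so the conditional excess probability is $\le C_\delta Ae^{-A}/a\le\ep^4$, and a further Markov delivers the claimed $\ep^2$ bound.

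The fourth bullet, requiring $N^{\mathrm{tot}}_{t_j}\ge N$ for every $t_j=j(K+3)a^2<\Theta_1$, is the technical heart of the argument. Using $N^{\mathrm{tot}}_t\ge N^{(0),\mathrm{white}}_t$, I would split the relevant times into a pre-breakout range ($t_j\le T\le\sqrt{\ep}a^3$, containing $O(\sqrt{\ep}a)$ times on $G_\Delta$) and a post-breakout range ($T<t_j<\Theta_1=T+e^Aa^2$, containing $O(e^A)$ times). For pre-breakout times, Corollary~\ref{cor:Nt_prob_less_N} (with $Z^{\mathrm{white}}_0$ and $Z^{(0)}_{t_j-Ka^2}$ both close to $e^A$, thanks to $G^\sharp_0$ and to $G_{Z,n_T}\subset G^\sharp_\Delta$) yields per-time failure probability $\le C_\delta e^{-A}/a$ and a total $\le C_\delta\sqrt{\ep}e^{-A}\le \ep^{13/2}$. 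For post-breakout times, Lemma~\ref{lem:Nt_prob_less_N_moving_wall} applied to any fixed hat- or fug-particle yields per-time failure $\le C_\delta\eta e^{-A}/\ep\le e^{-17A/6}$ by $\eta\le e^{-2A}$ from \eqref{eq:eta} and $\ep\ge e^{-A/6}$ from \eqref{eq:ep_lower}; summing over $O(e^A)$ times gives at most $e^{-11A/6}\le\ep^{11}$ since $\ep\ge e^{-A/6}$.

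The main obstacle is exactly this post-breakout union bound: the estimate of Lemma~\ref{lem:Nt_prob_less_N_moving_wall} is as tight as one could hope for and only barely survives the $O(e^A)$-fold loss imposed by the requirement at every time $t_j$. Both bounds $\eta\le e^{-2A}$ and $\ep\ge e^{-A/6}$ are essential here; weakening either would break the approach. A secondary but purely technical point is that Lemma~\ref{lem:Nt_prob_less_N_moving_wall} is stated at one fixed $t$; inspection of its proof shows that the moment estimates depend on $t$ only through the bound $t-T^-\le 2e^Aa^2$, so the required uniformity over $t\in[T,\Theta_1]$ is automatic and the union bound is legitimate.
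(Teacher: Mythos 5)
Your overall architecture matches the paper's: work on $G^\sharp_\Delta\cap G^\sharp_{\mathrm{nbab}}\cap\widetilde G^{\mathrm{tot}}_{n_\Theta}$ (probability $1-O(\ep^{5/4})$ by Lemmas~\ref{lem:Gsharp_Delta_nbab} and~\ref{lem:G_n_prime}), control the white part via the B-BBM results, bound $Z^{\mathrm{nw}}_{\Theta_1}$ and $Y^{\mathrm{nw}}_{\Theta_1}$ in expectation through Lemma~\ref{lem:BBM_0} against the $B^{\mathrm{tot}}_n$ estimates and $Z^{\mathrm{gr}}_{t_{n_T}}\le e^{A/2}$, and obtain the fourth bullet by union bounds over the times $t_j$, using Corollary~\ref{cor:Nt_prob_less_N} before the breakout and Lemma~\ref{lem:Nt_prob_less_N_moving_wall} (with the exclusion $\widehat N^{\neg u}_t+N^{\mathrm{fug},\neg u}_t$) after; your arithmetic $\eta/\ep\le e^{-11A/6}$ and the uniformity remark are fine. (Minor bookkeeping: $G_{Z,n_T}$ is not contained in $G^\sharp_\Delta$; one adjoins it at cost $C_\delta\ep^2$ via Lemma~\ref{lem:G_n0_tot}, exactly as in \eqref{eq:113}.)

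The genuine gap is the last bullet. ``Restarting Lemma~\ref{lem:B_n} at $\Theta_1$ with $n=1$'' does not bound $B^{\mathrm{tot}}_{[\Theta_1,\Theta_1+t_1]}$: in the restarted clock that quantity is the $n=0$ block, while $n=1$ of the restarted process is $[\Theta_1+t_1,\Theta_1+2t_1)$. More importantly, the $n=0$ block is not accessible to the Lemma~\ref{lem:B_hat}/Lemma~\ref{lem:B_n} machinery at all: those proofs condition at a time $t_n-(K+1)a^2\ge 2a^2$ after the start so that the population has relaxed and Lemma~\ref{lem:Nt_prob_less_N}/Corollary~\ref{cor:Nt_prob_less_N} (which require a time lag of at least $Ka^2$) apply, and they decorrelate the particle hitting $0$ from the event $\{N^{(0)}_s<N\}$ by deleting the descendants of one initial ancestor --- which needs either independence of the initial particles (available at time $0$ under (H$_\perp$), cf.\ Lemma~\ref{lem:HBsharp}, but not at $\Theta_1$) or the conditional structure given $\F^\sharp_\Delta$. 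The paper's device is to \emph{not} restart: since $\Theta_1\in I_{n_\Theta-1}$, one has $[\Theta_1,\Theta_1+t_1]\subset I_{n_\Theta-1}\cup I_{n_\Theta}$, and Lemma~\ref{lem:B_n_moving_wall} applied to these two blocks, conditionally on $\F^\sharp_\Delta$, gives $\E^\sharp_{\mathrm{nbab}}[(B^{\mathrm{tot}}_{n_\Theta-1}+B^{\mathrm{tot}}_{n_\Theta})\Ind_{\widetilde G^{\mathrm{tot}}_{n_\Theta}}\,|\,\F^\sharp_\Delta]\Ind_{G^\sharp_\Delta}\le C_\delta e^{\mu a-5A/3}/a$; the double Markov argument then yields the $\ep^2$ bound using \eqref{eq:ep_lower}. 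With this substitution your proof is complete.
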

\begin{proof}
By Corollary~\ref{cor:Nt_prob_less_N} and the fact that $T\le a^3$ on $G_{\mathscr U}$, we have by a union bound,
\begin{equation}
  \label{eq:113}
\Psh(\exists n \le n_T:N^{\mathrm{tot}}_{t_n} < N,\ G_{Z,n_T},\ G_{\mathscr U}) \le \sum_{n=1}^{\lfloor a \rfloor} \P_{[n]}(N^{(0)}_{t_n} < N,\,G_{Z,n}) \le C_{\delta} e^{-A}.
\end{equation}
Furthermore, by Lemma~\ref{lem:Nt_prob_less_N_moving_wall}, we have
\begin{equation}
  \label{eq:9}
\Psh(\exists n_T < n < n_\Theta:N^{\mathrm{tot}}_{t_n} < N\tor N^{\mathrm{tot}}_{\Theta_1} < N,\ G_\Delta) \le C_{\delta} \eta/\ep \le C_\delta e^{-A},
\end{equation}
by \eqref{eq:eta} and \eqref{eq:ep_lower}.
Now, by Proposition~\ref{prop:quantities} and Lemmas~\ref{lem:BBM_0} and \ref{lem:Gsharp_Delta_nbab},
\begin{equation}
  \label{eq:4}
  \begin{split}
 \E^\sharp_{\mathrm{nbab}}[Z^{\mathrm{nw}}_{\Theta_1}\Ind_{\widetilde G^{\mathrm{tot}}_{n_\Theta}}\,|\,\F^\sharp_\Delta]\Ind_{G^\sharp_\Delta} &\le C\Big(Z^{\mathrm{gr}}_{t_{n_T}}+ C_\delta a e^{\mu a}\sum_{n=n_T-1}^{n_\Theta-1}  \E^\sharp_{\mathrm{nbab}}[B^{\mathrm{tot}}_n\Ind_{\widetilde G^{\mathrm{tot}}_n}\,|\,\F^\sharp_\Delta]\Big)\Ind_{G^\sharp_\Delta}\\
&\le Ce^{A/2} + C_\delta  \le C_\delta e^{A/2},
  \end{split}
\end{equation}
by Lemma~\ref{lem:B_n_moving_wall}. Similarly, we get
\begin{equation}
  \label{eq:5}
  \E^\sharp_{\mathrm{nbab}}[Y^{\mathrm{nw}}_{\Theta_1}\Ind_{\widetilde G^{\mathrm{tot}}_{n_\Theta}}\,|\,\F^\sharp_\Delta]\Ind_{G^\sharp_\Delta}  \le Ce^{A/2}/a.
\end{equation}
Moreover, we have by Lemma~\ref{lem:B_n_moving_wall},
\begin{equation*}
\E^\sharp_{\mathrm{nbab}}[(B^{\mathrm{tot}}_{n_\Theta-1}+B^{\mathrm{tot}}_{n_\Theta}) \Ind_{\widetilde G^{\mathrm{tot}}_{n_\Theta}}\,|\,\F^\sharp_\Delta]\Ind_{G^\sharp_\Delta}  \le C_\delta e^{\mu a -5A/3}/a.
\end{equation*}
Setting $X = \Psh\Big(B^{\mathrm{tot}}_{[\Theta_1,\Theta_1+t_1]} \le e^{-A} e^{\mu a}/a\,\Big|\,\F_{\Theta_1}\Big)$, we then have $\Esh[(1-X) \Ind_{G^\sharp_{\mathrm{nbab}}\cap \widetilde G^{\mathrm{tot}}_{n_\Theta} \cap G^\sharp_\Delta}] \le C_\delta e^{-2A/3}$ by Markov's inequality.
Applying the Markov inequality once more to $X$ as in the proof of Lemma~\ref{lem:HBflat} yields 
\begin{equation}
  \label{eq:8}
\Psh(X \le 1-\ep^2,\,G^\sharp_{\mathrm{nbab}}\cap \widetilde G^{\mathrm{tot}}_{n_\Theta} \cap G^\sharp_\Delta) \le C_\delta\ep^{-2}e^{-2A/3} \le C_\delta \ep^2,
\end{equation}
by \eqref{eq:ep_lower}.
The lemma now follows from \eqref{eq:113}, \eqref{eq:9} and \eqref{eq:8}, Lemmas~\ref{lem:Gsharp_Delta_nbab} and \ref{lem:G_n_prime} and Markov's inequality applied to \eqref{eq:4} and \eqref{eq:5}.
\end{proof}

\subsection{Proofs of the main results}
\label{sec:Bsharp_main_results}

\begin{proof}[Proof of Lemma~\ref{lem:HBsharp}]
Set $N_0 = \lfloor 2\pi  e^Aa^{-3}e^{\mu a} \rfloor$, then $|N_0 - \lceil e^\delta N\rceil| \le 1$.
By Lemma~\ref{lem:Hperp}, it remains to estimate the probability of the last event in the definition of $G^\sharp_0$. Denote the initial particles by $1,\dots,N_0$ and let $N^{\neg i}_{t_1}$ be the number of particles at time $t$ which do not descend from $i\in\{1,\ldots,N_0\}$. Again as in the proof of Lemma~\ref{lem:Bflat_start}, we have by \eqref{eq:49}, \eqref{eq:69} and \eqref{eq:84}, for every $i\le N_0$, by independence,
\begin{equation*}
\P(N^{\neg i}_{t_1} < N,\,R_{t_1}=0\,|\,X_i(0)) \le C_\delta e^{-A}/a\quad\tand\quad\P(R_{t_1}> 0) \le C_\delta e^A/a.
\end{equation*}
Using Lemma~\ref{lem:Bsharp_left_border}, we now have,
\begin{equation*}
  \begin{split}
  \E[B_0\Ind_{(R_{t_1}=0)}] &\le \E\Big[ \sum_{i=1}^{N_0} \P(N^{\neg i}_{t_1} < N,\,R_{t_1}=0\,|\,X_i(0)) \E^{X_i(0)}[L_0\Ind_{(R_{t_1}=0)}]\Big]\\
& \le C_\delta (e^{-A}/a) e^{\mu a} \E[Y_0] \le C_\delta e^{\mu a}/a^2,
  \end{split}
\end{equation*}
by \eqref{eq:49}. Setting $X = \P\Big(B_{0} \le e^{-A} e^{\mu a}/a\,\Big|\,\F_0\Big)$, we then have $\E[1-X] \le C_\delta e^{A}/a$ by Markov's inequality. Applying Markov's inequality once more to $X$ as in the proof of Lemma~\ref{lem:HBflat} yields the lemma.
\end{proof}

\begin{proof}[Proof of Proposition~\ref{prop:Bsharp} and the \Bsh-BBM part of Theorem~\ref{th:Bflat_Bsharp}]
Exactly the same reasoning as for the \Bfl-BBM, but using Lemma~\ref{lem:G_sharp} instead of Lemma~\ref{lem:G_Bflat}.
\end{proof}

\begin{proof}[Proof of the second part of Proposition~\ref{prop:Bflat_Bsharp_med}]
Define $\rho = x_{\alpha e^{-2\delta}}$, so that $(1+\mu\rho)e^{-\mu\rho} = \alpha e^{-2\delta}(1+o_a(1))$. Recall that $N^{\mathrm{tot}}_t(\rho)$ denotes the number of particles to the right of $\rho$ at time $t$. As in the proof of the first part of Proposition~\ref{prop:Bflat_Bsharp_med} in Section~\ref{sec:Bflat_main_results}, it is enough to show that
\begin{equation*}
\P^\nu(N^{\mathrm{tot}}_t(\rho) > \alpha N,\,T>t) \le C_\delta \ep^{5/4},
\end{equation*}
for large $A$ and $a$, where $\nu$ is such that $G^\sharp_0$ is satisfied and $t\in[e^Aa^2,\sqrt\ep a^3]$. Fix such $\nu$ and $t$. Write $\P_{[t]} = \P(\cdot,\,T>t)$. Let $n$ be the largest integer such that $t_{n+1} < t$; note that $n\ge1$ for large $A$. 
By Lemmas~\ref{lem:hat_quantities} and \ref{lem:Zt_variance} together with Chebychev's and Markov's inequalities and Equations \eqref{eq:ep_lower} and \eqref{eq:ep_upper}, we have for large $A$ and $a$,
\begin{equation}
  \label{eq:16}
\P_{[t]}(|e^{-A}Z_{t_n}-1|\le \ep^{1/8},\,Y_{t_n}\le e^{4A/3}/a) \ge 1-C_\delta  \ep^{5/4}.
\end{equation}
And as in the proof of Lemma~\ref{lem:G_ex}, we have $\E_{[t]}[Z^{\mathrm{gr}}_{t_n}\Ind_{G_n^{\mathrm{tot}}\cap G_{\mathscr U}}]\le C_\delta$, $\E_{[t]}[Y^{\mathrm{gr}}_{t_n}\Ind_{G_n^{\mathrm{tot}}\cap G_{\mathscr U}\cap\{R^{\mathrm{gr}}_{I_{n-1}} = 0\}} ]\le C_\delta/a$ and $\P_{[t]}(R^{\mathrm{gr}}_{I_{n-1}} > 0,\,G_n^{\mathrm{tot}},\,G_{\mathscr U}) \le C_\delta \eta$, so that by Markov's inequality and Lemmas~\ref{lem:G_n0_tot} and \ref{lem:GDelta}, together with \eqref{eq:16}, we get
\begin{equation}
  \label{eq:89}
\P_{[t]}(|e^{-A}Z^{\mathrm{wg}}_{t_n}-1|\le 2\ep^{1/8},\,Y^{\mathrm{wg}}_{t_n}\le 2e^{4A/3}/a) \ge 1-C_\delta \ep^{5/4},
\end{equation}
Lemmas~\ref{lem:Rt}, \ref{lem:Bsharp_left_border} and \ref{lem:BBM_0} and Corollary~\ref{cor:hat_many_to_one} then show that 
\begin{equation}
 \label{eq:7208}
\P_{[t]}(R^{\mathrm{tot}}_{[t_n,t]} > 0,\,|e^{-A}Z^{\mathrm{wg}}_{t_n}-1|\le 2\ep^{1/8},\,Y^{\mathrm{wg}}_{t_n}\le 2e^{4A/3}/a) \le o_a(1). 
\end{equation}

Let $\widetilde N^{\mathrm{wg}}_t(\rho)$ be the particles to the right of $\rho$ at time $t$ which descend from the white and grey particles at time $t_n$ and which have hit neither $0$ nor $a$ between $t_n$ and $t$. By Lemmas~\ref{lem:N_expec_large_t} and~\ref{lem:N_2ndmoment} and Corollary~\ref{cor:hat_many_to_one}, we have for large $A$ and $a$, with $\Phat = \P(\cdot\,|\,T>t)$,
\begin{align}
  \Ehat[\widetilde N^{\mathrm{wg}}_t(\rho)\,|\,\F_{t_n}] &\le (1-\delta/2) \alpha N e^{-A} Z^{\mathrm{wg}}_{t_n},\\
  \Varhat(\widetilde N^{\mathrm{wg}}_t(\rho)\,|\,\F_{t_n}) &\le C_{\delta,\alpha} N^2 e^{-2A} (a^{-1}Z^{\mathrm{wg}}_{t_n}+Y^{\mathrm{wg}}_{t_n}).
\end{align}
Chebychev's inequality and \eqref{eq:89} then give for large $A$ and $a$,
\begin{equation}
  \label{eq:94}
  \Phat(\widetilde N^{\mathrm{wg}}_t(\rho) > (1-\delta/4) \alpha N) \le C_\delta \ep^{5/4}.
\end{equation}
Furthermore, denote by $\widetilde N^{\mathrm{blue}}_t(\rho)$ the number of particles to the right of $\rho$ at time $t$ which have turned blue after $t_{n}+a^2$ and which have not hit $a$ between $t_n$ and $t$. Then as in Lemma~\ref{lem:blue_particles}, we have by Lemma~\ref{lem:Nt_prob_less_N} and \eqref{eq:left_border},	
\begin{align}
\nonumber
&\Ehat[\widetilde N^{\mathrm{blue}}_t(\rho)\,|\,\F_{t_n}]\Ind_{G_{Z,n}}\\
\nonumber
& \le \sup_{(u,t_n)\in\mathscr N^{(0)}_{t_n},\,r\in[t_n+a^2,t]} \Phat(N^{(0),\neg u}_r < N\,|\,\F_{t_n})\Ind_{G_{Z,n}}\sum_{(u,s)\in\mathscr N^{\mathrm{wg}}_{t_n}}\int_{t_{n}+a^2}^t \Ehat^{(X_u(s),s)}[L_{[\tau,\tau+\dd \tau]}(u,s)]\E^0[N^{\mathrm{free}}_{{t-\tau}}(\rho)]\\
\label{eq:73}
& \le C_\delta e^{-A}(a^{-1} + e^{-A}Y^{\mathrm{wg}}_{t_n})\sum_{(u,s)\in\mathscr N^{\mathrm{wg}}_{t_n}}e^{\mu X_u(s)}\int_{a^2}^{t-t_n} I^a(a-X_u(s),\dd \tau)\E^0[N^{\mathrm{free}}_{t-t_n-\tau}(\rho)].
\end{align}
Set $D = t-t_n$. Note that $(K+3)a^2\le D \le 2(K+3)a^2$. By Lemma~\ref{lem:many_to_one_simple} and Girsanov's theorem, we have for every $\tau\ge 0$,
\begin{equation}
\label{eq:53}
  \E^0[N^{\mathrm{free}}_{\tau}({\rho})] = e^{\beta m\tau}W^0_{-\mu}(X_\tau > \rho) = e^{\frac{\pi^2}{2a^2}\tau}W^0[e^{-\mu X_\tau}\Ind_{(X_\tau > \rho)}] = e^{\frac{\pi^2}{2a^2}\tau}\int_\rho^\infty e^{-\mu z}g_\tau(z)\,\dd z,
\end{equation}
where $g_\tau(x)=(2\pi\tau)^{-1/2}e^{-x^2/(2\tau)}$ is the Gaussian density with variance $\tau$. If $\tau\ge a^2$, then $\sup_z g_\tau(z) \le C/a$, so that for every $x\in[0,a]$ and $z\ge\rho$,
\begin{equation}
\label{eq:45}
  \int_{a^2}^{D-a^2}I^a(a-x,\dd\tau) g_{D-\tau}(z) \le Ca^{-1} I^a(a-x,[a^2,D-a^2]) \le CKa^{-1}\sin(\pi x/a),
\end{equation}
by Lemma~\ref{lem:I_estimates}. Moreover, by Lemma~\ref{lem:I_estimates}, we have $I^a(a-x,\dd \tau) \le Ca^{-2} \sin(\pi x/a)\,\dd\tau$ for every $\tau\ge a^2$, so that
\begin{equation}
  \label{eq:72}
  \int_{D-a^2}^D I^a(a-x,\dd\tau)g_{D-\tau}(z) \le Ca^{-2}\sin(\pi x/a)\int_0^{a^2} \tau^{-1/2}\,\dd\tau \le Ca^{-1}\sin(\pi x/a).
\end{equation}
Equations \eqref{eq:73}, \eqref{eq:53}, \eqref{eq:45} and \eqref{eq:72} together with Fubini's theorem now yield
\begin{equation}
  \label{eq:74}
  \begin{split}
 \Ehat[\widetilde N^{\mathrm{blue}}_t(\rho)\,|\,\F_{t_n}]\Ind_{G_{Z,n}} \le C_\delta e^{-A}(a^{-1} + e^{-A}Y^{\mathrm{wg}}_{t_n}) e^{\mu(a-\rho)}a^{-2}Z^{\mathrm{wg}}_{t_n}
 \le C_\delta \alpha e^{-2A}N(1+ae^{-A}Y^{\mathrm{wg}}_{t_n}) Z^{\mathrm{wg}}_{t_n}.
  \end{split}
\end{equation}
Furthermore, if $N^{\mathrm{rest}}_t(\rho)$ denotes the  particles to the right of $\rho$ at time $t$ descending from those turning blue between $t_{n-1}$ and $t_n+a^2$, then by \eqref{eq:53} and the supremum bound on $g_\tau(z)$,
\begin{equation}
  \label{eq:91}
  \Ehat[N^{\mathrm{rest}}_t(\rho)G_n^{\mathrm{tot}}] \le C_\delta a^{-1}e^{-\mu \rho} \Ehat[(B_n^{\mathrm{tot}} +B_{n-1}^{\mathrm{tot}})\Ind_{G_n^{\mathrm{tot}}}] \le C_\delta \alpha e^{-A}N,
\end{equation}
by Lemma~\ref{lem:B_n}. Markov's inequality applied to \eqref{eq:91} and \eqref{eq:74} gives, together with Lemma~\ref{lem:G_n0_tot} and \eqref{eq:ep_lower},
\begin{equation}
  \label{eq:125}
  \Phat(\widetilde N_t^{\mathrm{blue}}(\rho)+N_t^{\mathrm{rest}}(\rho) > (\delta/10) \alpha N,\,Z^{\mathrm{wg}}_{t_n}\le 2e^A,\,Y^{\mathrm{wg}}_{t_n}\le 2e^{4A/3}/a) \le C_\delta \ep^{2}.
\end{equation}
The statement now follows from  \eqref{eq:89}, \eqref{eq:7208}, \eqref{eq:94} and \eqref{eq:125}.
\end{proof}

\begin{proof}[Proof of Lemma~\ref{lem:Csharp}]
Follows simply from the fact that B$^\sharp$-BBM and C$^\sharp$-BBM coincide until $\Theta_n$ on the set $G^\sharp_n$.
\end{proof}

\appendix

\section{Preliminaries on branching Markov processes}
\label{sec:preliminaries}

In this section we settle the notation and recall some basic properties of branching Markov processes in a Polish space $\mathscr E$. We use the setup of Neveu's marked trees\footnote{Other definitions of branching Markov processes have been used in the literature, e.g.\ as a Markov process in the space $\bigcup_n \mathscr E^n/_{S_n}$, where $S_n$ is the group of coordinate permutations \cite{Ikeda1968} or as a measure-valued Markov process \cite{Dawson1991}. However, these definitions do not include the genealogy and are therefore less suited for our purposes.} but omit some technical details. These can be found in the original works by Neveu and Chauvin \cite{Neveu1986,Chauvin1988,Chauvin1991}. 

\subsection{Definition and notation}
\label{sec:preliminaries_definition}

To define a branching Markov process, we need four ingredients:
\begin{itemize}[nolistsep]
 \item $\mathscr E$, a Polish space (we will only use $\mathscr E = \R$ or $=\R^2$),
 \item $X = (X_t)_{t\ge 0}$, a (conservative) strong Markov process on $\mathscr E$ with cadlag paths; law of and expectation w.r.t.\ this process started at $x$ are denoted by $P^x$ and $E^x$,
 \item $R: \mathscr E\to\R_+$, a measurable function (the ``killing rate''), and
 \item $((q(x,k))_{k\in \N})_{x\in\mathscr E}$, a family of probability measures (``reproduction laws'') on $\N$, measurable with respect to $x$.
\end{itemize}
The branching Markov process starting at $x$ is informally defined as follows: A particle moves according to the process $X$, starting at $x$, and gets killed at the position-dependent rate $R$. When it gets killed at the point $y$, say, it gets replaced by a random number of particles according to the reproduction law $q(y,\cdot)$. These particles then independently repeat this process, starting at $y$. 

In order to maintain a record of the genealogy, we associate to each particle, or \emph{individual}, a label $u\in U = \{\emptyset\} \cup \bigcup_{n\ge 1} (\N^*)^n$, the \emph{universe}. Here, the ancestor is denoted by $\emptyset$ and the $i$-th child of the individual $u$ is denoted by $ui$, the concatenation of $u$ and $i$. The universe is endowed with the ordering relations $\preceq$ and $\prec$ defined by 
\[
 u \preceq v \iff \exists w\in U: v = uw\quad\tand\quad u\prec v \iff u\preceq v \tand u \ne v.
\]
The most recent common ancestor of two individuals $u$ and $v$, i.e.\ the maximal individual smaller than $u$ and $v$ (in the sense of $\preceq$), is then denoted by $u\wedge v$.

For $t\ge0$, we denote by $\A(t)\subset U$ the set of individuals \emph{alive} at time $t$, i.e.\ those which are born before or at time $t$ and which die after time $t$. The death time of an individual $u$ is denoted by $d_u$ and the number of its children by $k_u$. The \emph{position} of an individual $u$ at time $t$ is denoted by $X_u(t)$; if $u$ is not alive at time $t$, it is the position of its ancestor alive at that time (or undefined if such an ancestor does not exist).

The individuals that are alive at some time, i.e. the set $\A_\infty = \bigcup_t \A(t)$, forms a tree, and together with $(X_u(t))_{u\in \A_\infty,\,t\ge0}$ this forms an object called a \emph{marked tree}. The law of the branching Markov process defined above is then formally a law on the space of marked trees which we denote by $\P^x$. Expectation w.r.t.\ this law will be denoted by $\E^x$. Note that by looking at the space-time process $(X_t,t)_{t\ge 0}$, we can (and will) extend this definition to the time-inhomogeneous case.

The above definitions can be extended to arbitrary initial configurations of particles distributed according to a finite counting measure $\nu$ on $\mathscr E$. In this case, there is no individual $\emptyset$; the initial particles are denoted by $1,\ldots,n$. The law of this process is denoted by $\P^\nu$, expectation w.r.t.\ this law by $\E^\nu$.

\subsection{Stopping lines}
\label{sec:stopping_lines}

The analogue to stopping times for branching Markov processes are \emph{(optional) stopping lines}, which have been defined in various degrees of generality \cite{Jagers1989,Chauvin1991,Biggins2004}; often only in the discrete-time case, but the definitions carry over. The most general definition is contained in \cite{Jagers1989}, which the reader should consult to find justifications of the definitions below.

For $(v,s),(u,t)\in U\times [0,\infty)$, write $(v,s)\preceq(u,t)$ if  $v\preceq u$ and $s\le t$, and $(v,s)\prec(u,t)$ if $(v,s)\preceq(u,t)$ and $(v,s)\ne (u,t)$. Now define a \emph{line} $\ell$ to be a subset of $U\times [0,\infty)$, such that $(u,t)\in\ell$ implies $(v,s)\notin \ell$ for all $(v,s) \preceq (u,t)$. We extend the meaning of the ordering relation $\preceq$ to lines as follows: For a pair $(u,t)\in U\times [0,\infty)$ and a line $\ell$, write $\ell \preceq (u,t)$ if there exists $(v,s)\in \ell$, such that $(v,s)\preceq(u,t)$. For a subset $A\subset U\times [0,\infty)$, write $\ell \preceq A$ if $\ell \preceq (u,t)$ for all $(u,t)\in A$. The relation $\prec$ is extended similarly. If $\ell_1$ and $\ell_2$ are two lines, we define the line $\ell_1 \wedge \ell_2$ to be the maximal line (with respect to $\preceq$), which is smaller than both lines; it is easy to see that this line exists.

We say that a line $\ell$ is \emph{proper} if $u\in\A(t)$ for all $(u,t)\in \ell$.

For a line $\ell$, we define the $\sigma$-algebra 
\[
\F_\ell = \sigma\Big(\{u\in\A(t)\}, X_u(t); (u,t)\in U\times [0,\infty): \ell \not \prec (u,t)\Big).
\]
Informally, $\F_\ell$ contains the information about everything except the descendants of the line $\ell$.
 A \emph{stopping line} $\mathscr L$ is then a random proper line, such that for every line $\ell$, $\{\mathscr L\preceq \ell\} \in \F_\ell$. Informally, this means that $\mathscr L$ does not depend on the individuals ``behind'' it. A stopping line $\mathscr L$ (in fact any random proper line), defines a $\sigma$-algebra $\F_{\mathscr L}$ by
\[
 \F_{\mathscr L} = \sigma\Big(\{u\in\A(t), X_u(t) \in A, \mathscr L \not \prec (u,t)\}; (u,t)\in U\times [0,\infty), A\subset \mathscr E \text{ measurable}\Big).
\]

The \emph{strong branching property} \cite[Theorem 4.14]{Jagers1989} states that for every stopping line $\mathscr L$, conditioned on $\F_\mathscr L$, the subtrees rooted at the pairs $(u,t)\in\mathscr L$ are independent with respective distributions $\P^{(X_u(t),t)}$.

For $t\ge0$, the line $\mathscr L_t = \A(t)\times\{t\}$ is a stopping line. More generally, if $T$ is a stopping time for the Markov process $X$, viewed as a functional of the path, then
\[
 \mathscr L_T = \{(u,t)\in U\times [0,\infty): u\in \A(t)\tand t = T(X_u)\}
\]
is a stopping line as well.

\subsection{Moment formulae}
\label{sec:spine}

In this section, we recall first- and second moment formulae (called \emph{many-to-one} and \emph{many-to-two} lemmas) for additive functionals of branching Markov processes, which will be the main tools for the calculations in this article. For fixed time, these are classical (see e.g.\ \cite[Theorems~4.1 and~4.15]{Ikeda1968} or \cite{Sawyer1976}), see also  \cite{Kyprianou2004,Hardy2006,Harris2011} for recent proofs using the spine decomposition from \cite{LPP1995}. With stopping lines like the ones we consider below, they have been proved in the discrete-time setting \cite[Lemma~14.1]{Biggins2004} and their arguments can be used to adapt the proofs in \cite{Hardy2006} and \cite{Harris2011} to yield the results stated here. Again, they can be immediately extended to the time-inhomogeneous case by considering the space-time process.

In both lemmas below,  $f:\mathscr E\to \R_+$ is a measurable function and $H$ the hitting time functional of a closed set $F\subset \mathscr E$ which satisfies $P^x(H<\infty) = 1$ for every $x\in\mathscr E$. Define $m(x) = \sum_{k\ge 0} (k-1) q(x,k)$ and $m_2(x) = \sum_{k\ge 0} k(k-1) q(x,k)$ for every  $x\in\mathscr E$.

\begin{lemma}[Many-to-one]
\label{lem:many_to_one_simple}
For all $x\in\mathscr E$,
\[
 \E^x\Big[\sum_{(u,t)\in \mathscr L_H} f(X_u(t))\Big] = E^x\Big[e^{\int_0^H R(X_t) m(X_t)\,\dd t} f(X_H)\Big].
\]
\end{lemma}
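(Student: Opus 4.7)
The plan is to reduce the stopping-line identity to the classical fixed-time many-to-one formula and then pass to the limit by monotone convergence. First I would establish the path-functional version at deterministic time,
\[
 \E^x\Big[\sum_{u\in\A(T)} G\big((X_u(s))_{0\le s\le T}\big)\Big] = E^x\Big[e^{\int_0^T R(X_s)m(X_s)\,\dd s}\,G\big((X_s)_{0\le s\le T}\big)\Big],
\]
valid for any fixed $T>0$ and any non-negative measurable path functional $G$. One proof is by induction on the number of branchings: conditioning on the first branching time $\tau$ (which, given the driving path, is the first atom of an inhomogeneous Poisson process of rate $R(X_\cdot)$) and applying the branching property yields a Feynman--Kac renewal equation that is seen to be satisfied by both sides, which are then identified by monotone iteration in the number of branchings. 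Equivalently one can invoke the Hardy--Harris spine change of measure; in either case the result is classical.

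Given the path-functional version, I would introduce the auxiliary branching Markov process $Y$ obtained by freezing each individual at the instant its ancestral trajectory hits $F$: no further motion, no further branching, so the individual stays in $\A_Y(t)$ for all $t\ge H$. By construction, every frozen individual alive in $Y$ at a large time $T$ corresponds bijectively to a pair $(u,s)\in\mathscr L_H$ with $s\le T$, while the remaining individuals of $\A_Y(T)$ are those whose path has never touched $F$. Applying the fixed-time path-functional identity to $Y$ with $G(\omega)=f(\omega(T))\,\Ind_{\{H(\omega)\le T\}}$ (so that only frozen particles contribute on the left), and noting that the driving process under $Y$ is the stopped process $X_{\cdot\wedge H}$ with killing rate vanishing after $H$, yields
\[
 \E^x\Big[\sum_{(u,s)\in\mathscr L_H,\,s\le T} f(X_u(s))\Big] = E^x\Big[e^{\int_0^{H\wedge T} R(X_r)m(X_r)\,\dd r}\,f(X_H)\,\Ind_{\{H\le T\}}\Big].
\]
Both sides are non-decreasing in $T$; since $f\ge 0$ and $P^x(H<\infty)=1$, monotone convergence as $T\to\infty$ gives the lemma.

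The main obstacle is purely bookkeeping: fitting the frozen process $Y$ into the marked-tree formalism of Section~\ref{sec:preliminaries_definition} requires either enlarging the state space with an absorbing copy of $F$ or declaring frozen individuals to be alive for all future times with no offspring, and then checking that the bijection between frozen particles in $\A_Y(T)$ and the truncated stopping line $\{(u,s)\in\mathscr L_H:\,s\le T\}$ is measurable and exhaustive. Once this is in place, the remaining steps are routine: the fixed-time many-to-one is classical, the truncation identity follows by direct substitution, and the limit in $T$ is a standard monotone convergence argument relying on $P^x(H<\infty)=1$.
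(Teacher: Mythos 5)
Your proof is correct, but it is worth noting that the paper does not actually prove this lemma: it states it as known, citing the classical fixed-time versions (Ikeda--Nagasawa--Watanabe, Sawyer) and asserting that the stopping-line version follows by adapting the discrete-time stopping-line arguments of Biggins--Kyprianou to the continuous-time spine proofs of Hardy--Harris and Harris--Roberts. Your route is a genuine, self-contained alternative: freeze each particle at the instant its ancestral path hits $F$, observe that the frozen system is again a branching Markov process in the sense of Section~\ref{sec:preliminaries_definition} (driving motion $X_{\cdot\wedge H}$, killing rate $R\,\Ind_{\mathscr E\setminus F}$, same reproduction laws), apply the fixed-time formula, and let $T\to\infty$. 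This buys elementarity: no spine change of measure and no general path-functional machinery are needed. Indeed you can simplify further — for the stopped process, $\{H\le T\}=\{\omega(T)\in F\}$ and $f(\omega(H))=f(\omega(T))$ on that event, so your functional $G$ is a function of the terminal position alone, and only the most classical endpoint version of the fixed-time many-to-one is required. Two minor points: the hypothesis $P^x(H<\infty)=1$ is used only on the right-hand side (the left-hand truncated sums increase to the full sum over $\mathscr L_H$ regardless, since $\mathscr L_H$ by definition only contains pairs with finite hitting time); and the a.s.-null event that a particle branches exactly at its hitting time of $F$ is harmless but should be mentioned when setting up the bijection between frozen individuals of $\A_Y(T)$ and $\{(u,s)\in\mathscr L_H: s\le T\}$.
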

Thus, calculating the expectation of an additive functional reduces to calculating the expectation of a functional w.r.t.\ the Markov process $X$. Note that if $R$ and $m$ are constants, then $e^{Rmt}$ is the expected number of particles at time $t$.

Now suppose that the Markov process $X$ admits a transition density w.r.t.\ some measure $\dd x$ on $\mathscr E$. Define the \emph{density of the branching Markov process before $\mathscr L_H$ (w.r.t.\ $\dd x$)} by
\begin{equation}
\label{eq:def_density}
 p_H(x,y,t)\,\dd y = \E^x\Big[\sum_{u\in \A(t)} \Ind_{(X_u(t)\in \dd y,\ t < H(X_u))}\Big].
\end{equation} 
\begin{lemma}[Many-to-two]
\label{lem:many_to_two}
For all $x\in\mathscr E$,
\begin{multline*}
 \E^x\Big[\Big(\sum_{(u,t)\in \mathscr L_H} f(X_u(t))\Big)^2\Big] = \E^x\Big[\sum_{(u,t)\in \mathscr L_H} (f(X_u(t)))^2\Big]\\
 + \int_0^\infty \int_{\mathscr E} p_H(x,y,s) R(y)m_2(y) \Big(\E^y\Big[\sum_{(u,t)\in \mathscr L_H} f(X_u(t))\Big]\Big)^2\,\dd y \,\dd s.
\end{multline*}
\end{lemma}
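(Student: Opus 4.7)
The plan is to expand the square into a diagonal term and an off-diagonal sum over ordered pairs of distinct particles $(u,t_u)\ne(v,t_v)$ in $\mathscr L_H$, and then to classify each such pair by its branching point, i.e.\ by the space-time location $(y,s)$ at which the most recent common ancestor of $u$ and $v$ dies. The key identity is
\[
\Big(\sum_{(u,t)\in\mathscr L_H} f(X_u(t))\Big)^2 = \sum_{(u,t)\in\mathscr L_H} f(X_u(t))^2 + \sum_{\substack{(u,t_u),(v,t_v)\in\mathscr L_H\\ u\ne v}} f(X_u(t_u))\,f(X_v(t_v)),
\]
and the first sum on the right is already the diagonal term of the statement. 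For the off-diagonal sum I would group pairs $(u,v)$ according to their most recent common ancestor $w=u\wedge v$ and write, for each such $w$ with death time $d_w$ and death position $Y_w$, the pair of distinct children $(wi,wj)$ of $w$ from which $u$ and $v$, respectively, descend.

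The second step is to apply the strong branching property at the stopping line $\mathscr L' = \{(w,d_w): w\text{ has at least two children and } \mathscr L_H\not\preceq(w,d_w)\}$ of branching events that occur strictly before $\mathscr L_H$. Conditionally on $\F_{\mathscr L'}$, the subtrees rooted at the children of each $w\in\mathscr L'$ are independent branching Markov processes starting from $(Y_w,d_w)$. Consequently, for an ordered pair of distinct children $(wi,wj)$, the conditional expectation of $f(X_u(t_u))f(X_v(t_v))$ summed over $u$ descended from $wi$ and $v$ from $wj$ that end up on $\mathscr L_H$ factorises as $g(Y_w,d_w)^2$, where
\[
g(y,s) := \E^{(y,s)}\Big[\sum_{(u,t)\in\mathscr L_H} f(X_u(t))\Big] = \E^y\Big[\sum_{(u,t)\in\mathscr L_H} f(X_u(t))\Big],
\]
the last equality holding because $H$ is a hitting time functional and $X$ is time-homogeneous. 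The number of ordered pairs of distinct children of $w$ has conditional mean equal to the second factorial moment $m_2(Y_w)$ of the reproduction law at $Y_w$.

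Combining these observations and using Lemma~\ref{lem:many_to_one_simple} (or rather its refinement that identifies the intensity measure of the point process $\mathscr L'$), the off-diagonal contribution equals
\[
\E^x\Big[\sum_{w\in\mathscr L'} m_2(Y_w)\, g(Y_w,d_w)^2\Big] = \int_0^\infty\!\int_{\mathscr E} p_H(x,y,s)\,R(y)\,m_2(y)\, g(y,s)^2\,\dd y\,\dd s,
\]
since the intensity of death events of particles still alive before $\mathscr L_H$ at $(y,s)$ is $p_H(x,y,s)R(y)\,\dd y\,\dd s$ by the very definition \eqref{eq:def_density} of the density $p_H$ and the fact that killing occurs at rate $R$. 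The main obstacle is not any single step but bookkeeping: one must verify that the stopping line $\mathscr L'$ is indeed a stopping line, check that the pairs $(u,v)$ with $u\wedge v=w\in\mathscr L'$ exhaust the off-diagonal sum (this is where $P^x(H<\infty)=1$ is used, so that every particle alive strictly before $\mathscr L_H$ has descendants on $\mathscr L_H$ almost surely), and apply the strong branching property only to those $w\in\mathscr L'$, i.e.\ to branching events strictly before $\mathscr L_H$. Time-inhomogeneity of the space-time process is handled exactly as in Lemma~\ref{lem:many_to_one_simple}.
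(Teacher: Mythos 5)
The paper does not actually prove Lemma~\ref{lem:many_to_two}: Section~\ref{sec:spine} only states it and points to the classical literature (Ikeda--Nagasawa--Watanabe, Sawyer) and to spine/change-of-measure proofs, remarking that the discrete-time stopping-line argument of Biggins--Kyprianou adapts. Your argument is the classical direct one — expand the square, index off-diagonal pairs by the most recent common ancestor, factorise by the branching property, and integrate against the intensity $p_H(x,y,s)R(y)m_2(y)\,\dd y\,\dd s$ of branch points — and this route is correct in substance and is essentially what the cited classical references do; it is more elementary than the two-spine proofs the paper also cites.

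Two points need repair, though neither is fatal. First, the collection $\mathscr L'$ of all branching events occurring before $\mathscr L_H$ is \emph{not} a line, let alone a stopping line: branching events are nested along any line of descent, so $\mathscr L'$ contains comparable pairs, violating the defining property of a line in Section~\ref{sec:stopping_lines}. You therefore cannot invoke the strong branching property once at $\mathscr L'$. The standard fix is to work label by label: for each fixed $w\in U$ and each ordered pair of distinct indices $i\ne j$, condition on the history up to the death of $w$ (including $k_w$, $d_w$ and $Y_w$); the subtrees rooted at $(wi,d_w)$ and $(wj,d_w)$ are then independent with law $\P^{(Y_w,d_w)}$, the inner expectation factorises into $g(Y_w,d_w)^2$, and summing over $i\ne j$ produces the factor $k_w(k_w-1)$, whose conditional mean is $m_2(Y_w)$; finally one sums over $w$ and applies the many-to-one identity for the death-point process. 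Second, your parenthetical that $P^x(H<\infty)=1$ guarantees every particle alive before $\mathscr L_H$ has descendants on $\mathscr L_H$ is false when $q(y,0)>0$ (a particle may die childless). What the hypothesis actually gives is that every pair $(u,t_u)\ne(v,t_v)$ on $\mathscr L_H$ has a most recent common ancestor that dies at a finite time strictly before the line, so the classification of off-diagonal pairs by branch point is exhaustive and disjoint. With these corrections the argument is complete.
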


\section{Brownian motion in an interval}
\label{sec:bm}

In this section, we recall some explicit formulae concerning real-valued Brownian motion killed upon exiting or conditioned not to exit an interval. We also prove Lemma~\ref{lem:k_integral}.

\subsection{Brownian motion killed upon exiting an interval}
\label{sec:killed_bm}

We define for $x\in\R$ and $t>0$ the following function of Jacobi theta-type:
\begin{eqnarray}
\label{eq:theta}
\theta(x,t) =& \frac 1 2 + \sum_{n=1}^\infty e^{- \tfrac{\pi^2}{2} n^2 t} \cos(\pi n x)\\
\label{eq:theta_gaussian}
=& \sum_{n\in\Z} \frac{1}{\sqrt{2\pi t}} \exp\Big(-\frac{(x-2n)^2}{2t}\Big),
\end{eqnarray}
the two representations being related by the Poisson summation formula (see \cite{Bellman}, §9). We denote the derivative of $\theta$ with respect to $x$ by  $\theta'$. Note that \eqref{eq:theta_gaussian} implies that 
\begin{equation}
 \label{eq:theta_Cinfty}
\theta \in C^\infty\Big(\big(\R\times[0,\infty)\big)\backslash \{(2n,0):n\in\Z\}\Big).
\end{equation}
Furthermore, the function $\theta$ has the following basic properties:
\begin{itemize}[nolistsep]
  \item It is $2$-periodic in $x$, i.e. $\theta(x,t) = \theta(x+2,t)$ for every $x\in\R$, $t\ge 0$.
  \item For every $t>0$, $\theta(\cdot,t)$ and $\theta(1+\cdot,t)$ are even functions, in particular, $\theta'(0,t) = \theta'(1,t) = 0$ for all $t>0$.
  \item For every $t>0$, the function $\theta(x,t)$ is strictly decreasing (resp., increasing) in $x$ on every interval $[2n,2n+1]$ (resp.,  $[2n-1,2n]$), $n\in\Z$. This is easily seen from the infinite product representation of theta functions, see e.g.\ \cite{Bellman}, §32.
  \item It solves the heat equation $\partial_t \theta = \tfrac 1 2 \partial_x^2 \theta$ on $\R\times(0,\infty)$ with initial condition $\theta(\cdot,0) = \sum_{n\in\Z} \delta_{2n}$.
\end{itemize}
We then define $\thbar(t)$ by
\begin{equation}
\label{eq:thbar}
 \thbar(t) = \frac{2}{\pi^2} e^{\frac{\pi^2}{2}t}\frac{\dd}{\dd t} \theta(1,t) = \frac{1}{\pi^2} e^{\frac{\pi^2}{2}t} \theta''(1,t),
\end{equation}
which is a smooth function on $\R_+$. By \eqref{eq:theta} and \eqref{eq:theta_gaussian}, one can show that $\thbar(t)$ is strictly increasing\footnote{More precisely, by elementary computations, \eqref{eq:theta} gives $t_0\in\R_+$, such that $\thbar$ is strictly increasing on $(t_0,\infty)$ and \eqref{eq:theta_gaussian} gives $t_1 > t_0$, such that $\thbar$ is strictly increasing on $[0,t_1)$.} with $\thbar(0) = 0$ and $\thbar(+\infty) = 1$.

Various quantities of Brownian motion killed upon exiting an interval can be expressed using $\theta(x,t)$. For $x\in\R$, let $W^x$ be the law of Brownian motion started at $x$, let $(X_t)_{t\ge0}$ be the canonical process and let $H_y = \inf\{t\ge0: X_t = y\}$. For $a>0$, let $p^a_t(x,y)$ be the transition density of Brownian motion started at $x$ and killed upon leaving the interval $(0,a)$, i.e.
\begin{equation}
\label{eq:def_p}
p^a_t(x,y)\,\dd y = W^x(X_t \in \dd y,\
H_0\wedge H_a > t), \quad x,y\in [0,a].
\end{equation}
Then (see \cite{ItoMcKean}, Problem 1.7.8 or \cite{BorodinSalminen}, formula 1.1.15.8),
\begin{equation}
\label{eq:p_theta}
p^a_t(x,y) = a^{-1}\left(\theta\Big(\frac{x-y}{a},\frac t {a^2}\Big) - \theta\Big(\frac{x+y}{a},\frac t {a^2}\Big)\right).
\end{equation}
One can easily deduce from the above-mentioned properties of $\theta(x,t)$, or otherwise, the following basic properties of this quantity:
\begin{itemize}[nolistsep]
	\item $p_t^a(x,y) > 0$ for all $x,y\in(0,a)$ and $t>0$.
	\item $p_t^a(0,y) = p_t^a(a,y) = p_t^a(x,0) = p_t^a(x,a) = 0$ for all $x,y\in[0,a]$ and $t>0$.
	\item For all $t>0$, the following quantities are strictly positive and continuous functions of $x$, $y$ and $t$: 
	\[
	\frac{\partial}{\partial x}p_t^a(x,y)\Big|_{x=0}, -\frac{\partial}{\partial x}p_t^a(x,y)\Big|_{x=a}, \frac{\partial}{\partial y}p_t^a(x,y)\Big|_{y=0}, -\frac{\partial}{\partial y}p_t^a(x,y)\Big|_{y=a}
	\]
\end{itemize}
These properties imply that for every $\ep>0$, the function $(x,y,t)\mapsto p_t^1(x,y) / (\sin(\pi x)\sin(\pi y))$ is continuous and strictly positive on the compact set $[0,1]^2\times[\ep,\ep^{-1}]$. By scaling, this readily implies the following: There exists a (universal) family $(C_t)_{t>0}$ of positive constants, continuous in $t$, such that for every $t>0$,
\begin{equation}
\label{eq:p_sin_coarse}
\frac{2C_{t/a^2}}{a} \sin\Big(\frac{\pi x}{a}\Big) \sin\Big(\frac{\pi y}{a}\Big) \le p_t^a(x,y) \le \frac{2C_{t/a^2}^{-1}}{a} \sin\Big(\frac{\pi x}{a}\Big) \sin\Big(\frac{\pi y}{a}\Big),\quad \forall x,y\in[0,a].
\end{equation}
For large $t$, this can be made much more precise: by \eqref{eq:theta}, we have
\begin{equation}
\label{eq:p_sin}
p^a_t(x,y) = \frac{2}{a} \sum_{n=1}^\infty e^{-\frac{\pi^2}{2a^2} n^2 t} \sin(\pi n \tfrac x a) \sin(\pi n \tfrac y a).
\end{equation}
Define 
\begin{equation}
\label{eq:def_E}
 E_t = \pi^2 \sum_{n=2}^{\infty} n^2e^{-\pi^2(n^2-1) t/2}.
\end{equation}
By \eqref{eq:p_sin} and the inequality $|\sin nx| \le n \sin x$, $x\in [0,\pi]$, one now sees that
\begin{equation}
 \label{eq:p_estimate}
 \left|e^{\frac{\pi^2}{2a^2} t}p^a_t(x,y) - \frac 2 a \sin\Big(\frac{\pi x}{a}\Big) \sin\Big(\frac{\pi y}{a}\Big)\right| \le E_{t/a^2}\frac 2 a \sin\Big(\frac{\pi x}{a}\Big) \sin\Big(\frac{\pi y}{a}\Big).
\end{equation}
In particular, the optimal constant $C_t$ in \eqref{eq:p_sin_coarse} converges to $1$ as $t\to\infty$.

For small $t$, we often resort to the integral over $t$ of $p_t^a(x,y)$, i.e.\ the Green function of the killed Brownian motion. It is given by (see e.g.\ \cite{Kallenberg1997}, Lemma 20.10, p379)
\begin{equation}
\label{eq:p_potential}
 \int_0^\infty p^a_t(x,y)\,\dd t = 2 a^{-1} (x\wedge y)(a-x\vee y).
\end{equation}

Set $H=H_0\wedge H_a$ and define $r^a_t(x) = W^x(H \in \dd t,\ X_H = a)/\dd t$. Then (see \cite{BorodinSalminen}, formula 1.3.0.6),
\begin{equation*}
r^a_t(x) = -\frac 1 2 \frac{\partial}{\partial y} p_t^a(x,y)\Big|_{y=a} = \frac{1}{a^2} \theta'\left(\frac{x}{a}-1,\frac t {a^2}\right).
\end{equation*}

The following two integrals are going to appear several times throughout the article, which is why we give some useful estimates here. For a measurable subset $S\subset \R$, define
\begin{align}
 \label{eq:Ia} 
  I^a(x,S) &= W^x\Big[e^{\frac{\pi^2}{2a^2}H_a} \Ind_{(H_0 > H_a,\,H_a\in S)}\Big] = \int_{S\cap (0,\infty)} e^{\frac{\pi^2}{2a^2} s} r_s^a(x)\dd s,\quad\tand\\
 \label{eq:Ja}
  J^a(x,y,S) &= \int_{S\cap (0,\infty)} e^{\frac{\pi^2}{2a^2} s} p_s^a(x,y)\dd s,
\end{align}
which satisfy the scaling relations 
\begin{equation}
 \label{eq:IJ_scaling}
 I^a(x,S) = I\Big(\frac x a,\frac S {a^2}\Big),\quad J^a(x,y,S) = a J\Big(\frac x a,\frac y a,\frac S {a^2}\Big),
\end{equation}
with $I = I^1$ and $J=J^1$. The following lemma provides estimates on $I(x,S)$ and $J(x,y,S)$. It is easily proven from the above equations (see \cite[Lemma~2.2.1]{MaillardThese}).
\begin{lemma}
 \label{lem:I_estimates}
There exists a universal constant $C$, such that for every $x\in[0,1]$ and every measurable $S\subset \R_+$, we have
\[
\begin{split}
 |I(x,S) - \pi \lambda(S)\sin(\pi x)| &\le C\Big(x \wedge E_{\inf S} (1\wedge \lambda(S))\sin(\pi x)\Big),\quad\tand\\
|J(x,y,S) - 2 \lambda(S)\sin(\pi x)\sin(\pi y)| &\le C\Big([(x\wedge y)(1-(x\vee y))] \wedge E_{\inf S} \sin(\pi x) \sin(\pi y)\Big),
\end{split}
\]
where $\lambda(S)$ denotes the Lebesgue measure of $S$ and $E_{\inf S}$ is defined in \eqref{eq:def_E}.
\end{lemma}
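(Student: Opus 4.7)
}

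The proof proceeds by deriving, for each of the quantities, a pointwise bound on the "spectral residual" integrand and then integrating in $s$. The spectral expansion \eqref{eq:p_sin} (with $a=1$) gives
\[
e^{\pi^2 s/2}p^1_s(x,y) - 2\sin(\pi x)\sin(\pi y) = 2\sum_{n\ge 2}e^{-\pi^2(n^2-1)s/2}\sin(\pi n x)\sin(\pi n y),
\]
and differentiating in $y$ at $y=1$ yields $r^1_s(x) = \pi\sum_{n\ge 1}(-1)^{n+1}n e^{-\pi^2 n^2 s/2}\sin(\pi n x)$, so that
\[
e^{\pi^2 s/2}r^1_s(x) - \pi\sin(\pi x) = \pi\sum_{n\ge 2}(-1)^{n+1}n e^{-\pi^2(n^2-1)s/2}\sin(\pi n x).
\]
Writing $g(s)$ and $\tilde g(s)$ for these two residuals, one has $J(x,y,S)-2\lambda(S)\sin(\pi x)\sin(\pi y) = \int_S g$ and $I(x,S)-\pi\lambda(S)\sin(\pi x) = \int_S \tilde g$. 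Using $|\sin(\pi n x)|\le n\sin(\pi x)$ in the series for $\tilde g$ and recalling \eqref{eq:def_E} gives the analog of \eqref{eq:p_estimate} for $\tilde g$, namely $|\tilde g(s)|\le \pi^{-1}E_s\sin(\pi x)$, while \eqref{eq:p_estimate} itself yields $|g(s)|\le 2E_s\sin(\pi x)\sin(\pi y)$.

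For the $E_{\inf S}$-type bounds, integrate these pointwise inequalities over $S$. A short calculation from the definition \eqref{eq:def_E}, using $n^2/(n^2-1)\le 4/3$ for $n\ge 2$, shows $\int_a^\infty E_s\,\dd s \le \frac{2}{3\pi^2}E_a$. Combining this with the trivial bound $E_s\le E_{\inf S}$ on $S$ yields $\int_S E_s\,\dd s\le C(1\wedge\lambda(S))E_{\inf S}$, from which the $E_{\inf S}\sin\sin$ estimate for $J$ and the $E_{\inf S}(1\wedge\lambda(S))\sin(\pi x)$ estimate for $I$ follow at once.

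For the "spatial" bounds, since $|\int_S g|\le \int_S |g|\le \int_0^\infty|g|$ (and likewise for $\tilde g$), it suffices to show $\int_0^\infty |g|\le C(x\wedge y)(1-(x\vee y))$ and $\int_0^\infty|\tilde g|\le C x$. I would split the integration at $s=1$. On $[1,\infty)$, use the pointwise $E_s$-bounds of the previous paragraph together with the elementary inequalities $\sin(\pi x)\sin(\pi y)\le \pi^2(x\wedge y)(1-(x\vee y))$ (an easy case split $x\le 1/2$ vs $x>1/2$, combined with $\sin(\pi u)\le \pi(u\wedge(1-u))$) and $\sin(\pi x)\le \pi x$; these integrals are then bounded by $C\sin(\pi x)\sin(\pi y)\int_1^\infty E_s\,\dd s$ and $C\sin(\pi x)\int_1^\infty E_s\,\dd s$, respectively. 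On $[0,1]$, use the triangle inequality $|g(s)|\le e^{\pi^2/2}p^1_s(x,y)+2\sin(\pi x)\sin(\pi y)$ (resp.\ $|\tilde g(s)|\le e^{\pi^2/2}r^1_s(x) + \pi\sin(\pi x)$); the first terms integrate to at most $e^{\pi^2/2}\int_0^\infty p^1_s(x,y)\,\dd s = 2e^{\pi^2/2}(x\wedge y)(1-(x\vee y))$ by the potential identity \eqref{eq:p_potential}, and $e^{\pi^2/2}\int_0^\infty r^1_s(x)\,\dd s = e^{\pi^2/2}W^x(H_1<H_0) = e^{\pi^2/2}x$ by optional stopping; the second terms contribute $2\sin(\pi x)\sin(\pi y)\le 2\pi^2(x\wedge y)(1-(x\vee y))$ and $\pi\sin(\pi x)\le \pi^2 x$.

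There is no real obstacle: the entire argument is a routine combination of \eqref{eq:p_estimate}, \eqref{eq:p_potential}, and the identity $W^x(H_1<H_0)=x$. The only point that requires a moment's care is the pointwise inequality $\sin(\pi x)\sin(\pi y)\le \pi^2(x\wedge y)(1-(x\vee y))$, and the degenerate cases $\lambda(S)=0$ or $\inf S=0$ where one of the two bounds in the minimum becomes vacuous (but then the other remains finite, so the statement still follows).
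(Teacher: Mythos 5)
Your proposal is correct: the paper itself omits the proof of Lemma~\ref{lem:I_estimates} (deferring to \cite[Lemma~2.2.1]{MaillardThese} with the remark that it follows ``easily from the above equations''), and your argument uses exactly the intended ingredients — the spectral expansions behind \eqref{eq:p_sin}--\eqref{eq:p_estimate} for the $E_{\inf S}$-type bounds, and the Green-function identity \eqref{eq:p_potential} together with $W^x(H_1<H_0)=x$ for the spatial bounds. All the individual estimates check out (in particular $\int_a^\infty E_s\,\dd s\le \tfrac{2}{3\pi^2}E_a$ and $\sin(\pi x)\sin(\pi y)\le\pi^2(x\wedge y)(1-(x\vee y))$, the latter following directly from $\sin(\pi x)\le\pi x$ and $\sin(\pi y)\le\pi(1-y)$), so nothing further is needed.
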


\subsection{The Brownian taboo process}
\label{sec:taboo}
 The \emph{Brownian taboo process} on the interval $(0,a)$ is the diffusion with infinitesimal generator
$\frac 1 2 \left(\frac \dd {\dd x}\right)^2 + \frac \pi a \cot \frac {\pi x} a \frac \dd {\dd x}$.
The name of this process was coined by F.\ Knight \cite{Knight1969} who showed that it can be interpreted as Brownian motion conditioned to stay inside the interval $(0,a)$. It is the Doob transform of Brownian motion killed at $0$ and $a$, with respect to the space-time harmonic function $h(x,t) = \sin(\pi x/a) \exp(\pi^2t/(2a^2))$. As a consequence, its transition density is given for $x,y\in[0,a]$ by
\begin{equation*}
 p^{\text{taboo},a}_t(x,y) = \frac {\sin (\pi y/a)} {\sin (\pi x/a)}e^{\frac {\pi^2}{2a^2} t}\, p^a_t(x,y).
\end{equation*}
In particular, by \eqref{eq:p_sin_coarse} and \eqref{eq:p_estimate}, there exists 
a (universal) family $(C_t)_{t>0}$ of positive constants, continuous in $t$
 and such that $C_t\to 1$ as $t\to\infty$, such that
\begin{equation}
\label{eq:ptaboo_estimate}
 \frac {2C_{t/a^2}} a \sin^2(\pi y/a) \le p^{\text{taboo},a}_t(x,y) \le \frac {2C_{t/a^2}^{-1}} a \sin^2(\pi y/a).
\end{equation}
For $x\in [0,a]$ we denote the law of the Brownian taboo process on $(0,a)$ started from $x$ by $W^x_{\text{taboo},a}$. Often we will drop the $a$ if its value is clear from the context, similarly for $p^{\text{taboo},a}_t(x,y)$. We also denote by $W^{x,t,y}_{\text{taboo}}$ the law of the taboo bridge from $x$ to $y$ of length $t$. Note that the taboo process is self-dual in the sense that for a measurable functional $F$ and $t>0$, we have
\(
 W^{x,t,y}_{\text{taboo}}[F((X_s;0\le s\le t))] = W^{y,t,x}_{\text{taboo}}[F((X_{t-s};0\le s\le t))].
\)

The following lemma is used a few times in the article, e.g.\ for bounding from  below the density of particles in the B-BBM conditioned not the break out from Section~\ref{sec:before_breakout_particles} (in the proof of Lemma~\ref{lem:conditioned_Z_lowerbound}), and in Section~\ref{sec:fugitive} for bounding the contribution of the fugitive's descendants in B-BBM.

\begin{lemma}
\label{lem:k_integral}
Let $c>0$. There exists a constant $C$, depending only on $c$, such that we have for every $x,y\in[0,a]$, for every $t\ge0$,
\begin{equation}
 \label{eq:taboo_integral_error_1}
 W^{x}_{\mathrm{taboo}}\Big[\int_0^t e^{-c X_s}\,\dd s\Big] \le C \Big(t/a^3 + (1\wedge x^{-1})\Big),
\end{equation}
and for $t\ge a^2$, 
\begin{equation}
 \label{eq:taboo_integral_error_2}
 W^{x,t,y}_{\mathrm{taboo}}\Big[\int_0^t e^{-c X_s}\,\dd s\Big] \le C \Big(t/a^3 +(1\wedge x^{-1}) + (1\wedge y^{-1})\Big).
\end{equation}
 If $t\le a^2$, we still have,
\begin{equation}
 \label{eq:taboo_integral_error_3}
 W^{x,t,y}_{\mathrm{taboo}}\Big[\int_0^t e^{-c X_s}\,\dd s\Big] \le C.
\end{equation}
\end{lemma}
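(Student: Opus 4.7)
The plan is to prove \eqref{eq:taboo_integral_error_1} via a Poisson-equation / Dynkin decomposition, and then deduce the bridge estimates \eqref{eq:taboo_integral_error_2} and \eqref{eq:taboo_integral_error_3} from it. Let $\mu(\dd z) = (2/a)\sin^2(\pi z/a)\,\dd z$ denote the stationary measure of the taboo process and $\bar e = \int_0^a e^{-cz}\,\mu(\dd z)$. Expanding $2\sin^2 = 1-\cos(2\pi z/a)$ and using the elementary identity
$\int_0^a e^{-cz}\cos(k\pi z/a)\,\dd z = ca^2[1-(-1)^k e^{-ca}]/(k^2\pi^2+c^2a^2)$
yields $\bar e = 4\pi^2/(c^3 a^3) + O_c(a^{-5})$, which supplies the $t/a^3$ term in all three bounds.

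For \eqref{eq:taboo_integral_error_1} I would let $h:[0,a]\to\R$ be the unique centred solution of $Lh = \bar e - e^{-cx}$, where $L$ is the generator of the taboo process. Since $L$ is $\mu$-symmetric with $L f = (2\mu)^{-1}(\mu f')'$, a single integration yields the closed formula
\[
 h'(x) = -\frac{2}{\mu(x)}\int_0^x \mu(z)\bigl(e^{-cz}-\bar e\bigr)\,\dd z.
\]
Estimating this integral on the three ranges $x\in[0,1]$, $x\in[1,a/2]$, $x\in[a/2,a]$ and integrating yields the key pointwise bound $|h(x)|\le C(1\wedge x^{-1})$ uniformly, with $C$ depending only on $c$. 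Dynkin's formula then gives the decomposition
\[
 W^x_{\mathrm{taboo}}\Big[\int_0^t e^{-cX_s}\,\dd s\Big] = t\bar e + h(x) - E^x_{\mathrm{taboo}}[h(X_t)],
\]
and the pointwise bound on $h$, together with a short estimate of $E^x_{\mathrm{taboo}}[1\wedge X_t^{-1}]\le C(1\wedge x^{-1})$ (obtained from \eqref{eq:ptaboo_estimate} for $t\ge a^2$ and from the continuity of $X_t$ for $t\le a^2$), completes the proof of \eqref{eq:taboo_integral_error_1}.

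For the bridge bounds, the starting point is the explicit formula
\[
 W^{x,t,y}_{\mathrm{taboo}}\Big[\int_0^t e^{-cX_s}\,\dd s\Big] = \frac{1}{p^{\mathrm{taboo}}_t(x,y)}\int_0^t\!\int_0^a e^{-cz}\,p^{\mathrm{taboo}}_s(x,z)\,p^{\mathrm{taboo}}_{t-s}(z,y)\,\dd z\,\dd s,
\]
which I would split at $s=t/2$ and treat the two halves symmetrically by $\mu$-reversibility of the taboo process (equivalently, the duality $W^{x,t,y}_{\mathrm{taboo}}[F((X_s))]=W^{y,t,x}_{\mathrm{taboo}}[F((X_{t-s}))]$). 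For \eqref{eq:taboo_integral_error_2} ($t\ge a^2$), \eqref{eq:ptaboo_estimate} makes the ratio $p^{\mathrm{taboo}}_{t-s}(z,y)/p^{\mathrm{taboo}}_t(x,y)$ bounded by a constant times $\sin^2(\pi z/a)/\sin^2(\pi y/a)\le 1$ on $s\in[0,t/2]$, so that half reduces to a bounded multiple of $W^x_{\mathrm{taboo}}[\int_0^{t/2}e^{-cX_s}\,\dd s]$, which \eqref{eq:taboo_integral_error_1} controls by $C(t/a^3+1\wedge x^{-1})$; the reverse half gives the $1\wedge y^{-1}$ contribution. For \eqref{eq:taboo_integral_error_3} ($t\le a^2$) the bounds \eqref{eq:p_sin_coarse} allow one to compare $p^{\mathrm{taboo}}_s$ with the transition density of a standard Brownian bridge on $(0,a)$ (the constants $C_{s/a^2}$ are uniformly bounded for $s\le a^2$), for which a direct Gaussian computation yields a uniform bound $\le C$.

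The main obstacle is establishing $|h(x)|\le C(1\wedge x^{-1})$ rather than merely $|h|\le C$. Naive bounds on the integrand of $h'$ in the bulk give $|h'(x)|\asymp \bar e/\mu(x)\asymp 1/a^2$ uniformly for $x\asymp a$, which upon integration only yields a useless bound $|h|\le C/a$. The sharper decay relies on the orthogonality $\int_0^a\mu(z)(e^{-cz}-\bar e)\,\dd z = 0$ (which is the definition of $\bar e$): rewriting $\int_0^x = -\int_x^a$ and observing that $e^{-cz}$ is negligible on $(x,a)$ once $x\ge 1$, one replaces $e^{-cz}-\bar e$ by $-\bar e$ in that range to obtain the sharp asymptotics $h'(x)\asymp -\bar e/\mu(x)\asymp -1/(c^3 x^2)$ for $x\in[1,a/2]$, which integrates to the claimed $|h(x)|\asymp 1/x$ behaviour.
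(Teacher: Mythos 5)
Your route to \eqref{eq:taboo_integral_error_1} is a legitimate alternative to the paper's: the paper splits the additive functional at the hitting time of an independent stationary point and evaluates both pieces via the scale function and speed measure, while you solve the Poisson equation $Lh=\bar e-e^{-cx}$ and apply Dynkin's formula; these are two packagings of the same speed-measure computation, and your key bound $|h(x)|\le C(1\wedge x^{-1})$ is correct (the orthogonality you highlight is indeed the essential point, and the centring constant is $O(1/a)$ because $\int_0^a(1\wedge z^{-1})\mu(\dd z)=O(1/a)$). The step you gloss over is $W^x_{\mathrm{taboo}}[1\wedge X_t^{-1}]\le C(1\wedge x^{-1})$ for $t\le a^2$: ``continuity of $X_t$'' is not an argument, and the crude bound $W^x_{\mathrm{taboo}}[f(X_t)]\le e^{\pi^2t/(2a^2)}\sin(\pi x/a)^{-1}\sup_y f(y)\sin(\pi y/a)$ only gives $C/(a\sin(\pi x/a))$, which is \emph{not} $\le C(1\wedge x^{-1})$ for $x$ near $a$; a short Bessel-3-type estimate is needed here, whereas the paper's hitting-time decomposition avoids the terminal term altogether. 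Your derivation of \eqref{eq:taboo_integral_error_2} from \eqref{eq:taboo_integral_error_1} coincides with the paper's.

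The genuine gap is in \eqref{eq:taboo_integral_error_3}. First, your parenthetical claim is false: the optimal constants $C_s$ in \eqref{eq:p_sin_coarse} satisfy $C_s\to0$ as $s\to0$ (the upper bound $2C_s^{-1}a^{-1}\sin(\pi x/a)\sin(\pi y/a)$ must dominate the on-diagonal singularity $(2\pi s)^{-1/2}$ of $p^a_s$), so \eqref{eq:p_sin_coarse} gives no uniform control of $p^{\mathrm{taboo}}_s$ over $s\le a^2$. More fundamentally, a comparison with a \emph{standard} Brownian bridge cannot yield a uniform $O(1)$ bound: for $x=y=0$ and $t=a^2$, the Brownian bridge spends expected time of order $\sqrt t=a$ within distance $O(1)$ of the origin, so the corresponding expectation for that bridge is of order $a$, not $O(1)$. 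The bound \eqref{eq:taboo_integral_error_3} rests entirely on the repulsion from the origin of the conditioned process (note that the taboo bridge equals the bridge of Brownian motion \emph{killed} at $0$ and $a$, since the $h$-transform factors cancel); this is why the paper, after cutting the path at level $a/2$, compares with the Bessel-3 bridge, pushes both endpoints to $0$ by a diffusion comparison argument, and concludes from the Bessel-3 Green function $G(0,z)=2z$, for which $\int_0^\infty 2ze^{-cz}\,\dd z=2/c^2$. Some form of this quadratic vanishing of the bridge marginal at the origin is indispensable and is missing from your sketch.
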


\begin{proof}
Let $c>0$. We first show that \eqref{eq:taboo_integral_error_1} implies \eqref{eq:taboo_integral_error_2}. By the self-duality of the taboo process, we have for every $x,y\in[0,a]$,
\[
 W^{x,t,y}_{\mathrm{taboo}}\Big[\int_0^t e^{-cX_s}\,\dd s\Big] = W^{x,t,y}_{\mathrm{taboo}}\Big[\int_0^{t/2} e^{-cX_s}\,\dd s\Big] + W^{y,t,x}_{\mathrm{taboo}}\Big[\int_0^{t/2} e^{-cX_s}\,\dd s\Big].
\]
Conditioning on $\sigma(X_s; 0\le s\le t/2)$ and using \eqref{eq:ptaboo_estimate}, there exists a universal constant $C$, such that for $t\ge a^2$, for every $x,y\in[0,a]$,
\begin{align*}
 W^{x,t,y}_{\mathrm{taboo}}\Big[\int_0^{t/2} e^{-cX_s}\,\dd s\Big] &= W^x_{\mathrm{taboo}}\Big[\frac{p^{\mathrm{taboo}}_{t/2}(X_{t/2},y)}{p^{\mathrm{taboo}}_t(x,y)}\int_0^{t/2} e^{-cX_s}\,\dd s\Big] \\
 &\le C\, W^x_{\mathrm{taboo}}\Big[\int_0^{t/2} e^{-cX_s}\,\dd s\Big].
\end{align*}
Equation \eqref{eq:taboo_integral_error_1} therefore implies \eqref{eq:taboo_integral_error_2}.

Heuristically, one can estimate the left side of \eqref{eq:taboo_integral_error_1} for large $a$ in the following way: Since $e^{-cx}$ is decreasing very fast, only the times at which $X_s$ is of order $1$ contribute to the integral. When started from the stationary distribution, the process takes a time of order $a^3$ to reach a point at distance $O(1)$ from the origin \cite{Lambert2000} and it stays there for a time of order $1$, hence the integral is of order $t/a^3$. When started from the point $x$, an additional error is added, which is of order $1$, when $x$ is at distance of order $1$ away from $0$. Adding both terms gives the bound appearing in the statement of the lemma.

The actual calculations can be performed in the following way: Let $Y$ be a random variable with values in $(0,a)$ distributed according to $\widetilde{m}(\dd x) := (2/a) \sin^2(\pi x/a)\,\dd x$, which is the stationary probability measure of the taboo process. Let $H_Y = \inf\{t>0: X_s = Y\}$. We then have
\[
\begin{split}
 W^x_{\mathrm{taboo}}\Big[\int_0^t e^{-cX_s}\,\dd s\Big] &= W^x_{\mathrm{taboo}}\Big[\int_0^{H_Y} e^{-cX_s}\,\dd s+\int_{H_Y}^t e^{-cX_s}\,\dd s\Big]\\
&\le W^x_{\mathrm{taboo}}\Big[\int_0^{H_Y} e^{-cX_s}\,\dd s\Big] + W^{\widetilde m}_{\mathrm{taboo}}\Big[\int_0^t e^{-cX_s}\,\dd s\Big].
\end{split}
\]
Both of these quantities can now be expressed through the scale function and speed measure of the taboo process and after some calculations (see \cite[Lemma~2.2.2]{MaillardThese} for details), one obtains \eqref{eq:taboo_integral_error_1}.

Now let $t\le a^2$. In order to prove \eqref{eq:taboo_integral_error_3}, a different method is needed. We may assume that $x,y \le a/2$, otherwise we decompose the path at the first and/or last time it hits $a/2$ and bound the parts above $a/2$ trivially by $a^2e^{-ca/2} = O(c^{-2})$. The transition density of the taboo bridge can be written for every $s\in(0,t)$ by,
\[
 W^{x,t,y}_{\mathrm{taboo}}(X_s\in \dd z) = \frac{p_s^a(x,z)p_{t-s}^a(z,y)}{p_t^a(x,y)}\,\dd z.
\]
If we denote by $p_t^\infty(x,y) = (2/(\pi t))^{1/2} \exp(-(x^2+y^2)/2t)\sinh(xy/t)$ the transition density of Brownian motion killed at $0$, then we have the trivial inequality $p_t^a(x,y) \le p_t^\infty(x,y)$ and furthermore $p_t^a(x,y) \ge C^{-1} p_t^\infty(x,y)$ for $x,y\le a/2$ and $t\le a^2$ by Brownian scaling, where $C>0$ is a universal constant. It follows that
\[
 W^{x,t,y}_{\mathrm{taboo}}\Big[\int_0^t e^{-cX_s}\,\dd s\Big] \le C R^{x,t,y}\Big[\int_0^t e^{-cX_s}\,\dd s\Big],
\]
where $R^{x,t,y}$ denotes the law of the Bessel bridge of dimension 3, the Doob transform of Brownian motion killed at $0$ and started at $x$ with respect to the space-time harmonic function $h_y(z,s) = p_{t-s}^\infty(z,y)/p_t^\infty(x,y)$. It is Brownian motion with additional time-dependent drift
\[
 \frac{\dd}{\dd z}(\log h_y(z,s)) = -\frac{z}{t-s}+\frac{\dd}{\dd z}\log \sinh\frac{zy}{t-s} = -\frac{z}{t-s} + \frac{y}{t-s} \coth \frac{zy}{t-s}.
\]
Elementary calculations show that this is an increasing function in $y$ and standard comparison theorems for diffusions (see e.g.\ \cite{Revuz1999}, Theorem IX.3.7) now yield that for $y_1\le y_2$, we have
\(
 R^{x,t,y_2}[e^{-cX_s}] \le R^{x,t,y_1}[e^{-cX_s}],
\)
since $e^{-cx}$ is decreasing in $x$. This is true in particular for $y_1 = 0$. Using the self-duality of the Bessel bridge, we can repeat the same reasoning with $x$. We thus have altogether
\[
 W^{x,t,y}_{\mathrm{taboo}}\Big[\int_0^t e^{-cX_s}\,\dd s\Big] \le C R^{0,t,0}\Big[\int_0^t e^{-cX_s}\,\dd s\Big] = 2C R^{0,t,0}\Big[\int_0^{t/2} e^{-cX_s}\,\dd s\Big],
\]
for any $x,y\le a/2$. This last expectation can be bounded by a constant: Let $r_s(x,y) = y p^\infty_s(x,y)/x$ be the transition density of the Bessel-3 process, extended to $x=0$ by continuity. Then for every $s\in(0,t]$,
\[
  \lim_{y\to 0} \frac{r_{s}(x,y)}{r_t(0,y)} = \Big(\frac t s\Big)^{3/2} e^{-\frac{x^2}{2s}} \le  \Big(\frac t s\Big)^{3/2},
\]
and so, with $R^0$ the law of the Bessel-3 process started at $0$,
\[
 R^{0,t,0}\Big[\int_0^{t/2} e^{-cX_s}\,\dd s\Big] = R^0\Big[\Big(\int_0^{t/2} e^{-cX_s}\,\dd s\Big)\lim_{y\to 0} \frac{r_{t/2}(x,y)}{r_t(0,y)}\Big] \le 2^{3/2}  R^0\Big[\int_0^\infty e^{-cX_s}\Big].
\]
The Green function of the Bessel-3 process is $G^R(x,y) = 2y(x\wedge y)/x$ (let $a\to\infty$ in \eqref{eq:p_potential}), and so,
\[
 R^0\Big[\int_0^\infty e^{-cX_s}\Big] = \int_0^\infty G^R(0,x) e^{-cx}\,dx = \int_0^\infty 2xe^{-cx}\,dx = 2c^{-2}.
\]
Together with the previous inequalities, this yields \eqref{eq:taboo_integral_error_3}.
\end{proof}

\section{BBM in an interval}
\label{sec:interval}

In this section we study branching Brownian motion killed upon exiting an interval. We remark that the results of Sections~\ref{sec:ZY} and \ref{sec:right_border} as well as Lemma~\ref{lem:N_expec_large_t} are in essence already contained in \cite{Berestycki2010} (with $f\equiv 0$), but we reprove them here for completeness and with streamlined proofs. The remaining results are, to our knowledge, new.

The results obtained in this section are independent of the main text of the article and so is the notation that we introduce, although there are many similarities. However, there are also some conflicts that the reader should be aware of, notably the definitions of $Z_t$, $Y_t$ and $R_t$ (which would correspond to $Z^{(0)}_t$, $Y^{(0)}_t$ and $R^{(0)}_t$ in the main text). These conflicts have been voluntarily accepted in order to simplify the notation in this section.

\subsection{Notation}
\label{sec:interval_notation}
Let $q(k)$ be a law on $\N$ and $L$ a random variable with law $q(k)$. Define $m = \E[L-1]$, $m_2 = \E[L(L-1)]$ and $\beta = 1/(2m)$ and suppose that $m>0$ and $m_2 < \infty$. Furthermore, we let $a \ge \pi$ and set $\mu = \sqrt{1 - \frac{\pi^2}{a^2}}$. We then denote by $\P^x$ the law of the branching Markov process (in the sense of Section~\ref{sec:preliminaries_definition}) where, starting with a single particle at the point $x\in\R$, particles move according to Brownian motion with variance $1$ and drift $-\mu$ and branch at rate $\beta$ into $k$ particles according to the reproduction law $q(k)$. Expectation with respect to $\P^x$ is denoted by $\E^x$. On the space of continuous functions from $\R_+$ to $\R$, we define $H_0$ and $H_a$ to be the hitting time functionals of $0$ and $a$. We further set $H = H_0\wedge H_a$. By Lemma~\ref{lem:many_to_one_simple}, the density of the branching Brownian motion before $\mathscr L_H$, as defined in \eqref{eq:def_density}, is given for $t>0$ and $x,y \in (0,a)$ by
\begin{equation}
 \label{eq:density_bbm}
\p_t(x,y) = e^{\mu(x-y) + \frac{\pi^2}{2 a^2} t} p_t^a(x,y),
\end{equation}
where $p_t^a$ was defined in \eqref{eq:def_p}.

Now let $f$ be a barrier function as defined in Section~\ref{sec:BBBM_definition_definition}. Define
\begin{equation*}
\mu_t = \mu + \frac{1}{a^2} f'(t/a^2).
\end{equation*}
We denote by $\P^x_f$ the law of the branching Brownian motion described above, but with (time-dependent) drift $-\mu_t$. Expectation with respect to $\P^x_f$ is denoted by $\E^x_f$ and the density of the process before $\mathscr L_H$ is denoted by $\p^f_t(x,y)$. We also denote by $W^x_{-\mu_t}$ and $W^x_{-\mu}$ the laws of Brownian motion with (time-dependent) drift $-\mu_t$ and (constant) drift $-\mu$, respectively.

We also extend the definitions of $\P^x$, $\E^x$, $\P^x_f$ and $\E^x_f$ to arbitrary initial configurations of particles distributed according to a finite counting measure $\nu$ on $(0,a)$, as mentioned at the end of Section~\ref{sec:preliminaries_definition}, as well as to a single particle starting from a space-time point $(x,t)$.

\subsection{The processes \texorpdfstring{$Z_t$ and $Y_t$}{Z\_t and Y\_t}}
\label{sec:ZY}

Recall from Section \ref{sec:preliminaries} that the set of particles alive at time $t$ is denoted by $\mathscr A(t)$. We define
\[
 \mathscr A_{0,a}(t) = \{u\in \mathscr A(t): H(X_u) > t\},
\]
where $H$ was defined in the previous subsection. Now set 
\begin{equation*}
w_Z(x) = ae^{\mu (x-a)}\sin(\pi x/a)\Ind_{(x\in[0,a])}\quad\tand\quad w_Y(x) = e^{\mu(x-a)},
\end{equation*}
and define the processes $(Z_t)_{t\ge0}$ and $(Y_t)_{t\ge0}$ by
\[
 Z_t = \sum_{u\in \mathscr A_{0,a}(t)} w_Z(X_u(t))\quad\tand\quad Y_t = \sum_{u\in \mathscr A_{0,a}(t)} w_Y(X_u(t)).
\]
The usefulness of the process $(Z_t)_{t\ge0}$ comes from the fact that it is a martingale under $\P^x$, as observed in~\cite{Berestycki2010}. The proof of this fact is standard and relies on the branching property, the many-to-one lemma (Lemma~\ref{lem:many_to_one_simple}) and the fact that $e^{t/2} w_Z(X_t)$ is a martingale for $(X_t)_{t\ge0}$ a Brownian motion with drift $-\mu$ killed at $0$ and $a$, which is easily seen by It\={o}'s formula, for example.

The following lemma relates the density of BBM with variable drift to BBM with fixed drift. 
\begin{lemma}
 \label{lem:mu_t_density} For all $x,y\in[0,a]$ and $t\ge0$, we have
\(
 \p^f_t(x,y) = \p_t(x,y) e^{- f(t/a^2) + O(\|f\|/a)}.
\)
\end{lemma}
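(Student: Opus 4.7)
The plan is to use the many-to-one lemma (Lemma~\ref{lem:many_to_one_simple}) to reduce the density identity to an identity for Brownian motion, and then to use Girsanov's theorem together with integration by parts to extract the factor $e^{-f(t/a^2)}$ cleanly.

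First, by Lemma~\ref{lem:many_to_one_simple} applied to the space-time process, and recalling that $\beta m = 1/2$, one has
\[
\p^f_t(x,y)\,\dd y = e^{t/2}\, W^x_{-\mu_t}\!\left(X_t\in\dd y,\ H>t\right),\qquad
\p_t(x,y)\,\dd y = e^{t/2}\, W^x_{-\mu}\!\left(X_t\in\dd y,\ H>t\right),
\]
so it suffices to compare the two Brownian laws on the event $\{H>t\}$. Girsanov's theorem applied to the drift change from $-\mu$ to $-\mu_s = -\mu - a^{-2}f'(s/a^2)$ gives, with $M_s = X_s + \mu s - x$ a $W^x_{-\mu}$-Brownian motion,
\[
\frac{\dd W^x_{-\mu_t}}{\dd W^x_{-\mu}}\bigg|_{\mathcal{F}_t}
= \exp\!\left(-\int_0^t a^{-2}f'(s/a^2)\,\dd M_s - \frac12\int_0^t a^{-4}(f'(s/a^2))^2\,\dd s\right).
\]

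Next I would analyse these two exponents on $\{H>t\}$. Substituting $u=s/a^2$ in the quadratic term gives $\int_0^{t/a^2} a^{-2}(f'(u))^2\,\dd u \le a^{-2}\|f\|$, which is $O(\|f\|/a)$. For the stochastic integral I write $\dd M_s = \dd X_s + \mu\,\dd s$; the drift piece produces exactly
\[
\mu\!\int_0^t a^{-2}f'(s/a^2)\,\dd s \;=\; \mu f(t/a^2)\;=\;f(t/a^2)+O(\|f\|/a^2),
\]
using $f(0)=0$, the substitution $u=s/a^2$, and $\mu = 1+O(1/a^2)$ with $|f(t/a^2)|\le\|f\|$. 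The remaining piece $\int_0^t a^{-2}f'(s/a^2)\,\dd X_s$ I integrate by parts (which is legitimate since $f'$ is of bounded variation, hence the deterministic integrand has zero covariation with the semimartingale $X$):
\[
\int_0^t a^{-2}f'(s/a^2)\,\dd X_s
= a^{-2}f'(t/a^2)\,X_t - a^{-2}f'(0)\,x - \int_0^t X_s\cdot a^{-2}\,\dd f'(s/a^2).
\]
The point is that on $\{H>t\}$ one has $|X_s|\le a$ for every $s\le t$, so each of the three boundary and BV-type contributions is bounded by a deterministic constant times $\|f\|/a$ (the last one via $a\cdot a^{-2}\int_0^{t/a^2}|\dd f'(u)|\le a^{-1}\|f\|$).

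Collecting these pieces, on $\{H>t\}$ the Girsanov density equals $\exp(-f(t/a^2) + R)$ where $R$ is an $\mathcal{F}_t$-measurable random variable with $|R|\le C\|f\|/a$ almost surely. Applying this to $\Ind_{X_t\in\dd y,\,H>t}$ and dividing by $\dd y$ yields
\[
\p^f_t(x,y) \;=\; e^{-f(t/a^2)}\,\E^{W^x_{-\mu}}\!\left[e^{R}\,\big|\,X_t=y,\ H>t\right]\p_t(x,y),
\]
and the conditional expectation lies in $[e^{-C\|f\|/a},e^{C\|f\|/a}]$, giving the stated estimate.

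The only non-routine step is the stochastic-integral bound: one must avoid the naive estimate $|M_s|\lesssim a+s$ (which would blow up on the $\log^3 N$ timescale) by converting $\dd M_s$ to $\dd X_s$ plus a harmless drift and then integrating by parts so that every surviving term sees only the bounded quantity $X_s\in[0,a]$. This is the main obstacle, and it relies crucially on the BV hypothesis built into the definition of $\|f\|$ in~\eqref{eq:def_f_norm}.
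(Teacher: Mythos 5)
Your proof is correct and follows essentially the same route as the paper: many-to-one to reduce to a Girsanov comparison of the two Brownian laws, extraction of the deterministic factor $e^{-\mu f(t/a^2)}$ (then $\mu f = f + O(\|f\|/a^2)$), a deterministic integration by parts on $\int_0^t a^{-2}f'(s/a^2)\,\dd X_s$ so that only the bounded quantity $X_s\in[0,a]$ appears, and the $O(\|f\|/a^2)$ bound on the quadratic-variation correction. The only cosmetic difference is that you split $\dd M_s=\dd X_s+\mu\,\dd s$ explicitly, whereas the paper absorbs the resulting $-\mu f(t/a^2)$ into the term $\int_0^t(\mu_s^2-\mu^2)/2\,\dd s$; the estimates are identical.
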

\begin{proof}
 By Lemma~\ref{lem:many_to_one_simple} and Girsanov's theorem, we have
\begin{equation}
\label{eq:005}
\begin{split}
 \p^f_t(x,y)\,\dd y &= e^{\beta mt}W_{-\mu_t}^x\left(X_t\in \dd y,\, H > t\right)\\
&= \exp\left(t/2 - \int_0^t\frac{\mu_s^2-\mu^2}{2} \dd s\right)W_{-\mu}^x\left[\exp\Big(-\int_0^t [\mu_s-\mu]\,\dd X_s\Big),\,X_t\in\dd y, H > t\right].
\end{split}
\end{equation}
By integration by parts, under $W^x_{-\mu}$, we have $\int_0^t[\mu_s-\mu]\,\dd X_s = (\mu_t - \mu) X_t - \int_0^tX_s\,\dd \mu_s.$
Since $X_s \in (0,a)$ for all $s<H$, this gives on the event $\{H>t\}$,
\begin{align}
\label{eq:0101}
 \Big|\int_0^t[\mu_s-\mu]\,\dd X_s\Big| &\le a |\mu_t - \mu| + a \int_0^t |\dd \mu_s| 
 \le \frac 1 a \Big(\|f'\|_\infty + \int_0^\infty|\dd f'(s)|\Big)
\end{align}
Furthermore, we have for all $s\ge 0$,
\[
 \frac{\mu_s^2}{2} = \frac{\mu^2}{2} + \frac{\mu}{a^2}f'(s/a^2)+\frac{f'(s/a^2)^2}{2a^4},
\]
so that
\begin{equation}
 \label{eq:020}
 \Big|\int_0^t \frac{\mu_s^2-\mu^2}{2} \dd s - \mu f(t/a^2)\Big| \le \int_0^\infty \frac{f'(s/a^2)^2}{2a^4}\dd s = \frac 1 {2a^2}  \int_0^\infty f'(s)^2\,\dd s.
\end{equation}
Equations \eqref{eq:005}, \eqref{eq:0101} and \eqref{eq:020} and another application of Lemma~\ref{lem:many_to_one_simple} now give
\[
  \p^f_t(x,y) = \p_t(x,y) e^{-\mu f(t/a^2) + O(\|f\|/a)},
\]
and the lemma now follows from \eqref{eq:c0_mu}.
\end{proof}

\begin{proposition}
 \label{prop:quantities}
Under any initial configuration of particles at positions $x_1,\ldots,x_n$, we have for every $t\ge 0$
\begin{equation}
 \label{eq:Zt_expectation}
\E_f[Z_t] = Z_0e^{- f(t/a^2) + O(\|f\|/a)},
\end{equation}
and if in addition $\mu \ge 1/2$, then
\begin{equation}
 \label{eq:Zt_variance} 
\Var_f(Z_t) \le \sum_{i=1}^n \E^{x_i}_f[Z_t^2] \le C e^{- f(t/a^2)+O(\|f\|/a)} \Big(\frac{t}{a^3} Z_0 + Y_0 \Big).
\end{equation}
Furthermore, we have for every $t\ge 0$ (without hypothesis on $\mu$),
\begin{equation}
 \label{eq:Yt_weak}
\E_f[Y_t] \le Ce^{- f(t/a^2)+O(\|f\|/a)}Y_0.
\end{equation}
and for $t\ge a^2$,
\begin{equation}
  \label{eq:Yt}
\E_f[Y_t] \le Ce^{- f(t/a^2)+O(\|f\|/a)} \frac{Z_0}{a}.
\end{equation}
Moreover, for every $a^2\le t\le a^3$, we have
\begin{equation}
  \label{eq:Yt_variance}
\Var_f(Y_t) \le \sum_{i=1}^n \E^{x_i}_f[Y_t^2] \le Ce^{- f(t/a^2)+O(\|f\|/a)} \frac{Y_0}a.
\end{equation}
\end{proposition}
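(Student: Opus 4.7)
The first step is to reduce everything to the case $f\equiv 0$. Lemma~\ref{lem:mu_t_density} gives $\p^f_t(x,y) = \p_t(x,y)\, e^{-f(t/a^2)+O(\|f\|/a)}$, so by the many-to-one identity $\E^x_f[\sum g(X_u(t))] = \int g(y)\,\p^f_t(x,y)\,dy$ the first-moment estimates \eqref{eq:Zt_expectation}, \eqref{eq:Yt_weak} and \eqref{eq:Yt} follow immediately from their $f\equiv 0$ analogues. For the second-moment estimates, Lemma~\ref{lem:many_to_two} applied at the stopping line at time $t$, combined with the first-moment identity applied at the intermediate space-time point $(y,s)$ (which contributes a factor $e^{-[f(t/a^2)-f(s/a^2)]+O(\|f\|/a)}$), produces an overall factor $e^{-2f(t/a^2)+O(\|f\|/a)}$ in front of the unperturbed expression. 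This likewise reduces \eqref{eq:Zt_variance} and \eqref{eq:Yt_variance} to the case $f\equiv 0$.

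For $f\equiv 0$, the generator identity $\tfrac12 w_Z'' - \mu w_Z' = -\tfrac12 w_Z$ together with $\beta m = 1/2$ shows that $(Z_t)$ is a martingale, so $\E^x[Z_t]=w_Z(x)$, proving \eqref{eq:Zt_expectation}. For $Y_t$, Lemma~\ref{lem:many_to_one_simple} and Girsanov's theorem give $\E^x[Y_t] = w_Y(x)\, e^{\pi^2 t/(2a^2)}\, W^x(H>t)$. The trivial bound $W^x(H>t)\le 1$ together with $e^{\pi^2 t/(2a^2)} \le e^{\pi^2/2}$ for $t\le a^2$ handles \eqref{eq:Yt_weak}, while the spectral estimate \eqref{eq:p_estimate} yields $e^{\pi^2 t/(2a^2)} W^x(H>t) \le C\sin(\pi x/a)$ uniformly for $t\ge a^2$, producing the refined bound \eqref{eq:Yt}.

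For the second moments, Lemma~\ref{lem:many_to_two} applied at $\mathscr L_t$ with test function $w_Z$ and the martingale identity $\E^y[Z_{t-s}]=w_Z(y)$ yields
\[
\E^x[Z_t^2] = \int_0^a w_Z(y)^2\,\p_t(x,y)\,dy + \beta m_2 \int_0^t\!\int_0^a \p_s(x,y)\, w_Z(y)^2\,dy\,ds.
\]
The pointwise bound $w_Z(y)^2 \le C w_Y(y)$, which holds for $\mu \ge 1/2$ (substitute $u=a-y$ and maximise $u^2 e^{-\mu u}$), together with \eqref{eq:Yt_weak} bounds the first term by $Cw_Y(x)$. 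For the double integral, insert the spectral expansion $e^{\pi^2 s/(2a^2)}p_s^a(x,y) = (2/a)\sum_{n\ge1}\sin(n\pi x/a)\sin(n\pi y/a)\,e^{-\pi^2(n^2-1)s/(2a^2)}$ and exchange summation and integration to obtain
\[
\int_0^a \p_s(x,y)\, w_Z(y)^2\,dy = 2a w_Y(x)\sum_{n\ge 1} H_n\,\sin(n\pi x/a)\, e^{-\pi^2(n^2-1)s/(2a^2)},
\]
with $H_n = \int_0^a w_Y(y)\sin^2(\pi y/a)\sin(n\pi y/a)\,dy$. Expanding $\sin^2(\pi y/a)=\tfrac12(1-\cos(2\pi y/a))$ writes $H_n$ as a combination of $J_k = \int w_Y(y)\sin(k\pi y/a)\,dy$ which can be computed explicitly; the crucial point is the cancellation $H_1 = (3J_1-J_3)/4 = O(1/a^3)$, one order of $a$ better than the naive $\|w_Y\sin^2\|_{L^1}=O(1/a^2)$ bound. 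This produces the "persistent" $n=1$ contribution $2aw_Y(x)\sin(\pi x/a)\,H_1 = O(w_Z(x)/a^3)$, which integrates over $s\in[0,t]$ to $Ctw_Z(x)/a^3$. For $n\ge 2$ the factor $e^{-\pi^2(n^2-1)s/(2a^2)}$ produces a summable transient: direct estimation gives $|H_n|\le C\min(1/a^2,\,a/n^3)$ which, multiplied by the $s$-integral $2a^2/(\pi^2(n^2-1))$, sums over $n\ge 2$ to $O(w_Y(x))$. Summing over initial particles proves \eqref{eq:Zt_variance}. The bound \eqref{eq:Yt_variance} proceeds identically, with the integrand now $\p_s w_Y (\E^y[Y_{t-s}])^2$; using \eqref{eq:Yt} on $\E^y[Y_{t-s}]$, valid when $t-s\ge a^2$ (which the hypothesis $t\ge a^2$ allows one to exploit in the dominant regime), produces the extra factor $1/a$.

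The main obstacle is extracting the correct $1/a^3$ scaling for the leading spectral coefficient $H_1$: using only $|H_1|\le \|w_Y\sin^2\|_1 = O(1/a^2)$ gives a bound that is too large by a factor $a^2$ on both the $tZ_0/a^3$ and the $Y_0$ terms. One must exploit the cancellation coming from the fact that $\sin(\pi y/a)$ is the principal eigenfunction of $\tfrac12\partial_y^2$ on $[0,a]$ with Dirichlet conditions; this is precisely what produces the sharp $a$-dependence in the proposition.
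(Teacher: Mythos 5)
Your overall architecture — reduce to $f\equiv 0$ via Lemma~\ref{lem:mu_t_density}, use the martingale property of $Z_t$ for \eqref{eq:Zt_expectation}, Girsanov plus the spectral bound for the $Y_t$ first moments, and the many-to-two formula combined with $w_Z^2\le C w_Y$ for the second moments — is essentially the paper's. The one substantive difference is how you handle the double integral $S_2 = \int_0^t\int_0^a \p_s(x,y)\,w_Z(y)^2\,\dd y\,\dd s$: the paper rewrites it in terms of $J^a(x,y,[0,t])$ and invokes the \emph{uniform} Green-function estimate of Lemma~\ref{lem:I_estimates}, whereas you insert the spectral series for $p^a_s(x,y)$ and try to bound the Fourier coefficients $H_n$ term by term. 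This is where your argument breaks down.

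Concretely, write (as you do) $S_2 = 2a\,w_Y(x)\sum_{n\ge 1} H_n\sin(n\pi x/a)\int_0^t e^{-\pi^2(n^2-1)s/(2a^2)}\,\dd s$. Your $n=1$ term is fine: $H_1 = \int_0^a e^{-\mu u}\sin^3(\pi u/a)\,\dd u = O(a^{-3})$, giving $O(t\,w_Z(x)/a^3)$. But for the $n\ge 2$ transient you assert that $|H_n|\le C\min(a^{-2},a/n^3)$ multiplied by the $s$-integral $2a^2/(\pi^2(n^2-1))$ sums to $O(w_Y(x))$. It does not. Keeping track of the prefactor $2a\,w_Y(x)$, the range $2\le n\le a$ alone gives
\[
2a\,w_Y(x)\sum_{n=2}^{a}\frac{1}{a^2}\cdot\frac{2a^2}{\pi^2(n^2-1)}
= \frac{4a\,w_Y(x)}{\pi^2}\sum_{n=2}^{a}\frac{1}{n^2-1}
\ge c\,a\,w_Y(x),
\]
a full factor of $a$ larger than the required $O(w_Y(x))$. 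Even with the sharper bound $|H_n|\le C\min(n/a^3,\,a/n^3)$ (which is what one actually gets from $H_n\sim (-1)^{n+1}\,6\pi^3 n/a^3$ for $n\ll a$), the term-by-term absolute-value sum gives $O(w_Y(x)\log a)$ — still not the stated bound. The obstruction is structural: your argument discards the alternating sign $(-1)^{n+1}$ of the $H_n$, and the cancellation between consecutive $n$ is precisely what brings the $n\ge 2$ part down from $O(a\,w_Y(x))$ to $O(w_Y(x))$. The paper's Lemma~\ref{lem:I_estimates} captures this cancellation directly by estimating the resummed kernel $\sum_{n\ge 2}\sin(n\pi x/a)\sin(n\pi y/a)\,\frac{2a^2}{\pi^2(n^2-1)}$ as a Green function (giving the bound $C\,(x\wedge y)(a-x\vee y)$ after rescaling), and only then integrating against $e^{\mu(y-a)}\sin^2(\pi y/a)$; you would need to sum the series in closed form in both $x$ and $y$ before taking absolute values to make your route work.

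A secondary, smaller issue: your reduction to $f\equiv 0$ for the second moment claims to produce a factor $e^{-2f(t/a^2)+O(\|f\|/a)}$. What the many-to-two formula actually gives (since $\p^f_s$ contributes $e^{-f(s/a^2)}$ and $(\E^{(y,s)}_f[Z_t])^2$ contributes $e^{-2(f(t/a^2)-f(s/a^2))}$) is $e^{f(s/a^2)-2f(t/a^2)+O(\|f\|/a)}$, which depends on $s$. The paper absorbs this into the stated $e^{-f(t/a^2)+O(\|f\|/a)}$ using the barrier-function property $f(t)-f(s)\ge -1$ for $s\le t$, which bounds $e^{f(s/a^2)-f(t/a^2)}\le e$. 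Your stated factor is neither what the computation yields nor what the proposition asserts.
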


\begin{proof} 
Equation \eqref{eq:Zt_expectation} follows from Lemma~\ref{lem:mu_t_density} and the fact that $Z_t$ is a martingale under $\P^x$. In order to show \eqref{eq:Yt_weak} and \eqref{eq:Yt}, it suffices by Lemma~\ref{lem:mu_t_density} to consider the case without variable drift. We first suppose that $t\ge a^2$. By \eqref{eq:density_bbm} and \eqref{eq:p_estimate}, we get
\begin{align*}
 \E^x[Y_t] &\le e^{\mu (x-a)}\int_0^a e^{\frac{\pi^2}{2a^2}t} p^a_t(x,y)\,\dd y
\le C e^{\mu (x-a)} \sin(\pi x/a)\int_0^a \frac 1 a \sin(\pi y/a)\,\dd y\\
&\le C e^{\mu (x-a)} \sin(\pi x/a).
\end{align*}
Summing over $x$ yields \eqref{eq:Yt} as well as \eqref{eq:Yt_weak} in the case $t\ge a^2$. Now, if $t< a^2$, by Lemma~\ref{lem:many_to_one_simple} and Girsanov's theorem, we have
\[
 \E^x[Y_t] = e^{\beta mt} W^x_{-\mu}\Big[e^{\mu (X_t-a)},\ H > t\Big] = e^{\pi^2t/(2a^2)} W^x(H > t) e^{\mu (x-a)} \le Ce^{\mu(x-a)}.
\]
Summing over $x$ yields \eqref{eq:Yt_weak}.

In order to prove \eqref{eq:Zt_variance}, we have by Lemma~\ref{lem:many_to_two},
\begin{equation*}
 \E^x_f[Z_t^2] = \E_f^x\Big[\sum_{u\in \mathscr A_{0,a}(t)}w_Z(X_u(t))^2\Big] + \beta m_2\int_0^a \int_0^t \p_s^f(x,y) (\E_f^{(y,s)}[Z_t])^2\,\dd s\,\dd y.
\end{equation*}
By Lemma~\ref{lem:mu_t_density}, \eqref{eq:Zt_expectation} and the fact that $f(t)-f(s)\ge -1$ for all $s<t$, this yields
\begin{equation}
 \label{eq:050}
 \E_f^x[Z_t^2] \le C e^{- f(t/a^2)+O(\|f\|/a)} \left(\E^x\Big[\sum_{u\in \mathscr A_{0,a}(t)}w_Z(X_u(t))^2\Big] + \int_0^a \int_0^t \p_s(x,y) w_Z(y)^2\,\dd s\,\dd y\right).
\end{equation}
Now we have
\(
 w_Z(x)^2 = (a\sin(\pi x/a) e^{-\mu(a-x)})^2 \le \pi^2 (a-x)^2e^{-2\mu(a-x)} \le C w_Y(x)
\)  for $x\in (0,a)$,
because $\mu \ge 1/2$ by hypothesis and so $(a-x)^2e^{-\mu(a-x)} \le C$ for $x\in(0,a)$. This yields
\begin{equation}
 \label{eq:S1}
 S_1 := \E^x\Big[\sum_{u\in \mathscr A_{0,a}(t)}w_Z(X_u(t))^2\Big] \le C \E^x[Y_t] \le C w_Y(x),
\end{equation}
by \eqref{eq:Yt_weak}. Now, by \eqref{eq:density_bbm} and \eqref{eq:Ja}, we have
\[
 S_2 := \int_0^a \int_0^t \p_s(x,y) w_Z(y)^2\,\dd s\,\dd y = ae^{\mu (x-a)}\int_0^aae^{\mu (y-a)}\sin^2(\pi y/a) J^a(x,y,[0,t])\,\dd y.
\]
Lemma~\ref{lem:I_estimates} and \eqref{eq:IJ_scaling} now give after the change of variables $y\mapsto a-y$, 
\begin{equation}
\label{eq:S2}
\begin{split}
 S_2 &\le C ae^{\mu (x-a)}\int_0^a e^{-\mu y}\sin^2(\pi y/a)\Big(t\sin(\pi x/a)\sin(\pi y/a) + ay\Big)\,\dd y\\
&\le C ae^{\mu (x-a)} \Big(\sin(\pi x/a)\frac t{a^3} + \frac 1 a\Big)\int_0^\infty e^{-\mu y} y^3\,\dd y,
\end{split}
\end{equation}
the last line following from the inequality $\sin x\le x$ for $x\ge0$. Using again the fact that $\mu \ge 1/2$, Equations \eqref{eq:050}, \eqref{eq:S1} and \eqref{eq:S2} now imply
\begin{equation}
 \label{eq:Zt_2ndmoment}
\E^x_f[Z_t^2] \le C e^{- f(t/a^2)+O(\|f\|/a)} \Big(\frac{t}{a^3} w_Z(x) + w_Y(x) \Big).
\end{equation}
Writing now the positions of the initial particles as $x_1,\ldots,x_n$, then by independence,
\begin{equation}
\label{eq:41}
 \Var_f(Z_t) = \sum_i \Var^{x_i}_f(Z_t) \le \sum_i \E^{x_i}_f[Z_t^2].
\end{equation}
Equations \eqref{eq:Zt_2ndmoment} and \eqref{eq:41} now prove \eqref{eq:Zt_variance}. Equation \eqref{eq:Yt_variance} is proven similarly.
\end{proof}

\subsection{The number of particles}
\label{sec:interval_number}
In this subsection, we establish precise first and second moment estimates for the number of particles which have not been absorbed until time $t$.  For $r\in[0,a]$ and $t\ge0$, we denote by $N_t(r)$ the number of not absorbed particles (i.e.\ individuals $u\in\mathscr A_{0,a}(t)$) located in the interval $[r,a]$ at time $t$. Recall the definition $E_t = \pi^2 \sum_{n=2}^{\infty} n^2e^{-\pi^2(n^2-1) t/2}$ from \eqref{eq:def_E}. The first result is useful when $t\gg a^2$:
\begin{lemma}
 \label{lem:N_expec_large_t}
 Suppose $\mu \ge 1/2$. Then for every $x,r\in(0,a)$ and $t > 0$, we have
\begin{equation}
 \label{eq:N_expec_large_t}
\E^x[N_t(r)] = \frac{2\pi (1+\mu r)e^{\mu (a-r)}}{ a^3} w_Z(x) \Big(1+\mathrm{err}\Big),
\end{equation}
with $|\mathrm{err}|\le E_{t/a^2}+O\Big((1+E_{t/a^2})\big(\tfrac{1+r}{a}\big)^2\Big)$.
\end{lemma}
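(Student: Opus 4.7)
The proof is a direct moment computation using the many-to-one formula and the explicit transition density of Brownian motion killed outside $[0,a]$. By Lemma~\ref{lem:many_to_one_simple} and \eqref{eq:density_bbm},
\[
\E^x[N_t(r)] = \int_r^a \p_t(x,y)\,\dd y = e^{\mu x}\int_r^a e^{-\mu y}\, e^{\pi^2 t/(2a^2)} p_t^a(x,y)\,\dd y.
\]
I then invoke the sharp estimate \eqref{eq:p_estimate}, which replaces $e^{\pi^2 t/(2a^2)}p_t^a(x,y)$ by $\tfrac{2}{a}\sin(\pi x/a)\sin(\pi y/a)$ with a multiplicative error bounded by $E_{t/a^2}$. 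This reduces the claim to computing the elementary integral
\[
I(r) := \int_r^a e^{-\mu y}\sin(\pi y/a)\,\dd y,
\]
and asserting that
\[
\E^x[N_t(r)] = \frac{2e^{\mu x}\sin(\pi x/a)}{a}\,I(r)\,(1+\mathrm{err}_1),\qquad |\mathrm{err}_1|\le E_{t/a^2}.
\]

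\textbf{Computing $I(r)$.} The crucial algebraic fact is $\mu^2 + \pi^2/a^2 = 1$ (see \eqref{eq:mu}), which makes
\[
\int e^{-\mu y}\sin(\pi y/a)\,\dd y = -e^{-\mu y}\left(\mu \sin(\pi y/a)+\tfrac{\pi}{a}\cos(\pi y/a)\right).
\]
Evaluating from $r$ to $a$ yields
\[
I(r) = e^{-\mu r}\!\left(\mu\sin(\pi r/a)+\tfrac{\pi}{a}\cos(\pi r/a)\right) + \tfrac{\pi}{a}e^{-\mu a}.
\]
A Taylor expansion of $\sin$ and $\cos$ gives
\[
\Bigl|\mu\sin(\pi r/a)+\tfrac{\pi}{a}\cos(\pi r/a) - \tfrac{\pi}{a}(1+\mu r)\Bigr| \le C\,\tfrac{\pi}{a}(1+\mu r)\bigl((1+r)/a\bigr)^2,
\]
using $|\sin\alpha-\alpha|\le\alpha^3/6$ and $|\cos\alpha-1|\le\alpha^2/2$ with $\alpha=\pi r/a$, and then rearranging with the factor $1+\mu r$.

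\textbf{Handling the boundary term $\tfrac{\pi}{a}e^{-\mu a}$.} This is the only slightly delicate point, since it does not carry an obvious $e^{-\mu r}$ factor. I split into cases: for $r\le a/2$ one has $e^{-\mu(a-r)}\le e^{-\mu a/2}$, which beats any polynomial in $1/a$ and is dominated by the claimed error; for $r>a/2$ one has $1+\mu r \ge \mu a/2$ and $((1+r)/a)^2 \ge 1/4$, so $\tfrac{\pi}{a}e^{-\mu a}/(1+\mu r) \le C/a^{2}$ is absorbed into $C\tfrac{\pi}{a}e^{-\mu r}(1+\mu r)((1+r)/a)^2$. Combining,
\[
I(r) = \tfrac{\pi}{a}(1+\mu r)e^{-\mu r}(1+\mathrm{err}_2),\qquad |\mathrm{err}_2|\le C\bigl((1+r)/a\bigr)^2.
\]

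\textbf{Conclusion.} Substituting back and recognising that
\(
\tfrac{2\pi (1+\mu r)e^{\mu(a-r)}}{a^3}w_Z(x) = \tfrac{2\pi(1+\mu r)}{a^2}e^{\mu x}e^{-\mu r}\sin(\pi x/a),
\)
I obtain
\[
\E^x[N_t(r)] = \frac{2\pi(1+\mu r)e^{\mu(a-r)}}{a^3}w_Z(x)\,(1+\mathrm{err}_1)(1+\mathrm{err}_2),
\]
and $(1+\mathrm{err}_1)(1+\mathrm{err}_2)=1+\mathrm{err}$ with $|\mathrm{err}|\le E_{t/a^2} + O\bigl((1+E_{t/a^2})((1+r)/a)^2\bigr)$. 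No step presents a genuine obstacle; this is essentially bookkeeping of the error terms, with the only subtlety being the case split used to absorb the boundary contribution $\tfrac{\pi}{a}e^{-\mu a}$ into the stated error.
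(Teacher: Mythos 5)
Your proof is correct and follows essentially the same route as the paper: both reduce the claim via \eqref{eq:density_bbm} and \eqref{eq:p_estimate} to the integral $\int_r^a e^{-\mu y}\sin(\pi y/a)\,\dd y$ and then extract the factor $\tfrac{\pi}{a}(1+\mu r)e^{-\mu r}$ with relative error $O(((1+r)/a)^2)$. The only (cosmetic) difference is that you evaluate the integral exactly via the antiderivative coming from $\mu^2+\pi^2/a^2=1$ and Taylor-expand the result, whereas the paper Taylor-expands the integrand $\sin(\pi y/a)$ first and integrates term by term, invoking \eqref{eq:c0_mu}.
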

\begin{proof}
By \eqref{eq:density_bbm} and \eqref{eq:p_estimate}, we have for every $x,r\in(0,a)$ and $t>0$,
\begin{equation*}
\E^x[N_t(r)] = (1+\mathrm{err}') w_Z(x) \frac {2 e^{\mu a}} {a^2} \int_r^a e^{-\mu y} \sin(\pi y/a)\,\dd y,
\end{equation*}
with $|\mathrm{err}'|\le E_{t/a^2}$. Now expand $\sin(\pi y/a)$ around $y=0$ and use $\int_r^\infty xe^{-\mu x}\,dx = \mu^{-2}(1+\mu r)e^{-\mu r}$ and $\int_r^\infty x^2e^{-\mu x} = O(\mu^{-3} (1+(\mu r)^2)e^{-\mu r})$. Together with  \eqref{eq:c0_mu} and the hypothesis on $\mu$, this yields \eqref{eq:N_expec_large_t}.
\end{proof}

The following proposition has a different formula for $\E^x[N_t(r)]$ which is useful for every $t\ge 0$. We do not use it directly 
in this article, but the two corollaries that follow it are invoked very often.
\begin{proposition}
\label{prop:N_expec}
Suppose $\mu \ge 1/2$. Let $t\ge0$, $x,r\in [0,a]$ and suppose that $x\ge (r+a/{20})\Ind_{(t\le a^2)}$. Then
\begin{equation*}
 \E^x[N_{t}(r)] = (1+\mu r)e^{\mu (x-r)}\left[-\frac{2e^{\pi^2 t/(2a^2)}}{a^2} \theta'\Big(\frac x a,\frac t {a^2}\Big) + O\left((1+r^2)\frac{\sin(\pi x/a) \vee \Ind_{(t\le a^2)}}{a^4}\right)\right].
\end{equation*}
\end{proposition}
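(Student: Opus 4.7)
The strategy is to reduce the expectation to a one-dimensional integral via Lemma~\ref{lem:many_to_one_simple} and Girsanov, and then extract its leading behavior from a Taylor expansion of the killed kernel $p_t^a(x,y)$ in $y$ around $y=0$. As in the proof of Lemma~\ref{lem:N_expec_large_t}, one first obtains
\[
 \E^x[N_t(r)] = e^{\pi^2 t/(2a^2) + \mu x}\int_r^a e^{-\mu y}\, p_t^a(x,y)\,\dd y.
\]
The representation \eqref{eq:p_theta} shows that $y\mapsto p_t^a(x,y)$ extends to a smooth \emph{odd} function of $y\in\R$ (for fixed $x,t$), so Taylor's theorem gives
\[
 p_t^a(x,y) = -\frac{2y}{a^2}\,\theta'(x/a,t/a^2) + \frac{y^3}{6}\,\partial_y^3 p_t^a(x,\xi_y),\qquad \xi_y\in(0,y),
\]
with $\partial_y^3 p_t^a(x,y)=-a^{-4}\bigl[\theta'''((x-y)/a,t/a^2)+\theta'''((x+y)/a,t/a^2)\bigr]$ from \eqref{eq:p_theta}.

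For the linear part, the integral is explicit: $\int_r^a y e^{-\mu y}\,\dd y=\mu^{-2}(1+\mu r)e^{-\mu r}-\mu^{-2}(1+\mu a)e^{-\mu a}$. Since $\mu\ge 1/2$ one has $\mu^{-2}=1+O(1/a^2)$ and the boundary contribution is bounded by $Ca\,e^{-a/2}$. Multiplying by $e^{\pi^2 t/(2a^2)+\mu x}$ and factoring out $(1+\mu r)e^{\mu(x-r)}$ produces exactly the announced main term, the $O(1/a^2)$ and $O(ae^{-\mu a})$ corrections being absorbed into the announced error once one bounds $|\theta'(x/a,t/a^2)|\le C\sin(\pi x/a)$ for $t\ge a^2$ (via \eqref{eq:theta} and $|\sin(n\pi u)|\le n\sin(\pi u)$ for $u\in[0,1]$), and $|\theta'(x/a,t/a^2)|\le C$ when $t\le a^2$ and $x\ge a/20$ (via \eqref{eq:theta_gaussian}).

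For the cubic remainder, since $\int_r^a y^3 e^{-\mu y}\,\dd y\le C(1+r^3)e^{-\mu r}$ and $(1+r^3)/(1+\mu r)=O(1+r^2)$, it suffices to show the pointwise estimate
\[
 \bigl|\partial_y^3 p_t^a(x,\xi)\bigr|\le \frac{C}{a^4}\bigl(\sin(\pi x/a)\vee \Ind_{(t\le a^2)}\bigr),\qquad \xi\in(0,a).
\]
When $t\ge a^2$ this follows from differentiating the Fourier series \eqref{eq:p_sin} three times in $y$ and again applying $|\sin(n\pi x/a)|\le n\sin(\pi x/a)$; the resulting factor $\sum_n n^4 e^{-\pi^2 n^2 t/(2a^2)}$ is bounded uniformly in $t\ge a^2$. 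When $t\le a^2$ one uses the Gaussian representation \eqref{eq:theta_gaussian}, splitting the $y$-integral at $r+a/100$: on $[r,r+a/100]$ the hypothesis $x\ge r+a/20$ keeps the arguments $(x\pm\xi_y)/a$ of $\theta'''$ at distance at least $1/50$ from every $2n\in\Z$, so each term in \eqref{eq:theta_gaussian} and its derivatives are $O(1)$ uniformly in $t/a^2\in[0,1]$; on $[r+a/100,a]$, the factor $e^{-\mu y}\le e^{-a/200}e^{-\mu r}$ absorbs any polynomial blow-up of $\theta'''$.

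The main obstacle is precisely this last control when $s=t/a^2\downarrow 0$: the Gaussian image charges make $\theta'''(u,s)$ blow up polynomially in $s^{-1}$ whenever $u$ approaches an even integer, and the hypothesis $x\ge r+a/20$ is calibrated exactly to avoid this regime in the range where the integrand is not already exponentially small. Checking that the two-piece split survives the bookkeeping (in particular that the error factor $(1+r^2)$ rather than $(1+r^3)$ is genuinely recovered after dividing by the prefactor $(1+\mu r)e^{\mu(x-r)}$) is the technical heart of the proof; everything else reduces to the estimates on $\theta$ already proved in the appendix.
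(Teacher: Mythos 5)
Your reduction via Lemma~\ref{lem:many_to_one_simple} and Girsanov, the Taylor expansion of $p_t^a(x,\cdot)$ at $y=0$, the treatment of the linear term, and the Fourier-series argument for $t\ge a^2$ all match the paper's proof. The gap is in the small-$t$ regime, on the far piece $[r+a/100,a]$ of the remainder integral. There you keep the Lagrange remainder $\tfrac{y^3}{6}\partial_y^3p_t^a(x,\xi_y)$ and assert that $e^{-\mu y}\le e^{-a/200}e^{-\mu r}$ ``absorbs any polynomial blow-up of $\theta'''$''. But, as you yourself observe, the blow-up is polynomial in $s^{-1}=a^2/t$, not in $a$: from \eqref{eq:theta_gaussian} one has $\sup_u|\theta'''(u,s)|\asymp s^{-2}$, so $\sup_{\xi}|\partial_y^3p_t^a(x,\xi)|$ can be of order $t^{-2}$ (the unknown intermediate point $\xi_y\in(0,y)$ may sit at the singularity, e.g.\ at $\xi_y=x$, once $y>x$, which happens on the far piece). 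For fixed $a$ and $t\downarrow 0$ this is unbounded, and no factor $e^{-ca}$ controls it; the required bound on the far piece fails once $t$ is exponentially small in $a$, while the proposition is stated (and used, via Corollaries~\ref{cor:N_expec_upper_bound} and \ref{cor:N_expec_thbar}) for all $t\ge 0$.

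The fix is the one in the paper (Eq.~\eqref{eq:262}): do \emph{not} Taylor-expand on the far piece. Restrict the expansion to $[r,x']$ with $x'=x-a/40$ (so that $(x\pm\xi)/a$ stays at distance $\ge 1/40$ from $2\Z$ there), and on $[x',a]$ bound the \emph{original} integrand by
$\int_{x'}^a e^{-\mu y}p_t^a(x,y)\,\dd y\le e^{-\mu x'}\int_0^a p_t^a(x,y)\,\dd y\le e^{-\mu r-\mu a/40}$,
using only that $p_t^a(x,\cdot)$ is a sub-probability density --- a bound uniform in $t$. (Correspondingly, the linear term's integral $\int_r^{x'}ye^{-\mu y}\,\dd y$ differs from $\mu^{-2}(1+\mu r)e^{-\mu r}$ by an exponentially small tail, which is harmless.) A second, minor point: for $t\ge a^2$ your displayed pointwise bound $|\partial_y^3p_t^a(x,\xi)|\le Ca^{-4}\sin(\pi x/a)$ does not by itself suffice, since the remainder is still multiplied by the prefactor $e^{\pi^2 t/(2a^2)}$, which is unbounded in $t$; you need the version carrying the extra decay $e^{-\pi^2 t/(2a^2)}$, which the computation you sketch from \eqref{eq:p_sin} does deliver (the relevant factor is $\sum_n n^4e^{-\pi^2(n^2-1)t/(2a^2)}$, bounded for $t\ge a^2$).
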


\begin{proof}
 Let $t\ge0$ and $x,r\in[0,1]$. By \eqref{eq:density_bbm} and Brownian scaling, we have
\begin{equation}
 \label{eq:250}
 \E^{ax}[N_{ta^2}(ar)] = e^{\mu ax + \pi^2 t/2} \int_{r}^1 e^{-\mu a z} p_t^1(x,z)\,\dd z.
\end{equation}
Because of the exponential in the integral, the value of the integral is determined by the value of the integrand near the origin. This suggests an expansion of $p_t^1(x,z)$ around $z=0$. By \eqref{eq:p_theta}, we have $(\partial/\partial z) p_t^1(x,z)|_{z=0} = -2\theta'(x,t)$ and $(\partial/\partial z)^2 p_t^1(x,z)|_{z=0} = 0$. Taylor's formula then guarantees the existence of $\xi\in [0,z]$, such that
\begin{equation}
 \label{eq:252}
 p_t^1(x,z) = - 2z \theta'(x,t) + \frac {z^3} {6!} \frac{\partial^3}{\partial z^3} p_t^1(x,z)\big|_{z=\xi}.
\end{equation}
Now, if $t\ge 1$, we have by \eqref{eq:p_sin}, for every $z\in[0,1]$ and $\xi\in[0,z]$,
\begin{equation*}
 \Big|\frac{\partial^3}{\partial z^3} p_t^1(x,z)\big|_{z=\xi}\Big| = \pi^3\Big|\sum_{n=1}^\infty e^{-\pi^2 n^2 t/2} n^3 \sin(\pi n x)\cos(\pi n \xi)\Big| \le C \sin(\pi x) e^{-\pi^2 t/2},
\end{equation*}
by the inequality $|\sin nx| \le n \sin x$, $x\in[0,\pi]$. Similarly, by \eqref{eq:theta}, $|\theta'(x,t)| \le C\sin(\pi x) e^{-\pi^2 t/2}$ for all $x\in[0,1]$ and $t\ge 1$. With \eqref{eq:252}, this yields for $t\ge 1$,
\begin{equation}
 \label{eq:256}
 \int_{r}^1 e^{-\mu a z} p_t^1(x,z)\,\dd z = - \frac{2\theta'(x,t)}{\mu^2 a^2} (1+\mu ar) e^{-\mu ar} + O\Big((1+(ar)^3) e^{-\mu ar}\frac{\sin(\pi x)}{a^4} e^{-\pi^2 t/2}\Big).
\end{equation}
Now suppose that $t\le1$ and $x\ge r+1/{20}$. By \eqref{eq:p_theta} and \eqref{eq:theta_Cinfty}, we have for every $z\in[0,x-1/40]$, 
\begin{equation*}
 \Big|\frac{\partial^3}{\partial z^3} p_t^1(x,z)\Big| = |\theta'''(x+z,t)+\theta'''(x-z,t)| \le C
\end{equation*}
Furthermore, $|\theta'(z,t)| \le C$ for all $z\in[1/20,1]$ by \eqref{eq:theta_Cinfty}. With \eqref{eq:252},  this yields for $t\ge 1$, with $x' = x-1/40 \ge r+1/40$,
\begin{equation}
\label{eq:260}
 \int_{r}^{x'} e^{-\mu a z} p_t^1(x,z)\,\dd z = - \frac{2\theta'(x,t)}{\mu^2 a^2} (1+\mu ar)e^{-\mu ar} + O\Big(\frac{(1+(ar)^3)e^{-\mu ar}}{a^4}\Big).
\end{equation}
Furthermore, since $\int_0^1 p_t^1(x,z)\,\dd z \le 1$ for all $x\in[0,1]$ and $t\ge 0$, we have
\begin{equation}
 \label{eq:262}
\int_{x'}^1 e^{-\mu a z} p_t^1(x,z)\,\dd z \le e^{-\mu ax'} \int_{x'}^1 p_t^1(x,z)\,\dd z \le e^{-\mu ax'} \le e^{-\mu ar - \mu a/40}.
\end{equation}
Equations \eqref{eq:250}, \eqref{eq:256}, \eqref{eq:260}, \eqref{eq:262} and the hypothesis on $\mu$ now yield the proposition.
\end{proof}

\begin{corollary}
  \label{cor:N_expec_upper_bound}Suppose $\mu \ge 1/2$. Let $t\ge0$, $x,r\in
  [0,a]$ and suppose that $x\ge (r+a/{20})\Ind_{(t\le a^2)}$. Then,
  \begin{equation*}
\E^x[N_{t}(r)] \le C (1+r)\frac{e^{\mu (a-r)}}{a^3}\Big(w_Z(x)+\frac{1+r^2} a w_Y(x)\Ind_{(t\le a^2)}\Big).
  \end{equation*}
Furthermore, we have for all $x,r\in[0,a]$ and $t\ge 0$, $\E^x[N_{t}(r)] \le C e^{\mu(x-r)}.$
\end{corollary}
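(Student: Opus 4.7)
The plan is to derive both inequalities from results already at hand. The second (universal) bound is the easier one: it will follow by comparing $N_t(r)$ to $Y_t$. Indeed, every particle $u\in\mathscr A_{0,a}(t)$ with $X_u(t)\ge r$ satisfies $e^{\mu X_u(t)}\ge e^{\mu r}$, so
\[
e^{\mu r}N_t(r) \;\le\; \sum_{u\in\mathscr A_{0,a}(t)} e^{\mu X_u(t)} \;=\; e^{\mu a}\,Y_t.
\]
Taking $\E^x$ and invoking \eqref{eq:Yt_weak} with $Y_0=w_Y(x)=e^{\mu(x-a)}$ gives $\E^x[N_t(r)]\le C e^{\mu a-\mu r}\,w_Y(x)=Ce^{\mu(x-r)}$, which is the second claim.

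For the main inequality I will plug into Proposition~\ref{prop:N_expec} and estimate each of its two pieces separately. The error piece is routine: multiplying $(1+\mu r)e^{\mu(x-r)}$ by the $\sin(\pi x/a)/a^4$ part and using $e^{\mu x}\sin(\pi x/a)=w_Z(x)e^{\mu a}/a$ gives a contribution $C(1+r)(1+r^2)w_Z(x)e^{\mu(a-r)}/a^5$, which is absorbed in the $w_Z$-term of the target since $(1+r^2)/a^2\le C$ when $r\le a$; multiplying instead by the $\Ind_{(t\le a^2)}/a^4$ part gives, via $e^{\mu x}=w_Y(x)e^{\mu a}$, a contribution $C(1+r)(1+r^2)w_Y(x)e^{\mu(a-r)}/a^4$, which is precisely the $w_Y$-term of the target.

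The key step is therefore to bound the ``main'' contribution $M:=-2e^{\pi^2 t/(2a^2)}\theta'(x/a,t/a^2)/a^2$. The goal is the uniform estimate
\[
\bigl|e^{\pi^2 s/2}\theta'(u,s)\bigr|\;\le\;C\sin(\pi u)\qquad\text{for every }u\in[1/20,1],\ s>0,
\]
which, via $u=x/a$ and $s=t/a^2$ (note $x\ge r+a/20$ forces $u\ge1/20$), will give $|M|\le C\sin(\pi x/a)/a^2$. Then $(1+\mu r)e^{\mu(x-r)}|M|\le C(1+r)w_Z(x)e^{\mu(a-r)}/a^3$ via the identity $e^{\mu x}\sin(\pi x/a)=w_Z(x)e^{\mu a}/a$, matching the $w_Z$-term. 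For $s\ge 1$ the bound is immediate from the trigonometric series \eqref{eq:theta}: extracting the $n=1$ term yields $|\theta'(u,s)|\le \pi e^{-\pi^2 s/2}\sin(\pi u)(1+CE_s)$.

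The real obstacle is the short-time regime $s\le 1$, where the series decay is absent and the Gaussian representation \eqref{eq:theta_gaussian} must be used. For $u\in[1/20,19/20]$, every lattice term $(u-2n)g_s(u-2n)/s$ is exponentially small in $1/s$ since $|u-2n|\ge 1/20$, so $|\theta'(u,s)|\le Cs^{-3/2}e^{-c/s}$ is bounded, and since $\sin(\pi u)$ is bounded away from zero there, the estimate is trivial. The delicate case is $u\in[19/20,1]$: here $\sin(\pi u)$ can be arbitrarily small, so one must exploit the cancellation at $u=1$. Since $\theta'(1,s)=0$ by $2$-periodicity and evenness of $\theta$, a Taylor expansion gives $|\theta'(u,s)|\le (1-u)\sup_{\xi\in[19/20,1]}|\theta''(\xi,s)|$, and from \eqref{eq:theta_gaussian} one checks that $\theta''(\xi,s)$ remains bounded on $[19/20,1]\times(0,1]$ (the would-be singular $s^{-2}$ factor is killed by $g_s(1)\sim e^{-1/(2s)}$). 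Combined with $\sin(\pi u)\ge 2(1-u)$ on $[19/20,1]$, this yields $|\theta'(u,s)|\le C\sin(\pi u)$, completing the uniform bound and hence the corollary.
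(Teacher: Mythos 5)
Your proof is correct and essentially coincides with the paper's: the second bound comes from $N_t(r)\le e^{\mu(a-r)}Y_t$ together with \eqref{eq:Yt_weak}, and the first from Proposition~\ref{prop:N_expec} plus the estimate $|\theta'(u,s)|\le Ce^{-\pi^2 s/2}\sin(\pi u)$, which for small $s$ the paper likewise derives from $\theta'(1,\cdot)=0$ and the boundedness of $\theta''$ on $[1/20,1]\times[0,1]$ (using the mean value theorem on the whole interval rather than your two-case split, and quoting Lemma~\ref{lem:N_expec_large_t} for $t\ge a^2$ instead of re-deriving it from the series). Note only that the hypothesis forces $x/a\ge 1/20$ just when $t\le a^2$, which is harmless here since your series bound for $s\ge 1$ is valid for all $u\in[0,1]$.
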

\begin{proof}
The second statement follows from the inequality $N_t(r) \le e^{\mu (a-r)} Y_t$ together with \eqref{eq:Yt_weak}. As for the first statement, it is enough by Lemma~\ref{lem:N_expec_large_t} to show it for $t\le a^2$. We then note that by \eqref{eq:theta_Cinfty}, we have $\theta''(x,t) \le C$ for $(x,t)\in [1/{20},1]\times[0,1]$. The mean value theorem and the fact that $\theta'(1,t)=0$ then give $\theta'(x,t) \le C(1-x)$. Furthermore, $1-x \le C\sin(\pi x)$ for $x\ge 1/20$. This, together with Proposition~\ref{prop:N_expec}, yields the statement.
\end{proof}

\begin{corollary}
 \label{cor:N_expec_thbar}
Suppose $\mu\ge 1/2$. Let $t\ge 0$ and $r\in[0,a]$. For every $x\in[r+a/20,a]$, we have,
\begin{equation*}
\E^x[N_{t}(r)] = \frac{2\pi(1+\mu r)e^{\mu (a-r)}}{ a^3}\Big[w_Z(x)\thbar\left(\frac t {a^2}\right)+O\left(\frac{1+r^2 + (a-x)^2}{a}w_Y(x)\right)\Big].
\end{equation*}
\end{corollary}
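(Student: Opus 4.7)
The strategy is to start from the formula in Proposition~\ref{prop:N_expec} and Taylor-expand $\theta'(\xi,s)$ around $\xi=1$, where $\xi=x/a$ and $s=t/a^2$, then identify the leading term with $\pi\sin(\pi\xi)\thbar(s)$ via the defining relation $\thbar(s) = \pi^{-2}e^{\pi^2 s/2}\theta''(1,s)$. Two facts about $\theta$ at $\xi=1$ are crucial: (i) $\theta'(1,s)=0$ by the symmetry of $\theta(\cdot,s)$ around $\xi=1$ (equivalently, from \eqref{eq:theta}, each term $\cos(\pi n\xi)$ has vanishing derivative there), and (ii) $\theta'''(1,s)=0$, since the cosine series gives $\theta'''(\xi,s) = \sum_{n\ge 1}e^{-\pi^2 n^2 s/2}(\pi n)^3\sin(\pi n\xi)$ and $\sin(\pi n)=0$. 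Setting $u=1-\xi$, Taylor's theorem with integral remainder then yields
\[
-\theta'(\xi,s) = u\,\theta''(1,s) + \int_0^u \frac{v^2}{2}\,\theta^{(4)}(1-v'(v),s)\,\dd v,
\]
so that $-e^{\pi^2 s/2}\theta'(\xi,s) = u\pi^2\thbar(s) + R_1(u,s)$ with $|R_1(u,s)| \le \tfrac{u^3}{6} e^{\pi^2 s/2}\sup_{\zeta\in[1-u,1]}|\theta^{(4)}(\zeta,s)|$.

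The next step, which I expect to be the main obstacle, is to establish the uniform bound
\[
\sup_{s>0,\ \zeta\in[1/20,\,1]}\ e^{\pi^2 s/2}\,|\theta^{(4)}(\zeta,s)| \le C.
\]
This requires treating large and small $s$ separately. For $s\ge 1$ the cosine series \eqref{eq:theta} gives $|\theta^{(4)}(\zeta,s)| \le C e^{-\pi^2 s/2}$ immediately. For $s\in(0,1]$ one invokes the Gaussian series \eqref{eq:theta_gaussian}: the hypothesis $x\ge r+a/20$ translates into $\zeta\ge 1/20$, so $|\zeta-2n|\ge 1/20$ for every $n\in\Z$; each Gaussian contribution to $\theta^{(4)}$ is a Hermite polynomial times a Gaussian, bounded by $Cs^{-5/2}e^{-c/s}$, which is uniformly bounded and in fact vanishes as $s\to 0$. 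Combined, $R_1(u,s) = O(u^3)$ uniformly. Expanding $\sin(\pi\xi) = \sin(\pi u) = \pi u + O(u^3)$ and using $\thbar(s)\le 1$, one gets $\pi\sin(\pi\xi)\thbar(s) = u\pi^2\thbar(s) + O(u^3)$, so subtraction produces the crucial identity
\[
-e^{\pi^2 s/2}\theta'(\xi,s) = \pi\sin(\pi\xi)\thbar(s) + O\big((1-\xi)^3\big).
\]

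Finally, I would substitute this into Proposition~\ref{prop:N_expec}. The principal term $(1+\mu r)e^{\mu(x-r)}\cdot 2\pi\sin(\pi\xi)\thbar(s)/a^2$ equals $\frac{2\pi(1+\mu r)e^{\mu(a-r)}}{a^3}\,w_Z(x)\thbar(t/a^2)$ upon recognizing $a\sin(\pi x/a)e^{\mu(x-a)}=w_Z(x)$. The new error contribution is $(1+\mu r)e^{\mu(x-r)}\cdot O((a-x)^3/a^5)$; since $(a-x)/a\le 1$, it is dominated by $(1+\mu r)e^{\mu(x-r)}\cdot O((a-x)^2/a^4)$, while the preexisting error of Proposition~\ref{prop:N_expec} contributes $(1+\mu r)e^{\mu(x-r)}\cdot O((1+r^2)/a^4)$. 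Adding these matches the stated error $\frac{2\pi(1+\mu r)e^{\mu(a-r)}}{a^3}\cdot\frac{1+r^2+(a-x)^2}{a}\,w_Y(x)$, since this equals $\frac{2\pi(1+\mu r)(1+r^2+(a-x)^2)\,e^{\mu(x-r)}}{a^4}$ after collecting the exponentials.
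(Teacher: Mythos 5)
Your proof is correct and takes essentially the same route as the paper: Taylor-expand $-e^{\pi^2 s/2}\theta'(\cdot,s)$ around $1$ using $\theta'(1,s)=0$, identify the linear term with $\pi^2\thbar(s)$ via \eqref{eq:thbar}, compare against $\pi\sin(\pi\xi)$, and substitute into Proposition~\ref{prop:N_expec}. The paper stops one Taylor order earlier, bounding $e^{\pi^2 s/2}\theta'''$ uniformly on $[1/20,1]\times\R_+$ to get an $O((1-\xi)^2)$ remainder, which already matches the stated error; you additionally exploit $\theta'''(1,s)=0$ and control $\theta^{(4)}$ to obtain the sharper but unneeded $O((1-\xi)^3)$.
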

\begin{proof}
Let $x\in[0,1]$. By Taylor's formula and \eqref{eq:thbar}, there exists $\xi\in[x,1]$, such that for all $t\ge 0$,
\begin{equation*}
-e^{\frac{\pi^2}{2}t}\theta'(x,t) = \pi^2(1-x)\thbar(t) - \tfrac 1 2(1-x)^2e^{\frac{\pi^2}{2}t}\theta'''(\xi,t) = \pi\sin(\pi x)\thbar(t) + O((1-x)^2),
\end{equation*}
where the last equality follows from the fact that  $\pi(1-x) = \sin(\pi x) + O((1-x)^2)$ and that  $|e^{\frac{\pi^2}{2}t}\theta'''(x',t)| \le C$ for all $x'\ge 1/20$ and $t\ge0$, this follows from \eqref{eq:theta_Cinfty} for $t\in[0,1]$ and from \eqref{eq:theta} for $t\ge 1$. Together with Proposition~\ref{prop:N_expec}, this finishes the proof.
\end{proof}

We now turn to second moment estimates.

\begin{lemma}
\label{lem:N_2ndmoment}
Suppose $\mu\ge 1/2$ and $r\le 9a/10$. For every $t\ge 0$ and $x\in[0,a]$, we have,
\begin{equation*}
 \E^x_f[N_{t}(r)^2] \le Ce^{O(\|f\|/a)} \left(\frac{e^{\mu a}}{a^3}\right)^2 (1+r^4)e^{-2 \mu r}\Big(w_Z(x)  \frac t {a^3} + w_Y(x)\Big)
\end{equation*}
\end{lemma}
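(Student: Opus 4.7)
The plan is to apply the many-to-two formula of Lemma~\ref{lem:many_to_two} to the additive functional $N_t(r) = \sum_{u\in\mathscr A_{0,a}(t)}\Ind_{(X_u(t)\ge r)}$, and then to estimate the two resulting pieces by means of the first-moment bounds of Corollary~\ref{cor:N_expec_upper_bound}. Since Lemma~\ref{lem:mu_t_density} gives $\p^f_s(x,y) = \p_s(x,y) e^{-f(s/a^2)+O(\|f\|/a)}$ and, similarly, $\E^{(y,s)}_f[N_t(r)] \le e^{O(\|f\|/a)}\E^{(y,s)}[N_t(r)]$ (where we use that $f$ only changes by $\ge -1$ between $s$ and $t$, absorbed into the implicit constant), the factor $e^{O(\|f\|/a)}$ can be pulled out at once, and for the remainder of the argument we may assume $f\equiv 0$.

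First I would dispatch the ``diagonal'' term $\E^x[\sum_{u\in\mathscr A_{0,a}(t)}\Ind_{(X_u(t)\ge r)}^2] = \E^x[N_t(r)]$. Corollary~\ref{cor:N_expec_upper_bound} together with Lemma~\ref{lem:N_expec_large_t} gives in all cases
\[
\E^x[N_t(r)] \le C(1+r)\frac{e^{\mu(a-r)}}{a^3}\Big(w_Z(x)+\tfrac{1+r^2}{a}w_Y(x)\Ind_{(t\le a^2)}\Big),
\]
which is comfortably dominated by the claimed right-hand side (use $w_Y(x) \le 1$ and $e^{\mu a}/a^3$).

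The substance of the proof is the integral term
\[
S := \beta m_2 \int_0^t\int_0^a \p_s(x,y)\,\bigl(\E^{(y,s)}[N_t(r)]\bigr)^2 \, dy\,ds.
\]
I would split this at $s = (t-a^2)_+$. On $\{s\le t-a^2\}$, the remaining time $t-s\ge a^2$ is long, and Lemma~\ref{lem:N_expec_large_t}/Corollary~\ref{cor:N_expec_upper_bound} yields
$\E^{(y,s)}[N_t(r)] \le C(1+r)a^{-3}e^{\mu(a-r)}w_Z(y)$, so that
$(\E^{(y,s)}[N_t(r)])^2 \le C(1+r^2)a^{-6}e^{2\mu(a-r)}w_Z(y)^2$.
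Using $w_Z(y)^2 \le Cw_Y(y)$ (valid because $\mu\ge 1/2$, as in the derivation of \eqref{eq:S1}) and the calculation in \eqref{eq:S2}, which rests on \eqref{eq:density_bbm} together with the estimate of $J^a$ from Lemma~\ref{lem:I_estimates}, gives
\[
\int_0^{(t-a^2)_+}\!\!\int_0^a \p_s(x,y)\,w_Z(y)^2 \,dy\,ds \le C\Big(\tfrac{t}{a^3}w_Z(x)+\tfrac{1}{a}w_Y(x)\Big),
\]
which, after multiplication by $(1+r^2)a^{-6}e^{2\mu(a-r)}$, is of the desired order.

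The main obstacle is the contribution from $s\in((t-a^2)_+,t]$, where $t-s<a^2$ and the first-moment bound on $\E^{(y,s)}[N_t(r)]$ degrades: Corollary~\ref{cor:N_expec_upper_bound} only gives the good bound when $y\ge r+a/20$, and otherwise one must fall back on the crude $\E^{(y,s)}[N_t(r)]\le Ce^{\mu(y-r)}$. I would split the $y$-integral at $r+a/20$. On the good range $y\ge r+a/20$, squaring produces an additional term $(1+r^2)^2 a^{-2}w_Y(y)^2 = (1+r^2)^2 a^{-2}e^{2\mu(y-a)}$; integrating against $\p_s(x,y)$ over $s\in(t-a^2,t)$ and $y\in[r+a/20,a]$ is carried out by using $\p_s(x,y) = e^{\mu(x-y)+\pi^2 s/(2a^2)}p^a_s(x,y)$, passing to the Green-function bound $\int_0^{a^2}p^a_s(x,y)\,ds \le Ca^{-1}(x\wedge y)(a-x\vee y)$ from \eqref{eq:p_potential}, and checking that the resulting $y$-integral is dominated by $C(w_Z(x)/a^3 + w_Y(x))$. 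On the bad range $y<r+a/20$ (which requires $r\le 9a/10$ so that $y$ stays bounded away from $a$), the squared bound $e^{2\mu(y-r)}$ is absorbed into $w_Y(y)\cdot e^{-2\mu r}\cdot e^{2\mu a}$ up to a polynomial factor, and the same Green-function integration argument gives the required estimate. Collecting the two regimes then yields the claimed bound.
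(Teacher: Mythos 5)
Your overall strategy is the same as the paper's: many-to-two (Lemma~\ref{lem:many_to_two}) plus Lemma~\ref{lem:mu_t_density} to strip off $f$, then Corollary~\ref{cor:N_expec_upper_bound} for the inner first moments and Lemma~\ref{lem:I_estimates} for the time integration. You slice the double integral differently — a time split at $s=(t-a^2)_+$ with a further spatial split of the near-terminal piece at $y=r+a/20$ — whereas the paper makes a single spatial split at $z=19a/20$ (valid since $r\le 9a/10$ forces $19a/20\ge r+a/20$), uniformly in $s$, and then integrates $s$ over all of $[0,t]$ in one stroke via $J^a$ and Lemma~\ref{lem:I_estimates}. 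Both slicings lead to the same bound, but two of your justifications do not go through as written.

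First, the diagonal term: the estimate you quote ``in all cases'' is the first part of Corollary~\ref{cor:N_expec_upper_bound}, which requires $x\ge (r+a/20)\Ind_{(t\le a^2)}$ and is simply false for small $t$ and $x$ just below $r$ (take $t=0$, $x=r$: the left side is $1$ while your right side is $O(1/a)$). The fix is to use the crude second part, $\E^x[N_t(r)]\le Ce^{\mu(x-r)}=Cw_Y(x)e^{\mu(a-r)}$, and absorb $e^{\mu(a-r)}$ into $(e^{\mu a}/a^3)^2e^{-2\mu r}$ using $e^{\mu(a-r)}\ge e^{a/20}\ge a^6/C$ (this is exactly \eqref{eq:273}). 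Second, and more seriously, on the window $s\in((t-a^2)_+,t]$ you cannot ``pass to the Green-function bound'' \eqref{eq:p_potential}: the density carries the factor $e^{\pi^2 s/(2a^2)}$, which on that window is of order $e^{\pi^2 t/(2a^2)}$ and is unbounded in $t$, while \eqref{eq:p_potential} has no exponential weight. The inequality you actually need,
\begin{equation*}
\int_{(t-a^2)_+}^{t} e^{\frac{\pi^2}{2a^2}s}\,p^a_s(x,y)\,\dd s \;=\; J^a\big(x,y,[(t-a^2)_+,t]\big) \;\le\; C\,a^{-1}(x\wedge y)(a-x\vee y),
\end{equation*}
is true uniformly in $t$, but it is the content of Lemma~\ref{lem:I_estimates} (together with $\sin(\pi u)\sin(\pi v)\le \pi^2 (u\wedge v)(1-(u\vee v))$, so that the main term $2\lambda(S)\sin\sin$ with $\lambda(S)\le 1$ is itself of Green type), not of \eqref{eq:p_potential}. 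With that substitution your near-terminal estimate closes; the paper avoids the issue altogether by bounding $J^a(x,a-z,[0,t])\le Cz(t\sin(\pi x/a)/a^2+1)$ for the full time range at once.
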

\begin{proof}
As in the proof of \eqref{eq:Zt_variance}, we have by Lemmas~\ref{lem:many_to_two} and \ref{lem:mu_t_density},
\begin{equation}
\label{eq:272}
 \E^x_f[N_{t}(r)^2] \le e^{- f(t/a^2)+O(\|f\|/a)} \Big(\E^x[N_{t}(r)] + \beta m_2 \int_0^a \int_0^t \p_s(x,z) (\E^z[N_{t-s}(r)])^2\,\dd s\,\dd z\Big).
\end{equation}
The first summand is easily handled by the second part of Corollary~\ref{cor:N_expec_upper_bound}:
\begin{equation}
 \label{eq:273}
 \E^x[N_{t}(r)] \le Ce^{\mu(x-r)} = Cw_Y(x) e^{\mu(a-r)} \le Cw_Y(x) \left(\frac{e^{\mu a}}{a^3}\right)^2 e^{-2 \mu r},
\end{equation}
where the last inequality follows from the hypotheses on $r$ and $\mu$. As for the second summand, we split the integral over $z$ into two pieces at $z=19a/20$ and apply the second and first parts of Corollary~\ref{cor:N_expec_upper_bound}, respectively. This yields for every $s\in[0,t]$,
\begin{multline*}
  \int_0^a \p_s(x,z) (\E^z[N_{t-s}(r)])^2\,\dd z \le C \left(\frac{e^{\mu a}}{a^3}\right)^2  e^{-2\mu r} \\
  \times \Big[ a^6 \int_0^{19a/20} \p_s(x,z) w_Y(z)^2\,\dd z + \int_{19a/20}^a \p_s(x,z) (1+r^2)\Big(w_Z(z)^2 + (1+r^2)w_Y(z)^2\Big)\,\dd z\Big]
\end{multline*}
Changing variables by $z\to a-z$ and applying \eqref{eq:density_bbm} and the inequality $\sin x \le x$ for $x\ge 0$, we get,
 \begin{multline}
\label{eq:278}
  \int_0^a \p_s(x,z) (\E^z[N_{t-s}(r)])^2\,\dd z \le C \left(\frac{e^{\mu a}}{a^3}\right)^2 e^{\mu(x-a-2r)+\pi^2s/(2a^2)}\\
\times \Big[a^6  \int_{a/{20}}^a p_s^a(x,a-z) e^{-\mu z} \,\dd z + (1+r^4) \int_0^{a/20} p_s^a(x,a-z) e^{-\mu z}(1+z^2) \,\dd z\Big].
 \end{multline}
Integrating over $s$ from $0$ to $t$ gives
\begin{multline*}
\int_0^a \int_0^t \p_s(x,z) (\E^z[N_{t-s}(r)])^2\,\dd s\,\dd z \le C \left(\frac{e^{\mu a}}{a^3}\right)^2 e^{\mu(x-a-2r)}\\
\times \Big[a^6  \int_{a/{20}}^a J^a(x,a-z,[0,t]) e^{-\mu z} \,\dd z + (1+r^4) \int_0^{a/20} J^a(x,a-z,[0,t]) e^{-\mu z}(1+z^2) \,\dd z\Big],
\end{multline*}
with $J^a$ from \eqref{eq:Ja}. By \eqref{eq:IJ_scaling} and Lemma~\ref{lem:I_estimates}, we have $J^a(x,a-z,[0,t]) \le Cz(t\sin(\pi x/a)/a^2 + 1)$. Together with the hypothesis on $\mu$, this gives,
\begin{equation}
\label{eq:280}
\int_0^a \int_0^t \p_s(x,z) (\E^z[N_{t-s}(r)])^2\,\dd s\,\dd z \le C\left(\frac{e^{\mu a}}{a^3}\right)^2 (1+r^4)e^{-2\mu r} \Big( w_Z(x) \frac t {a^3}+w_Y(x) \Big).
\end{equation}
The lemma then follows from \eqref{eq:272}, \eqref{eq:273} and \eqref{eq:280}, together with the fact that $f(t)\ge -1$ for all $t\ge0$ by definition.
\end{proof}

\subsection{The particles hitting the right border}
\label{sec:right_border}

For a measurable subset $S\subset \R$, define $R_S$ to be the number of particles killed at the right border during the (time) interval $S$, i.e.
\[
 R_S = \#\{(u,t):u\in \mathscr A(t)\tand H_0(X_u)>H_a(X_u) = t\in S\}.
\]
Furthermore, define $R_t = R_{[0,t]}$. The first lemma estimates first and second moments of these variables, in the case $f\equiv 0$.
\begin{lemma}
\label{lem:Rt}
 For any initial configuration $\nu$ and any $0\le s\le t$, we have
\begin{equation}
 \label{eq:Rt_nu_expec}
 |\E[R_{[s,t]}] - \frac{\pi (t-s)}{a^3} Z_0| \le C \Big(Y_0 \wedge E_{s/a^2} (1\wedge (t-s)/a^3) Z_0\Big),
\end{equation}
where $E_s$ is defined in \eqref{eq:def_E}. Furthermore, if $\mu \ge 1/2$ and $0\le t\le a^3$, then for each $x\in (0,a)$,
\begin{equation*}
 \E^x[R_t^2]\le C \Big(\frac{t}{a^3} w_Z(x) + w_Y(x)\Big),
\end{equation*}
\end{lemma}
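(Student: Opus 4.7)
By the independence of the initial particles' subtrees, it suffices to prove the single-particle bound on $\E^x[R_{[s,t]}]$ and then sum. Writing $R_{[s,t]}$ as a sum over the stopping line $\mathscr L_H$ restricted to the space-time event $\{X_u(r)=a,\ r\in[s,t]\}$, Lemma~\ref{lem:many_to_one_simple} gives $\E^x[R_{[s,t]}]=W^x_{-\mu}[e^{H_a/2}\Ind_{H_0>H_a,\ H_a\in[s,t]}]$, using $\beta m=1/2$. A Girsanov change of measure removing the drift $-\mu$ contributes the factor $\exp(-\mu(a-x)-(\mu^2/2)H_a)$, and combining with $e^{H_a/2}$ and the identity $1/2-\mu^2/2=\pi^2/(2a^2)$ yields
\[
\E^x[R_{[s,t]}]=e^{\mu(x-a)}W^x[e^{\pi^2 H_a/(2a^2)}\Ind_{H_0>H_a,\ H_a\in[s,t]}]=w_Y(x)\,I^a(x,[s,t]).
\]
Applying Lemma~\ref{lem:I_estimates} with the scaling \eqref{eq:IJ_scaling} extracts the main term $\pi(t-s)/a^3\cdot w_Z(x)$ (using $w_Y(x)\sin(\pi x/a)=w_Z(x)/a$) with error $w_Y(x)\cdot O(x/a\wedge E_{s/a^2}(1\wedge (t-s)/a^2)\sin(\pi x/a))$. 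Summing over the initial particles, using $\sum_i w_Y(x_i)=Y_0$ and $\sum_i w_Y(x_i)\sin(\pi x_i/a)=Z_0/a$, and absorbing the factor $1/a$ from $(1\wedge(t-s)/a^2)/a\le 1\wedge(t-s)/a^3$ into the two alternatives of the minimum, gives exactly \eqref{eq:Rt_nu_expec}.

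\textbf{Second moment.} Applying Lemma~\ref{lem:many_to_two} to $R_t$ (via the space-time trick, since the stopping condition is $\{X_u(r)=a,\ r\le t\}$), one obtains
\[
\E^x[R_t^2]=\E^x[R_t]+\beta m_2\int_0^t\!\!\int_0^a \p_s(x,y)\,\bigl(\E^y[R_{t-s}]\bigr)^2\,dy\,ds.
\]
The first term is already controlled by Part~1. For the integrand, Part~1 yields $\E^y[R_{t-s}]\le C[(t-s)/a^3\cdot w_Z(y)+w_Y(y)]$, whose square is bounded by $C[w_Z(y)^2+w_Y(y)^2]$ once one uses $t\le a^3$. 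The hypothesis $\mu\ge 1/2$ gives $(a-y)^2 e^{-\mu(a-y)}\le C$, so $w_Z(y)^2\le Cw_Y(y)$, reducing the task to estimating $\int_0^t\!\int_0^a\p_s(x,y)w_Z(y)^2\,dy\,ds$ and $\int_0^t\!\int_0^a\p_s(x,y)w_Y(y)\,dy\,ds$. The first was computed in the proof of \eqref{eq:Zt_variance} and satisfies the desired bound $C(w_Z(x)t/a^3+w_Y(x))$. The second equals $\int_0^t \E^x[Y_s]\,ds$; splitting at $s=a^2$ and invoking \eqref{eq:Yt_weak} for $s\le a^2$ and the sharper \eqref{eq:Yt} for $s\ge a^2$ gives a bound of the same form.

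\textbf{The main obstacle} I anticipate is carrying out the second integral with a sharp enough constant: the naive bound $\E^x[Y_s]\le Cw_Y(x)$ uniformly in $s$ costs a factor $a^2$ after integrating over $s\in[0,a^2]$, which would be disastrous. This is precisely why \eqref{eq:Yt} (valid only for $s\ge a^2$, giving $Cw_Z(x)/a$) must be combined with \eqref{eq:Yt_weak} on $[0,a^2]$, exploiting the eigenvalue expansion \eqref{eq:p_sin} for large $s$ and the Gaussian decay of the drifted Brownian density for small $s$. Once this split is done cleanly, both pieces combine to the required $C(w_Z(x)t/a^3+w_Y(x))$, completing the proof.
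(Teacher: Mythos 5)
Your first-moment argument is correct and is precisely the paper's: many-to-one plus Girsanov give $\E^x[R_S]=e^{\mu(x-a)}I^a(x,S)$, and then Lemma~\ref{lem:I_estimates} with the scaling \eqref{eq:IJ_scaling} yields \eqref{eq:Rt_nu_expec} after summing over the initial configuration (the book-keeping with the two alternatives of the minimum is fine).

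The second-moment argument has a genuine gap, and it is exactly at the step you flag as ``the main obstacle''. After you obtain $(\E^y[R_{t-s}])^2\le C[w_Z(y)^2+w_Y(y)^2]$, you replace $w_Y(y)^2$ by $w_Y(y)$ and reduce to $\int_0^t\!\int_0^a\p_s(x,y)w_Y(y)\,\dd y\,\dd s=\int_0^t\E^x[Y_s]\,\dd s$. But $w_Y(y)^2/w_Y(y)=e^{\mu(y-a)}$ is exponentially small away from $a$, so this throws away a decisive factor. The resulting integral really is of order $a^2\bigl((t/a^3)w_Z(x)+w_Y(x)\bigr)$, and the split at $s=a^2$ does not save it: for $s\le a^2$ the particle essentially has not moved, so $\E^x[Y_s]\asymp w_Y(x)$ and $\int_0^{a^2}\E^x[Y_s]\,\dd s\asymp a^2w_Y(x)$; for $s\ge a^2$, \eqref{eq:Yt} gives $\int_{a^2}^t\le C\,tw_Z(x)/a=Ca^2\cdot(t/a^3)w_Z(x)$. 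Both pieces are off by $a^2$, not just the constant. The correct move is to \emph{not} weaken $w_Y(y)^2$ to $w_Y(y)$: compute $\int_0^t\!\int_0^a\p_s(x,y)w_Y(y)^2\,\dd y\,\dd s$ directly via the Green function $J^a$, exactly parallel to the treatment of $S_2$ in the proof of \eqref{eq:Zt_variance}. After $y\mapsto a-y$ this becomes $w_Y(x)\int_0^a J^a(x,a-y,[0,t])e^{-\mu y}\,\dd y$, and Lemma~\ref{lem:I_estimates} gives $J^a(x,a-y,[0,t])\le C[(t/a)\sin(\pi x/a)\sin(\pi y/a)+y]$, so the $y$-integral is $\le C[(t/a^2)\sin(\pi x/a)+1]$, yielding $C\bigl((t/a^3)w_Z(x)+w_Y(x)\bigr)$ as required. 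This is exactly the paper's route, which defers to \cite[Lemma~5.8]{MaillardThese} but explicitly says the calculation is ``similar to the one in the proof of Proposition~\ref{prop:quantities}'', i.e.\ one keeps the squared weights and integrates against $J^a$ rather than bounding $w_Y^2$ by $w_Y$ and integrating $\E^x[Y_s]$.
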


\begin{proof}
Recall that $H_0$ and $H_a$ denote the hitting time functionals of $0$ and $a$ and $H = H_0\wedge H_a$. Note that $W^x_{-\mu}(H<\infty) = 1$ for all $x\in [0,a]$. By Lemma~\ref{lem:many_to_one_simple}, we then have
\begin{equation}
\label{eq:RS_x_expec}
\E^x[R_S] = \E^x\Big[\sum_{(u,t)\in \mathscr L_H} \Ind_{(X_u(t) = a,\,t\in S)}\Big] = W^x_{-\mu}\Big[e^{\beta mH_a} \Ind_{(H_0 > H_a\in S)}\Big]=e^{\mu(x-a)} I^a(x,S),
\end{equation}
where the last equality follows from Girsanov's transform and \eqref{eq:Ia}.
Equation \eqref{eq:Rt_nu_expec} now follows from Lemma~\ref{lem:I_estimates}. The second-moment estimate is obtained from Lemma~\ref{lem:many_to_two} and \eqref{eq:RS_x_expec} by a calculation similar to the one in the proof of Proposition~\ref{prop:quantities} (see \cite[Lemma~5.8]{MaillardThese} for details).
\end{proof}

The next lemma treats the case of general $f$. 
\begin{lemma}
 \label{lem:R_f}
For every $x\in (0,a)$ and every measurable $S\subset \R$, we have
\[
 \E^x_f[R_S] \le e^{1+ O(\|f\|/a)}\E^x[R_S].
\]
\end{lemma}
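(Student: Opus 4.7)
The plan is to mimic the proof of Lemma~\ref{lem:mu_t_density}, but stop at the hitting time $H_a$ instead of a fixed deterministic time, and exploit the one-sided bound $f \ge -1$ (which is the definition of a barrier function) to control the only term in the Girsanov exponent which is not of order $\|f\|/a$.

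First, by Lemma~\ref{lem:many_to_one_simple} applied to the stopping line $\mathscr{L}_{H_0\wedge H_a}$, and using $\beta m = 1/2$,
\[
 \E^x_f[R_S] = W^x_{-\mu_t}\bigl[e^{\frac12 H_a}\,\Ind_{(H_0>H_a\in S)}\bigr],
\]
where $W^x_{-\mu_t}$ denotes Brownian motion started at $x$ with time-dependent drift $-\mu_t = -\mu - a^{-2} f'(t/a^2)$. Next, I would apply Girsanov's theorem to switch from drift $-\mu_t$ to constant drift $-\mu$: the Radon--Nikodym derivative is
\[
 M_t \;=\; \exp\Bigl(-\int_0^t(\mu_s-\mu)\,\dd X_s \;-\; \tfrac{1}{2}\int_0^t(\mu_s^2-\mu^2)\,\dd s\Bigr),
\]
so that $\E^x_f[R_S] = W^x_{-\mu}\bigl[e^{\frac12 H_a}M_{H_a}\Ind_{(H_0>H_a\in S)}\bigr]$, and it remains to show $M_{H_a}\le e^{1+O(\|f\|/a)}$ on $\{H_0>H_a\}$.

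The key point is that on $\{H_0>H_a\}$ one has $X_s\in[0,a]$ for all $s\le H_a$, so the two estimates \eqref{eq:0101} and \eqref{eq:020} from the proof of Lemma~\ref{lem:mu_t_density}, which are derived from pathwise bounds on $X_s$, remain valid with $t$ replaced by the random time $H_a$. Integration by parts and $\|f'\|_\infty, \int|\dd f'| \le \|f\|$ give
\[
 \Bigl|\int_0^{H_a}(\mu_s-\mu)\,\dd X_s\Bigr| \;\le\; \frac{1}{a}\Bigl(\|f'\|_\infty + \int_0^\infty|\dd f'(s)|\Bigr) \;=\; O(\|f\|/a),
\]
while expanding $\mu_s^2$ yields
\[
 \int_0^{H_a}\frac{\mu_s^2-\mu^2}{2}\,\dd s \;=\; \mu\,f(H_a/a^2) \;+\; O(\|f\|/a^2).
\]
Combining these two identities,
\[
 \log M_{H_a} \;=\; -\mu\, f(H_a/a^2) \;+\; O(\|f\|/a).
\]
The essential observation is now that $f(t)-f(0)\ge -1$ for every $t\ge 0$ and $f(0)=0$, so $f(H_a/a^2)\ge -1$, hence $-\mu\, f(H_a/a^2)\le \mu\le 1$. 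Therefore $M_{H_a}\le e^{1+O(\|f\|/a)}$ on $\{H_0>H_a\}$. Substituting this bound and invoking \eqref{eq:RS_x_expec} to identify the resulting expression with $\E^x[R_S]$ yields the claim.

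There is no real obstacle: the proof is routine once one realises that the only quantity which is not controlled uniformly by $\|f\|/a$ is the drift contribution $-\mu f(H_a/a^2)$, and the one-sided bound $f\ge -1$ built into the definition of a barrier function is precisely what gives the multiplicative loss $e^1$ in the statement.
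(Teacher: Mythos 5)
Your proof is correct and follows essentially the same route as the paper: apply the many-to-one lemma to express $\E^x_f[R_S]$ as a $W^x_{-\mu_t}$-expectation, perform the Girsanov change of measure to $W^x_{-\mu}$ as in the proof of Lemma~\ref{lem:mu_t_density}, bound the $O(\|f\|/a)$ terms pathwise using $X_s\in[0,a]$ for $s\le H_a$ on $\{H_0>H_a\}$, and absorb the remaining factor $e^{-\mu f(H_a/a^2)}\le e^1$ via $f\ge -1$ and $\mu\le 1$. The only cosmetic difference is that you extract $M_{H_a}\le e^{1+O(\|f\|/a)}$ before taking the expectation, whereas the paper keeps $e^{-\mu f(H_a/a^2)}$ inside and bounds it at the end; the content is identical.
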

\begin{proof}
 As in the proof of Lemma~\ref{lem:Rt}, we have
\[
 \E^x_f[R_S] = W^x_{-\mu_t}\Big[e^{\beta mH_a} \Ind_{(H_0 > H_a\in S)}\Big].
\]
Similarly to the proof of Lemma~\ref{lem:mu_t_density}, we then have
\[
 \E^x_f[R_S] \le e^{O(\|f\|/a)}W^x_\mu\Big[e^{-\mu f(H_a/a^2)} e^{\beta mH_a} \Ind_{(H_0 > H_a\in S)}\Big] \le e^{1+O(\|f\|/a)}\E^x[R_S],
\]
where in the last inequality we used the fact that $\mu \le 1$ and $f(t) \ge -1$ for all $t\ge0$. This proves the lemma.
\end{proof}

\subsection{Penalizing the particles hitting the right border}
\label{sec:weakly_conditioned}
In this section, let $(V_u)_{u\in U}$ be iid random variables, uniformly distributed on $(0,1)$, independent of the branching Brownian motion. Furthermore, let $p:\R_+\to [0,1]$ be measurable and such that $p(t) = 0$ for large enough $t$. Recall that $H = H_0 \wedge H_a$. We define the event
\[
 E = \{\nexists (u,t)\in \mathscr L_H: X_u(t) = a \tand V_u \le p(t)\}.
\]
Define $\Ptilde_f^x = \P_f^x(\cdot|E)$. In the main text, we will set $p(t)$ to be for example the probability of a particle hitting $a$ at time $t$ to break out before some fixed time $t_0$ (see Section~\ref{sec:before_breakout_particles}).  Then $\Ptilde_f^x$ is the law of the tier 0 particles of BBM conditioned not to break out before time $t_0$.

In order to describe $\Ptilde_f^x$, set
\begin{align}
  \label{eq:def_tilde_quantities}
 h(x,t) = \P_f^{(x,t)}(E),\quad Q(x,t) = \sum_{k=0}^\infty q(k)h(x,t)^{k-1},\quad \widetilde q(x,t,k) = q(k) h(x,t)^{k-1} / Q(x,t)
\end{align}
Under $\Ptilde_f^x$, the BBM stopped at $\mathscr L_H$ is then again a branching Markov process where (see \cite[Section~3.4]{MaillardThese} and the errata in the arXiv version)
\begin{itemize}[nolistsep]
 \item particles move according to the law obtained from Brownian motion with drift $-\mu_t$ (stopped at $0$ and $a$) through a change of measure by the martingale 
\[
 h(X_{t\wedge H},t\wedge H) \exp\left(-\int_0^{t\wedge H} \beta (1-Q(X_s,s))\,\dd s\right).
\]
 \item a particle located at the point $x\in(0,a)$ at time $t$ branches at rate $\beta Q(x,t)\Ind_{x\in (0,a)}$, branching into $k$ offspring with probability $\widetilde q(x,t,k)$.
\end{itemize}

Together with  Lemma~\ref{lem:many_to_one_simple}, this immediately gives the following useful many-to-one lemma for the conditioned process stopped at the stopping line $\mathscr L_{H \wedge t}$: Define the function
\begin{equation}
 \label{eq:def_e}
 e(x,t) = \beta \sum_{k\ge 0} k (1-h(x,t)^{k-1}) q(k) \le \beta m_2(1-h(x,t)).
\end{equation}
\begin{lemma}
\label{lem:many_to_one_conditioned}
 For any measurable function $g:[0,a]\to \R_+$, $x\in (0,a)$ and $t\ge 0$, we have
\begin{equation}
\label{eq:many_to_one_conditioned}
 \Etilde_f^x\Big[\sum_{u\in \mathscr L_{H \wedge t}} g(X_u(t))\Big] = W_{-\mu_t}^x\Big[g(X_{H\wedge t})\frac{h(X_{H\wedge t},H\wedge t)}{h(x,0)} e^{(H\wedge t)/2 -\int_0^{H\wedge t} e(X_s,s) \,\dd s}\Big].
\end{equation}
In particular, if we denote by $\widetilde{\p}^f_t(x,y)$ the density of the $\Ptilde_f^x$-BBM before $\mathscr L_H$, then for $f\equiv 0$,
\begin{equation*}
\widetilde{\p}_t(x,y) =  \frac{h(y,t)}{h(x,0)} \p_t(x,y) W^{x,t,y}_{\mathrm{taboo}}\Big[e^{-\int_0^t e(X_s,s) \,\dd s}\Big],
\end{equation*}
and for general $f$,
\begin{equation}
  \label{eq:ptilde_estimate_f}
  \widetilde{\p}^f_t(x,y) \le \frac{h(y,t)}{h(x,0)} \p^f_t(x,y).
\end{equation}
\end{lemma}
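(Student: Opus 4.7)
The plan is to apply the ordinary many-to-one lemma (Lemma~\ref{lem:many_to_one_simple}) to the branching Markov process description of $\Ptilde^x_f$ given just above the statement, and then unfold the two resulting exponential factors -- the Girsanov martingale coming from the tilted motion and the factor $e^{\int \beta m}$ coming from the branching rate and mean offspring -- to recover the stated formula.

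Concretely, first I would observe that under $\Ptilde^x_f$ the process stopped at $\mathscr L_H$ is a branching Markov process whose single-particle motion is obtained from $W^x_{-\mu_t}$ (killed at $\{0,a\}$) by Girsanov change of measure with the martingale
\begin{equation*}
M_s = \frac{h(X_{s\wedge H}, s\wedge H)}{h(x,0)}\exp\!\Big(-\!\!\int_0^{s\wedge H}\!\!\beta\,(1-Q(X_r,r))\,\dd r\Big),
\end{equation*}
and whose branching rate is $\widetilde R(x,s)=\beta Q(x,s)$ with offspring mean $1+\widetilde m(x,s)$ where $\widetilde m+1=\sum_k k\,\widetilde q(x,s,k)$. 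Applying Lemma~\ref{lem:many_to_one_simple} to this process with the stopping line $\mathscr L_{H\wedge t}$ and the additive functional $g$ gives
\begin{equation*}
\Etilde_f^x\Big[\sum_{u\in\mathscr L_{H\wedge t}}g(X_u(t))\Big]
= W^x_{-\mu_t}\Big[g(X_{H\wedge t})\,\frac{h(X_{H\wedge t},H\wedge t)}{h(x,0)}\,
e^{\int_0^{H\wedge t}\Phi(X_s,s)\,\dd s}\Big],
\end{equation*}
where $\Phi(x,s)=\beta Q(x,s)\widetilde m(x,s)-\beta(1-Q(x,s))$.

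The key computation is then to simplify $\Phi$. Using $Q\widetilde m=\sum_k(k-1)q(k)h^{k-1}$ and $Q=\sum_k q(k)h^{k-1}$, one finds $\beta(Q\widetilde m+Q-1)=\beta\sum_k kq(k)h^{k-1}-\beta=\beta m-\beta\sum_k kq(k)(1-h^{k-1})$, which equals $\tfrac12-e(x,s)$ by the definitions $\beta m=1/2$ and \eqref{eq:def_e}. Substituting yields exactly \eqref{eq:many_to_one_conditioned}. This algebraic identification is the one step that requires care but is otherwise a short manipulation.

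For the density formulas, I specialise to $g=\Ind_{[y,y+\dd y]}$ and restrict attention to $\{H>t\}$, so that $H\wedge t=t$. In the case $f\equiv 0$, I then decompose $W^x_{-\mu}$ conditionally on $(X_t,H>t)$: under this conditioning, drift terms cancel and the path is distributed as the Brownian taboo bridge $W^{x,t,y}_{\mathrm{taboo}}$ on $[0,a]$. Using Girsanov together with the identity $\tfrac12-\tfrac{\mu^2}{2}=\tfrac{\pi^2}{2a^2}$ and definition \eqref{eq:density_bbm} of $\p_t(x,y)$ turns the first factor into $\p_t(x,y)\,\dd y$, while the remaining exponential becomes the taboo-bridge expectation, giving the stated equality. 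For general $f$, the inequality $e^{-\int_0^t e(X_s,s)\,\dd s}\le 1$ together with the analogous Girsanov computation (now yielding $\p^f_t(x,y)$ via Lemma~\ref{lem:mu_t_density}) immediately gives \eqref{eq:ptilde_estimate_f}. The only mildly delicate step in the whole proof is the cancellation identifying $\Phi$ with $\tfrac12-e$; everything else is bookkeeping.
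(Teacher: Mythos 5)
Your proof is correct and is exactly the argument the paper intends: the paper states the lemma as an immediate consequence of the description of $\Ptilde_f^x$ as a branching Markov process (tilted motion plus modified branching rate and offspring law) together with Lemma~\ref{lem:many_to_one_simple}, and your unfolding of the Girsanov martingale and the identification $\beta\bigl(Q\widetilde m+Q-1\bigr)=\tfrac12-e(x,s)$ is precisely the computation being elided. The derivation of the density formulas via the taboo bridge and the bound $e^{-\int e}\le 1$ for general $f$ is likewise the intended route.
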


This lemma immediately gives an upper bound for the quantities we are interested in:
\begin{corollary}
\label{cor:conditioned_upperbound}
 Let  $x\in (0,a)$, $t\ge 0$ and  $g:[0,a]\to \R_+$ be measurable with $g(0) = g(a) = 0$. Define $S_t = \sum_{u\in \mathscr L_{H \wedge t}} g(X_u(t))$. Then,
 \begin{align}
  \label{eq:conditioned_upperbound_1}
  \Etilde_f^x[S_t] &\le (h(x,0))^{-1} \E_f^x[S_t],\\
  \label{eq:conditioned_upperbound_2}
  \Etilde_f^x[S_t^2] &\le (h(x,0))^{-1} \E_f^x[S_t^2].
 \end{align}
\end{corollary}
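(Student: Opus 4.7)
The plan is to avoid invoking the many-to-one formula of Lemma~\ref{lem:many_to_one_conditioned} and to argue instead directly from the definition of $\Ptilde_f^x$ as a conditional law. By construction,
\[
\Ptilde_f^x(\cdot) = \P_f^x(\cdot \mid E), \qquad \P_f^x(E) = h(x,0),
\]
so for any nonnegative $\F$-measurable random variable $X$ we have
\[
\Etilde_f^x[X] \;=\; \frac{\E_f^x[X\,\Ind_E]}{h(x,0)} \;\le\; \frac{\E_f^x[X]}{h(x,0)},
\]
the last inequality using $\Ind_E \le 1$ and $X\ge0$.

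Applying this with $X=S_t$ and with $X=S_t^2$ immediately yields \eqref{eq:conditioned_upperbound_1} and \eqref{eq:conditioned_upperbound_2}, provided that both $S_t$ and $S_t^2$ are nonnegative; this is automatic since $g\ge0$.

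There is no real obstacle here: the only point that deserves a brief remark is that the sum in $S_t$ ranges over the stopping line $\mathscr L_{H\wedge t}$, whose elements are either particles alive at time $t$ (contributing $g(X_u(t))$ with $X_u(t)\in(0,a)$) or particles killed at $0$ or $a$ before time $t$ (contributing $g(0)=g(a)=0$). Either way $S_t\ge0$, which is all that the argument needs.

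As a consistency check, one can recover the same bound from Lemma~\ref{lem:many_to_one_conditioned}: the formula \eqref{eq:many_to_one_conditioned} expresses $\Etilde_f^x[S_t]$ as $h(x,0)^{-1}$ times a Brownian expectation containing the extra factors $h(X_{H\wedge t},H\wedge t)\le 1$ and $\exp(-\int_0^{H\wedge t} e(X_s,s)\,\dd s)\le 1$ (since $e\ge0$ by \eqref{eq:def_e}); dropping these two factors reproduces $h(x,0)^{-1}\E_f^x[S_t]$ via the standard many-to-one identity. But the conditional-probability derivation above is strictly shorter and covers both inequalities in one stroke.
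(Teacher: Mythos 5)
Your proof is correct, and it is genuinely different from (and shorter than) the one in the paper. Since $\Ptilde_f^x$ is \emph{defined} as $\P_f^x(\cdot\,|\,E)$ with $\P_f^x(E)=h(x,0)$, and $S_t$ is a nonnegative functional of the tree (measurable with respect to $\F_{\mathscr L_{H\wedge t}}$, hence independent of the auxiliary variables $(V_u)$ beyond their role in $E$), the bound $\Etilde_f^x[X]=h(x,0)^{-1}\E_f^x[X\Ind_E]\le h(x,0)^{-1}\E_f^x[X]$ for $X\ge 0$ is immediate and handles both moments (indeed any nonnegative functional) in one stroke. The paper instead works through the explicit description of the conditioned process as a branching Markov process with modified rates: it gets \eqref{eq:conditioned_upperbound_1} from the density bound \eqref{eq:ptilde_estimate_f}, and for \eqref{eq:conditioned_upperbound_2} it applies Lemma~\ref{lem:many_to_two} to the conditioned process and compares term by term using $\widetilde{m_2}(x,t)Q(x,t)\le h(x,t)m_2$. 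What the paper's route buys is practice with the machinery (the densities $\widetilde{\p}^f$, the modified branching rates) that is genuinely needed elsewhere, e.g.\ in Lemma~\ref{lem:many_to_two_with_tiers} and Lemma~\ref{lem:Zt_variance}, where one must control second moments of the conditioned process via its branching structure rather than by undoing the conditioning; for the corollary itself, your argument is strictly simpler and fully rigorous. Your closing consistency check via Lemma~\ref{lem:many_to_one_conditioned} (dropping the factors $h\le 1$ and $e^{-\int e}\le 1$) is also accurate.
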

\begin{proof}
Equation \eqref{eq:conditioned_upperbound_1} immediately follows from \eqref{eq:ptilde_estimate_f}. In order to prove the second-moment estimates, we note that by Lemma~\ref{lem:many_to_two} and the description of the conditioned process,
\[
 \begin{split}
  \Etilde_f^x[S^2_t] &= \Etilde_f^x\Big[\sum_{u\in \mathscr L_{H \wedge t}} g(X_u(t))^2\Big] + \int_0^t\int_0^a\widetilde{\p}^f_s(x,y) \widetilde{m_2}(y,s)\beta Q(y,s)\left(\Etilde_f^{(y,s)}[S_t]\right)^2\,\dd y\,\dd s,
 \end{split}
\]
where $\widetilde{m_2}(x,t) = \sum_{k\ge 0} k(k-1) \widetilde{q}(x,t,k).$ By \eqref{eq:def_tilde_quantities}, we have $\widetilde{m_2}(x,t)Q(x,t)\le h(x,t) m_2$ for all $x\in(0,a)$ and $t\ge0$. Using this inequality with \eqref{eq:ptilde_estimate_f} and \eqref{eq:conditioned_upperbound_1}, we obtain \eqref{eq:conditioned_upperbound_2} after another application of Lemma~\ref{lem:many_to_two}.
\end{proof}

The following lemma gives a good lower bound on the first-moment estimates in a special case.
\begin{lemma}
\label{lem:conditioned_Z_lowerbound}
Suppose $\mu \ge 1/2$, $t\le a^3$ and suppose that there exists $s_0\in[0,a^3]$, such that $p(s) = 0$ for $s\ge s_0$ and $f(s) = 0$ for $s\le s_0$. Let $S_t$ be as in Corollary \ref{cor:conditioned_upperbound}. We have
\begin{equation*}
 \Etilde_f^x[S_t] \ge \E_f^x[S_t](1-C\|p\|_\infty ).
\end{equation*}
\end{lemma}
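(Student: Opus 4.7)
My plan is to work directly with the many-to-one formula of Lemma~\ref{lem:many_to_one_conditioned}, which expresses
$\Etilde^x_f[S_t]$ as $W^x_{-\mu_t}[g(X_{H\wedge t})e^{(H\wedge t)/2}\cdot h(X_{H\wedge t},H\wedge t)/h(x,0)\cdot \exp(-\int_0^{H\wedge t}e(X_s,s)\,\dd s)]$, while the standard many-to-one gives $\E^x_f[S_t]=W^x_{-\mu_t}[g(X_{H\wedge t})e^{(H\wedge t)/2}]$. The goal is therefore to show that the extra $h$-ratio and exponential weight are $\ge 1-C\|p\|_\infty$ in the right averaged sense.

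First I would establish that $h(y,s)\ge 1-C\|p\|_\infty$ uniformly. Writing $1-h(y,s)=\P_f^{(y,s)}(E^c)\le \|p\|_\infty\,\E_f^{(y,s)}[R_{[s,s_0]}]$ by a union bound over particles hitting $a$, and then using Lemma~\ref{lem:R_f} followed by Lemma~\ref{lem:Rt} with $w_Z(y)\le \pi w_Y(y)\le \pi$ and $s_0-s\le a^3$, yields $1-h(y,s)\le C\|p\|_\infty$. Since $h(x,0)\le 1$, this forces $h(X_{H\wedge t},H\wedge t)/h(x,0)\ge 1-C\|p\|_\infty$. From \eqref{eq:def_e} one gets $e(y,s)\le \beta m_2(1-h(y,s))\le C\|p\|_\infty w_Y(y)$; moreover $h(y,s)=1$ for $s\ge s_0$ (since $p\equiv 0$ there), so the effective integration range is $[0,H\wedge t\wedge s_0]$, on which, by hypothesis, $f\equiv 0$ and hence $\mu_s=\mu$. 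Using $e^{-I}\ge 1-I$ for $I\ge 0$, I arrive at
$$\Etilde^x_f[S_t]\ge (1-C\|p\|_\infty)\Big(\E^x_f[S_t]-C\|p\|_\infty\, W^x_{-\mu_t}\Big[F\int_0^{H\wedge t\wedge s_0}w_Y(X_s)\,\dd s\Big]\Big),$$
with $F=g(X_{H\wedge t})e^{(H\wedge t)/2}$. The whole problem is thus reduced to showing
$$W^x_{-\mu_t}\Big[F\int_0^{H\wedge t\wedge s_0}w_Y(X_s)\,\dd s\Big]\le C\,W^x_{-\mu_t}[F].$$

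To prove this bound, I would condition on the endpoint $X_t=y$ (the integral range being where $H>t$, since $g(0)=g(a)=0$). Writing $W^x_{-\mu_t}[F\cdot J]=\int W^x_{-\mu_t}(H>t,X_t\in \dd y)\,g(y)e^{t/2}\,W^{x,t,y}_{\mathrm{br}}[J]$, with analogous formula for $W^x_{-\mu_t}[F]$ without $J$, reduces matters to a \emph{uniform} bound on $W^{x,t,y}_{\mathrm{br}}[\int_0^{t\wedge s_0}w_Y(X_s)\,\dd s]$, where $W^{x,t,y}_{\mathrm{br}}$ is the bridge of Brownian motion killed at $\{0,a\}$ from $x$ to $y$ of length $t$. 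Crucially, this bridge law is insensitive to constant drift on $[0,s_0]$: the Girsanov density $\exp(-\mu(X_t-x)-\mu^2 t/2)$ is a function only of the endpoint, hence cancels upon conditioning; and the change of drift from $-\mu$ to $-\mu_t$ involves only the path on $[s_0,t]$, so it factors out of the conditional expectation of $J$. The killed-BM bridge coincides with the Brownian taboo bridge $W^{x,t,y}_{\mathrm{taboo}}$ (both are $h$-transforms of killed BM with the same finite-dimensional distributions), so we need $W^{x,t,y}_{\mathrm{taboo}}[\int_0^{t\wedge s_0}e^{-\mu(a-X_s)}\,\dd s]\le C$. By the reflection symmetry of the taboo process about $a/2$ (i.e.\ $a-X_\cdot$ under $W^{x,t,y}_{\mathrm{taboo}}$ has law $W^{a-x,t,a-y}_{\mathrm{taboo}}$), this is exactly $W^{a-x,t,a-y}_{\mathrm{taboo}}[\int_0^{t\wedge s_0}e^{-\mu X_s}\,\dd s]$, which by Lemma~\ref{lem:k_integral} with $c=\mu\ge 1/2$ is bounded by $C(t/a^3+1)\le C'$ using $t\le a^3$.

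The main obstacle is the third step --- the bridge identification and reduction to Lemma~\ref{lem:k_integral} --- which depends crucially on the two hypotheses of the lemma: $\mu\ge 1/2$ (so that $c=\mu$ gives a uniform constant bound in Lemma~\ref{lem:k_integral}) and $f\equiv 0$ on $[0,s_0]$ (so that the path on the integration interval really is driven by constant drift $-\mu$, allowing the bridge identification to give a drift-free estimate). Assembling the pieces produces the announced lower bound $\Etilde^x_f[S_t]\ge (1-C\|p\|_\infty)^2\E^x_f[S_t]\ge \E^x_f[S_t](1-C'\|p\|_\infty)$.
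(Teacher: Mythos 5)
Your proposal follows essentially the same route as the paper's proof: express $\Etilde^x_f[S_t]/\E^x_f[S_t]$ via the many-to-one formula of Lemma~\ref{lem:many_to_one_conditioned}, bound $1-h$ pointwise using Lemma~\ref{lem:h_estimate} (via $\E[R]$), bound $e(y,s)$ accordingly, observe that $e\equiv 0$ for $s\ge s_0$ so the integration is over $[0,t\wedge s_0]$ where the drift is constant, and finally control the taboo-bridge expectation of $\int e$ via Lemma~\ref{lem:k_integral}, using the reflection $x\mapsto a-x$ to convert $e^{-c(a-X_s)}$ to $e^{-cX_s}$. Your bridge-identification detour (killed-BM bridge = taboo bridge) is correct but redundant, since Lemma~\ref{lem:many_to_one_conditioned} already delivers the taboo bridge directly.

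One imprecision worth fixing: the inequality $w_Z(y)\le\pi w_Y(y)$ is false in general ($w_Z(y)/w_Y(y)=a\sin(\pi y/a)$, which can be of order $a$), and consequently the bound $1-h(y,s)\le C\|p\|_\infty w_Y(y)$ that you extract is missing the polynomial factor from Lemma~\ref{lem:h_estimate}: the actual bound is $1-h(y,s)\le C\|p\|_\infty\,(a-y+1)\,e^{-\mu(a-y)}$. Because of this polynomial, Lemma~\ref{lem:k_integral} cannot be applied with $c=\mu$ directly. Instead one uses $\mu\ge 1/2$ to absorb the polynomial into a smaller exponential, e.g.\ $(a-y+1)e^{-\mu(a-y)}\le Ce^{-(a-y)/3}$, and then applies Lemma~\ref{lem:k_integral} with $c=1/3$, which is exactly what the paper does in \eqref{eq:218}--\eqref{eq:219}. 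This correction is cosmetic and does not affect the overall validity of your argument.
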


This follows from the following estimate on $h(x,0)$.

\begin{lemma} Suppose $p(s) = 0$ for all $s\ge a^3$. Then for all $x\in(0,a)$, we have
 \label{lem:h_estimate}
\[
1-h(a-x,0) \le C \|p\|_\infty  e^{O(\|f\|/a)} (x + 1) e^{-\mu x}.
\]
\end{lemma}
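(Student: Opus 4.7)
The plan is to reduce the probability of $E^c$ to the expected number of particles hitting $a$ (weighted by $p$) via a union bound, and then apply the first-moment estimate for $R$ from Section~\ref{sec:right_border}. Since the $V_u$ are iid and independent of the BBM, we have, conditionally on $\F_{\mathscr L_H}$,
\[
\P_f^{x}(E \mid \F_{\mathscr L_H}) = \prod_{(u,t)\in\mathscr L_H,\,X_u(t)=a} (1-p(t)),
\]
so that by the elementary inequality $1-\prod_i(1-a_i) \le \sum_i a_i$ for $a_i\in[0,1]$,
\[
1 - h(x,0) \;\le\; \E_f^{x}\Big[\sum_{(u,t)\in\mathscr L_H,\,X_u(t)=a} p(t)\Big] \;\le\; \|p\|_\infty\,\E_f^{x}[R_{[0,a^3]}],
\]
where the last inequality uses that $p$ is supported in $[0,a^3]$ by hypothesis.

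Next I would invoke Lemma~\ref{lem:R_f} to absorb the drift perturbation, giving $\E_f^{x}[R_{[0,a^3]}] \le e^{1+O(\|f\|/a)}\,\E^{x}[R_{[0,a^3]}]$. For the right-hand side, the proof of Lemma~\ref{lem:Rt} shows $\E^{x}[R_{[0,a^3]}] = e^{\mu(x-a)} I^a(x,[0,a^3])$. Setting $x=a-y$, the scaling \eqref{eq:IJ_scaling} and Lemma~\ref{lem:I_estimates} yield
\[
I^a(a-y,[0,a^3]) \;\le\; \pi a\,\sin(\pi y/a) + C,
\]
and since $a\sin(\pi y/a) \le \pi y$ for $y\in[0,a]$, this is at most $C(y+1)$. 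Multiplying by $e^{\mu(x-a)}=e^{-\mu y}$ gives $\E^{a-y}[R_{[0,a^3]}] \le C(y+1)e^{-\mu y}$, and combining with the preceding displays yields the claim.

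There is no substantive obstacle here: the only nontrivial ingredients are the union-bound step (which relies on the independence of the $V_u$ from the BBM and from each other) and the observation that $I^a(a-y,[0,a^3])\le C(y+1)$ uniformly in $y\in[0,a]$, both of which follow directly from results stated earlier.
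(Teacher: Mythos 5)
Your proposal is correct and follows essentially the same route as the paper: the paper bounds $1-h(x,0)$ by Markov's inequality applied to the expected number of particles hitting $a$ with $V_u\le\|p\|_\infty$ (equivalent to your conditional product/union bound), then applies Lemma~\ref{lem:R_f} and the first-moment estimate of Lemma~\ref{lem:Rt} together with $\sin x\le x$. Your only cosmetic difference is that you unwind Lemma~\ref{lem:Rt} through the identity $\E^x[R_S]=e^{\mu(x-a)}I^a(x,S)$ and Lemma~\ref{lem:I_estimates} rather than quoting its statement with $Z_0=w_Z(x)$, $Y_0=w_Y(x)$.
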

\begin{proof}
By Markov's inequality, we have
\[
  1-h(x,0) \le \E_f^x(\#\{(u,s)\in \mathscr L_{H\wedge a^3}: X_u(s)=a,\ V_u \le \|p\|_\infty\}) \le \|p\|_\infty  \E_f^x(R_{a^3}).
\]
The lemma now follows from Lemmas~\ref{lem:Rt} and \ref{lem:R_f} and the inequality $\sin x\le x$, $x\in[0,\pi]$.
\end{proof}

\begin{proof}[Proof of Lemma~\ref{lem:conditioned_Z_lowerbound}]
For all $s\ge s_0$ and $x\in[0,a]$, we have by hypothesis $h(x,s) = 1$ and therefore $e(x,s) = 0$. With \eqref{eq:many_to_one_conditioned} and Lemma~\ref{lem:many_to_one_simple}, this gives
\begin{equation}
 \label{eq:210}
 \Etilde_f^x[S_t] \ge \E_f^x[S_t]\inf_{y\in(0,a)}\Big(h(y,t\wedge s_0)W_{\mathrm{taboo}}^{x,t\wedge s_0,y}\Big[e^{-\int_0^{t\wedge s_0} e(X_s,s) \,\dd s}\Big]\Big).
\end{equation}
By Lemma~\ref{lem:h_estimate} (with $f\equiv 0$) and the hypothesis on $\mu$, we have for every $y\in (0,a)$ and $s\le s_0$,
\begin{equation}
 \label{eq:218}
h(a-y,s) \ge h(a-y,0) \ge 1-C\|p\|_\infty e^{-y/3}.
\end{equation}
Together with \eqref{eq:def_e}, this gives, for every $y\in(0,a)$,
\begin{align}
\nonumber
 W_{\mathrm{taboo}}^{x,t\wedge s_0,y}\Big[e^{-\int_0^{t\wedge s_0} e(X_s,s) \,\dd s}\Big] 
 &\ge 1 - W_{\mathrm{taboo}}^{x,t\wedge s_0,y}\Big[\int_0^{t\wedge s_0} e(X_s,s) \,\dd s\Big] \\
  \label{eq:219}
 &\ge 1-W_{\mathrm{taboo}}^{x,t\wedge s_0,y}\Big[\int_0^{t\wedge s_0} Ce^{-(a-X_s)/3} \,\dd s\Big] 
 \ge 1-C\|p\|_\infty,
\end{align}
the last inequality following from Lemma~\ref{lem:k_integral}. The lemma now follows from \eqref{eq:210}, \eqref{eq:218} and \eqref{eq:219}.
\end{proof}

Finally, we study the law of $R_t$ under the new probability.
\begin{lemma}
 \label{lem:conditioned_Rt}
We have for every $x\in [0,a]$,
\begin{equation}
\label{eq:conditioned_Rt}
 \E_f^x[R_t] - \|p\|_\infty \E_f^x[R_t^2] \le \Etilde_f^x[R_t] \le (h(x,0))^{-1}\E_f^x[R_t],
\end{equation}
and if there is $p\in[0,1]$, such that $p(s) \equiv p$ for $s\le t$, then we even have
\begin{equation}
\label{eq:conditioned_Rt_constant}
 \Etilde_f^x[R_t] \le \E_f[R_t].
\end{equation}
\end{lemma}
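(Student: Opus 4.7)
The plan is to split the argument into three parts: the upper bound of~\eqref{eq:conditioned_Rt}, the lower bound of~\eqref{eq:conditioned_Rt}, and the improvement~\eqref{eq:conditioned_Rt_constant} for constant $p$.

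\emph{Upper bound.} I would apply Lemma~\ref{lem:many_to_one_conditioned} with the indicator $g(y) = \Ind_{\{a\}}(y)$, observing that the associated sum $\sum_{(u,s)\in\mathscr L_{H\wedge t}} g(X_u(s))$ counts exactly those particles whose first exit from $(0,a)$ by time~$t$ occurs at the right boundary, and hence equals $R_t$. This gives
\[
\Etilde_f^x[R_t] \;=\; \frac{1}{h(x,0)}\, W^x_{-\mu_t}\Bigl[\Ind_{H_a\le t\wedge H_0}\, h(a,H_a)\, e^{H_a/2 - \int_0^{H_a} e(X_s,s)\,\dd s}\Bigr].
\]
The inequality $h(a,\cdot)\le 1$ and the nonnegativity $e(\cdot,\cdot)\ge 0$, together with the identity $\E_f^x[R_t] = W^x_{-\mu_t}[\Ind_{H_a\le t\wedge H_0}\,e^{H_a/2}]$ (derived in the proof of Lemma~\ref{lem:Rt} via Lemma~\ref{lem:many_to_one_simple} and Girsanov), immediately deliver the right-hand inequality in~\eqref{eq:conditioned_Rt}.

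\emph{Lower bound.} Writing $\Etilde_f^x[R_t] = \E_f^x[R_t\Ind_E]/\P_f^x(E)$ and using $\P_f^x(E)\le 1$, it suffices to bound $\E_f^x[R_t\Ind_{E^c}]\le \|p\|_\infty\E_f^x[R_t^2]$ (modulo a lower-order term of the same form). Since the coin flips $(V_u)_{u\in U}$ are independent of the branching tree, conditioning on the tree yields
\[
\P_f^x(E^c\mid\text{tree}) \;=\; 1-\!\!\prod_{\substack{(u,s)\in\mathscr L_H\\ X_u(s)=a}}\!(1-p(s)) \;\le\; \sum_{\substack{(u,s)\in\mathscr L_H\\ X_u(s)=a}} p(s)
\]
by Bernoulli. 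Restricting the sum to hits at times $s\le t$ yields exactly $\|p\|_\infty R_t$, and multiplying by $R_t$ before taking expectation produces the stated $\|p\|_\infty\E_f^x[R_t^2]$ bound; the residual contribution of hits at times $s>t$ is handled through the Markov property at time~$t$, using that the subtrees rooted at particles alive at $t$ are independent and their expected future hit counts are controlled in terms of starting positions.

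\emph{Constant-$p$ case.} When $p(s)\equiv p$, the same tree-conditioning gives $\E_f^x[\Ind_E\mid\text{tree}] = (1-p)^{R_\infty}$, hence
\[
\Etilde_f^x[R_t] \;=\; \frac{\E_f^x[R_t\,(1-p)^{R_\infty}]}{\E_f^x[(1-p)^{R_\infty}]},
\]
and the sought inequality $\Etilde_f^x[R_t]\le\E_f^x[R_t]$ is equivalent to $\mathrm{Cov}_{\P_f^x}\!\bigl(R_t,(1-p)^{R_\infty}\bigr)\le 0$. Intuitively this holds since $R_\infty\ge R_t$ and $(1-p)^{R_\infty}$ is a decreasing function of $R_\infty$. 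To make it rigorous, I would condition on $\F_t$, decompose $R_\infty = R_t + \sum_i R^{(i)}$ with the $R^{(i)}$ independent given $\F_t$ (indexed by particles alive at time~$t$), and reduce the claim to comparing $\E[R_t(1-p)^{R_t}\psi(\F_t)]$ with $\E[R_t]\cdot\E[(1-p)^{R_t}\psi(\F_t)]$, where $\psi(\F_t)=\E[(1-p)^{\sum_i R^{(i)}}\mid\F_t]$. The main obstacle is this covariance inequality: $\psi(\F_t)$ is not a direct function of $R_t$, so I expect to need the monotonicity of $\psi$ in the positions of the alive particles at time~$t$ together with a FKG/size-biasing coupling argument that exploits the fact that larger $R_t$ tends to produce ``more bunched'' configurations of alive particles.
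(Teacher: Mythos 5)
Your upper bound is correct: it takes a slightly different route from the paper (you apply Lemma~\ref{lem:many_to_one_conditioned} with $g=\Ind_{\{a\}}$ and then use $h\le 1$ and $e\ge 0$, whereas the paper reads the bound off the ratio identity \eqref{eq:230} whose denominator is $h(x,0)$), but both arguments are sound and of comparable length. The problems are in the lower bound of \eqref{eq:conditioned_Rt} and in \eqref{eq:conditioned_Rt_constant}, and they have a common source: by conditioning on the whole tree you make the \emph{total} number of hits of $a$, say $R_\infty$, appear in the exponent where the statement requires $R_t$. For the lower bound your computation yields $\E_f^x[R_t]-\|p\|_\infty\E_f^x[R_t^2]-\|p\|_\infty\E_f^x[R_t(R_\infty-R_t)]$; the hypotheses do not force $p(s)=0$ for $s>t$, so the last term is genuinely positive in general, and invoking the Markov property at time $t$ only re-expresses it — it does not make it vanish, so you do not recover the exact inequality claimed. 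For the constant-$p$ case you correctly identify the covariance inequality $\mathrm{Cov}(R_t,(1-p)^{R_\infty})\le 0$ as the obstacle and do not prove it; note moreover that the hypothesis only gives $p(s)\equiv p$ for $s\le t$, so $\E_f^x[\Ind_E\mid\text{tree}]$ is not even equal to $(1-p)^{R_\infty}$.

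The paper's proof avoids $R_\infty$ entirely by conditioning on $\F_{\mathscr L_{H\wedge t}}$ rather than on the whole tree: by the strong branching property the hits of $a$ occurring after time $t$ are summarized by the factors $h(X_u(t),t)$ attached to the particles still alive at time $t$ (a particle stopped at $a$ at time $s$ contributes only its own coin flip, $h(a,s)=1-p(s)$), and one arrives at the identity \eqref{eq:230} in which the product runs only over $\mathscr R_t=\{(u,s)\in\mathscr L_{H\wedge t}:X_u(s)=a\}$. From there the denominator is $h(x,0)\le 1$ and the numerator is bounded below by $\E_f^x[R_t(1-\|p\|_\infty)^{R_t}]\ge\E_f^x[R_t]-\|p\|_\infty\E_f^x[R_t^2]$, with exponent $R_t$ throughout. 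Likewise, for constant $p$ the identity becomes $\Etilde_f^x[R_t]=\E_f^x[R_t(1-p)^{R_t}]/\E_f^x[(1-p)^{R_t}]$, and \eqref{eq:conditioned_Rt_constant} follows from the elementary fact that an increasing and a decreasing function of the \emph{same} real random variable ($R_t$ and $(1-p)^{R_t}$) are negatively correlated — no FKG-type inequality on the branching structure is needed. You should rework the last two inequalities along these lines.
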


\begin{proof}
Define the stopping line $\mathscr R_t = \{(u,s)\in \mathscr L_{H\wedge t}: X_u(s) = a\}$.
By the definition of the law $\Ptilde$,
\begin{equation}
 \label{eq:230}
 \Etilde_f^x[R_t] = \frac{\E_f^x\Big[R_t \prod_{(u,s)\in\mathscr R_t} (1-p(s))\Big]}{\E_f^x\Big[\prod_{(u,s)\in\mathscr R_t} (1-p(s))\Big]}.
\end{equation}
The denominator is $h(x,0)$ by \eqref{eq:def_tilde_quantities}, which yields the right-hand side of \eqref{eq:conditioned_Rt}. The left-hand side follows by noticing that
\[
 \E_f^x\Big[R_t \prod_{(u,s)\in\mathscr R_t} (1-p(s))\Big] \ge \E_f^x[R_t(1-\|p\|_\infty )^{R_t}] \ge \E_f^x[R_t] - \|p\|_\infty \E_f^x[R_t^2].
\]
For \eqref{eq:conditioned_Rt_constant}, we note that if $p(s) \equiv p$ for $s\le t$, then by \eqref{eq:230},
\[
 \Etilde_f^x[R_t] = \frac{\E_f^x[R_t(1-p)^{R_t}]}{\E_f^x[(1-p)^{R_t}]}.
\]
Since $(1-p)^k$ is decreasing in $k$, this yields \eqref{eq:conditioned_Rt_constant}.
\end{proof}

\subsection{Conditioning the penalised process to hit the right border at a given time}

In this section, we consider the law $\Ptilde^x$ from the previous section (with $f\equiv 0$) conditioned on the event that a particle hits the right border at a given time $t>0$, i.e.\ the conditional law $\Ptilde^x(\cdot\,|\,\exists \mathscr U \in U : (\mathscr U,t) \in \mathscr R_\infty)$, where $\mathscr R_\infty = \{(u,s)\in\mathscr L_H: X_u(s) = a\}$. For this, it is enough to consider expectation with respect to test functionals $Y$ of the form 
\begin{equation}
 \label{eq:test_Y}
 Y = \sum_{(u,s)\in \mathscr L_H} Y_u \Ind_{(u=\mathscr U)},
\end{equation}
where $Y_u$ is a non-negative $\F_{\mathscr L_H}$-measurable random variable for every $u\in U$ and $\mathscr U$ is the individual hitting the right border at time $t$.

For this, we define a branching Markov process with a selected genealogical line, called the \emph{spine}\footnote{Such processes are classical, see e.g.\ \cite{Chauvin1988} for an early example.}. Recall the definitions of $Q(x,t)$ and $\widetilde q(x,t,k)$ from \eqref{eq:def_tilde_quantities} and define $\widetilde m_1(x,t,k) = \sum_k k \widetilde q(x,t,k)$.
\begin{itemize}[nolistsep]
\item Initially, a single spine particle moves according to standard Brownian motion starting at $x\in\R$ and  absorbed at 0 and $a$.
\item As long as this spine particle has not been absorbed yet, it branches at rate $\widetilde m_1(y,s)R(y)$ when at position $y$ at time $s$, throwing offspring according to the size-biased distribution of $\widetilde q(y,s,\cdot)$ defined by $\widetilde q^*(y,s,k) = k\widetilde q(y,s,k)/\widetilde m_1(y,s)$.
\item Amongst those offspring, the next individual on the spine is chosen uniformly. This individual repeats the behaviour of its parent (started at the point $y$).
\item The other offspring initiate independent branching Markov processes according to the law $\Ptilde^y$, independently of the spine.
\end{itemize}
The law of and expectation w.r.t.\ this process are denoted by $\Ptilde^{*,x}$ and $\Etilde^{*,x}$, respectively. We further denote the individual on the spine alive at time $s$ by $\xi_s$ and the trajectory of the spine by $X_\xi$. We then have the following result, which resembles \cite[Theorem~1]{Chauvin1991a}. Recall the definition of $e(x,s)$ from \eqref{eq:def_e}.

\begin{lemma}
 \label{lem:many_to_one_fugitive}
Let $t \ge 0$, $x\in(0,a)$ and $Y$ be a functional as in \eqref{eq:test_Y}. Then,
\begin{equation*}
  \Etilde^x\Big[Y\,\Big|\,\exists \mathscr U \in U : (\mathscr U,t) \in \mathscr R_\infty\Big] = \frac{\Etilde^{*,x}\Big[Y_{\xi_t}e^{-\int_0^t e(X_\xi(s),s)\,\dd s} \,\Big|\,H_0(X_\xi) > H_a(X_\xi)=t\Big]}{\Etilde^{*,x}\Big[e^{-\int_0^t e(X_\xi(s),s)\,\dd s} \,\Big|\,H_0(X_\xi) > H_a(X_\xi)=t\Big]}.
\end{equation*}
\end{lemma}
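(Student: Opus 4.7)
The plan is to reduce the lemma to establishing the unnormalised ``master identity''
\begin{equation}
\label{eq:app-master}
\Etilde^x\Big[\sum_{(u,s)\in\mathscr R_\infty} Y_u\,g(s)\Big]
= h(x,0)^{-1}\,\Etilde^{*,x}\Big[(1-p(H_a))\,e^{H_a/2}\,Y_{\xi_{H_a}}\,g(H_a)\,e^{-\int_0^{H_a} e(X_\xi(s),s)\,ds}\,\Ind_{H_0>H_a}\Big]
\end{equation}
for every non-negative measurable $g:\R_+\to\R_+$. Interpreting the events $\{\exists \mathscr U:(\mathscr U,t)\in\mathscr R_\infty\}$ and $\{H_0>H_a=t\}$ as densities in $t$, and applying \eqref{eq:app-master} once with the given $Y$ and once with $Y\equiv 1$, the $x$-dependent prefactor $h(x,0)^{-1}(1-p(t))e^{t/2}$ cancels between numerator and denominator, yielding the stated identity.

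To prove \eqref{eq:app-master} I would first write $\Ptilde^x=\P^x(\cdot\mid E)$ and integrate over the independent uniform marks $(V_u)$, obtaining
\[
\Etilde^x\Big[\sum_{(u,s)\in\mathscr R_\infty}Y_u\,g(s)\Big] = h(x,0)^{-1}\,\E^x\Big[\sum_{(u,s)\in\mathscr R_\infty}Y_u\,g(s)\,(1-p(s))\prod_{(v,r)\in\mathscr R_\infty,\,v\ne u}(1-p(r))\Big].
\]
I would then invoke the spine version of the many-to-one lemma for the stopping line $\mathscr L_H$ (the natural extension of Lemma~\ref{lem:many_to_one_simple} to $\F_{\mathscr L_H}$-measurable test functionals depending on the whole tree) to rewrite the sum over $u$ as an expectation under the classical size-biased spine construction for $\P^x$: the trajectory $X_\xi$ is a drift-$(-\mu)$ Brownian motion absorbed at $\{0,a\}$; branching events on $\xi$ form a Poisson point process of rate $\beta(m+1)$ with offspring distribution $q^{*}(k)=kq(k)/(m+1)$; and the $k-1$ off-spine children at each event initiate independent BBMs of law $\P^y$. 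The many-to-one exponential $e^{H_a/2}=e^{\int_0^{H_a}Rm\,ds}$ appears as the overall spine normalisation, using $\beta m=1/2$.

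Next I would integrate out the off-spine structure. For each off-spine subtree starting at $(y,r)$, the identity $\E^y[f(\text{subtree})\,\Ind_{E(\text{subtree})}]=h(y,r)\,\Etilde^y[f(\text{subtree})]$, combined with the conditional independence of subtrees given the spine, produces a weight $\prod_i h(y_i,t_i)^{k_i-1}$ together with a replacement of the subtree law $\P^y\to\Ptilde^y$, after absorbing the $\Ind_E$ contribution of the off-spine particles. Combining the weight $\prod h^{k_i-1}$ with the offspring distribution $q^{*}$ produces the doubly size-biased distribution $\widetilde q^{*}(y,s,k)=kq(k)h^{k-1}/(Q\widetilde m_1)$ together with an event-wise normalising factor $Q\widetilde m_1/(m+1)$. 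Changing the Poisson rate from $\beta(m+1)$ to the target rate $\beta Q\widetilde m_1$ of $\Ptilde^{*,x}$ then absorbs these per-event factors via the Poisson Radon--Nikodym derivative and collapses the resulting exponential correction, using the elementary identity
\[
\beta(m+1)-\beta Q\widetilde m_1 \;=\; e(\cdot,\cdot),
\]
which follows directly from \eqref{eq:def_e}, to the single factor $\exp(-\int_0^{H_a}e(X_\xi(s),s)\,ds)$. What remains is exactly the description of $\Ptilde^{*,x}$, so the right-hand side of \eqref{eq:app-master} emerges.

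The main technical obstacle is bookkeeping within the marked-tree formalism of Section~\ref{sec:preliminaries_definition}: one must rigorously justify the spine-level many-to-one identity for a general $\F_{\mathscr L_H}$-measurable functional $Y_u$, and the interchange of the sum over $u$, the $V$-marginalisation, the Poisson averaging over spine branching events, and the $\Ptilde$-conditioning of off-spine subtrees. Once this framework is in place, the remaining computation is a sequence of standard Radon--Nikodym manipulations together with the algebraic identity $\beta(m+1)-\beta Q\widetilde m_1=e$.
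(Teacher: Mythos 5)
Your proof is correct in substance but takes a genuinely different route from the paper's. The paper exploits the fact that $\Ptilde^x$ has already been identified in Section~\ref{sec:weakly_conditioned} as a branching Markov process in its own right (branching rate $\beta Q$, offspring law $\widetilde q$, motion given by a Doob transform of the drift-$(-\mu)$ Brownian motion by the martingale $h(X_{s\wedge H},s\wedge H)\exp(-\int_0^{s\wedge H}\beta(1-Q))$); it therefore applies the stopping-line spine many-to-one lemma \emph{directly to} $\Ptilde^x$, picking up the exponential $\exp(\int_0^t\widetilde m\,\beta Q)$, and then undoes the Doob transform of the spine motion by a single Girsanov step, which simultaneously produces the factor $e^{-\int_0^t e(X_\xi(s),s)\,\dd s}$ and the deterministic prefactor $e^{-\mu(a-x)+\pi^2t/(2a^2)}\,h(a,t)/h(x,0)$ that cancels in the ratio. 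You instead start from the unconditioned law $\P^x$, integrate out the marks $(V_u)$, and rebuild the tilted spine structure by hand: the off-spine marginalisation yields the weights $h(y_i,t_i)^{k_i-1}$ and turns the off-spine subtree laws into $\Ptilde$, and re-tilting the spine's branching point process from rate $\beta(m+1)$ with offspring $q^{*}$ to rate $\beta Q\widetilde m_1$ with offspring $\widetilde q^{*}$ produces $e^{-\int e}$ via the identity $e=\beta(m+1)-\beta Q\widetilde m_1$, which is indeed immediate from \eqref{eq:def_e} and \eqref{eq:def_tilde_quantities}. In effect you re-derive, inside the spine computation, the description of the conditioned process that the paper imports wholesale; your route is more self-contained, at the cost of heavier bookkeeping, and both arguments rest on the same key external ingredient, namely the spine many-to-one lemma for $\F_{\mathscr L_H}$-measurable functionals along a stopping line.

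One point to tighten: as written, your unnormalised master identity lands on a spine whose motion is the drift-$(-\mu)$ Brownian motion inherited from $\P^x$, whereas the spine under $\Ptilde^{*,x}$ is \emph{standard} Brownian motion absorbed at $0$ and $a$; to equate the two literally you need one further Girsanov factor $e^{-\mu(a-x)-\mu^2H_a/2}$ inside the expectation. This does not invalidate your conclusion, since on the event $\{H_0>H_a=t\}$ that factor is a deterministic function of $(x,t)$ and the two bridge laws from $x$ to $a$ over $[0,t]$ avoiding $0$ coincide, so the discrepancy cancels in the conditional ratio --- but the master identity itself should either carry this factor or be stated for the drifted spine.
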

\begin{proof}
Since almost surely, no two particles hit the point $a$ at the same time, we have
\begin{equation}
 \label{eq:600}
 \Etilde^x\Big[Y\,\Big|\,\exists \mathscr U\in U : (\mathscr U,t) \in \mathscr R_\infty\Big]
  = \frac{\Etilde^x\Big[\sum_{(u,s)\in \mathscr R_\infty}\Ind_{(H_a(X_u) \in \dd t)} Y_u\Big]}{\Etilde^x\Big[\sum_{(u,s)\in \mathscr R_\infty}\Ind_{(H_a(X_u) \in \dd t)}\Big]}.
\end{equation}
Lemma~\ref{lem:many_to_one_simple} is too restricted to deal with such an expectation. However, the ``full'' many-to-one lemma (see \cite{Kyprianou2004,Hardy2006,Harris2011} for proofs with fixed time and \cite[Lemma~14.1]{Biggins2004} for a proof with stopping lines in the discrete-time setting, which can be adapted to continuous time), yields
\begin{equation}
  \label{eq:47}
 \Etilde^x\Big[\sum_{(u,s)\in \mathscr R_\infty}\Ind_{(H_a(X_u) \in \dd t)} Y_u\Big] = \widetilde \Etilde^{*,x}\Big[Y_{\xi_t}e^{\int_0^t \widetilde m(X_\xi(s),s)\beta Q(X_\xi(s),s)\,\dd s} \Ind_{(H_0(X_\xi) > H_a(X_\xi)\in\dd t)}\Big],
\end{equation}
where $\widetilde \Etilde^{*,x}$ is defined as $\Etilde^{*,x}$ but with the motion of the spine being the same as the motion of the other particles. According to the description of $\Ptilde^x$ in Section \ref{sec:weakly_conditioned}, the law of this motion is obtained from Brownian motion with drift $-\mu$ (stopped at $0$ and $a$) through a change of measure by the martingale
\(
\left(h(X_{s\wedge H},s\wedge H) \exp(-\int_0^{s\wedge H} \beta (1-Q(X_r,r))\,\dd r)\right)_{s\ge 0},
\)
where $h$ and $Q$ are defined in \eqref{eq:def_tilde_quantities}. With Girsanov's transform, \eqref{eq:47} then yields,
  \begin{equation}
    \label{eq:57}
  \Etilde^x\Big[\sum_{(u,s)\in \mathscr R_\infty}\Ind_{(H_a(X_u) \in \dd t)} Y_u\Big] = e^{-\mu(a-x)+\frac{\pi^2}{2a^2}t}\frac{h(a,t)}{h(x,0)} \Etilde^{*,x}\Big[Y_{\xi_t}e^{-\int_0^t e(X_\xi(s),s)\,\dd s} \Ind_{(H_0(X_\xi) > H_a(X_\xi)\in\dd t)}\Big].
  \end{equation}
Plugging  \eqref{eq:57} into \eqref{eq:600}, with $Y_u\equiv 1$ in the denominator, yields the lemma.
\end{proof}

\begin{corollary}
\label{cor:many_to_one_fugitive}
In addition to the assumptions in Lemma~\ref{lem:many_to_one_fugitive}, suppose $\mu\ge 1/2$, $t\in [0,a^3]$ and $p(s)= 0$ for $s\ge a^3$. Then,
\begin{equation*}
 \Etilde^x\Big[Y\,\Big|\,\exists \mathscr U\in U : (\mathscr U,t) \in \mathscr R_\infty\Big]
 \le e^{C\|p\|_\infty} \Etilde^{*,x}\Big[Y_{\xi_t}\,\Big|\, H_0(X_\xi) > H_a(X_\xi)\in\dd t\Big].
\end{equation*}
\end{corollary}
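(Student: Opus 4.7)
The plan is to start from the exact ratio identity in Lemma~\ref{lem:many_to_one_fugitive}. Apply the trivial bound $e^{-\int_0^t e(X_\xi(s),s)\,\dd s}\le 1$ in the numerator to get an upper bound by $\Etilde^{*,x}[Y_{\xi_t}\,|\,H_0(X_\xi)>H_a(X_\xi)=t]$ already. The whole problem then reduces to showing the lower bound
\[
\Etilde^{*,x}\Big[e^{-\int_0^t e(X_\xi(s),s)\,\dd s}\,\Big|\,H_0(X_\xi)>H_a(X_\xi)=t\Big]\ge e^{-C\|p\|_\infty}
\]
on the denominator. By Jensen's inequality applied to $u\mapsto e^{-u}$, this is implied by the upper bound
\[
\Etilde^{*,x}\Big[\int_0^t e(X_\xi(s),s)\,\dd s\,\Big|\,H_0(X_\xi)>H_a(X_\xi)=t\Big]\le C\|p\|_\infty.
\]

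Combining \eqref{eq:def_e} with a time-shifted version of Lemma~\ref{lem:h_estimate}, valid for all $s\in[0,t]$ under the hypotheses $f\equiv 0$, $p\equiv 0$ on $[a^3,\infty)$ and $t\le a^3$ (which render the underlying process time-homogeneous with escape windows of length at most $a^3$), and using $\mu\ge 1/2$ to absorb the polynomial prefactor into the exponential, gives the pointwise bound $e(z,s)\le C\|p\|_\infty e^{-(a-z)/3}$. It therefore remains to establish the deterministic uniform estimate
\[
\Etilde^{*,x}\Big[\int_0^t e^{-(a-X_\xi(s))/3}\,\dd s\,\Big|\,H_0(X_\xi)>H_a(X_\xi)=t\Big]\le C
\]
for all $x\in(0,a)$ and $t\in[0,a^3]$. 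Under this conditioning $X_\xi$ is the Brownian bridge from $x$ to $a$ of length $t$ conditioned to stay in $(0,a)$, i.e., the $y\to a$ limit of the Brownian taboo bridge from $x$ to $y$.

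To prove this last estimate, the plan is to split the time interval into a bulk region $[0,(t-a^2)_+]$ and an endpoint region $[(t-a^2)_+,t]$ and bound each separately. On the bulk, the marginal density of $X_\xi(s)$ is comparable, via \eqref{eq:p_estimate} and \eqref{eq:ptaboo_estimate}, to the taboo stationary density $(2/a)\sin^2(\pi z/a)$; since $\sin^2(\pi z/a)\sim \pi^2(a-z)^2/a^2$ near $z=a$, the integral $\int_0^a e^{-(a-z)/3}(2/a)\sin^2(\pi z/a)\,\dd z$ is of order $1/a^3$, and multiplying by the time length $t\le a^3$ gives an $O(1)$ bulk contribution. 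On the endpoint region, the time-reversed process $u\mapsto a-X_\xi(t-u)$ enters $(0,a)$ from $0$ and can be compared, via a standard $h$-transform argument, to a three-dimensional Bessel process $R_u$ started at $0$; the Green function $G^R(0,r)=2r$ then yields $\int_0^{a^2}E^0[e^{-R_u/3}]\,\dd u\le \int_0^\infty 2re^{-r/3}\,\dd r=18$.

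The main technical obstacle will be this final uniform estimate. A direct application of Lemma~\ref{lem:k_integral} via the symmetry $z\leftrightarrow a-z$ produces a term $(1\wedge(a-y)^{-1})$ that blows up as $y\to a$, even though the actual conditional integral remains bounded, since near $s=t$ the bridge spends only time of order $u$ within distance $\sqrt{u}$ of $a$ (by Bessel-3 scaling). The splitting above circumvents this issue by treating the endpoint region via a direct Bessel-3 comparison rather than via the taboo-bridge bound.
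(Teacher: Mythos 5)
Your overall strategy is the paper's: exact identity from Lemma~\ref{lem:many_to_one_fugitive}, bound the numerator by $1$ via $e^{-u}\le 1$, lower-bound the denominator via Jensen, reduce to the first-moment bound on $\int_0^t e(X_\xi(s),s)\,\dd s$ under the taboo bridge, and obtain the pointwise estimate $e(z,s)\le C\|p\|_\infty e^{-(a-z)/3}$ from \eqref{eq:def_e}, Lemma~\ref{lem:h_estimate} and $\mu\ge 1/2$. Up to that point you match the paper step for step.

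But the ``main technical obstacle'' you then identify is illusory, and it stems from a misreading of Lemma~\ref{lem:k_integral}. After the reflection $z\mapsto a-z$, the bridge from $x$ to $a$ becomes the taboo bridge from $a-x$ to $0$ integrated against $e^{-c\tilde z}$, and \eqref{eq:taboo_integral_error_2} (for $t\ge a^2$) gives the bound
\[
C\Big(\frac{t}{a^3}+\big(1\wedge(a-x)^{-1}\big)+\big(1\wedge 0^{-1}\big)\Big)\le C\Big(\frac{t}{a^3}+1+1\Big)\le 3C,
\]
since the minimum with $1$ caps each boundary term at $1$ no matter how close the endpoint is to $0$. Nothing blows up; the endpoint term equals exactly $1$, and for $t\le a^3$ the whole expression is $O(1)$. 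For $t\le a^2$ one instead invokes \eqref{eq:taboo_integral_error_3}, which is also uniformly $O(1)$. So the paper simply applies Lemma~\ref{lem:k_integral} (via this reflection) and is done. Your proposed bulk/endpoint split with a separate Bessel-$3$ comparison near $s=t$ is not needed; it is, in effect, a re-derivation of the ingredient \eqref{eq:taboo_integral_error_3} that is already built into Lemma~\ref{lem:k_integral} and whose proof in the appendix uses exactly the Bessel-$3$ $h$-transform you describe. In short: the proof goes through, but you should delete the claimed obstacle and the detour around it, and replace them with a single direct invocation of Lemma~\ref{lem:k_integral} after the $z\mapsto a-z$ reflection.
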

\begin{proof}
By \eqref{eq:def_e}, Lemma~\ref{lem:h_estimate} and the hypothesis on $\mu$, we have $e(x,s) \le C\|p\|_\infty e^{-(a-x)/3}$ for all $x\in(0,a)$ and $s\ge 0$. By Lemma~\ref{lem:k_integral}, we then have
\(
 W^{x,t,a}_{\mathrm{taboo}}[\int_0^t e(X_s,s)\,\dd s] \le C\|p\|_\infty.
\)
Jensen's inequality now yields 
\begin{equation}
  \label{eq:610}
 \Etilde^{*,x}\Big[e^{-\int_0^t e(X_\xi(s),s)\,\dd s} \,\Big|\,H_0(X_\xi) > H_a(X_\xi)=t\Big] = W^{x,t,a}_{\mathrm{taboo}}[e^{-\int_0^t e(X_s,s)\,\dd s}] \ge e^{-C\|p\|_\infty}.
\end{equation}
The statement now follows from \eqref{eq:610} and Lemma~\ref{lem:many_to_one_fugitive}.
\end{proof}

\section*{Acknowledgments}
\addcontentsline{toc}{section}{Acknowledgements}
I wish to thank my PhD advisor Zhan Shi for his continuous support and encouragements and Julien Berestycki for numerous discussions and his interest in this work. I further thank two anonymous referees who read and checked the manuscript in detail, gave very valuable suggestions regarding the organisation of the paper and spotted several typographical errors.

{
\small
\bibliography{n-bbm1}

\begin{thebibliography}{10}

\bibitem{Aldous1978}
D.~\textsc{Aldous} (1978).
\newblock {Stopping times and tightness}.
\newblock \emph{The Annals of Probability}, \textbf{6}, 2, 335--340

\bibitem{Asselah2012}
A.~\textsc{Asselah}, P.~A. \textsc{Ferrari}, P.~\textsc{Groisman},
  M.~\textsc{Jonckheere} (2012).
\newblock {Fleming-Viot selects the minimal quasi-stationary distribution: The
  Galton-Watson case}.
\newblock arXiv:1206.6114

\bibitem{Asselah2012a}
A.~\textsc{Asselah}, M.-N. \textsc{Thai} (2012).
\newblock {A note on the rightmost particle in a Fleming-Viot process}.
\newblock arXiv:1212.4168

\bibitem{Bellman}
R.~\textsc{Bellman} (1961).
\newblock \emph{{A brief introduction to theta functions}}.
\newblock Athena Series: Selected Topics in Mathematics. Holt, Rinehart and
  Winston, New York

\bibitem{Berard2010}
J.~\textsc{B\'{e}rard}, J.-B. \textsc{Gou\'{e}r\'{e}} (2010).
\newblock {Brunet-Derrida behavior of branching-selection particle systems on
  the line}.
\newblock \emph{Communications in Mathematical Physics}, \textbf{298}, 2,
  323--342

\bibitem{Berestycki2010a}
J.~\textsc{Berestycki}, N.~\textsc{Berestycki}, J.~\textsc{Schweinsberg}
  (2011).
\newblock {Survival of near-critical branching Brownian motion}.
\newblock \emph{Journal of Statistical Physics}, \textbf{143}, 5, 833--854

\bibitem{Berestycki2010}
--- (2013).
\newblock {The genealogy of branching Brownian motion with absorption}.
\newblock \emph{The Annals of Probability}, \textbf{41}, 2, 527--618

\bibitem{Bertoin2000}
J.~\textsc{Bertoin}, J.-F. \textsc{{Le Gall}} (2000).
\newblock {The Bolthausen-Sznitman coalescent and the genealogy of
  continuous-state branching processes}.
\newblock \emph{Probability Theory and Related Fields}, \textbf{117}, 2,
  249--266

\bibitem{Biggins2004}
J.~D. \textsc{Biggins}, A.~E. \textsc{Kyprianou} (2004).
\newblock {Measure change in multitype branching.}
\newblock \emph{Advances in Applied Probability}, \textbf{36}, 2, 544--581

\bibitem{Billingsley1999}
P.~\textsc{Billingsley} (1999).
\newblock \emph{{Convergence of probability measures}}.
\newblock Wiley Series in Probability and Statistics: Probability and
  Statistics, second edition. John Wiley \& Sons Inc., New York

\bibitem{BorodinSalminen}
A.~N. \textsc{Borodin}, P.~\textsc{Salminen} (2002).
\newblock \emph{{Handbook of Brownian motion---facts and formulae}}.
\newblock Probability and its Applications, second edition. Birkh\"{a}user
  Verlag, Basel

\bibitem{Bramson1983}
M.~D. \textsc{Bramson} (1983).
\newblock {Convergence of solutions of the {K}olmogorov equation to travelling
  waves}.
\newblock \emph{Memoirs of the American Mathematical Society}, \textbf{44}, 285

\bibitem{Brunet1997}
E.~\textsc{Brunet}, B.~\textsc{Derrida} (1997).
\newblock {Shift in the velocity of a front due to a cutoff}.
\newblock \emph{Physical Review E}, \textbf{56}, 3, 2597--2604

\bibitem{Brunet2001}
--- (2001).
\newblock {Effect of microscopic noise on front propagation}.
\newblock \emph{Journal of Statistical Physics}, \textbf{103}, 1, 269--282

\bibitem{Brunet2006}
E.~\textsc{Brunet}, B.~\textsc{Derrida}, A.~\textsc{Mueller},
  S.~\textsc{Munier} (2006).
\newblock {Phenomenological theory giving the full statistics of the position
  of fluctuating pulled fronts}.
\newblock \emph{Physical Review E}, \textbf{73}, 5, 056126

\bibitem{Brunet2008a}
E.~\textsc{Brunet}, I.~M. \textsc{Rouzine}, C.~O. \textsc{Wilke} (2008).
\newblock {The Stochastic Edge in Adaptive Evolution}.
\newblock \emph{Genetics}, \textbf{179}, 1, 603--620

\bibitem{Burdzy1996}
K.~\textsc{Burdzy}, R.~\textsc{Holyst}, D.~\textsc{Ingerman}, P.~\textsc{March}
  (1996).
\newblock {Configurational transition in a Fleming--Viot-type model and
  probabilistic interpretation of Laplacian eigenfunctions}.
\newblock \emph{Journal of Physics A: Mathematical and General}, \textbf{29},
  11, 2633--2642

\bibitem{Chauvin1991}
B.~\textsc{Chauvin} (1991).
\newblock {Product martingales and stopping lines for branching Brownian
  motion}.
\newblock \emph{The Annals of Probability}, \textbf{19}, 3, 1195--1205

\bibitem{Chauvin1988}
B.~\textsc{Chauvin}, A.~\textsc{Rouault} (1988).
\newblock {KPP equation and supercritical branching Brownian motion in the
  subcritical speed area. Application to spatial trees}.
\newblock \emph{Probability Theory and Related Fields}, \textbf{80}, 2,
  299--314

\bibitem{Chauvin1991a}
B.~\textsc{Chauvin}, A.~\textsc{Rouault}, A.~\textsc{Wakolbinger} (1991).
\newblock {Growing conditioned trees}.
\newblock \emph{Stochastic Processes and their Applications}, \textbf{39}, 1,
  117--130

\bibitem{Dawson1991}
D.~A. \textsc{Dawson}, B.~\textsc{Maisonneuve}, J.~\textsc{Spencer} (1993).
\newblock \emph{\'{E}cole d'\'{E}t\'e de {P}robabilit\'es de {S}aint-{F}lour
  {XXI}---1991}, volume 1541 of \emph{Lecture Notes in Mathematics}.
\newblock Springer-Verlag, Berlin

\bibitem{Derrida2007}
B.~\textsc{Derrida}, D.~\textsc{Simon} (2007).
\newblock {The survival probability of a branching random walk in presence of
  an absorbing wall}.
\newblock \emph{Europhysics Letters (EPL)}, \textbf{78}, 6, 60006

\bibitem{Durrett2009}
R.~\textsc{Durrett}, D.~\textsc{Remenik} (2011).
\newblock {Brunet–Derrida particle systems, free boundary problems and
  Wiener–Hopf equations}.
\newblock \emph{The Annals of Probability}, \textbf{39}, 6, 2043--2078

\bibitem{Ethier1986}
S.~N. \textsc{Ethier}, T.~G. \textsc{Kurtz} (1986).
\newblock \emph{{Markov processes}}.
\newblock Wiley Series in Probability and Mathematical Statistics: Probability
  and Mathematical Statistics. John Wiley \& Sons Inc., New York

\bibitem{Gantert2011}
N.~\textsc{Gantert}, Y.~\textsc{Hu}, Z.~\textsc{Shi} (2011).
\newblock {Asymptotics for the survival probability in a killed branching
  random walk}.
\newblock \emph{Annales de l'Institut Henri Poincar\'{e} (B) Probabilit\'{e}s
  et Statistiques}, \textbf{47}, 1, 111--129

\bibitem{Hardy2006}
R.~\textsc{Hardy}, S.~C. \textsc{Harris} (2006).
\newblock {A new formulation of the spine approach to branching diffusions}.
\newblock arXiv:math/0611054

\bibitem{HHK2006}
J.~W. \textsc{Harris}, S.~C. \textsc{Harris}, A.~E. \textsc{Kyprianou} (2006).
\newblock {Further probabilistic analysis of the
  Fisher-Kolmogorov-Petrovskii-Piscounov equation: One sided travelling-waves}.
\newblock \emph{Annales de l'Institut Henri Poincar\'{e} (B) Probabilit\'{e}s
  et Statistiques}, \textbf{42}, 1, 125--145

\bibitem{Harris2011}
S.~C. \textsc{Harris}, M.~I. \textsc{Roberts} (2011).
\newblock {The many-to-few lemma and multiple spines}.
\newblock arXiv:1106.4761

\bibitem{Ikeda1968}
N.~\textsc{Ikeda}, M.~\textsc{Nagasawa}, S.~\textsc{Watanabe} (1968-69).
\newblock {Branching {M}arkov processes. {I}--{III}}.
\newblock \emph{Journal of Mathematics of Kyoto University}.
\newblock I: 8, 233--278, 1968. II: 8, 365--410, 1968. III: 9, 95-160

\bibitem{ItoMcKean}
K.~\textsc{It\^{o}}, H.~P. \textsc{{McKean Jr.}} (1974).
\newblock \emph{{Diffusion processes and their sample paths}}, volume 125 of
  \emph{Die Grundlehren der mathematischen Wissenschaften}.
\newblock Second edition. Springer-Verlag, Berlin

\bibitem{Jagers1989}
P.~\textsc{Jagers} (1989).
\newblock {General branching processes as Markov fields.}
\newblock \emph{Stochastic Processes and their Applications}, \textbf{32}, 2,
  183--212

\bibitem{Kallenberg1997}
O.~\textsc{Kallenberg} (1997).
\newblock \emph{{Foundations of modern probability}}.
\newblock Probability and its Applications (New York). Springer-Verlag, New
  York

\bibitem{Kesten1978}
H.~\textsc{Kesten} (1978).
\newblock {Branching Brownian motion with absorption}.
\newblock \emph{Stochastic Processes and their Applications}, \textbf{7}, 1,
  9--47

\bibitem{Knight1969}
F.~B. \textsc{Knight} (1969).
\newblock {Brownian local times and taboo processes}.
\newblock \emph{Transactions of the American Mathematical Society},
  \textbf{143}, 173--185

\bibitem{Kurtz1975}
T.~G. \textsc{Kurtz} (1975).
\newblock {Semigroups of conditioned shifts and approximation of {M}arkov
  processes}.
\newblock \emph{The Annals of Probability}, \textbf{3}, 4, 618--642

\bibitem{Kyprianou2004}
A.~E. \textsc{Kyprianou} (2004).
\newblock {Travelling wave solutions to the {K-P-P} equation: Alternatives to
  {S}imon {H}arris' probabilistic analysis}.
\newblock \emph{Annales de l'Institut Henri Poincar\'{e} (B) Probabilit\'{e}s
  et Statistiques}, \textbf{40}, 1, 53--72

\bibitem{Lambert2000}
A.~\textsc{Lambert} (2000).
\newblock {Completely asymmetric L\'{e}vy processes confined in a finite
  interval}.
\newblock \emph{Annales de l'Institut Henri Poincar\'{e} (B) Probabilit\'{e}s
  et Statistiques}, \textbf{36}, 2, 251--274

\bibitem{LPP1995}
R.~\textsc{Lyons}, R.~\textsc{Pemantle}, Y.~\textsc{Peres} (1995).
\newblock {Conceptual proofs of {$L$ Log $L$} criteria for mean behavior of
  branching processes}.
\newblock \emph{The Annals of Probability}, \textbf{23}, 3, 1125--1138

\bibitem{Maillard2010}
P.~\textsc{Maillard} (2010).
\newblock {The number of absorbed individuals in branching {B}rownian motion
  with a barrier}.
\newblock arXiv:1004.1426, to appear in \emph{Annales de l'Institut Henri
  Poincar\'{e} (B) Probabilit\'{e}s et Statistiques}.

\bibitem{MaillardThese}
--- (2012).
\newblock \emph{{Branching Browian motion with selection}}.
\newblock Ph.D. thesis, Universit\'{e} Pierre et Marie Curie, arXiv:1210.3500

\bibitem{McKean1975}
H.~P. \textsc{McKean} (1975).
\newblock {Application of Brownian motion to the equation of
  Kolmogorov-Petrovskii-Piskunov}.
\newblock \emph{Communications on Pure and Applied Mathematics}, \textbf{28},
  3, 323--331

\bibitem{Mueller2010}
C.~\textsc{Mueller}, L.~\textsc{Mytnik}, J.~\textsc{Quastel} (2010).
\newblock {Effect of noise on front propagation in reaction-diffusion equations
  of KPP type}.
\newblock \emph{Inventiones mathematicae}, \textbf{184}, 2, 405--453

\bibitem{Neveu1986}
J.~\textsc{Neveu} (1986).
\newblock {Arbres et processus de Galton-Watson}.
\newblock \emph{Annales de l'Institut Henri Poincar\'{e} (B) Probabilit\'{e}s
  et Statistiques}, \textbf{22}, 2, 199--207

\bibitem{Neveu1988}
--- (1988).
\newblock {Multiplicative martingales for spatial branching processes}.
\newblock In \emph{Seminar on Stochastic Processes, 1987 (Princeton, NJ,
  1987)}, volume~15 of \emph{Progress in Probability and Statistics}, 223--242.
  Birkh\"{a}user Boston, Boston, MA

\bibitem{Panja2004}
D.~\textsc{Panja} (2004).
\newblock {Effects of fluctuations on propagating fronts}.
\newblock \emph{Physics Reports}, \textbf{393}, 2, 87--174

\bibitem{Revuz1999}
D.~\textsc{Revuz}, M.~\textsc{Yor} (1999).
\newblock \emph{{Continuous martingales and {B}rownian motion}}, volume 293 of
  \emph{Die Grundlehren der Mathematischen Wissenschaften}.
\newblock Third edition. Springer-Verlag, Berlin

\bibitem{Sawyer1976}
S.~A. \textsc{Sawyer} (1976).
\newblock {Branching diffusion processes in population genetics}.
\newblock \emph{Advances in Applied Probability}, \textbf{8}, 4, 659--689

\bibitem{Shiga1988}
T.~\textsc{Shiga} (1988).
\newblock {Stepping stone models in population genetics and population
  dynamics}.
\newblock In \emph{Stochastic processes in physics and engineering
  ({B}ielefeld, 1986)}, volume~42 of \emph{Mathematics and Applications},
  345--355. Reidel, Dordrecht

\bibitem{VanSaarloos2003}
W.~\textsc{{Van Saarloos}} (2003).
\newblock {Front propagation into unstable states}.
\newblock \emph{Physics Reports}, \textbf{386}, 2-6, 29--222

\bibitem{Villemonais2014}
D.~\textsc{Villemonais} (2014).
\newblock {Minimal quasi-stationary distribution approximation for a birth and
  death process}.
\newblock \emph{arXiv:1404.6648}

\end{thebibliography}
}
\end{document}